%%%%%%%%%%%%%%%%%%%%%%%%%%%%%%%%%%%%%%%%%%%%%%%%%%%%%%%
% % Lines starting with % are comments, which are ignored.
% % This is a handy way of indicating the date and version of
% % your document, to wit:
% %
% % LaTeX sample file
% % Modified March, 2002
% %
%%%%%%%%%%%%%%%%%%%%%%%%%%%%%%%%%%%%%%%%%%%%%%%%%%%%%%%
% % Title and author(s)
%%%%%%%%%%%%%%%%%%%%%%%%%%%%%%%%  %%%%%%%%%%%%%%%%%%%%%%%
\title{On a Class of Global Solutions to 3D Free-Boundary Relativistic Euler Equations with a Physical Vacuum Boundary}
\author{Marcelo M. Disconzi\thanks{Department of Mathematics, Vanderbilt University, Nashville, TN, 37240, United States.\url{marcelo.disconzi@vanderbilt.edu}},
Zhongtian Hu\thanks{Mathematics Department, Princeton University, Princeton, NJ, 08544, United States. \url{zh1077@princeton.edu}},
and Chenyun Luo\thanks{Department of Mathematics, The Chinese University of Hong Kong. Shatin, NT, Hong Kong SAR. \url{cluo@math.cuhk.edu.hk}.}
}
%%%%%%%%%%%%%%%%%%%%%%%%%%%%%%%%%%%%%%%%%%%%%%%%%%%%%%%
\documentclass[11pt]{article}
\usepackage[margin=1in]{geometry}
\usepackage{comment,slashed,tensor}
\usepackage[T1]{fontenc}
\usepackage{ stmaryrd }
\usepackage{cancel}
\usepackage[symbol]{footmisc}
\usepackage[title]{appendix}

%%%%%%%%%%%%%%%%%%%%%%%%%%%%%%%%%%%%%%%%%%%%%%%%%%%%%%%
% %
% % The next command allows your in import encapsulated
% % postscript files, .epsf or .eps files, which
% % contain vector graphic image data.
% %
%%%%%%%%%%%%%%%%%%%%%%%%%%%%%%%%%%%%%%%%%%%%%%%%%%%%%%%
\usepackage[bookmarksnumbered, bookmarksopen, colorlinks, citecolor=red, linkcolor=black]{hyperref}
 \usepackage{amsfonts}
 \usepackage{comment}
  \usepackage{cancel}
\usepackage{graphicx}
 \usepackage{amsmath, amssymb}
 \usepackage{amsthm}
 \usepackage{mathtools}

 \numberwithin{equation}{section}
%%%%%%%%%%%%%%%%%%%%%%%%%%%%%%%%%%%%%%%%%%%%%%%%%%%%%%%
% % We use newtheorem to define theorem-like structures
% %
% % Here are some common ones. . .
%%%%%%%%%%%%%%%%%%%%%%%%%%%%%%%%%%%%%%%%%%%%%%%%%%%%%%%
\newtheorem{thm}{Theorem}[section]
\newtheorem{lem}[thm]{Lemma}
\newtheorem{cor}[thm]{Corollary}

\newtheorem{prop}[thm]{Proposition}
\newtheorem{rmk}[thm]{Remark}
\newtheorem{defn}[thm]{Definition}

\allowdisplaybreaks

\newcommand{\R}{\mathbb{R}}
\newcommand{\N}{\mathbb{N}}

\newcommand{\calA}{\mathcal{A}}

\newcommand{\calF}{\mathcal{F}}

\newcommand{\mcalB}{\mathcal{B}}
\newcommand{\calG}{\mathcal{G}}
\newcommand{\calT}{\mathcal{T}}
\newcommand{\calC}{\mathcal{C}}
\newcommand{\calE}{\mathcal{E}}
\newcommand{\calD}{\mathcal{D}}
\newcommand{\calO}{\mathcal{O}}
\newcommand{\calL}{\mathcal{L}}
\newcommand{\calS}{\mathcal{S}}

\newcommand{\trho}{\tilde{\rho}}

\newcommand{\ze}{\zeta}

\newcommand{\tv}{\tilde{v}}

\newcommand{\mfE}{\mathfrak{E}}
\newcommand{\mfD}{\mathfrak{D}}

\newcommand{\mfI}{\mathfrak{I}}
\newcommand{\mfC}{\mathfrak{C}}
\newcommand{\mfR}{\mathfrak{R}}

\makeatletter
\newcommand*{\rom}[1]{\expandafter\@slowromancap\romannumeral #1@}
\makeatother

\newcommand{\p}{\partial}
\newcommand{\kk}{\kappa}
\newcommand{\eps}{\epsilon}
\newcommand{\lam}{\lambda}
\newcommand{\lamt}{\bar{\lambda}_{\tau}}
\newcommand{\dz}{\p_{\zeta}}
\newcommand{\pz}{\p_{\zeta}}
\newcommand{\pt}{\p_{\tau}}
\newcommand{\dt}{\p_{\tau}}
\newcommand{\Dz}{D_{\zeta}}
\newcommand{\barcalD}{\bar{\calD}}
\newcommand{\calP}{\mathcal{P}}
\newcommand{\barcalP}{\bar{\mathcal{P}}}
\newcommand{\barcalG}{\bar{\mathcal{G}}}
\newcommand{\dist}{\text{dist }}

%%%%%%%%%%%%%%%%%%%%%%%%%%%%%%%%%%%%%%%%%%%%%%%%%%%%%%%
% %   The first thanks indicates your affiliation
% %
% %  Just the name here.
% %
% % Your mailing address goes at the end.
% %
% % \thanks is also how you indicate grant support
% %
%%%%%%%%%%%%%%%%%%%%%%%%%%%%%%%%%%%%%%%%%%%%%%%%%%%%%%%

%macros for LWP
\newcommand{\EulerianDomain}{\Omega}
\newcommand{\DefFunction}{\mathrm{d}}
\newcommand{\GenericFunction}{\mathsf{f}}
\newcommand{\NewSoundSpeedSq}{\mathsf{r}}
\newcommand{\NewVelocity}{\mathsf{v}}
\newcommand{\DITWeightedSpace}{\mathcal{H}}

%Used in appendix B
\usepackage{tensor}

\begin{document}
%\onehalfspacing
\newpage
\maketitle
%%%%%%%%%%%%%%%%%%%%%%%%%%%
% abstract, keywords and Subject classification are optional.
%%%%%%%%%%%%%%%%%%%%%%%%%%%
\begin{abstract}
    We consider the free-boundary relativistic Euler equations in Minkowski spacetime $\mathbb{M}^{1+3}$ equipped with a physical vacuum boundary, which models the motion of a relativistic gas. We concern ourselves with the family of isentropic, barotropic, and polytropic gas, with an equation of state $p = \rho^{1+\kk}, \kk \in (0,\frac23]$. We construct an open class of initial data that launches future-global solutions. Such solutions are spherically symmetric, have small initial density, and expand asymptotically linearly in time. In particular, the asymptotic rate of expansion is allowed to be arbitrarily close to the speed of light. Therefore, our main result is far from a perturbation of existing results concerning the classical isentropic Euler counterparts.
\end{abstract}
\tableofcontents
\section{Introduction}
In this article, we study the motion of a relativistic fluid in the Minkowski background $\mathbb{M}^{1+3}$. The fluid state is represented by the (energy) density $\rho\geq 0$, and the relativistic (four-) velocity $u=(u^0, u^1, u^2, u^3)$. The velocity is assumed to the a forward time-like vector field, normalized by
\begin{equation}\label{normalization general}
    m_{\mu\nu}u^{\mu}u^{\nu}=-1.
\end{equation}
where $m$ is the Minkowski metric. Throughout this article, we adopt the summation convention for repeated indices, where Greek indices range from $0$ to $3$ and Latin indices range from $1$ to $3$. Also, indices are raised and lowered in accordance with $m$. 

The equations of motion consist of 
\begin{equation}\label{rel Euler general form}
    \p_{\mu} \calT^{\mu\alpha} =0, 
\end{equation}
where $\calT$ is the energy-momentum tensor for a perfect fluid, given by
\begin{equation}\label{def: energy-mom tensor}
    \calT^{\alpha\beta} = (p+\rho)u^{\alpha}u^{\beta} + pm^{\alpha\beta}.
\end{equation}
Here, we denote by $p$ the pressure, which is related to $\rho$ through the equation of states
\begin{equation}
    p = p (\rho). 
\end{equation}
Projecting \eqref{rel Euler general form} onto the directions parallel and orthogonal to $u$, using \eqref{normalization general} and \eqref{def: energy-mom tensor}, yields the system of equations:
\begin{subequations}\label{eq:RE}
\begin{align}
    &u^\mu\p_\mu \rho + (p + \rho)\p_\mu u^\mu = 0, \label{eq:RE_mass}\\
    &(\rho + p) u^\mu\p_\mu u^\alpha + (m^{\mu \alpha } + u^\mu u^\alpha)\p_\mu p = 0, \label{eq:RE_mom}\\
    &m_{\mu\nu}u^\mu u^\nu = -1. \label{eq: RE_normalization}
\end{align}
\end{subequations}
This is a nonlinear hyperbolic system, which in the reference frame of the moving fluid has the propagation speed
\begin{align}\label{def: c_s}
    c_s^2(\rho)  = \frac{dp}{d\rho},
\end{align}
which is subject to $0\leq c_s<1$, implying that the speed of propagation of sound waves stays non-negative and always below the speed of light, which is equal to one in the units we adopted. 

The relativistic Euler equations are extensively used in modeling fluids in regimes when the laws of relativity cannot be neglected. They are ubiquitous in many areas of physics, ranging from high-energy physics \cite{Romatschke:2017ejr}, astrophysics \cite{Rezzolla-Zanotti-Book-2013}, and cosmology \cite{Weinberg-Book-2008}. They are also a fertile source of mathematical problems, see e.g., \cite{Choquet-Bruhat-Book-2009}.

In this article, we consider the physical situation in which vacuum states are allowed, i.e., $\rho$ may vanish. In this case, \eqref{eq:RE} models the motion of a \textit{gas}, which occupies the domain
\begin{align}\label{def: moving domain Omega_t}
    \Omega_t :=\{x\in \R^3 :  \rho(t,x)>0\},
\end{align}
whose boundary $\p \Omega_t$ is the \textit{vacuum boundary}, and is advected by the fluid's velocity. It is important to note that the distinguishing characteristic of a gas, as opposed to a liquid, is that the density vanishes on $\p\Omega_t$:
\begin{align}\label{rho is 0 on the free boundary}
    \rho =0,\quad \text{on } \p\Omega_t. 
\end{align}
Moreover, an appropriate equation of state to describe this situation is
\begin{equation}\label{equation of states}
    p(\rho) = \rho^{1+\kk},\quad \text{for some fixed }\kk>0. 
\end{equation}
Consequently, we infer from \eqref{rho is 0 on the free boundary} that $p$ and $c_s$ also vanish on $\p\Omega_t$. From a physical viewpoint, this indicates that the sound waves cannot propagate in the vacuum region across the moving boundary. However, it is important to note that there is essentially only one vanishing rate of $c_s$ near the vacuum boundary that is both mathematically and physically consistent with the evolution of the free boundary \cite{disconzi2024recent}:
\begin{align}\label{fallout rate}
    c_s^2(t,x) \approx \text{dist}(x, \p\Omega_t),\quad \text{in }\Omega_t,
\end{align}
where the denote by $\text{dist}(\cdot,\cdot)$ the distance function. Under \eqref{fallout rate}, the free boundary $\p\Omega_t$ is commonly referred to as the \textit{physical vacuum boundary}.

\subsection{History and Background}

    %Local well-posedness results (Jang-Masmoudi-LeFloch, Had{\v{z}}i{\'c}-Shkoller-Speck, Disconzi-Ifrim-Tataru); 
    Local well-posedness for free-boundary relativistic Euler equations has been studied extensively in the past decade. In the case of a \textit{liquid}, i.e., where the pressure of the fluid vanishes on the free-boundary but the density remains strictly positive, the local well-posedness has been obtained by Ginsberg--Lindblad \cite{ginsberg2023local}, Miao--Shahshahani--Wu \cite{miao2021well}, and Oliynyk \cite{oliynyk2019dynamical}. Very recently, the result of \cite{miao2021well} has been extended to the local well-posedness for Einstein--Euler equations, where the effects of general relativity are taken into consideration, by Miao--Shahshahani \cite{miao2024well} and Hao--Huo--Miao \cite{hao2024well}. In contrast to the case of a \textit{liquid}, the present work focuses on the relativistic Euler equations with a \textit{physical vacuum boundary} modeling the motion of a \textit{gas}, where the density vanishes on the moving boundary. The a priori estimates for this model are addressed in Had{\v{z}}i{\'c}--Shkoller--Speck \cite{hadvzic2019priori} and Jang--LeFloch--Masmoudi \cite{jang2016lagrangian}, followed by the local well-posedness result addressed in a very recent work by Disconzi--Ifrim--Tataru \cite{disconzi2022relativistic}. When neglecting all relativistic effects, the system of equations \eqref{eq:RE} reduces to the compressible Euler equations with a vacuum boundary, whose local well-posedness is addressed in \cite{coutand2012well, ifrim2023compressible, jang2015well, liu2024physical, luo2014well}.
 
    Due to the quasilinear nature of the relativistic Euler equations, this system exhibits extremely rich dynamics beyond local well-posedness theory. Even if one considers the relativistic Euler equations on Minkowski spacetime (flat spacetime) and \textit{without} a free boundary, it is possible that various types of singular behaviors might occur, many of which being under active investigation. In a series of monographs by Christodoulou \cite{christodoulou2007formation,christodoulou2017shock} and a work by Christodoulou-Lisibach \cite{christodoulou2016shock}, they studied the shock formation and the restricted shock development problem for the relativistic Euler equations, especially for open sets of irrotational, isentropic data in three dimensions which are compactly supported perturbations of non-vacuum constant fluid background. We refer to the recent review article \cite{abbrescia2023relativistic} and references therein for a thorough discussion on shock-type singularities for relativistic Euler equations. In a recent breakthrough by Shao--Wei--Zhang \cite{shao2024self}, the authors constructed self-similar \textit{imploding} solutions to the relativistic Euler equations with the isothermal equation of state $p = \frac1l\rho$, $l > 1$, in three dimensions. The imploding singularity is, in some sense, more singular than shocks, as the hydrodynamic variables $\rho$ or $u$ themselves blowup. The imploding solutions found in \cite{shao2024self}, parallel to those discovered in the groundbreaking work \cite{MRRS1} studying the isentropic compressible Euler equations, are spacetime irrotational, spherically symmetrical, and featured by finite-time density blowup. Interestingly, these solutions play a critical role in the study of finite-time singularity formation to the supercritical defocusing nonlinear wave equation. See \cite{shao2025blow,buckmaster2024blowup} for recent developments.

    %Stabilizing mechanism afforded by expansion: in the relativistic fluid regime (relativistic Euler/Einstein Euler), the expansion of background spacetime metric prevents singularity formation in the isothermal regime (Speck, Speck-Rodnianski, Oliynik et al.)

    In contrast with possible singularity formation scenarios, there also have been exciting developments concerning scenarios which lead to global-in-time solutions to relativistic Euler equations. One mechanism of particular interest is the effect of \textit{expansion}. On an intuitive level, sufficiently rapid expansion not only should annihilate imploding singularities, as expansion discourages concentrating effects, but also should suppress shock formations, in that rapid expansion should ``spread out'' characteristics and prevent them from colliding. In the case of relativistic Euler equations without free boundary, the above intuition has been rigorously justified when the background spacetime expands. In particular, small perturbations to a quiescent, homogeneous fluid background with the Friedmann–Lemaître-Robertson–Walker (FLRW) metric that expands at least linearly are nonlinearly stable for all times, given the isothermal equation of state $p = K\rho$, $K \in (0,1/3)$. Such stability even holds in the full Einstein-Euler case, where the background metric is nonlinearly coupled to the fluid part via Einstein equations. See \cite{speck2012nonlinear,speck2013stabilizing,oliynyk2016future,fajman2021stabilizing,fajman2024stability,fajman2025stability} for recent advances in this direction.

    %Stabilizing mechanism afforded by expansion: in the classical compressible Euler/Euler-Poisson regime, well-known that the expansion of the support of the fluid also leads to global existence of solutions. (Had{\v{z}}i{\'c}-Jang, Sideris, Shkoller-Sideris, Rickard, Rickard-Had{\v{z}}i{\'c}-Jang, Parmeshwar-Had{\v{z}}i{\'c}-Jang)

    An important parallel to the stabilizing effects induced by the expansion of the spacetime background in the scenario of relativistic fluids is the expansion of the fluid bulk. With the presence of the physical vacuum boundary, the study of expanding solutions to the classical compressible Euler(-Poisson) equations has a rich history, which dates back to the works of \cite{makino1992blowing,fu1998critical}. In a more recent work \cite{sideris2017global}, the author discovered a more general class of explicit solutions, namely the ``affine motions", which feature linear expansion in an infinite-in-time and spatially inhomogeneous manner. In a series of remarkable works \cite{hadvzic2018expanding,hadvzic2018nonlinear,hadvzic2019class,parmeshwar2019global,shkoller2019global}, it is found that asymptotically linear expansions are nonlinear stable under small perturbations, thereby giving rise to a rather large class of initial data which lead to global solutions to compressible Euler(-Poisson) equations. The aforementioned results strongly suggest that an at least linear expansion of the fluid bulk alone is sufficient to suppress possible singularity scenarios despite the presence of vacuum and a free boundary. In the case of isentropic compressible Euler equations, linear expansion of the fluid support is actually necessary in guaranteeing the global existence of a regular solution. See \cite{sideris2014spreading}.

With the above discussions, while the local theory for the relativistic Euler equations with a physical vacuum boundary is relatively well-known, the understanding of large-time dynamics of this model is still limited. In fact, one may wonder whether there exist solutions describing expanding gas in the relativistic Euler scenario. Inspired by the remarkable works \cite{hadvzic2018nonlinear,hadvzic2018expanding}, we give a positive answer the the above question by constructing a class of future-global solutions to \eqref{eq:RE}, \eqref{def: moving domain Omega_t}--\eqref{fallout rate} featured by asymptotically linear expansion of the fluid support. To the best of the authors' knowledge, this article provides the first rigorous construction of an \textbf{open} class of initial data that lead to future-global solutions. A rough version of the main result is formulated as follows:

\begin{thm}(Main theorem: a Rough Version)
    Given $\kk \in (0,\frac23]$, $(\lam_0,\lam_1) \in \R^+ \times (0,1)$, there exists an open set of spherically symmetric initial data supported in $B(0,\lam_0)$ with initial expanding rate $\lam_1$ and initial mass $\delta^\frac1\kk \ll 1$ in a suitable high-order weighted Sobolev space which lead to future-global solutions to the system \eqref{eq:RE}, \eqref{def: moving domain Omega_t}--\eqref{fallout rate}. Moreover, the support of these solutions expands linearly (with respect to an Eulerian observer) in time with an asymptotic rate $\bar\lam \sim \lam_1$. 
\end{thm}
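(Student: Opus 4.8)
The plan is to single out a distinguished linearly-expanding background solution of \eqref{eq:RE} adapted to the physical vacuum rate \eqref{fallout rate}, and then establish its nonlinear stability within the spherically symmetric class by a high-order weighted energy method coupled to a continuity argument, broadly following the strategy developed by Had{\v{z}}i{\'c}--Jang for the Newtonian problem \cite{hadvzic2018nonlinear,hadvzic2018expanding}. Constructing the background is, in the relativistic setting, already a genuine step rather than a perturbation of a known object. Writing \eqref{eq:RE} in Lagrangian (comoving) coordinates on a fixed reference ball $B(0,\lam_0)$ converts the free-boundary problem into a fixed-domain quasilinear system for the radial flow map $\eta(t,\cdot)$, once $u^0$ is eliminated through the normalization \eqref{eq: RE_normalization}; inside this system I look for a background of the form $\bar\eta(t,y) = \lam(t)\,\Psi(y)$ together with a reference enthalpy-type weight $w(y)$ vanishing like $\text{dist}(\cdot,\p B)^{1/\kk}$, which is exactly what makes \eqref{fallout rate} hold. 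The profile $\Psi$ and the scaling law $\lam$ satisfy a coupled ODE in which the Lorentz factor $u^0 = \sqrt{1+|u|^2}$ and the relativistic enthalpy $h$ (with $dh = dp/(p+\rho)$) both enter; one must solve this ODE and, crucially, verify that the asymptotic rate $\bar\lam := \lim_{t\to\infty}\dot\lam(t)$ is strictly \emph{subluminal}, $\bar\lam<1$, with $\bar\lam\sim\lam_1$, for every admissible $\lam_1\in(0,1)$. This is the relativistic counterpart of the Sideris affine / Goldreich--Weber expanding motions, and smallness of the initial density is precisely what keeps $c_s^2 = (1+\kk)\rho^{\kk}<1$, so that the system remains hyperbolic.

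I then renormalize the full system around $\bar\eta$: passing to a rescaled time $\tau$ with $d\tau/dt\sim\lam(t)^{-1}$ (so that $\tau\to\infty$ as $t\to\infty$) and dividing the flow map by $\lam$, the background becomes a $\tau$-independent solution, and the perturbation $\theta$ obeys a quasilinear equation whose linear part is a $w$-degenerate second-order (wave-type) operator carrying an essentially constant friction inherited from $\dot\lam/\lam$. Analyzing this linearized operator --- its coercivity against the natural $w$-weighted inner product versus the strength of the friction --- produces the decay-in-$\tau$ estimates for $\theta$, and it is here that $\kk\in(0,\tfrac{2}{3}]$, the relativistic analogue of the Newtonian range $\gamma = 1+\kk\in(1,\tfrac{5}{3}]$, i.e. $(1,1+\tfrac{2}{n}]$ with $n=3$, is needed, so that the expansion weakens the nonlinear coupling fast enough. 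Next I build a finite hierarchy of weighted energies $\mfE_0,\dots,\mfE_N$, the weights being powers of $w$ matched to the degeneracy, and derive for each an energy--dissipation inequality by differentiating the renormalized equation, pairing with the correctly weighted multiplier, and integrating by parts; Hardy-type inequalities control the lower-order weighted norms and absorb the boundary contributions, while the commutator and nonlinear terms are estimated via weighted Sobolev embeddings together with the a priori $\tau$-decay.

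A continuity argument then closes the scheme: on a maximal interval on which $\sum_{k\le N}(1+\tau)^{p_k}\mfE_k(\tau)\lesssim\delta$, the energy inequalities combined with the linearized decay strictly improve the constant, forcing the interval to be all of $[0,\infty)$. Unwinding the changes of variables yields a future-global solution of \eqref{eq:RE}, \eqref{def: moving domain Omega_t}--\eqref{fallout rate} whose support $\Omega_t$ expands linearly in $t$ with asymptotic rate $\bar\lam\sim\lam_1$, and openness of the data set is automatic, since the only requirement is smallness of a fixed high-order weighted norm of the initial perturbation. I expect the genuine difficulties to lie in two places: (i) the background construction, where both subluminality $\bar\lam<1$ and compatibility with \eqref{fallout rate} must be extracted from a relativistic profile ODE having no Newtonian analogue; and (ii) the choice of the weighted energy, since the extra nonlinear structure carried by $u^0$ and by the relativistic enthalpy must be arranged so that the top-order energy retains a good antisymmetric/coercive form and the relativistic corrections do not upset the delicate balance between the $\kk$-dictated decay rate and the nonlinear loss.
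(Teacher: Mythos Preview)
Your overall architecture is right --- rescale, pass to Lagrangian coordinates, linearize around an expanding background, and close a weighted energy hierarchy exploiting the friction from $\dot\lam/\lam$ and the range $\kk\in(0,\tfrac23]$ --- but several of the ingredients you single out as the hard steps are not the ones that actually matter, and one structural expectation you build into the bootstrap would prevent the argument from closing.

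First, there is no nontrivial background profile to construct. After the mass-critical rescaling $\rho=\lam^{-3}\trho$, $v=\lam^{-\frac32\kk}\tv$, $z=r/\lam$, with $\lam$ solving $\ddot\lam=\delta\lam^{-3\kk-1}$, and after introducing the modified velocity $V$ via $\tv/u^0=V+\tfrac{\lam'}{\lam}z$, the background is simply $\eta\equiv\zeta$, i.e.\ $\Theta\equiv0$; the ``profile'' is just the fixed admissible weight $w$. The subluminality $\bar\lam<1$ comes for free from $\lam_1<1$ and $\delta\ll1$, not from a relativistic profile ODE.

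Second, the step you do not anticipate is the handling of the mass equation. Relativistic Euler does not conserve mass, so $f\calF$ is not constant along the flow; the paper introduces a \emph{relativistic corrector} $\calG$ through the ansatz $f=w_\delta^{1/\kk}\calF^{-1}\calG$ and observes that, in Lagrangian variables, the combination $\p_s u^0+V\p_z u^0$ collapses to $\p_s U^0$. This turns the equation for $\barcalG:=\kk^{-1}\calG^{-\kk}$ into a linear ODE in $\tau$ that can be solved \emph{explicitly} in terms of $U^0$ and $\calF$, with no derivatives of these quantities appearing. Without this, your top-order energy identity would see $\pt U^0$ at the same order as $\pt^2\Theta$ and lose a derivative.

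Third, and most importantly, your bootstrap cannot assume $\sum_k(1+\tau)^{p_k}\mfE_k\lesssim\delta$. Because $u^0$ does not scale under the mass-critical ansatz, the equation for $\Theta$ carries genuine $\calO(1)$ source terms at low frequency (coming e.g.\ from $\lamt^2\barcalD_i\ze^2$ in $\barcalD_i(U^0)^{-2}$). The energies therefore do \emph{not} decay; the sharp statement is a uniform bound $E^N(\tau)\le M_*\eps$, obtained from an inequality of the form $E^N(\tau)\lesssim(\eps+\delta^2)+\int_0^\tau e^{-\frac32\kk\bar\lam\tilde\tau}E^N\,d\tilde\tau+\delta^{1/2}\int_0^\tau e^{-\frac32\kk\bar\lam\tilde\tau}(E^N)^{1/2}\,d\tilde\tau$. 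Closing this forces $\delta\le\eps$ and gives boundedness only. A decay-weighted hierarchy would be destroyed by these source terms; the correct mechanism is that the exponential factor $e^{-\frac32\kk\bar\lam\tau}$ (equivalently $\lam^{-\frac32\kk}$) makes the Gr\"onwall kernel integrable.

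Finally, the top-order terms you call ``commutator and nonlinear terms'' are more delicate than generic Moser estimates: after $\calD_i$, the errors $E_1,E_3,E_4$ produce contributions like $w\ze\,\calD_{i+1}\pt\Theta$ and $w\,\calD_i\pt^2\Theta$ with explicit $\calO(1)$ coefficients, which cannot be absorbed perturbatively. They are handled by integrating by parts (in $\ze$ or $\tau$) against the multiplier $\pt\calD_i\Theta$, using the specific symmetric structure of these coefficients; this also requires an auxiliary $L^2_{\tau,\ze}$ control of $\calD_j\pt^2\Theta$ for $j\le N-1$.
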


We refer the readers to Theorem \ref{thm:mainprecise} for a precise statement of this main result.

\begin{rmk}
\begin{enumerate}
    \item It should be emphasized that $\lam_1$ \textit{a priori} must verify the bound $\lam_1 < 1$ due to causality, as the speed of light is $1$ in our case. Therefore, our result saturates the regime of causality in this sense.

    \item We expect that our main result can be extended to the full regime $\kappa \in (0,1)$, which is of physical significance, after adapting a modified energy framework established in \cite{shkoller2019global}. For the sake of conciseness, we leave the details to interested readers.
\end{enumerate}
\end{rmk}

We finish this Section with some brief remarks about potential connections with physics. One problem of great interest, for which very little is known from a mathematical perspective, is that of the stability of stars. Stability here does not necessarily mean global existence; in fact, non-stellar end states such as a black-hole or a supernova are natural in many situations. However, stars are generally expected to be long lived, so that their mathematical description should go beyond simple local well-posedness, and some type of ``semi-global" result is thus desirable. The type of expanding solutions we study here do not correspond to a stellar model, but should be viewed within the broader context of long time results for relativistic fluids that can help lead to new ideas for the mathematical study of stars. In this regard, it is important to point out that our solutions satisfy standard physical requirements, such as non-negative energy density, subluminal sound speed, and they satisfy the dominant energy condition.

\subsection{Main Difficulties and Methodology}
We first detail the major difficulties in discovering stable (future-in-time) global scenarios for the relativistic Euler equations with physical vacuum boundary, especially compared with those arising in the study of global dynamics for the classical free-boundary compressible Euler equations.
\begin{enumerate}
    \item \textbf{Degeneracies due to coordinate singularity and the physical vacuum.}\\
    Similar to the free-boundary compressible Euler equations with physical vacuum and radial symmetry, the corresponding problem for the relativistic Euler equations has identical issues of degeneracies near the free interface as well as at the origin. Such degeneracies behoove us to work in a suitable \textit{reformulation} of the original problem. More precisely, we need to work with a set of suitable coordinates as well as a suitable functional framework which takes such degeneracies into account.

    \item \textbf{Necessity of a robust blowup suppressing mechanism.}\\
    Similar to many other compressible fluid equations, the relativistic Euler equations can also experience the formation of singularities in finite time, including those of shock type \cite{christodoulou2007formation} or those of implosion type \cite{shao2024self}. Therefore in hope of constructing any stable global-in-time solution for the relativistic Euler equations, it is necessary to exploit certain robust stabilizing mechanism which suppresses the aforementioned singularity formation scenarios.
    
    \item \textbf{Lack of scaling symmetry for the relativistic Euler equations.}\\
    In contrast to the classical (isentropic) compressible Euler equations with a polytropic equation of state, which enjoy a $2$-parameter family of scaling symmetry corresponding to the hydrodynamic variables $(\rho,u^i)$, the corresponding problem to the relativistic Euler equations do not possess any scaling symmetry in variables $(\rho, u^\mu)$. Specifically, there are two main obstacles that prevent scaling symmetries in the relativistic Euler equations.  First, in the evolution equation for fluid's density $\rho$, the terms $\rho \p_\mu u^\mu$ and $p \p_\mu u^\mu$ scale differently due to the polytropic equation of state $p = \rho^{1+\kk}$, $\kk > 0$. Second, due to the nonlinear constraint equation $m_{\mu\nu}u^\mu u^\nu = -1$, the component $u^0$ ``breaks the scaling,'' even under the assumption of spherical symmetry.

    In general, the absence of scaling symmetry in a nonlinear PDE poses immense difficulties for understanding possible global dynamics. From the point of view of discovering special global solutions, a lack of scaling symmetry generally prevents one from reducing the PDE to a more tractable ODE. Additionally, from the perspective of stability analysis, the lack of scaling symmetry and the presence of special solutions generally discourage attempts to construct a class of global solutions near a (stable) background via perturbative methods.

    \item \textbf{Source errors and potential derivative loss}\\
    However, it should be noted that proving stability results outside admissible scaling symmetry by the target PDE is possible, as long as, roughly speaking, the main decay mechanism is sufficiently robust, and the error terms outside the scaling are ``subcritical'' and enjoy a favorable (non)-linear structure. In fact, such overarching scheme has been classically deployed in the study of (asymptotically) stable phenomena in many nonlinear PDEs, particularly in those concerning fluid dynamics, e.g. the study of imploding-type singularities \cite{cao2023non,cao2024non,MRRS2,MRRSNLS,shao2025blow,buckmaster2024blowup}, gravitational collapse \cite{guo2021continued}, and global gas dynamics of compressible Euler(-Poisson) equations \cite{hadvzic2019class,parmeshwar2019global}. Motivated by this idea, we enforce a ``mass-critical'' scaling ansatz to the solution (see Section \ref{sect: setup} for a detailed discussion). As a consequence, two types of main errors emerge in the equations for the rescaled variables:
    \begin{itemize}
        \item \textbf{Type I:} errors with favorable decay but might lose derivatives;
        \item \textbf{Type II:} errors at low frequencies, but generally with large size and without decay.
    \end{itemize}
    Type I errors typically arise from the pressure $p$ and its space-time derivatives; Type II errors emerge from the component $u^0$ of the $4$-velocity. Both types of errors pose serious obstacles to the attempt to close a continuation argument based upon the energy method.
    \end{enumerate}

    \subsubsection{Methodology}
    In view of the study of classical compressible Euler equations (\cite{hadvzic2018nonlinear,hadvzic2018expanding,sideris2017global}), we expect that linear expansion of the fluid bulk is a viable mechanism that gives rise to global solutions. To capture the effects of expansion, a crucial insight revealed by the aforementioned works is the correspondence in between linear expansion with a \textit{mass-critical scaling}. We are thus motivated to make the following ansatz to the unknowns of the relativistic Euler equations with spherical symmetry:
    \begin{equation}\label{introansatz}
    \rho(t,r) = \lambda(s)^{-3}\trho(s,z),\quad v(t,r) = \lambda(s)^{-\frac32\kappa}\tv(s,z),\quad \frac{ds}{dt} = \lambda(t)^{-\frac32\kappa - 1},\quad z = \frac{r}{\lambda(t)},
    \end{equation}
    where $\lam(t)$ solves the following ODE:
    \begin{equation}\label{introODE}
    \p_t^2 \lam = \delta\lam^{-3\kk-1},
    \end{equation}
    for some parameter $\delta$, and has the asymptotics $\lam(t) \sim t$. Here, $r = |x|$ and $u^i(t,x) = v(t,r)\frac{x^i}{r}$. We remark that the ODE \eqref{introODE} is classical in the study of expansion/gravitational collapse of gaseous stars. See \cite{makino1992blowing,fu1998critical}. Also note that we can always solve for the component $u^0$ of the $4$-velocity via the following algebraic relation: $$u^0(t,r)^2 = 1+v(t,r)^2.$$

    The rescaled variables $(\trho, \tv)$ satisfy a rather complicated, modified relativistic Euler equation. To further recast it into a more convenient form, we introduce the modified velocity $V$ given by:
    $$
    \frac{\tv}{u^0} = V + \frac{\p_s\lam}{\lam}z,
    $$
    which is reminiscent to a similar definition in \cite{hadvzic2018expanding}. In the new variables $(\trho, V)$, the relativistic Euler equations can be written in the following schematic form:
    \begin{subequations}
        \label{schematic1}
        \begin{equation}\label{schematic1mass}
            u^0\left(\p_s\trho + D_z(\trho V)\right) = \trho \left(\p_s u^0 + V\p_z u^0\right) + \lam^{-3\kk} F_1(\trho, u^0, V, \p_z V),
        \end{equation}
        \begin{equation}
            \label{schematic1mom}
            \begin{split}
            &g(\trho)\left[\left(\p_s V + V\p_z V + \left(1-\frac32\kk\right)\frac{\p_s\lam}{\lam} V + \delta z\right) + \frac{V + \frac{\p_s\lam}{\lam}z}{u^0}(\p_s u^0 + V \p_z u^0)\right] + \frac{1+\kk}{\kk}\p_z(\trho^\kk)\\
            &\quad = \lam^{-3\kk}F_2(V,\trho, \p_s\trho, \p_z\trho),
            \end{split}
        \end{equation}
    \end{subequations}
where $D_z := \p_z + \frac2z$ is the divergence operator restricted to radial functions in three dimensions, and $g, F_1, F_2$ are explicit smooth functions in their arguments. We also recall that $$(u^0)^2 = 1+v^2 = 1 + \lam^{-3\kk}\tv^2.$$

Readers who are familiar with the expanding gas dynamics of the classical compressible Euler equations might have noticed that \eqref{schematic1} formally reduces to the compressible Euler equations in an expanding background after setting $u^0, g \equiv 1$, $F_1,F_2 \equiv 0$. However, we emphasize that \eqref{schematic1} should not be viewed as a trivial extension of the previous results \cite{hadvzic2018expanding,shkoller2019global}: firstly, in both \eqref{schematic1mass} and \eqref{schematic1mom}, the following term
$$
\p_s u^0 + V \p_z u^0,
$$
which is absent in the study of the classical compressible Euler equations, is not straightforward to treat. Secondly, from the perspective of derivative count, the forcing terms $F_1, F_2$ involve terms to the top order, despite the fact that there is a favorable decaying factor $\lam^{-3\kk}$ in front of them. In view of the transport nature of \eqref{schematic1mass} and \eqref{schematic1mom}, there might be a potential derivative loss. \textit{We remark that the above issues are fundamentally tied to the main difficulty concerning the lack of scaling symmetry for the relativistic Euler equations}.

To address the issues raised in the previous paragraph, we need to fully exploit the nonlinear structure of the relativistic Euler equations, particularly in the Lagrangian formulation. We now proceed to further elucidate this remark in greater details. Define the Lagrangian map $\eta$ generated by the modified velocity $V$ as
$$
\frac{d\eta}{ds}(s,\ze) = V(s,\eta(s,\ze)),
$$
where $\eta \equiv \ze$ corresponds to the expanding background. We first comment on the crucial role played by this Lagrangian formulation in the mass equation. In the study of classical compressible Euler(-Poisson) dynamics (e.g. \cite{hadvzic2018expanding,hadvzic2018nonlinear,hadvzic2019class}), the mass equation trivializes to the conservation of the quantity $f\calF$ in time, where $f = \trho\circ \eta$ and $\calF = \left(\frac{\eta}{\ze}\right)^2\pz \eta$, the latter of which being exactly the Jacobian of the Lagrangian gradient. In the case of the relativistic Euler equations, the quantity $f\calF$, nevertheless, satisfies a more complicated PDE, which corresponds to the fact that the relativistic Euler does not conserve mass! To address this difficulty, a novel observation is that the equation for $f\calF$ bears a remarkable nonlinear structure which only manifests itself in Lagrangian coordinates. In particular, we fundamentally exploit the following elementary yet important observation: let $U^0 = u^0 \circ \eta$. We have
$$
\p_s U^0 = (\p_s u^0 + V \p_z u^0)\circ \eta,
$$
which reduces the first forcing term on the RHS of \eqref{schematic1mass} (in Lagrangian coordinates) to a total time derivative. Moreover, after a careful design of a structural ansatz for the Lagrangian density $f$ (cf. \eqref{f ansatz}), the mass equation \eqref{schematic1mass} in the Lagrangian variable can be equivalently written as an evolution equation for the so-called \textit{relativistic correction} $\calG$. More importantly, this evolution equation can be viewed as a nonlinear \textbf{ODE} in $s$. This novel observation allows us to almost explicitly solve for the Lagrangian density $f$ in terms of fundamental quantities $\calF$ and $U^0$ (but not their derivatives), which are more amenable to analysis. See Subsection \ref{subsect:barG} for a detailed discussion.

As for the momentum equation in Lagrangian coordinates, we utilize another structure concerning $\p_s u^0 + V\p_z u^0$ in Lagrangian coordinates, namely:
$$
\left[\frac{V + \frac{\p_s\lam}{\lam}z}{u^0}(\p_s u^0 + V \p_z u^0)\right]\circ \eta = ((U^0)^2 - 1) \left(\p_s V + V\p_z V + \left(1-\frac32\kk\right)\frac{\p_s\lam}{\lam} V + \delta z\right)\circ \eta,
$$
which can be combined with other main terms in the square bracket in \eqref{schematic1mom}. Moreover, using the structural ansatz for $f\calF$ (cf. \eqref{f ansatz}), the pressure gradient $\p_\ze (\trho^\kk)$ in Lagrangian coordinates can be reduced to a degenerate elliptic operator parallel to the one appearing in the study of expanding Euler gas dynamics. See, for instance, \cite{hadvzic2018expanding,hadvzic2018nonlinear,hadvzic2019class}.

After introducing the perturbed Lagrangian $\Theta := \eta - \ze$ and a logarithmic time $\tau \sim \log(1+t)$, we may schematically rewrite \eqref{schematic1} as
\begin{equation}
    \label{schematic2}
    \begin{split}
    &g^0(\lam,w,\calF,\calG)\left[\delta^{-1}\lam^{3\kk} \left(\pt^2\Theta + \frac{\pt\lam}{\lam}\pt\Theta\right) + \Theta\right] + g^1(U^0,\Theta) \frac{1}{w^{\frac1\kk}}\pz \left(w^{1+\frac1\kk}(\calF^{-1-\kk} - 1)\right)\\
    &\quad= \bar{F}(\lam, w, U^0, \Theta, \calF, \calG, \pt\Theta,\p_{\tau,\ze}\calF, \p_{\tau,\ze}\calG),
    \end{split}
\end{equation}
where $g^0, g^1, \bar F$ are explicit smooth functions in their arguments. Moreover, $w: [0,1] \to \R^+$ is the admissible enthalpy profile satisfying $\delta w = (f\calF)^\kk|_{\tau = 0}$. See Subsection \ref{subsect:enthalpy} and \eqref{gauge fixing} for a detailed discussion. We also remark that the appearance of the favorable coefficient $\delta^{-1}\lam^{3\kk}$ on the LHS of \eqref{schematic2} reflects the sub-criticality of the problem under the mass-critical scaling ansatz, in the sense that such favorable factor originates from the fact that $p$ scales subcritically with respect to $\rho$ under scaling \eqref{introansatz} due to the polytropic equation of state.

Now, the main problem is transformed into proving the global existence for \eqref{schematic2} with small data. We accomplish this goal by adapting an energy method in a weighted framework introduced by Jang-Masmoudi \cite{jang2015well}. We briefly introduce this approach here. On the $i^{th}$ level energy estimate, the LHS of \eqref{schematic2} gives rise to the following energy with damping given $\kk \le \frac23$. Namely,
\begin{equation}\label{schematicenergy}
\begin{split}
&\frac12\frac{d}{d\tau}\left(\delta^{-1}\lam^{3\kk}\|w^{\frac{\frac1\kk + i}{2}}\ze D^i\pt\Theta\|_{L^2}^2 + \|w^{\frac{\frac1\kk + i}{2}}\ze D^i\Theta\|_{L^2}^2 + \|(U^0)^{-2}w^{\frac{\frac1\kk + i + 1}{2}}\ze D^{i+1}\Theta\|_{L^2}^2\right)\\
&\quad + \left(1-\frac32\kk\right)\delta^{-1}\lam^{3\kk}\|w^{\frac{\frac1\kk + i}{2}}\ze D^i\Theta\|_{L^2}^2,
\end{split}
\end{equation}
where $D^j$, $j \in \N$, should be viewed an appropriate $j^{th}$ order derivative in $\ze$. In contrast with the classical Euler equations, an additional factor $(U^0)^{-2}$ enters the highest order spatial derivative control. This factor essentially corresponds to the restriction on the choice of initial data $\lam_1 < 1$, since $U^0 \to \infty$ as $\lam_1 \to 1$, which captures the causal structure tied to the relativistic Euler equations.

Regarding the forcing term $\bar F$ in \eqref{schematic2}, it turns out that one may perform the following decomposition:
$$
D^i \bar F = F^i_{\text{Type I}} + F^i_{\text{Type II}}.
$$
In particular, $F^i_{\text{Type I}}$ can be written schematically as
$$
F^i_{\text{Type I}} = a_1 \left(w\ze  D^{i+1}\pt\Theta\right) + a_2 D^i\pt^2\Theta + a_3 D^{i+2}\Theta,
$$
where $a_j \in W^{1,\infty}_{\tau,\ze}$ are suitably bounded. Moreover, $F^i_{\text{Type II}}$ at best obeys the bound
$$
F^i_{\text{Type II}} \lesssim 1 + \calE^i,
$$
where $\calE^i$ is the natural energy induced by \eqref{schematicenergy}. Indeed, $F^i_{\text{Type I}}$ and $F^i_{\text{Type II}}$ correspond to the Type I and Type II errors mentioned in Main Difficulty 4 above. On the one hand, the error $F^i_{\text{Type I}}$ will be properly controlled after a careful study of its exact top order structure and the structural symmetry afforded by the main equation \eqref{schematic2}. On the other hand, $F^i_{\text{Type II}}$ does not lose derivative, but an $\calO(1)$ size error emerges precisely due to the low-order derivatives of $U^0$ and ``profile errors'' incurred by $w$. We refer the readers to Section \ref{sect:buildingblock} for a detailed discussion. Finally, in the treatment of the two types of errors above, a new technical aspect of our work, compared with the study of the classical compressible Euler equations, is the development of an additional control of the Lagrangian acceleration $\pt^2\Theta$ up to the $(i-1)^{th}$ order in an $L^2_{\tau,\ze}$ setting. Such control is necessary when we treat $\pt\calG$, from which $\pt U^0$ arises.

With the main high-order norm $E^N$ introduced, we show the following main energy inequality:
\begin{equation}
    \label{mainenergyintro}
    E^N(\tau) \lesssim (\eps + \delta^2) + \int_0^\tau e^{-\frac32\kk \bar\lam \tilde\tau} E^N(\tilde\tau) d\tilde\tau + \delta^\frac12 \int_0^\tau e^{-\frac32\kk \bar\lam \tilde\tau} (E^N(\tilde\tau))^\frac12 d\tilde\tau,
\end{equation}
where $\bar\lam \sim \lam_1$ and $\eps$ measures the size of perturbation $\Theta$. The structure revealed by \eqref{mainenergyintro} necessitates the parameter regime $\delta \le \eps \ll 1$ to complete the proof of global existence via a continuation argument.

\subsection{Organization of the Paper}
\begin{itemize}
    \item In Section \ref{sect: setup}, we introduce the relativistic Euler equations in spherical symmetry and facilitate the notion of mass-critical scaling ansatz. With this structural assumption, the introduction of a suitable Lagrangian framework, and an important notion of relativistic corrector which characterizes the evolution of the density of the gas, we derive a modified version of the relativistic Euler equations tailored to the stability analysis in later sections. 

    \item In Section \ref{sect: functional}, we lay out the main functional preliminaries deployed in this article, including a vector field method developed by \cite{guo2021continued} and a weighted framework introduced by \cite{jang2015well}. We also define a hierarchy of spacetime norms $E^N$ that will be crucial in the nonlinear stability analysis.

    \item In Section \ref{sec: elliptic operator}, we study the term contributed by the pressure gradient in the momentum equation under the Lagrangian framework and recast it into a form that is more amenable to the analysis at high orders.

    \item In Section \ref{sec: high-order equation}, we introduce the high-order differentiated main equation after commuting the ``good derivative'' $\calD_i$.

    \item In Section \ref{sect:buildingblock}, we record detailed expressions concerning various crucial quantities, highlighted by $U^0, \calF, \calG$, after the vector field $\calD_i$ is applied. Here, $U^0$ and $\calG$ are key quantities which keep track of the relativistic effects. In particular, these expressions reveal very delicate highest order structures which are compatible with the main energy estimate culminating in Section \ref{sect: Energy Estimate V}.

    \item In Section \ref{sect: energynorm}, we introduce the relevant norms and the energies for the main equation that we use in our global existence argument. We also set up the main bootstrap assumptions, which involve the smallness of a spacetime norm measuring the size of perturbed variables, and suitable boundedness of key quantities $U^0, \calF, \calG$. We conclude this section by showing several helpful consequences of the main bootstrap assumptions, one of which being the norm-energy equivalence.

    \item Section \ref{sect: Energy Estimate 1}--\ref{sect: Energy Estimate III} contain the majority of technical estimates: Section \ref{sect: Energy Estimate 1} addresses estimates of nonlinear errors arising from the pressure gradient as hinted in Section \ref{sec: elliptic operator}; Section \ref{sect: Energy Estimate II} addresses commutators generated by commuting $\pt$ across the main equation; Section \ref{sect: Energy Estimate III} estimates terms in the momentum equation which encode relativistic effects, and the analysis of which crucially rely on the structural features observed in Section \ref{sect:buildingblock}.

    \item Section \ref{sect: Energy Estimate 4} shows that a suitable $L^2$ spacetime norm for the Lagrangian acceleration can be bounded by the main norm $E^N$. In Section \ref{sect: Energy Estimate V}, we combine all estimates derived in Section \ref{sect: Energy Estimate 1}--\ref{sect: Energy Estimate 4} to conclude our main energy inequality. 

    \item In Section \ref{S:LWP}, we provide a basic local existence and uniqueness result for the main system of equations to be studied. We remark that this will not be a new construction but simply a translation of known results to our setting.

    \item In Section \ref{sect: mainthm}, we exploit the aforementioned main energy inequality to prove the main theorem, which is a small-data, global existence theorem for the modified equation derived in Section \ref{sect: setup}. Moreover, we prove that the convergence of the Lagrangian to end-state as the (conformal) time $\tau \to \infty$, from which we obtain a precise asymptotic description of the gas.
\end{itemize}

\subsection{List of Notations}
\begin{itemize}
\item $\bar\lam$ is the asymptotic expansion rate of the support of the fluid; $N = N(\kk)$ is the regularity threshold of the solution. 
    \item We denote by $C>0$ a generic constant, which is allowed to change from line to line, depends on $\kk, \bar\lam$, and $N$, but is independent of bootstrap parameters $M_*, \epsilon$, and $\delta$ (see Definition \ref{assump: bootstrap}).  On the other hand, we use $C_\alpha$, $\alpha=0,1,2,\cdots,$ to denote a generic constant satisfying the properties of $C$, but it is not allowed to change from line to line. 
    \item By $A\lesssim B$ we mean $A\leq CB$ for some constant $C>0$, which is independent from $M_*, \epsilon$, and $\delta$. Also, by $A\approx B$ we mean $B\lesssim A \lesssim B$. 
\end{itemize}
\subsection*{Acknowledgments}
MMD acknowledges support from NSF grants DMS-2406870 and NRT-2125764, and from DOE grant DE-SC0024711.
ZH acknowledges support from NSF grants DMS-2306726, DMS-2106528, and from a Simons Collaboration
Grant 601960. CL acknowledges support from Hong Kong RGC Grants CUHK--14302922, CUHK--14304424, and CUHK--14301225. 

\section{A Lagrangian Formulation of relativistic Euler Equations in Spherical Symmetry}\label{sect: setup}

\subsection{Relativistic Euler Equations in Spherical Symmetry}
We focus on spherically symmetric solutions to the relativistic Euler equations with a physical vacuum boundary \eqref{eq:RE}. Let $r=|x|$. We consider 
\begin{align}\label{ansatz radial symmetry}
\rho = \rho(t,r),\quad u^i = v(t,r) \frac{x^i}{r}, \quad i=1,2,3. 
\end{align}
Then, we infer from \eqref{eq: RE_normalization} that
\begin{equation}\label{u00}
(u^0)^2(t,r) = 1 + u_iu^i = 1 + v^2.
\end{equation}

\noindent Under the ansatz \eqref{ansatz radial symmetry}, the mass equation \eqref{eq:RE_mass} becomes
\begin{equation}\label{mass0}
u^0\p_t\rho + \left(v\p_r\rho + \rho (\p_r v + \frac2r v)\right) + (p+\rho)\p_t u^0 + p (\p_r v + \frac{2}{r}v) = 0, \quad \text{in }\Omega_t.
\end{equation}
Note that, under the spherically symmetric setting, $\Omega_t=B(0,r(t))$ for some $r(t)>0$, i.e., the ball centered at the origin with radius $r(t)$. 
Additionally, the momentum equation \eqref{eq:RE_mom} can be written as
\begin{equation}\label{momentum0}
(\rho + p)\left[u^0\p_tv + v\p_r v\right] + (u^0)^2\p_r p + vu^0\p_tp = 0, \quad \text{in }\Omega_t,
\end{equation}
after applying \eqref{u00} to \eqref{eq:RE_mom} with $\alpha=i$.
\subsection{Scaling Ansatz} 
Consider the following change of variables:
\begin{equation}\label{defn:scaling}
\rho(t,r) = \lambda(s)^{-3}\trho(s,z),\quad v(t,r) = \lambda(s)^{-\frac32\kappa}\tv(s,z),\quad \frac{ds}{dt} = \lambda(t)^{-\frac32\kappa - 1},\quad z = \frac{r}{\lambda(t)},
\end{equation}
where the scaling factor $\lam(t)$ satisfies the following ODE:
\begin{equation}
    \label{lambdaODEt}
    \p_t^2 \lambda = \delta \lambda^{-3\kk - 1},
\end{equation}
equipped with initial conditions 
\begin{align}\label{lambdaODEt_data}
\lam(0) = \lam_0,\quad \p_t \lam(0) = \lam_1,\quad (\lam_0,\lam_1)\in \R^+\times (0,1).
\end{align}
Here, $\lam_1$ is the initial expanding rate, and we choose $r(0)=\lam_0$ so that $\lam_0$ is the radius of the initial domain. Moreover, $\delta$ is a free parameter, which should be viewed as a measure of the initial mass of the fluid body. It is worth mentioning that the scaling assumption \eqref{defn:scaling} is classically referred as the \textit{mass-critical scaling} in the study of compressible Euler equations. In fact, such scaling corresponds to an exact scaling symmetry enjoyed by the compressible Euler equations, from which a large class of nonlinearly stable affine motions emerges (see \cite{sideris2017global,hadvzic2018expanding}).

Recalling the rescaled time $s$, we will also frequently use the ODE \eqref{lambdaODEt} in $s$-variable:
\begin{align}\label{ODE}
\left(\frac{\lambda'}{\lambda}\right)' - \frac32\kappa\left(\frac{\lambda'}{\lambda}\right)^2 = \delta,
\end{align}
where we used the convenient notation $\lambda' := \p_s\lambda$. We remark that the ODE \eqref{lambdaODEt} exactly reflects the asymptotically linear expansion of the domain, an important fact which we summarize in the following technical lemma:
\begin{lem}
    \label{lem:lambdaODE}
    The initial value problem \eqref{lambdaODEt}--\eqref{lambdaODEt_data} admits a unique, smooth solution $\lam(t)$ for all $t \ge 0$. Moreover, there exist constants $\delta_0 > 0, C>0$ such that for any $\delta \in (0,\delta_0)$, the unique solution $\lambda(t)$ to \eqref{lambdaODEt}--\eqref{lambdaODEt_data} can be written in the form
\begin{equation}
    \label{lambdaasymp}
    \lambda(t) = a^\delta(t) + t\bar\lambda^\delta,
\end{equation}
where $\bar\lambda^\delta$ is time independent and
$$
|\bar\lambda^\delta - \lambda_1| \le C\delta,\quad |\p_t^2 a^\delta| \le C\delta (1+t)^{-3\kk-1}.
$$
Furthermore, consider the following time rescaling
$$
\frac{d\tau}{dt}= \frac{1}{\lambda(t)}.
$$
Then the following bounds hold:
\begin{equation}\label{est:lambdaasym}
C^{-1}e^{\bar\lambda^\delta \tau} \le \lambda(\tau) \le C e^{\bar\lambda^\delta \tau},\quad\tau\ge 0,
\end{equation}
and for $\lamt^\delta := \frac{\pt\lam}{\lam}$, we have
\begin{equation}\label{est:ptlamt}
\left|\p_\tau \lamt^\delta\right| \le C\delta e^{-3\kk \bar\lambda^\delta \tau}.
\end{equation}
Finally, $\lim_{\tau \to \infty}\lamt^\delta = \bar\lam^\delta$ and the following estimate holds:
\begin{equation}
    \label{est:difflamt}
    |\lamt^\delta - \bar\lam^\delta| \le \frac{C}{\kk\bar\lam^\delta}\delta e^{-3\kk\lam^\delta \tau}.
\end{equation}
\end{lem}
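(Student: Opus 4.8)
The plan is to treat \eqref{lambdaODEt}--\eqref{lambdaODEt_data} as an autonomous second-order ODE whose nonlinearity $\lambda \mapsto \delta\lambda^{-3\kk-1}$ is smooth (indeed real-analytic) on $\{\lambda>0\}$, so that Cauchy--Lipschitz immediately gives a unique maximal smooth solution on some $[0,T_{\max})$. The three steps will be: (i) derive uniform-in-$\delta$ upper and lower \emph{linear} bounds on $\lambda(t)$, which forces $T_{\max}=\infty$; (ii) integrate $\p_t^2\lambda$ in $t$ to extract the constant $\bar\lam^\delta$ and control $a^\delta:=\lambda-t\bar\lam^\delta$; (iii) transfer these bounds to the logarithmic time $\tau$ defined by $d\tau/dt=\lambda^{-1}$, obtaining \eqref{est:lambdaasym}--\eqref{est:difflamt}.

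For step (i) I would multiply \eqref{lambdaODEt} by $\p_t\lambda$ and integrate to get the energy identity
\[
(\p_t\lambda)^2 = \lambda_1^2 + \tfrac{2\delta}{3\kk}\bigl(\lambda_0^{-3\kk}-\lambda^{-3\kk}\bigr).
\]
Since $\p_t^2\lambda=\delta\lambda^{-3\kk-1}>0$, the quantity $\p_t\lambda$ is strictly increasing, and as $\p_t\lambda(0)=\lambda_1>0$ we obtain $\p_t\lambda\ge\lambda_1>0$; hence $\lambda$ is increasing, $\lambda(t)\ge\lambda_0+\lambda_1 t$, and then $\lambda^{-3\kk}\le\lambda_0^{-3\kk}$, so the energy identity gives $\lambda_1\le\p_t\lambda\le\Lambda_\delta:=(\lambda_1^2+\tfrac{2\delta}{3\kk}\lambda_0^{-3\kk})^{1/2}$ and $\lambda(t)\le\lambda_0+\Lambda_\delta t$. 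Thus $\lambda$ grows at most linearly and stays bounded away from the only singular point of the nonlinearity, so $T_{\max}=\infty$; for $\delta\le\delta_0$ the constant $\Lambda_\delta$ is bounded in terms of $\kk,\lambda_0,\lambda_1$ alone.

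For step (ii), $|\p_t^2\lambda|=\delta\lambda^{-3\kk-1}\le\delta(\lambda_0+\lambda_1 t)^{-3\kk-1}\le C\delta(1+t)^{-3\kk-1}$, which is integrable on $[0,\infty)$ since $3\kk+1>1$; hence $\p_t\lambda(t)$ converges to $\bar\lam^\delta:=\lambda_1+\int_0^\infty\p_t^2\lambda\,dt'$, with $\lambda_1\le\bar\lam^\delta$ and $|\bar\lam^\delta-\lambda_1|\le\int_0^\infty|\p_t^2\lambda|\,dt'\le C\delta$. Setting $a^\delta:=\lambda-t\bar\lam^\delta$ gives $\p_t^2 a^\delta=\p_t^2\lambda$, hence the decay bound asserted in \eqref{lambdaasymp}; note also $\p_t a^\delta=\p_t\lambda-\bar\lam^\delta\le0$, so $a^\delta$ is monotone and bounded. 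For step (iii) I would use $d\tau/dt=\lambda^{-1}$ and the lower bound on $\lambda$ to get $\tau(t)\le\lambda_1^{-1}\log\frac{\lambda_0+\lambda_1 t}{\lambda_0}$, i.e.\ $1+t\gtrsim e^{\lambda_1\tau}$ with implicit constant depending only on $\kk,\lambda_0,\lambda_1$; inserting this into step (ii) yields $|\p_t\lambda-\bar\lam^\delta|\le C\delta e^{-3\kk\lambda_1\tau}$, which is integrable in $\tau$. Since $\frac{d}{d\tau}\log\lambda=\lambda\cdot\frac{\p_t\lambda}{\lambda}=\p_t\lambda$, integrating in $\tau$ gives $|\log\lambda(\tau)-\log\lambda_0-\bar\lam^\delta\tau|\le C\delta$, which is \eqref{est:lambdaasym}. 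Finally $\lamt^\delta=\frac{\pt\lam}{\lam}=\frac{\lambda\,\p_t\lambda}{\lambda}=\p_t\lambda$, so $\p_\tau\lamt^\delta=\lambda\,\p_t^2\lambda=\delta\lambda^{-3\kk}$; feeding in \eqref{est:lambdaasym} gives \eqref{est:ptlamt}, and integrating that bound from $\tau$ to $\infty$ (which also re-expresses the convergence $\lamt^\delta\to\bar\lam^\delta$ from step (ii)) gives \eqref{est:difflamt}.

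I do not expect a genuine obstacle here: the argument is elementary ODE analysis. The only points requiring care are bookkeeping ones — keeping every constant independent of $\delta$ for $\delta\le\delta_0$ (in particular confirming that the implicit constants in $1+t\gtrsim e^{\lambda_1\tau}$ and in $|\p_t^2\lambda|\lesssim\delta(1+t)^{-3\kk-1}$ depend only on $\kk,\lambda_0,\lambda_1$), and respecting the order of the three steps, since the linear bounds feed the convergence of $\p_t\lambda$, which feeds the exponential bound \eqref{est:lambdaasym}, which must be in hand before closing \eqref{est:ptlamt}--\eqref{est:difflamt}.
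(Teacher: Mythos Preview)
Your proposal is correct and, for the two estimates the paper actually proves (\eqref{est:ptlamt} and \eqref{est:difflamt}), follows exactly the same computation: $\p_\tau\lamt^\delta = \lambda\p_t^2\lambda = \delta\lambda^{-3\kk}$ combined with \eqref{est:lambdaasym}, then integration from $\tau$ to $\infty$. The paper simply cites \cite{hadvzic2019class} for the global existence, the decomposition \eqref{lambdaasymp}, and the exponential bound \eqref{est:lambdaasym}, whereas you supply a clean self-contained argument via the energy identity $(\p_t\lambda)^2 = \lambda_1^2 + \tfrac{2\delta}{3\kk}(\lambda_0^{-3\kk}-\lambda^{-3\kk})$ and the intermediate bound $1+t\gtrsim e^{\lambda_1\tau}$ to bootstrap into \eqref{est:lambdaasym}. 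This is a natural and efficient route, and your care about the order of the three steps is exactly right.

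One small aside: your remark that $a^\delta$ is ``monotone and bounded'' is not part of the lemma's assertions, and the boundedness below actually fails for $\kk\le\tfrac13$ (since $|\p_t a^\delta|\lesssim\delta(1+t)^{-3\kk}$ is not integrable in $t$ there). This does not affect anything you use downstream, so you can simply drop the word ``bounded''.
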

\begin{proof}
    All estimates except for \eqref{est:ptlamt} and \eqref{est:difflamt} were proved in \cite[Lemma 2.1]{hadvzic2019class}. To prove \eqref{est:ptlamt}, we note that
    $$
    \p_\tau \left(\frac{\p_\tau\lambda}{\lambda}\right) = \lambda \p_t^2\lambda = \delta\lambda^{-3\kk},
    $$
    where we used the ODE for $\lambda$. Then the estimate follows immediately from \eqref{est:lambdaasym}.

    To prove \eqref{est:difflamt}, we note from the definition of $\tau$ and \eqref{lambdaasymp} that
    $$
    \lamt- \bar\lam = \p_t \lambda- \bar\lam = \p_t a^\delta,
    $$
    which converges to $0$ thanks to the integrable decay bound $|\p_t^2 a^\delta| \le C\delta (1+t)^{-3\kk-1}$ when $\kk > 0$. Moreover,
    $$
    |\lamt^\delta - \bar\lam^\delta| \le \int_\tau^\infty |\p_t^2 a^\delta(t(\tilde\tau))|\lam(\tilde \tau) d\tilde\tau \lesssim \delta \int_\tau^\infty e^{-3\kk\bar\lam^\delta \tilde\tau} d\tilde\tau \lesssim \frac{1}{\kk\bar\lam^\delta}\delta e^{-3\kk\lam^\delta \tau},
    $$
    which concludes the proof of \eqref{est:difflamt}.
\end{proof}
\begin{rmk}
    Since we will perform our analysis after fixing the parameter $\delta$, we will write $\lamt$ and $\bar\lambda$ instead of $\lamt^\delta$ and $\bar\lambda^\delta$ for the rest of this article.
\end{rmk}

With the structural assumptions established above, a straightforward application of chain rule yields the following equations for $(\trho, \tv)$:
\begin{equation}
    \label{mass1}
    \begin{split}
    u^0&\left(\p_s \trho - 3\frac{\lambda'}{\lambda}\trho - \frac{\lambda'}{\lambda}z\p_z \trho\right) + \left( \tv \p_z \trho + \trho(\p_z\tv + \frac{2}{z}\tv) \right)\\
    &= -\lambda^{-3\kappa}\trho^{1+\kappa}(\p_z\tv + \frac{2}{z}\tv)- \lambda^{\frac{3}{2}\kappa + 1}(\lam^{-3\kk}\trho^{1+\kk}+\trho)\p_t u^0,\quad \text{in } \widetilde{\Omega}_s, 
    \end{split}
\end{equation}
where $\widetilde{\Omega}_s = B\left(0, \frac{r(s)}{\lam(s)}\right)$, and 
\begin{equation}
    \label{momentum1}
    \begin{split}
        &\left(1 + \lam^{-3\kk}\trho^\kk\right)\left[\left(\p_s \tv - \frac32\kappa \frac{\lambda'}{\lambda}\tv - \frac{\lambda'}{\lambda}z\p_z\tv\right) + \frac{\tv}{u^0}\p_z\tv\right] + \frac{1+\kappa}{\kappa}u^0\p_z(\trho^\kappa)\\
        &= -\lambda^{-3\kappa}\frac{1+\kappa}{\kk}\tv \left(\p_s (\trho^\kk) - 3\kk\frac{\lambda'}{\lambda}\trho^\kk - \frac{\lambda'}{\lambda}z\p_z (\trho^\kk)\right),\quad \text{in } \widetilde{\Omega}_s. 
    \end{split}
\end{equation}
Note that in \eqref{mass1}, we keep the last term in its original form without expressing it in tilde variables until we introduce the modified velocity in the upcoming section.

\subsection{Modified Velocity}
We now introduce the modified velocity $V$ given by the following relation:
\begin{equation}
    \label{modvel}
    \frac{\tv}{u^0}  = V + \frac{\lambda'}{\lambda}z.
\end{equation}
It turns out that the system \eqref{mass1}--\eqref{momentum1} can be recast into a more tractable form after a rewriting in $(\trho, V)$ variables. To motivate the modified velocity defined in \eqref{modvel}, one should view $V$ as a perturbation around the background (asymptotically) linear expansion afforded by $\frac{\lam'}{\lam}z$. Moreover, the introduction of the weight $u^0$ in \eqref{modvel} is natural in view of the transport structure of the relativistic Euler equations: the material derivative associated with \eqref{eq:RE} is $u^\mu \p_\mu$, which is equivalent to $\p_t + \frac{u^i}{u^0}\p_i$ due to the lower bound $u^0 \ge 1$.

By combining \eqref{u00} and \eqref{modvel}, we obtain the following useful formula for $u^0$: 
\begin{equation}
    \label{u0}
    (u^0)^2(s,z) = \frac{1}{1-\lambda^{-3\kappa}\left(V + \frac{\lambda'}{\lambda}z\right)^2}.
\end{equation}
Hence, we are able to rewrite \eqref{mass1}, \eqref{momentum1} in terms of equations for $\trho, V$. First, we treat the mass equation \eqref{mass1}. By employing the identities 
\begin{align*}
    &\tv \p_z \trho = u^0 \left(V + \frac{\lambda'}{\lambda}z\right)\p_z \trho,\\
    &\trho(\p_z \tv + \frac{2}{z}\tv) = \trho \left(V + \frac{\lambda'}{\lambda}z\right) \p_zu^0 + u^0\trho(\p_z V + \frac{2}{z}V + 3\frac{\lambda'}{\lambda} ),
\end{align*}
and plugging these two identities in \eqref{mass1}, we obtain
\begin{equation}
    \label{mass1.5}
    \begin{split}
    u^0&\left(\p_s \trho  +  V \p_z \trho +\trho(\p_z V + \frac{2}{z}V )\right)\\
    &= -(1+\lam^{-3\kk}\trho^\kk)\trho\left[\left(V + \frac{\lambda'}{\lambda}z\right) \p_zu^0 +\lambda^{\frac{3}{2}\kappa + 1}\p_t u^0\right] - \lam^{-3\kk}\trho^{1+\kk}u^0\left(\p_z V + \frac{2}{z}V + 3\frac{\lam'}{\lam}\right).
    \end{split}
\end{equation}
Next, we derive an important reformulation of the expression in square brackets appearing in the first term on the RHS of \eqref{mass1.5}.
\begin{lem}
\label{lem:u^0 cancellation}
    The following identities hold:
    \begin{align}\label{eq: u^0 cancellation}
        \begin{aligned}
            \left(V + \frac{\lambda'}{\lambda}z\right) \p_zu^0 &+\lambda^{\frac{3}{2}\kappa + 1}\p_t u^0 = \p_s u^0 + V\p_z u^0\\
            &=\lam^{-3\kk}(u^0)^3 \left(V+\frac{\lam'}{\lam}z\right)\left[\p_s V+ V\p_z V + \delta z + \frac{\lam'}{\lam}\left(1-\frac32\kk\right)V\right].
        \end{aligned}
    \end{align}
\end{lem}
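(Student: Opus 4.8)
The plan is to prove the two equalities in \eqref{eq: u^0 cancellation} separately. The first is a pure change-of-variables identity relating $\p_t$ to $\p_s$ and $\p_z$ through the scaling \eqref{defn:scaling}, while the second will follow by differentiating the algebraic relation \eqref{u0} for $(u^0)^2$ along the transport operator $\calL := \p_s + V\p_z$ and inserting the ODE \eqref{ODE}.

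For the first equality I would regard $u^0$ as a function of $(t,r)$ via $s = s(t)$ and $z = r/\lam(t)$ and apply the chain rule. Since $\frac{ds}{dt} = \lam^{-\frac32\kk - 1}$ and $\p_s\lam = \lam^{\frac32\kk + 1}\p_t\lam$, one has $\p_t z|_r = -z\,\frac{\lam'}{\lam}\,\lam^{-\frac32\kk - 1}$, so that $\lam^{\frac32\kk + 1}\p_t u^0 = \p_s u^0 - \frac{\lam'}{\lam}z\,\p_z u^0$. Adding $\big(V + \frac{\lam'}{\lam}z\big)\p_z u^0$ to both sides yields $\big(V + \frac{\lam'}{\lam}z\big)\p_z u^0 + \lam^{\frac32\kk + 1}\p_t u^0 = \p_s u^0 + V\p_z u^0$. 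This step is routine.

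For the second equality I would rewrite \eqref{u0} as $(u^0)^{-2} = 1 - \lam^{-3\kk}\big(V + \frac{\lam'}{\lam}z\big)^2$ and apply $\calL$ to both sides. The left-hand side yields $-2(u^0)^{-3}\calL u^0$; on the right, using that $\lam$ depends only on $s$, one has $\calL(\lam^{-3\kk}) = -3\kk\frac{\lam'}{\lam}\lam^{-3\kk}$, so the right-hand side equals $3\kk\frac{\lam'}{\lam}\lam^{-3\kk}\big(V + \frac{\lam'}{\lam}z\big)^2 - 2\lam^{-3\kk}\big(V + \frac{\lam'}{\lam}z\big)\,\calL\big(V + \frac{\lam'}{\lam}z\big)$. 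The key step is to expand $\calL\big(V + \frac{\lam'}{\lam}z\big)$ using $\calL z = V$ and the ODE \eqref{ODE} in the form $\p_s\frac{\lam'}{\lam} = \delta + \frac32\kk\big(\frac{\lam'}{\lam}\big)^2$, which I expect to give
\[
\calL\Big(V + \tfrac{\lam'}{\lam}z\Big) = \Big(\p_s V + V\p_z V + \delta z + \tfrac{\lam'}{\lam}\big(1 - \tfrac32\kk\big)V\Big) + \tfrac32\kk\tfrac{\lam'}{\lam}\Big(V + \tfrac{\lam'}{\lam}z\Big),
\]
that is, the bracketed factor $\mcalB := \p_s V + V\p_z V + \delta z + \frac{\lam'}{\lam}\big(1 - \frac32\kk\big)V$ appearing in \eqref{eq: u^0 cancellation} plus a multiple of $V + \frac{\lam'}{\lam}z$. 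Substituting this back, the two $\frac32\kk\frac{\lam'}{\lam}\lam^{-3\kk}\big(V + \frac{\lam'}{\lam}z\big)^2$ contributions cancel exactly, leaving $-2(u^0)^{-3}\calL u^0 = -2\lam^{-3\kk}\big(V + \frac{\lam'}{\lam}z\big)\,\mcalB$, which rearranges to $\calL u^0 = \lam^{-3\kk}(u^0)^3\big(V + \frac{\lam'}{\lam}z\big)\,\mcalB$, the claimed identity.

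The only slightly delicate bookkeeping — which I would flag as the main, though modest, obstacle — is keeping careful track of the $\frac32\kk\big(\frac{\lam'}{\lam}\big)^2$ term produced by the ODE \eqref{ODE}: it is precisely this term that recombines with $\frac{\lam'}{\lam}z$ inside the drift $V + \frac{\lam'}{\lam}z$ and then cancels the $3\kk\frac{\lam'}{\lam}\lam^{-3\kk}(\cdot)^2$ contribution coming from $\calL(\lam^{-3\kk})$, which is what makes the right-hand side collapse to the clean form in the statement. Everything else is elementary chain rule and algebra.
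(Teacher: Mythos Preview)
Your proposal is correct and follows essentially the same approach as the paper. The only cosmetic difference is that the paper computes $\p_s u^0$ and $\p_z u^0$ separately from \eqref{u0} (obtaining \eqref{p_z u^0} and \eqref{p_s u^0 pre}) and then adds, whereas you apply the combined transport operator $\calL=\p_s+V\p_z$ to $(u^0)^{-2}$ in one stroke; the underlying algebra and the use of the ODE \eqref{ODE} are identical.
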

\begin{proof}
    We first address the first equality. Since $\frac{ds}{dt} = \lambda^{-\frac32\kappa - 1}$ and $\frac{dz}{dt} = -\frac{\lambda'}{\lambda}\lambda^{-\frac32\kappa - 1}z$, we have
    \begin{align*}
    \lam^{\frac32\kk + 1}\p_t u^0 &= \lam^{\frac32\kk + 1}\left(\lambda^{-\frac32\kappa - 1}\p_s u^0 - \frac{\lambda'}{\lambda}\lambda^{-\frac32\kappa - 1}z\p_z u^0\right)\\
    &= \p_s u^0 -\frac{\lambda'}{\lambda}z\p_z u^0,
    \end{align*}
    and the first equality follows from the identity above.
    
    To address the second equality, by \eqref{u0}, a straightforward computation gives: 
    \begin{align}\label{p_z u^0}
    \p_z u^0 &= (u^0)^3 \lambda^{-3\kappa}\left(V + \frac{\lambda'}{\lambda}z\right)\left(\p_zV + \frac{\lambda'}{\lambda}\right),
\end{align}
and 
\begin{align}\label{p_s u^0 pre}
\begin{aligned}
    \p_s u^0 &= -\frac12 (u^0)^3 \p_s (u^0)^{-2} = \frac12 (u^0)^3 \p_s\left(\lam^{-3\kk}\left(V + \frac{\lam'}{\lam}z\right)^2\right)\\
    &= (u^0)^3\lam^{-3\kk}\left(V + \frac{\lam'}{\lam}z\right)\left[\p_s V - \frac32\kk \frac{\lam'}{\lam} V + \left(\left(\frac{\lam'}{\lam}\right)' - \frac32\kk \left(\frac{\lam'}{\lam}\right)^2\right)z\right]\\
    &= (u^0)^3\lam^{-3\kk}\left(V + \frac{\lam'}{\lam}z\right)\left[\p_s V - \frac32\kk \frac{\lam'}{\lam} V + \delta z\right],
    \end{aligned}
\end{align}
where the final equality follows from \eqref{ODE}. Then the second equality in \eqref{eq: u^0 cancellation} is proved after we combine \eqref{p_z u^0} and \eqref{p_s u^0 pre}.
\end{proof}

\noindent Combining \eqref{eq: u^0 cancellation} and \eqref{mass1.5}, we obtain, after dividing $u^0$ on both sides, that
\begin{equation}
    \label{mass2}
    \begin{split}
        &\p_s\trho + V\p_z\trho + \trho (\p_z V + \frac{2}{z}V)\\
        &= -(1+\lam^{-3\kk}\trho^\kk)\trho \frac{\p_s u^0 + V\p_z u^0}{u^0}-\lam^{-3\kk}\trho^{1+\kk}\left(\p_z V + \frac{2}{z}V + 3\frac{\lam'}{\lam}\right).
    \end{split}
\end{equation}

Next, we rewrite the momentum equation \eqref{momentum1}. 
We begin by treating terms in the square brackets appearing on the LHS of \eqref{momentum1}. In light of \eqref{modvel} and \eqref{ODE}, we have
\begin{align*}
     \left(\p_s \tv - \frac32\kappa \frac{\lambda'}{\lambda}\tv - \frac{\lambda'}{\lambda}z\p_z\tv\right) &+ \frac{\tv}{u^0}\p_z\tv = u^0\left(\p_s V + (1-\frac32\kappa)\frac{\lambda'}{\lambda}V + V\p_z V + \delta z\right)\\
     &\quad + (V + \frac{\lambda'}{\lambda}z)(\p_s u^0 - \frac{\lambda'}{\lambda}z\p_z u^0 + (V + \frac{\lambda'}{\lambda}z)\p_zu^0)\\
     &= u^0\left(\p_s V + (1-\frac32\kappa)\frac{\lambda'}{\lambda}V + V\p_z V + \delta z\right) + (V + \frac{\lambda'}{\lambda}z)(\p_s u^0 + V\p_z u^0).
\end{align*}
By plugging this identity into \eqref{momentum1} and using \eqref{modvel} again and then dividing $u^0$ on both sides, we arrive at
\begin{equation}
    \label{momentum2}
    \begin{split}
    &\left(1+\lambda^{-3\kk}\trho^\kk\right)\bigg[\left(\p_s V + (1-\frac32\kappa)\frac{\lambda'}{\lambda}V + V\p_z V + \delta z\right) + \frac{1}{u^0}(V + \frac{\lambda'}{\lambda}z)(\p_s u^0 + V\p_z u^0)\bigg] + \frac{1+\kappa}{\kappa}\p_z (\trho^\kappa)\\
        &= -\lambda^{-3\kappa}\frac{1+\kappa}{\kk}(V+\frac{\lambda'}{\lambda}z) \left(\p_s (\trho^\kk) - 3\kk\frac{\lambda'}{\lambda}\trho^\kk - \frac{\lambda'}{\lambda}z\p_z (\trho^\kk)\right).
    \end{split}
\end{equation}
\begin{rmk}\label{rmk: u^0 cancellation}
    We note that in \eqref{mass2} and \eqref{momentum2}, the derivatives of $u^0$ arise in a very specific way; namely, only the material derivative induced by the modified velocity $V$ emerges. We will crucially take advantage of this observation in the Lagrangian formulation in the section afterwards.
\end{rmk}

\subsection{Lagrangian Formulation}
Our next aim is to express \eqref{mass2} and \eqref{momentum2} in suitable Lagrangian coordinates, thereby enabling us to formulate these equations on a fixed compact domain. More importantly, in spherical symmetry we can further exploit the structure of these equations (e.g., Remark \ref{rmk: u^0 cancellation}), which ultimately yields a non-homogeneous degenerate quasilinear wave equation for the Lagrangian flow map. 
Under the spherically symmetric setting, we introduce the Lagrangian flow map associated with $V$:
\begin{align}
\label{E:d_eta_d_s}
\frac{d\eta(s,\zeta)}{ds} = V(s,\eta(s,\zeta)), \quad\eta(0,\zeta) = \eta_0(\zeta),    
\end{align}
where $\eta_0:[0,1] \mapsto [0,1]$ is an orientation-preserving diffeomorphism (so in particular $\eta_0(0)=0$) 
to be determined. 
Also, we denote by
\begin{align}
\label{E:f_and_U_of_s}    
f(s,\zeta) = \trho\circ \eta(s,\zeta),\quad U(s,\ze) = V \circ \eta(s,\zeta), \quad U^0(s,\zeta) = u^0\circ \eta(s,\zeta),
\end{align}
respectively the Lagrangian variables of $\trho$, $V$, and $u^0$. Importantly, we remark that
\begin{equation}
    \label{materialU0}
    \p_s U^0 = (\p_s u^0 + V\p_z u^0) \circ \eta (s,\zeta).
\end{equation}
\begin{rmk}
    Thanks to our choice $r(0)=\lam(0)$, we have $\widetilde{\Omega}_s|_{s=0} = B(0,1)$. This implies that $\eta_0$ is a diffeomorphism that maps $[0,1]$ to itself. 
\end{rmk}

\subsubsection{Momentum Equation in Lagrangian Coordinates}
We begin with deriving the Lagrangian formulation for the momentum equation \eqref{momentum2}. By applying the identities \eqref{materialU0} and
\begin{equation}
    \label{U0}
    (U^0)^{2} = \frac{1}{1- \lambda^{-3\kappa}\left(U + \frac{\lambda'}{\lambda}\eta\right)^2} = 1 + \lambda^{-3\kappa}(U^0)^2\left(U + \frac{\lambda'}{\lambda}\eta\right)^2,
\end{equation}
which follows from \eqref{modvel} and \eqref{u0}, to \eqref{momentum2}, we obtain
\begin{equation}
    \label{momentumLag0}
    \begin{split}
    \left(1+ \lam^{-3\kk}f^\kk\right)\bigg[\p_s^2 \eta &+ (1-\frac32\kappa )\frac{\lambda'}{\lambda}\p_s\eta + \delta \eta + \frac{U + \frac{\lambda'}{\lambda}\eta}{U^0}\p_s U^0\bigg] + \frac{1+\kappa}{\kappa}\frac{1}{\p_\zeta \eta}\p_\zeta (f^\kappa)\\
    &= -\frac{1+\kappa}{\kk}\lambda^{-3\kappa}(U + \frac{\lambda'}{\lambda}\eta)(\p_s f^\kk - \frac{U}{\p_\zeta\eta}\p_\zeta f^\kk - 3\kk\frac{\lambda'}{\lambda}f^\kk - \frac{\lambda'}{\lambda}\eta \frac{\p_\zeta f^\kk}{\p_\zeta \eta})\\
    &= \frac{1+\kk}{\kk}\lambda^{-3\kk}(U + \frac{\lambda'}{\lambda}\eta)^2\frac{\p_\zeta f^\kk}{\p_\zeta \eta} - \frac{1+\kk}{\kk}\lambda^{-3\kappa}(U + \frac{\lambda'}{\lambda}\eta)(\p_s f^\kk  - 3\kk\frac{\lambda'}{\lambda}f^\kk).
    \end{split}
\end{equation}
In view of \eqref{materialU0} and \eqref{eq: u^0 cancellation}, we further note that
\begin{align*}
\p_s U^0 = \lambda^{-3\kappa}(U^0)^3 (U+\frac{\lambda'}{\lambda}\eta)\left[\p_s U + (1-\frac32\kappa)\frac{\lambda'}{\lambda}U + \delta\eta\right].
\end{align*}
Then, 
$$
\frac{U + \frac{\lambda'}{\lambda}\eta}{U^0}\p_s U^0 = \lambda^{-3\kappa}(U^0)^2 (U+\frac{\lambda'}{\lambda}\eta)^2\left[\p_s U + (1-\frac32\kappa)\frac{\lambda'}{\lambda}U + \delta\eta\right].
$$
Thanks to this identity, the terms in the square brackets on the LHS of \eqref{momentumLag0} become
\begin{align*}
     \p_s^2 \eta + (1-\frac32\kappa )\frac{\lambda'}{\lambda}\p_s\eta + \delta \eta + \frac{U + \frac{\lambda'}{\lambda}\eta}{U^0}\p_s U^0  &= \left(1+\lambda^{-3\kappa}(U^0)^2 (U+\frac{\lambda'}{\lambda}\eta)^2\right)\left[\p_s U + (1-\frac32\kappa)\frac{\lambda'}{\lambda}U + \delta\eta\right]\\
     &= (U^0)^2\left[\p_s U + (1-\frac32\kappa)\frac{\lambda'}{\lambda}U + \delta\eta\right],
\end{align*}
where \eqref{U0} is applied to obtain the final equality. Now, by moving the first term on the RHS of \eqref{momentumLag0} to the LHS and then using \eqref{U0} again, we arrive at 
\begin{equation}
    \label{momentumLag}
    \begin{split}
    &\left(1+\lam^{-3\kk}f^\kk\right)\left[\p_s U + (1-\frac32\kappa)\frac{\lambda'}{\lambda}U + \delta\eta\right] + \frac{1+\kappa}{\kappa}(U^0)^{-4}\frac{1}{\p_\zeta \eta}\p_\zeta (f^\kappa)\\
    &= - \frac{1+\kk}{\kk}\lambda^{-3\kappa}(U^0)^{-2}(U + \frac{\lambda'}{\lambda}\eta)(\p_s f^\kk  - 3\kk\frac{\lambda'}{\lambda}f^\kk).
    \end{split}
\end{equation}

\subsubsection{Mass Equation in Lagrangian Coordinates} \label{subsect:barG}
Next, we derive the Lagrangian formulation for the mass equation \eqref{mass2}. Denoting by
\begin{align}
    \label{E:Jac_det_def}
    \calF(s,\zeta):=\det(D\bar\eta), \quad \text{where }\bar\eta(s,y) = \eta(s,\zeta)\frac{y^i}{\zeta},
\end{align}
and since $$(D\bar\eta)_{ij} = \p_i \left(\eta(s,\zeta)\frac{y_j}{\zeta}\right)=\frac{\eta}{\zeta}\delta_{ij} + (\p_\zeta \eta-\frac{\eta}{\zeta})\frac{y_iy_j}{\zeta^2},
$$
we obtain, from a direct computation, that
$$
\calF(s,\zeta) := \left(\frac{\eta(s,\zeta)}{\zeta}\right)^2 \p_\zeta \eta(s,\zeta).
$$
Moreover, 
\begin{equation}\label{dsF}
\p_s \calF = \left(\frac{\eta}{\zeta}\right)^2\p_\zeta \eta \left(\frac{\p_\zeta U}{\p_\zeta \eta} + \frac{2U}{\eta}\right) = \calF \left(\frac{\p_\zeta U}{\p_\zeta \eta} + \frac{2U}{\eta}\right).
\end{equation}
Using the computations above, we note that also using \eqref{mass2}:
\begin{equation}\label{mass2.5}
\begin{split}
    \p_s (f\calF) &= \calF(\p_s \trho(s,\eta) + V(s,\eta)\p_z \trho(s,\eta)) + f\p_s\calF\\
    &= \underbrace{-\calF f(\p_z V(s,\eta) + \frac{2V(s,\eta)}{\eta}) + f\calF \left(\frac{\p_\zeta U}{\p_\zeta \eta} + \frac{2U}{\eta}\right)}_{=0}\\
    &\quad -(f\calF)\bigg[(1+\lam^{-3\kk} f^\kk)\frac{\p_s U^0}{U^0}+ \lam^{-3\kk} f^\kk \left(\frac{\pz U}{\pz \eta} + \frac{2}{\eta}U + 3\frac{\lam'}{\lam}\right) \bigg].
    \end{split}
\end{equation}

Using \eqref{dsF} and the following elementary identity
\begin{equation}
    \label{eq:logU0}
    -\frac12\p_s\log(U^0)^{-2} = \frac{\p_s U^0}{U^0},
\end{equation}
from \eqref{mass2.5} we obtain the following equation for $f\calF$:
\begin{equation}
    \label{massLag}
    \begin{split}
    \p_s(f\calF) &= -(f\calF)\left[-\frac{1}{2}(1+\lambda^{-3\kk}f^\kk)\p_s\log\left(U^0\right)^{-2} + \lambda^{-3\kk}f^\kk (\calF^{-1}\p_s \calF + 3\frac{\lambda'}{\lambda})\right].
    \end{split}
\end{equation}

Summing up, we have deduced the Lagrangian formulation for the relativistic Euler equations:
    \begin{subequations}\label{eq:releulerLag}
    \begin{align}
            &\p_s(f\calF) =-(f\calF)\left[-\frac{1}{2}(1+\lambda^{-3\kk}f^\kk)\p_s\log\left(U^0\right)^{-2} + \lambda^{-3\kk}f^\kk (\calF^{-1}\p_s \calF + 3\frac{\lambda'}{\lambda})\right],\label{eq:releulerLagmass}
            \end{align}
            \begin{align}
    &\left(1+\lam^{-3\kk}f^\kk\right)\left[\p_s U + (1-\frac32\kappa)\frac{\lambda'}{\lambda}U + \delta\eta\right] + \frac{1+\kappa}{\kappa}(U^0)^{-4}\frac{1}{\p_\zeta \eta}\p_\zeta (f^\kappa)\nonumber\\
    &\quad= - \frac{1+\kk}{\kk}\lambda^{-3\kappa}(U^0)^{-2}(U + \frac{\lambda'}{\lambda}\eta)(\p_s f^\kk  - 3\kk\frac{\lambda'}{\lambda}f^\kk).\label{eq:releulerLagmom}
    \end{align}
    \end{subequations}
\subsection{Admissible Weight and Enthalpy Profile}\label{subsect:enthalpy}
A crucial difficulty in the study of free-boundary fluids with physical vacuum boundary in spherical symmetry consists of both the degeneracy of the fluid density at the free interface and the coordinate singularity. To correctly keep track of such degeneracies, we define the following class of admissible weight functions:
\begin{defn}\label{def: w}
    A smooth function $w: [0,1] \to \R^+$ is said to be an admissible weight function if $w$ possesses the following properties:
    \begin{enumerate}
        \item (Absence of interior vacuum) $w > 0$ on $[0,1)$  and $w(1) = 0$;
        \item (Physical vacuum boundary) $-\infty <w'(1) < 0$. Or equivalently, there exists a universal constant $C > 0$ such that
        $$
        \frac1C |r-1| \le w(r) \le C|r-1|,
        $$
        when $r$ is sufficiently close to $1$;
        \item (Regularity at the origin) $w^{(2k+1)}(0) = 0$ for all nonnegative integers $k$.
    \end{enumerate}
    Given $\delta > 0$ and an admissible weight $w$, we also define the 1-parameter family $w_\delta = \delta w$.
\end{defn}
\begin{rmk}
    Given $\delta > 0$, the 1-parameter family $w_\delta$ also serves as the approximate enthalpy profile for the gas. We stress that the degree of freedom in choosing $\delta$ to be sufficiently small is indispensable for our analysis. This manifests that our result concerns the small density/sound speed regime.
\end{rmk}

\subsection{Equation for Perturbation Variables}
To facilitate our nonlinear stability analysis, we define the Lagrangian perturbation 
\begin{align}
    \label{E:Theta_def}
    \Theta = \eta - \zeta.
\end{align}
We further introduce another convenient rescaled time $\tau$ given by:
\begin{align}
\label{E:d_tau_d_s}
\frac{d\tau}{ds} = \lambda^{\frac32\kk}, \, \tau(s=0)=0
\quad \text{ or equivalently }\frac{d\tau}{dt} = \lam^{-1}.    
\end{align}

\subsubsection{Mass Equation in a Good Unknown}
To start with, we further rewrite \eqref{eq:releulerLagmass}. To motivate the necessity of this rewriting, we emphasize an outstanding difficulty of the relativistic Euler equations: in contrast with the classical compressible Euler equations, where the quantity $f\calF$ is invariant in time due to the conservation of mass, the evolution of this quantity is more involved for the relativistic Euler equations exactly due to a lack of such conservation. As a result, the evolution of $f\calF$ described by \eqref{eq:releulerLagmass} is a highly nonlinear equation that couples to \eqref{eq:releulerLagmom} to the top order, which is manifested by the term $\pt \log (U^0)^{-2} \sim \pt^2 \Theta$.

To remedy this issue, we first perform a gauge fixing as in the classical compressible Euler case. Secondly, we introduce a structural ansatz for $f$ which effectively keeps track of the forcing terms appearing in \eqref{eq:releulerLagmass}, and reduce the mass equation to an equivalent evolution equation for the relativistic corrector $\calG$ (cf. \eqref{eq:G0}). In the final step, we further introduce a good unknown $\barcalG$. See \eqref{barcalG}. Remarkably, the evolution equation for $\barcalG$ can be viewed as a linear ODE in $\tau$, from which we can explicitly solve $\barcalG$ in terms of other fundamental quantities of the relativistic Euler equations in Lagrangian coordinates. 

\medskip
\noindent\textbf{Step 1} (Gauge fixing).
We fix the gauge by picking $\eta_0$ so that (recall \eqref{E:Jac_det_def})

\begin{align*}
    \left(\frac{\eta_0(\zeta)}{\zeta}\right)^2 \p_\zeta \eta_0(\zeta) \trho(0, \eta_0(\zeta) ) = (\delta w(\zeta))^\frac{1}{\kappa},
\end{align*}
or equivalently
\begin{equation}\label{gauge fixing}
\calF_0 f_0 = (w_{\delta})^{\frac1\kk}, 
\end{equation}
where $w_{\delta}$ is given by Definiton \ref{def: w}, $\calF_0 = \calF|_{s=0}$, and $f_0 = \trho\circ \eta_0$. The existence of such $\eta_0$ follows from Dacorogna--Moser \cite{dacorogna1990partial}. 

\medskip
\noindent\textbf{Step 2} (Choice of ansatz). We introduce the following ansatz 
\begin{align}\label{f ansatz}
f(\tau,\ze) = w_{\delta}^{\frac1\kk}(\ze)\calF^{-1}(\tau, \ze)\calG(\tau, \ze), \quad \text{with }\calG(0,\zeta) = 1.
\end{align}
Here, the quantity $\calG$ is the so-called \textit{relativistic correction}, which plays a crucial role in the rest of this article. 
Thanks to \eqref{f ansatz}, we can equivalently write the mass equation \eqref{eq:releulerLagmass} in $(\tau,\ze)$ coordinates as an evolution equation for $\calG$:
\begin{equation}
    \label{eq:G0}
    \begin{split}
    \p_\tau \calG &= \calG\left(\frac12 \p_\tau \log (U^0)^{-2}\right) - \delta\calG^{1+\kk} \lambda^{-3\kk} w \calF^{-\kk}\left[-\frac12 \p_\tau \log (U^0)^{-2} + \calF^{-1}\p_\tau \calF + 3\lamt\right].
    \end{split}
\end{equation}

\medskip
\noindent\textbf{Step 3} (New variables). 
Define the following change of variable:
\begin{equation}\label{defn:barG}
\bar\calG := \frac{1}{\kk}\calG^{-\kk}.
\end{equation}
Then we note that
$$
\p_\tau \bar\calG = -\calG^{-1-\kk}\p_\tau \calG,
$$
and \eqref{eq:G0} can be equivalently written as
\begin{equation}
    \label{eq:barG}
    \begin{split}
    &\p_\tau \bar\calG = \left(-\frac\kk2 \p_\tau \log (U^0)^{-2}\right)\bar\calG + \delta \lambda^{-3\kk} w \calF^{-\kk}\left[-\frac12 \p_\tau \log (U^0)^{-2} + \calF^{-1}\p_\tau \calF + 3\lamt\right],\\
    &\bar\calG(0,\zeta) = \frac1\kk.
    \end{split}
\end{equation}

Multiplying \eqref{eq:barG} on both sides by $(U^0)^{-\kk}$, we have
$$
\frac{d}{d\tau}\left((U^0)^{-\kk}\bar\calG\right) = \delta (U^0)^{-\kk} \lambda^{-3\kk} w \calF^{-\kk}\left[-\frac12 \p_\tau \log (U^0)^{-2} + \calF^{-1}\p_\tau \calF + 3\lamt\right],
$$
Integrating in $\tau$ yields the following formula:
\begin{equation}
    \label{barcalG}
    \barcalG = (U^0)^{\kk}\left(\frac1\kk (U^0)^{-\kk}(0) + \delta\int_0^\tau (U^0)^{-\kk} \lambda^{-3\kk} w \calF^{-\kk}\left[-\frac12 \p_\tau \log (U^0)^{-2} + \calF^{-1}\p_\tau \calF + 3\lamt\right] d\tilde\tau\right)
\end{equation}
where we used that $\bar\calG(0,\zeta) = \frac1\kk$. 

To proceed, we note the following fact:
$$
(U^0)^{-\kk} \frac12\p_\tau\log(U^0)^{-2} = \frac1\kk \p_\tau (U^0)^{-\kk}.
$$
Then
\begin{equation}\label{barGcancel1}
    -\int_0^\tau (U^0)^{-\kk} \lambda^{-3\kk} w \calF^{-\kk}\left[\frac12 \p_\tau \log (U^0)^{-2}\right] d\tilde\tau = -\frac w\kk \int_0^\tau \p_\tau (U^0)^{-\kk} \lambda^{-3\kk}\calF^{-\kk} d\tilde\tau.
\end{equation}
On the other hand, it is straightforward to check that
$$
\lambda^{-3\kk}\left(\calF^{-\kk-1} \p_\tau \calF + 3\lamt \calF^{-\kk}\right) = -\frac1\kk \p_\tau \left(\lambda^{-3\kk}\calF^{-\kk}\right).
$$
Then after an integration by parts in $\tau$, we have:
\begin{equation}\label{barGcancel2}
\begin{split}
    \int_0^\tau (U^0)^{-\kk} \lambda^{-3\kk} w \calF^{-\kk}&\left[\calF^{-1}\p_\tau \calF + 3\lamt\right] d\tilde\tau = w\int_0^\tau (U^0)^{-\kk} \left(-\frac1\kk \p_\tau \left(\lambda^{-3\kk}\calF^{-\kk}\right)\right) d\tilde\tau\\
    &= -\frac{w}{\kk}\left[(U^0)^{-\kk} \lambda^{-3\kk}\calF^{-\kk}(\tau) - (U^0)^{-\kk} \lambda^{-3\kk}\calF^{-\kk}(0)\right]\\
    &\quad + \frac{w}{\kk}\int_0^\tau \lambda^{-3\kk} \p_\tau ((U^0)^{-\kk}) \calF^{-\kk} d\tilde\tau.
    \end{split}
\end{equation}
Combining \eqref{barcalG}, \eqref{barGcancel1}, and \eqref{barGcancel2}, we get the following simple expression for $\barcalG$:\begin{equation}
    \label{eq:barcalG}
    \begin{split}
    \barcalG &= (U^0)^{\kk}\left(\frac1\kk (U^0)^{-\kk}(0) - \frac{\delta w}{\kk}\left[(U^0)^{-\kk} \lambda^{-3\kk}\calF^{-\kk}(\tau) - (U^0)^{-\kk} \lambda^{-3\kk}\calF^{-\kk}(0)\right]\right)\\
    &= (U^0)^{\kk}\left(\frac1\kk (U^0)^{-\kk}(0)(1+ \delta\lambda^{-3\kk}(0)w\calF^{-\kk}(0)) \right) - \frac{\delta}{\kk} \lambda^{-3\kk} w\calF^{-\kk}.
    \end{split}
\end{equation}
Differentiating in $\tau$, we have
\begin{equation}\label{eq:dtbarcalG}
\begin{split}
\kk \p_\tau \barcalG &= \pt (U^0)^{\kk}\left((U^0)^{-\kk}(0)(1+ \delta\lambda^{-3\kk}(0)w\calF^{-\kk}(0)) \right) - \delta w\pt(\lam^{-3\kk}\calF^{-\kk})\\
&= -\frac{\kk}{2}\pt (U^0)^{-2} \cdot (U^0)^{\kk+2}\left(\frac1\kk (U^0)^{-\kk}(0)(1+ \delta\lambda^{-3\kk}(0)w\calF^{-\kk}(0)) \right)\\
&\quad + \kk\delta w \lam^{-3\kk} \calF^{-\kk-1}\pt\calF + 3\kk \lamt \delta w\lam^{-3\kk}\calF^{-\kk}.
\end{split}
\end{equation}
The explicit formulae \eqref{eq:barcalG} and \eqref{eq:dtbarcalG} will be instrumental when we perform nonlinear energy estimates.

\subsubsection{Equation for Lagrangian Perturbation $\Theta$}
In this section, we rewrite \eqref{eq:releulerLagmom} into an equation for the Lagrangian perturbation $\Theta$. Observe that
\begin{align}
    \label{E:U_0_Theta_and_bar_lambda}
    U = \p_s\eta = \lambda^{\frac32\kk} \p_\tau \Theta,\quad \p_s U  = \lambda^{3\kk}(\p_\tau^2 \Theta + \frac32\kk \lamt \p_\tau \Theta),\quad (U^0)^{-2} = 1 - (\p_\tau \Theta + \lamt (\Theta + \ze))^2
\end{align}
and recall that (cf. Lemma \ref{lem:lambdaODE})
$$
\lamt:= \frac{\p_\tau \lambda}{\lambda} = \lambda^{-\frac32\kk}\frac{\lambda'}{\lambda}.
$$
We then rewrite \eqref{eq:releulerLagmom} as:
\begin{equation}\label{momlagaux1}
\begin{split}
\left(1+\lam^{-3\kk}f^\kk\right)\bigg[\lambda^{3\kk}(\p_\tau^2 \Theta + \lamt \p_\tau \Theta) &+ \delta (\Theta + \ze)\bigg] +\frac{1+\kappa}{\kappa}(U^0)^{-4}\frac{1}{\p_\zeta \eta}\p_\zeta (f^\kappa)\\
&= -\frac{1+\kk}{\kk} (U^0)^{-2}(\p_\tau \Theta + \lamt(\Theta + \zeta))(\p_\tau f^\kk - 3\kk\lamt f^\kk)\\
&= -\frac{1+\kk}{\kk} (U^0)^{-2}(\p_\tau \Theta + \lamt(\Theta + \zeta))(\p_\tau f^\kk )\\
&\quad + 3(\kk+1) \lamt (U^0)^{-2}(\p_\tau \Theta + \lamt(\Theta + \zeta)) f^\kk.
\end{split}
\end{equation}
From the definition of $\calF$, i.e., $\calF = (\eta/\zeta)^2\p_\zeta \eta$, we have $\calF^{-1}=(1+\Theta/\zeta)^{-2}\frac{1}{\p_\zeta \eta}$, so that
\begin{align*}
   \frac{1+\kappa}{\kappa} w^{\frac1\kappa} \frac{1}{\p_\zeta \eta} \p_\zeta (w\calF^{-\kappa}) &= \frac{1+\kappa}{\kappa} \frac{1}{\p_\zeta \eta} w^{\frac1\kappa}\p_\zeta w \calF^{-\kappa} - (1+\kappa) w^{1+\frac1\kappa}\frac{1}{\p_\zeta \eta} \calF^{-\kappa - 1}\p_\zeta \calF\\
    &={\frac{1}{\p_\zeta\eta}\left(\p_{\zeta}w^{1+\frac1\kappa}\calF^{-\kappa}-(1+\kappa)w^{1+\frac1\kappa} \calF^{-\kappa-1}\p_\zeta\calF\right)}\\
    &={\left(1+\frac\Theta\zeta\right)^2\left(\p_\zeta w^{1+\frac1\kappa}\calF^{-\kappa-1}-(1+\kappa)w^{1+\frac1\kappa}\calF^{-\kappa-2}\p_\zeta\calF \right)}\\
    &=\left(1+\frac\Theta\zeta\right)^2\p_\zeta \left(w^{1+\frac1\kappa} \calF^{-1-\kappa}\right).
\end{align*}
Using the computations above, we may rewrite the pressure gradient term as follows:
\begin{align*}
    \frac{1+\kk}{\kk}(U^0)^{-4} &\frac{1}{\pz \eta} \pz f^\kk =  \delta\frac{1+\kk}{\kk}(U^0)^{-4} \frac{1}{\pz \eta} \pz (w\calF^{-\kk}\calG^\kk)\\
    &= \delta(U^0)^{-4}\left(\frac{1+\kk}{\kk} \frac{1}{\pz \eta} \pz (w\calF^{-\kk})\calG^\kk + \frac{1+\kk}{\kk} \frac{1}{\pz \eta} \pz(\calG^\kk) w\calF^{-\kk}\right)\\
    &= \delta(U^0)^{-4}\left(1+\frac\Theta\zeta\right)^2\bigg(\frac{1}{w^{\frac1\kk}}\p_\zeta \left(w^{1+\frac1\kappa} \calF^{-1-\kappa}\right)\\
    &\quad + \frac{1}{w^{\frac1\kk}}\p_\zeta \left(w^{1+\frac1\kappa} \calF^{-1-\kappa}\right)(\calG^\kk - 1)
    + \frac{1+\kk}{\kk} \calF^{-\kk-1}w\pz(\calG^\kk)\bigg).
\end{align*}

We then deduce that
\begin{align*}
    \delta \ze + \frac{1+\kk}{\kk}(U^0)^{-4} &\frac{1}{\pz \eta} \pz f^\kk =\delta(U^0)^{-4}\left(1+\frac\Theta\zeta\right)^2\bigg(\frac{1}{w^{\frac1\kk}}\p_\zeta \left(w^{1+\frac1\kappa} (\calF^{-1-\kappa}-1)\right)\\
    &\quad +\frac{1}{w^{\frac1\kk}}\p_\zeta \left(w^{1+\frac1\kappa} (\calF^{-1-\kappa}-1)\right)(\calG^\kk - 1) + \frac{1}{w^{\frac1\kk}}\pz (w^{\frac1\kk+1})\calG^\kk\\
    &\quad + \frac{1+\kk}{\kk} \calF^{-\kk-1}w\pz(\calG^\kk)\bigg) + \delta\ze
\end{align*}
With the above computation, the momentum equation can be written as
\begin{equation}
    \label{eq:momlagmain0}
    \begin{split}
    &\left(1+\delta \lam^{-3\kk}w\calF^{-\kk}\calG^\kk\right)\left[\delta^{-1}\lambda^{3\kk}(\p_\tau^2 \Theta + \lamt \p_\tau \Theta) + \Theta\right]\\
    &+ (U^0)^{-4}\left(1+\frac\Theta\zeta\right)^2\frac{1}{w^{\frac1\kk}}\p_\zeta \left(w^{1+\frac1\kappa} (\calF^{-1-\kappa}-1)\right) = -\sum_{j=1}^6 E_j,
    \end{split}
\end{equation}
where
\begin{equation}
    \label{mainerror}
    \begin{split}
    &E_1= (U^0)^{-4}\left(1+\frac\Theta\zeta\right)^2\frac{1}{w^{\frac1\kk}}\p_\zeta \left(w^{1+\frac1\kappa} (\calF^{-1-\kappa}-1)\right)(\calG^\kk - 1),\\
    &E_2= (U^0)^{-4}\left(1+\frac\Theta\zeta\right)^2\frac{1}{w^{\frac1\kk}}\pz (w^{\frac1\kk+1})\calG^\kk,\\
    &E_3 = \frac{1+\kk}{\kk}(U^0)^{-4}\left(1+\frac\Theta\zeta\right)^2 \calF^{-\kk-1}w\pz(\calG^\kk),\\
    &E_4 =\frac{1+\kk}{\kk} (U^0)^{-2}(\p_\tau \Theta + \lamt(\Theta + \zeta))(w\p_\tau (\calF^{-\kk}\calG^\kk)),\\
    &E_5 = -3(1+\kk) \lamt (U^0)^{-2}(\p_\tau \Theta + \lamt(\Theta + \zeta)) w\calF^{-\kk}\calG^\kk,\\
    &E_6 = -(1+\delta\lambda^{-3\kk}w\calF^{-\kk}\calG^\kk)\ze.
\end{split}
\end{equation}

\subsubsection{Main Perturbation Equations around an Expanding Background}
Summarizing the analysis of the above subsections, we record our main system of equations to study in this work:
\begin{equation}
\boxed{
    \label{eq:momlagmain}
    \begin{split}
    &\left(1+\delta \lam^{-3\kk}w\calF^{-\kk}\calG^\kk\right)\left[\delta^{-1}\lambda^{3\kk}(\p_\tau^2 \Theta + \lamt \p_\tau \Theta) + \Theta\right]\\
    &+ (U^0)^{-4}\left(1+\frac\Theta\zeta\right)^2\frac{1}{w^{\frac1\kk}}\p_\zeta \left(w^{1+\frac1\kappa} (\calF^{-1-\kappa}-1)\right) = -\sum_{j=1}^6 E_j,
    \end{split}
    }
\end{equation}
equipped with initial conditions
\begin{equation}
    \label{mainID}
    \Theta(0,\ze) = \Theta_0(\ze),\quad \pt\Theta(0,\zeta) = U_0(\ze),\quad \zeta \in [0,1].
\end{equation}
Here, $E_j$, $j = 1,\hdots, 6$ are defined by \eqref{mainerror}. Furthermore, $\calG = \left(\frac{1}{\kk}\barcalG^{-1}\right)^{\frac1\kk}$, and $\barcalG$ verifies \eqref{eq:barcalG}.

\section{Functional Setup}\label{sect: functional}
As we will see in later sections, the main equation \eqref{eq:momlagmain} manifests itself as a quasilinear wave equation with degeneracies in the spatial variable $\ze$ localized at $\ze = 0, 1$. To resolve such degeneracies and in view of performing high-order energy estimates for a quasilinear wave equation, we adopt a robust vector field method first introduced in Guo-Had{\v{z}}i{\'c}-Jang \cite{guo2021continued} as well as the classical weighted framework developed in Jang-Masmoudi \cite{jang2015well}.

To start with, we define
\begin{align}
    \label{E:Primitive_weighted_derivative}
    \Dz = \frac{1}{\ze^2}\pz (\ze^2\cdot) = \pz + \frac{2}{\ze}. 
\end{align}
We then define the differential operators formed by a concatenation of $\pz$ and $\Dz$:
\begin{equation}
    \label{calDj}
    \calD_j = \begin{cases}
        (\dz\Dz)^{j/2},& j\text{ is even},\\
        \Dz(\dz\Dz)^{\frac{j-1}{2}},& j\text{ is odd},
    \end{cases}
\end{equation}
with $\calD_0 = 1$. We also define auxiliary operators
\begin{equation}
    \label{barcalDj}
    \barcalD_j = \begin{cases}
        \calD_0,& j=0\\
        \calD_{j-1}\dz,& j \ge 1
    \end{cases}
\end{equation}

With the definition of good derivatives \eqref{calDj},  we introduce the following weighted norm:
\begin{defn}\label{def: inner product}
    Given $i \in \N$, $\kk > 0$, and $w$ an admissible weight (cf. Definition \ref{def: w}), we define the spaces $L^2_{\kk,i}$ as the completion of $C^\infty_c(0,1)$ by the norm $\|\cdot\|_{\kk,i}$ afforded by the following inner product:
    $$
    (f,g)_{\kk,i} := \int_0^1 w^{\frac1\kk + i}\ze^2 f(\ze)g(\ze)d\ze.
    $$
\end{defn}
\begin{rmk}
    Since we fix the value of $\kk > 0$ for the rest of this work, we will drop the $\kk$ dependence of the above notations. Namely, we will write $L^2_i$ instead of $L^2_{\kk,i}$, $\|\cdot\|_{i}$ instead of $\|\cdot\|_{\kk,i}$, and $(\cdot,\cdot)_{i}$ instead of $(\cdot,\cdot)_{\kk, i}$.
\end{rmk}
We then introduce the spacetime norms that we will use to perform our nonlinear analysis:
\begin{defn}
    Let $\tau \ge \tau_0 \ge 0$ and $N \in \N$. We define
    \begin{equation}
        \label{defn:EN}
        E^N(\tau;\tau_0):= \sum_{i = 0}^N \sup_{\tau_0 \le \tau' \le \tau} \left[\delta^{-1}\lam(\tau')^{3\kk}\|\calD_i \pt \Theta(\tau',\cdot)\|_i^2 + \|\calD_i \Theta(\tau',\cdot)\|_i^2 + \|\calD_{i+1}\Theta(\tau',\cdot)\|_{i+1}^2 \right].
    \end{equation}
    We also define
    \begin{equation}
        \label{defn:SN}
        S^N(\tau;\tau_0) := \sum_{i = 0}^{N-1}\int_{\tau_0}^{\tau} \|\calD_i\pt^2\Theta(\tau',\cdot)\|_i^2 d\tau'.
    \end{equation}
    We will also use the following shorthand notations:
    $$
    E^N(\tau) := E^N(\tau;0),\quad S^N(\tau) := S^N(\tau;0).
    $$
\end{defn}

Finally, we introduce the following classes of admissible vector fields:
\begin{align*}
    \calP_{2j+2} := \{\prod_{k=1}^{j+1} (\dz V_k):V_k \in \{\Dz, \ze^{-1}\}\},\\\calP_{2j+1} := \{ V_{j+1}\prod_{k=1}^{j} (\dz V_k):V_k \in \{\Dz, \ze^{-1}\}\},
\end{align*}
for $j \ge 0$. We also set $\calP_0 = Id$. We also define
$$
\barcalP_{2j+1} := \{W\dz:W\in \calP_{2j}\},\quad \barcalP_{2j+2} := \{W\dz:W\in \calP_{2j+1}\}.
$$

We conclude this section by remarking on admissible vector fields that can be applied to $w$ or its derivatives:
\begin{lem}
    \label{lem:wderivative}
    Let $w$ be an admissible weight function given by Definition \ref{def: w}. Then for any $i \ge 0$,
    $$
    \bar P_i w, \quad P_i w', \quad  \bar P_i\left(\frac{w'}{\ze}\right) \in C^\infty([0,1]),
    $$
    where $P_i \in \calP_i$, $\bar P_i \in \barcalP_i$.
\end{lem}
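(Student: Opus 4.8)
\textbf{Proof plan for Lemma \ref{lem:wderivative}.} The plan is to reduce everything to the explicit structure of $\calP_i$ and $\barcalP_i$, which are built by concatenating the operators $\Dz = \pz + 2/\ze$ and multiplication by $\ze^{-1}$, and then to separately verify smoothness near $\ze = 1$ (where $w$ vanishes simply) and near $\ze = 0$ (where the parity condition $w^{(2k+1)}(0)=0$ is decisive). Away from $\ze=0$ and $\ze=1$ there is nothing to prove since $w \in C^\infty$ and $\ze^{-1}$ is smooth there; so the content is purely local at the two endpoints.

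First I would handle $\ze = 1$. Near $\ze = 1$ the factor $\ze^{-1}$ and all its derivatives are smooth and bounded, so each generator in $\calP_i$, $\barcalP_i$ is a smooth-coefficient first-order differential operator there; hence $\bar P_i w$, $P_i w'$, and $\bar P_i(w'/\ze)$ are all finite linear combinations of $w, w', \dots, w^{(i+1)}$ with smooth coefficients, and these are smooth up to $\ze = 1$ because $w \in C^\infty([0,1])$ by hypothesis (Definition \ref{def: w}). The only subtlety is that $w(1) = 0$, but this is irrelevant for smoothness; the condition $-\infty < w'(1) < 0$ plays no role here beyond guaranteeing $w \in C^\infty$ with a genuine first-order zero, which we don't even need for this lemma.

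The real work is at $\ze = 0$. Here I would argue by a parity/Taylor-expansion bookkeeping: the hypothesis $w^{(2k+1)}(0) = 0$ for all $k$ says precisely that $w$ is, near the origin, a smooth \emph{even} function of $\ze$, i.e. $w(\ze) = h(\ze^2)$ for some $h \in C^\infty$ near $0$ (by Whitney's theorem on even functions, or directly from the vanishing odd Taylor coefficients). The key observation is that the operator $\Dz = \ze^{-2}\pz(\ze^2 \cdot)$ maps even smooth functions of $\ze$ to even smooth functions of $\ze$: if $g(\ze) = G(\ze^2)$ then $\Dz g = 3 G'(\ze^2)\cdot 2 + \dots$ — more carefully, $\pz g = 2\ze G'(\ze^2)$ is odd, and $\ze^{-1}\cdot(\text{odd smooth vanishing at }0)$ is again even smooth, while $2g/\ze$ applied to an even function is \emph{not} individually smooth but the combination $\pz g + 2g/\ze$ rearranges as $\ze^{-2}\pz(\ze^2 g)$ with $\ze^2 g$ even and vanishing to order $2$, whose derivative is odd vanishing to order $1$, divided by $\ze^2$ gives odd-over-$\ze$ = even smooth. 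Similarly, multiplication by $\ze^{-1}$ sends even smooth functions \emph{vanishing at }$0$ to odd smooth functions, and sends odd smooth functions to even smooth functions. Tracking these parities through the definitions \eqref{calDj}–\eqref{barcalDj}: $\pz$ flips parity, $\Dz$ preserves the ``even-and-divisible'' class, and each $\ze^{-1}$ in $\calP_i$ is, by construction, applied only to a factor of the appropriate parity so that no singularity is produced. Thus applying $\bar P_i$ to $w$ (even near $0$), or $P_i$ to $w'$ (odd near $0$, since $w$ is even), or $\bar P_i$ to $w'/\ze$ (even near $0$), keeps us in the class of smooth functions at the origin.

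The main obstacle I anticipate is getting the endpoint-$0$ parity bookkeeping exactly right: one has to check that in every admissible word in $\calP_i$ and $\barcalP_i$, whenever a factor $\ze^{-1}$ appears it acts on a quantity that is odd (or even-and-vanishing) at $\ze = 0$, so that the division is harmless — this is where the specific alternating structure of \eqref{calDj} (concatenations of $\dz$ and $\Dz$) and the choice $w'(0)=0$, $w'''(0)=0,\dots$ conspire. I would make this rigorous by introducing, near $\ze=0$, the two function classes $\mathcal{E}$ (smooth even functions of $\ze$) and $\mathcal{O}$ (smooth odd functions of $\ze$), noting $\pz:\mathcal{E}\to\mathcal{O}$ and $\pz:\mathcal{O}\to\mathcal{E}$, that $\ze^{-1}:\mathcal{O}\to\mathcal{E}$ and $\ze^{-1}:\{g\in\mathcal{E}: g(0)=0\}\to\mathcal{O}$, and that $\Dz$ preserves both $\mathcal{E}$ and $\mathcal{O}$ (using $\Dz = \ze^{-2}\pz(\ze^2\cdot)$), and then verifying by a short induction on $i$ that every operator in $\calP_i$ maps $\mathcal{E}$ into $\mathcal{E}$ and $\mathcal{O}$ into $\mathcal{O}$ while every operator in $\barcalP_i$ maps $\mathcal{E}$ into $\mathcal{O}$ when $i$ is odd and $\mathcal{E}$ into $\mathcal{E}$ when $i$ is even (and symmetrically on $\mathcal{O}$). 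Since $w\in\mathcal{E}$, $w'\in\mathcal{O}$, and $w'/\ze\in\mathcal{E}$ near $0$ by the parity hypothesis, the claimed $C^\infty$ regularity at the origin follows; combined with the trivial interior and $\ze=1$ cases, this gives membership in $C^\infty([0,1])$.
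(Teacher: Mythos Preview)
Your overall strategy matches the paper's (very brief) proof: the only issue is at $\ze=0$, and the parity condition $w^{(2k+1)}(0)=0$ is what resolves it. However, your parity bookkeeping contains a concrete error that would make the stated induction fail.

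You claim that $\Dz$ preserves both $\mathcal{E}$ and $\mathcal{O}$, and that consequently every operator in $\calP_i$ maps $\mathcal{E}\to\mathcal{E}$ and $\mathcal{O}\to\mathcal{O}$. Both statements are false. For the first: take $g\equiv 1\in\mathcal{E}$; then $\Dz g = 2/\ze$ is not even smooth at the origin, let alone in $\mathcal{E}$. (Your own computation actually shows this: with $g=G(\ze^2)$ and $G(0)=c$, you get $\ze^{-2}\pz(\ze^2 g) = 2c/\ze + \text{smooth}$.) For the second: the correct action is $\Dz:\mathcal{O}\to\mathcal{E}$ and $\ze^{-1}:\mathcal{O}\to\mathcal{E}$, so $\calP_1=\{\Dz,\ze^{-1}\}$ swaps parity on odd inputs and is singular on generic even inputs. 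The induction as you wrote it breaks at $i=1$.

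The fix is to observe that you only ever need $P_j$ applied to \emph{odd} functions. Indeed $\bar P_i = P_{i-1}\pz$ for $i\ge 1$, so $\bar P_i w = P_{i-1}(w')$ and $\bar P_i(w'/\ze) = P_{i-1}\big(\pz(w'/\ze)\big)$, and both $w'$ and $\pz(w'/\ze)$ lie in $\mathcal{O}$. Now run the correct parity chase through the structure of $\calP_{2j+1}=\{V_{j+1}\pz V_j\cdots\pz V_1\}$ and $\calP_{2j+2}=\{\pz V_{j+1}\cdots\pz V_1\}$: starting from an odd input, each $V_k\in\{\Dz,\ze^{-1}\}$ receives an odd function and outputs an even one, and each $\pz$ flips parity back to odd. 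So every intermediate expression stays in $\mathcal{E}\cup\mathcal{O}\subset C^\infty$ near $\ze=0$, and the lemma follows.
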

\begin{proof}
    It suffices to consider the case near the origin, and the desired statement follows straightforwardly from Definition \ref{def: w} (3).
\end{proof}

\section{Decomposition of the Pressure Gradient}\label{sec: elliptic operator}
In this section, we concern ourselves with the study of the following term:
\begin{align}\label{ellipticpre}
\frac{1}{w^{\frac1\kk}}\p_\zeta \left(w^{1+\frac1\kappa} (\calF^{-1-\kappa}-1)\right),
\end{align}
which originates from the pressure gradient in the momentum equation written in the original variables. Inspired by the study of compressible Euler equations in Lagrangian variables \cite{hadvzic2018nonlinear,hadvzic2018expanding,guo2021continued}, we similarly make a key observation that \eqref{ellipticpre} can be realized as a degenerate, linear elliptic operator in divergence form modulo higher order errors.

As a preparation, we introduce some notations which will be convenient for our analysis:
$$
\xi = \frac{\eta}{\ze}, \quad \theta = \frac{\Theta}{\ze} = \xi -1. 
$$
Under these new variables, $\calF = (\eta/\ze)^2\pz\eta$ becomes
\begin{align} \label{eq: calF in xi}
    \calF = \xi^2 (\xi + \ze \pz \xi). 
\end{align}
Since $\pz \xi  = \pz \theta$, a direct computation yields
\begin{align}\label{eq: pz calF in xi}
    \pz \calF = \xi^2 (\ze \pz^2\theta + 4\pz\theta) + 2\ze \xi (\pz \theta)^2. 
\end{align}
With the preparations above, we show the main result of this section:
\begin{lem} \label{lem:elliptic term in theta}
    The following identity holds:
    \begin{align}\label{eq: elliptic term in theta}
        \begin{aligned}
            w^{-\frac{1}{\kk}}\pz \left(w^{1+\frac1\kk}(\calF^{-\kk-1}-1)\right)=&
            -(1+\kk)\calF^{-\kk-2}\xi^2w^{-\frac1\kk}\pz\left(w^{1+\frac1\kk}\frac{1}{\ze^2}\pz(\ze^3\theta)\right)\\
            &\underbrace{-2(1+\kk)\calF^{-\kk-2}w\xi\ze (\pz\theta)^2}_{=R_1(\theta)}\\
            &+\frac{1+\kk}{\kk}w'\left(\calF^{-\kk-1}-1+(1+\kk)\calF^{-\kk-2}\xi^2(\ze\pz\theta+3\theta)\right). 
        \end{aligned}
    \end{align}
    Moreover, the term $\frac{1+\kk}{\kk}w'\left(\calF^{-\kk-1}-1+(1+\kk)\calF^{-\kk-2}\xi^2(\ze\pz\theta+3\theta)\right)$
    in \eqref{eq: elliptic term in theta} can be modified into
    \begin{align}\label{eq:cancellation pre}
        \begin{aligned}
            &\frac{1+\kk}{\kk}w'\left(\calF^{-\kk-1}-1+(1+\kk)\calF^{-\kk-2}\xi^2(\ze\pz\theta+3\theta)\right)\\
            =  &\underbrace{\frac{(1+\kk)^2}{\kk}w'(3\theta^2 + 2\theta^3)}_{=R_2(\theta)}\\
            &+\underbrace{\frac{1+\kk}{\kk}w'\left(\calF^{-\kk-1}-1+ (1+\kk)(\calF-1)+(1+\kk)(\calF^{-\kk-2}-1)\xi^2(\ze\pz\theta+3\theta)\right)}_{=R_3(\theta)}. 
        \end{aligned}
    \end{align}
\end{lem}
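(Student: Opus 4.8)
## Proof Proposal

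The plan is to verify the identity \eqref{eq: elliptic term in theta} by a direct differentiation starting from the left-hand side, and then to establish \eqref{eq:cancellation pre} by purely algebraic manipulation of the ``profile error'' term. Both steps are elementary once the right bookkeeping is set up; the main subtlety is recognizing which groupings of terms produce the divergence-form elliptic operator and which get absorbed into $R_1$, $R_2$, $R_3$.

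For the first identity, I would begin by expanding
$$
w^{-\frac1\kk}\pz\!\left(w^{1+\frac1\kk}(\calF^{-\kk-1}-1)\right) = \frac{1+\kk}{\kk}w'(\calF^{-\kk-1}-1) - (1+\kk)w\calF^{-\kk-2}\pz\calF,
$$
using $\pz w^{1+\frac1\kk} = \frac{1+\kk}{\kk}w^{\frac1\kk}w'$. Next I would substitute the expression \eqref{eq: pz calF in xi} for $\pz\calF$, namely $\pz\calF = \xi^2(\ze\pz^2\theta + 4\pz\theta) + 2\ze\xi(\pz\theta)^2$. The term $-2(1+\kk)w\calF^{-\kk-2}\xi\ze(\pz\theta)^2$ is exactly $R_1(\theta)$. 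For the remaining piece $-(1+\kk)w\calF^{-\kk-2}\xi^2(\ze\pz^2\theta + 4\pz\theta)$, I would recognize that $\frac{1}{\ze^2}\pz(\ze^3\theta) = \ze\pz\theta + 3\theta$, so $w^{-\frac1\kk}\pz\!\left(w^{1+\frac1\kk}\frac{1}{\ze^2}\pz(\ze^3\theta)\right) = \frac{1+\kk}{\kk}w'(\ze\pz\theta + 3\theta) + w(\ze\pz^2\theta + 4\pz\theta)$, where the $4\pz\theta$ arises from $\pz(\ze\pz\theta + 3\theta) = \ze\pz^2\theta + 4\pz\theta$. Multiplying this by $-(1+\kk)\calF^{-\kk-2}\xi^2$ reproduces the degenerate-elliptic leading term in \eqref{eq: elliptic term in theta}, and the leftover $-(1+\kk)\calF^{-\kk-2}\xi^2 \cdot \frac{1+\kk}{\kk}w'(\ze\pz\theta + 3\theta)$ combines with the $\frac{1+\kk}{\kk}w'(\calF^{-\kk-1}-1)$ term to give precisely the third line of \eqref{eq: elliptic term in theta}. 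Collecting all pieces yields the claimed identity.

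For the second identity \eqref{eq:cancellation pre}, this is a matter of adding and subtracting terms so that the dangerous factor $\calF^{-\kk-1}-1$ (and $\calF^{-\kk-2}-1$), which do not have an obvious sign or smallness near $\ze = 1$, are rewritten modulo a quadratic-in-$\theta$ ``good'' piece. Writing $\xi = 1+\theta$ and $\calF = \xi^2(\xi + \ze\pz\xi)$, one checks via \eqref{eq: calF in xi} that $\calF - 1 = \xi^2(\ze\pz\theta + 3\theta) + (\xi^2\theta) + \text{(lower-order in }\theta)$; more precisely I would expand $\xi^2(\ze\pz\theta + 3\theta) = (1+\theta)^2(\ze\pz\theta + 3\theta)$ and compare. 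The key cancellation is that
$$
\calF^{-\kk-1}-1 + (1+\kk)(\calF-1) = O(|\calF-1|^2)
$$
by Taylor expansion of $x \mapsto x^{-\kk-1}$ at $x=1$, and similarly $(1+\kk)\calF^{-\kk-2}\xi^2(\ze\pz\theta+3\theta) = (1+\kk)\xi^2(\ze\pz\theta+3\theta) + (1+\kk)(\calF^{-\kk-2}-1)\xi^2(\ze\pz\theta+3\theta)$. After regrouping, the genuinely non-small terms organize into $(1+\kk)[(\calF-1) + \xi^2(\ze\pz\theta+3\theta)]$ plus error factors $(\calF^{-\kk-1}-1)$ and $(\calF^{-\kk-2}-1)$ multiplied by quantities that now carry a factor $\theta$ or $\pz\theta$ — hence absorbed into $R_3(\theta)$. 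The remaining explicit polynomial is obtained by computing $(\calF-1) + \xi^2(\ze\pz\theta + 3\theta)$ exactly: using $\calF-1 = (1+\theta)^2(1+\theta+\ze\pz\theta) - 1$ one extracts the $\ze\pz\theta$ terms, which cancel against the contribution designed to cancel them, leaving only $3\theta^2 + 2\theta^3$ times $\frac{(1+\kk)^2}{\kk}w'$, which is $R_2(\theta)$.

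The main obstacle is purely organizational rather than conceptual: one must be careful that every occurrence of $\ze\pz\theta$ cancels exactly so that $R_2(\theta)$ contains no derivative of $\theta$ (this is what makes $R_2$ a genuinely lower-order, easily-estimated term), and that every ``bad'' factor $\calF^{-\kk-1}-1$ or $\calF^{-\kk-2}-1$ left over in $R_3$ is paired with at least one power of $\theta$ or $\pz\theta$ so that $R_3$ is quadratically small in the perturbation. I would therefore carry out the bookkeeping by first isolating all terms linear in $\theta$ and $\pz\theta$ with unit coefficient (up to the fixed constants) and verifying their cancellation, and only then collecting the quadratic remainder; checking the $\ze\pz\theta$ cancellation is the step most prone to sign errors.
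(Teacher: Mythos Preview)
Your approach is correct and matches the paper's proof exactly: expand the left-hand side, substitute \eqref{eq: pz calF in xi}, recognize the divergence form via $\frac{1}{\ze^2}\pz(\ze^3\theta)=\ze\pz\theta+3\theta$, and for \eqref{eq:cancellation pre} split $\calF^{-\kk-2}=1+(\calF^{-\kk-2}-1)$ together with the algebraic identity $(\calF-1)-\xi^2(\ze\pz\theta+3\theta)=-(3\theta^2+2\theta^3)$. One small slip: the polynomial $3\theta^2+2\theta^3$ comes from $\xi^2(\ze\pz\theta+3\theta)-(\calF-1)$, not from the sum $(\calF-1)+\xi^2(\ze\pz\theta+3\theta)$ as you wrote---your own caveat about sign errors applies here.
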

\begin{proof}
    We expand \eqref{ellipticpre} to obtain
    \begin{align*}
        \frac{1}{w^{\frac1\kk}}\p_\zeta \left(w^{1+\frac1\kappa} (\calF^{-1-\kappa}-1)\right) = - (1+\kk)w\calF^{-\kk-2}\pz \calF
        + \frac{1+\kk}{\kk}w'(\calF^{-\kk-1}-1). 
    \end{align*}
    From \eqref{eq: pz calF in xi}, we have
    \begin{align*}
        - (1+\kk)w\calF^{-\kk-2}\pz \calF =& -(1+\kk)\calF^{-\kk-2}\xi^2w^{-\frac1\kk}\pz \left(w^{1+\frac1\kk}\frac{1}{\ze^2}\pz (\ze^3 \theta)\right)\\
        -&2(1+\kk)\calF^{-\kk-2}w\xi\ze(\pz\theta)^2\\
        +&\frac{(1+\kk)^2}{\kk}w'\calF^{-\kk-2}\xi^2(\ze\pz\theta+3\theta),
    \end{align*}
    which leads to \eqref{eq: elliptic term in theta}, where we also used $\partial_\zeta(\frac{1}{\zeta^2} \partial_\zeta(\zeta^3 \theta))=\zeta \partial_\zeta^2\theta +4 \partial_\zeta \theta$. Moreover, \eqref{eq:cancellation pre} follows from the identity
    \begin{align*}
        (\calF-1) - \xi^2(\ze\pz\theta+3\theta)=-(3\theta^2+2\theta^3),
    \end{align*}
    which follows from a direct computation by setting $\xi=1+\theta$ in \eqref{eq: calF in xi}. 
\end{proof}

Next, we state an important decomposition that plays a crucial role in the $\calD_i$-differentiated high-order version of \eqref{eq:momlagmain} and the associated energy estimates.

\begin{lem}[Decomposition of $\Dz R_3(\theta)$]\label{lem:cancellation lemma theta}
Let $R_3(\theta)$ be defined in \eqref{eq:cancellation pre}. Then
    \begin{align}\label{eq:cancellation theta}
        \begin{aligned}
            \Dz R_3(\theta) &=\frac{1+\kk}{\kk}(\Dz w')\left(\calF^{-\kk-1}-1+ (1+\kk)(\calF-1)+(1+\kk)(\calF^{-\kk-2}-1)\xi^2(\ze\pz\theta+3\theta)\right)\\
            &+ \frac{1+\kk}{\kk} w' \pz \left(\calF^{-\kk-1}-1+ (1+\kk)(\calF-1)+(1+\kk)(\calF^{-\kk-2}-1)\xi^2(\ze\pz\theta+3\theta)\right)
            \\&=\frac{1+\kk}{\kk}(\Dz w')\left(\calF^{-\kk-1}-1+ (1+\kk)(\calF-1)+(1+\kk)(\calF^{-\kk-2}-1)\xi^2(\ze\pz\theta+3\theta)\right)\\
            &+\frac{1+\kk}{\kk}w'\bigg(6(1+\kk)(\calF^{-\kk-2}-1)\xi\theta\pz\theta
            -(\kk+1)(\kk+2)\calF^{-\kk-3}\xi^2\pz\calF(\ze\pz\theta+3\theta)\bigg). 
        \end{aligned}
    \end{align}
\end{lem}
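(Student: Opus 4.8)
The plan is to differentiate $R_3(\theta)$ directly, exploiting its product structure $R_3(\theta) = \frac{1+\kk}{\kk}\, w'\, G$, where
\[
G := \calF^{-\kk-1}-1+ (1+\kk)(\calF-1)+(1+\kk)(\calF^{-\kk-2}-1)\xi^2(\ze\pz\theta+3\theta).
\]
First I would record the Leibniz identity for the weighted derivative, $\Dz(fg) = (\Dz f)\,g + f\,\pz g$, which follows at once from $\Dz = \pz + \tfrac{2}{\ze}$, and apply it with $f = w'$ and $g = G$. This produces exactly the first displayed equality in \eqref{eq:cancellation theta} and reduces the whole statement to the computation of $\pz G$.

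Next I would compute $\pz G$ term by term. Using $\pz(\calF^{-\kk-1}) = -(\kk+1)\calF^{-\kk-2}\pz\calF$, $\pz(\calF^{-\kk-2}) = -(\kk+2)\calF^{-\kk-3}\pz\calF$, and the product rule on the last summand of $G$, the contributions proportional to $\pz\calF$ collect into $-(1+\kk)(\calF^{-\kk-2}-1)\pz\calF$ together with $-(\kk+1)(\kk+2)\calF^{-\kk-3}\xi^2(\ze\pz\theta+3\theta)\pz\calF$, plus the leftover piece $(1+\kk)(\calF^{-\kk-2}-1)\,\pz\!\big[\xi^2(\ze\pz\theta+3\theta)\big]$. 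The key algebraic input is the identity already established in the proof of Lemma \ref{lem:elliptic term in theta}, namely $(\calF-1) - \xi^2(\ze\pz\theta+3\theta) = -(3\theta^2+2\theta^3)$, i.e.\ $\xi^2(\ze\pz\theta+3\theta) = \calF - 1 + 3\theta^2 + 2\theta^3$. Differentiating this and using $\xi = 1+\theta$, $\pz\xi = \pz\theta$ gives
\[
\pz\!\big[\xi^2(\ze\pz\theta+3\theta)\big] - \pz\calF = \pz(3\theta^2+2\theta^3) = 6\xi\theta\,\pz\theta.
\]
Substituting this into the leftover piece, the two $(1+\kk)(\calF^{-\kk-2}-1)\pz\calF$ terms cancel, and one is left with $\pz G = 6(1+\kk)(\calF^{-\kk-2}-1)\xi\theta\,\pz\theta - (\kk+1)(\kk+2)\calF^{-\kk-3}\xi^2(\ze\pz\theta+3\theta)\,\pz\calF$, which, after multiplication by $\frac{1+\kk}{\kk}w'$, is precisely the parenthetical expression in the second equality of \eqref{eq:cancellation theta}.

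There is no substantial obstacle here — the argument is a bookkeeping computation — but the step deserving care, and the whole point of the lemma, is the cancellation just described: the ``bare'' $\pz\calF$ terms, which by \eqref{eq: pz calF in xi} carry a second derivative $\pz^2\theta$ with no compensating power of $\theta$, must recombine and be traded for the manifestly quadratic quantity $6\xi\theta\,\pz\theta$. I would also note that $w'$ and $\Dz w'$ (and hence all coefficients produced) are smooth on $[0,1]$ by Lemma \ref{lem:wderivative}, and that every factor multiplying $w'$ or $\Dz w'$ in \eqref{eq:cancellation theta} is either quadratic in $\theta$ and its derivatives or a smooth function of $\calF,\xi,\theta$ vanishing at $\theta=0$; this is what makes \eqref{eq:cancellation theta} a genuine higher-order decomposition of $\Dz R_3(\theta)$ suitable for the $\calD_i$-commuted energy estimates in the later sections.
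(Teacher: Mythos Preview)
Your proposal is correct and follows essentially the same approach as the paper: apply the product rule $\Dz(w'G) = (\Dz w')G + w'\pz G$, then compute $\pz G$ and observe the cancellation of the bare $\pz\calF$ terms. The only cosmetic difference is that the paper verifies the cancellation by directly expanding $\pz[\xi^2(\ze\pz\theta+3\theta)] = 2\xi\pz\theta(\ze\pz\theta+3\theta) + \xi^2(\ze\pz^2\theta+4\pz\theta)$ and invoking \eqref{eq: pz calF in xi}, whereas you obtain the same result more cleanly by differentiating the algebraic identity $\xi^2(\ze\pz\theta+3\theta) = \calF-1+3\theta^2+2\theta^3$; the two computations are equivalent.
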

\begin{proof}
The first equality follows from the product rule for $\Dz$ \eqref{eq:basicproduct}.  The second equality follows from $$   \pz \calF -\xi^2 (\ze \pz^2\theta + 4\pz\theta) =2\ze \xi (\pz \theta)^2,
$$ 
and
\begin{align*}
    &\pz \left(\calF^{-\kk-1}-1+ (1+\kk)(\calF-1)+(1+\kk)(\calF^{-\kk-2}-1)\xi^2(\ze\pz\theta+3\theta)\right)\\
    &= -(1+\kk)\calF^{-\kk-2}\left(\pz \calF -\xi^2 (\ze \pz^2\theta + 4\pz\theta)\right)+(1+\kk)\left(\pz \calF -\xi^2 (\ze \pz^2\theta + 4\pz\theta)\right)\\
    &+2(1+\kk)\xi(\calF^{-\kk-2}-1) \pz \theta(\ze\pz\theta+3\theta)-(\kk+1)(\kk+2)\calF^{-\kk-3}\xi^2\pz\calF(\ze\pz\theta+3\theta)\\
    &=
    6(1+\kk)(\calF^{-\kk-2}-1)\xi\theta\pz\theta-(\kk+1)(\kk+2)\calF^{-\kk-3}\xi^2\pz\calF(\ze\pz\theta+3\theta).
\end{align*}
\end{proof}
\begin{rmk}
In particular, we show in Lemma \ref{lem:cancellation lemma theta} that $\Dz R_3(\theta)$ consists of terms with at most one spatial derivative, i.e., $\zeta$ derivative, on $\theta$, except for the term  $-(\kappa+1)(\kappa+2)F^{-\kappa-3}\,\xi^{2}(\partial_\zeta F)(\zeta\,\partial_\zeta\theta+3\theta)$ in the last line of \eqref{eq:cancellation theta}, which has $\pz \calF$ that contributes to $\pz^2\theta$ at the highest order. This term can, in fact, be canceled by another leading-order term appearing when treating the commutators of the elliptic operator. See Lemma \ref{lem: cancellation lemma M}. 
\end{rmk}

From Lemma \ref{lem:elliptic term in theta} and since
$$
\ze\pz\theta = \Dz\Theta - 3\frac{\Theta}{\ze}, 
$$
we have
\begin{align}\label{eq:elltipic term original form}
    \begin{aligned}
        &(U^0)^{-4}\left(1+\frac\Theta\zeta\right)^2\frac{1}{w^{\frac1\kk}}\p_\zeta \left(w^{1+\frac1\kappa} (\calF^{-1-\kappa}-1)\right)\\
        &=-(1+\kk)\calF^{-\kk-2}(U^0)^{-4}\left(1+\frac{\Theta}{\ze}\right)^4w^{-\frac1\kk}\pz\left(w^{1+\frac1\kk}\Dz\Theta\right) +\mathfrak{R}_1(\Theta)+\mathfrak{R}_2(\Theta)+\mathfrak{R}_3(\Theta),
    \end{aligned}
\end{align}
where $\mathfrak{R}_i(\Theta) = (U^0)^{-4}\left(1+\frac{\Theta}{\ze}\right)^2 R_i(\Theta)$ with $i=1,2,3$. In particular, \begin{align}
    &\mathfrak{R}_1(\Theta)  = -2(1+\kk)\calF^{-\kk-2}(U^0)^{-4}\left(1+\frac{\Theta}{\ze}\right)^3 w \ze\left(\pz\left(\frac{\Theta}{\ze}\right)\right)^2,
    \label{def:R1}
    \\ 
    &\mathfrak{R}_2(\Theta) = \frac{(1+\kk)^2}{\kk}(U^0)^{-4}\left(1+\frac{\Theta}{\ze}\right)^2 w' \left(3\left(\frac{\Theta}{\zeta}\right)^2+2\left(\frac{\Theta}{\zeta}\right)^3\right),\label{def:R2}\\
    &\mathfrak{R}_3(\Theta) = \frac{1+\kk}{\kk}(U^0)^{-4}\left(1+\frac{\Theta}{\ze}\right)^2w'\bigg((\calF^{-\kk-1}-1)+(1+\kk)(\calF-1)\nonumber\\
    &\quad+(1+\kk)(\calF^{-\kk-2}-1)\left(1+\frac{\Theta}{\ze}\right)^2\Dz\Theta\bigg).\label{def:R3}
\end{align}
In summary, \eqref{eq:momlagmain} becomes
\begin{align}\label{eq:momlagmainfull}
    \begin{aligned}
            &\left(1+\delta \lam^{-3\kk}w\calF^{-\kk}\calG^\kk\right)\left[\delta^{-1}\lambda^{3\kk}(\p_\tau^2 \Theta + \lamt \p_\tau \Theta) + \Theta\right]\\
    &+(1+\kk)\calF^{-\kk-2}(U^0)^{-4}\left(1+\frac{\Theta}{\ze}\right)^4L_0(\Theta) + \sum_{j=1}^3\mathfrak{R}_j(\Theta) = -\sum_{i=1}^6 E_i.
    \end{aligned}
\end{align}
Here and throughout, we define, for each $k\geq 0$, the elliptic operator
\begin{equation}\label{def: elliptic Lk}
    L_{k}(f)=-\frac{1}{w^{\frac1\kk + k}}\pz\left(w^{1+\frac1\kk + k}\Dz f\right),
\end{equation}
where $f$ is a generic smooth function. 

\section{The $\calD_i$-differentiated High-Order Equations}\label{sec: high-order equation}
In this section, we aim to derive the high-order version of the equation \eqref{eq:momlagmainfull} by commuting $\calD_i$. We begin by studying the $\calD_i$-differentiated elliptic term, i.e., 
\begin{align}\label{Di elliptic operator pre}
\begin{aligned}
&\calD_i\left(\calF^{-\kk-2}(U^0)^{-4}\left(1+\frac{\Theta}{\ze}\right)^4L_0(\Theta)\right)=\bar\calD_{i-1}\left(\Dz \left(\calF^{-\kk-2}(U^0)^{-4}\left(1+\frac{\Theta}{\ze}\right)^4L_0(\Theta)\right)\right)\\
=&\barcalD_{i-1}\left(\pz\left(\calF^{-\kk-2}(U^0)^{-4}\left(1+\frac{\Theta}{\ze}\right)^4\right)L_0(\Theta) + \calF^{-\kk-2}(U^0)^{-4}\left(1+\frac{\Theta}{\ze}\right)^4\Dz L_0(\Theta)\right)\\
=&\calF^{-\kk-2}(U^0)^{-4}\left(1+\frac{\Theta}{\ze}\right)^4 \calD_i L_0(\Theta)
+\left[\barcalD_{i-1}, \calF^{-\kk-2}(U^0)^{-4}\left(1+\frac{\Theta}{\ze}\right)^4\right]\Dz L_0(\Theta) + \barcalD_{i-1}\mathfrak{R}_4(\Theta), 
\end{aligned}
\end{align}
where
\begin{equation}\label{def:R4}
\mathfrak{R}_4(\Theta)=\pz\left(\calF^{-\kk-2}(U^0)^{-4}\left(1+\frac{\Theta}{\ze}\right)^4\right)L_0(\Theta). 
\end{equation}
Taking $\calD_i$ on both sides of \eqref{eq:momlagmainfull}, we obtain 
\begin{equation}\label{eq:highorder momlagmain pre}
    \begin{aligned}
        &\left(1+\delta \lam^{-3\kk}w\calF^{-\kk}\calG^\kk\right)\left(\delta^{-1}\lambda^{3\kk}(\p_\tau^2 \calD_i\Theta + \lamt \p_\tau \calD_i\Theta) + \calD_i\Theta\right)\\
        &+ [\calD_i, 1+\delta \lam^{-3\kk}w\calF^{-\kk}\calG^\kk] \left(\delta^{-1}\lambda^{3\kk}(\p_\tau^2 \Theta + \lamt \p_\tau \Theta) + \Theta\right)\\
        &+(1+\kk)\calF^{-\kk-2}(U^0)^{-4}\left(1+\frac{\Theta}{\ze}\right)^4 \calD_i L_0(\Theta)\\
        &+(1+\kk)\left[\barcalD_{i-1}, \calF^{-\kk-2}(U^0)^{-4}\left(1+\frac{\Theta}{\ze}\right)^4\right]\Dz L_0(\Theta)\\
        &+\calD_i\sum_{j=1}^3\mathfrak{R}_j(\Theta) + (1+\kk)\barcalD_{i-1}\mathfrak{R}_4(\Theta) = -\calD_i\sum_{j=1}^6E_j,
    \end{aligned}
\end{equation}
where $\mathfrak{R}_1(\Theta)$, $\mathfrak{R}_2(\Theta)$, and $\mathfrak{R}_3(\Theta)$ are given by \eqref{def:R1}--\eqref{def:R3}, respectively. 
Note that we do not commute $\calD_i$ directly through 
$\calF^{-\kk-2}(U^0)^{-4}\left(1+\frac{\Theta}{\ze}\right)^4$ in \eqref{eq:highorder momlagmain pre}; instead, we write $\calD_i = \barcalD_{i-1}\Dz$ and then commute $\Dz$ first. The reason of doing this is that we need to further exploit another hidden cancellation in 
\begin{align}\label{def: M}
\mathfrak{M}(\Theta)=\Dz \mathfrak{R}_3(\Theta) + (1+\kk)\mathfrak{R}_4(\Theta),
\end{align} so that the terms involving two spatial derivatives on $\Theta$ without the corresponding multiple of $w$ will be cancelled. 

Invoking the commutator identity \eqref{commutator [calDi g]f}, we see that
\begin{align*}
    [\calD_i, 1+\delta \lam^{-3\kk}w\calF^{-\kk}\calG^\kk] \left(\delta^{-1}\lambda^{3\kk}(\p_\tau^2 \Theta + \lamt \p_\tau \Theta) + \Theta\right)\\
    = [\calD_i, w\calF^{-\kk}\calG^\kk]\left(\p_\tau^2 \Theta + \lamt \p_\tau \Theta + \delta \lam^{-3\kk}\Theta\right).
\end{align*}
Next, we treat the third term on the LHS of \eqref{eq:highorder momlagmain pre} involving $\calD_iL_0(\Theta)$. We denote by $L_k^*$ the dual operator of $L_k$ given by \eqref{def: elliptic Lk}, which is given by
\begin{equation}\label{def:Lk dual}
L_k^*f = -\frac{1}{w^{\frac{1}{\kk}+k}}\Dz(w^{1+\frac{1}{\kk}+k}\dz f).
\end{equation}
Then, through $L_k$ and $L_k^*$ we can define the high-order elliptic operators:
\begin{equation}\label{def:high-order elliptic op}
\calL_j\calD_j = \begin{cases}
        L_j\calD_j,& j\text{ is even},\\
        L_{j}^*\calD_j,& j\text{ is odd}.
    \end{cases}
\end{equation}
In view of Lemma \ref{lem:Lcommute}, we have
$$
\calD_i L_0(\Theta) = \calL_{i}\calD_i\Theta + \sum_{j=0}^{i-1}\xi_{i,j}\calD_{i-j}\Theta,
$$
where $\xi_{i,j}$ are smooth functions on $[0,1]$. Consequently, the $\calD_i$-differentiated \eqref{eq:momlagmainfull} reads
\begin{equation}\label{eq:highorder momlagmain}
\boxed{
    \begin{aligned}
        &\left(1+\delta \lam^{-3\kk}w\calF^{-\kk}\calG^\kk\right)\left(\delta^{-1}\lambda^{3\kk}(\p_\tau^2 \calD_i\Theta + \lamt \p_\tau \calD_i\Theta) + \calD_i\Theta\right)\\
        &\quad+(1+\kk)\calF^{-\kk-2}(U^0)^{-4}\left(1+\frac{\Theta}{\ze}\right)^4 \calL_{i}\calD_i\Theta\\
        &\quad + \sum_{j=1}^3 \mfC_j+\calD_i\sum_{j=1}^2\mathfrak{R}_j(\Theta) + \barcalD_{i-1}\mathfrak{M}(\Theta)
        = -\calD_i\sum_{j=1}^6E_j,
    \end{aligned}
    }
\end{equation}
where $\mfC_j$ are commutators defined as follows: 
\begin{align}
    &\mfC_1 = (1+\kk)\left[\barcalD_{i-1}, \calF^{-\kk-2}(U^0)^{-4}\left(1+\frac{\Theta}{\ze}\right)^4\right]\Dz L_0(\Theta),\label{def:C1}\\
    &\mfC_2 = (1+\kk)\calF^{-\kk-2}(U^0)^{-4}\left(1+\frac{\Theta}{\ze}\right)^4\sum_{j=0}^{i-1}\xi_{i,j}\calD_{i-j}\Theta,\label{def:C2}\\
    &\mfC_3= [\calD_i, w\calF^{-\kk}\calG^\kk]\left(\p_\tau^2 \Theta + \lamt \p_\tau \Theta + \delta \lam^{-3\kk}\Theta\right). \label{def:C3}
    \end{align}
    
Next, we present a crucial cancellation structure in $\mathfrak{M}(\Theta)$: 
\begin{lem}[Cancellation on $\mathfrak{M}(\Theta)$]\label{lem: cancellation lemma M}
    Let $\mathfrak{M}(\Theta)$ be given by \eqref{def: M}. Then
    \begin{align}\label{def: M decomp}
        \mathfrak{M}(\Theta) = \mathfrak{M}_1+ \mathfrak{M}_2 + \mathfrak{M}_3, 
    \end{align}
    where 
    \begin{align}
        \begin{aligned}
            \mathfrak{M}_1 =& \frac{1+\kk}{\kk}\Dz\left((U^0)^{-4}\left(1+\frac{\Theta}{\ze}\right)^2 w'\right)\Big(\calF^{-\kk-1}-1+ (1+\kk)(\calF-1)\\
            &+(1+\kk)(\calF^{-\kk-2}-1)\left(1+\frac{\Theta}{\ze}\right)^2\Dz\Theta\Big),\\
            \mathfrak{M}_2 =&\frac{6(1+\kk)^2}{\kk}\left((U^0)^{-4}\left(1+\frac{\Theta}{\ze}\right)^2 w'\right)\Bigg(
            (\calF^{-\kk-2}-1)\left(1+\frac{\Theta}{\ze}\right)\left(\frac{\Theta}{\ze}\right)\pz\left(\frac{\Theta}{\ze}\right)\Bigg),\\
            \mathfrak{M}_3 =& (1+\kk)w\left((\kk+2)\calF^{-\kk-3}(\pz\calF)(U^0)^{-4}\left(1+\frac{\Theta}{\ze}\right)^4\right)\pz\Dz\Theta\\
           &-(1+\kk)\left(\calF^{-\kk-2}\pz\left((U^0)^{-4}\left(1+\frac{\Theta}{\ze}\right)^4\right)\right)\left(\frac{1+\kk}{\kk}w'\Dz\Theta+w\pz\Dz\Theta\right). 
        \end{aligned}
    \end{align}
 In other words, terms in $\mathfrak{M}(\Theta)$ that involve two spatial derivatives of $\Theta$ must be multiplied by $w$. 
\end{lem}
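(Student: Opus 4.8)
The plan is to expand $\mathfrak{M}(\Theta)=\Dz\mathfrak{R}_3(\Theta)+(1+\kk)\mathfrak{R}_4(\Theta)$ directly, keeping track at every step of whether a term carries the full weight $w$ or only $w'$, and to exhibit a single exact cancellation between the two pieces that removes the only ``dangerous'' term. The two inputs are Lemma~\ref{lem:cancellation lemma theta} (more precisely, the intermediate computation inside its proof) and the explicit form of the elliptic operator $L_0$ from \eqref{def: elliptic Lk}.

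\textbf{Step 1: expanding $\Dz\mathfrak{R}_3(\Theta)$.} Recalling \eqref{def:R3}, I would write $\mathfrak{R}_3(\Theta)=\tfrac{1+\kk}{\kk}\,c\,b$ with
$$
c:=(U^0)^{-4}\Big(1+\tfrac{\Theta}{\ze}\Big)^2 w',\qquad b:=\calF^{-\kk-1}-1+(1+\kk)(\calF-1)+(1+\kk)(\calF^{-\kk-2}-1)\Big(1+\tfrac{\Theta}{\ze}\Big)^2\Dz\Theta .
$$
Using the product rule \eqref{eq:basicproduct} in the form $\Dz(cb)=(\Dz c)\,b+c\,\pz b$ gives $\Dz\mathfrak{R}_3=\tfrac{1+\kk}{\kk}\big[(\Dz c)\,b+c\,\pz b\big]$, and the first summand is exactly $\mathfrak{M}_1$. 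For $\pz b$ I would reuse the computation already performed inside the proof of Lemma~\ref{lem:cancellation lemma theta}, which differentiates precisely this bracket (after identifying $\xi^2(\ze\pz\theta+3\theta)=(1+\tfrac{\Theta}{\ze})^2\Dz\Theta$, $\xi=1+\tfrac{\Theta}{\ze}$, $\theta=\tfrac{\Theta}{\ze}$), yielding
$$
\pz b=6(1+\kk)(\calF^{-\kk-2}-1)\Big(1+\tfrac{\Theta}{\ze}\Big)\tfrac{\Theta}{\ze}\,\pz\Big(\tfrac{\Theta}{\ze}\Big)-(\kk+1)(\kk+2)\calF^{-\kk-3}(\pz\calF)\Big(1+\tfrac{\Theta}{\ze}\Big)^2\Dz\Theta .
$$
Multiplying by $\tfrac{1+\kk}{\kk}c$, the first term produces exactly $\mathfrak{M}_2$, while the second produces the only term that carries a \emph{second} $\ze$-derivative of $\Theta$ (hidden in $\pz\calF$, cf.\ \eqref{eq: pz calF in xi}) together with merely a factor $w'$; denote it
$$
\mathsf{B}:=-\tfrac{(1+\kk)^2(\kk+2)}{\kk}\,w'\,(U^0)^{-4}\Big(1+\tfrac{\Theta}{\ze}\Big)^4\calF^{-\kk-3}(\pz\calF)\,\Dz\Theta .
$$

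\textbf{Step 2: expanding $(1+\kk)\mathfrak{R}_4(\Theta)$.} From \eqref{def: elliptic Lk} one has $L_0(\Theta)=-\big(\tfrac{1+\kk}{\kk}w'\,\Dz\Theta+w\,\pz\Dz\Theta\big)$, and $\pz\big(\calF^{-\kk-2}(U^0)^{-4}(1+\tfrac{\Theta}{\ze})^4\big)=-(\kk+2)\calF^{-\kk-3}(\pz\calF)(U^0)^{-4}(1+\tfrac{\Theta}{\ze})^4+\calF^{-\kk-2}\pz\big((U^0)^{-4}(1+\tfrac{\Theta}{\ze})^4\big)$. Substituting into \eqref{def:R4} and distributing: the contribution of $-(\kk+2)\calF^{-\kk-3}(\pz\calF)$ against the $\tfrac{1+\kk}{\kk}w'\Dz\Theta$-part of $L_0$ equals $-\mathsf{B}$, hence cancels the dangerous term from Step~1; its companion $w\,\pz\Dz\Theta$-part is the first line of $\mathfrak{M}_3$; and the contribution of $\calF^{-\kk-2}\pz\big((U^0)^{-4}(1+\tfrac{\Theta}{\ze})^4\big)$ against all of $L_0$ is the second line of $\mathfrak{M}_3$. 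Collecting Steps~1--2,
$$
\mathfrak{M}(\Theta)=\mathfrak{M}_1+\mathfrak{M}_2+\mathsf{B}+(-\mathsf{B})+\mathfrak{M}_3=\mathfrak{M}_1+\mathfrak{M}_2+\mathfrak{M}_3,
$$
which is \eqref{def: M decomp}.

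\textbf{Step 3: the last assertion.} Finally I would inspect each $\mathfrak{M}_j$ for two-derivative terms: in $\mathfrak{M}_1$ the bracket depends on $\Theta$ only through $\calF,\calF^{-\kk-1},\calF^{-\kk-2}$ and $\Dz\Theta$, each involving at most one $\ze$-derivative of $\Theta/\ze$, while $\Dz\big((U^0)^{-4}(1+\tfrac{\Theta}{\ze})^2w'\big)$ contributes only $\pz\Theta$, the mixed $\pz\pt\Theta$ through $(U^0)^{-4}$, and $\ze$-derivatives of the smooth function $w$; $\mathfrak{M}_2$ likewise carries at most one $\ze$-derivative on $\Theta$. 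The only genuine second $\ze$-derivatives of $\Theta$ — namely $\pz\Dz\Theta$ and the $\ze\,\pz^2(\Theta/\ze)$ appearing inside $\pz\calF$ via \eqref{eq: pz calF in xi} — occur exclusively in $\mathfrak{M}_3$ and are there always multiplied by $w$, which is the claimed structural statement.

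The main (and essentially only) obstacle is the coefficient bookkeeping around $\mathsf{B}$: one must verify that the constant $\tfrac{(1+\kk)^2(\kk+2)}{\kk}$ generated by $\Dz\mathfrak{R}_3$ coincides \emph{exactly} with the one generated by $(1+\kk)\mathfrak{R}_4$, so that no residual $w'\calF^{-\kk-3}(\pz\calF)\Dz\Theta$ term survives. This is a short but careful computation relying on $\pz(\calF^{-\kk-2})=-(\kk+2)\calF^{-\kk-3}(\pz\calF)$ and on the explicit form of $L_0(\Theta)$ recorded above.
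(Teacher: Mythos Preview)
Your proof is correct and follows essentially the same approach as the paper: both expand $\Dz\mathfrak{R}_3(\Theta)$ via Lemma~\ref{lem:cancellation lemma theta} to isolate the dangerous $w'\calF^{-\kk-3}(\pz\calF)\Dz\Theta$ term, then expand $(1+\kk)\mathfrak{R}_4(\Theta)$ using $L_0(\Theta)=-\tfrac{1+\kk}{\kk}w'\Dz\Theta-w\pz\Dz\Theta$ and the product rule on $\pz(\calF^{-\kk-2})$ to produce the exact opposite. Your presentation is in fact slightly more explicit than the paper's, both in naming the cancelling term $\mathsf{B}$ and in carrying out Step~3, which the paper leaves implicit in the statement.
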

\begin{proof}
    Invoking Lemma \ref{lem:cancellation lemma theta}, we have
        \begin{align}\label{eq: Dz frakR3Theta}
        \begin{aligned}
            \Dz \mathfrak{R}_3(\Theta) &=\frac{1+\kk}{\kk}\Dz\left((U^0)^{-4}\left(1+\frac{\Theta}{\ze}\right)^2 w'\right)\Big(\calF^{-\kk-1}-1+ (1+\kk)(\calF-1)\\
            &+(1+\kk)(\calF^{-\kk-2}-1)\left(1+\frac{\Theta}{\ze}\right)^2\Dz\Theta\Big)\\
            &+\frac{1+\kk}{\kk}\left((U^0)^{-4}\left(1+\frac{\Theta}{\ze}\right)^2 w'\right)\Bigg[
            6(1+\kk)(\calF^{-\kk-2}-1)\left(1+\frac{\Theta}{\ze}\right)\left(\frac{\Theta}{\ze}\right)\pz\left(\frac{\Theta}{\ze}\right)\\
            &-(\kk+1)(\kk+2)\calF^{-\kk-3}\left(1+\frac{\Theta}{\ze}\right)^2\pz\calF\Dz\Theta\Bigg]. 
            \end{aligned}
    \end{align}
   We will show that the term in the last line of \eqref{eq: Dz frakR3Theta} can be cancelled after expanding $(1+\kk)\mathfrak{R}_4(\Theta)$. From definition of $\mathfrak{R}_4(\Theta)$ \eqref{def:R4}, we see that 
   \begin{align}\label{eq:pz frakR4}
       \begin{aligned}
           (1+\kk)\mathfrak{R}_4(\Theta)=& (1+\kk)\pz\left(\calF^{-\kk-2}(U^0)^{-4}\left(1+\frac{\Theta}{\ze}\right)^4\right)\left(-\frac{1+\kk}{\kk}w'\Dz\Theta-w\pz\Dz\Theta\right)\\
           =&(1+\kk)\left(-(\kk+2)\calF^{-\kk-3}\pz\calF(U^0)^{-4}\left(1+\frac{\Theta}{\ze}\right)^4\right)\left(-\frac{1+\kk}{\kk}w'\Dz\Theta-w\pz\Dz\Theta\right)\\
           +&(1+\kk)\left(\calF^{-\kk-2}\pz\left((U^0)^{-4}\left(1+\frac{\Theta}{\ze}\right)^4\right)\right)\left(-\frac{1+\kk}{\kk}w'\Dz\Theta-w\pz\Dz\Theta\right).
       \end{aligned}
   \end{align}
   It can be seen that the first term of the second line of \eqref{eq:pz frakR4} cancels with the final term of \eqref{eq: Dz frakR3Theta}. 
\end{proof}

\section{Building Block Expressions}\label{sect:buildingblock}
In this section, we record several building block expressions. These terms involve crucial quantities appearing in error estimates when acted upon by vector fields from classes $\barcalP_i$ or $\calP_i$. 
We begin with the following identity for $\calF$, which will be used frequently throughout. 
\begin{lem}\label{lem:calF-1} We have
        \begin{align}\label{eq:calF-1}
        \calF-1 = \Dz\left(\Theta + \frac{\Theta^2}{\ze}\right) + \frac13\Dz\left(\frac{\Theta^3}{\ze^2}\right).
        \end{align}    
\end{lem}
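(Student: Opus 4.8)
The plan is to observe that $\calF$ is, up to the constant $1/3$, a perfect application of the operator $\Dz$ to a cubic expression in $\eta$, after which the claimed identity falls out by a binomial expansion. Concretely, since $\eta=\ze+\Theta$ and $\calF=(\eta/\ze)^2\pz\eta$, the chain rule gives $\eta^2\pz\eta=\tfrac{1}{3}\pz(\eta^3)$, so recalling the primitive form $\Dz=\ze^{-2}\pz(\ze^2\,\cdot\,)$ from \eqref{E:Primitive_weighted_derivative} we obtain the compact formula
\begin{equation*}
\calF=\frac{\pz(\eta^3)}{3\ze^2}=\frac{1}{3}\,\Dz\!\left(\frac{\eta^3}{\ze^2}\right).
\end{equation*}
This is the only structural fact needed; everything after it is bookkeeping.

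Next I would expand $\eta^3=(\ze+\Theta)^3=\ze^3+3\ze^2\Theta+3\ze\Theta^2+\Theta^3$, divide by $\ze^2$, and use linearity of $\Dz$ to write
\begin{equation*}
\calF=\frac{1}{3}\,\Dz(\ze)+\Dz(\Theta)+\Dz\!\left(\frac{\Theta^2}{\ze}\right)+\frac{1}{3}\,\Dz\!\left(\frac{\Theta^3}{\ze^2}\right).
\end{equation*}
Since $\Dz\ze=\pz\ze+\tfrac{2}{\ze}\cdot\ze=3$, the first term equals $1$, and regrouping $\Dz(\Theta)+\Dz(\Theta^2/\ze)=\Dz(\Theta+\Theta^2/\ze)$ yields exactly \eqref{eq:calF-1} after subtracting $1$ from both sides.

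There is essentially no analytical obstacle here: the proof is a short algebraic identity. The only point I would flag is that the quotients $\Theta^2/\ze$ and $\Theta^3/\ze^2$ on the right-hand side are genuinely regular at the coordinate singularity $\ze=0$, because under spherical symmetry $\Theta$ is an odd function of $\ze$ vanishing at the origin (consistent with $\eta_0(0)=0$), so $\Theta/\ze$ extends smoothly and the identity holds classically on all of $[0,1]$ rather than merely for $\ze>0$. Alternatively, one can run the same computation in the variable $\xi=\eta/\ze=1+\theta$ starting from \eqref{eq: calF in xi}, which makes regularity at the origin manifest; I would carry out the $\Theta$-version above since that is the form used downstream, and simply remark on the $\xi$-version as a sanity check.
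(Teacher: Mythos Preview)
Your proof is correct. The key observation $\calF=\tfrac{1}{3}\Dz(\eta^3/\ze^2)$ is a clean structural identity that reduces the lemma to a binomial expansion plus the trivial fact $\Dz\ze=3$.

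The paper's proof takes a slightly different route: it expands $\calF=(1+\Theta/\ze)^2(1+\pz\Theta)$ directly into three groups of terms in $\Theta$ and $\pz\Theta$, and then recognizes each group a posteriori as a $\Dz$-derivative, namely $\Dz\Theta$, $\Dz(\Theta^2/\ze)$, and $\tfrac13\Dz(\Theta^3/\ze^2)$. Your approach works in the opposite direction---first packaging $\calF$ as a single $\Dz$-derivative, then expanding---which avoids having to spot the regroupings by hand. Both are elementary algebra of the same length, but yours explains \emph{why} the answer is a sum of $\Dz$-derivatives rather than verifying it term by term. Your remark on regularity at $\ze=0$ is also a useful sanity check that the paper does not spell out here.
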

\begin{proof}
    We recall 
        $
        \calF = \left(\frac{\eta}{\zeta}\right)^2 \pz \eta. 
        $
        Since $\eta = \Theta+\zeta$, 
        $$
        \calF -1 = \Dz\Theta +\left(\frac{2\Theta}{\zeta}\pz \Theta +\frac{\Theta^2}{\zeta^2}\right)+\frac{\Theta^2}{\zeta^2}\pz \Theta. 
        $$
        Here, 
        \begin{align*}
            \frac{2\Theta}{\zeta}\pz \Theta +\frac{\Theta^2}{\zeta^2} = \pz\left(\frac{\Theta^2}{\zeta}\right) + \frac{1}{\zeta}\frac{2\Theta^2}{\zeta} = \Dz\left(\frac{\Theta^2}{\zeta}\right),
        \end{align*}
        and
        \begin{align*}
            \frac{\Theta^2}{\zeta^2}\pz \Theta = \frac{1}{3}\pz\left(\frac{\Theta^3}{\zeta^2}\right) +\frac{2}{3}\frac{\Theta^3}{\zeta^3}=\frac{1}{3}\Dz\left(\frac{\Theta^3}{\zeta^2}\right). 
        \end{align*}
\end{proof}

With the preparatory expressions above, we state the following proposition which concerns spatial derivatives of $\calF$ and $(U^0)^{-2}$ \textit{without} time derivatives.
\begin{prop}\label{prop:buildingblock}
    Let $i \ge 1$ be an integer, and consider $\bar P_i \in \barcalP_{i}$. Then the following identities hold:
    \begin{enumerate}
        \item ($\barcalP$ Derivative of Jacobian $\calF$)
        \begin{equation}
            \label{PbarF}
            \bar P_i\calF = P_{i+1}\Theta + \sum_{\substack{A_{1,2} \in \calP_{l_1,l_2}\\ l_1 + l_2 = i+2,\\l_1,l_2 \le i+1}} c^{l_1,l_2}_i (A_1\Theta)(A_2\Theta) + \sum_{\substack{A_{1,2,3} \in \calP_{l_1,l_2, l_3}\\ l_1 + l_2 + l_3 = i+2,\\l_1,l_2,l_3 \le i+1}} c^{l_1,l_2,l_3}_i(A_1\Theta)(A_2\Theta)(A_3\Theta),
        \end{equation}
        where $P_{i+1} = \bar P_i \Dz \in \calP_{i+1}$ and $c^{l_1,l_2}_i,c^{l_1,l_2,l_3}_i$ are constants.

        \item ($\barcalP$ Derivative of the velocity component $U^0$)
        \begin{equation}
            \label{PbarU0}
            \begin{split}
            &\bar P_i (U^0)^{-2} = \sum_{\substack{A_{1,2}\in \calP_{l_1,l_2},\\l_1+l_2 =i,\\l_1 \le i-1 }} c^{l_1,l_2}_{i,1} (A_1\pt\Theta)(A_2\pt\Theta) + \lamt\sum_{\substack{A_{1,2}\in \calP_{l_1,l_2},\\l_1+l_2 =i,\\l_1 \le i-1}} c^{l_1,l_2}_{i,2} (A_1\pt\Theta)(A_2\Theta)\\
            &+ \lamt^2\sum_{\substack{A_{1,2}\in \calP_{l_1,l_2},\\l_1+l_2 =i,\\l_1 \le i-1}} c^{l_1,l_2}_{i,3} (A_1\Theta)(A_2\Theta)+ \lamt\sum_{\substack{A_{1,2}\in \calP_{l_1,l_2},\\l_1+l_2 =i,\\l_1 \le i-1}} c^{l_1,l_2}_{i,4} (A_1\ze)(A_2\pt\Theta + \lamt A_2\Theta) - \lamt^2 \bar P_{i}\ze^2,
            \end{split}
        \end{equation}
        where $c^{l_1,l_2}_{i,j}$ are constants.
        
        In particular, we also have:
        \begin{equation}
            \label{PbarU0highest}
            \begin{split}
            &\barcalD_{i+1}(U^0)^{-2} = -2(\pt \Theta + \lamt(\Theta + \ze)) \calD_{i+1}\pt \Theta +\sum_{\substack{A_{1,2}\in \calP_{l_1,l_2},\\l_1+l_2 =i+1,\\l_1,l_2 \le i }} c^{l_1,l_2}_{i,1} (A_1\pt\Theta)(A_2\pt\Theta)\\
            &\quad+ \lamt\sum_{\substack{A_{1,2}\in \calP_{l_1,l_2},\\l_1+l_2 =i+1,\\l_1 \le i}} c^{l_1,l_2}_{i,2} (A_1\pt\Theta)(A_2\Theta) + \lamt^2\sum_{\substack{A_{1,2}\in \calP_{l_1,l_2},\\l_1+l_2 =i+1,\\l_1 \le i}} c^{l_1,l_2}_{i,3} (A_1\Theta)(A_2\Theta)\\
            &\quad + \lamt\sum_{\substack{A_{1,2}\in \calP_{l_1,l_2},\\l_1+l_2 =i+1,\\l_1,l_2 \le i}} c^{l_1,l_2}_{i,4} (A_1\ze)(A_2\pt\Theta) + \lamt\sum_{\substack{A_{1,2}\in \calP_{l_1,l_2},\\l_1+l_2 =i+1,\\l_1 \le i}} c^{l_1,l_2}_{i,5} (A_1\ze)(A_2\Theta)- \lamt^2 \barcalD_{i+1} \ze^2,
            \end{split}
        \end{equation}
        where $c^{l_1,l_2}_{i,j}$ are constants. 
    \end{enumerate}
\end{prop}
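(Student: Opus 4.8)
The plan is to reduce all three identities to the Leibniz (product) rule for the operator classes $\calP_j$ and $\barcalP_j$ applied to explicit polynomial expressions in $\Theta$, $\pt\Theta$, $\ze$ and the weights $\ze^{-1}$. For \eqref{PbarF} the starting point is Lemma \ref{lem:calF-1}, which gives
\[
\calF - 1 = \Dz\!\left(\Theta + \frac{\Theta^2}{\ze} + \tfrac13\frac{\Theta^3}{\ze^2}\right),
\]
and for \eqref{PbarU0} and \eqref{PbarU0highest} the starting point is the last identity in \eqref{E:U_0_Theta_and_bar_lambda}, namely $(U^0)^{-2} = 1 - (\pt\Theta + \lamt(\Theta+\ze))^2$. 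Throughout one uses that $\bar P_i$ kills constants (for $i\ge 1$), commutes with multiplication by the $\tau$-dependent factor $\lamt$, may trade $\Dz$ for $\pz + 2\ze^{-1}$ near the origin, and that pre- or post-composing an element of $\calP_j$ with multiplication by $\ze^{\pm 1}$ stays within these classes with the index shifted by one.

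First I would treat \eqref{PbarF}. Directly from the definitions of $\barcalP_i$ and $\calP_{i+1}$ one has $P_{i+1} := \bar P_i\Dz \in \calP_{i+1}$, so applying $\bar P_i$ to the displayed formula reproduces $P_{i+1}\Theta$ from the linear term. Writing $\Theta^2/\ze = \Theta\cdot(\ze^{-1}\Theta)$ and $\Theta^3/\ze^2 = \Theta\cdot(\ze^{-1}\Theta)\cdot(\ze^{-1}\Theta)$ and distributing $P_{i+1}$ over these products by the product rule \eqref{eq:basicproduct}, the index $i+1$ carried by $P_{i+1}$ is split among the two (resp.\ three) copies of $\Theta$, and each surviving $\ze^{-1}$ raises by one the index of the factor carrying it; this yields exactly the index sums $l_1+l_2 = i+2$ and $l_1+l_2+l_3 = i+2$, the constraint $l_j\le i+1$ recording that no single factor can absorb all of $P_{i+1}$ together with a weight. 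The apparent singularities $\ze^{-1},\ze^{-2}$, and those produced by rewriting $\Dz$, are all reabsorbed into the admissible classes, so nothing blows up at $\ze=0$.

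Next I would treat \eqref{PbarU0}. Expanding the square gives
\[
(U^0)^{-2} = 1 - (\pt\Theta)^2 - 2\lamt(\pt\Theta)\Theta - 2\lamt\ze\,\pt\Theta - \lamt^2\Theta^2 - 2\lamt^2\ze\Theta - \lamt^2\ze^2 ,
\]
and applying $\bar P_i$ termwise (it annihilates the constant and passes through $\lamt$) and invoking the product rule produces the five sums plus the leftover $-\lamt^2\bar P_i\ze^2$; the $\ze\,\pt\Theta$ and $\ze\,\Theta$ contributions combine into the single sum with $(A_1\ze)(A_2\pt\Theta + \lamt A_2\Theta)$. The asymmetric bound $l_1\le i-1$ is the one structural point requiring care: writing $\bar P_i = W\dz$ with $W\in\calP_{i-1}$, the terminal $\dz$ must land on one factor of each product, so that factor carries at least one derivative; relabeling it as the second factor forces $l_2\ge 1$. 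Identity \eqref{PbarU0highest} is the same computation with $\barcalD_{i+1} = \calD_i\dz$ in place of $\bar P_i$: applying $\dz$ first gives $\dz(U^0)^{-2} = -2(\pt\Theta + \lamt(\Theta+\ze))(\pt\dz\Theta + \lamt\dz\Theta + \lamt)$, and then applying $\calD_i$ by the product rule produces all listed sums, the unique top-order contribution being the one in which $\calD_i$ falls entirely on $\pt\dz\Theta$; commuting $\pt$ out, this is the displayed leading term $-2(\pt\Theta + \lamt(\Theta+\ze))\calD_{i+1}\pt\Theta$, and every remaining term places at most $i$ further derivatives on any single copy of $\Theta$, giving $l_1,l_2\le i$.

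The main obstacle is not any individual step but the combinatorial bookkeeping: tracking exactly how the total index is partitioned under repeated use of the product rule, checking that every $\ze^{-1}$ generated along the way --- whether by differentiating the weights $\ze^{\pm1},\ze^{\pm2}$ or by replacing $\Dz$ with $\pz + 2\ze^{-1}$ near the origin --- is reabsorbed into the classes $\calP_\bullet$ (so that the asserted expressions are genuinely smooth up to $\ze=0$), and verifying the sharp ranges of the indices $l_j$ in each sum. This is where the precise definitions of $\calP_j$, $\barcalP_j$ and the elementary product/commutator rules are used essentially; the rest is an explicit, if lengthy, calculation.
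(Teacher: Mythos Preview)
Your outline for \eqref{PbarF} and \eqref{PbarU0} is essentially the paper's argument: the paper too starts from Lemma \ref{lem:calF-1} and then invokes the packaged product rules in Lemma \ref{lem:Fnonlinear} (for the $\Theta^2/\ze$ and $\Theta^3/\ze^2$ pieces) and Lemma \ref{lem:generalproductrule} (after writing $\bar P_i=P_{i-1}\pz$ for the $(U^0)^{-2}$ piece). Your index-partition description is exactly what those lemmas encode.

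There is, however, a genuine gap in your treatment of \eqref{PbarU0highest}. You write that when $\calD_i$ falls entirely on $\pz\pt\Theta$ one obtains $\calD_{i+1}\pt\Theta$. But $\calD_i\pz=\barcalD_{i+1}$, and $\barcalD_{i+1}\neq\calD_{i+1}$; for instance with $i=0$ this is $\pz$ versus $\Dz$. So a naive application of the product rule to $\calD_i[(\pt\Theta+\lamt(\Theta+\ze))\,\pz\pt\Theta]$ produces a leading term with the wrong operator. The proposition, as the Remark following it stresses, requires exactly $\calD_{i+1}\pt\Theta$, not merely some $P_{i+1}\pt\Theta$, because this precise form is what permits the integration-by-parts cancellation in Section \ref{sect: Energy Estimate V}.

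The fix, which is what the paper does, is to first rewrite $\pz\pt\Theta=\Dz\pt\Theta-\frac{2}{\ze}\pt\Theta$ and then invoke Lemma \ref{lem:highestspatial}, which gives $\calD_i(f\Dz g)=f\,\calD_{i+1}g+\text{(lower order)}$. This manufactures the exact operator $\calD_{i+1}$ in the leading term. The correction $-\frac{2}{\ze}\pt\Theta$ then combines with the other factor into expressions like $\frac{(\pt\Theta)^2}{\ze}$ and $(1+\frac{\Theta}{\ze})\pt\Theta$, to which the special product rules \eqref{eq:thetasquare} and \eqref{Pproduct} apply and confirm that all remaining terms satisfy $l_1,l_2\le i$. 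Without this $\pz\to\Dz$ substitution and Lemma \ref{lem:highestspatial}, the sharp top-order structure in \eqref{PbarU0highest} does not follow.
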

\begin{proof}
Throughout the proof, all constants might change from line to line.
    \begin{enumerate}
        \item 
        Invoking the identity \eqref{eq:calF-1}, then
        \begin{align*}
            \bar P_i \calF &= \bar P_i \Dz \Theta + \bar P_i \Dz \left(\frac{\Theta^2}{\ze}\right) + \frac13\bar P_i \Dz \left(\frac{\Theta^3}{\ze^2}\right)\\
            &= P_{i+1} \Theta + P_{i+1} \left(\frac{\Theta^2}{\ze}\right) + \frac13 P_{i+1} \left(\frac{\Theta^3}{\ze^2}\right),
        \end{align*}
        where $P_{i+1} := \bar P_i \Dz \in \calP_i$. Then the desired identity \eqref{PbarF} follows from Lemma \ref{lem:Fnonlinear}.

        \item We first prove \eqref{PbarU0}. To begin with, we note the following computations:
        \begin{equation}\label{pzU0}
        \begin{split}
        \pz (U^0)^{-2} &= \pz\left(1-(\pt\Theta + \lamt(\Theta + \ze))^2\right)\\
        &= -2(\pt\Theta + \lamt(\Theta + \ze))(\dz\dt\Theta + \lamt(\dz\Theta + 1))\\
        &= -2(\pt\Theta + \lamt\Theta)(\dz\dt\Theta + \lamt(\dz\Theta + 1)) - 2\lamt\ze(\dz\dt\Theta + \lamt\dz\Theta) - 2\lamt^2 \ze\\
        &= -2 (\pt\Theta + \lamt\Theta)\left(\Dz\dt\Theta - \frac{2}{\ze}\dt\Theta + \lamt(\Dz\Theta - \frac2\ze \Theta + 1)\right)\\
        &\quad -2\lamt\ze\left(\Dz\dt\Theta - \frac{2}{\ze}\dt\Theta + \lamt(\Dz\Theta - \frac2\ze \Theta)\right) - 2\lamt^2\ze.
        \end{split}
        \end{equation}
        Writing $\bar P_i = P_{i-1}\dz$, $P_{i-1} \in \calP_{i-1}$, and applying Lemma \ref{lem:generalproductrule}, we arrive at
        \begin{align*}
            \bar P_i (U^0)^{-2} &= \sum_{\substack{A_{1,2}\in \calP_{l_1,l_2},\\l_1+l_2 =i,\\l_1 \le i-1 }} c^{l_1,l_2}_i(A_1\p_\tau \Theta + \lamt A_1\Theta)(A_2\p_\tau\Theta + \lamt A_2\Theta)\\
            &\quad + \sum_{\substack{B_{1,2}\in \calP_{l_1,l_2},\\l_1+l_2 =i,\\l_1 \le i-1 }}\bar c^{l_1,l_2}_i (B_1 \ze)(B_2\p_\tau\Theta + \lamt B_2\Theta) + \lamt^2 \bar P_{i}\ze^2\\
            &= \sum_{\substack{A_{1,2}\in \calP_{l_1,l_2},\\l_1+l_2 =i,\\l_1 \le i-1 }} c^{l_1,l_2}_i(A_1\p_\tau \Theta + \lamt A_1(\Theta+\ze))(A_2\p_\tau\Theta + \lamt A_2\Theta) - \lamt^2 \bar P_{i}\ze^2,
        \end{align*}
        which completes the proof of \eqref{PbarU0} after we expand the expression above.
        
        To prove \eqref{PbarU0highest}, we view $\barcalD_{i+1} = \calD_i\pz$. According to the second line of \eqref{pzU0}, most terms can be treated in the same way as the proof of \eqref{PbarU0} except for the following top order term:
        \begin{align*}
        -2\calD_{i}&\left[(\pt \Theta + \lamt(\Theta + \ze))(\pz\pt\Theta)\right] = -2\calD_i\left[(\pt \Theta + \lamt(\Theta + \ze))(\Dz\pt\Theta-\frac{2}{\ze}\pt\Theta)\right]\\
        &= -2\calD_i\left((\pt \Theta + \lamt(\Theta + \ze)) \Dz\pt\Theta - 2\frac{(\pt\Theta)^2}{\ze} - 2\lamt \left(1+\frac{\Theta}{\ze}\right)\pt\Theta\right)\\
        &=: -2(I_1 + I_2 + I_3).
        \end{align*}
        Using Lemma \ref{lem:highestspatial}, we can write $I_1$ as
        \begin{align*}
            I_1 &= (\pt \Theta + \lamt(\Theta + \ze))\calD_{i+1}\dt\Theta + \sum_{\substack{A_{l_1,l_2}\in \calP_{l_1,l_2},\\l_1 + l_2 = i+1,\\l_1,l_2 \le i}} c_i^{l_1,l_2}(A_1\dt\Theta)(A_2\dt\Theta)\\
            &\quad + \lamt\sum_{\substack{A_{l_1,l_2}\in \calP_{l_1,l_2},\\l_1 + l_2 = i+1,\\l_1,l_2 \le i}} c_i^{l_1,l_2} (A_1\Theta)(A_2\dt\Theta) + \lamt\sum_{\substack{A_{l_1,l_2}\in \calP_{l_1,l_2},\\l_1 + l_2 = i+1,\\l_1,l_2 \le i}} c_i^{l_1,l_2} (A_1\ze)(A_2\dt\Theta)
        \end{align*}
        Using product rule \ref{eq:thetasquare}, we can write $I_2$ as
        $$
        I_2 = \sum_{\substack{A_{l_1,l_2}\in \calP_{l_1,l_2},\\l_1 + l_2 = i+1,\\l_1,l_2 \le i}} c_i^{l_1,l_2}(A_1\dt\Theta)(A_2\dt\Theta).
        $$
        Finally using \eqref{Pproduct}, we can write $I_3$ as
        $$
        I_3 = \lamt \sum_{\substack{A_{l_1,l_2}\in \calP_{l_1,l_2},\\l_1 + l_2 = i+1,\\l_2 \le i}}c_i^{l_1,l_2}(A_1\Theta)(A_2\pt\Theta).
        $$
        The identity \eqref{PbarU0highest} is thus proved by combining all computations above.
    \end{enumerate}
\end{proof}
\begin{rmk}
    It is readily seen from formula \eqref{PbarU0} that a $\calO(1)$-size source term $\lamt^2 \bar P_i \ze^2$ appears, and it persists when $i \le 2$. As we will see later, this exactly reflects the fact that the largest error appearing in energy estimates comes from low frequencies. We point out that this phenomenon is fundamentally tied to the scaling ansatz that we impose on the original variable $\rho$ and $u$, in the sense that the relativistic Euler equations \textbf{does not} admit any scaling symmetry in the original variables.
\end{rmk}
The second proposition concerns derivatives for powers of $\calF$ and $(U^0)^{-2}$:
\begin{prop}
    \label{prop:buildingblockpowers}
    Let $i \ge 1$ be an integer, and consider $\bar P_i \in \barcalP_{i}$. Then the following statements hold:
    \begin{enumerate}
        \item For a given $\alpha > 0$, we can write $\bar P_i (\calF^{-\alpha})$ as a linear combination of the following three types of terms:
        \begin{equation}
            \label{PbarF-atype1}
            \calF^{-\alpha-k}\prod_{\substack{W_j \in \bar P_{i_j},\\\sum_{j=1}^k i_j = i}} (W_j\Dz\Theta),
        \end{equation}
        \begin{equation}
            \label{PbarF-atype2}
            \calF^{-\alpha-k}\prod_{\substack{W_j^a \in \bar P_{i_j^a},\\ \sum_{a = 1}^2 \sum_{j=1}^k i_j^a = i}} (W_j^a\Dz\Theta),
        \end{equation}
        and
        \begin{equation}
            \label{PbarF-atype3}
            \calF^{-\alpha-k}\prod_{\substack{W_j^b \in \bar P_{i_j^b},\\ \sum_{b = 1}^3 \sum_{j=1}^k i_j^b = i}} (W_j^b\Dz\Theta),
        \end{equation}
        for $k = 1,\hdots, i$.

        \item For a given $\alpha \neq 0$, $\bar P_i \left((U^0)^{\alpha}\right)$ can be written as a linear combination of terms in the following types:
        \begin{equation}
            \label{PbarU0type1}
            (U^0)^{\alpha + 2k} \prod_{\substack{W_j^a \in \barcalP_{i_j^a},\\\sum_{a=1}^2 \sum_{j=1}^k i_j^a = i}}(W_j^aH_j^a(\Theta)),
        \end{equation}
        \begin{equation}
            \label{PbarU0type2}
            (U^0)^{\alpha + 2k} \prod_{\substack{W_j^b \in \barcalP_{i_j^b},\\\sum_{b=1}^2 \sum_{j=1}^k i_j^b = i}} (W_j^1\ze)(W_j^2H_j(\Theta)),
        \end{equation}
        and
        \begin{equation}
            \label{PbarU0type3}
            (U^0)^{\alpha + 2k} \prod_{\substack{W_j \in \barcalP_{i_j},\\\sum_{j=1}^k i_j = i}} \lamt^2(W_j\ze^2),
        \end{equation}
        where $k = 1,\hdots, i$, and $H_j^a(\Theta), H_j(\Theta)$ can either be $\pt\Theta$ or $\Theta$.
    \end{enumerate}
\end{prop}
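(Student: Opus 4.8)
Here is how I would prove Proposition~\ref{prop:buildingblockpowers}.

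\medskip

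\noindent\textbf{Overall strategy.} The plan is to derive both statements from a single Fa\`a di Bruno–type formula for the operators in $\barcalP_i$, applied to a scalar function of $\calF$ (resp.\ of $(U^0)^{-2}$), and then to substitute the structural expansions already recorded in Proposition~\ref{prop:buildingblock}. The first step is to establish, by induction on $i$ and repeated use of the general product rule of Lemma~\ref{lem:generalproductrule} (together with \eqref{eq:basicproduct}), that for any smooth $g$ one has
\begin{equation*}
\bar P_i\big(g(\calF)\big)=\sum_{k=1}^{i}\ \sum_{\substack{W_j\in\barcalP_{i_j},\ i_j\ge 1\\ \sum_{j=1}^{k} i_j=i}} c\, g^{(k)}(\calF)\prod_{j=1}^{k}\big(W_j\calF\big),
\end{equation*}
up to lower-order corrections generated by the $\ze^{-1}$- and $\Dz$-parts of the $\barcalP_i$ operators, which are themselves absorbed into the same family of expressions (this is exactly where one uses that the $W_j$ range over $\barcalP$, which already contains $\ze^{-1}$ compositions, and that $k$ is allowed to run from $1$ to $i$). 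The same formula with $\calF$ replaced by $(U^0)^{-2}$ serves Part~(2).

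\medskip

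\noindent\textbf{Part (1).} Take $g(x)=x^{-\alpha}$, so that $g^{(k)}(x)=c_{\alpha,k}\,x^{-\alpha-k}$ with $c_{\alpha,k}=(-\alpha)(-\alpha-1)\cdots(-\alpha-k+1)$; the hypothesis $\alpha>0$ guarantees $c_{\alpha,k}\neq0$ for every $k\ge1$, so the formula does not degenerate at $k=1$. This reduces $\bar P_i(\calF^{-\alpha})$ to a constant-coefficient sum of $\calF^{-\alpha-k}\prod_{j=1}^k(W_j\calF)$. Into each factor I substitute \eqref{PbarF}: writing $W_j=\bar P_{i_j}$, the quantity $W_j\calF$ equals the base term $W_j\Dz\Theta$ plus quadratic and cubic expressions in derivatives of $\Theta$ of total order $i_j+2$, each of which, after the routine relabellings among the classes $\calP$ and $\barcalP$ and a use of Lemma~\ref{lem:Fnonlinear} to normalize the nonlinear products, is again of the form $(\text{an element of }\barcalP)\,\Dz\Theta$. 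Expanding $\prod_{j=1}^k(W_j\calF)$ and letting each factor contribute its linear, quadratic, or cubic piece produces terms with between $k$ and $3k$ base factors of $\Theta$ whose orders sum to $i$ plus the number of factors; regrouping according to whether the maximal number of base factors is $k$, at most $2k$, or at most $3k$ yields precisely the three declared types \eqref{PbarF-atype1}--\eqref{PbarF-atype3}, with $k$ ranging over $1,\dots,i$. This step is pure bookkeeping.

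\medskip

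\noindent\textbf{Part (2).} The argument is entirely parallel. Using $(U^0)^{-2}=1-(\pt\Theta+\lamt(\Theta+\ze))^2$ from \eqref{E:U_0_Theta_and_bar_lambda}, I write $(U^0)^{\alpha}=h\big((U^0)^{-2}\big)$ with $h(x)=x^{-\alpha/2}$, so $h^{(k)}(x)=c'_{\alpha,k}x^{-\alpha/2-k}$ and the evaluated power is $\big((U^0)^{-2}\big)^{-\alpha/2-k}=(U^0)^{\alpha+2k}$; since $\alpha\neq0$, $c'_{\alpha,1}\neq0$ (some higher $c'_{\alpha,k}$ may vanish, e.g.\ for $\alpha\in\{-4,-8,\dots\}$, which merely removes terms). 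The Fa\`a di Bruno formula then writes $\bar P_i\big((U^0)^{\alpha}\big)$ as a constant-coefficient combination of $(U^0)^{\alpha+2k}\prod_{j=1}^k\big(W_j(U^0)^{-2}\big)$, $W_j\in\barcalP_{i_j}$, $\sum_j i_j=i$, $1\le k\le i$, into which I substitute \eqref{PbarU0}: each $W_j(U^0)^{-2}$ is a linear combination of three shapes --- (i) $\lamt^{p}(A_1\mathsf H)(A_2\mathsf H')$ with $\mathsf H,\mathsf H'\in\{\pt\Theta,\Theta\}$ and $p\in\{0,1,2\}$, (ii) $\lamt^{p}(A_1\ze)(A_2\mathsf H)$ with $\mathsf H\in\{\pt\Theta,\Theta\}$ and $p\in\{1,2\}$, and (iii) $-\lamt^2\,\bar P_{i_j}\ze^2$. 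Expanding the product over $j$ and grouping by the shape supplied by each factor --- all of shape (i) $\to$ \eqref{PbarU0type1}, all of shape (ii) $\to$ \eqref{PbarU0type2}, all of shape (iii) $\to$ \eqref{PbarU0type3}, with the $\tau$-dependent $\lamt$'s absorbed into constants or displayed as in \eqref{PbarU0type3} --- gives exactly the asserted decomposition, the index constraints $\sum i_j^a=i$ and $k\le i$ falling out of the accounting.

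\medskip

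\noindent\textbf{Main obstacle.} There is essentially no analytic content; the difficulty is entirely in the careful formulation of the iterated product/chain rule for the \emph{non-derivation} operators in $\barcalP_i$: tracking how a $\Dz$ versus a $\pz$, and the scalar $\ze^{-1}$ multiplications, redistribute across a product so that the operators acting on individual factors genuinely land in classes $\barcalP_{i_j}$ with $\sum_j i_j=i$, and carrying along the matching combinatorial count of derivative orders. Once that formula is in place --- which is what Lemma~\ref{lem:generalproductrule} is designed to facilitate --- both statements follow by substituting Proposition~\ref{prop:buildingblock} and regrouping.
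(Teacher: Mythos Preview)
Your approach is essentially the same as the paper's. The paper's proof is extremely terse: for Part~(1) it invokes the chain rule \eqref{eq:chainrule} (which is exactly your Fa\`a di Bruno formula, already recorded as Lemma~\ref{lem:chainrule}) and then substitutes \eqref{PbarF}; for Part~(2) it writes $(U^0)^\alpha = \big((U^0)^{-2}\big)^{-\alpha/2}$, applies the same chain rule to get $(U^0)^{\alpha+2k}\prod_j W_j(U^0)^{-2}$, and substitutes \eqref{PbarU0}. You have simply re-derived \eqref{eq:chainrule} by induction and the product rule rather than quoting it, and then carried out the same substitution and regrouping.
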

\begin{proof}
        The representation of typical terms \eqref{PbarF-atype1}, \eqref{PbarF-atype2}, and \eqref{PbarF-atype3} follow from the chain rule \eqref{eq:chainrule} and then \eqref{PbarF}. As for expressions concerning $(U^0)^\alpha$, we note that the chain rule tells us that
        \begin{align*}
        \bar P_i((U^0)^\alpha) &= \bar P_i((U^0)^{-2})^{-\frac{\alpha}{2}} = \sum_{k=1}^i (U^0)^{-2\cdot (-\frac\alpha2 - k)}\prod_{\substack{W_j \in \barcalP_{i_j},\\\sum_{j=1}^k i_j = i}} c^{i_1,\hdots,i_k}_i (W_j(U^0)^{-2})\\
        &= \sum_{k=1}^i (U^0)^{\alpha + 2k}\prod_{\substack{W_j \in \barcalP_{i_j},\\\sum_{j=1}^k i_j = i}} c^{i_1,\hdots,i_k}_i (W_j(U^0)^{-2}).
        \end{align*}
        Then \eqref{PbarU0type1}, \eqref{PbarU0type2}, and \eqref{PbarU0type3} follow from \eqref{PbarU0}.
\end{proof}

With the preparations done in Proposition \ref{prop:buildingblockpowers}, we show in the following proposition a detailed decomposition of the key quantity $\barcalG$:

\begin{prop} \label{prop:buildingblockpowersG}
    Let $i \ge 1$ be an integer, and consider $\bar P_i \in \barcalP_i$. The quantity $\bar P_i (\kk \barcalG)$ can be represented as a linear combination of the following types of terms: 
    \begin{subequations}\label{PbarGtype1}
    \begin{equation}
            \label{PbarGtype1a}
            (U^0)^{\kk + 2k}\cdot\bar P\left((U^0)^{-\kk}(0)(1+\delta \lam^{-3\kk}(0)w\calF^{-\kk}(0)\right) \prod_{\substack{V_j^a \in \calP_{i_j^a},\\\sum_{a=1}^2 \sum_{j=1}^k i_j^a = n_2}}(V_j^aH_j^a(\Theta)),
    \end{equation}
    \begin{equation}
            \label{PbarGtype1b}
            (U^0)^{\kk + 2k}\cdot\bar P\left((U^0)^{-\kk}(0)(1+\delta \lam^{-3\kk}(0)w\calF^{-\kk}(0)\right) \prod_{\substack{V_j^b \in \calP_{i_j^b},\\\sum_{b=1}^2 \sum_{j=1}^k i_j^b = n_2}} (V_j^b\ze)(V_j^bH_j(\Theta)),
    \end{equation}
    and
    \begin{equation}
            \label{PbarGtype1c}
            (U^0)^{\kk + 2k}\cdot\bar P\left((U^0)^{-\kk}(0)(1+\delta \lam^{-3\kk}(0)w\calF^{-\kk}(0)\right) \prod_{\substack{W_j \in \barcalP_{i_j},\\\sum_{j=1}^k i_j = n_2}} \lamt^2(W_j\ze^2),
    \end{equation}
    \end{subequations}
    as well as
    \begin{subequations}\label{PbarGtype2}
    \begin{equation}
            \label{PbarGtype2a}
            \delta \lam^{-3\kk}\calF^{-\kk - 2k}\cdot\bar Pw \prod_{\substack{W_j \in \bar P_{i_j},\\\sum_{j=1}^k i_j = n_2}} (W_j\Dz\Theta),
    \end{equation}
    \begin{equation}
            \label{PbarGtype2b}
            \delta \lam^{-3\kk}\calF^{-\kk - 2k}\cdot\bar Pw \prod_{\substack{W_j^a \in \bar P_{i_j^a},\\ \sum_{a = 1}^2 \sum_{j=1}^k i_j^a = n_2}} (W_j^a\Dz\Theta),
    \end{equation}
    and
    \begin{equation}
            \label{PbarGtype2c}
            \delta \lam^{-3\kk}\calF^{-\kk - 2k}\cdot\bar Pw \prod_{\substack{W_j^b \in \bar P_{i_j^b},\\ \sum_{b = 1}^3 \sum_{j=1}^k i_j^b = n_2}} (W_j^b\Dz\Theta),
    \end{equation}
    \end{subequations}
    where $\bar P \in \barcalP_{n_1}$ and $k = 1,\hdots, n_2$. Here, $n_1, n_2$ are nonnegative integers such that $n_1 + n_2 = i$.
\end{prop}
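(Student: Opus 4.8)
The plan is to differentiate the closed-form expression \eqref{eq:barcalG} for $\barcalG$ directly. Writing it out,
\[
\kk\,\barcalG \;=\; (U^0)^{\kk}\,h_0 \;-\; \delta\,\lam^{-3\kk}\,w\,\calF^{-\kk},\qquad h_0(\ze):=(U^0)^{-\kk}(0)\bigl(1+\delta\lam^{-3\kk}(0)\,w\,\calF^{-\kk}(0)\bigr),
\]
where $h_0$ is a fixed smooth profile on $[0,1]$ depending only on $\ze$ (smoothness following from Lemma \ref{lem:wderivative}, the gauge choice \eqref{gauge fixing}, and the positivity of $\calF_0$ and $U^0(0)\ge 1$). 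The key structural observation is that this formula involves only the \emph{spatial} objects $U^0$, $\calF$, $w$, together with the $\ze$-independent scalar $\lam=\lam(\tau)$; consequently $\bar P_i$, being a purely spatial operator, never produces a genuine time derivative, and the occurrences of $\pt\Theta$ in \eqref{PbarGtype1a}--\eqref{PbarGtype1b} are entirely inherited from the identity $(U^0)^{-2}=1-(\pt\Theta+\lamt(\Theta+\ze))^2$ through Propositions \ref{prop:buildingblock} and \ref{prop:buildingblockpowers}.

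For the first summand I would apply the generalized product rule for $\barcalP$-operators (Lemma \ref{lem:generalproductrule}) to $\bar P_i\bigl((U^0)^{\kk}h_0\bigr)$, obtaining a finite linear combination, over splittings $n_1+n_2=i$, of terms $(\bar P\,h_0)\bigl(\bar Q\,(U^0)^{\kk}\bigr)$ with $\bar P\in\barcalP_{n_1}$, $\bar Q\in\barcalP_{n_2}$. The factor $\bar P h_0$ is precisely the prefactor appearing in \eqref{PbarGtype1}, so it is left untouched; the factor $\bar Q\,(U^0)^{\kk}$ is expanded by Proposition \ref{prop:buildingblockpowers}(2) with $\alpha=\kk$ into the three admissible shapes \eqref{PbarU0type1}--\eqref{PbarU0type3}, carrying the powers $(U^0)^{\kk+2k}$ and the three product patterns $\prod(W_j^aH_j^a(\Theta))$, $\prod(W_j^1\ze)(W_j^2H_j(\Theta))$, $\prod\lamt^2(W_j\ze^2)$ of total order $n_2$. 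Substituting these back produces exactly the families \eqref{PbarGtype1a}, \eqref{PbarGtype1b}, \eqref{PbarGtype1c}, with $\sum i_j^a=\sum i_j^b=\sum i_j=n_2$ and $n_1+n_2=i$ as required.

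For the second summand I would use that $\lam$ is $\ze$-independent, so $\bar P_i\bigl(\delta\lam^{-3\kk}w\calF^{-\kk}\bigr)=\delta\lam^{-3\kk}\,\bar P_i\bigl(w\calF^{-\kk}\bigr)$; another application of Lemma \ref{lem:generalproductrule} splits this into $\sum_{n_1+n_2=i}(\bar P w)\bigl(\bar Q\,\calF^{-\kk}\bigr)$, and Proposition \ref{prop:buildingblockpowers}(1) with $\alpha=\kk$ rewrites $\bar Q\,\calF^{-\kk}$ as a linear combination of \eqref{PbarF-atype1}--\eqref{PbarF-atype3}, i.e. $\calF^{-\kk-k}$ times a product of at most three nested groups of factors $W_j^b\Dz\Theta$ of total order $n_2$. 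Multiplying back by $-\delta\lam^{-3\kk}\,\bar P w$ yields exactly \eqref{PbarGtype2a}, \eqref{PbarGtype2b}, \eqref{PbarGtype2c}. Adding the two contributions gives the stated decomposition.

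The only genuine difficulty is the combinatorial bookkeeping: one must track that the exponents of $U^0$ and $\calF$ shift by $2$ (resp. by $1$) each time a $\barcalP$-derivative lands on $(U^0)^{-2}$ (resp. on a $\calF$-factor) via the chain rule, and that the inner multi-index sums reconcile with the outer splitting $n_1+n_2=i$ produced by the product rule. This is purely algebraic once Propositions \ref{prop:buildingblock} and \ref{prop:buildingblockpowers} are in hand; no new analytic estimate is needed, and the reason the three nested-product patterns in \eqref{PbarGtype2} (rather than one) must be kept is simply that $\bar P_i\calF$ already carries the cubic structure displayed in \eqref{PbarF}.
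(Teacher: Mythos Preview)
Your proposal is correct and follows essentially the same route as the paper: the paper's proof is the single sentence ``The proposition follows from a straightforward computation using chain rule, product rule \eqref{Pbarproduct}, and Proposition \ref{prop:buildingblockpowers},'' and your argument is precisely a fleshed-out version of this, starting from the explicit formula \eqref{eq:barcalG}, splitting via \eqref{Pbarproduct} into $(\bar P h_0)(\bar Q(U^0)^\kk)$ and $(\bar P w)(\bar Q\calF^{-\kk})$ pieces, and then invoking Proposition \ref{prop:buildingblockpowers} on each $\bar Q$-factor.
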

\begin{proof}
    The proposition follows from a straightforward computation using chain rule, product rule \eqref{Pbarproduct}, and Proposition \ref{prop:buildingblockpowers}.
\end{proof}

Next, we present several key computations that will be instrumental in studying the nonlinear terms arising from the highest order spatial derivative of $\p_\tau \calF$. Such expressions will be useful in treating $E_4$.
\begin{prop}\label{prop:highestdtF}
    Let $i \ge 1$ be an integer. Then the following set of identities hold:\begin{equation}
        \label{eq:dtFsqpttheta}
        \calD_i \left(\pt\Theta \Dz\p_\tau\left(\frac{\Theta^2}{\ze}\right) \right)= 2\pt\Theta \frac{\Theta}{\ze}\calD_{i+1} \p_\tau \Theta + \sum_{\substack{A_{1,2,3} \in \calP_{l_1,l_2,l_3}\\l_1 + l_2 + l_3 = i+1\\l_1, l_2 \le i}} c_i^{l_1,l_2,l_3}(A_1\pt\Theta)(A_2\p_\tau\Theta)(A_3 \Theta),
    \end{equation}
    \begin{equation}
        \label{eq:dtFsqtheta}
        \calD_i \left(\Theta \Dz\p_\tau\left(\frac{\Theta^2}{\ze}\right) \right)= 2\frac{\Theta^2}{\ze}\calD_{i+1} \p_\tau \Theta + \sum_{\substack{A_{1,2,3} \in \calP_{l_1,l_2,l_3}\\l_1 + l_2 + l_3 = i+1\\l_1 \le i}} c_i^{l_1,l_2,l_3}(A_1\pt\Theta)(A_2\Theta)(A_3 \Theta),
    \end{equation}
    \begin{equation}
        \label{eq:dtFsqzeta}
        \calD_i \left(\ze \Dz\p_\tau\left(\frac{\Theta^2}{\ze}\right) \right)= 2\Theta\calD_{i+1} \p_\tau \Theta + \sum_{\substack{A_{1,2} \in \calP_{l_1,l_2}\\l_1 + l_2 = i+1\\l_1 \le i}} c_i^{l_1,l_2}(A_1\pt\Theta)(A_2\Theta).
    \end{equation}
    Similarly, the following identities hold:
    \begin{equation}
        \label{eq:dtFcupttheta}
        \begin{split}
        \calD_i \left(\pt\Theta \Dz\p_\tau\left(\frac{\Theta^3}{\ze^2}\right)\right) &= 3\pt\Theta\left(\frac{\Theta}{\ze}\right)^2\calD_{i+1} \p_\tau \Theta\\
        &\quad+ \sum_{\substack{A_{1,\hdots,4} \in \calP_{l_1,\hdots,l_4}\\l_1 +\hdots+ l_4 = i+1\\l_1,l_2\le i}} c_i^{l_1,l_2,l_3,l_4}(A_1\pt\Theta)(A_2\pt\Theta)(A_3\Theta)(A_4\Theta),
        \end{split}
    \end{equation}
    \begin{equation}
        \label{eq:dtFcutheta}
        \begin{split}
        \calD_i \left(\Theta \Dz\p_\tau\left(\frac{\Theta^3}{\ze^2}\right)\right) &= 3\Theta\left(\frac{\Theta}{\ze}\right)^2\calD_{i+1} \p_\tau \Theta\\
        &\quad+ \sum_{\substack{A_{1,\hdots,4} \in \calP_{l_1,\hdots,l_4}\\l_1 +\hdots+ l_4 = i+1\\l_1\le i}} c_i^{l_1,l_2,l_3,l_4}(A_1\pt\Theta)(A_2\Theta)(A_3\Theta)(A_4\Theta),
        \end{split}
    \end{equation}
    \begin{equation}
        \label{eq:dtFcuzeta}
        \calD_i \left(\ze \Dz\p_\tau\left(\frac{\Theta^3}{\ze^2}\right) \right)= 3\frac{\Theta^2}{\ze}\calD_{i+1} \p_\tau \Theta + \sum_{\substack{A_{1,2,3} \in \calP_{l_1,l_2,l_3}\\l_1 + l_2 + l_3 = i+1\\l_1 \le i}} c_i^{l_1,l_2,l_3}(A_1\pt\Theta)(A_2\Theta)(A_3 \Theta),
    \end{equation}
    where $c_i^{l_1,l_2,l_3}, c_i^{l_1,l_2,l_3,l_4}$ are constants.
\end{prop}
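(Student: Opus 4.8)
The plan is to prove all six identities by one common three-step scheme; I would write it out in full for \eqref{eq:dtFsqpttheta} and only indicate the cosmetic modifications for the other five. First I would reduce $\Dz\p_\tau(\Theta^2/\ze)$ and $\Dz\p_\tau(\Theta^3/\ze^2)$ to closed form. Since $\Dz$ is purely spatial it commutes with $\p_\tau$, so it suffices to expand $\Dz(\Theta^2/\ze)$ and $\Dz(\Theta^3/\ze^2)$ and then apply $\p_\tau$. Using the intermediate computations already carried out in the proof of Lemma \ref{lem:calF-1} together with $\pz\Theta = \Dz\Theta - 2\Theta/\ze$, one gets the exact identities
\[
\Dz\!\left(\tfrac{\Theta^2}{\ze}\right) = \tfrac{2\Theta}{\ze}\Dz\Theta - 3\tfrac{\Theta^2}{\ze^2},\qquad
\Dz\!\left(\tfrac{\Theta^3}{\ze^2}\right) = 3\tfrac{\Theta^2}{\ze^2}\Dz\Theta - 6\tfrac{\Theta^3}{\ze^3},
\]
hence, applying $\p_\tau$,
\[
\Dz\p_\tau\!\left(\tfrac{\Theta^2}{\ze}\right) = \tfrac{2\Theta}{\ze}\Dz\pt\Theta + \tfrac{2\pt\Theta}{\ze}\Dz\Theta - 6\tfrac{\Theta\pt\Theta}{\ze^2},
\]
and likewise $\Dz\p_\tau(\Theta^3/\ze^2) = 3\tfrac{\Theta^2}{\ze^2}\Dz\pt\Theta + 6\tfrac{\Theta\pt\Theta}{\ze^2}\Dz\Theta - 18\tfrac{\Theta^2\pt\Theta}{\ze^3}$. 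In each case exactly one summand carries a spatial derivative on $\pt\Theta$, while the rest are products of the order-zero and order-one quantities $\Theta,\pt\Theta,\ze^{-1}$ with a single $\Dz$ landing on a $\Theta$-factor.

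Next I would multiply by $H(\Theta)\in\{\pt\Theta,\Theta,\ze\}$ and apply $\calD_i$ to the resulting three- or four-factor product, using the generalized product rule (Lemma \ref{lem:generalproductrule}) together with the squaring identities \eqref{Pproduct}, \eqref{eq:thetasquare}. The displayed leading term comes from the summand containing $\Dz\pt\Theta$: by Lemma \ref{lem:highestspatial} the unique contribution in which all $i$ derivatives of $\calD_i$ fall on $\Dz\pt\Theta$ equals $H(\Theta)\cdot(\text{prefactor})\cdot\calD_{i+1}\pt\Theta$ modulo lower-order terms, and since $\ze^{-1}\in\calP_1$ this reproduces exactly $2\pt\Theta\,\tfrac{\Theta}{\ze}\,\calD_{i+1}\pt\Theta$, $2\tfrac{\Theta^2}{\ze}\calD_{i+1}\pt\Theta$, $2\Theta\,\calD_{i+1}\pt\Theta$ in \eqref{eq:dtFsqpttheta}--\eqref{eq:dtFsqzeta}, and the analogues $3\pt\Theta\,(\tfrac{\Theta}{\ze})^2\calD_{i+1}\pt\Theta$, etc., in \eqref{eq:dtFcupttheta}--\eqref{eq:dtFcuzeta}.

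It then remains to organize the remainder. Every term not of the leading type — those where some derivative of $\calD_i$ acts on the prefactor, together with all terms from $\calD_i$ applied to $\tfrac{2\pt\Theta}{\ze}\Dz\Theta$, $6\tfrac{\Theta\pt\Theta}{\ze^2}$ and their cubic counterparts — is automatically a product of at most three (resp.\ four) factors of the form $A\Theta$ or $A\pt\Theta$ with $A\in\calP$, with total derivative count $i+1$ (the $i$ supplied by $\calD_i$ plus the one $\Dz$ already present). So only the sharp index constraint needs checking, and this is exactly the nonroutine point of the argument. The hard part will be verifying that outside the displayed term no $\pt\Theta$-factor ever receives $i+1$ derivatives: this holds because in any non-leading contribution at least one derivative of $\calD_i$ is spent away from the factor $\Dz\pt\Theta$, which therefore acquires at most $i-1$ further derivatives and stays of order $\le i$, while the bare $\pt\Theta$-factor (and, when $H=\pt\Theta$, also the factor coming from $H$) can gain at most $i$. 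This yields $l_1,l_2\le i$ in \eqref{eq:dtFsqpttheta} and \eqref{eq:dtFcupttheta} and $l_1\le i$ in the $H=\Theta$ and $H=\ze$ identities. Assembling the leading term with this remainder gives the six claimed formulas; the cubic identities differ only in that the closed form of Step 1 carries one extra $\Theta$-factor, which accounts for the four-factor products on their right-hand sides.

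Once the closed-form reduction of Step 1 is in place, Steps 2 and 3 are a mechanical application of the product- and chain-rule lemmas established earlier together with Lemma \ref{lem:highestspatial}; I would present Step 1 in detail and treat the combinatorial verification in Step 3 with care, since this is where the precise bounds on the $l_k$'s — and thus the usefulness of these identities in the later treatment of $E_4$ — are actually earned.
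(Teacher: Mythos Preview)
Your approach is essentially the same as the paper's. Both expand $\Dz\p_\tau(\Theta^2/\ze)$ into a sum in which exactly one term carries $\Dz\pt\Theta$, extract the leading $\calD_{i+1}\pt\Theta$ contribution via Lemma~\ref{lem:highestspatial}, and control the remainder using the product rule \eqref{Pproduct} together with \eqref{eq:thetasquare}. The only cosmetic difference is ordering: the paper writes $\Dz\p_\tau(\Theta^2/\ze)=2[(\Theta/\ze)\Dz\pt\Theta+\pt\Theta\,\pz(\Theta/\ze)]$, multiplies by $H$, and then (for $H=\pt\Theta$) invokes the rewrite $(\pt\Theta)^2\pz(\Theta/\ze)=\frac{(\pt\Theta)^2}{\ze}(\Dz\Theta-3\Theta/\ze)$ as a ``crucial observation'' to set up \eqref{eq:thetasquare}; your Step~1 arrives at the same three-term form directly before multiplying by $H$, so no separate rewrite is needed.
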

\begin{proof}
    We start with the proof of identity \eqref{eq:dtFsqpttheta}. First, we note that
    $$
    \Dz\pt\left(\frac{\Theta^2}{\ze}\right) = 2\Dz\left(\pt\Theta\frac  \Theta\ze\right) = 2\left(\Dz\pt\Theta \frac{\Theta}{\ze} + \pt\Theta \pz\left(\frac{\Theta}{\ze}\right)\right).
    $$
    Using this piece of computation, we may write
    \begin{equation}\label{dtFaux1}
        \calD_i \left(\pt\Theta \Dz\p_\tau\left(\frac{\Theta^2}{\ze}\right) \right) = 2\calD_i\left(\pt\Theta \frac{\Theta}{\ze} \Dz\pt\Theta\right) + 2\calD_i\left((\pt\Theta)^2 \pz\left(\frac{\Theta}{\ze}\right)\right).
    \end{equation}
    For the first term in \eqref{dtFaux1}, we use Lemma \ref{lem:highestspatial} and the product rule \eqref{Pproduct} to write
    \begin{equation}
        \label{dtFaux2}
        \begin{split}
            2\calD_i\left(\pt\Theta \frac{\Theta}{\ze} \Dz\pt\Theta\right) &= 2\pt\Theta \frac{\Theta}{\ze} \calD_{i+1}\pt\Theta + \sum_{\substack{V_{1,2} \in \calP_{l_1,l_2},\\l_1 + l_2 = i+1,\\l_1\le i}}V_1\left(\pt\Theta\frac{\Theta}{\ze}\right) V_2\pt\Theta\\
            &= 2\pt\Theta \frac{\Theta}{\ze} \calD_{i+1}\pt\Theta + \sum_{\substack{A_{1,2,3} \in \calP_{l_1,l_2,l_3}\\l_1 + l_2 + l_3 = i+1\\l_1, l_2 \le i}} c_i^{l_1,l_2,l_3}(A_1\pt\Theta)(A_2\p_\tau\Theta)(A_3 \Theta)
        \end{split}
    \end{equation}
    For the second term in \eqref{dtFaux1}, we make the following crucial observation:
    $$
    (\pt\Theta)^2 \pz\left(\frac{\Theta}{\ze}\right) = \frac{(\pt\Theta)^2}{\ze}\cdot \ze\pz\left(\frac{\Theta}{\ze}\right) = \frac{(\pt\Theta)^2}{\ze}\cdot\left(\Dz\Theta -3\frac{\Theta}{\ze}\right).
    $$
    Hence, we may use the product rule \eqref{Pproduct} and subsequently \eqref{eq:thetasquare} to obtain
    \begin{equation}
        \label{dtFaux3}
        \begin{split}
            2\calD_i\left((\pt\Theta)^2 \pz\left(\frac{\Theta}{\ze}\right)\right) &= \sum_{\substack{V_1\in\calP_{l_1}, V_2 \in \barcalP_{l_2},\\l_1 + l_2 = i}} c^{l_1,l_2} V_1\left(\frac{(\pt\Theta)^2}{\ze}\right)V_2\left(\Dz\Theta -3\frac{\Theta}{\ze}\right)\\
            &= \sum_{\substack{A_{1,2,3} \in \calP_{l_1,l_2,l_3}\\l_1 + l_2 + l_3 = i+1\\l_1, l_2 \le i}} c_i^{l_1,l_2,l_3}(A_1\pt\Theta)(A_2\p_\tau\Theta)(A_3 \Theta).
        \end{split}
    \end{equation}
    Then \eqref{eq:dtFsqpttheta} follows from \eqref{dtFaux1}, \eqref{dtFaux2}, and \eqref{dtFaux3}. The identities \eqref{eq:dtFsqtheta} and \eqref{eq:dtFsqzeta} follow from very similar arguments. To prove \eqref{eq:dtFcupttheta}, \eqref{eq:dtFcutheta}, and \eqref{eq:dtFcuzeta}, we use the following identity:
    \begin{align*}
        \Dz \pt \left(\frac{\Theta^3}{\ze^2}\right) &= 3\left[\left(\frac{\Theta}{\ze}\right)^2 \Dz\pt\Theta + 2\frac{\Theta}{\ze} \pt\Theta \pz\left(\frac{\Theta}{\ze}\right)\right].
    \end{align*}
    Then a similar argument to that proving \eqref{eq:dtFsqpttheta}, \eqref{eq:dtFsqtheta}, and \eqref{eq:dtFsqzeta} would yield the desired identities. We omit the tedious details here.
\end{proof}

Finally, we remark on several computations which concern the scenario where a $\calP_i$ vectorfield acts on $\pt (U^0)^{-2}$. 
\begin{prop}\label{prop:dtU0}
Let $i \ge 1$ be an integer and $P_i \in \calP_i$. Then the following identities hold:  
\begin{enumerate}
\item (Lower order formula) Let $H(\Theta) = \Theta$ or $\pt\Theta$. Then
{\allowdisplaybreaks
\begin{align}
    \label{eq:dtU0H}
    P_i&\left(H(\Theta) \pt (U^0)^{-2}\right) = \sum_{\substack{A_{1,2,3}\in\calP_{l_1,l_2,l_3},\\l_1+l_2 + l_3 = i}} c^{l_1,l_2,l_3}_{i,1}(A_1H(\Theta))(A_2\pt^2\Theta)\left[\lamt(A_3\ze) + A_3(\pt\Theta + \lamt\Theta)\right] \nonumber\\
    &+ \lamt\pt\lamt\sum_{\substack{A_{1}\in\calP_{l_1},A_2 \in\barcalP_{l_2}\\l_1+l_2= i}} c^{l_1,l_2}_{i,2}(A_1H(\Theta))(A_2\ze^2)\nonumber\\
    &+ \lamt\pt\lamt \sum_{\substack{A_{1,2,3}\in\calP_{l_1,l_2,l_3},\\l_1+l_2 + l_3 = i}} c^{l_1,l_2,l_3}_{i,3}(A_1H(\Theta))(A_2\Theta)(A_3\ze)\nonumber\\
    &+ (\lamt^2 + \pt\lamt)\sum_{\substack{A_{1,2,3}\in\calP_{l_1,l_2,l_3},\\l_1+l_2 + l_3 = i}} c^{l_1,l_2,l_3}_{i,4}(A_1H(\Theta))(A_2\pt\Theta)(A_3\ze)\nonumber\\
    &+ \sum_{\substack{A_{1,2,3}\in\calP_{l_1,l_2,l_3},\\l_1+l_2 + l_3 = i}} c^{l_1,l_2,l_3}_{i,5}(A_1H(\Theta))(A_2(\pt\Theta + \lamt\Theta))(A_3(\pt\lamt \Theta + \lamt \pt\Theta)).
\end{align}}
We also have the following:
\begin{equation}
    \label{eq:dtU0zeta}
    \begin{split}
    P_i&\left(\ze \pt (U^0)^{-2}\right) = \sum_{\substack{A_{1,2,3}\in\calP_{l_1,l_2,l_3},\\l_1+l_2 + l_3 = i}} c^{l_1,l_2,l_3}_{i,1}(A_1\ze)(A_2\pt^2\Theta)\left[\lamt(A_3\ze) + A_3(\pt\Theta + \lamt\Theta)\right] \\
    &+ \lamt\pt\lamt (P_i\ze^3) + \lamt\pt\lamt \sum_{\substack{A_1\in\calP_{l_1}, A_2 \in \barcalP_{l_2}\\l_1+l_2 = i}} c^{l_1,l_2}_{i,3}(A_1\Theta)(A_2\ze^2)\\
    &+ (\lamt^2 + \pt\lamt)\sum_{\substack{A_1\in\calP_{l_1}, A_2 \in \barcalP_{l_2}\\l_1+l_2 = i}} c^{l_1,l_2}_{i,4}(A_1\pt\Theta)(A_2\ze^2)\\
    &+ \sum_{\substack{A_{1,2,3}\in\calP_{l_1,l_2,l_3},\\l_1+l_2 + l_3 = i}} c^{l_1,l_2,l_3}_{i,5}(A_1\ze)(A_2(\pt\Theta + \lamt\Theta))(A_3(\pt\lamt \Theta + \lamt \pt\Theta)).
    \end{split}
\end{equation}
Here, $c_i^{l_1,l_2},c_i^{l_1,l_2,l_3}$ are constants.
\item (Highest order formula) Let $H(\Theta) = \Theta$ or $\pt\Theta$. Then
\begin{equation}
    \label{eq:dtU0Hhighest}
    \begin{split}
    \calD_i\left(H(\Theta) \pt (U^0)^{-2}\right) &= -2H(\Theta)(\pt\Theta + \lamt(\Theta +\ze))\calD_i\pt^2\Theta\\
    &+\sum_{\substack{A_{1,2,3}\in\calP_{l_1,l_2,l_3},\\l_1+l_2 + l_3 = i,\\l_2 \le i-1}} c^{l_1,l_2,l_3}_{i,1}(A_1H(\Theta))(A_2\pt^2\Theta)\left[\lamt(A_3\ze) + A_3(\pt\Theta + \lamt\Theta)\right] \\
    &+ \lamt\pt\lamt\sum_{\substack{A_{1}\in\calP_{l_1},A_2 \in\barcalP_{l_2}\\l_1+l_2= i}} c^{l_1,l_2}_{i,2}(A_1H(\Theta))(A_2\ze^2)\\
    &+ \lamt\pt\lamt \sum_{\substack{A_{1,2,3}\in\calP_{l_1,l_2,l_3},\\l_1+l_2 + l_3 = i}} c^{l_1,l_2,l_3}_{i,3}(A_1H(\Theta))(A_2\Theta)(A_3\ze)\\
    &+ (\lamt^2 + \pt\lamt)\sum_{\substack{A_{1,2,3}\in\calP_{l_1,l_2,l_3},\\l_1+l_2 + l_3 = i}} c^{l_1,l_2,l_3}_{i,4}(A_1H(\Theta))(A_2\pt\Theta)(A_3\ze)\\
    &+ \sum_{\substack{A_{1,2,3}\in\calP_{l_1,l_2,l_3},\\l_1+l_2 + l_3 = i}} c^{l_1,l_2,l_3}_{i,5}(A_1H(\Theta))(A_2(\pt\Theta + \lamt\Theta))(A_3(\pt\lamt \Theta + \lamt \pt\Theta)).
    \end{split}
\end{equation}
We also have the following:
\begin{align}
    \label{eq:dtU0zetahighest}
    \calD_i&\left(\ze \pt (U^0)^{-2}\right) = -2\ze(\pt\Theta + \lamt(\Theta +\ze))\calD_i\pt^2\Theta\nonumber\\
    &+\sum_{\substack{A_{1,2,3}\in\calP_{l_1,l_2,l_3},\\l_1+l_2 + l_3 = i,\\l_2\le i-1}} c^{l_1,l_2,l_3}_{i,1}(A_1\ze)(A_2\pt^2\Theta)\left[\lamt(A_3\ze) + A_3(\pt\Theta + \lamt\Theta)\right] \nonumber\\
    &+ \lamt\pt\lamt (P_i\ze^3) + \lamt\pt\lamt \sum_{\substack{A_1\in\calP_{l_1}, A_2 \in \barcalP_{l_2}\\l_1+l_2 = i}} c^{l_1,l_2}_{i,3}(A_1\Theta)(A_2\ze^2)\nonumber\\
    &+ (\lamt^2 + \pt\lamt)\sum_{\substack{A_1\in\calP_{l_1}, A_2 \in \barcalP_{l_2}\\l_1+l_2 = i}} c^{l_1,l_2}_{i,4}(A_1\pt\Theta)(A_2\ze^2)\nonumber\\
    &+ \sum_{\substack{A_{1,2,3}\in\calP_{l_1,l_2,l_3},\\l_1+l_2 + l_3 = i}} c^{l_1,l_2,l_3}_{i,5}(A_1\ze)(A_2(\pt\Theta + \lamt\Theta))(A_3(\pt\lamt \Theta + \lamt \pt\Theta)).
\end{align}
Here, $c_i^{l_1,l_2},c_i^{l_1,l_2,l_3}$ are constants.
\end{enumerate}
\end{prop}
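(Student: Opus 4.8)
The plan is to start from the third identity in \eqref{E:U_0_Theta_and_bar_lambda}, $(U^0)^{-2} = 1 - (\pt\Theta + \lamt(\Theta+\ze))^2$, and differentiate once in $\tau$. Since $\ze$ is the Lagrangian spatial coordinate we have $\pt\ze = 0$, and since $\lamt$ and $\pt\lamt = \delta\lam^{-3\kk}$ (cf. Lemma \ref{lem:lambdaODE}) depend on $\tau$ only, they commute with every spatial operator in $\calP_i$ and $\barcalP_i$. Differentiating gives
\[
\pt(U^0)^{-2} = -2\left(\pt\Theta + \lamt(\Theta+\ze)\right)\left(\pt^2\Theta + \lamt\pt\Theta + \pt\lamt(\Theta+\ze)\right).
\]
First I would multiply this by $H(\Theta)$ (respectively by $\ze$) and expand the product of the three displayed factors completely, splitting $\lamt(\Theta+\ze) = \lamt\Theta + \lamt\ze$ and $\pt\lamt(\Theta+\ze) = \pt\lamt\Theta + \pt\lamt\ze$. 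This writes $H(\Theta)\pt(U^0)^{-2}$ as a finite sum of monomials, each a scalar multiple of $\lamt^{a}(\pt\lamt)^{b}$ times a product of three factors drawn from $\{\Theta,\pt\Theta,\pt^2\Theta,\ze\}$, with at most one factor equal to $\pt^2\Theta$ and exactly one equal to $H(\Theta)$ (resp. $\ze$).

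The next step is to apply $P_i \in \calP_i$ and redistribute the $i$ derivatives across the (at most three) factors using the generalized product rule (Lemma \ref{lem:generalproductrule}) together with the product rules \eqref{Pproduct}, \eqref{Pbarproduct}, \eqref{eq:thetasquare}. A factor that is a power of $\ze$ is differentiated by a field from $\barcalP$ and all other factors by fields from $\calP$ — the same mechanism responsible for the $\lamt^2\bar P_i\ze^2$ term in \eqref{PbarU0} — while the $\tau$-dependent coefficients $\lamt^{a}(\pt\lamt)^{b}$ factor outside each summand. I would then collect the output by the power of $\lamt,\pt\lamt$ and by the $\ze$-content of the factors, recombining $\pt\Theta + \lamt\Theta$ and $\pt\lamt\Theta + \lamt\pt\Theta$ as in the statement: the monomials $\pt\Theta\,\pt^2\Theta$ and $\lamt(\Theta+\ze)\,\pt^2\Theta$ become the first family, $\lamt\pt\lamt\,\ze^2$ and $\lamt\pt\lamt\,\Theta\ze$ the second and third, the $(\lamt^2 + \pt\lamt)\,\pt\Theta\,\ze$ monomials (equal factor structure, combined coefficients) the fourth, and $\lamt(\pt\Theta)^2$, $\pt\lamt\,\Theta\pt\Theta$, $\lamt^2\,\Theta\pt\Theta$, $\lamt\pt\lamt\,\Theta^2$ assemble into the fifth, $\left(A_1 H(\Theta)\right)\left(A_2(\pt\Theta+\lamt\Theta)\right)\left(A_3(\pt\lamt\Theta + \lamt\pt\Theta)\right)$. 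This yields \eqref{eq:dtU0H}; \eqref{eq:dtU0zeta} comes out by the same computation with $H(\Theta)$ replaced by $\ze$, the one new feature being that products of $\ze$-factors collapse into a single $\ze^3$ (hit by a $\calP$-field) or $\Theta\,\ze^2$ (with $\ze^2$ hit by a $\barcalP$-field), which explains the form $\lamt\pt\lamt(P_i\ze^3) + \lamt\pt\lamt\sum(A_1\Theta)(A_2\ze^2)$ of the corresponding family there.

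Finally, for the highest-order identities \eqref{eq:dtU0Hhighest} and \eqref{eq:dtU0zetahighest} I would specialize $P_i = \calD_i$ and single out the one summand in which all $i$ derivatives fall on the $\pt^2\Theta$ factor; by Lemma \ref{lem:highestspatial} this equals $-2H(\Theta)\left(\pt\Theta + \lamt(\Theta+\ze)\right)\calD_i\pt^2\Theta$ (resp. $-2\ze\left(\pt\Theta + \lamt(\Theta+\ze)\right)\calD_i\pt^2\Theta$), while in every other summand at least one derivative is peeled off the $\pt^2\Theta$ factor, so it carries an operator of order $l_2 \le i-1$ — exactly the index cap recorded in the statement. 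This passage mirrors the one from \eqref{PbarU0} to \eqref{PbarU0highest}. The hard part is not analytic — there is no input beyond the product rules and Lemma \ref{lem:highestspatial} — but purely organizational: one must carefully match each of the many monomials of the expansion to one of the five families while keeping track of the $\calP$-versus-$\barcalP$ distinction on the $\ze$-factors and of the derivative-order caps in the highest-order version. That bookkeeping is where essentially all of the work lies.
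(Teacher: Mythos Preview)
Your approach is essentially the same as the paper's: expand $H(\Theta)\pt(U^0)^{-2}$ into monomials, apply the product rules \eqref{Pproduct} and \eqref{Pbarproduct2}, and regroup. One small correction: for the highest-order identities \eqref{eq:dtU0Hhighest} and \eqref{eq:dtU0zetahighest} the relevant tool is Lemma~\ref{lem:leadingorder} (identity \eqref{eq: leading order symmetry}), not Lemma~\ref{lem:highestspatial}, since you are isolating $\calD_i$ on the factor $\pt^2\Theta$ in a plain product $g\cdot\pt^2\Theta$ rather than in an expression of the form $f\Dz g$.
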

\begin{proof}
    We first prove \eqref{eq:dtU0H}. Note that
    \begin{align*}
    H(\Theta)\pt (U^0)^{-2} &= -2H(\Theta)\left(\pt\Theta + \lamt(\Theta + \ze)\right)\left(\pt^2\Theta + \pt\lamt(\Theta + \ze) + \lamt \pt\Theta\right)\\
    &= -2H(\Theta)\bigg[\pt^2\Theta (\lamt\ze) + \pt^2\Theta (\pt\Theta + \lamt\Theta)\\
    &\quad + \lamt\pt\lamt \ze^2 + (\lamt^2+\pt\lamt) \pt\Theta\ze + 2\lamt \pt\lamt \Theta \ze\\
    &\quad + (\pt\Theta + \lamt\Theta)(\pt\lamt\Theta + \lamt \pt\Theta)\bigg]
    \end{align*}
    Then a direct application of product rule \eqref{Pproduct} and \eqref{Pbarproduct2} yields the desired identity \eqref{eq:dtU0H}. The identity \eqref{eq:dtU0zeta} follows from a similar argument. The proof for \eqref{eq:dtU0Hhighest} and \eqref{eq:dtU0zetahighest} follows from almost identical arguments to those proving \eqref{eq:dtU0H} and \eqref{eq:dtU0zeta}. The only modification to be made is to use \eqref{eq: leading order symmetry} instead of \eqref{Pproduct} when computing terms involving $\pt^2\Theta$.
\end{proof}
\begin{rmk}
    The importance of all identities appearing in Proposition \ref{prop:highestdtF} and \ref{prop:dtU0} lies in the precise form of the terms where most derivatives fall on $\Dz\pt\Theta$ (or $\pt^2\Theta$). We require such derivatives to be exactly $\calD_{i+1}\pt\Theta$ (or $\calD_{i}\pt^2\Theta$) instead of a general derivative $P_{i+1}\pt\Theta$ (or $P_{i}\pt^2\Theta$), so that one can combine this term with $\calD_i\pt\Theta$ and avoid derivative loss upon integration by parts (See Section \ref{sect: Energy Estimate V}).
\end{rmk}

\section{Energy, Norm, and Preliminary Bounds}\label{sect: energynorm}
In this section, we introduce a hierarchy of natural energies corresponding to \eqref{eq:momlagmainfull} and equivalent norms that facilitate our analysis. With these fundamental quantities introduced, we provide several other technical statements that prepare us for the nonlinear estimates in the forthcoming sections. Throughout the rest of this article, let $N \ge \lfloor \frac1\kk\rfloor + 10$ and $\kk\in (0,\frac{2}{3}]$ be fixed. 

\subsection{High-order Energies and the Associated Energy Identity}
We start by motivating the high-order energies and deriving the corresponding energy identity, obtained by testing equation \eqref{eq:highorder momlagmain} against $\p_\tau \calD_i\Theta$ with respect to the inner product $\left( \cdot,\cdot \right)_i$ (See Definition \ref{def: inner product}). The first term in the first line of \eqref{eq:highorder momlagmain} contributes to the following terms:
\begin{align}\label{Energy contribution 1st line 1}
    &\left((\delta^{-1}\lam^{3\kk}+w\calF^{-\kk}\calG^{\kk})\p_{\tau}^2\calD_i\Theta, \p_{\tau}\calD_i\Theta\right)_i \nonumber \\
    &= \frac{1}{2}\frac{d}{d\tau}\int_0^1 w^{\frac1\kk+i}\ze^2(\delta^{-1}\lam^{3\kk}+w\calF^{-\kk}\calG^{\kk})|\p_{\tau}\calD_i\Theta|^2-\frac12\int_0^1 w^{\frac1\kk+i}\ze^2\p_{\tau}(\delta^{-1}\lam^{3\kk}+w\calF^{-\kk}\calG^{\kk})|\p_{\tau}\calD_i\Theta|^2\nonumber\\
    &=\frac{1}{2}\frac{d}{d\tau}\int_0^1 w^{\frac1\kk+i}\ze^2(\delta^{-1}\lam^{3\kk}+w\calF^{-\kk}\calG^{\kk})|\p_{\tau}\calD_i\Theta|^2\underbrace{-\frac{3\kk}{2}\int_0^1 w^{\frac1\kk+i}\ze^2\delta^{-1}\lam^{3\kk}\lamt|\p_{\tau}\calD_i\Theta|^2}_{=J_1}\nonumber\\
    &-\frac12\int_0^1 w^{\frac1\kk+i}\ze^2\p_{\tau}(w\calF^{-\kk}\calG^{\kk})|\p_{\tau}\calD_i\Theta|^2.
\end{align}
Moreover, the second and third terms in the first line of \eqref{eq:highorder momlagmain} contribute, respectively, to the following two expressions. 
\begin{align}\label{Energy contribution 1st line 2}
\begin{aligned}
    &\left((\delta^{-1}\lam^{3\kk}+w\calF^{-\kk}\calG^{\kk})\lamt\p_{\tau}\calD_i\Theta, \p_{\tau}\calD_i\Theta\right)_i \\
    &= \underbrace{\int_0^1 w^{\frac1\kk+i}\ze^2\delta^{-1}\lam^{3\kk}\lamt|\p_{\tau}\calD_i\Theta|^2}_{=J_2}
    +\int_0^1 w^{\frac1\kk+i}\ze^2(w\calF^{-\kk}\calG^{\kk})\lamt|\p_{\tau}\calD_i\Theta|^2,
\end{aligned}
\end{align}
and
\begin{align}\label{Energy contribution 1st line 3}
\begin{aligned}
    &\left((1+\delta\lam^{-3\kk}w\calF^{-\kk}\calG^{\kk})\calD_i\Theta, \p_{\tau}\calD_i\Theta\right)_i\\
    &=\frac{1}{2}\frac{d}{d\tau}\int_0^1w^{\frac1\kk+i}\ze^2(1+\delta\lam^{-3\kk}w\calF^{-\kk}\calG^{\kk})|\calD_i\Theta|^2-\frac{1}{2}\int_0^1w^{\frac1\kk+i}\ze^2\p_{\tau}(\delta\lam^{-3\kk}w\calF^{-\kk}\calG^{\kk})|\calD_i\Theta|^2.
\end{aligned}
\end{align}
Here, it is important to note that one can combine $J_1$ and $J_2$ together, i.e., 
$$
J_1+J_2 = \left(1-\frac32\kk\right)\int_0^1 w^{\frac1\kk+i}\ze^2\delta^{-1}\lam^{3\kk}\lamt|\p_{\tau}\calD_i\Theta|^2. 
$$
This term contributes to the $N$-th order damping functional. See Definition \ref{def: damping} below. 

Additionally, we study the contribution of the third line of \eqref{eq:highorder momlagmain}. For any generic smooth function $g$ defined on $[0,1]$, we have 
\begin{align}\label{Energy contribution elliptic'}
    \begin{aligned}
        (g\calL_i\calD_i\Theta, \p_{\tau}\calD_i\Theta)_i=
        \begin{cases}
            -\int_0^1\ze^2 g \pz (w^{1+\frac1\kk+i}\calD_{i+1}\Theta)\p_{\tau}\calD_i\Theta,\quad \text{when }i\text{ is even},\\
            -\int_0^1\ze^2 g \Dz(w^{1+\frac1\kk+i}\calD_{i+1}\Theta)\p_{\tau}\calD_i\Theta,\quad \text{when }i\text{ is odd},
        \end{cases}
    \end{aligned}
\end{align}
where, we recall $\calL_i$ is given by \eqref{def:high-order elliptic op}. When $i$ is even, we integrate by parts in $\pz$ and get
\begin{equation}\label{ellipticibp}
\begin{split}
    (g\calL_i\calD_i\Theta, \p_{\tau}\calD_i\Theta)_i=&\int_0^1 w^{1+\frac1\kk+i}\pz(g \ze^2  \p_{\tau}\calD_i\Theta)(\calD_{i+1}\Theta)\\
    &=\int_0^1w^{1+\frac1\kk+i}\ze^2g(\p_{\tau}\calD_{i+1}\Theta)(\calD_{i+1}\Theta)+\int_0^1w^{1+\frac1\kk+i}\ze^2(\pz g)(\p_{\tau}\calD_i\Theta)(\calD_{i+1}\Theta)\\
    &=\frac{1}{2}\frac{d}{d\tau}\int_0^1w^{1+\frac1\kk+i}\ze^2g|\calD_{i+1}\Theta|^2 - \frac{1}{2}\int_0^1w^{1+\frac1\kk+i}\ze^2(\p_{\tau}g)|\calD_{i+1}\Theta|^2\\
    &+\int_0^1w^{1+\frac1\kk+i}\ze^2(\pz g)(\p_{\tau}\calD_i\Theta)(\calD_{i+1}\Theta). 
    \end{split}
\end{equation}
Similarly, when $i$ is odd, 
\begin{align*}
    (g\calL_i\calD_i\Theta, \p_{\tau}\calD_i\Theta)_i=& -\int_0^1\ze^2 g \pz(w^{1+\frac1\kk+i}\calD_{i+1}\Theta)\p_{\tau}\calD_i\Theta-\int_0^1\ze^2 g \frac{2}{\ze}(w^{1+\frac1\kk+i}\calD_{i+1}\Theta)\p_{\tau}\calD_i\Theta\\
    &=\int_0^1w^{1+\frac1\kk+i}\pz (g\ze^2\p_{\tau}\calD_i\Theta)\calD_{i+1}\Theta-\int_0^12\ze g (w^{1+\frac1\kk+i}\calD_{i+1}\Theta)\p_{\tau}\calD_i\Theta\\
    &=\int_0^1 w^{1+\frac1\kk+i}\ze^2g(\p_{\tau}\calD_{i+1}\Theta)(\calD_{i+1}\Theta) + \int_0^1w^{1+\frac1\kk+i}\ze^2(\pz g)(\p_{\tau}\calD_i\Theta)(\calD_{i+1}\Theta)\\
    &=\frac{1}{2}\frac{d}{d\tau}\int_0^1w^{1+\frac1\kk+i}\ze^2g|\calD_{i+1}\Theta|^2 - \frac{1}{2}\int_0^1w^{1+\frac1\kk+i}\ze^2(\p_{\tau}g)|\calD_{i+1}\Theta|^2\\
    &+\int_0^1w^{1+\frac1\kk+i}\ze^2(\pz g)(\p_{\tau}\calD_i\Theta)(\calD_{i+1}\Theta).
\end{align*}
Setting $g=(1+\kk)\calF^{-\kk-2}(U^0)^{-4}\left(1+\frac{\Theta}{\ze}\right)^4$, \eqref{Energy contribution elliptic'} turns into
\begin{align}\label{Energy contribution elliptic}
\begin{aligned}
    &\left((1+\kk)\calF^{-\kk-2}(U^0)^{-4}\left(1+\frac{\Theta}{\ze}\right)^4\calL_i\calD_i\Theta, \p_{\tau}\calD_i\Theta\right)_i\\
    &=\frac{1+\kk}{2}\frac{d}{d\tau}\int_0^1w^{1+\frac1\kk+i}\ze^2\left(\calF^{-\kk-2}(U^0)^{-4}\left(1+\frac{\Theta}{\ze}\right)^4\right)|\calD_{i+1}\Theta|^2 \\
    &- \frac{1+\kk}{2}\int_0^1w^{1+\frac1\kk+i}\ze^2\p_{\tau}\left(\calF^{-\kk-2}(U^0)^{-4}\left(1+\frac{\Theta}{\ze}\right)^4\right)|\calD_{i+1}\Theta|^2\\
    &+(1+\kk)\int_0^1w^{1+\frac1\kk+i}\ze^2\pz \left(\calF^{-\kk-2}(U^0)^{-4}\left(1+\frac{\Theta}{\ze}\right)^4\right)(\p_{\tau}\calD_i\Theta)(\calD_{i+1}\Theta).
    \end{aligned}
\end{align}
In view of \eqref{Energy contribution 1st line 1},  \eqref{Energy contribution 1st line 3}, and \eqref{Energy contribution elliptic}, we define our high-order energy functional as follows:
\begin{defn}[$N$-th order energy functional]
    Let $N \in \N$ and $\tau \ge 0$. We define
    \begin{equation}
        \label{defn:mfEN}
        \mfE^N(\tau) := \sum_{j = 0}^N \mfE_j(\tau),
    \end{equation}
    where
    \begin{equation}
        \label{defn:mfEj}
\begin{aligned}
    \mfE_j(\tau):=&\frac{1}{2}\delta^{-1}\lam^{3\kk}\int_0^1 w^{\frac1\kk+j}\ze^2(1+\delta\lam^{-3\kk}w\calF^{-\kk}\calG^{\kk})|\calD_j\p_{\tau}\Theta|^2d\ze \\
    &+ \frac{1}{2}\int_0^1 w^{\frac1\kk+j}\ze^2(1+\delta\lam^{-3\kk}w\calF^{-\kk}\calG^{\kk})|\calD_j\Theta|^2d\ze\\
    &+\frac{1+\kk}{2}\int_0^1w^{1+\frac1\kk+j}\ze^2\left(\calF^{-\kk-2}(U^0)^{-4}\left(1+\frac{\Theta}{\ze}\right)^4\right)|\calD_{j+1}\Theta|^2d\ze.
\end{aligned}
    \end{equation}
\end{defn}
Apart from the $N$-th order energy functional, we also introduce the following high-order damping functional from $J_1+J_2$ in \eqref{Energy contribution 1st line 1} and \eqref{Energy contribution 1st line 2}: 
\begin{defn}[$N$-th order damping functional]\label{def: damping}
    Let $N \in \N$ and $\tau \ge 0$. We define
    \begin{equation}\label{defn:damping}
        \mfD^N(\tau) := \sum_{j=0}^N\mfD_j(\tau), 
    \end{equation}
    where
    \begin{equation}
    \mfD_j(\tau) := \left(1-\frac32\kk\right)\lamt\delta^{-1}\lam^{3\kk}\int_0^1 w^{\frac1\kk+j}\ze^2|\p_{\tau}\calD_j\Theta|^2d\ze.
\end{equation}
\end{defn}
\begin{rmk}
    It is worth noting that the only property of $\mfD^N$ which we will take advantage of in order to close the nonlinear estimate is its coercivity whenever $\kk\leq \frac23$. 
\end{rmk}

Next, we will derive the energy identity associated with \eqref{defn:mfEN} and \eqref{defn:damping}. Let $1\leq i\leq N$ be fixed. By invoking \eqref{eq:highorder momlagmain} and summing up \eqref{Energy contribution 1st line 1}, \eqref{Energy contribution 1st line 2}, \eqref{Energy contribution 1st line 3}, and \eqref{Energy contribution elliptic}, we conclude the following fundamental energy balance:
\begin{align}\label{eq: high-order energy identity}
    \begin{aligned}
        \frac{d}{d\tau}\mfE_i(\tau) + \mfD_i(\tau) &=  -(\barcalD_{i-1}\mathfrak{M}, \p_{\tau}\calD_i\Theta)_i - \sum_{j=1}^2 (\calD_i\mathfrak{R}_j(\Theta), \p_\tau\calD_i\Theta)_i-\sum_{j=1}^6(\calD_iE_j, \p_{\tau}\calD_i\Theta)_i\\
        &-\sum_{j=1}^3(\mfC_j, \p_{\tau}\calD_i\Theta)_i -\sum_{j=1}^5\mfI_j.
    \end{aligned}
\end{align}
Here, $\barcalD_{i-1}\mathfrak{M}$ is given by \eqref{def: M}, $\mathfrak{R}_1(\Theta)$ and $\mathfrak{R}_2(\Theta)$ are given respectively by \eqref{def:R1} and \eqref{def:R2}, $E_1,\cdots, E_6$ are defined as \eqref{mainerror}, and the high-order commutators $\mfC_1, \mfC_2$, and $\mfC_3$ are given by \eqref{def:C1}--\eqref{def:C3}. 
Moreover, we denote by $\mathfrak{I}_1,\cdots,\mathfrak{I}_5$ the high-order errors generated when deriving the energy identity \eqref{eq: high-order energy identity}, which are taken from \eqref{Energy contribution 1st line 1}, \eqref{Energy contribution 1st line 2}, \eqref{Energy contribution 1st line 3}, and \eqref{Energy contribution elliptic}:
\begin{subequations}\label{def:I1toI5}
    \begin{align}
        &\mfI_1 = -\frac12\int_0^1 w^{\frac1\kk+i}\ze^2\p_{\tau}(w\calF^{-\kk}\calG^{\kk})|\p_{\tau}\calD_i\Theta|^2, \label{def:I3}\\
        &\mfI_2=\int_0^1 w^{\frac1\kk+i}\ze^2(w\calF^{-\kk}\calG^{\kk})\lamt|\p_{\tau}\calD_i\Theta|^2,\label{def:I4}\\
        &\mfI_3=-\frac{1}{2}\int_0^1w^{\frac1\kk+i}\ze^2\p_{\tau}(\delta\lam^{-3\kk}w\calF^{-\kk}\calG^{\kk})|\calD_i\Theta|^2,\label{def:I5}\\
        &\mfI_4=- \frac{1+\kk}{2}\int_0^1w^{1+\frac1\kk+i}\ze^2\p_{\tau}\left(\calF^{-\kk-2}(U^0)^{-4}\left(1+\frac{\Theta}{\ze}\right)^4\right)|\calD_{i+1}\Theta|^2,\label{def:I6}\\
        &\mfI_5 = (1+\kk)\int_0^1w^{1+\frac1\kk+i}\ze^2\pz \left(\calF^{-\kk-2}(U^0)^{-4}\left(1+\frac{\Theta}{\ze}\right)^4\right)(\p_{\tau}\calD_i\Theta)(\calD_{i+1}\Theta).\label{def:I7}
    \end{align}
\end{subequations}
We shall study the RHS estimates of \eqref{eq: high-order energy identity} in Sections \ref{sect: Energy Estimate 1}--\ref{sect: Energy Estimate III}. 

We conclude this subsection by introducing the following auxiliary norms, which will be convenient in the nonlinear analysis. 
\begin{defn}[Auxiliary Norms]
Fixing $\tau \ge 0$, we define: 
\begin{equation}
    \label{defn:calEi}
    \calE_i(\tau) :=\delta^{-1}\lam(\tau)^{3\kk}\|\calD_i \pt \Theta(\tau,\cdot)\|_i^2 + \|\calD_i \Theta(\tau,\cdot)\|_i^2 + \|\calD_{i+1}\Theta(\tau,\cdot)\|_{i+1}^2,
\end{equation}
and
\begin{equation}
    \label{defn:calEN}
    \calE^N(\tau) :=\sum_{i = 0}^N \calE_i(\tau).
\end{equation}
We also define
\begin{equation}
    \label{defn:calSN}
    \calS^N(\tau) := \sum_{j = 0}^{N-1} \|\calD_j\pt^2\Theta(\tau,\cdot)\|_j^2.
\end{equation}
\end{defn}
\begin{rmk}
    With appropriate bootstrap assumptions in place, we can show that $\calE^N(\tau)$ and $\mfE^N(\tau)$ are in fact equivalent. See Corollary \ref{cor: energy norm equiva}. 
\end{rmk}

The quantities $E^N, S^N$ and $\calE^N,\calS^N$ are related as follows: given $\tau \ge \tau_0 \ge 0$,
$$
E^N(\tau;\tau_0) = \sup_{\tau_0 \le \tau' \le \tau}\calE^N(\tau'),\quad S^N(\tau;\tau_0) = \int_{\tau_0}^\tau \calS^N(\tau') d\tau'.
$$

\subsection{Bootstrap Assumptions}
The main theorem of this work will be proved via a bootstrap argument. In this section, we state our main bootstrap assumptions and present several immediate consequences.

\begin{defn}[Bootstrap Assumptions]\label{assump: bootstrap}
Set $M_* > 0$ and $\eps \in (0,1)$ such that $M_* \eps < 1$. Let $\tau_* > 0$ be the maximal time such that the solution to \eqref{eq:momlagmainfull} exists on $[0,\tau_*]$ and verifies the following bounds:
\begin{subequations}
\begin{align}\label{bootstrap EN}
    E^N(\tau_*) \leq M_*\epsilon,
\end{align}
\begin{align}\label{bootstrap U0}
    \frac12 (1-\bar\lam^2\ze^2)\leq |U^0(\tau,\ze)|^{-2} \leq \frac32 (1- \bar\lam^2 \ze^2), \quad (\tau,\ze) \in [0,\tau_*]\times [0,1], 
\end{align}
and
    \begin{align} 
        \|\calF(\tau)-1\|_{L^{\infty}}+\|\kk\barcalG(\tau)-1\|_{L^{\infty}} \leq \frac{1}{100}, \quad \tau \in [0,\tau_*]. \label{bootstrap FG}
    \end{align}
\end{subequations}
\end{defn}
\begin{rmk}
    From now on, all appearances of ``bootstrap assumptions'' mean precisely the statements in Definition \ref{assump: bootstrap}.
\end{rmk}

We then discuss immediate consequences of the bootstrap assumptions in the following lemma. It should be noted that the bounds developed for quantities stated in the following lemma will play a crucial role when we bound the lower-order coefficients appearing in the top-order terms of the main energy estimate.
\begin{lem}\label{lem: induced bootstrap}
    Let $(\Theta,\pt\Theta)$ be a solution to \eqref{eq:momlagmainfull} on $[0,\tau_*]$ verifying the bootstrap assumptions. There exists a constant $\eps' > 0$, a constant $C > 0$ independent of $\eps'$ and $\delta$, such that for any $\eps \le \eps'$, $\delta \le \eps^\frac12$, and $\tau \in [0,\tau_*]$, the following estimates hold:

    \begin{enumerate}
    \item (Uniform bound for $U^0$) For any $k\neq 0$, there exists a constant $C(k,\bar\lam) > 0$ such that
    \begin{equation}
        \label{U0k bd}
        \frac{1}{C(k,\bar\lam)} \le (U^0)^k(\tau,\ze) \le C(k,\bar\lam),
    \end{equation}
    for all $\ze \in [0,1]$.
    \item ($P_j$-differentiated $\Theta$) 
        \begin{align}\label{bootstrap Theta}
   \sum_{j=0}^{2}\|P_j \Theta(\tau)\|_{L^{\infty}}\leq \frac{1}{100}, 
    \end{align}
    where $P_j\in \calP_j$ for $j=0,1,2$, 
    
    \item ($\tau$--derivatives of $\Theta$)     
        \begin{align} \sum_{j=0}^1\|\p_{\tau}P_j\Theta(\tau)\|_{L^{\infty}}\leq \frac{1}{100}\lam^{-\frac32 \kk}, \quad \|\p_{\tau}^2\Theta(\tau)\|_{L^{\infty}}\leq \frac{1}{50}\lam^{-\frac32 \kk}, \label{bootstrap ptau Theta}
                \end{align}

   \item ($\tau$-- and $\ze$--derivatives of $U^0$)
      \begin{align}\label{est: induced bootstrap U^0}
     \|\p_{\tau}(U^0(\tau))^{-2}\|_{L^{\infty}}\leq \frac{1}{10}\lam^{-\frac{3}{2}\kk},\quad \|\pz (U^0(\tau))^{-2}\|_{L^{\infty}} \leq 100, 
    \end{align}

    \item ($\tau$-- and $\ze$--derivatives of $\calF$)
        \begin{align}\label{est: induced bootstrap F}
        \|\p_{\tau} \calF(\tau)\|_{L^{\infty}} \leq \frac{1}{10}\lam^{-\frac32 \kk},\quad \|\pz \calF(\tau)\|_{L^{\infty}}\leq \frac{1}{10},
    \end{align}

    \item ($\tau$-- and $\ze$--derivative of $\calG$)
        \begin{align}\label{est: induced bootstrap G}
    \|\p_{\tau} \calG(\tau)\|_{L^{\infty}}\leq \frac{1}{10}\kk^{-1}\lam^{-\frac{3}{2}\kk},\quad
    \|\pz \calG(\tau)\|_{L^{\infty}}\leq 100\kk^{-1}, 
    \end{align}

    \item (Smallness of $\calF^{\kk}-1$ and $\calG^{\kk}-1$)
    \begin{align}\label{smallness of calFGkk-1}
        \|\calF^{\kk}(\tau)-1\|_{L^{\infty}} \leq \frac14, \quad \|\calG^{\kk}(\tau)-1\|_{L^{\infty}} \leq \frac14.
    \end{align}
    \end{enumerate}
\end{lem}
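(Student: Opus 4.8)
The statement collects a list of pointwise (mostly $L^\infty$) bounds that are all consequences of the bootstrap assumptions \eqref{bootstrap EN}--\eqref{bootstrap FG} together with the parameter restrictions $\eps \le \eps'$, $\delta \le \eps^{1/2}$. The unifying idea is that the energy bound $E^N(\tau_*) \le M_*\eps$ controls, via a weighted Sobolev embedding adapted to the degenerate weight $w$ (the Hardy-type/Jang--Masmoudi embedding developed in \cite{jang2015well} and invoked through the $\calP_i$, $\barcalP_i$ vector-field formalism of Section \ref{sect: functional}), low-order $L^\infty$ norms of $\Theta$ and its derivatives by $\calE^N$, hence by $M_*\eps$; choosing $\eps'$ small makes these $\le 1/100$ or similar. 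The remaining items then follow by plugging these smallness bounds into the explicit algebraic/differential relations for $U^0$, $\calF$, $\calG$ already derived in Sections \ref{sect: setup}--\ref{sect:buildingblock}.

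\textbf{Step 1 (uniform bounds for $U^0$, item 1).} From bootstrap assumption \eqref{bootstrap U0} we have $\frac12(1-\bar\lam^2\ze^2) \le (U^0)^{-2} \le \frac32(1-\bar\lam^2\ze^2)$. Since $\bar\lam \le \bar\lam^\delta < 1$ (Lemma \ref{lem:lambdaODE}) and $\ze \in [0,1]$, we have $0 < 1-\bar\lam^2 \le 1-\bar\lam^2\ze^2 \le 1$, so $(U^0)^{-2}$ is bounded above and below by constants depending only on $\bar\lam$; raising to the power $-k/2$ gives \eqref{U0k bd} with $C(k,\bar\lam)$ depending only on $k$ and $\bar\lam$. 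Note this is pure algebra from the bootstrap, used repeatedly below.

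\textbf{Step 2 (low-order $\Theta$ bounds, items 2--3).} Apply the weighted embedding: for $j \le 2$ (and more generally up to order $\sim N - \lfloor 1/\kk\rfloor$), $\|P_j\Theta\|_{L^\infty} \lesssim (\calE^N)^{1/2} \lesssim (M_*\eps)^{1/2}$, and similarly $\|\pt P_j\Theta\|_{L^\infty} \lesssim \lam^{-3\kk/2}(\delta^{-1}\lam^{3\kk}\|\calD_i\pt\Theta\|_i^2 + \dots)^{1/2}\lesssim \lam^{-3\kk/2}(M_*\eps)^{1/2}$, the factor $\lam^{-3\kk/2}$ coming from the $\delta^{-1}\lam^{3\kk}$ normalization in $E^N$ (one absorbs $\delta^{-1} \le \delta^{-1}$ using $\delta \le \eps^{1/2}$, i.e. $\delta^{-1}\eps = \delta^{-1}\eps \le \eps^{1/2}\cdot\eps^{-1/2}\cdot\eps^{1/2}$... more carefully, $\eps/\delta \le \eps/\eps = $, here one uses $\delta \le \eps$ which follows from $\delta\le\eps^{1/2}$ only for $\eps\le 1$, giving $\eps\delta^{-1}\le 1$ is false; rather $\delta\le\eps^{1/2}$ gives $\delta^{-1}\le \eps^{-1/2}$, so $\delta^{-1}\eps \le \eps^{1/2}$, which is the relevant smallness). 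For $\pt^2\Theta$ one uses the main equation \eqref{eq:momlagmain} solved for $\pt^2\Theta$: schematically $\pt^2\Theta = -\lamt\pt\Theta - \delta\lam^{-3\kk}(\text{bounded})\cdot\Theta - \delta\lam^{-3\kk}(U^0)^{-4}(\dots)w^{-1/\kk}\pz(w^{1+1/\kk}(\calF^{-1-\kk}-1)) - \delta\lam^{-3\kk}\sum E_j$, and one bounds each term using Steps 1--2, Lemma \ref{lem:wderivative}, and the smallness of $\calF-1$, $\kk\barcalG-1$ from \eqref{bootstrap FG}; the favorable factor $\delta\lam^{-3\kk}$ together with $\lam \gtrsim e^{\bar\lam\tau}$ (Lemma \ref{lem:lambdaODE}) and $\delta \le \eps^{1/2}$ yields the claimed $\frac{1}{50}\lam^{-3\kk/2}$. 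Choosing $\eps'$ small enough turns all the $(M_*\eps)^{1/2}$ into the stated numerical constants $\frac1{100}$, $\frac1{50}$.

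\textbf{Step 3 (derivatives of $U^0$, $\calF$, $\calG$, items 4--6; smallness item 7).} For $\calF$: from Lemma \ref{lem:calF-1}, $\calF - 1 = \Dz(\Theta + \Theta^2/\ze) + \frac13\Dz(\Theta^3/\ze^2)$, so $\pz\calF$ and $\pt\calF$ are expressed through $P_j\Theta$, $\pt P_j\Theta$ with $j\le 2$ and the already-bootstrapped smallness of $\Theta/\ze$ (note $\Theta/\ze = \theta$ and $\|\theta\|_{L^\infty}$ is controlled since $\Theta/\ze$ lies in the scope of the $\calP$-calculus); this gives \eqref{est: induced bootstrap F}. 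For $(U^0)^{-2} = 1 - (\pt\Theta + \lamt(\Theta+\ze))^2$: $\pz(U^0)^{-2}$ and $\pt(U^0)^{-2}$ are computed directly (cf. \eqref{pzU0} and the formula in the proof of Proposition \ref{prop:dtU0}), and bounded using Step 2, $\lamt \le \bar\lam + C\delta \lesssim 1$, and $|\pt\lamt| \le C\delta e^{-3\kk\bar\lam\tau}$ from \eqref{est:ptlamt}; the $\ze$-derivative bound is the crude $\le 100$ (the $\lamt^2\ze$ term contributes $O(1)$, reflecting the Type II low-frequency error noted in the Remark after Proposition \ref{prop:buildingblock}), while the $\tau$-derivative carries the gain $\lam^{-3\kk/2}$ because $\pt\Theta$ does. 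For $\calG$: use $\calG = (\frac1\kk\barcalG^{-1})^{1/\kk}$ with $\barcalG$ given by the explicit formula \eqref{eq:barcalG} and $\pt\barcalG$ by \eqref{eq:dtbarcalG}; since $\|\kk\barcalG - 1\|_{L^\infty} \le \frac1{100}$, $\barcalG$ is bounded away from $0$ and $\infty$, the chain rule gives $\pz\calG, \pt\calG$ in terms of $\pz\barcalG, \pt\barcalG$, which in turn are bounded via \eqref{eq:barcalG}--\eqref{eq:dtbarcalG} using Steps 1--3, Lemma \ref{lem:wderivative} (to handle $\pz w$, $w\calF^{-\kk}$ etc.), the decay $\lam^{-3\kk} \lesssim e^{-3\kk\bar\lam\tau}$, and $\delta \le \eps^{1/2}$; this gives \eqref{est: induced bootstrap G}, with the explicit $\kk^{-1}$ factors tracking the $\frac1\kk$ in the definition of $\barcalG$. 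Finally item 7: $\calF^\kk - 1 = \kk(\calF-1) + O((\calF-1)^2)$ and similarly for $\calG^\kk$ via $\calG^\kk = (\kk\barcalG)^{-1}$ and $\|\kk\barcalG-1\|_{L^\infty}\le\frac1{100}$; since $\|\calF-1\|_{L^\infty}\le\frac1{100}$ and $\|\kk\barcalG-1\|_{L^\infty}\le\frac1{100}$, elementary estimates give $\|\calF^\kk-1\|_{L^\infty}, \|\calG^\kk-1\|_{L^\infty} \le \frac14$ for $\kk\in(0,\frac23]$.

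\textbf{Main obstacle.} The crux is Step 2 --- making the weighted Sobolev embedding $\|P_j\Theta\|_{L^\infty} \lesssim (\calE^N)^{1/2}$ precise for $j \le 2$ (and more generally for all orders needed downstream), including the correct powers of the degenerate weight $w$ near $\ze = 1$ and the handling of the coordinate singularity at $\ze = 0$ via the operators $\Dz$, $\ze^{-1}$, and the requirement $N \ge \lfloor 1/\kk\rfloor + 10$. This is exactly where the Jang--Masmoudi weighted framework and the Guo--Had\v{z}i\'c--Jang vector-field bookkeeping do the heavy lifting, and where the interplay with the $\delta^{-1}\lam^{3\kk}$ normalization (and hence the need for $\delta \le \eps^{1/2}$ rather than merely $\delta, \eps \ll 1$) must be tracked carefully. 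Everything after that --- Steps 1 and 3 --- is bootstrapping plus the explicit formulae already established, essentially routine once the embedding constants and the $\lam, \lamt, \delta$ bookkeeping are pinned down.
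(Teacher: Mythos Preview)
Your proposal is correct and follows essentially the same route as the paper: item 1 from the algebraic bootstrap bound on $(U^0)^{-2}$, items 2--3 from the weighted Hardy embedding \eqref{est:LinftyHardy1} (with $\pt^2\Theta$ read off from the main equation), items 4--6 by plugging into the explicit formulas for $(U^0)^{-2}$, $\calF$, $\barcalG$, and item 7 by Taylor expansion. One small point you gloss over: when you solve \eqref{eq:momlagmain} for $\pt^2\Theta$, the term $\delta\lam^{-3\kk}E_4$ contains $\pt\calG^\kk$ which in turn hides a $\pt^2\Theta$ via $\pt(U^0)^{-2}$; the paper notes this must be absorbed into the LHS using the smallness of $\delta$, and your parenthetical confusion about the $\delta^{-1}$ bookkeeping in Step 2 is unnecessary --- the Hardy inequality \eqref{est:LinftyHardy1} already carries the $\delta^{-1}\lam^{3\kk}$ weight on $\pt\Theta$, so $\|P_j\pt\Theta\|_{L^\infty}^2 \lesssim \delta\lam^{-3\kk}\calE^N$ directly, and $\delta \le 1$ suffices.
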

\begin{proof} We shall prove \eqref{U0k bd}--\eqref{smallness of calFGkk-1} in order. 
\begin{enumerate}
\item \textit{Proof of \eqref{U0k bd}.} This follows from \eqref{bootstrap U0} because, for each fixed $\bar\lam$, we have $1 - \bar\lam^2\ze^2 \leq 1$, and it is also bounded away from $0$ since $\bar\lam < 1$.

\item \textit{Proof of \eqref{bootstrap Theta}.} Since $0\leq j\leq 2$, from the Hardy-type embedding \eqref{est:LinftyHardy1} we conclude that $\|P_j \Theta(\tau)\|_{L^{\infty}}\leq C(\calE^N(\tau))^{\frac12}$ for $j=0,1,2$, where $C>0$ is a constant independent of $M_*$ and $\epsilon'$. Now, by choosing $\epsilon'$ so that 
$$
M_*\epsilon \leq 10^{-4}C^{-2},\quad\text{whenever }\epsilon\leq \epsilon',
$$ 
we obtain \eqref{bootstrap Theta} from \eqref{bootstrap EN}. 

\item \textit{Proof of \eqref{bootstrap ptau Theta}.} Similar to the proof of \eqref{bootstrap Theta}, the estimate for $\|\pt \Theta(\tau)\|_{L^{\infty}}$ follows also from \eqref{bootstrap EN} and \eqref{est:LinftyHardy1}. To bound $\|\pt^2 \Theta(\tau)\|_{L^{\infty}}$, our strategy is to express $\pt^2 \Theta$ as a linear combination of terms involving at most one $\tau$-derivative of $\Theta$. From \eqref{eq:momlagmainfull}, we obtain
\begin{align}\label{pt2Theta bootstrap eq}
\begin{aligned}
    (1+\delta\lam^{-3\kk}w\calF^{-\kk}\calG^{\kk})\pt^2\Theta  =& -(1+\delta\lam^{-3\kk}w\calF^{-\kk}\calG^{\kk})\lamt\pt\Theta-\delta\lam^{-3\kk}\bigg((1+\delta\lam^{-3\kk}w\calF^{-\kk}\calG^{\kk})\Theta\\
    &+(1+\kk)\calF^{-\kk-2}\left(1+\frac{\Theta}{\ze}\right)^4L_0\Theta
    + \sum_{j=1}^3\mathfrak{R}_j(\Theta) + \sum_{j=1}^6E_j\bigg).
    \end{aligned}
\end{align}
Now, by choosing $\epsilon'$ sufficiently small so that $\delta=\epsilon^{\frac12}< \frac{1}{100}$, we obtain from \eqref{bootstrap U0} and \eqref{bootstrap FG} that $(1+\delta\lam^{-3\kk}w\calF^{-\kk}\calG^{\kk})\approx 1$. 
In view of the definitions of $\mathfrak{R}_j(\Theta)$ \eqref{def:R1}--\eqref{def:R3} and $E_j$ \eqref{mainerror}, all terms on the RHS have at most one $\tau$--derivative on $\Theta$, apart from a term generated by $\delta\lam^{-3\kk}E_4$ when $\pt$ lands on $\calG^{\kk}$. 
However, this term can be absorbed by the LHS thanks to the smallness of $\delta$. Additionally, we infer from the first term on the RHS of \eqref{pt2Theta bootstrap eq}, whose coefficient is of $\calO(1)$ size, that both $\|\pt^2\Theta\|_{L^{\infty}}$ and $\|\pt\Theta\|_{L^{\infty}}$ should be of $\calO(\lam^{-\frac32\kk})$ size. 
This confirms the desired bound for $\|\pt^2\Theta\|_{L^{\infty}}$ in \eqref{bootstrap ptau Theta}.

\item \textit{Proof of \eqref{est: induced bootstrap U^0}.} 
    Recall that $(U^0)^{-2} = 1-(\p_\tau\Theta+\lamt (\Theta+\ze))^2$, we have
    $$
    \p_{\tau} (U^0)^{-2} = -2 (\p_\tau\Theta + \lamt (\Theta+\ze))\left(\p_\tau^2\Theta + (\p_{\tau}\lamt)(\Theta+\ze) +\lamt\p_{\tau}\Theta\right),
    $$
and
    \begin{align*}
        \pz (U^0)^{-2} = -2(\p_\tau\Theta + \lamt(\Theta+\ze))\left(\Dz\p_{\tau}\Theta+ \lamt(\Dz\Theta+3)\right) + 4\ze^{-1}(\p_\tau\Theta + \lamt(\Theta+\ze))^2. 
    \end{align*}
    These imply \eqref{est: induced bootstrap U^0} after invoking \eqref{est:lambdaasym}, \eqref{est:ptlamt}, \eqref{bootstrap Theta}, and \eqref{bootstrap ptau Theta}.  
    
\item  \textit{Proof of \eqref{est: induced bootstrap F}.} Recall that we have shown in Lemma \ref{lem:calF-1} that
    $$
    \calF-1= \Dz\left(\Theta + \frac{\Theta^2}{\ze}\right) + \frac13\Dz\left(\frac{\Theta^3}{\ze^2}\right). 
    $$
    In light of Lemma \ref{lem:Fnonlinear}, this contributes to two types of terms:
    \begin{align*}
        \Dz \Theta, \quad \prod_{\substack{A_j\in \calP_{l_j},\,l_j\leq 1\\
        \sum_{j=1}^{2 \text{ or } 3}l_j=2}}(A_j\Theta).
    \end{align*}
    Thus, both estimates in \eqref{est: induced bootstrap F} follow from \eqref{bootstrap Theta} and \eqref{bootstrap ptau Theta}, respectively, since $\p_{\tau}^k\calF = \p_{\tau}^k(\calF-1)$. 
    
\item    \textit{Proof of \eqref{est: induced bootstrap G}.} Since $\kk \barcalG = \calG^{-\kk}$,  
    we have 
    $$
    |\pz \calG(\tau)|\leq \kk^{-1}|G(\tau)^{\kk+1}\pz(\kk\barcalG(\tau))|=\kk^{-1}|\kk^{-1}\barcalG^{-1}|^{\frac{1+\kk}{\kk}}|\pz(\kk\barcalG(\tau))|.
    $$
    Because \eqref{bootstrap FG} indicates that $|\kk^{-1}\barcalG^{-1}|^{\frac{1+\kk}{\kk}} \approx 1$, hence it suffices to control $\|\pz (\kk\barcalG(\tau))\|_{L^{\infty}}$. We infer from Proposition \ref{prop:buildingblockpowersG}, \eqref{bootstrap FG}, \eqref{bootstrap Theta}, and \eqref{bootstrap ptau Theta} that $\|\pz (\kk\barcalG(\tau))\|_{L^{\infty}}\leq 10$. Additionally, $\|\p_{\tau} \calG(\tau)\|_{L^{\infty}}\leq \frac{1}{10}\kk^{-1}\lam^{-\frac{3}{2}\kk}$ follows from a similar argument together with \eqref{eq:dtbarcalG}, \eqref{bootstrap FG}, \eqref{est: induced bootstrap U^0}, and \eqref{est: induced bootstrap F}.  

\item \textit{Proof of \eqref{smallness of calFGkk-1}.} We infer from the Taylor expansion that 
$$
\calF^{\kk}-1 = \kk(\calF-1) + \calO(|\calF-1|^2). 
$$
Since $\|\calF(\tau)-1\|_{L^{\infty}}\leq 10^{-2}$ from \eqref{bootstrap FG} and $\kk<1$, $\|\calF(\tau)^{\kk}-1\|_{L^{\infty}}\leq \frac14$ follows. Additionally, $\|\calG(\tau)^{\kk}-1\|_{L^{\infty}}\leq \frac14$ is a direct consequence of \eqref{bootstrap FG} as $\frac{1}{\kk}\barcalG^{-1}=\calG^{\kk}$.
\end{enumerate}  
\end{proof}

We conclude this section by showing the following equivalence in between our main energy $\mfE^N(\tau)$ and the auxiliary norm $\calE^N(\tau)$:
\begin{cor}\label{cor: energy norm equiva}
Let $(\Theta,\pt\Theta)$ be a solution to \eqref{eq:momlagmainfull} in $[0,\tau_*]$ verifying the bootstrap assumptions. Suppose that $\eps < \eps'$ (where $\eps'$ is introduced in Lemma \ref{lem: induced bootstrap}) and $\delta \le \eps^\frac12$. Then there exists a universal constant $C' > 0$ such that
$$
\frac{1}{C'} \mfE_i(\tau) \le \calE_i(\tau) \le C' \mfE_i(\tau),\quad i = 0,\hdots, N,
$$
and
$$
\frac{1}{C'} \mfE^N(\tau) \le \calE_i(\tau) \le C' \mfE^N(\tau)
$$
hold for all $\tau \in [0,\tau_*]$.
\end{cor}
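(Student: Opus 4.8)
The plan is to establish the equivalence term by term, comparing the three pieces of $\mfE_i(\tau)$ in \eqref{defn:mfEj} with the three pieces of $\calE_i(\tau)$ in \eqref{defn:calEi}. The key observation is that the weights $w^{\frac1\kk+i}\ze^2$ and $w^{1+\frac1\kk+i}\ze^2 = w^{\frac1\kk+(i+1)}\ze^2$ appearing in $\mfE_i$ are precisely the weights defining the inner products $(\cdot,\cdot)_i$ and $(\cdot,\cdot)_{i+1}$ from Definition \ref{def: inner product}. Hence $\mfE_i$ and $\calE_i$ differ only through the coefficient functions $1+\delta\lam^{-3\kk}w\calF^{-\kk}\calG^\kk$ and $\calF^{-\kk-2}(U^0)^{-4}(1+\Theta/\ze)^4$, so it suffices to show each such coefficient is pointwise comparable to a universal constant (times $\delta^{-1}\lam^{3\kk}$, resp. $1$, where appropriate) uniformly in $(\tau,\ze) \in [0,\tau_*]\times[0,1]$.

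The key steps, in order: First I would record that $\delta^{-1}\lam^{3\kk}\cdot\tfrac12(1+\delta\lam^{-3\kk}w\calF^{-\kk}\calG^\kk) = \tfrac12(\delta^{-1}\lam^{3\kk} + w\calF^{-\kk}\calG^\kk)$; since $w$ is bounded on $[0,1]$, $\delta \le \eps^{\frac12} < 1$, $\lam^{3\kk} \ge C^{-1}$ by \eqref{est:lambdaasym}, and since \eqref{bootstrap FG} together with \eqref{smallness of calFGkk-1} give $\calF^{-\kk},\calG^\kk \approx 1$, the quantity $\delta\lam^{-3\kk}w\calF^{-\kk}\calG^\kk$ is bounded; in particular $1 + \delta\lam^{-3\kk}w\calF^{-\kk}\calG^\kk \approx 1$, which handles both the $\|\calD_i\pt\Theta\|_i^2$ term (after multiplying by $\delta^{-1}\lam^{3\kk}$) and the $\|\calD_i\Theta\|_i^2$ term. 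Second, for the top-order spatial term I would invoke \eqref{U0k bd} from Lemma \ref{lem: induced bootstrap} to get $(U^0)^{-4} \approx 1$, \eqref{bootstrap FG} and \eqref{smallness of calFGkk-1} (or directly \eqref{bootstrap FG}) to get $\calF^{-\kk-2} \approx 1$, and \eqref{bootstrap Theta} to get $(1+\Theta/\ze)^4 \approx 1$ (noting $\Theta/\ze = P_1\Theta$ up to a constant, so $\|\Theta/\ze\|_{L^\infty} \le \frac1{100}$); hence $\calF^{-\kk-2}(U^0)^{-4}(1+\Theta/\ze)^4 \approx 1$, giving $\tfrac{1+\kk}2\int_0^1 w^{1+\frac1\kk+i}\ze^2(\cdots)|\calD_{i+1}\Theta|^2 \approx \|\calD_{i+1}\Theta\|_{i+1}^2$. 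Summing the three comparisons over the fixed index $i$ yields $\frac1{C'}\mfE_i \le \calE_i \le C'\mfE_i$ with $C'$ depending only on $\kk$, and summing over $i = 0,\dots,N$ gives the statement for $\mfE^N$ and $\calE^N$ (the displayed second inequality in the corollary has an evident typo: $\calE_i$ should read $\calE^N$).

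There is no serious obstacle here; the corollary is essentially a bookkeeping consequence of Lemma \ref{lem: induced bootstrap}. The only point requiring a small amount of care is making sure that the constant $C'$ genuinely depends only on $\kk$ (and not on $M_*,\eps,\delta$): this is guaranteed because every bound invoked — \eqref{est:lambdaasym}, \eqref{bootstrap U0}, \eqref{bootstrap FG}, \eqref{bootstrap Theta}, \eqref{smallness of calFGkk-1}, \eqref{U0k bd} — provides constants of exactly that type, and the reduction of $\delta\lam^{-3\kk}w\calF^{-\kk}\calG^\kk$ to a bounded quantity uses only $\delta < 1$ and $\lam^{-3\kk} \le C$. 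I would also remark explicitly that the weight appearing in the third summand of $\mfE_i$ matches the $(i+1)$-weight, which is why no loss occurs in identifying it with $\|\calD_{i+1}\Theta\|_{i+1}^2$; this is the one structural fact (as opposed to a smallness estimate) that the argument relies on.
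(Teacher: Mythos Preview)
Your proposal is correct and follows essentially the same route as the paper: reduce the equivalence to showing the two coefficient functions $1+\delta\lam^{-3\kk}w\calF^{-\kk}\calG^{\kk}$ and $\calF^{-\kk-2}(U^0)^{-4}(1+\Theta/\ze)^4$ are each $\approx 1$, then invoke the bootstrap assumptions and Lemma~\ref{lem: induced bootstrap}. Your write-up is in fact more detailed than the paper's two-line proof, and your observation about the typo ($\calE_i$ should be $\calE^N$ in the second display) is correct.
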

\begin{proof}
    In view of \eqref{defn:mfEN}, it suffices to show 
    $$
    (1+\delta\lam^{-3\kk}w\calF^{-\kk}\calG^{\kk})\approx 1, 
    $$
    and 
    $$
    \calF^{-\kk-2}(U^0)^{-4}\left(1+\frac{\Theta}{\ze}\right)^4 \approx 1. 
    $$
    Indeed, the former follows from \eqref{bootstrap U0} and \eqref{bootstrap FG}; the latter follows from \eqref{bootstrap Theta}, \eqref{bootstrap U0}, and \eqref{bootstrap FG}. 
\end{proof}

\subsection{Technical Bounds}
In this subsection, we prove two technical lemmas that will be useful in the nonlinear energy estimates. The first technical lemma below provides a general recipe to treat the error terms when only $\Theta$, $\pt\Theta$, and their spatial derivatives appear.

\begin{lem}\label{lem:nonlinear}
    Let $(\Theta,\pt\Theta)$ be a solution to \eqref{eq:momlagmainfull} in $[0,\tau_*]$ verifying the bootstrap assumptions. Let $i \le N+1$, $i_1, i_2$ be integers such that $i_1 + i_2 \le i$, and $J_1, J_2 \ge 0$ be integers such that $J_1 + J_2 \ge 1$. Assume further that $H_k(\Theta) = \Theta$ or $\pt\Theta$. Then the following inequality holds for all $\tau \in [0,\tau_*]$:
    \begin{equation}\label{est:nonlinear}
        \int_0^1 w^{\frac1\kk + i}\ze^2 \prod_{\substack{A_k \in \calP_{a_k},\\\sum_{k=1}^{J_1}a_k \le i_1}}(A_kH_k(\Theta))^2\cdot \prod_{\substack{B_k \in \barcalP_{b_k},\\\sum_{k=1}^{J_2}b_k \le i_2 -1}}(B_k P_1\Theta)^2 d\ze \lesssim (\delta\lam^{-3\kk})^K (\calE^N)^{J_1+J_2},
    \end{equation}
    where $a_k \le N$ whenever $H_{k}(\Theta) = \pt\Theta$, $P_1 \in \calP_1$, and $K = |\{b\;|\; 1 \le b \le J_1, H_b(\Theta) = \pt\Theta\}|$.
\end{lem}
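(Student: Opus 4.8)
The plan is to reduce the estimate to a product of $L^\infty$ bounds on the ``low-order'' factors times $L^2_{\kk,i}$-type norms on the ``high-order'' factors, using the elementary observation that in any product of $J_1+J_2$ factors, at most one factor can carry more than $\lfloor i/2 \rfloor$ derivatives, hence at most one factor is genuinely high-order while all others admit a uniform bound. Concretely, I would first split, in each of the two products appearing in \eqref{est:nonlinear}, the indices $a_k$ (resp.\ $b_k$) into a ``top'' index and the rest; since $\sum a_k \le i_1$ and $\sum b_k \le i_2 - 1$ with $i_1 + i_2 \le i \le N+1$, every index other than the maximal one is at most $\lfloor (N+1)/2 \rfloor + 1 \le N - 2$ (using $N \ge \lfloor \frac1\kk \rfloor + 10$), so those factors fall in the range where the Hardy-type embeddings cited in Lemma \ref{lem: induced bootstrap} (cf.\ \eqref{est:LinftyHardy1} and its higher-order analogues) give $\|A_k H_k(\Theta)\|_{L^\infty} \lesssim (\calE^N)^{1/2}$ and similarly $\|B_k P_1\Theta\|_{L^\infty} \lesssim (\calE^N)^{1/2}$; note that when $H_k(\Theta) = \pt\Theta$ the corresponding $L^\infty$ bound carries a factor $\lam^{-\frac32\kk}$ via the $\delta^{-1}\lam^{3\kk}$-weighted term in $\calE^N$, which is exactly the source of the $(\delta\lam^{-3\kk})^K$ on the right-hand side.

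Next I would treat the single remaining top-order factor. After pulling out all the $L^\infty$ bounds above, one is left with an integral of the form $\int_0^1 w^{\frac1\kk + i}\ze^2 (A_{\max} H(\Theta))^2 \, d\ze$ with $A_{\max} \in \calP_{a}$, $a \le i$ (or the $\barcalP$-analogue with $a \le i$, recalling $\barcalP_{b+1} \ni B_k P_1$ with $b \le i_2-1$ so the total is $\le i_2 \le i$). The point is that this is bounded, up to the admissible-weight lemma (Lemma \ref{lem:wderivative}) used to absorb any $w'$ or $w'/\ze$ factors that a $\ze^{-1} \in \calP$ generates, by $\|\calD_a H(\Theta)\|_a^2$ plus lower-order pieces, hence by $\calE^N$ (with the extra $\delta^{-1}\lam^{3\kk}$ weight if $H = \pt\Theta$, contributing one more factor of $\delta\lam^{-3\kk}$ to match $K$). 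Combining the $J_1 + J_2 - 1$ $L^\infty$ bounds (each contributing $(\calE^N)^{1/2}$, total $(\calE^N)^{(J_1+J_2-1)/2} \cdot (\calE^N)^{(J_1+J_2-1)/2} = (\calE^N)^{J_1+J_2-1}$ since they are squared) with the one $L^2$ bound ($(\calE^N)^1$) gives the claimed $(\calE^N)^{J_1+J_2}$, and the $\lam^{-\frac32\kk}$ factors assemble into $(\delta\lam^{-3\kk})^K$ after squaring and using $\delta \le \eps^{1/2}$, $\delta^{-1}\lam^{3\kk} \le \delta^{-1}\lam^{3\kk}$ bookkeeping.

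\textbf{Main obstacle.} The delicate point is the bookkeeping of the weight powers and the verification that every ``low-order'' index really does fall below the Sobolev/Hardy embedding threshold: one must check that with $J_1 + J_2 \ge 1$ and the index constraints $\sum a_k \le i_1$, $\sum b_k \le i_2 - 1$, $i_1 + i_2 \le i \le N+1$, after designating the largest index as the top-order one, each of the remaining indices is $\le \lfloor (N+1)/2\rfloor$, which is comfortably $\le N$ and in fact small enough for the uniform embedding $H^{N} \hookrightarrow L^\infty$ in the weighted spaces to apply with room to spare. A secondary subtlety is that some of the $\calP$/$\barcalP$ operators contain $\ze^{-1}$ factors, so the pointwise bounds near $\ze = 0$ require the Hardy inequalities rather than naive Sobolev embedding, and one must keep track that the weight $w^{\frac1\kk + i}$ is the correct one for the operator $\calD_a$ acting on the top factor (the index $i$ in the measure must dominate the number of derivatives in the top factor, which holds since $a \le i$); this is routine given the framework of Section \ref{sect: functional} but is where sign/index errors would most easily creep in. Finally, the factor $K$ counting $\pt\Theta$-type factors must be matched exactly: each such factor, whether it ends up as an $L^\infty$ factor or the $L^2$ factor, contributes precisely one power of $\delta\lam^{-3\kk}$ through the normalization of $\calE^N$, so the identity $K = |\{b : H_b(\Theta) = \pt\Theta\}|$ is forced, and I would verify it by writing out the two cases (top factor is or is not a $\pt\Theta$-factor) separately.
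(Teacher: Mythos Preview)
Your overall strategy—isolate one top-order factor in $L^2$ and bound the rest in $L^\infty$—matches the paper's, but there is a real gap in the $L^\infty$ step. The unweighted Hardy embedding \eqref{est:LinftyHardy1} you invoke only yields $\|P_k H(\Theta)\|_{L^\infty}^2 \lesssim \calE^N$ for $k \le \frac{N-\lfloor 1/\kk\rfloor-2}{2}$, which with $N = \lfloor 1/\kk\rfloor + 10$ means $k\le 4$. Your claim that every non-top index is $\le \lfloor(N+1)/2\rfloor$ is correct but irrelevant, because the unweighted $L^\infty$ threshold is $4$, not $\lfloor(N+1)/2\rfloor$. For example, with $J_1=2$, $J_2=0$, $a_1=a_2=5$, $i_1=i=10$, both factors exceed that threshold and neither can be pulled out as you describe.

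The paper closes this gap by inducting on $J=J_1+J_2$ together with a dichotomy on the \emph{smallest} index. If the smallest index is $\le 4$, that factor is removed in unweighted $L^\infty$ via \eqref{est:LinftyHardy1} and the inductive hypothesis handles the remaining $J-1$ factors. If every index is $\ge 5$ (respectively every $b_k\ge 4$), the constraint $\sum a_k+\sum b_k\le i-1\le N$ forces the \emph{largest} index to be $\le N-4$, so all non-top factors fall in the range of the \emph{weighted} embedding $\|w^{a_k-2}P_{a_k}H(\Theta)\|_{L^\infty}^2\lesssim \calE^N$ from Lemma~\ref{lem:LinftyHardyweighted}; the top factor goes into $L^2$ via \eqref{est:L2Hardy} with weight $w^{\frac1\kk+2a_{\max}-N}$, and the paper verifies explicitly that the available weight $w^{\frac1\kk+i}$ covers the $L^2$ weight plus all the $w^{2(a_k-2)}$ contributions from the weighted $L^\infty$ bounds. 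You flagged weight bookkeeping as a secondary subtlety, but in the regime where several factors carry $\ge 5$ derivatives it is the entire argument, and your outline as written does not supply it.
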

\begin{proof}
    Fixing $i \le N+1$, we prove by inducting on $J := J_1 + J_2$. 
    \begin{enumerate}
        \item $J = 1$. We have either $J_1 = 1$ or $J_2 = 1$. In either case, \eqref{est:nonlinear} is reduced to
        \begin{equation}
            \label{nonlinaux1}
            \int_0^1 w^{\frac1\kk + i}\ze^2 |PH(\Theta)|^2d\ze,
        \end{equation}
        where $P \in \calP_{l}$, $l \le i$, and $H(\Theta) = \Theta$ or $\pt\Theta$. We first demonstrate the desired estimate for $H(\Theta) = \Theta$. If $l \le 4$, we invoke \eqref{est:LinftyHardy1} to bound \eqref{nonlinaux1} by $\|w^{\frac1\kk + i}\|_{L^\infty} \|P\Theta\|_{L^\infty}^2 \lesssim \calE^N$. If $l \ge 5$, we first consider $i \le N$. In this case, we invoke the $L^2$ embedding \eqref{est:L2Hardy} to obtain       
        $$
        \eqref{nonlinaux1} \le \|w^{\frac{1}{\kk}+i-(\frac1\kk + 2l - N)}\|_{L^\infty} \int_0^1 w^{\frac1\kk + 2l - N}|P\Theta|^2 d\ze \lesssim \calE^N,
        $$
        where we used the fact that for $l \le i \le N$:
        $$
        \frac{1}{\kk}+i-(\frac1\kk + 2l - N) \ge N- i \ge 0.
        $$
For the reader's convenience, we verify here the applicability of \eqref{est:L2Hardy}. It requires
$\frac{N-\alpha}{2} \le k \equiv l \le N$, $P_k \in \calP_k$. Since $N = \lfloor \frac1\kk\rfloor + 10$, we have
$ l \equiv k  \geq \frac{N}{2} - \frac{\alpha}{2} = \frac{1}{2}(N - \frac{1}{\kappa})  = \frac{1}{2}( \lfloor \frac1\kk\rfloor + 10 - \frac{1}{\kappa} )
        \geq \frac{9}{2}
$.
%\blue{Kevin: I also agree that we add this line.}
        
        If $i = N+1$, we use \eqref{est:L2Hardy2} instead of \eqref{est:L2Hardy} to obtain
        $$
        \eqref{nonlinaux1} \le \|w^{\frac{1}{\kk}+i-(\frac1\kk + 2(l-1)+1 - N)}\|_{L^\infty} \int_0^1 w^{\frac1\kk + 2(l-1)+1 - N}|P\Theta|^2 d\ze \lesssim \calE^N,
        $$
        where we used that
        $$
        \frac{1}{\kk}+i-(\frac1\kk + 2(l-1)+1 - N) \ge -i+1+N \ge 0.
        $$
        The scenario where $H(\Theta) = \pt\Theta$ is similar, where we only need to consider $i \le N$ due to the assumption of the lemma on the maximum number of derivatives falling on $\pt\Theta$. This ends the proof of the base case.
        
        \item $J \ge 2$. 
        We proceed by assuming the inductive hypothesis that \eqref{est:nonlinear} holds up to $J-1$. 
        Let us also without loss assume that $a_1 \le \hdots \le a_{J_1}$, $b_1 \le \hdots \le b_{J_2}$. We discuss the following cases:
        \begin{enumerate}
            \item $\min\{a_1, b_1 + 1\} \le 4$: in this case, either $A_1 \in \calP_l$ or $B_1P_1 \in \calP_l$, where $l \le 4$. Without loss of generality, we assume $a_1$ is the minimum and $H_1(\Theta) = \Theta$. Then we may apply \eqref{est:LinftyHardy1} to $A_1H_1(\Theta)$ to obtain the bound:
            \begin{align*}
                \eqref{est:nonlinear} &\lesssim \calE^N\cdot\int_0^1 w^{\frac1\kk + i}\ze^2 \prod_{\substack{A_k \in \calP_{a_k},\\\sum_{k=2}^{J_1}a_k \le i_1}}(A_kH_k(\Theta))^2\cdot \prod_{\substack{B_k \in \barcalP_{b_k},\\\sum_{k=1}^{J_2}b_k \le i_2 -1}}(B_k P_1\Theta)^2 d\ze\\
                &\lesssim (\delta \lam^{-3\kk})^K (\calE^N)^{J_1 + J_2},
            \end{align*}
            where we used the induction hypothesis in the final inequality above.
            \item $\min\{a_1, b_1 + 1\} \ge 5$: we consider $\max\{a_{J_1}, b_{J_2}\}$. Then by definition of $a_k, b_k$, it is clear that the maximum, and thus all $a_k, b_k$, are less or equal to $N - 4$. We first consider the case where $b_{J_2} = \max\{a_{J_1}, b_{J_2}\}$. Our plan is to bound $B_{{b_{J_2}}}P_1\Theta$ in $L^2$ using \eqref{est:L2Hardy} and all other terms using $L^\infty$ embedding \eqref{est:LinftyHardyweighted1N-4}. Indeed,
            \begin{align*}
                \eqref{est:nonlinear}&\lesssim \|w^{\frac1\kk + i - (\frac1\kk + 2b_{J_2}+ 2 - N)-2\sum_{k=1}^{J_2 - 1}(b_k - 1) - 2\sum_{l=1}^{J_1}(a_l - 2)}\|_{L^\infty}\\
                &\quad \times \prod_{\substack{A_k \in \calP_{a_k},\\\sum_{k=1}^{J_1}a_k \le i_1}}\|w^{a_k - 2}A_kH_k(\Theta)\|_{L^\infty}^2 \cdot \prod_{\substack{B_k \in \barcalP_{b_k},\\\sum_{k=1}^{J_2-1}b_k \le i_2 -1}}\|w^{b_k - 1} B_kP_1\Theta\|_{L^\infty}^2\\
                &\quad \times \int_0^1 w^{\frac1\kk + 2b_{J_2}+ 2 - N}\ze^2 |B_{b_{J_2}}P_1\Theta|^2 d\ze\\
                &\lesssim (\delta \lam^{-3\kk})^K (\calE^N)^{J_1 + J_2},
            \end{align*}
            where we used that
            \begin{align*}
                \frac1\kk + &i - (\frac1\kk + 2b_{J_2}+ 2 - N)-2\sum_{k=1}^{J_2 - 1}(b_k - 1) - 2\sum_{l=1}^{J_1}(a_l - 2)\\
                &= i - 2(\sum_{k=1}^{J_1}a_k + \sum_{l=1}^{J_2}b_l) -2+N + 2(J_2 - 1) + 4J_1\\
                &\ge -i + N  + 2(J_1 + J_2) -2+2J_1 \ge 1+2J_1 > 0.
            \end{align*}
            Here, we used that $\sum a_k + \sum b_l \le i - 1$, $J_1 + J_2 \ge 2$, and $i \le N+1$.
            If $a_{J_1} = \max\{a_{J_1}, b_{J_2}\}$, we instead will bound $A_{{a_{J_1}}}H_{J_1}(\Theta)$ in $L^2$ using \eqref{est:L2Hardy} (or \eqref{est:L2Hardy2} when $i = N+1$) and all other terms using $L^\infty$ embedding \eqref{est:LinftyHardyweighted1N-4}. The argument follows similarly to the previous case and we omit the details here.
        \end{enumerate}
    \end{enumerate}
\end{proof}

The following variant of Lemma \ref{lem:nonlinear} will also be instrumental when $\pt^2\Theta$ or its derivatives appear:
\begin{lem}\label{lem:nonlinear2}
    Let $(\Theta,\pt\Theta)$ be a solution to \eqref{eq:momlagmainfull} in $[0,\tau_*]$ verifying the bootstrap assumptions. Let $i \le N+1$, $i_1, i_2$ be integers such that $i_1 + i_2 \le i$, and $J_1, J_2, J_3 \ge 0$ be integers such that $J_1 + J_2 + J_3 \ge 1$. Assume further that $\bar H_k(\Theta) = \Theta$, $\pt\Theta$, or $\pt^2\Theta$. Then the following inequality holds for all $\tau \in [0,\tau_*]$:
    \begin{equation}\label{est:nonlinear2}
        \int_0^1 w^{\frac1\kk + i}\ze^2 \prod_{\substack{A_k \in \calP_{a_k},\\\sum_{k=1}^{J_1+J_2}a_k \le i_1}}(A_k \bar H_k(\Theta))^2\cdot \prod_{\substack{B_k \in \barcalP_{b_k},\\\sum_{k=1}^{J_3}b_k \le i_2 -1}}(B_k P_1\Theta)^2 d\ze \lesssim (\delta\lam^{-3\kk})^K (\calE^N)^{J_1+J_3}(\calS^N)^{J_2},
    \end{equation}
    where $a_k \le N$ whenever $\bar H_{k}(\Theta) = \pt\Theta$ and $a_k \le N-1$ whenever $\bar H_k(\Theta) = \pt^2\Theta$. Moreover, $P_1 \in \calP_1$, $J_2 = |\{b\;|\; \bar H_b(\Theta) = \pt^2\Theta\}|$, and $K = |\{b\;|\; \bar H_b(\Theta) = \pt\Theta\}|$.
\end{lem}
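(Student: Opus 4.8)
\emph{Proof strategy.} The plan is to reprise verbatim the inductive argument used to prove Lemma \ref{lem:nonlinear}, inducting on the total number of factors $J := J_1 + J_2 + J_3 \ge 1$; the only genuinely new feature is that a factor carrying $\bar H_k(\Theta) = \pt^2\Theta$ must be absorbed into a power of $\calS^N$ rather than of $\calE^N$. A factor with $\bar H_k(\Theta) \in \{\Theta,\pt\Theta\}$ is handled exactly as in Lemma \ref{lem:nonlinear}, contributing a power of $\calE^N$ together with a power of $\delta\lam^{-3\kk}$ when $\bar H_k(\Theta) = \pt\Theta$, so we only spell out the modifications forced by the $\pt^2\Theta$-factors. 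The structural point that makes everything close is that such a factor is constrained by $a_k \le N-1$ — one derivative less than a $\pt\Theta$-factor — which is precisely the shift compensating for $\calS^N$ (see \eqref{defn:calSN}) running over levels $0,\dots,N-1$ rather than $0,\dots,N$.

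\emph{Base case $J = 1$.} The integrand reduces to $w^{\frac1\kk + i}\ze^2|P\bar H(\Theta)|^2$ with $P \in \calP_l$. If $\bar H(\Theta) \in \{\Theta,\pt\Theta\}$ this is the base case of Lemma \ref{lem:nonlinear}. If $\bar H(\Theta) = \pt^2\Theta$ (so $K = 0$, $J_2 = 1$) then $l \le \min\{i, N-1\}$. For $l \le 4$ we bound $\|P\pt^2\Theta\|_{L^\infty}^2 \lesssim \calS^N$ via \eqref{est:LinftyHardy1} applied to $\pt^2\Theta$, which is admissible since $N - 1 \ge \lfloor \tfrac1\kk\rfloor + 9$ leaves ample room. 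For $l \ge 5$ we use the weighted $L^2$ Hardy inequality \eqref{est:L2Hardy} (or \eqref{est:L2Hardy2} if $i = N+1$) anchored at the top level $N-1$ of $\calS^N$, writing $w^{\frac1\kk + i} = w^{\,i - 2l + (N-1)}\cdot w^{\frac1\kk + 2l - (N-1)}$ and noting that $i - 2l + (N-1) \ge 0$ because $2l \le 2\min\{i, N-1\} \le i + (N-1)$. In all cases this yields the bound $\lesssim \calS^N = (\calS^N)^{J_2}$.

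\emph{Inductive step $J \ge 2$.} Order the factors by derivative count, treating $B_kP_1\Theta$ as carrying $b_k+1$ derivatives. If the minimal count is $\le 4$, peel off the corresponding factor in $L^\infty$ via \eqref{est:LinftyHardy1}: it contributes a power of $\calE^N$, of $\delta\lam^{-3\kk}\calE^N$, or of $\calS^N$ according as $\bar H_k(\Theta)$ is $\Theta$, $\pt\Theta$, or $\pt^2\Theta$; applying the induction hypothesis to the remaining product of $J-1$ factors (with $J$ and the relevant one among $J_1,J_3$, and with $K$ resp.\ $J_2$ when the peeled factor is a $\pt\Theta$- resp.\ $\pt^2\Theta$-factor, each reduced by one) closes the estimate. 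If the minimal count is $\ge 5$, then since $J \ge 2$ and $\sum a_k + \sum b_k \le i - 1 \le N$ every factor has derivative count $\le N-5$; we place the highest-derivative factor in weighted $L^2$ using \eqref{est:L2Hardy} (or \eqref{est:L2Hardy2} if $i = N+1$) and all remaining factors in weighted $L^\infty$ using \eqref{est:LinftyHardyweighted1N-4}, distributing the weight $w^{\frac1\kk + i}$ exactly as in Lemma \ref{lem:nonlinear}; the leftover power of $w$ is nonnegative by the same arithmetic, the substitution $N \mapsto N-1$ when the highest factor is a $\pt^2\Theta$-factor being precisely compensated by the hypothesis $a_k \le N-1$.

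\emph{Main obstacle.} The argument is essentially bookkeeping, and the one point deserving care is verifying that, when a $\pt^2\Theta$-factor is simultaneously the factor of highest derivative count and hence must be measured in weighted $L^2$, the residual weight exponent stays nonnegative. This is the exact analogue of the inequality $\frac1\kk + i - (\dots) \ge 1 + 2J_1 > 0$ appearing in the proof of Lemma \ref{lem:nonlinear}, and it is here — and only here — that the stricter derivative bound $a_k \le N-1$ for $\pt^2\Theta$-factors, matched to the top level $N-1$ of $\calS^N$, is genuinely used.
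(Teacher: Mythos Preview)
Your proposal is correct and follows essentially the same approach as the paper, which simply states that the argument of Lemma~\ref{lem:nonlinear} carries over once the Hardy inequalities for $\pt^2\Theta$ are substituted. The only cosmetic discrepancy is that the paper cites the $\pt^2\Theta$-specific inequalities \eqref{est:L2Hardypt2}, \eqref{est:LinftyHardypt2}, and \eqref{est:LinftyHardyweightedpt2} directly, whereas you describe them as the general inequalities ``anchored at the top level $N-1$''; the content is identical.
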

\begin{proof}
    The proof of this lemma follows an almost identical argument to that of Lemma \ref{lem:nonlinear}, except that we invoke suitable Hardy inequalities \eqref{est:L2Hardypt2}, \eqref{est:LinftyHardypt2}, and \eqref{est:LinftyHardyweightedpt2} and use the definition of $\calS^N$ whenever we treat terms involving $\pt^2\Theta$.
\end{proof}

\section{Energy Estimates I: Terms Arising From the Pressure Gradient}\label{sect: Energy Estimate 1}
From this section onward, we initiate the investigation of various types of error terms appearing in the $\calD_i$-differentiated equation \eqref{eq:highorder momlagmain}. For clarity, we assert that in Sections \ref{sect: Energy Estimate 1}--\ref{sect: Energy Estimate III}, we will always assume that the solution $(\Theta, \pt\Theta)$ of \eqref{eq:momlagmain} on $[0, \tau_*]$ verifies the bootstrap assumptions given in Definition \ref{assump: bootstrap}. Also, we will assume that $\epsilon\leq \epsilon'$ and $\delta\leq \epsilon^{\frac12}$, where $\epsilon'$ is defined in Lemma \ref{lem: induced bootstrap}. 

We start our analysis by first studying $\barcalD_{i-1}\mathfrak{M}$ \eqref{def: M}, 
$\calD_i\mathfrak{R}_1$ \eqref{def:R1} and $\calD_i\mathfrak{R}_2$ \eqref{def:R2}, as well as the commutators $\mfC_1$ \eqref{def:C1} and $\mfC_2$ \eqref{def:C2}, which emerge from the derivation of the elliptic operator in Sections \ref{sec: elliptic operator} and \ref{sec: high-order equation}. 
\subsection{Estimates for $\barcalD_{i-1}\mathfrak{M}$}
In this subsection, we aim to prove an estimate for $\|\barcalD_{i-1}\mathfrak{M}\|_i^2$.
\begin{prop}\label{prop:est M}
    Let $\mathfrak{M}$ be given by \eqref{def: M decomp}. Then
    \begin{equation}
        \|\barcalD_{i-1}\mathfrak{M}\|_i^2 \lesssim \calE^N. 
    \end{equation}
\end{prop}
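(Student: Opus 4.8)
The strategy is to use the decomposition $\mathfrak{M} = \mathfrak{M}_1 + \mathfrak{M}_2 + \mathfrak{M}_3$ from Lemma \ref{lem: cancellation lemma M} and estimate $\|\barcalD_{i-1}\mathfrak{M}_k\|_i^2 \lesssim \calE^N$ for $k=1,2,3$ separately. The key point, as emphasized in the remark following Lemma \ref{lem: cancellation lemma M}, is that every term in $\mathfrak{M}$ carrying two spatial derivatives of $\Theta$ comes paired with a full factor of $w$ (not merely $w'$), which is precisely what makes the weighted $L^2$ norm $\|\cdot\|_i$ finite at the high order. For each $\mathfrak{M}_k$ I would apply the product rule for the $\barcalD$-operators (as in \eqref{eq:basicproduct}, \eqref{Pbarproduct}) to expand $\barcalD_{i-1}\mathfrak{M}_k$ into a sum of terms, each of which is a product of: (i) $\barcalP$- or $\calP$-derivatives of $\Theta$ (via the building-block expressions of Proposition \ref{prop:buildingblock}, in particular \eqref{PbarF} for $\bar P \calF$ and the formulas for $\calF^{-\alpha}$, $(U^0)^\alpha$ in Proposition \ref{prop:buildingblockpowers}); (ii) admissible derivatives of the weight $w$, $w'$, or $w'/\ze$, which remain smooth on $[0,1]$ by Lemma \ref{lem:wderivative}; and (iii) bounded coefficients such as $\calF^{-\kk-j}$, $(U^0)^{-4}$, $(1+\Theta/\ze)^m$, all of which are $\calO(1)$ by the bootstrap assumptions and Lemma \ref{lem: induced bootstrap} (estimates \eqref{U0k bd}, \eqref{bootstrap Theta}, \eqref{bootstrap FG}).

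The heart of the matter is the top-order term in $\mathfrak{M}_3$, namely $(1+\kk)w\,(\kk+2)\calF^{-\kk-3}(\pz\calF)(U^0)^{-4}(1+\Theta/\ze)^4\,\pz\Dz\Theta$ together with the companion $w\,\pz\Dz\Theta$ term. Here $\pz\calF$ already contributes a second $\ze$-derivative of $\Theta$ at top order, so when $\barcalD_{i-1}$ distributes all $i-1$ derivatives onto $\pz\Dz\Theta$ (or onto $\pz\calF$), one obtains a contribution morally of the form $(\text{bounded})\cdot w\cdot(\text{one }\ze\text{-derivative of }\Theta)\cdot(\calD_{i+1}\Theta\text{-type factor})$. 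The factor $w$ upgrades the weight from $w^{\frac1\kk+i}$ to $w^{\frac1\kk+i+1} = w^{\frac1\kk+(i+1)}$, which is exactly the weight appearing in $\|\calD_{i+1}\Theta\|_{i+1}^2$ inside $\calE^N$; the remaining low-order factor is controlled in $L^\infty$ by the weighted Hardy embeddings (e.g. \eqref{est:LinftyHardyweighted1N-4}) and the bootstrap bounds. I expect this is the step requiring the most care: one must track that the highest-order derivative really lands as $\calD_{i+1}\Theta$ (appropriately placed so that the weight exponent matches), invoking the building-block identities of Section \ref{sect:buildingblock} which were designed precisely to guarantee this, rather than as a generic $P_{i+1}\Theta$ which would not obviously be controlled by $\calE^N$.

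For $\mathfrak{M}_1$ and $\mathfrak{M}_2$: in $\mathfrak{M}_1$, the quantities $\calF^{-\kk-1}-1$, $\calF-1$, and $(\calF^{-\kk-2}-1)(1+\Theta/\ze)^2\Dz\Theta$ are, via Lemma \ref{lem:calF-1} and Proposition \ref{prop:buildingblock}, expressible in terms of $\Dz\Theta$ (i.e.\ one $\calD$-derivative) times smooth lower-order factors, so $\barcalD_{i-1}$ produces at most $\calD_i\Theta$-type factors, weighted by $w^{\frac1\kk+i}$ — controlled directly by $\|\calD_i\Theta\|_i^2 \le \calE^N$; the prefactor $\Dz((U^0)^{-4}(1+\Theta/\ze)^2 w')$ and its further derivatives stay smooth by Lemma \ref{lem:wderivative} and bounded by Lemma \ref{lem: induced bootstrap}. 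The term $\mathfrak{M}_2$ carries an explicit factor $w'$ and a product $\frac\Theta\ze\,\pz(\frac\Theta\ze)$ of two first-order quantities, so it is strictly lower order and handled the same way. In all three cases, once the expansion is in hand, the final $L^2$ bound follows by combining the $L^\infty$ weighted Hardy inequalities with the defining inner product of $\|\cdot\|_i$ (Definition \ref{def: inner product}), in the same spirit as Lemma \ref{lem:nonlinear}; indeed one may largely invoke Lemma \ref{lem:nonlinear} directly after recording that each term fits its hypotheses. Summing over $k=1,2,3$ and over the finitely many terms in each expansion yields $\|\barcalD_{i-1}\mathfrak{M}\|_i^2 \lesssim \calE^N$, as claimed.
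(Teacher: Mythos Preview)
Your proposal is correct and follows essentially the same approach as the paper: decompose $\mathfrak{M}=\mathfrak{M}_1+\mathfrak{M}_2+\mathfrak{M}_3$, expand each piece via the product rules and Propositions \ref{prop:buildingblock}--\ref{prop:buildingblockpowers}, and close with Lemma \ref{lem:nonlinear}, the key point being that the extra factor of $w$ in $\mathfrak{M}_3$ supplies exactly the additional weight needed to control the $(i{+}1)$-derivative term. One small correction: you need not worry that the top-order factor be \emph{exactly} $\calD_{i+1}\Theta$ rather than a generic $P_{i+1}\Theta$ --- the Hardy inequalities \eqref{est:L2Hardy2} underlying Lemma \ref{lem:nonlinear} already hold for arbitrary $P_k\in\calP_k$ (via the coercivity \eqref{est:Dicoercivity}), so the paper simply invokes Lemma \ref{lem:nonlinear} at level $i{+}1$ without any special structural identity from Section \ref{sect:buildingblock}.
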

\subsubsection{Estimates for $\barcalD_{i-1}\mathfrak{M}_1$}
We begin by further decomposing $\mathfrak{M}_1 = \mathfrak{M}_{11}+\mathfrak{M}_{12}+\mathfrak{M}_{13}$, where
\begin{align*}
&\mathfrak{M}_{11} = \frac{1+\kk}{\kk}\Dz\left((U^0)^{-4}\left(1+\frac{\Theta}{\ze}\right)^2 w'\right)\Big(\calF^{-\kk-1}-1\Big),\\
&\mathfrak{M}_{12} = \frac{(1+\kk)^2}{\kk}\Dz\left((U^0)^{-4}\left(1+\frac{\Theta}{\ze}\right)^2 w'\right)(\calF-1),\\
&\mathfrak{M}_{13} = \frac{(1+\kk)^2}{\kk}\Dz\left((U^0)^{-4}\left(1+\frac{\Theta}{\ze}\right)^2 w'\right)\Big((\calF^{-\kk-2}-1)\left(1+\frac{\Theta}{\ze}\right)^2\Dz\Theta\Big).
\end{align*}
\paragraph{Analysis of $\barcalD_{i-1}\mathfrak{M}_{11}$.} 
Since product rule \eqref{Pbarproduct} implies 
\begin{align*}
    \barcalD_{i-1}\mathfrak{M}_{11} =& \frac{1+\kk}{\kk}\sum_{\substack{A,B\in \barcalP_{l_{A,B}}\\l_A+l_B=i-1}}c^{AB}A\Dz\left((U^0)^{-4}\left(1+\frac{\Theta}{\ze}\right)^2 w'\right) B\Big(\calF^{-\kk-1}-1\Big),
\end{align*}
where $c^{AB}$ are real constants, 
we begin by studying $\Dz\left((U^0)^{-4}\left(1+\frac{\Theta}{\ze}\right)^2 w'\right)$. Writing 
$$
\Dz\left((U^0)^{-4}\left(1+\frac{\Theta}{\ze}\right)^2 w'\right)=\Dz\left((U^0)^{-4}\ze\left(1+\frac{\Theta}{\ze}\right)^2 \frac{w'}{\ze}\right),
$$
and then invoking the product rule \eqref{eq:basicproduct}, we obtain
$$
\Dz\left((U^0)^{-4}\ze\left(1+\frac{\Theta}{\ze}\right)^2 \frac{w'}{\ze}\right) = \Dz\left(\ze\left(1+\frac{\Theta}{\ze}\right)^2\right)(U^0)^{-4}\frac{w'}{\ze} + \pz \left((U^0)^{-4}\frac{w'}{\ze}\right)\ze\left(1+\frac{\Theta}{\ze}\right)^2.
$$
Since 
\begin{align}\label{eq:Dz ze f}
\Dz (\ze f) = 3f + \ze\pz f
\end{align}
holds for any generic $f\in C^1$, we have
\begin{align*}
    \Dz\left(\ze\left(1+\frac{\Theta}{\ze}\right)^2\right) =& 3\left(1+\frac{\Theta}{\ze}\right)^2 + 2\ze \left(1+\frac{\Theta}{\ze}\right)\pz \left(\frac{\Theta}{\ze}\right)\\
    =&3\left(1+\frac{\Theta}{\ze}\right)^2+2\left(1+\frac{\Theta}{\ze}\right)\left(\Dz \Theta - 3\frac{\Theta}{\ze}\right). 
\end{align*}
Thus, we obtain
\begin{align*}
    \Dz\left((U^0)^{-4}\left(1+\frac{\Theta}{\ze}\right)^2 w'\right) =& 3\left(1+\frac{\Theta}{\ze}\right)^2(U^0)^{-4}\frac{w'}{\ze}+2\left(1+\frac{\Theta}{\ze}\right)\left(\Dz \Theta - 3\frac{\Theta}{\ze}\right)(U^0)^{-4}\frac{w'}{\ze}\\
    &+\pz \left((U^0)^{-4}\frac{w'}{\ze}\right)\ze\left(1+\frac{\Theta}{\ze}\right)^2.
\end{align*}
Now, we compute $A\Dz\left((U^0)^{-4}\left(1+\frac{\Theta}{\ze}\right)^2 w'\right)$ with $A\in\barcalP_{l_A}$, $l_A\leq i-1$ by applying \eqref{Pbarproduct} to the first two terms on the RHS, while applying \eqref{Pbarproduct2} to the third term. Consequently, $A\Dz\left((U^0)^{-4}\left(1+\frac{\Theta}{\ze}\right)^2 w'\right)$ is a linear combination of terms of the following form:
\begin{equation*}
 A_1\frac{w'}{\ze} A_2(U^0)^{-4}\prod_{\substack{A_3^j\in\calP_{i_j}\\\sum_{j=1}^{m_3}=l_3}}A_3^j\Theta, \quad\text{with } A_{1,2}\in \barcalP_{l_{1,2}},\quad l_1+l_2+l_3\leq i,\quad m_3=0,1,2.  
\end{equation*}
In particular, this reduces to $A_1\frac{w'}{\ze} A_2(U^0)^{-4}$ whenever $m_3=0$. 

Moreover, invoking Proposition \ref{prop:buildingblockpowers}, we see that $A_2(U^0)^{-4}$ is a linear combination of the following terms:
\begin{align}\label{eq:A(U0)^-4}
\begin{aligned}
       &(U^0)^{-4+2k_2}\prod_{\substack{A_2^j\in \barcalP_{i_j}\\\sum_{j=1}^{m_2}i_j=l_2}}A_2^jH_j(\Theta),\quad k_2=1,2, \quad m_2 = k_2, 2k_2,\\
       &(U^0)^{-4+2k_2}\prod_{\substack{A_2^j\in \barcalP_{i_j}\\\sum_{j=1}^{k_2}i_j=l_2}}\lamt^2 A_2^j\ze,\quad k_2=1,2,
\end{aligned}
\end{align}
where $H_j(\Theta)=\p_\tau\Theta$ or $\Theta$. Also, since $B\in \barcalP_{l_B}$ with $l_B\leq i-1$, $B(\calF^{-\kk-1}-1)$ is a linear combination of the following terms: 
\begin{align*}
    \calF^{-\kk-1-k_B}\prod_{\substack{B^j\in \barcalP_{i_j}\\\sum_{j=1}^{m_B}i_j=l_B}}(B^j\Dz\Theta), \quad k_B=1,\cdots, l_B,\quad m_B = k_B, 2k_B, 3k_B,
\end{align*}
provided that $l_B>0$.
In summary, $\barcalD_{i-1}\mathfrak{M}_{11}$ is a linear combination of terms of the following forms:
\begin{subequations}\label{barDi-1M11lb0}
\begin{align}
    (U^0)^{-4+2k_2}(\calF^{-\kk-1}-1) \left(A_1\frac{w'}{\ze}\right)\prod_{\substack{A_2^j\in \barcalP_{i_j}\\\sum_{j=1}^{m_2}i_j=l_2}}\left(A_2^jH_j(\Theta)\right)\prod_{\substack{A_3^j\in\calP_{i_j}\\\sum_{j=1}^{m_3}=l_3}}A_3^j\Theta,\label{barDi-1M111stlb0}\\
    (U^0)^{-4+2k_2}(\calF^{-\kk-1}-1) \left(A_1\frac{w'}{\ze}\right)\prod_{\substack{A_2^j\in \barcalP_{i_j}\\\sum_{j=1}^{k_2}i_j=l_2}}(\lamt^2 A_2^j\ze)\prod_{\substack{A_3^j\in\calP_{i_j}\\\sum_{j=1}^{m_3}=l_3}}A_3^j\Theta,\label{barDi-1M112ndlb0}
\end{align}
\end{subequations}
whenever $l_B=0$, and
\begin{subequations}\label{barDi-1M11}
\begin{align}
    (U^0)^{-4+2k_2}\calF^{-\kk-1-k_B} \left(A_1\frac{w'}{\ze}\right)\prod_{\substack{A_2^j\in \barcalP_{i_j}\\\sum_{j=1}^{m_2}i_j=l_2}}\left(A_2^jH_j(\Theta)\right)\prod_{\substack{A_3^j\in\calP_{i_j}\\\sum_{j=1}^{m_3}=l_3}}A_3^j\Theta \prod_{\substack{B^j\in \barcalP_{i_j}\\\sum_{j=1}^{m_B}i_j=l_B}}(B^j\Dz\Theta),\label{barDi-1M111st}\\
    (U^0)^{-4+2k_2}\calF^{-\kk-1-k_B} \left(A_1\frac{w'}{\ze}\right)\prod_{\substack{A_2^j\in \barcalP_{i_j}\\\sum_{j=1}^{k_2}i_j=l_2}}(\lamt^2 A_2^j\ze)\prod_{\substack{A_3^j\in\calP_{i_j}\\\sum_{j=1}^{m_3}=l_3}}A_3^j\Theta \prod_{\substack{B^j\in \barcalP_{i_j}\\\sum_{j=1}^{m_B}i_j=l_B}}(B^j\Dz\Theta),\label{barDi-1M112nd}
\end{align}
\end{subequations}
whenever $l_B\geq 1$. It is worth noting that in \eqref{barDi-1M11lb0}, we obtain from \eqref{eq:calF-1} and Lemma \ref{lem:Fnonlinear}, as well as the expansion
\begin{align*}
\calF^{-\kk-1} -1 = - (\kk+1)(\calF-1) + \calO(|\calF-1|^2)
\end{align*}
that $\calF^{-\kk-1}$ is at least linear in $\Theta$. 
From the bootstrap assumption $U^0, \calF\approx 1$, and 
thanks to the smoothness of $A_1\frac{w'}{\ze}$ (Lemma \ref{lem:wderivative}), we have
\begin{align}\label{control:barDi-1M111stlb0}
    \|\eqref{barDi-1M111stlb0}\|_{i}^2 \lesssim \int_0^1 w^{1+\frac1\kk+i}|\calF^{-\kk-1}-1|^2\prod_{\substack{A_2^j\in \barcalP_{i_j}\\\sum_{j=1}^{m_2}i_j=l_2}}\left|A_2^jH_j(\Theta)\right|^2\prod_{\substack{A_3^j\in\calP_{i_j}\\\sum_{j=1}^{m_3}i_j=l_3}}|A_3^j\Theta|^2\lesssim \calE^N,
\end{align}
and
\begin{align}\label{control:barDi-1M111st}
    \|\eqref{barDi-1M111st}\|_{i}^2 \lesssim \int_0^1 w^{1+\frac1\kk+i}\prod_{\substack{A_2^j\in \barcalP_{i_j}\\\sum_{j=1}^{m_2}i_j=l_2}}\left|A_2^jH_j(\Theta)\right|^2\prod_{\substack{A_3^j\in\calP_{i_j}\\\sum_{j=1}^{m_3}i_j=l_3}}|A_3^j\Theta|^2 \prod_{\substack{B^j\in \barcalP_{i_j}\\\sum_{j=1}^{m_B}i_j=l_B}}|B^j\Dz\Theta|^2\lesssim \calE^N.
\end{align}
Here, we applied Lemma \ref{lem:nonlinear}  
and note that $l_2, l_3\leq i$ and $l_B\leq i-1$. In addition, a similar argument yields:
\begin{align}\label{control:barDi-1M112ndlb0}
    \|\eqref{barDi-1M112ndlb0}\|_{i}^2 \lesssim \int_0^1 w^{1+\frac1\kk+i}|\calF^{-\kk-1}-1|^2\prod_{\substack{A_3^j\in\calP_{i_j}\\\sum_{j=1}^{m_3}i_j=l_3}}|A_3^j\Theta|^2 \lesssim \calE^N,
\end{align}
and
\begin{align}\label{control:barDi-1M112nd}
    \|\eqref{barDi-1M112nd}\|_{i}^2 \lesssim \int_0^1 w^{1+\frac1\kk+i}\prod_{\substack{A_3^j\in\calP_{i_j}\\\sum_{j=1}^{m_3}i_j=l_3}}|A_3^j\Theta|^2 \prod_{\substack{B^j\in \barcalP_{i_j}\\\sum_{j=1}^{m_B}i_j=l_B}}|B^j\Dz\Theta|^2\lesssim \calE^N.
\end{align}

\paragraph{Analysis of $\barcalD_{i-1}\mathfrak{M}_{12}$ and $\barcalD_{i-1}\mathfrak{M}_{13}$.}
Our analysis of $\barcalD_{i-1}\mathfrak{M}_{11}$ can be directly adapted to $\barcalD_{i-1}\mathfrak{M}_{12}$. The only exception is that we require only Proposition \ref{prop:buildingblock}, instead of Proposition \ref{prop:buildingblockpowers}, to treat $\calF-1$. Additionally, we need to expand $B\Big((\calF^{-\kk-2}-1)\left(1+\frac{\Theta}{\ze}\right)^2\Dz\Theta\Big)$, $B\in \barcalP_{l_B}$, $l_B\leq i-1$ when treating $\barcalD_{i-1}\mathfrak{M}_{13}$. Consequently, $\barcalD_{i-1}\mathfrak{M}_{13}$ is also a linear combination of terms given by \eqref{barDi-1M11lb0} and \eqref{barDi-1M11}, with $\calF^{-\kk-1-k_B}$ replaced by $\calF^{-\kk-2-k_B}$ in \eqref{barDi-1M11}. In summary, 
\begin{align}\label{est: barDi-1M1}
    \|\barcalD_{i-1}\mathfrak{M}_{1}\|_i^2 \lesssim \calE^N. 
\end{align}

\subsubsection{Estimates for $\barcalD_{i-1}\mathfrak{M}_2$}
Writing 
\begin{align}\label{M2}
\begin{aligned}
\mathfrak{M}_2 &= \frac{6(1+\kk)^2}{\kk}\left((U^0)^{-4}\left(1+\frac{\Theta}{\ze}\right)^2 \frac{w'}{\ze}\right)\Bigg((\calF^{-\kk-2}-1)\left(1+\frac{\Theta}{\ze}\right)\left(\frac{\Theta}{\ze}\right)\ze\pz\left(\frac{\Theta}{\ze}\right)\Bigg)\\
&=\frac{6(1+\kk)^2}{\kk}\left((U^0)^{-4}\left(1+\frac{\Theta}{\ze}\right)^2 \frac{w'}{\ze}\right)\Bigg((\calF^{-\kk-2}-1)\left(1+\frac{\Theta}{\ze}\right)\left(\frac{\Theta}{\ze}\right)\left(\Dz\Theta-3\frac{\Theta}{\ze}\right)\Bigg),
\end{aligned}
\end{align}
and using Proposition \ref{prop:buildingblockpowers}, one can see that $\barcalD_{i-1}\mathfrak{M}_{2}$ is a linear combination of terms of the forms \eqref{barDi-1M111st} and \eqref{barDi-1M112nd}, with $\calF^{-\kk-1-k_B}$ replaced by $\calF^{-\kk-2-k_B}$. Therefore, 
\begin{align}\label{est: barDi-1M2}
    \|\barcalD_{i-1}\mathfrak{M}_{2}\|_i^2 \lesssim (\calE^N)^2. 
\end{align}
Note that this estimate must be (at least) quadratic in $\calE^N$ due to the appearance of  $\left(\frac{\Theta}{\ze}\right)\left(\Dz\Theta-3\frac{\Theta}{\ze}\right)$ in the second line of \eqref{M2}. 

\subsubsection{Estimates for $\barcalD_{i-1}\mathfrak{M}_3$}
We decompose $\mathfrak{M}_3 = \mathfrak{M}_{31}+\mathfrak{M}_{32}+\mathfrak{M}_{33}$, where
\begin{align*}
&\mathfrak{M}_{31} = (1+\kk)(2+\kk)w\left(\calF^{-\kk-3}(\pz\calF)(U^0)^{-4}\left(1+\frac{\Theta}{\ze}\right)^4\right)\pz\Dz\Theta,\\
&\mathfrak{M}_{32} =-\frac{(1+\kk)^2}{\kk}w'\left(\calF^{-\kk-2}\pz\left((U^0)^{-4}\left(1+\frac{\Theta}{\ze}\right)^4\right)\right)\Dz\Theta,\\
&\mathfrak{M}_{33} =-(1+\kk)w\left(\calF^{-\kk-2}\pz\left((U^0)^{-4}\left(1+\frac{\Theta}{\ze}\right)^4\right)\right)\pz\Dz\Theta. 
\end{align*}
We can observe that $\mathfrak{M}_{31}$ includes terms with two $\ze$-derivatives acting on $\Theta$, specifically $\pz\Dz\Theta$, while the other emerges from $\pz\mathcal{F}$. This also applies to $\mathfrak{M}_{33}$. 
We require the additional $w$-weight to control the corresponding terms in $\|\cdot\|_{i}$. 
\paragraph{Analysis of $\barcalD_{i-1}\mathfrak{M}_{31}$.}
From the product rule \eqref{Pbarproduct}, 
\begin{align}\label{barcalDM31 pre}
\begin{aligned}
    \barcalD_{i-1} \mathfrak{M}_{31} =& (1+\kk)(2+\kk)\sum_{\substack{A,B\in \barcalP_{l_A, l_B}\\l_A+l_B=i-1}} c_{i-1}^{l_Al_B} A\left(w\calF^{-\kk-3}(U^0)^{-4}\left(1+\frac{\Theta}{\ze}\right)\right)B\left((\pz\calF)(\pz\Dz\Theta)\right)\\
    =&(1+\kk)(2+\kk)\sum_{\substack{A_{1,2,3,4}\in \barcalP_{l_{1,2,3,4}}\\B_{1,2}\in \calP_{\ell_{1,2}}\\\sum_{j=1}^4l_j+\sum_{j=1}^2\ell_j=i-1}} c_{i-1,l_B}^{l_{1,2,3,4}\ell_{1,2}} A_1w(A_2\calF^{-\kk-3})A_3(U^0)^{-4}A_4\left(1+\frac{\Theta}{\ze}\right)\\
    &\cdot B_1(\pz\calF)B_2(\pz\Dz\Theta),
    \end{aligned}
\end{align}
where we applied \eqref{Pbarproduct2} on $B\left((\pz\calF)(\pz\Dz\Theta)\right)$. We discuss the following cases:
\begin{itemize}
    \item [(a)] When either $\ell_1=i-1$ or $\ell_2=i-1$. It suffices to treat the sub-case when $\ell_1=i-1$, which is more complex compared to the other sub-case.  If $\ell_1=i-1$, the RHS of \eqref{barcalDM31 pre} reduces to a linear combination of terms of the following form:
    $$
    w\calF^{-\kk-3}(U^0)^{-4}\left(1+\frac{\Theta}{\ze}\right)(\pz\Dz\Theta)B_1(\pz \calF). 
    $$  
  From Proposition \ref{prop:buildingblock}, this term can be expressed as a linear combination of the following terms: 
  \begin{equation}\label{M31case(a)}
      w\calF^{-\kk-3}(U^0)^{-4}\left(1+\frac{\Theta}{\ze}\right)(\pz\Dz\Theta)\prod_{\substack{B_1^j\in \calP_{i_j}\\\sum_{j=1}^{m_1}i_j=i}}(B_1^j\Dz\Theta),\quad m_1=1,2,3. 
  \end{equation}
  If there exists some $j\in\{1,\cdots, m_1\}$ such that $i_j=i$, then we use the bootstrap assumptions $U^0, \calF \approx 1$, as well as Lemma \ref{lem:nonlinear} (after replacing $i$ by $i+1$) to have
  \begin{align*}
      \|\eqref{M31case(a)}\|_{i}^2 \lesssim \int_0^1 w^{\frac1\kk+i+2}\ze^2 \left|1+\frac{\Theta}{\ze}\right|^2|\pz\Dz\Theta|^2\prod_{\substack{B_1^j\in \calP_{i_j}\\\sum_{j=1}^{m_1}i_j=i}}|B_1^j\Dz\Theta|^2 \lesssim (\calE^N)^2. 
  \end{align*}
 On the other hand, if $i_j<i$ for all $j\in\{1,\cdots, m_1\}$, then the extra $w$-weight plays no role when applying Lemma \ref{lem:nonlinear}. Consequently, 
   \begin{align*}
      \|\eqref{M31case(a)}\|_{i}^2 \lesssim |w|^2\int_0^1 w^{\frac1\kk+i}\ze^2 \left|1+\frac{\Theta}{\ze}\right|^2|\pz\Dz\Theta|^2\prod_{\substack{B_1^j\in \calP_{i_j}\\\sum_{j=1}^{m_1}i_j=i,\,\,i_j\leq i-1}}|B_1^j\Dz\Theta|^2 \lesssim (\calE^N)^2. 
  \end{align*}
 
  \item [(b)] When $\ell_1,\ell_2\leq i-2$. The RHS of \eqref{barcalDM31 pre} is a linear combination of terms of the following form:
  \begin{align}\label{M31case(b)'}
      A_1w(A_2\calF^{-\kk-3})A_3(U^0)^{-4}A_4\left(1+\frac{\Theta}{\ze}\right)
    B_1(\pz\calF)B_2(\pz\Dz\Theta).
  \end{align}
   In the above, we apply Proposition \ref{prop:buildingblockpowers} to express $A_2\calF^{-\kk-3}$ as a linear combination of
   $$
   \calF^{-\kk-3-k_2}\prod_{\substack{A_2^j\in \barcalP_{i_j}\\\sum_{j=1}^{m_2}i_j=l_2}}(A_2^j\Dz\Theta), \quad k_2=0,\cdots, l_2,\quad m_2 = k_2, 2k_2, 3k_2, 
   $$
   provided that $l_2>0$, 
   and $A_3(U^0)^{-4}$ as a linear combination of terms similar to \eqref{eq:A(U0)^-4}.
   Moreover, from Proposition \ref{prop:buildingblock} we see that $B_1\pz \calF$ is a linear combination of 
   $$
   \prod_{\substack{B^j\in \barcalP_{i_j}\\\sum_{j=1}^{m}i_j=\ell_1+1\\\ell_1\leq i-2}}(B^j\Dz\Theta),\quad m=1,2,3.
   $$
   Then, \eqref{M31case(b)'} reduces to a linear combination of the following terms:
       \begin{subequations}\label{M31case(b)l20}
    \begin{align}
      (U^0)^{-4+2k_3}\calF^{-\kk-3}A_1w\prod_{\substack{A_3^j\in \barcalP_{i_j}\\\sum_{j=1}^{m_3}i_j=l_3}}A_3^jH_j(\Theta)A_4\left(1+\frac{\Theta}{\ze}\right)
    \left(\prod_{\substack{B^j\in \barcalP_{i_j}\\\sum_{j=1}^{m}i_j=\ell_1+1}}(B^j\Dz\Theta)\right)B_2(\pz\Dz\Theta),\\
       (U^0)^{-4+2k_3}\calF^{-\kk-3}A_1w\prod_{\substack{A_3^j\in \barcalP_{i_j}\\\sum_{j=1}^{k_3}i_j=l_3}}(\lamt^2A_3^j\ze) A_4\left(1+\frac{\Theta}{\ze}\right)
    \left(\prod_{\substack{B^j\in \barcalP_{i_j}\\\sum_{j=1}^{m}i_j=\ell_1+1}}(B^j\Dz\Theta)\right)B_2(\pz\Dz\Theta),
    \end{align}
  \end{subequations}
  whenever $l_2=0$, and
    \begin{subequations}\label{M31case(b)}
    \begin{align}
      (U^0)^{-4+2k_3}\calF^{-\kk-3-k_2}A_1w\prod_{\substack{A_2^j\in \barcalP_{i_j}\\\sum_{j=1}^{m_2}i_j=l_2}}(A_2^j\Dz\Theta)\prod_{\substack{A_3^j\in \barcalP_{i_j}\\\sum_{j=1}^{m_3}i_j=l_3}}A_3^jH_j(\Theta)A_4\left(1+\frac{\Theta}{\ze}\right)\nonumber\\
    \cdot \left(\prod_{\substack{B^j\in \barcalP_{i_j}\\\sum_{j=1}^{m}i_j=\ell_1+1}}(B^j\Dz\Theta)\right)B_2(\pz\Dz\Theta),\\
       (U^0)^{-4+2k_3}\calF^{-\kk-3-k_2}A_1w\prod_{\substack{A_2^j\in \barcalP_{i_j}\\\sum_{j=1}^{m_2}i_j=l_2}}(A_2^j\Dz\Theta)\prod_{\substack{A_3^j\in \barcalP_{i_j}\\\sum_{j=1}^{k_3}i_j=l_3}}(\lamt^2A_3^j\ze) A_4\left(1+\frac{\Theta}{\ze}\right)\nonumber\\
    \cdot \left(\prod_{\substack{B^j\in \barcalP_{i_j}\\\sum_{j=1}^{m}i_j=\ell_1+1}}(B^j\Dz\Theta)\right)B_2(\pz\Dz\Theta),
    \end{align}
  \end{subequations}
  whenever $l_2\geq 1$, 
  where $k_3=1,2$ and $m_3=k_3, 2k_3$. Since $A_1w$ is smooth by Lemma \ref{lem:wderivative} and $\ell_1,\ell_2\leq i-2$, terms in \eqref{M31case(b)l20} and \eqref{M31case(b)} can be controlled in the norm $\|\cdot\|_{i}^2$ similar to \eqref{control:barDi-1M111stlb0}--\eqref{control:barDi-1M112nd}, respectively. Therefore, 
  \begin{align}
      \|\barcalD_{i-1}\mathfrak{M}_{31}\|_i^2 \lesssim (\calE^N)^2. 
  \end{align}
  \end{itemize}
\paragraph{Analysis of $\barcalD_{i-1}\mathfrak{M}_{32}$.}
By decomposing
\begin{align}\label{M32 pre}
\mathfrak{M}_{32} &=-\frac{(1+\kk)^2}{\kk}w'\left(\calF^{-\kk-2}\left(\pz(U^0)^{-4}\right)\left(1+\frac{\Theta}{\ze}\right)^4\right)\Dz\Theta\\
&-\frac{(1+\kk)^2}{\kk}w'\left(\calF^{-\kk-2}(U^0)^{-4}\pz\left(1+\frac{\Theta}{\ze}\right)^4\right)\Dz\Theta,
\end{align}
we see that $\barcalD_{i-1}\mathfrak{M}_{32}$ can be controlled in $\|\cdot\|_{i}^2$ by an argument similar to the control of $\barcalD_{i-1}\mathfrak{M}_{11}$.
In particular, the key strategy is to write the second term as
\begin{equation}\label{M322nd}
\begin{aligned}
-4\frac{(1+\kk)^2}{\kk}\frac{w'}{\ze}\left(\calF^{-\kk-2}(U^0)^{-4}\left(1+\frac{\Theta}{\ze}\right)^3\ze\pz\left(1+\frac{\Theta}{\ze}\right)\right)\Dz\Theta\\
=-4\frac{(1+\kk)^2}{\kk}\frac{w'}{\ze}\left(\calF^{-\kk-2}(U^0)^{-4}\left(1+\frac{\Theta}{\ze}\right)^3\left(\Dz\Theta-3\frac{\Theta}{\ze}\right)\right)\Dz\Theta.
\end{aligned}
\end{equation}
This allows us to bound $\barcalD_{i-1}$-differentiate \eqref{M322nd} via \eqref{Pbarproduct} without encountering issues of derivative loss. In the end, we have
\begin{align}\label{est barDi-1 M32}
    \|\barcalD_{i-1}\mathfrak{M}_{32}\|_i^2\lesssim \calE^N. 
\end{align}
It is important to note that, unlike the estimate for $\barcalD_{i-1}\mathfrak{M}_{31}$, this estimate is only linear in $\calE^N$. This is due to the appearance of $\pz (U^0)^{-4}$ in the first term of \eqref{M32 pre}, which leads to contributions of terms of type \eqref{PbarU0type3} found in Proposition \ref{prop:buildingblockpowers}. 

\paragraph{Analysis of $\barcalD_{i-1}\mathfrak{M}_{33}$.} From \eqref{Pbarproduct} and \eqref{Pbarproduct2}, we see that 
\begin{align*}
    \barcalD_{i-1} \mathfrak{M}_{33} =&-(1+\kk)\sum_{\substack{A_{1,2}\in \barcalP_{l_{1,2}}\,\,A_{3,4}\in \calP_{l_{3,4}}\\\sum_{j=1}^4l_j=i-1}}c_{i-1}^{l_{1,2,3,4}}A_1w A_2\calF^{-\kk-2}A_3\pz\left((U^0)^{-4}\left(1+\frac{\Theta}{\ze}\right)^4\right)A_4\pz\Dz\Theta\\
    =&-(1+\kk)\sum_{\substack{A_{1,2}\in \barcalP_{l_{1,2}},\,\,A_3^{1,2}\in \barcalP_{i_{1,2}}\\A_{4}\in \calP_{l_{4}}\\i_1+i_2=l_3+1\\\sum_{j=1}^4l_j=i-1}}c^{l_{1,3,4}i_{1,2}}_{i-1,l_3}A_1w A_2\calF^{-\kk-2}A_3^1(U^0)^{-4}A_3^2\left(1+\frac{\Theta}{\ze}\right)^4A_4\pz\Dz\Theta. 
\end{align*}
This then follows from an analysis similar to that of $\barcalD_{i-1}\mathfrak{M}_{31}$, where we need two sub-cases: (a) when either $l_2=i$ or $l_4=i-1$, and (b) when $l_2<i$ and $l_4<i-1$. Additionally, in a manner analogous to the discussion following \eqref{est barDi-1 M32}, the estimate for $\barcalD_{i-1}\mathfrak{M}_{33}$ is also expected to be linear in $\calE^N$, i.e., 
\begin{align}
    \|\barcalD_{i-1}\mathfrak{M}_{33}\|_i^2\lesssim \calE^N. 
\end{align}
Summing up the estimates for $\barcalD_{i-1}\mathfrak{M}_{31}$, $\barcalD_{i-1}\mathfrak{M}_{32}$, and $\barcalD_{i-1}\mathfrak{M}_{33}$, we conclude
\begin{align}\label{est: barDi-1M3}
    \|\barcalD_{i-1}\mathfrak{M}_{3}\|_i^2\lesssim \calE^N +(\calE^N)^2. 
\end{align}

In conclusion, we infer from the estimates \eqref{est: barDi-1M1}, \eqref{est: barDi-1M2}, and \eqref{est: barDi-1M3} that 
\begin{align*}
    \|\barcalD_{i-1}\mathfrak{M}\|_i^2 \lesssim \calE^N + (\calE^N)^2.
\end{align*}
This implies Proposition \ref{prop:est M} after invoking the bootstrap assumption \eqref{bootstrap EN}.

\subsection{Estimates for $\calD_{i}\mathfrak{R}_1$ and $\calD_{i}\mathfrak{R}_2$}
We devote this subsection to estimating $\calD_i$-differentiated $\mathfrak{R}_1(\Theta)$ and $\mathfrak{R}_2(\Theta)$. 
\begin{prop}\label{prop:est R}
    Let $\mathfrak{R}_1(\Theta)$ and $\mathfrak{R}_2(\Theta)$ be given by \eqref{def:R1} and \eqref{def:R2}, respectively. Then
    \begin{equation}
        \sum_{k=1}^2\|\calD_{i}\mathfrak{R}_k\|_i^2 \lesssim (\calE^N)^2. 
    \end{equation}
\end{prop}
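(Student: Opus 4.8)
The plan is to follow the same template already used for $\barcalD_{i-1}\mathfrak{M}$ in Proposition \ref{prop:est M}: expand $\calD_i\mathfrak{R}_k$ into a linear combination of explicit product terms via the product and chain rules, identify the building-block structure of each factor through Propositions \ref{prop:buildingblock}--\ref{prop:buildingblockpowers}, and close each resulting integral using the technical Hardy-type estimate Lemma \ref{lem:nonlinear}. First I would treat $\calD_i\mathfrak{R}_1(\Theta)$. Recalling \eqref{def:R1},
$$
\mathfrak{R}_1(\Theta) = -2(1+\kk)\calF^{-\kk-2}(U^0)^{-4}\left(1+\frac{\Theta}{\ze}\right)^3 w\,\ze\left(\pz\left(\frac{\Theta}{\ze}\right)\right)^2,
$$
and I would rewrite the degenerate factor $\ze\big(\pz(\Theta/\ze)\big)^2 = \tfrac1\ze\big(\ze\pz(\Theta/\ze)\big)^2 = \tfrac1\ze\big(\Dz\Theta - 3\Theta/\ze\big)^2$, exactly as in the treatment of $\mathfrak{M}_2$ in \eqref{M2}. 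Writing $\calD_i = \barcalD_{i-1}\Dz$ is not needed here; instead apply the product rule \eqref{Pproduct}/\eqref{Pbarproduct} directly to distribute $\calD_i$ over the factors $\calF^{-\kk-2}$, $(U^0)^{-4}$, $(1+\Theta/\ze)^3$, the smooth weight $w$ (smoothness of $\barcalP_j w$ by Lemma \ref{lem:wderivative}), and the two copies of $\Dz\Theta-3\Theta/\ze$. Proposition \ref{prop:buildingblockpowers} then expresses $\calD_{l}(\calF^{-\kk-2})$ and $\calD_l((U^0)^{-4})$ in the admissible product forms \eqref{PbarF-atype1}--\eqref{PbarF-atype3} and \eqref{PbarU0type1}--\eqref{PbarU0type3}; in all resulting terms the total spatial derivative count on the various $\Theta$-factors is at most $i+1$, with at most one factor carrying $i+1$ derivatives (and that factor is always of the form $A_k\Dz\Theta$ with $A_k\in\barcalP_{\le i}$ or $\calD_{i+1}\Theta$). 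Crucially $\mathfrak{R}_1$ contains the manifestly quadratic-in-perturbation block $(\Dz\Theta-3\Theta/\ze)^2$, so every term carries at least two $\Theta$-factors; applying Lemma \ref{lem:nonlinear} with $J_1+J_2\ge 2$ yields $\|\calD_i\mathfrak{R}_1\|_i^2\lesssim (\calE^N)^2$, where the two (or more) copies of $\calE^N$ come precisely from this quadratic structure, and the leftover $w$-weight is harmless since $w^{\frac1\kk+i}\le \|w\|_{L^\infty}^2\,w^{\frac1\kk+i}$ if the top-order factor uses $\le i$ derivatives, or can be absorbed into $w^{\frac1\kk+i+2}$ to run Lemma \ref{lem:nonlinear} at level $i+1$ if the top-order factor uses $i+1$ derivatives (mirroring case (a) in the analysis of $\barcalD_{i-1}\mathfrak{M}_{31}$).

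Next I would handle $\calD_i\mathfrak{R}_2(\Theta)$. From \eqref{def:R2},
$$
\mathfrak{R}_2(\Theta) = \frac{(1+\kk)^2}{\kk}(U^0)^{-4}\left(1+\frac{\Theta}{\ze}\right)^2 w'\left(3\left(\frac{\Theta}{\ze}\right)^2 + 2\left(\frac{\Theta}{\ze}\right)^3\right),
$$
which is even simpler: there are no second spatial derivatives of $\Theta$ present at all, only powers of the bounded quantity $\Theta/\ze$ and of $U^0$, multiplied by $w'$ which (together with $\barcalP_j w'$, $\calP_j w'$, by Lemma \ref{lem:wderivative}) is smooth on $[0,1]$. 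Distribute $\calD_i$ using the product rule and $\calD_l((U^0)^{-4})$ via \eqref{PbarU0type1}--\eqref{PbarU0type3}. Because $\mathfrak{R}_2$ is at least quadratic in $\Theta$ (lowest-order term $3(\Theta/\ze)^2$), every resulting term again has $J_1+J_2\ge 2$ $\Theta$-factors, each carrying at most $i$ spatial derivatives of $\Theta$; so $w^{\frac1\kk+i}$ suffices and Lemma \ref{lem:nonlinear} gives $\|\calD_i\mathfrak{R}_2\|_i^2\lesssim(\calE^N)^2$ directly. Note the coefficient $(U^0)^{-4}$ requires only the uniform bounds \eqref{U0k bd}, and the substitution $\ze\pz(\Theta/\ze)=\Dz\Theta-3\Theta/\ze$ is not even needed here.

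Combining the two estimates yields $\sum_{k=1}^2\|\calD_i\mathfrak{R}_k\|_i^2\lesssim(\calE^N)^2$, which is the claim. The main obstacle, as in the $\mathfrak{M}$ analysis, is purely bookkeeping: one must verify in each of the (finitely many) term types produced by the product/chain rules that the spatial-derivative budget on the $\Theta$-factors never exceeds the regime $\sum a_k + \sum b_k \le i$ (with the single top-order derivative correctly identified as a $\calD$-type operator so no derivative loss occurs upon applying Lemma \ref{lem:nonlinear}), and that any spare $w$ or $w'$ weight lands on factors that keep the total $w$-power at least $\frac1\kk + i$. These checks are routine and identical in spirit to the case analysis carried out for $\barcalD_{i-1}\mathfrak{M}_{11}$ and $\barcalD_{i-1}\mathfrak{M}_{31}$, so I would present $\calD_i\mathfrak{R}_1$ in detail and then remark that $\calD_i\mathfrak{R}_2$ follows by an easier version of the same argument. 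The fact that both estimates are genuinely quadratic (rather than merely linear) in $\calE^N$ is forced by the explicit quadratic lower-order structure of $\mathfrak{R}_1$ and $\mathfrak{R}_2$ and is exactly what the statement records.
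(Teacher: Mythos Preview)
Your treatment of $\calD_i\mathfrak{R}_1$ is correct and matches the paper: the quadratic block $Z=\ze(\pz(\Theta/\ze))^2=\tfrac1\ze(\Dz\Theta-3\Theta/\ze)^2$ is exactly what the paper isolates (via Lemma~\ref{lem: DiZ}), and the spare factor $w$ in $\mathfrak{R}_1$ absorbs the single top-order factor carrying $i{+}1$ derivatives by running Lemma~\ref{lem:nonlinear} at level $i{+}1$, just as in the $\barcalD_{i-1}\mathfrak{M}_{31}$ case.

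For $\calD_i\mathfrak{R}_2$, however, your claim that each factor carries ``at most $i$ spatial derivatives of $\Theta$'' and that ``the substitution $\ze\pz(\Theta/\ze)=\Dz\Theta-3\Theta/\ze$ is not even needed'' is where the argument breaks. When you distribute $\calD_i$ directly over $w'(U^0)^{-4}(\Theta/\ze)^n$ and all derivatives land on $(\Theta/\ze)^n$, the product rule \eqref{Pbarproduct} yields factors $\barcalP_{m_j}(\Theta/\ze)$ with $\sum m_j=i$. Fed into the $J_2$ slot of Lemma~\ref{lem:nonlinear} (with $P_1=\ze^{-1}$), this gives $\sum b_k=i$, which violates the constraint $\sum b_k\le i_2-1\le i-1$. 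Unlike $\mathfrak{R}_1$, the term $\mathfrak{R}_2$ carries only $w'$ (smooth, not an extra $w$), so you cannot promote to level $i{+}1$, and at $i=N$ this is a genuine weight deficit near the boundary.

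The paper fixes this by the rewriting $w'(\Theta/\ze)^n=\tfrac{w'}{\ze}\cdot\ze(\Theta/\ze)^n$ (using that $w'/\ze$ is smooth by Lemma~\ref{lem:wderivative}) and then applying the identity $\Dz(\ze f)=3f+\ze\pz f$ (respectively $\ze^{-1}(\ze f)=f$). The point is that the first $V\in\{\Dz,\ze^{-1}\}$ in $\calP_{l_B}$ consumes the explicit $\ze$ without raising the derivative count on any $\Theta/\ze$ factor; the remaining $\barcalP_{l_B-1}$ then distributes only $\le l_B-1\le i-1$ derivatives, which fits the $\sum b_k\le i_2-1$ constraint exactly. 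This reorganization is what buys the missing ``$-1$'' and is not optional; your sketch should incorporate it (or an equivalent device) rather than assert that $\mathfrak{R}_2$ is the easier of the two.
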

\subsubsection{Estimates for $\calD_{i}\mathfrak{R}_1$}
We begin by writing 
\begin{align*}
\mathfrak{R}_1(\Theta) = -2(1+\kk)w\calF^{-\kk-2}(U^0)^{-4}\left(1+\frac{\Theta}{\ze}\right)^3 Z,
\end{align*}
where $Z=\frac{1}{\ze}\left(\ze\pz\left(\frac{\Theta}{\ze}\right)\right)^2$. From the product rule \eqref{Pproduct}, $\calD_i\mathfrak{R}_1(\Theta)$ is a linear combination of the following terms:
\begin{equation}\label{R1 pre}
  A_1w A_2\calF^{-\kk-2}  A_3(U^0)^{-4} A_4 \left(1+\frac{\Theta}{\ze}\right)^3 BZ, \quad A_{1,2,3,4}\in \barcalP_{l_{1,2,3,4}},\quad B\in \calP_{l_B}. 
\end{equation}
We shall study \eqref{R1 pre} by dividing into the following two cases:
\begin{itemize}
    \item [(a)] When either $l_4$ or $l_B$ equal to $i$. We first study the sub-case where $l_B=i$. Invoking Lemma \ref{lem: DiZ}, \eqref{R1 pre} becomes a linear combination of the following terms:
    \begin{equation}\label{R1case(a)}
        w\calF^{\kk-2}(U^0)^{-4}\left(1+\frac{\Theta}{\ze}\right)^3(B_1\Theta)(B_2\Theta),\quad B_{1,2}\in \calP_{\ell_{1,2}},\quad \ell_1,\ell_2\leq i+1.  
    \end{equation}
    Thanks to the extra $w$-weight, $\|\eqref{R1case(a)}\|_i^2\lesssim (\calE^N)^2$ can be obtained similarly to the control of $\barcalD_{i-1}\mathfrak{M}_{31}$. The sub-case where $l_4=i$ is treated by a parallel argument. 
    \item [(b)] When both $l_4$ and $l_B<i$.
   From Proposition \ref{prop:buildingblockpowers} and \eqref{eq:DiZ}, we see that \eqref{R1 pre} is a linear combination of $\prod_j P_{l_j}H^j(\Theta)$ where $P_{l_j}\in \calP_{l_j}$ and $l_j\leq i$. We use Lemma \ref{lem:nonlinear} to conclude $\|\eqref{R1case(a)}\|_i^2\lesssim (\calE^N)^2$. 
\end{itemize}

\subsubsection{Estimates for $\calD_i\mathfrak{R}_2$}
Since $\left(1+\frac{\Theta}{\ze}\right)^2 = 1 + 2\left(\frac{\Theta}{\ze}\right)+\left(\frac{\Theta}{\ze}\right)^2$, it suffices to bound
$$
 w'(U^0)^{-4}\left(\frac{\Theta}{\ze}\right)^n, \quad n=2,3,4,5,
$$
in $\|\cdot\|_{i}$. By writing 
$$
 w'(U^0)^{-4}\left(\frac{\Theta}{\ze}\right)^n = \frac{w'}{\ze}(U^0)^{-4}\ze\left(\frac{\Theta}{\ze}\right)^n, \quad n=2,3,4,5,
$$
we see that $\calD_i\left(\frac{w'}{\ze}(U^0)^{-4}\ze\left(\frac{\Theta}{\ze}\right)^n\right)$ is a linear combination of the follow terms:
\begin{equation}\label{R2 pre}
    A_1\frac{w'}{\ze}A_2(U^0)^{-4}B\left(\ze\left(\frac{\Theta}{\ze}\right)^n\right),\quad A_{1,2}\in\barcalP_{l_{1,2}},\quad B\in \calP_{l_B}. 
\end{equation}
If $B=A_3\Dz$ where $A_3\in \barcalP_{l_B-1}$, we invoke \eqref{eq:Dz ze f} to obtain
\begin{align*}
  B \left(\ze\left(\frac{\Theta}{\ze}\right)^n\right)= A_3\left(3\left(\frac{\Theta}{\ze}\right)^n + n\left(\frac{\Theta}{\ze}\right)^{n-1}\left(\Dz\Theta-3\frac{\Theta}{\ze}\right)\right). 
\end{align*}
Moreover, if $B=A_3\frac{1}{\ze}$ where $A_3\in \barcalP_{l_B-1}$, 
\begin{align*}
B\left(\ze\left(\frac{\Theta}{\ze}\right)^n\right)=A_3\left(\frac{\Theta}{\ze}\right)^n. 
\end{align*}
Thus, we conclude that \eqref{R2 pre} is a linear combination of $\prod_j P_{l_j}H^j(\Theta)$ where $P_{l_j}\in \calP_{l_j}$ and $l_j\leq i$. We use Lemma \ref{lem:nonlinear} to conclude $\|\eqref{R2 pre}\|_i^2\lesssim (\calE^N)^2$. 

\subsection{Estimates for $\mfC_1$ and $\mfC_2$}
\begin{lem}\label{prop:est C12}
    Let $\mfC_k$, $k=1,2$ be given as \eqref{def:C1} and \eqref{def:C2}, respectively. Then
    \begin{align}\label{est: est C12}
        \sum_{k=1}^2\|\mfC_k\|_i^2 \lesssim \calE^N.
    \end{align}
\end{lem}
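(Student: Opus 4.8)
The plan is to reduce $\mfC_1$ and $\mfC_2$ to linear combinations of the ``building block'' expressions recorded in Section \ref{sect:buildingblock}, and then to close via the Hardy-type bound of Lemma \ref{lem:nonlinear}. For $\mfC_2 = (1+\kk)\calF^{-\kk-2}(U^0)^{-4}(1+\Theta/\ze)^4\sum_{j=0}^{i-1}\xi_{i,j}\calD_{i-j}\Theta$, recall from Lemma \ref{lem:Lcommute} that the $\xi_{i,j}$ are \emph{fixed} smooth functions on $[0,1]$ (independent of the solution), so each summand carries at most $i-1$ derivatives on $\Theta$; after multiplying by the bounded factors $\calF^{-\kk-2}(U^0)^{-4}(1+\Theta/\ze)^4\approx 1$ (Lemma \ref{lem: induced bootstrap}, \eqref{U0k bd}, \eqref{smallness of calFGkk-1}, \eqref{bootstrap Theta}) and the smooth $\xi_{i,j}$, one is left with terms of the form $P_l\Theta$, $l\le i-1 < i$, to which Lemma \ref{lem:nonlinear} with $J_1=1$, $J_2=0$ applies to give $\|\mfC_2\|_i^2\lesssim \calE^N$. (No extra $w$-weight is needed because only $i-1$ derivatives land on $\Theta$.)

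For $\mfC_1 = (1+\kk)[\barcalD_{i-1},\calF^{-\kk-2}(U^0)^{-4}(1+\Theta/\ze)^4]\Dz L_0(\Theta)$, the first step is to expand the commutator using the product rule \eqref{Pbarproduct}: the commutator never lets all $i-1$ derivatives fall on $\Dz L_0(\Theta)$, so every resulting term has the schematic form $\big(A\,(\calF^{-\kk-2}(U^0)^{-4}(1+\Theta/\ze)^4)\big)\cdot\big(B\,\Dz L_0(\Theta)\big)$ with $A\in\barcalP_{l_A}$, $l_A\ge 1$, $B\in\barcalP_{l_B}$, $l_A+l_B=i-1$, hence $l_B\le i-2$. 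Next, I would expand $A\big(\calF^{-\kk-2}(U^0)^{-4}(1+\Theta/\ze)^4\big)$ via Proposition \ref{prop:buildingblockpowers} (the representations \eqref{PbarF-atype1}--\eqref{PbarF-atype3}, \eqref{PbarU0type1}--\eqref{PbarU0type3}) and the product rule, so that it becomes a linear combination of $(U^0)$-powers and $\calF$-powers — all $\approx$ bounded — times products $\prod_j (P_{l_j}H_j(\Theta))$ with $\sum l_j\le l_A\le i-1$. Finally I would expand $B\,\Dz L_0(\Theta)$: recalling $L_0(\Theta) = -w^{-1/\kk}\pz(w^{1+1/\kk}\Dz\Theta)$, a direct computation gives $L_0(\Theta) = -w\,\pz\Dz\Theta - \tfrac{1+\kk}{\kk}w'\,\Dz\Theta$, so $\Dz L_0(\Theta)$ and its $\barcalP_{l_B}$-derivatives are, by Lemma \ref{lem:wderivative} (smoothness of $\bar P w$ and $P w'$) and the product rule, linear combinations of $w\cdot(\text{third-order }\ze\text{-derivative of }\Theta)$ plus $w'$-coefficient $\cdot(\text{second-order }\ze\text{-derivative of }\Theta)$, with $l_B\le i-2$ so the highest derivative count on $\Theta$ here is $\le i+1$.

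The delicate point — and the one I expect to be the main obstacle — is the bookkeeping of derivative counts and weights so that Lemma \ref{lem:nonlinear} applies without derivative loss. The worst term in $\mfC_1$ is one where $B$ carries all $l_B=i-2$ derivatives and they stack onto the $w\,\pz\Dz\Theta$ piece of $\Dz L_0(\Theta)$, producing (up to bounded factors) $w\cdot(\text{something like }\calD_{i+2}\Theta)$ multiplied by a single factor $A\Theta$ with $l_A=1$; equivalently, after the $w$-weight, this is an $L^2_{\kk,i}$-integrand $w^{1/\kk+i+2}\ze^2|\,\text{4th-order-}\ze\text{-deriv}\,\Theta|^2\cdot|\nabla\Theta|^2$, which matches the integrand controlled by Lemma \ref{lem:nonlinear} (with $i\to i+1$) precisely because the extra weight $w^{1}$ compensates the extra derivative; the $L^\infty$/$L^2$ Hardy embeddings then give $\lesssim \calE^N$. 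All the remaining terms have strictly fewer derivatives on the top factor and are handled identically, only more easily. Putting together the $\mfC_1$ and $\mfC_2$ bounds yields $\sum_{k=1}^2\|\mfC_k\|_i^2\lesssim\calE^N$, which is \eqref{est: est C12}.
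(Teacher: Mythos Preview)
Your approach is correct and matches the paper's: the commutator $[\barcalD_{i-1},g]$ ensures at most $i-2$ derivatives fall on $\Dz L_0(\Theta)$, the split $L_0(\Theta)=-w\pz\Dz\Theta-\tfrac{1+\kk}{\kk}w'\Dz\Theta$ isolates the unique $(i{+}1)$-derivative term on $\Theta$ (which carries the needed extra factor of $w$ and is controlled by $\|\calD_{i+1}\Theta\|_{i+1}^2\lesssim\calE^N$), and the smoothness of $\xi_{i,j}$ handles $\mfC_2$ immediately via Lemma~\ref{lem:nonlinear}. Two small corrections to your last paragraph: the top-order piece is $(i{+}1)$th order in $\Theta$ (as you in fact wrote earlier), not ``something like $\calD_{i+2}\Theta$''; and the leading coefficient $\pz g$ is $O(1)$ rather than $O(\nabla\Theta)$, because $\pz(U^0)^{-4}$ contains the source term coming from \eqref{PbarU0type3} --- this is precisely why the paper notes that the bound for $\mfC_1$ is linear (not quadratic) in $\calE^N$, though neither slip affects the validity of your argument.
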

\subsubsection{Estimates for $\mfC_1$.}
We begin by writing 
$L_0(\Theta) = -w\pz\Dz\Theta - \frac{1+\kk}{\kk}w'\Dz \Theta$. From this, we decompose $\mfC_1=\mfC_{11}+\mfC_{12}$, where
\begin{align*}
    \mfC_{11} = -(1+\kk)\left[\barcalD_{i-1}, \calF^{-\kk-2}(U^0)^{-4}\left(1+\frac{\Theta}{\ze}\right)^4\right]\Dz(w\pz\Dz\Theta),\\
    \mfC_{12} = -\frac{(1+\kk)^2}{\kk}\left[\barcalD_{i-1}, \calF^{-\kk-2}(U^0)^{-4}\left(1+\frac{\Theta}{\ze}\right)^4\right]\Dz(w'\Dz\Theta). 
\end{align*}
Since for each $j\in \mathbb{N}$, 
\begin{align}\label{commutator:barcalDi}
    [\barcalD_j,g] f = j\pz g\barcalD_{j-1}f + \sum_{\substack{2\leq k\leq j\\ A\in\barcalP_k,\,\,B\in\barcalP_{i-k}}}C_i^{AB}AgBf,
\end{align}
$C_{11}$ reduces to 
\begin{align}\label{C11 pre}
    C_{11} =& -(1+\kk)(i-1) \pz\left(\calF^{-\kk-2}(U^0)^{-4}\left(1+\frac{\Theta}{\ze}\right)^4\right)\calD_{i-1}(w\pz\Dz\Theta)\nonumber\\
    -&(1+\kk)\sum_{\substack{2\leq k\leq i-1\\A\in\barcalP_k,\,\,B'\in\barcalP_{i-k-1}}}c_{i-1}^{AB}A\left(\calF^{-\kk-2}(U^0)^{-4}\left(1+\frac{\Theta}{\ze}\right)^4\right) B'\Dz(w\pz\Dz\Theta). 
\end{align}
Since
$$
\calD_{i-1}(w\pz\Dz\Theta) = \sum_{\substack{B_1\in \barcalP_{j_1},\,\,B_2\in\barcalP_{j_2}\\j_1+j_2=i-1}}c_{i-1}^{j_1j_2}B_1w B_2\pz\Dz\Theta, 
$$
the first line of \eqref{C11 pre} reduces to a sum of terms of the following forms:
\begin{align}
w\pz\left(\calF^{-\kk-2}(U^0)^{-4}\left(1+\frac{\Theta}{\ze}\right)^4\right)\calD_{i+1}\Theta, \label{C11 1st}\\
\pz\left(\calF^{-\kk-2}(U^0)^{-4}\left(1+\frac{\Theta}{\ze}\right)^4\right)\sum_{\substack{B_1\in \barcalP_{j_1},\,\,B_2\in\barcalP_{j_2}\\j_1+j_2=i-1,\,\,j_1\geq 1}}B_1w B_2\pz\Dz\Theta.\label{C11 2nd}
\end{align}
Note that \eqref{C11 1st} involves $\calD_{i+1}\Theta$ together with an additional $w$-weight. Moreover, \eqref{C11 2nd} is a linear combination of $\prod_j P_{l_j}H^j(\Theta)$ where $P_{l_j}\in \calP_{l_j}$ and $l_j\leq i$. This also applies to the second line of \eqref{C11 pre} as well as $C_{12}$. Thus, we obtain 
\begin{align}\label{est: C1}
\|\mfC_1\|_{i}^2 \lesssim \calE^N,
\end{align}
after invoking Lemma \ref{lem:nonlinear}. In particular, this estimate appears to be linear in $\calE^N$, as opposed to the one in Proposition \ref{prop:est M} which appears to be quadratic in $\calE^N$. This is caused by the appearance of the final term in \eqref{PbarU0} in Proposition \ref{prop:buildingblock}. 

\subsubsection{Estimates for $\mfC_2$.} Since $\xi_{i,j}$ are smooth functions, we see directly from Lemma \ref{lem:nonlinear} that $\|\mfC_2\|_{i}^2 \lesssim \calE^N$. 

\subsection{Decomposition of the $\calD_i$-Differentiated Pressure Gradient}
In summary of all the estimates presented in this section, we draw the following conclusions:
\begin{thm}\label{thm: pressure estimate}
Let 
\begin{align}\label{def:E7}
    E_{\text{pressure}} = \sum_{j=1}^2(\mfC_j+\calD_i\mathfrak{R}_j(\Theta)) + \barcalD_{i-1}\mathfrak{M}. 
\end{align}
Then, 
\begin{equation}\label{elliptic derivation}
\begin{aligned}
\calD_i\left((U^0)^{-4}\left(1+\frac\Theta\zeta\right)^2\frac{1}{w^{\frac1\kk}}\p_\zeta \left(w^{1+\frac1\kappa} (\calF^{-1-\kappa}-1)\right)\right)\\
= (1+\kk)\calF^{-\kk-2}(U^0)^{-4}\left(1+\frac{\Theta}{\ze}\right)^4\calL_i\calD_i\Theta + E_{\text{pressure}},
\end{aligned}
    \end{equation}
    where
    \begin{equation}
        E_{\text{pressure}} \lesssim \calE^N.
    \end{equation}
\end{thm}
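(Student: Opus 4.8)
The plan is to assemble Theorem \ref{thm: pressure estimate} by collecting the four constituent estimates already proved in this section and invoking the algebraic identity that recasts the $\calD_i$-differentiated pressure gradient. First I would recall the identity \eqref{eq:elltipic term original form}, which expresses $(U^0)^{-4}(1+\Theta/\ze)^2 w^{-1/\kk}\p_\ze(w^{1+1/\kk}(\calF^{-1-\kk}-1))$ as the sum of the leading elliptic piece $-(1+\kk)\calF^{-\kk-2}(U^0)^{-4}(1+\Theta/\ze)^4 w^{-1/\kk}\p_\ze(w^{1+1/\kk}\Dz\Theta)$ and the lower-order remainders $\mathfrak{R}_1(\Theta)+\mathfrak{R}_2(\Theta)+\mathfrak{R}_3(\Theta)$. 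Applying $\calD_i = \barcalD_{i-1}\Dz$ and using the elliptic commutator computation of Section \ref{sec: high-order equation} — in particular \eqref{Di elliptic operator pre} together with Lemma \ref{lem:Lcommute}, which gives $\calD_i L_0(\Theta) = \calL_i\calD_i\Theta + \sum_{j=0}^{i-1}\xi_{i,j}\calD_{i-j}\Theta$ — produces exactly the decomposition \eqref{elliptic derivation}, where the error $E_{\text{pressure}}$ is precisely the sum \eqref{def:E7}: the commutators $\mfC_1, \mfC_2$ coming from the elliptic operator (the $\xi_{i,j}$-terms folded into $\mfC_2$), the differentiated low-order remainders $\calD_i\mathfrak{R}_1(\Theta)$ and $\calD_i\mathfrak{R}_2(\Theta)$, and the hidden-cancellation term $\barcalD_{i-1}\mathfrak{M}(\Theta) = \barcalD_{i-1}(\Dz\mathfrak{R}_3(\Theta) + (1+\kk)\mathfrak{R}_4(\Theta))$.

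Second, with \eqref{def:E7} in hand, I would estimate $\|E_{\text{pressure}}\|_i^2$ term by term by quoting the propositions already established. Proposition \ref{prop:est M} gives $\|\barcalD_{i-1}\mathfrak{M}\|_i^2 \lesssim \calE^N$; Proposition \ref{prop:est R} gives $\sum_{k=1}^2\|\calD_i\mathfrak{R}_k\|_i^2 \lesssim (\calE^N)^2$; Lemma \ref{prop:est C12} gives $\sum_{k=1}^2\|\mfC_k\|_i^2 \lesssim \calE^N$. Summing these and invoking the bootstrap assumption \eqref{bootstrap EN}, which ensures $\calE^N \le E^N \le M_*\eps < 1$ so that $(\calE^N)^2 \lesssim \calE^N$, yields
\begin{equation*}
\|E_{\text{pressure}}\|_i^2 \lesssim \calE^N,
\end{equation*}
which is the claimed bound. (Strictly, the statement as written reads $E_{\text{pressure}} \lesssim \calE^N$ but should be understood in the squared-norm sense consistent with the energy identity \eqref{eq: high-order energy identity}; I would either keep that convention or write $\|E_{\text{pressure}}\|_i^2 \lesssim \calE^N$ explicitly.)

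The genuine content is therefore not in Theorem \ref{thm: pressure estimate} itself — which is a bookkeeping synthesis — but in the three supporting results, so in the write-up I would keep the proof short and point to them. The one subtlety worth flagging is that $E_{\text{pressure}}$ as defined in \eqref{def:E7} must be verified to coincide exactly with the collection of error terms produced on the left-hand side of \eqref{eq:highorder momlagmain} after moving the genuine elliptic term $(1+\kk)\calF^{-\kk-2}(U^0)^{-4}(1+\Theta/\ze)^4\calL_i\calD_i\Theta$ to the right; this requires checking that the commutator $[\barcalD_{i-1}, \calF^{-\kk-2}(U^0)^{-4}(1+\Theta/\ze)^4]\Dz L_0(\Theta)$ appearing in \eqref{eq:highorder momlagmain pre} is the same $\mfC_1$ of \eqref{def:C1}, and that $\barcalD_{i-1}\mathfrak{R}_4(\Theta)$ has been correctly absorbed into $\barcalD_{i-1}\mathfrak{M}(\Theta)$ via \eqref{def: M} and Lemma \ref{lem: cancellation lemma M}. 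This matching is the main (mild) obstacle: it is purely organizational, but one must be careful that the $w$-weighted top-order terms isolated in $\mathfrak{M}_3$ (which carry $\pz\Dz\Theta$) are exactly those that the weighted norm $\|\cdot\|_i$ can afford, which is precisely what Proposition \ref{prop:est M} was designed to handle. Once that identification is confirmed, the theorem follows immediately.
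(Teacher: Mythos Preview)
Your proposal is correct and follows essentially the same approach as the paper: you derive identity \eqref{elliptic derivation} from \eqref{eq:elltipic term original form} together with the commutator decomposition in Section \ref{sec: high-order equation}, and then bound $\|E_{\text{pressure}}\|_i^2$ by quoting Proposition \ref{prop:est M}, Proposition \ref{prop:est R}, and Lemma \ref{prop:est C12}, using the bootstrap bound $\calE^N \lesssim 1$ to absorb $(\calE^N)^2$ into $\calE^N$. Your discussion of the organizational matching of $E_{\text{pressure}}$ with the terms in \eqref{eq:highorder momlagmain} is more explicit than the paper's, but the logic is identical.
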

\begin{proof}
    The identity \eqref{elliptic derivation} comes from \eqref{eq:elltipic term original form} and \eqref{eq:highorder momlagmain}. Additionally, from Proposition \ref{prop:est M}, Proposition \ref{prop:est R}, and the estimate \eqref{est: est C12} in Proposition \ref{prop:est C}, we obtain
    $$
    E_{\text{pressure}} \lesssim \calE^N(1+\calE^N) \lesssim \calE^N,
    $$
    where the second inequality comes from the $\calE^N \lesssim 1$ in the bootstrap regime. 
\end{proof}

\section{Energy Estimates II: High-Order Errors Arising from the Energy Identity}\label{sect: Energy Estimate II}
In this section, we will estimate $\mfC_3$ \eqref{def:C3} and $\mfI_1,\cdots, \mfI_5$ \eqref{def:I1toI5}, all of which arise from commuting $\pt$ through coefficients in the main equation \eqref{eq:highorder momlagmain}.
\begin{prop}\label{prop:est C}
    The high-order commutator $\mfC_3$ verifies the following estimate: 
    \begin{align}\label{est: est C3}
        \|\mfC_3\|_{i}^2 \lesssim \calS^N+ \calE^N\calS^N+\delta\lam^{-3\kk}\calE^N. 
    \end{align}
    Also, we have
    \begin{align}\label{est: mfI}
        \sum_{j=1}^5 \mfI_j \lesssim \lam^{-\frac32\kk}\calE^N. 
    \end{align}
\end{prop}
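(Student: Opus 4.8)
The plan is to handle the two estimates of Proposition~\ref{prop:est C} separately, in both cases reducing everything to the building-block expansions of Section~\ref{sect:buildingblock}, the induced bootstrap bounds of Lemma~\ref{lem: induced bootstrap}, and the nonlinear interpolation Lemmas~\ref{lem:nonlinear}--\ref{lem:nonlinear2}.

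\textbf{The commutator $\mfC_3$.} Writing $\mfC_3 = [\calD_i,\,w\calF^{-\kk}\calG^\kk]\,h$ with $h := \pt^2\Theta + \lamt\pt\Theta + \delta\lam^{-3\kk}\Theta$, the first step is to invoke the commutator identity \eqref{commutator [calDi g]f} to express $\mfC_3$ as a linear combination of products $\bigl(P_k(w\calF^{-\kk}\calG^\kk)\bigr)\bigl(Q_{i-k}h\bigr)$, $1\le k\le i$, where $P_k,Q_{i-k}$ are admissible operators of orders $k$ and $i-k$; in particular at least one derivative always lands on the coefficient and $h$ is differentiated at most $i-1\le N-1$ times. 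The second step is to expand $P_k(w\calF^{-\kk}\calG^\kk)$ by Leibniz, using Lemma~\ref{lem:wderivative} for the $w$-factor and Propositions~\ref{prop:buildingblockpowers} and~\ref{prop:buildingblockpowersG} for $\calF^{-\kk}$ and $\calG^\kk=(\kk\barcalG)^{-1}$; the structural point — exactly as in the treatment of $\mathfrak{M}_{31}$ in Section~\ref{sect: Energy Estimate 1} — is that whenever a derivative hits $\calF^{-\kk}\calG^\kk$ instead of $w$, the vanishing weight $w$ is retained. One then distributes over the three summands of $h$: the $\pt^2\Theta$-summand contributes the distinguished top-order term $w\,\partial(\calF^{-\kk}\calG^\kk)\,\calD_{i-1}\pt^2\Theta$, controlled in $\|\cdot\|_i^2$ by $\|w\|_{L^\infty}\|\calD_{i-1}\pt^2\Theta\|_{i-1}^2 \lesssim \calS^N$, and otherwise only terms to which Lemma~\ref{lem:nonlinear2} applies with exactly one $\pt^2\Theta$-factor, yielding $\calS^N$ when no further $\Theta$- or $\pt\Theta$-factor occurs and $\calE^N\calS^N$ otherwise; the $\lamt\pt\Theta$- and $\delta\lam^{-3\kk}\Theta$-summands carry a prefactor bounded respectively by $1$ and by $\delta\lam^{-3\kk}$, keep all spatial orders $\le N$, and by Lemma~\ref{lem:nonlinear} contribute $\lesssim \delta\lam^{-3\kk}\calE^N$ (for $\lamt\pt\Theta$ the factor $\delta\lam^{-3\kk}$ comes from the exponent $K\ge 1$ in that lemma). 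Summing the three contributions gives \eqref{est: est C3}.

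\textbf{The boundary terms $\mfI_1,\dots,\mfI_5$.} Each is treated by an $L^\infty$--$L^2$ split. For $\mfI_1,\mfI_2,\mfI_3$ one bounds $\pt(w\calF^{-\kk}\calG^\kk)$, $w\calF^{-\kk}\calG^\kk$, and $\pt(\delta\lam^{-3\kk}w\calF^{-\kk}\calG^\kk)$ in $L^\infty$ using \eqref{est: induced bootstrap F}, \eqref{est: induced bootstrap G}, \eqref{bootstrap FG}, and Lemma~\ref{lem:lambdaODE}, obtaining bounds $\lam^{-\frac32\kk}$, $1$, and $\delta\lam^{-3\kk}$ respectively; pairing with $\int_0^1 w^{\frac1\kk+i}\ze^2|\pt\calD_i\Theta|^2\,d\ze \le \delta\lam^{-3\kk}\calE^N$ (for $\mfI_1,\mfI_2$, absorbing the extra $w$ in $\mfI_2$) and with $\|\calD_i\Theta\|_i^2 \le \calE^N$ (for $\mfI_3$), and using $\delta\lam^{-3\kk}\lesssim\lam^{-\frac32\kk}$ (valid since $\lam$ is bounded below and $\delta$ is small), gives the bound for these three. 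For $\mfI_4$ one bounds $\pt\bigl(\calF^{-\kk-2}(U^0)^{-4}(1+\frac\Theta\ze)^4\bigr)$ in $L^\infty$ by $\lesssim\lam^{-\frac32\kk}$ via \eqref{est: induced bootstrap U^0}, \eqref{est: induced bootstrap F}, \eqref{bootstrap ptau Theta}, \eqref{bootstrap Theta}, \eqref{bootstrap U0}, \eqref{bootstrap FG}, and pairs with $\|\calD_{i+1}\Theta\|_{i+1}^2 \le \calE^N$. For $\mfI_5$ one bounds $\pz\bigl(\calF^{-\kk-2}(U^0)^{-4}(1+\frac\Theta\ze)^4\bigr)$ in $L^\infty$ by $\lesssim 1$ (writing $\ze\pz(\frac\Theta\ze) = \Dz\Theta - 3\frac\Theta\ze$ and invoking \eqref{est: induced bootstrap F}, \eqref{est: induced bootstrap U^0}, \eqref{bootstrap Theta}), and then applies Cauchy--Schwarz in the measure $w^{1+\frac1\kk+i}\ze^2\,d\ze$ to get $\mfI_5 \lesssim (\delta\lam^{-3\kk}\calE^N)^{\frac12}(\calE^N)^{\frac12} = \delta^{\frac12}\lam^{-\frac32\kk}\calE^N \lesssim \lam^{-\frac32\kk}\calE^N$.

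\textbf{Main difficulty.} The computations for the $\mfI_j$ are essentially mechanical once Lemma~\ref{lem: induced bootstrap} is in hand. The one place demanding care is the $\mfC_3$ estimate: one must track the commutator expansion term by term to confirm that $\pt^2\Theta$ is never differentiated more than $N-1$ times — so that it is always absorbed into $\calS^N$ rather than an uncontrolled norm — and that the weight $w$ survives on precisely those products carrying the top-order factor $\calD_{i-1}\pt^2\Theta$. Both facts follow from \eqref{commutator [calDi g]f}, Lemma~\ref{lem:wderivative}, and Proposition~\ref{prop:buildingblockpowers}, but the term-by-term verification is the bulk of the argument.
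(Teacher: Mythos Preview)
Your proposal is correct and follows essentially the same route as the paper: the commutator identity \eqref{commutator [calDi g]f}, the Leibniz/chain-rule expansions via Propositions~\ref{prop:buildingblockpowers} and~\ref{prop:buildingblockpowersG}, and Lemmas~\ref{lem:nonlinear}--\ref{lem:nonlinear2} for $\mfC_3$; then the $L^\infty$ bounds from Lemma~\ref{lem: induced bootstrap} for the $\mfI_j$. One small remark: the retained $w$ weight is most critical not for the term $w\,\partial(\calF^{-\kk}\calG^\kk)\,\calD_{i-1}\pt^2\Theta$ you highlight (that term is already controlled by $\calS^N$ via the weight gain $\|\cdot\|_i \le \|\cdot\|_{i-1}$), but rather for the spatially top-order contributions where all $i$ derivatives of the commutator land on $\calF^{-\kk}$ and produce a factor $B\Dz\Theta$ with $B\in\barcalP_i$ --- this is the paper's case (a1), handled by Lemma~\ref{lem:nonlinear2} with $i$ replaced by $i+1$, and your analogy with $\mathfrak{M}_{31}$ is exactly the right one.
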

\subsection{Estimates for $\mfC_3$.}
Invoking \eqref{commutator [calDi g]f}, 
\begin{align}\label{C3_pre1}
    \begin{aligned}
        \mfC_3 = \sum_{\substack{A\in \calP_{l_A}, B\in\barcalP_{l_B}\\l_A+l_B=i,l_A\leq i-1}}c_k^{AB} A\left(\p_\tau^2 \Theta + \lamt \p_\tau \Theta + \delta \lam^{-3\kk}\Theta\right)B(w\calF^{-\kk}\calG^{\kk}),
    \end{aligned}
\end{align}
where, from the product rule \eqref{Pbarproduct} and since $\calG^{\kk}=\frac{1}{\kk}\barcalG^{-1}$, $B(w\calF^{-\kk}\calG^{\kk})$ is linear combination of the following terms:
\begin{align}\label{C3_pre2}
     \frac{1}{\kk}B_1w B_2\calF^{-\kk}B_3\barcalG^{-1}, \quad B_j\in \barcalP_{l_j},\quad \sum_{j=1}^3l_j = l_B. 
\end{align}
We are ready to bound $\|\mfC_3\|_i^2$, which will be discussed in the following cases:
\begin{itemize}
    \item [(a)] When $l_3=0$. We need to discuss the following two sub-cases, depending on the value of $l_2$:
    \begin{itemize}
        \item [(a1)] When $l_2=i$.  In light of Proposition \ref{prop:buildingblockpowers}, $B_2\calF^{-\kk}$ is a linear combination of the following terms: 
\begin{align*}
    \calF^{-\kk-k_2}\prod_{\substack{B_2^j\in \barcalP_{i_j}\\\sum_{j=1}^{m_2}i_j=l_2}}(B_2^j\Dz\Theta), \quad k_2=1,\cdots, l_2,\quad m_2 = k_2, 2k_2, 3k_2.
\end{align*}
Thus, from \eqref{C3_pre1} and \eqref{C3_pre2}, we see that $\mfC_3$ is a linear combination of terms of the following types:
\begin{align}\label{C3 case(a)1}
    w\calF^{-\kk-k_2}\barcalG^{-1}\left(\p_\tau^2 \Theta + \lamt \p_\tau \Theta + \delta \lam^{-3\kk}\Theta\right)\prod_{\substack{B_2^j\in \barcalP_{i_j}\\\sum_{j=1}^{m_2}i_j=l_2}}(B_2^j\Dz\Theta).
\end{align}
If there exists some $j\in \{1,\cdots, m_2\}$ such that $i_j = i$, 
then, parallel to the control of $\barcalD_{i-1}\mathfrak{M}_{31}$, we require the extra $w$-weight to control $B^j\Dz\Theta$ whenever $i_j=i$. 
Since $\calF, \barcalG\approx 1$ from the bootstrap assumptions, we have
\begin{align}
    \|\eqref{C3 case(a)1}\|_i^2 \lesssim \calE^N\calS^N+\delta\lam^{-3\kk}\calE^N, 
\end{align}
where we used Lemma \ref{lem:nonlinear2} with $i$ replaced by $i+1$.  On the other hand, if $i_j\leq i-1$ for all $j\in \{1,\cdots, m_2\}$, then we obtain the same bound as above using Lemma \ref{lem:nonlinear2}. 
\item [(a2)] When $l_2\leq i-1$. In this case, $\mfC_3$ is a linear combination of the following terms:
\begin{subequations}\label{C3 case(a)2}
    \begin{align}
        \calF^{-\kk}\barcalG^{-1}(B_1w) A\left(\p_\tau^2 \Theta + \lamt \p_\tau \Theta + \delta \lam^{-3\kk}\Theta\right), \label{C3 case(a)2l20}\\
    \calF^{-\kk-k_2}\barcalG^{-1}A\left(\p_\tau^2 \Theta + \lamt \p_\tau \Theta + \delta \lam^{-3\kk}\Theta\right)B_1w\prod_{\substack{B_2^j\in \barcalP_{i_j}\\\sum_{j=1}^{m_2}i_j=l_2}}(B_2^j\Dz\Theta). \label{C3 case(a)2l2greater0}
\end{align}
\end{subequations}
Here, \eqref{C3 case(a)2l20} comes from the case when $l_2=0$, while \eqref{C3 case(a)2l2greater0} comes from the case when $1\leq l_2\leq i-1$. 
Since $B_1w$ is a smooth function in view of Lemma \ref{lem:wderivative}, and since $l_A\leq i-1$, we invoke Lemma \ref{lem:nonlinear2} as well as the bootstrap assumptions $\calF,\barcalG\approx 1$ to obtain
\begin{align}
    \|\eqref{C3 case(a)2}\|_i^2 \lesssim  \calS^N+ \calE^N\calS^N+\delta\lam^{-3\kk}\calE^N.
\end{align}
    \end{itemize}

\item[(b)] When $l_3\geq 1$. Using the chain rule \eqref{eq:chainrule}, 
$$
B_3\barcalG^{-1} = \barcalG^{-1-k_3}\prod_{\substack{B_3^j\in \barcalP_{\ell_j}\\\sum_{j=1}^{k_3}\ell_j=l_3}}B_3^j \barcalG, \quad k_3=1,\cdots, l_3.
$$
Thus, $\mfC_3$ is a linear combination of the following terms:
\begin{subequations}\label{C3 case(b)}
    \begin{align}\label{C3 case(b) l20}
    \calF^{-\kk}\barcalG^{-1-k_3}A\left(\p_\tau^2 \Theta + \lamt \p_\tau \Theta + \delta \lam^{-3\kk}\Theta\right)B_1w \prod_{\substack{B_3^j\in \barcalP_{\ell_j}\\\sum_{j=1}^{k_3}\ell_j=l_3}}B_3^j \barcalG,
\end{align}
    \begin{align}\label{C3 case(b) l2greater0}
    \calF^{-\kk-k_2}\barcalG^{-1-k_3}A\left(\p_\tau^2 \Theta + \lamt \p_\tau \Theta + \delta \lam^{-3\kk}\Theta\right)B_1w\prod_{\substack{B_2^j\in \barcalP_{i_j}\\\sum_{j=1}^{m_2}i_j=l_2}}(B_2^j\Dz\Theta)\prod_{\substack{B_3^j\in \barcalP_{\ell_j}\\\sum_{j=1}^{k_3}\ell_j=l_3}}B_3^j \barcalG,
\end{align}
\end{subequations}
depending on whether $l_2=0$ or $l_2>0$. 
Also, invoking Proposition \ref{prop:buildingblockpowersG}, it can be seen that $B_3^j\barcalG$ is a linear combination of the following three types of terms:
\begin{subequations}\label{PbarGtype1 contribution}
\begin{equation}
        (U^0)^{\kk+2k_3'}\bar{P}\left((U^0)^{-\kk}(0)(1+\delta\lam^{-3\kk}{(0)}w\calF^{-\kk}(0) )\right)\prod_{\substack{B_3^{jk}\in\calP_{i_k}\\\sum_{k=1}^{m_3}i_k=\ell_{j2}}}(B_3^{jk}H_k(\Theta)),
        \end{equation}
        \begin{equation}\label{PbarGtype1 contribution2}
        (U^0)^{\kk+2k_3'}\bar{P}\left((U^0)^{-\kk}(0)(1+\delta\lam^{-3\kk}(0)w\calF^{-\kk}(0))\right)\prod_{\substack{B_3^{jk}\in\calP_{i_k}\\\sum_{k=1}^{k_3'}i_k=\ell_{j2}}}(B_3^{jk}(\lamt^2\zeta^2)).
\end{equation}
\end{subequations}
where $\bar{P}\in \barcalP_{\ell_{j1}}$, $\ell_{j1}+\ell_{j2}=\ell_j$, $k_3'=1,\cdots, \ell_{j2}$ and $m_3=k_3', 2k_3'$, as well as 
\begin{equation}\label{PbarGtype2 contribution}
        \delta\lam^{-3\kk}\calF^{-\kk-2k_3'}\cdot \bar{P}w \prod_{\substack{B_3^{jk}\in \calP_{i_k}\\\sum_{k=1}^{m_3'}i_k =\ell_{j2}}}(B_3^{jk}H_k(\Theta)), 
    \end{equation}
    where $m_3'=k_3',2k_3',3k_3'$. 
By plugging \eqref{PbarGtype1 contribution} and \eqref{PbarGtype2 contribution} into \eqref{C3 case(b)}, respectively, we can use Lemma \ref{lem:nonlinear2} to conclude
\begin{align}
    \|\eqref{C3 case(b)}\|_i^2 \lesssim  \calE^N(0)\left(\calS^N+ \calE^N\calS^N+\delta\lam^{-3\kk}\calE^N\right).
\end{align}
Lastly, since $\calE^N(0)\lesssim 1$, we sum up all estimates in both cases and get
\begin{align}
    \|\mfC_3\|_i^2 \lesssim \calS^N+ \calE^N\calS^N+\delta\lam^{-3\kk}\calE^N.
\end{align}
\end{itemize}

\subsection{Estimates for $\mfI_j$, $j=1,\cdots,5$}
These estimates are simple thanks to the bootstrap assumptions in Definition \ref{assump: bootstrap} and Lemma \ref{lem: induced bootstrap}. Invoking Lemma \ref{lem: induced bootstrap}, one sees that
$
\mfI_1 + \mfI_2 \lesssim \delta\lam^{-3\kk}\calE^N.
$
Similarly, 
$
\mfI_5 \lesssim (\delta\lam^{-3\kk})^{\frac{1}{2}}\calE^N.
$
Next, we study the estimates for $\mfI_3$. Decomposing $\mfI_3$ into $\mfI_{31}+\mfI_{32}$, where
\begin{align*}
    \mfI_{31} = \frac{3\kk}{2}\delta \lam^{-3\kk}\lamt \int_0^1w^{\frac1\kk+i}\ze^2 w\calF^{-\kk}\calG^{\kk}|\calD_i\Theta|^2, \\
    \mfI_{32} =-\frac12\delta\lam^{-3\kk}\int_0^1w^{\frac1\kk+i}\ze^2\p_{\tau}(w\calF^{-\kk}\calG^{\kk})|\calD_i\Theta|^2, 
\end{align*}
we invoke the bootstrap assumptions in Definition \ref{assump: bootstrap} to obtain $\mfI_{31}\lesssim \delta\lam^{-3\kk}\calE^N$. Additionally, we use \eqref{est: induced bootstrap F} and \eqref{est: induced bootstrap G} to obtain $\mfI_{32}\lesssim \delta\lam^{-3\kk}\calE^N$.

Finally, we study the estimates for $\mfI_4$. Invoking Lemma \ref{lem: induced bootstrap}, we infer
$$
\left\|\p_{\tau}\left(\calF^{-\kk-2}(U^0)^{-4}\left(1+\frac{\Theta}{\ze}\right)^4\right)\right\|_{L^{\infty}} \lesssim \lam^{-\frac32\kk}.
$$
Therefore, $\mfI_4\lesssim \lam^{-\frac32\kk}\calE^N$. 
In summary, $\sum_{j=1}^5\mfI_j \lesssim \left(\delta\lam^{-3\kk}+(\delta\lam^{-3\kk})^{\frac12}+\lam^{-\frac32\kk}\right)\calE^N$, which implies \eqref{est: mfI} as $\delta<1$.  

\section{Energy Estimates III: $E_j$, $j=1,\cdots, 6$}\label{sect: Energy Estimate III}
In this section, we estimate the terms $E_j$, $j = 1,\hdots, 6$. Such terms encode relativistic effects on the fluid velocity and are coupled to the evolution of $\barcalG$ in a highly nontrivial way. It turns out that the major difficulty comes from $E_1, E_3,$ and $E_4$, which has potential derivative loss. We resolve this issue by fundamentally utilizing the precise structures of $\calF$ and $\barcalG$, from which a symmetric structure in the highest order term emerges. We are thus able to avoid a potential derivative loss upon integrating by parts in time or space respectively (See Section \ref{sect: Energy Estimate V} for a detailed exposition).

We organize this section as follows: we will first treat $E_4$, $E_1$, and $E_3$, as those terms are not only the most dangerous ones, but also already contain the majority of the technical difficulties. We then estimate $E_2, E_5,$ and $E_6$, which have no risk of derivative loss but generate large errors at low frequencies.

\subsection{Estimates for $\calD_i E_4$}
In this section, we aim to prove the following Proposition:
\begin{prop}
    \label{prop:DiE4}
    The following decomposition holds for any $\tau \in [0,\tau_*]$:
    \begin{equation}
        \label{est:DiE4}
        \begin{split}
        \calD_i E_4 &= (\calC^{E_4}_1+\calC^{E_4}_2)w\ze\calD_{i+1}\pt\Theta +\calC^{E_4}_3w\calD_i\pt^2\Theta + R_{4}^i,
        \end{split}
    \end{equation}
    where
    \begin{equation}\label{E4coeff}
    \begin{split}
    \calC^{E_4}_1(\tau,\ze) &:=-\frac{1+\kk}{\kk}(U^0)^{-2}\calF^{-\kk-1}\barcalG^{-1}\frac{\pt\Theta + \lamt(\Theta +\ze)}{\ze}\left(1+\frac{\Theta}{\ze}\right)^2,\\
    \calC^{E_4}_2(\tau,\ze) &:= -\frac{1+\kk}{\kk}\delta \lam^{-3\kk} w(U^0)^{-2}\calF^{-2\kk-1}\barcalG^{-2}\frac{\pt\Theta + \lamt(\Theta +\ze)}{\ze}\left(1+\frac{\Theta}{\ze}\right)^2,\\
    \calC^{E_4}_3(\tau,\ze) &:= -\frac{1+\kk}{\kk}\lamt (U^0)^\kk \calF^{-\kk}\barcalG^{-2}\left((U^0)^{-\kk}(0)(1+ \delta\lambda^{-3\kk}(0)w\calF^{-\kk}(0)) \right)(\pt\Theta + \lamt(\Theta +\ze))^2.
    \end{split}
    \end{equation}
    Moreover,
    \begin{equation}
        \label{est:R4i}
        \|R_{4}^i\|_i^2 \lesssim (\delta \lam^{-3\kk})^2+\delta \lam^{-3\kk}\calE^N(\tau) + \calS^N(\tau).
    \end{equation}
\end{prop}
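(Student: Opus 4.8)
The plan is to expand $\calD_i E_4$ by brute force using the product rule for $\calD_i$, keeping careful track of which factors can absorb the top-order derivatives, and then to isolate precisely those contributions that cannot be folded into the remainder $R_4^i$. Recall from \eqref{mainerror} that
\begin{equation*}
E_4 = \frac{1+\kk}{\kk}(U^0)^{-2}(\pt\Theta + \lamt(\Theta+\ze))\,w\,\pt(\calF^{-\kk}\calG^\kk).
\end{equation*}
The first step is to compute $\pt(\calF^{-\kk}\calG^\kk)$. Here one must use the explicit formula for $\barcalG$ and its time derivative from \eqref{eq:barcalG}--\eqref{eq:dtbarcalG}, together with $\calG^\kk = \tfrac1\kk\barcalG^{-1}$, to write
\begin{equation*}
\pt(\calF^{-\kk}\calG^\kk) = -\kk\calF^{-\kk-1}(\pt\calF)\calG^\kk + \calF^{-\kk}\pt(\calG^\kk),
\end{equation*}
and then substitute $\pt(\calG^\kk) = -\kk\calG^{1+\kk}\pt\barcalG$ (equivalently $-\tfrac1\kk\barcalG^{-2}\pt(\kk\barcalG)$) using \eqref{eq:dtbarcalG}. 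The point of using \eqref{eq:dtbarcalG} rather than the raw evolution equation \eqref{eq:barG} is that $\pt\barcalG$ is expressed in terms of $\pt(U^0)^{-2}$, $\pt\calF$, and smooth coefficients, but \emph{not} $\pt^2\Theta$ directly; the $\pt^2\Theta$ dependence is hidden inside $\pt(U^0)^{-2} = -2(\pt\Theta + \lamt(\Theta+\ze))(\pt^2\Theta + \pt\lamt(\Theta+\ze) + \lamt\pt\Theta)$. So after this substitution, $E_4$ is a sum of terms, each of which has at most one factor among $\{\pt\calF,\ \pt^2\Theta\}$ appearing linearly (through $\pt(U^0)^{-2}$), multiplied by smooth-in-$\ze$ and bootstrap-bounded coefficients and extra powers of $\pt\Theta + \lamt(\Theta+\ze)$.

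Second, I would apply $\calD_i$ using the product rule \eqref{Pproduct} (and its $\barcalP$ variants). The dangerous distributions are those where all $i$ derivatives land on the single "active" factor. Tracking these: when all derivatives hit $\pt\calF$, Proposition \ref{prop:buildingblock} (specifically \eqref{PbarF}, upgraded via \eqref{eq:dtFsqpttheta}--\eqref{eq:dtFcuzeta} of Proposition \ref{prop:highestdtF} since $\pt\calF$ carries the extra $\pt$) produces $\calD_{i+1}\pt\Theta$ times coefficients — these are exactly the $w\ze\calD_{i+1}\pt\Theta$ terms, contributing $\calC_1^{E_4}$ (from the $-\kk\calF^{-\kk-1}(\pt\calF)\calG^\kk$ piece) and $\calC_2^{E_4}$ (from the $\delta\lam^{-3\kk}w\calF^{-2\kk-1}$-type piece coming through $\pt\barcalG$); the $w$ weight appears because $E_4$ already carries the $w$ factor and $\pt\calF$'s highest term is $\calD_{i+1}\pt\Theta$ which comes with an extra $\ze$ from the structure $\Dz(\cdots)$ acting on $\Theta$-quadratics, cf. \eqref{eq:dtFsqzeta}. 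When all derivatives hit $\pt^2\Theta$ through $\pt(U^0)^{-2}$, Proposition \ref{prop:dtU0} (formula \eqref{eq:dtU0Hhighest}, which is designed precisely so the leading term is $\calD_i\pt^2\Theta$ and not a general $P_i\pt^2\Theta$) gives $\calD_i\pt^2\Theta$ times a coefficient — this is the $w\calD_i\pt^2\Theta$ term with coefficient $\calC_3^{E_4}$; note the coefficient involves $(\pt\Theta + \lamt(\Theta+\ze))^2$ and the factor $\lamt$, matching the fact that $\pt^2\Theta$ only enters $\pt\barcalG$ through $\lamt\,\pt(U^0)^{-2}$ in \eqref{eq:dtbarcalG}, and the $(U^0)^\kk\calF^{-\kk}\barcalG^{-2}$ dressing with the initial-data factor comes directly from $\pt(U^0)^\kk$ in the first line of \eqref{eq:dtbarcalG}. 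The explicit forms of $\calC_1^{E_4},\calC_2^{E_4},\calC_3^{E_4}$ in \eqref{E4coeff} should then match by direct bookkeeping of these leading coefficients.

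Third, everything else goes into $R_4^i$, and I would bound $\|R_4^i\|_i^2$ using Lemma \ref{lem:nonlinear2} (and Lemma \ref{lem:nonlinear} where no $\pt^2\Theta$ appears). The remainder terms fall into three classes: (i) terms where $\pt\calF$ or $\pt^2\Theta$ is hit by at most $i-1$ derivatives and the remaining derivatives are distributed among the smooth/bootstrap coefficients and lower-order $\Theta$-factors — these are controlled by $\delta\lam^{-3\kk}\calE^N$ or $\calS^N$ according to whether a $\pt^2\Theta$ is present and whether the overall prefactor carries $\delta\lam^{-3\kk}$ (which it does whenever the $\pt(\calG^\kk)$-branch of type \eqref{PbarGtype2 contribution}/\eqref{PbarGtype2c} is involved, or whenever $\pt\calF$ appears since $\pt\calF = \pt(\calF-1)$ is at least linear in $\Theta$-derivatives and is itself bounded by $\lam^{-\frac32\kk}$ via \eqref{est: induced bootstrap F}); (ii) pure low-order source terms from the $\lamt\pt\lamt\,\ze^2$-type contributions in Proposition \ref{prop:dtU0}, which after using $|\pt\lamt|\lesssim\delta e^{-3\kk\bar\lam\tau}\lesssim\delta\lam^{-3\kk}$ from \eqref{est:ptlamt} are bounded by $(\delta\lam^{-3\kk})^2$ (this is the source of the $(\delta\lam^{-3\kk})^2$ term in \eqref{est:R4i}); (iii) cross terms where some derivatives hit a coefficient and some hit $\pt^2\Theta$ — again $\calS^N$ or $\calE^N\calS^N\lesssim\calS^N$ in the bootstrap regime. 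The main obstacle will be the organizational one: correctly and exhaustively enumerating all distributions of $\calD_i$ across the many factors produced after substituting \eqref{eq:dtbarcalG}, and verifying in each case that the only way to produce $\calD_{i+1}\pt\Theta$ or $\calD_i\pt^2\Theta$ at top order is through the three displayed channels — so that the "all derivatives on one factor" leading terms genuinely reduce to $\calD_{i+1}\pt\Theta$ and $\calD_i\pt^2\Theta$ (not merely $P_{i+1}\pt\Theta$ or $P_i\pt^2\Theta$), which is exactly what Propositions \ref{prop:highestdtF} and \ref{prop:dtU0} are engineered to guarantee. A secondary subtlety is keeping the $w$-weight bookkeeping correct: the extra $\ze$ accompanying $\calD_{i+1}\pt\Theta$ in $\calC_1^{E_4},\calC_2^{E_4}$ is essential so that $\|w\ze\calD_{i+1}\pt\Theta\|_i$ sees the weight $w^{\frac1\kk+i+2}\ze^4$, which is controlled by $\calE^N$ after one more integration by parts in the final energy argument of Section \ref{sect: Energy Estimate V}; I would flag this but not carry it out here since \eqref{est:DiE4} only asks for the decomposition, not its subsequent absorption.
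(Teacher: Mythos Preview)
Your proposal is correct and follows essentially the same route as the paper: split $E_4$ into the $\pt\calF$ and $\pt\barcalG$ contributions (the paper labels these $E_{41}$ and $E_{42}$), use \eqref{eq:dtbarcalG} for the latter, then extract the top-order $\calD_{i+1}\pt\Theta$ and $\calD_i\pt^2\Theta$ terms via Propositions \ref{prop:highestdtF} and \ref{prop:dtU0}, and bound the remainder with Lemmas \ref{lem:nonlinear}--\ref{lem:nonlinear2}. One small imprecision: your parenthetical that ``$\pt^2\Theta$ only enters $\pt\barcalG$ through $\lamt\,\pt(U^0)^{-2}$'' is not right --- in \eqref{eq:dtbarcalG} the $\pt(U^0)^{-2}$ term carries no explicit $\lamt$; the quadratic factor $(\pt\Theta+\lamt(\Theta+\ze))^2$ in $\calC_3^{E_4}$ arises instead from the outer $(\pt\Theta+\lamt(\Theta+\ze))$ already in $E_4$ multiplying the one produced by $\pt(U^0)^{-2}$.
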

Note that we may write $E_4$ in the following way:
\begin{align*}
    E_4 &= \frac{1+\kk}{\kk} (U^0)^{-2}(\p_\tau \Theta + \lamt(\Theta + \zeta))(w\p_\tau (\calF^{-\kk}\calG^\kk))\\
    &= -\frac{1+\kk}{\kk} w(U^0)^{-2}(\p_\tau \Theta + \lamt(\Theta + \zeta)) \calF^{-\kk-1} \barcalG^{-1}\pt\calF\\
    &\quad -\frac{1+\kk}{\kk^2} w(U^0)^{-2}(\p_\tau \Theta + \lamt(\Theta + \zeta)) \calF^{-\kk}\barcalG^{-2}\pt\barcalG\\
    &= -E_{41} - E_{42},
\end{align*}
where we used the definition of $\barcalG$ in the above identities. We will study both $E_{41}$ and $E_{42}$ in detail below. It turns out that both of them can be decomposed into a leading order term and a remainder which is controlled by the energy.
\subsubsection{Estimates for $\calD_i E_{41}$}
In this subsection, we prove the following lemma concerning the estimate for $\calD_i E_{41}$:
\begin{lem}\label{lem:DiE41}
    The following decomposition holds for any $\tau \in [0,\tau_*]$:
    \begin{equation}
        \label{est:DiE41}
        \calD_i E_{41} = \frac{1+\kk}{\kk}w(U^0)^{-2}\calF^{-\kk-1}\barcalG^{-1}(\pt\Theta + \lamt(\Theta +\ze))\left(1+\frac{\Theta}{\ze}\right)^2\calD_{i+1}\pt\Theta + R_{41}^i,
    \end{equation}
    where $R_{41}^i$ obeys the following bound:
    \begin{equation}
        \label{est:R41i}
        \|R_{41}^i\|_i^2 \lesssim \delta \lam^{-3\kk}\calE^N(\tau).
    \end{equation}
\end{lem}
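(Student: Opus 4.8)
The plan is to compute $\calD_i E_{41}$ directly using the product rule for $\calD_i$ together with the building-block expressions from Section \ref{sect:buildingblock}, isolating the single term in which all $i$ derivatives fall onto $\pt\calF$ (which is the only place a top-order $\calD_{i+1}\pt\Theta$ can arise). First I would recall that $\pt\calF = \pt(\calF - 1)$ and, by Lemma \ref{lem:calF-1}, $\calF - 1 = \Dz(\Theta + \Theta^2/\ze) + \frac13\Dz(\Theta^3/\ze^2)$, so $\pt\calF = \Dz\pt\Theta + \Dz\pt(\Theta^2/\ze) + \frac13\Dz\pt(\Theta^3/\ze^2)$. Applying $\calD_i$ to $E_{41} = \frac{1+\kk}{\kk}w(U^0)^{-2}\calF^{-\kk-1}\barcalG^{-1}(\pt\Theta+\lamt(\Theta+\ze))\pt\calF$ via the product rule \eqref{Pproduct}, the leading term is the one where $\calD_i$ lands entirely on the factor $\pt\calF$ and produces $\calD_i\Dz\pt\Theta = \calD_{i+1}\pt\Theta$; this carries the prefactor $\frac{1+\kk}{\kk}w(U^0)^{-2}\calF^{-\kk-1}\barcalG^{-1}(\pt\Theta+\lamt(\Theta+\ze))$. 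The extra $(1+\Theta/\ze)^2$ in \eqref{est:DiE41} comes from combining with the $\Dz\pt(\Theta^2/\ze)$ and $\Dz\pt(\Theta^3/\ze^2)$ pieces using the identities \eqref{eq:dtFsqpttheta}--\eqref{eq:dtFcuzeta} of Proposition \ref{prop:highestdtF}: the top-order contributions of those pieces are $2\pt\Theta\frac\Theta\ze\calD_{i+1}\pt\Theta$ and $3\pt\Theta(\Theta/\ze)^2\calD_{i+1}\pt\Theta$ (and similarly with $\lamt(\Theta+\ze)$ replacing $\pt\Theta$), which, after summing against the coefficient, collapse $(1 + 2\frac\Theta\ze + (\frac\Theta\ze)^2) = (1+\frac\Theta\ze)^2$ times $\calD_{i+1}\pt\Theta$. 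Everything else is the remainder $R_{41}^i$.

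Next I would collect the remainder $R_{41}^i$ into a sum of terms of the two schematic shapes covered by Lemmas \ref{lem:nonlinear} and \ref{lem:nonlinear2}: (i) products of $\calP_j$-derivatives of $\Theta$, $\pt\Theta$, and at most one factor $\Dz\pt\Theta$ or $\pt^2\Theta$ (when a $\tau$-derivative hits $(U^0)^{-2}$, or when $\pt^2\Theta$ is hidden inside $\pt\pz(\cdot)$ terms), always carried by a coefficient which, by the bootstrap assumptions, is $\approx 1$ times either $w$ or a smooth function of $\ze$ by Lemma \ref{lem:wderivative}; (ii) genuinely lower-order products where no derivative reaches top order. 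The chain rules for $\calF^{-\kk-1}$ and $\barcalG^{-1}$ (Proposition \ref{prop:buildingblockpowers}, Proposition \ref{prop:buildingblockpowersG}) and for $(U^0)^{-2}$ (Proposition \ref{prop:buildingblock}, in particular \eqref{PbarU0highest} and \eqref{eq:dtU0Hhighest}) guarantee that all such lower-order factors are controlled in $L^\infty$ by the induced bootstrap bounds of Lemma \ref{lem: induced bootstrap}. The crucial bookkeeping point is that $E_{41}$ already carries one explicit factor $w\cdot\pt\calF \sim w\cdot\Dz\pt\Theta$ and one factor $(\pt\Theta + \lamt(\Theta+\ze))$, so that $\|R_{41}^i\|_i^2$ is at least quadratic in the building blocks that constitute $\calE^N$; and since $E_{41}$ is a summand of $E_4$ which in \eqref{mainerror} carries no explicit $\delta\lam^{-3\kk}$, I need to recover the factor $\delta\lam^{-3\kk}$ in \eqref{est:R41i} from the identity $\calF^{\kk}\barcalG = $ a quantity involving $(U^0)^{\kk}(1 + \delta\lam^{-3\kk}(0)w\calF^{-\kk}(0))$ minus $\delta\lam^{-3\kk}w\calF^{-\kk}$ coming from \eqref{eq:barcalG}; more precisely, wherever $\pt\calF$ (or $\pt\barcalG$, but that is in $E_{42}$) is \emph{not} the factor hit by $\calD_i$, the weight $w$ in $E_{41}$ pairs with a derivative of $\Theta$ that is not top order, so no smallness is needed there, while the one term with $\calD_{i+1}\pt\Theta$ is precisely the leading term extracted above — hence in $R_{41}^i$ every surviving top-order factor $\Dz\pt\Theta$ is multiplied by $w$ and by a \emph{second} derivative of $\Theta$, which via Lemma \ref{lem:nonlinear} (applied with $i$ replaced by $i+1$, exploiting the extra $w$) yields $(\calE^N)^2 \lesssim \calE^N$, and the overall $\delta\lam^{-3\kk}$ in \eqref{est:R41i} comes from the fact that one may freely insert a factor $\delta\lam^{-3\kk}w\calF^{-\kk}\calG^{\kk}$-type smallness only if one tracks that the genuinely dangerous term was already removed — so actually I expect \eqref{est:R41i} to hold with right side $\lesssim \calE^N$ and the stated $\delta\lam^{-3\kk}$ to be the sharper truth obtained because $E_{41}$, once the $\calD_{i+1}\pt\Theta$-term is peeled off, reduces to quantities that appear in \eqref{eq:dtbarcalG} carrying an explicit $\delta$.

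The main obstacle will be the careful extraction of the leading term: showing that after applying $\calD_i$ to $w(U^0)^{-2}\calF^{-\kk-1}\barcalG^{-1}(\pt\Theta+\lamt(\Theta+\ze))\pt\calF$, the \emph{only} place $\calD_{i+1}\pt\Theta$ appears is in the asserted coefficient times $(1+\Theta/\ze)^2$, and that the various top-order-looking contributions from $\calD_i$ hitting $(U^0)^{-2}$ (which produces $\calD_i\pt^2\Theta$-type terms via \eqref{PbarU0highest}, but with a $\tau$-derivative there is no second $\tau$-derivative, so this cannot happen in $E_{41}$ where $\pt$ sits on $\calF$, not $(U^0)^{-2}$) or from $\calD_i$ distributing over $\pt\calF = \Dz\pt\Theta + \dots$ combine exactly into the $(1+\Theta/\ze)^2$ collapse. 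This is bookkeeping-heavy but mechanical given Proposition \ref{prop:highestdtF}: the identities \eqref{eq:dtFsqpttheta}, \eqref{eq:dtFsqtheta}, \eqref{eq:dtFsqzeta} (and their cubic analogues) are designed precisely so that the top-order piece is $\calD_{i+1}\pt\Theta$ with the "geometric" coefficients $2\frac\Theta\ze$, $2(\frac\Theta\ze)^2$, $2\frac\Theta\ze$ (resp. $3(\frac\Theta\ze)^2$, etc.), and adding the $\lamt$-rescaled copies multiplying $\lamt(\Theta+\ze)$ gives the claimed formula. Once the leading term is correctly isolated, bounding $R_{41}^i$ is a routine application of Lemmas \ref{lem:nonlinear} and \ref{lem:nonlinear2} together with Lemma \ref{lem: induced bootstrap} and Lemma \ref{lem:wderivative}, observing that no term in $R_{41}^i$ loses a derivative because every remaining top-order factor comes weighted by $w$ and accompanied by at least one more derivative of $\Theta$ at order $\le N$.
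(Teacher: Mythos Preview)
Your overall strategy matches the paper's: split $(\pt\Theta + \lamt(\Theta+\ze))$ into $\lamt\ze$ and $\pt\Theta+\lamt\Theta$, apply the product rule \eqref{eq: leading order symmetry} to push $\calD_i$ onto the factor containing $\pt\calF$, then invoke Proposition~\ref{prop:highestdtF} (identities \eqref{eq:dtFsqpttheta}--\eqref{eq:dtFcuzeta}) so that the top-order contributions from $\Dz\pt\Theta$, $\Dz\pt(\Theta^2/\ze)$, $\tfrac13\Dz\pt(\Theta^3/\ze^2)$ assemble into the factor $(1+\Theta/\ze)^2$. The remainder is then handled by Lemma~\ref{lem:nonlinear}, exactly as you say; Lemma~\ref{lem:nonlinear2} is not needed for $E_{41}$ since, as you eventually note, no $\pt^2\Theta$ can appear here.

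However, your explanation of where the factor $\delta\lam^{-3\kk}$ in \eqref{est:R41i} comes from is incorrect, and following it would not close the estimate. It has nothing to do with \eqref{eq:barcalG} or \eqref{eq:dtbarcalG}: $E_{41}$ contains $\barcalG^{-1}$ only as an undifferentiated coefficient (bounded via \eqref{bootstrap FG}), and the $\delta$ appearing in those formulas sits inside $E_{42}$, not $E_{41}$. The actual mechanism is much simpler and is built into Lemma~\ref{lem:nonlinear}: the integer $K$ there counts the number of $\pt\Theta$ factors in the product, and since $\calE^N$ carries the weight $\delta^{-1}\lam^{3\kk}$ in front of $\|\calD_i\pt\Theta\|_i^2$, each such factor buys one power of $\delta\lam^{-3\kk}$. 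Every term in $R_{41}^i$ contains at least one $\pt\Theta$ factor, because $\pt\calF = \Dz\pt\Theta + \Dz\pt(\Theta^2/\ze) + \tfrac13\Dz\pt(\Theta^3/\ze^2)$ always carries exactly one $\pt$ on $\Theta$; this persists whether or not $\calD_i$ lands on $\pt\calF$. Hence $K\ge 1$ in every application of Lemma~\ref{lem:nonlinear}, yielding $(\delta\lam^{-3\kk})^K(\calE^N)^{J_1+J_2}\le \delta\lam^{-3\kk}\calE^N$ after using $\calE^N\lesssim 1$. If you only tracked ``$\lesssim \calE^N$'' as you suggest, the bound would be too weak to be absorbed later.
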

To prove the above lemma, it is helpful to further decompose $E_{41}$ into the following:
\begin{align*}
    \frac{\kk}{1+\kk}E_{41} &= w(U^0)^{-2}(\lamt\ze)\calF^{-\kk-1} \barcalG^{-1}\pt\calF\\
    &\quad + w(U^0)^{-2} (\pt\Theta + \lamt\Theta) \calF^{-\kk-1} \barcalG^{-1}\pt\calF\\
    &=: E_{411} + E_{412}.
\end{align*}
We will show in detail how to treat $E_{411}$, which contains the lowest order of nonlinearities. Then we comment on how to adapt the argument to $E_{412}$. 

\noindent\textbf{Estimates for $\calD_i E_{411}$.} Recalling the expression of $\calF$ \eqref{eq:calF-1}, we immediately have that
$$
\pt\calF = \pt\Dz\Theta + \pt\Dz\left(\frac{\Theta^2}{\ze}\right) + \frac13\pt\Dz\left(\frac{\Theta^3}{\ze^2}\right).
$$
Then after an application of the product rule \eqref{eq: leading order symmetry}, we obtain
\begin{equation}
    \label{E411aux1}
    \begin{split}
    \calD_i E_{411} &= \lamt w(U^0)^{-2}\calF^{-\kk-1}\barcalG^{-1}\cdot \calD_i\left[\ze\left(\Dz\pt\Theta + \Dz\pt\left(\frac{\Theta^2}{\ze}\right) + \frac13\Dz\pt\left(\frac{\Theta^3}{\ze^2}\right)\right)\right]\\
    &\quad + \sum_{\substack{A_{1,\hdots,4}\in\barcalP_{l_1,\hdots,l_4},\\
    A_5 \in \calP_{l_5},\\
    l_1 + \hdots + l_5 = i, l_5 \le i-1}} (A_1w)(A_2(U^0)^{-2})(A_3\calF^{-\kk-1})(A_4\barcalG^{-1})\\
    &\quad \times \left(A_5\left[\ze\left(\Dz\pt\Theta + \Dz\pt\left(\frac{\Theta^2}{\ze}\right) + \frac13\Dz\pt\left(\frac{\Theta^3}{\ze^2}\right)\right)\right]\right)
    \end{split}
\end{equation}
Applying \eqref{eq:dtFsqzeta} and \eqref{eq:dtFcuzeta} to the first term on the right-hand-side of \eqref{E411aux1}, we may further decompose it as
\begin{align*}
    &w(U^0)^{-2}\calF^{-\kk-1}\barcalG^{-1}\cdot \calD_i\left[\ze\left(\Dz\pt\Theta + \Dz\pt\left(\frac{\Theta^2}{\ze}\right) + \frac13\Dz\pt\left(\frac{\Theta^3}{\ze^2}\right)\right)\right]\\
    &= w(U^0)^{-2}\calF^{-\kk-1}\barcalG^{-1}\left(\ze+2\Theta + \frac{\Theta^2}{\ze}\right)\calD_{i+1}\pt\Theta + w(U^0)^{-2}\calF^{-\kk-1}\barcalG^{-1}\\
    &\quad \times \left(\sum_{\substack{B \in \calP_{l_1}, l_1 \le i}} c_i^{l_1}(B\pt\Theta)+ \sum_{\substack{A_{1,2} \in \calP_{l_1,l_2}\\l_1 + l_2 = i+1\\l_1 \le i}} c_i^{l_1,l_2}(B_1\pt\Theta)(B_2\Theta)+\sum_{\substack{B_{1,2,3} \in \calP_{l_1,l_2,l_3}\\l_1 + l_2 + l_3 = i+1\\l_1 \le i}} c_i^{l_1,l_2,l_3}(B_1\pt\Theta)(B_2\Theta)(B_3\Theta)\right)
\end{align*}
The above computation combining with \eqref{E411aux1} yields:
\begin{equation}
    \label{E411decomp}
    \begin{split}
        &\calD_i E_{411} = \lamt w(U^0)^{-2}\calF^{-\kk-1}\barcalG^{-1}\left(\ze+2\Theta + \frac{\Theta^2}{\ze}\right)\calD_{i+1}\pt\Theta + (U^0)^{-2}\calF^{-\kk-1}\barcalG^{-1}\\
    &\quad \times w\left(\sum_{\substack{B \in \calP_{l_1}, l_1 \le i}} c_i^{l_1}(B\pt\Theta)+ \sum_{\substack{A_{1,2} \in \calP_{l_1,l_2}\\l_1 + l_2 = i+1\\l_1 \le i}} c_i^{l_1,l_2}(B_1\pt\Theta)(B_2\Theta)+\sum_{\substack{B_{1,2,3} \in \calP_{l_1,l_2,l_3}\\l_1 + l_2 + l_3 = i+1\\l_1 \le i}} c_i^{l_1,l_2,l_3}(B_1\pt\Theta)(B_2\Theta)(B_3\Theta)\right)\\
    &\quad + \sum_{\substack{A_{1,\hdots,4}\in\barcalP_{l_1,\hdots,l_4},\\
    A_5 \in \calP_{l_5},\\
    l_1 + \hdots + l_5 = i, l_5 \le i-1}} (A_1w)(A_2(U^0)^{-2})(A_3\calF^{-\kk-1})(A_4\barcalG^{-1})\\
    &\quad \times \left(A_5\left[\ze\left(\Dz\pt\Theta + \Dz\pt\left(\frac{\Theta^2}{\ze}\right) + \frac13\Dz\pt\left(\frac{\Theta^3}{\ze^2}\right)\right)\right]\right)\\
    &= \lamt w(U^0)^{-2}\calF^{-\kk-1}\barcalG^{-1}\left(\ze+2\Theta + \frac{\Theta^2}{\ze}\right)\calD_{i+1}\pt\Theta + R_{411}^1 + R_{411}^2.
    \end{split}
\end{equation}
Despite the complexity of $R_{411}^1$ and $R_{411}^2$, we remark that they are readily controlled by our energy $\calE^N$. As we will demonstrate later, no more than $i$ derivatives fall on $\pt\Theta$ and no more than $i+1$ derivatives fall on $\Theta$. In particular, whenever an operator in class $\calP_{i+1}$ acts on $\Theta$, there is always an extra weight $w$ accompanying in that term. We present a detailed analysis of these two error terms as follows:
\begin{enumerate}
    \item \textit{Analysis of $R_{411}^1$.} By bootstrap assumptions and Lemma \ref{lem: induced bootstrap}, we readily see that
    $$
    (U^0)^{-2}\calF^{-\kk-1}\barcalG^{-1} \lesssim 1.
    $$
    Then in order to control $\|R_{411}^1\|_{i}^2$, it suffices to control the integrals assuming one of the following forms:
    \begin{align*}
        \int_0^1 w^{\frac1\kk + i}\ze^2 \cdot w^2|W\pt\Theta|^2 d\ze,\quad W \in \calP_{l},\;l\le i,\\
        \int_0^1 w^{\frac1\kk + i}\ze^2 \cdot w^2 |(W_1\pt\Theta)(W_2\Theta)|^2 d\ze,\quad W_{1,2} \in \calP_{l_1,l_2},\;l_1+l_2 = i+1,\; l_1 \le i,\\
        \int_0^1 w^{\frac1\kk + i}\ze^2 \cdot w^2 |(W_1\pt\Theta)(W_2\Theta)(W_3\Theta)|^2 d\ze,\quad W_{1,2,3} \in \calP_{l_1,l_2,l_3},\;l_1+l_2+l_3 = i+1,\; l_1 \le i.
    \end{align*}
    Since $w^2 \lesssim 1$, the first integral above is bounded by $\delta\lambda^{-3\kk}\calE^N$ due to Lemma \ref{lem:nonlinear} with $(J_1,J_2,i_1,i_2) = (1,0,i,0)$. Similarly, the rest of the two integrals can be bounded by
    $\delta\lambda^{-3\kk}(\calE^N)^2$ and $\delta\lambda^{-3\kk}(\calE^N)^3$ respectively, after invoking Lemma \ref{lem:nonlinear} with $(J_1,J_2,i_1,i_2) = (k,0,i+1,0)$, $k = 2,3$. Summarizing, we obtain
    \begin{equation}
        \label{est:R_411^1}
        \|R_{411}^1\|_{i}^2 \lesssim \delta\lambda^{-3\kk}\calE^N,
    \end{equation}
    where we also used the bootstrap assumption that $E^N < 1$.

    \item \textit{Analysis of $R_{411}^2$.} To study this term, we recall the following decompositions: by \eqref{PbarU0}, we can write $A_2(U^0)^{-2}$ as a linear combination of the following terms:
    $$
    \prod_{\substack{A_2^j \in \calP_{i_j},\\\sum_{j=1}^{m_2}i_j = l_2}}(A_2^j H^j(\Theta)),\quad m_2=1,2,\text{ and } \lamt^2 A_2\ze^2,
    $$
    where $H^j(\Theta) = \pt\Theta$ or $\Theta.$ According to Proposition \ref{prop:buildingblockpowers}, we can also write $A_3\calF^{-\kk-1}$ as a linear combination of the following terms:
    $$
    \calF^{-\kk-1-k_3}\prod_{\substack{A_3^j \in \barcalP_{i_j}\\\sum_{j=1}^{m_3}i_j = l_3}}(A_3^j\Dz\Theta),\quad k_3 = 0,\hdots,l_3,\quad  m_3 = k_3, 2k_3, 3k_3.
    $$
    Lastly, by following a similar argument to Proposition \ref{prop:dtU0}, we may write $$A_5\left[\ze\left(\Dz\pt\Theta + \Dz\pt\left(\frac{\Theta^2}{\ze}\right) + \frac13\Dz\pt\left(\frac{\Theta^3}{\ze^2}\right)\right)\right]$$ as a linear combination of the following terms:
    $$
    \prod_{\substack{A_5^j \in \calP_{i_j},\\\sum_{j=1}^{m_5}i_j = l_5}} (A_5^j H^j(\Theta)),\quad m_5 = 1,2,3,
    $$
    where $H^j(\Theta) = \pt\Theta$ or $\Theta$, and exactly one $H^j(\Theta)$ in each product is $\pt\Theta$. Summarizing the above computations and using chain rule \eqref{eq:chainrule} on $A_4 \barcalG^{-1}$, we finally write $R^2_{411}$ as a linear combination of the following terms:
    \begin{equation}
        \label{R4112type1}
        (A_1w)\calF^{-\kk-1-k_3}\barcalG^{-1-k_4}\prod_{\substack{\calA_j \in \calP_{i_j},\\ \sum_{j=1}^{m_2 + m_5}i_j = l_2 + l_5}}(\calA_j H^j(\Theta))\cdot \prod_{\substack{A_3^j \in \barcalP_{i_j}\\\sum_{j=1}^{m_3}i_j = l_3}}(A_3^j\Dz\Theta)\cdot \prod_{\substack{A_4^j \in \barcalP_{i_j},\\\sum_{j=1}^{k_4}i_j = l_4}}(A_4^j\barcalG),
    \end{equation}
    and 
    \begin{equation}
        \label{R4112type2}
        (\lamt^2 A_2\ze^2)(A_1w)\calF^{-\kk-1-k_3}\barcalG^{-1-k_4}\prod_{\substack{\calA_j \in \calP_{i_j},\\ \sum_{j=1}^{m_5}i_j = l_5}}(\calA_j H^j(\Theta))\cdot \prod_{\substack{A_3^j \in \barcalP_{i_j}\\\sum_{j=1}^{m_3}i_j = l_3}}(A_3^j\Dz\Theta)\cdot \prod_{\substack{A_4^j \in \barcalP_{i_j},\\\sum_{j=1}^{k_4}i_j = l_4}}(A_4^j\barcalG),
    \end{equation}
    where $k_3 = 1,\hdots, l_3$, and $k_4 = 1,\hdots, l_4$ when $l_3,l_4 \ge 1$.

    We first explain how to bound \eqref{R4112type1} in $\|\cdot\|_i$ norm, which can be split into the following cases:
    \begin{enumerate}
        \item $l_4 = 0.$ In this case, \eqref{R4112type1} can be reduced to
        \begin{equation}
            \label{R4112type1l4=0}
            (A_1w)\calF^{-\kk-1-k_3}\barcalG^{-1}\prod_{\substack{\calA_j \in \calP_{i_j},\\ \sum_{j=1}^{m_2 + m_5}i_j = l_2 + l_5}}(\calA_j H^j(\Theta))\cdot \prod_{\substack{A_3^j \in \barcalP_{i_j}\\\sum_{j=1}^{m_3}i_j = l_3}}(A_3^j\Dz\Theta).
        \end{equation}
        If $l_1 > 0$, we note that $\sum_{j=2}^5 l_j \le i-1$. Then after using $A_1w$ being a smooth function, and the bootstrap assumption that $\calF,\barcalG \approx 1$, we can bound:
        \begin{align*}
            \|\eqref{R4112type1l4=0}\|_i^2 &\lesssim \int_0^1 \ze^2 w^{\frac1\kk + i}\prod_{\substack{\calA_j \in \calP_{i_j},\\ \sum_{j=1}^{m_2 + m_5}i_j = l_2 + l_5}}|\calA_j H^j(\Theta)|^2\cdot \prod_{\substack{A_3^j \in \barcalP_{i_j}\\\sum_{j=1}^{m_3}i_j = l_3}}|A_3^j\Dz\Theta|^2 d\ze\\
            &\lesssim \delta \lam^{-3\kk} (\calE^N)^2,
        \end{align*}
        where we applied Lemma \ref{lem:nonlinear} with $(J_1,J_2, i_1, i_2) = (m_2 + m_5, m_3, l_2+l_5, l_3 + 1)$, as well as the facts that $m_2 + m_5 \ge 2$, $l_2 + l_3 + l_5 + 1 \le i$, and that at least one of $H^j(\Theta)$ is $\pt\Theta$.

        If $l_1 = 0,$ after using the bootstrap assumption that $\calF,\barcalG \approx 1$, we have
        \begin{align*}
            \|\eqref{R4112type1l4=0}\|_i^2 &\lesssim \int_0^1 \ze^2 w^{\frac1\kk + i + 2}\prod_{\substack{\calA_j \in \calP_{i_j},\\ \sum_{j=1}^{m_2 + m_5}i_j = l_2 + l_5}}|\calA_j H^j(\Theta)|^2\cdot \prod_{\substack{A_3^j \in \barcalP_{i_j}\\\sum_{j=1}^{m_3}i_j = l_3}}|A_3^j\Dz\Theta|^2 d\ze\\
            &\lesssim \delta \lam^{-3\kk} (\calE^N)^2,
        \end{align*}
        where in the final inequality we used Lemma \ref{lem:nonlinear} in the $i+1$ case, and $(J_1, J_2, i_1, i_2) = (m_2 + m_5, m_3, l_2 + l_5, l_3 + 1)$.

        \item $l_4 > 0$. In this case, we restrict our discussion to the case of $k_4 = 1$, which already encompasses all essential difficulties. The cases of $k_4 > 1$ will follow from a similar argument after a systematic use of Lemma \ref{lem:nonlinear}. For $k_4 = 1$, we can reduce \eqref{R4112type1} to 
        \begin{equation}
            \label{R4112type1l4>0}
            (A_1w)\calF^{-\kk-1-k_3}\barcalG^{-2}\prod_{\substack{\calA_j \in \calP_{i_j},\\ \sum_{j=1}^{m_2 + m_5}i_j = l_2 + l_5}}(\calA_j H^j(\Theta))\cdot \prod_{\substack{A_3^j \in \barcalP_{i_j}\\\sum_{j=1}^{m_3}i_j = l_3}}(A_3^j\Dz\Theta)\cdot \bar{A}_4 \barcalG,
        \end{equation}
        where $\bar A_4 \in \barcalP_{l_4}$, and $\bar A_4 \barcalG$ can be further decomposed into terms listed in \eqref{PbarGtype1} and \eqref{PbarGtype2}. We explain how to bound them on a term-by-term basis.

        \textbf{Contribution from \eqref{PbarGtype2}.} In this scenario, \eqref{R4112type1l4>0} is reduced to
        \begin{equation}\label{R4112type1l4>0aux1}
        \begin{split}
        &\delta \lam^{-3\kk}\calF^{-2\kk - 1 - k_3 - 2k_4}\barcalG^{-2}(A_1w)(\bar Pw) \prod_{\substack{\calA_j \in \calP_{i_j},\\ \sum_{j=1}^{m_2 + m_5}i_j = l_2 + l_5}}(\calA_j H^j(\Theta))\\
        &\cdot \prod_{\substack{A_3^j \in \barcalP_{i_j}\\\sum_{j=1}^{m_3}i_j = l_3}}(A_3^j\Dz\Theta)\cdot\prod_{\substack{W_j \in \barcalP_{i_j}\\\sum_{j=1}^{m_4}i_j = n_2}}(W_j\Dz\Theta)\\
        &= \delta \lam^{-3\kk}\calF^{-2\kk - 1 - k_3 - 2k_4}\barcalG^{-2}(A_1w)(\bar Pw)\prod_{\substack{\calA_j \in \calP_{i_j},\\ \sum_{j=1}^{m_2 + m_5}i_j = l_2 + l_5}}(\calA_j H^j(\Theta))\cdot\prod_{\substack{\mcalB_j \in \barcalP_{i_j}\\\sum_{j=1}^{m_3 +m_4}i_j = l_3 + n_2}}(\mcalB_j\Dz\Theta)
        \end{split}
        \end{equation}
        where $\bar P \in \barcalP_{n_1}$, $n_1 + n_2 = l_4$, $k_4 = 1,\hdots, n_2$, and $m_4 = k_4, 2k_4,$ or $3k_4$. A very similar argument to the estimate of \eqref{R4112type1l4=0} would yield the following bound:
        \begin{equation}
            \label{R4112type1l4>0aux2}
            \|\eqref{R4112type1l4>0aux1}\|_i^2 \lesssim (\delta \lam^{-3\kk})^3 (\calE^N)^2.
        \end{equation}

        \textbf{Contribution from \eqref{PbarGtype1}.} Note that we can always write a term of type \eqref{PbarGtype1} as
        $$
        g(\ze)(U^0)^{\kk + 2k}\bar P\left((U^0)^{-\kk}(0)(1+\delta \lam^{-3\kk}(0)w\calF^{-\kk}(0)\right)\prod_{\substack{V_j \in \calP_{i_j},\\ \sum_{j=1}^M i_j \le n_2}}(V_j H^j(\Theta)), k\ge 1.
        $$
        where $\bar P \in \barcalP_{n_1}$, $n_1 + n_2 = l_4$, $M \ge 0$, and $g(\ze)$ is a smooth function.
        
        Hence from a routine use of chain rule and product rule, for $\bar P \in \barcalP_{n_1}$, we may write
        $$
        \bar P\left((U^0)^{-\kk}(0)(1+\delta \lam^{-3\kk}(0)w\calF^{-\kk}(0)\right)
        $$
        as a linear combination of terms in form of:
        \begin{align*}
            g(\ze) (U^0)^{-\kk + 2\bar k_1}(0)\prod_{\substack{W_j \in \calP_{i_j},\\\sum_{j=1}^{\bar m_1} i_j = n_1^1}}(W_j H^j(\Theta)(0)) \cdot \left(1+\delta \lam^{-3\kk}(0)\calF^{-\kk - \bar k_2}(\tilde Pw)\prod_{\substack{V_j \in \barcalP_{i_j},\\\sum_{j=1}^{\bar m_2}i_j = n_1^3}}(V_j\Dz\Theta(0))\right),
        \end{align*}
        where $\bar m_1, \bar m_2 \ge 0$, $\bar k_1, \bar k_2 \ge 0$, $\tilde P \in \barcalP_{n_1^2}$, and $n_1^1 + n_1^2 + n_1^3 = n_1$. Moreover, $g(\zeta)$ is a smooth function. Therefore, after invoking bootstrap assumptions and bounds on initial data, it suffices to bound:
        \begin{align*}
            \int_0^1 w^{\frac1\kk + i} &\ze^2 |A_1w|^2|\tilde Pw|^2\prod_{\substack{W_j \in \calP_{i_j},\\\sum_{j=1}^{\bar m_1} i_j = n_1^1}}|W_j H^j(\Theta)(0)|^2\cdot \prod_{\substack{V_j \in \barcalP_{i_j},\\\sum_{j=1}^{\bar m_2}i_j = n_1^3}}|V_j\Dz\Theta(0)|^2\\
            &\times\prod_{\substack{\calA_j \in \calP_{i_j},\\ \sum_{j=1}^{m_2 + m_5 + M}i_j \le l_2 + l_5 + n_2}}|\calA_j H^j(\Theta)|^2\cdot \prod_{\substack{A_3^j \in \barcalP_{i_j}\\\sum_{j=1}^{m_3}i_j = l_3}}|A_3^j\Dz\Theta|^2d\ze\\
            &\lesssim \delta \lam^{-3\kk} (\calE^N(\tau))^2\left(1+\calE^N(0)\right),
        \end{align*}
        where we have used Lemma \ref{lem:nonlinear} together with the facts that $m_2 + m_5 \ge 2$ and $\calE^N(0) \lesssim \eps \ll 1$ in the final inequality. Summarizing the above estimates, we arrive at the following bound:
        \begin{equation}\label{est:R4112type1}
            \|\eqref{R4112type1}\|_i^2 \lesssim \delta \lam^{-3\kk} (\calE^N(\tau))^2\left(1+\calE^N(0)\right).
        \end{equation}
    \end{enumerate}
    Through an almost identical argument to the analysis of \eqref{R4112type1}, we are also able to obtain the corresponding estimate for \eqref{R4112type2}, namely:
    \begin{equation}\label{est:R4112type2}
            \|\eqref{R4112type2}\|_i^2 \lesssim \delta \lam^{-3\kk} \calE^N(\tau)\left(1+\calE^N(0)\right).
    \end{equation}
    Note that the bound is only linear in $\calE^N(\tau)$ due to the source term contribution $\lamt^2 A_2\ze^2$ which originates from $A_2 (U^0)^{-2}$. Combining \eqref{est:R4112type1} and \eqref{est:R4112type2}, we deduce the following bound for $R_{411}^2$:
    \begin{equation}
        \label{est:R4112}
        \|R_{411}^2\|_i^2 \lesssim \delta \lam^{-3\kk} \calE^N(\tau)\left(1+\calE^N(0)\right).
    \end{equation}
\end{enumerate}

\noindent\textbf{Estimates for $\calD_i E_{412}$.} We only sketch how to estimate this term, in that the only difference in between $E_{412}$ and $E_{411}$ is that $E_{412}$ contains $\pt\Theta + \lamt \Theta$ in place of $\lamt \ze$. Hence, using \eqref{eq:dtFsqpttheta}, \eqref{eq:dtFsqtheta} in place of \eqref{eq:dtFsqzeta}, and \eqref{eq:dtFcupttheta}, \eqref{eq:dtFcutheta} in place of \eqref{eq:dtFcuzeta}, we may write
\begin{equation}
    \label{E412decomp}
    \begin{split}
    \calD_i E_{412} &= \lamt w(U^0)^{-2}\calF^{-\kk-1}\barcalG^{-1}(\pt\Theta + \lamt\Theta)\left(1+ 2\frac{\Theta}{\ze} + \frac{\Theta^2}{\ze^2}\right)\calD_{i+1}\pt\Theta \\
    &\quad + R_{412}^1 + R_{412}^2,
    \end{split}
\end{equation}
where 
\begin{equation}
    \label{R4121}
    \begin{split}
    R_{412}^1 &= (U^0)^{-2}\calF^{-\kk - 1}\barcalG^{-1}\\
    &\quad \times \Bigg(\sum_{\substack{B_{1,2} \in \calP_{l_1,l_2}\\l_1 + l_2 = i+1\\l_1 \le i}} c_i^{l_1,l_2}(B_1\pt\Theta)(B_2H(\Theta))+\sum_{\substack{B_{1,2,3} \in \calP_{l_1,l_2,l_3}\\l_1 + l_2 + l_3 = i+1\\l_1 \le i}} c_i^{l_1,l_2,l_3}(B_1\pt\Theta)(B_2H(\Theta))(B_3H(\Theta))\\
    &\quad + \sum_{\substack{B_{1,\hdots,4} \in \calP_{l_1,\hdots,l_4}\\l_1 + \hdots + l_4 = i+1\\l_1 \le i}} c_i^{l_1,l_2,l_3,l_4}(B_1\pt\Theta)(B_2H(\Theta))(B_3H(\Theta))(B_4H(\Theta))\Bigg),
    \end{split}
\end{equation}
and
\begin{equation}
    \label{R4122}
    \begin{split}
    R_{412}^2 &= \sum_{\substack{A_{1,\hdots,4}\in\barcalP_{l_1,\hdots,l_4},\\
    A_5 \in \calP_{l_5},\\
    l_1 + \hdots + l_5 = i, l_5 \le i-1}} (A_1w)(A_2(U^0)^{-2})(A_3\calF^{-\kk-1})(A_4\barcalG^{-1})\\
    &\quad \times \left(A_5\left[H(\Theta)\left(\Dz\pt\Theta + \Dz\pt\left(\frac{\Theta^2}{\ze}\right) + \frac13\Dz\pt\left(\frac{\Theta^3}{\ze^2}\right)\right)\right]\right),
    \end{split}
\end{equation}
where $H(\Theta) = \Theta$ or $\pt\Theta$ in the above expressions. Estimating \eqref{R4121} (resp. \eqref{R4122}) in a similar way to $R_{411}^1$ (resp. $R_{411}^2$), we are able to obtain the following bound:
\begin{equation}
    \label{R4121+R4122}
    \|R_{411}^1\|_{i}^2 + \|R_{411}^2\|_{i}^2 \lesssim \delta \lam^{-3\kk}\calE^N(\tau)(1+\calE^N(0)).
\end{equation}
We have thus proved Lemma \ref{lem:DiE41} after combining \eqref{E411decomp}, \eqref{E412decomp}, \eqref{est:R_411^1}, \eqref{est:R4112}, and \eqref{R4121+R4122}, as well as using the bootstrap assumption that $E_N < 1$.

\subsubsection{Estimates for $\calD_i E_{42}$}
In this subsection, we prove the following lemma concerning the estimate for $\calD_i E_{42}$:
\begin{lem}\label{lem:DiE42}
    Assuming bootstrap assumptions, we may write
    \begin{equation}
        \label{est:DiE42}
        \begin{split}
        \calD_i E_{42} &= \calC^{E_{42}}_1\calD_{i+1}\pt\Theta +\calC^{E_{42}}_2\calD_i\pt^2\Theta + R_{42}^i,
        \end{split}
    \end{equation}
    where
    \begin{align*}
    \calC^{E_{42}}_1(\tau,\ze) &:= \frac{1+\kk}{\kk}\delta \lam^{-3\kk} w^2(U^0)^{-2}\calF^{-2\kk-1}\barcalG^{-2}(\pt\Theta + \lamt(\Theta +\ze))\left(1+\frac{\Theta}{\ze}\right)^2,\\
    \calC^{E_{42}}_2(\tau,\ze) &:= \frac{1+\kk}{\kk}\lamt w(U^0)^\kk \calF^{-\kk}\barcalG^{-2}\left((U^0)^{-\kk}(0)(1+ \delta\lambda^{-3\kk}(0)w\calF^{-\kk}(0)) \right)(\pt\Theta + \lamt(\Theta +\ze))^2.
    \end{align*}
    Moreover,
    \begin{equation}
        \label{est:R42i}
        \|R_{42}^i\|_i^2 \lesssim (\delta \lam^{-3\kk})^2+\delta \lam^{-3\kk}\calE^N(\tau) + S^N(\tau).
    \end{equation}
\end{lem}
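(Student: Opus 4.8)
The plan is to mirror the structure of the proof of Lemma \ref{lem:DiE41}, since $E_{42}$ differs from $E_{41}$ only in that the $\p_\tau$ acts on $\barcalG$ rather than $\calF$: recalling that
$$
E_{42} = \frac{1+\kk}{\kk^2} w(U^0)^{-2}(\pt\Theta + \lamt(\Theta + \zeta)) \calF^{-\kk}\barcalG^{-2}\pt\barcalG,
$$
the first step is to substitute the explicit formula \eqref{eq:dtbarcalG} for $\kk\pt\barcalG$. This replaces $\pt\barcalG$ by three groups of terms: one proportional to $\pt(U^0)^{-2}$ carrying the factor $(U^0)^{\kk+2}\big((U^0)^{-\kk}(0)(1+\delta\lam^{-3\kk}(0)w\calF^{-\kk}(0))\big)$, one proportional to $\delta w \lam^{-3\kk}\calF^{-\kk-1}\pt\calF$, and one proportional to $\lamt\delta w \lam^{-3\kk}\calF^{-\kk}$. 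The first group, upon applying $\calD_i$ and extracting the top-order piece via \eqref{eq:dtU0Hhighest} (with $H(\Theta)=\pt\Theta+\lamt(\Theta+\ze)$, modulo rewriting), produces the $\calC^{E_{42}}_2\calD_i\pt^2\Theta$ term; the second group, upon applying \eqref{eq:dtFsqzeta}--\eqref{eq:dtFcuzeta} and \eqref{eq:dtFsqpttheta}--\eqref{eq:dtFcupttheta} as in the analysis of $E_{41}$, produces the $\calC^{E_{42}}_1\calD_{i+1}\pt\Theta$ term (note the extra $\delta\lam^{-3\kk}w$ factor, which is why the coefficient carries $w^2$); and the third group contributes only to the remainder $R_{42}^i$ since it has no $\Theta$-derivative of top order.

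Second, I would carefully verify the coefficient identifications. For $\calC^{E_{42}}_2$: the $\pt(U^0)^{-2}$-term of $\kk\pt\barcalG$ from \eqref{eq:dtbarcalG} is $-\tfrac{\kk}{2}\pt(U^0)^{-2}(U^0)^{\kk+2}\big(\tfrac1\kk(U^0)^{-\kk}(0)(1+\delta\lam^{-3\kk}(0)w\calF^{-\kk}(0))\big)$, and using $\pt(U^0)^{-2} = -2(\pt\Theta+\lamt(\Theta+\ze))(\pt^2\Theta + \ldots)$ the $\calD_i\pt^2\Theta$ piece emerges with coefficient
$$
\tfrac{1+\kk}{\kk}\cdot\tfrac1{\kk^2}\cdot w(U^0)^{-2}(\pt\Theta+\lamt(\Theta+\ze))\calF^{-\kk}\barcalG^{-2}\cdot\kk(\pt\Theta+\lamt(\Theta+\ze))(U^0)^{\kk+2}\cdot\tfrac1\kk\big((U^0)^{-\kk}(0)(1+\ldots)\big),
$$
which after combining $(U^0)^{-2}(U^0)^{\kk+2}=(U^0)^\kk$ and the numerical factors matches the stated $\calC^{E_{42}}_2$. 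For $\calC^{E_{42}}_1$, the $\delta w\lam^{-3\kk}\calF^{-\kk-1}\pt\calF$-term of $\kk\pt\barcalG$ combined with the overall $\tfrac{1+\kk}{\kk^2}w(U^0)^{-2}(\pt\Theta+\lamt(\Theta+\ze))\calF^{-\kk}\barcalG^{-2}$ prefactor and the extraction of $\calD_{i+1}\pt\Theta$ from $\calD_i\pt\calF$ via the leading coefficient $(1+\tfrac{\Theta}{\ze})^2=\xi^2$ (from $\calF = \xi^2(\xi + \ze\pz\xi)$ and \eqref{eq:dtFsqzeta}, \eqref{eq:dtFcuzeta}) yields $\calC^{E_{42}}_1$ as stated. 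I would check, as in Lemma \ref{lem:DiE41}, that every term in which an operator of class $\calP_{i+1}$ lands on $\Theta$ carries a compensating weight $w$, so that the weighted Hardy inequalities apply.

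Third, for the remainder estimate \eqref{est:R42i}, I would invoke Proposition \ref{prop:buildingblockpowers} and \ref{prop:buildingblockpowersG} to expand $A_3\calF^{-\kk}$, $A_4\barcalG^{-2}$, $A_j(U^0)^{-2}$, and $\bar P$ of the initial-data factors, exactly as in the proof of $R_{411}^2$, then apply Lemma \ref{lem:nonlinear} and Lemma \ref{lem:nonlinear2}. The three groups contribute respectively: the $\calD_i\pt^2\Theta$ lower-order pieces (requiring $\calS^N$, hence the $S^N(\tau)$ on the right after integrating in time — though here it appears as $\calS^N$ at fixed $\tau$, matching the convention in Proposition \ref{prop:DiE4}); the $\delta\lam^{-3\kk}$-weighted pieces giving $\delta\lam^{-3\kk}\calE^N$ and, when all derivatives fall on the $w^2$-weighted quadratic source $\lamt^2 A_2\ze^2$ coming from $A_2(U^0)^{-2}$, a genuine $\calO((\delta\lam^{-3\kk})^2)$ term with no $\calE^N$; and the $\lamt\delta w\lam^{-3\kk}\calF^{-\kk}$ piece, which is harmless. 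The main obstacle I anticipate is \emph{not} any single estimate but the bookkeeping of the top-order extraction: one must confirm that the only terms escaping the weighted $L^2$ control via $\calE^N$ and $\calS^N$ are precisely the two explicit coefficient terms $\calC^{E_{42}}_1\calD_{i+1}\pt\Theta$ and $\calC^{E_{42}}_2\calD_i\pt^2\Theta$, and in particular that when $\calD_i$ acts on the composite $\pt\barcalG$ no derivative can simultaneously land at top order on two distinct factors — this is guaranteed by the structural lemmas (Propositions \ref{prop:highestdtF}, \ref{prop:dtU0}, and the decomposition of $\bar P_i(\kk\barcalG)$) but requires care because $\pt\barcalG$ itself already contains $\pt^2\Theta$ (through $\pt(U^0)^{-2}$) and $\pt\Theta$ (through $\pt\calF$), so the a priori derivative count must be tracked against the thresholds $a_k \le N$ (for $\pt\Theta$) and $a_k \le N-1$ (for $\pt^2\Theta$) demanded by Lemma \ref{lem:nonlinear2}. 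Once this is in place, the proof concludes by collecting the bounds and using $\calE^N(0)\lesssim \eps \ll 1$ and $E^N < 1$ from the bootstrap.
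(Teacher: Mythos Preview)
Your proposal is correct and follows essentially the same approach as the paper: the paper likewise substitutes \eqref{eq:dtbarcalG} to split $E_{42}$ into three pieces $E_{421}, E_{422}, E_{423}$ (your ``first, second, third groups''), treats $E_{422}$ exactly as $E_{41}$ to extract $\calC^{E_{42}}_1\calD_{i+1}\pt\Theta$, handles $E_{421}$ via Proposition~\ref{prop:dtU0} (splitting further into $\lamt\ze$ and $\pt\Theta+\lamt\Theta$ parts) to extract $\calC^{E_{42}}_2\calD_i\pt^2\Theta$, and shows $E_{423}$ contributes only to the remainder with the $(\delta\lam^{-3\kk})^2$ source error. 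The remainder bounds use the same Lemmas~\ref{lem:nonlinear} and~\ref{lem:nonlinear2} you invoke.
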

Using \eqref{eq:dtbarcalG}, we decompose $E_{42}$ into the following:
\begin{align*}
    \frac{\kk}{1+\kk}E_{42} &= -\frac{1}{2}w(U^0)^\kk \calF^{-\kk}\barcalG^{-2}(\pt\Theta + \lamt(\ze + \Theta))\cdot \left((U^0)^{-\kk}(0)(1+ \delta\lambda^{-3\kk}(0)w\calF^{-\kk}(0)) \right) \cdot \pt(U^0)^{-2}\\
    &\quad + \delta \lam^{-3\kk} w^2(U^0)^{-2}\calF^{-2\kk - 1}\barcalG^{-2}(\pt\Theta + \lamt(\ze + \Theta)) \pt\calF \\
    &\quad + 3\delta \lam^{-3\kk}\lamt w^2(U^0)^{-2} \calF^{-2\kk}\barcalG^{-2}(\pt\Theta + \lamt(\ze + \Theta))\\
    &=: E_{421} + E_{422} + E_{423}.
\end{align*}
We arrange this subsection in the following way: we first study $E_{422}$ and $E_{423}$, which can be estimated by deploying a similar strategy to the one used for estimating $\calD_i E_{41}$. We then focus on $E_{421}$, which can be further decomposed into a leading order term and controllable errors.\\

\noindent\textbf{Estimates for $\calD_i E_{422}$.} Observe that $E_{422}$ has an identical structure to $E_{41}$. Namely, both of the terms can be written in the form of $w^a(U^0)^{-b}\calF^{-c}\barcalG^{-d}(\pt\Theta + \lamt(\ze + \Theta)) \pt\calF$ with some positive exponents $a, b, c, d$. In view of the bootstrap assumptions and Lemma \ref{lem: induced bootstrap} that $U_0,\calF, \kk\barcalG \approx 1$ as well as the number of weights, the exact values of $a, b, c, d$ do not affect our analysis as long as $a \ge 1$. Hence, $\calD_i E_{422}$ can be analyzed in a similar fashion to that concerning $\calD_i E_{41}$. In particular, we have:
\begin{equation}
    \label{E422}
    \calD_i E_{422} = \delta \lam^{-3\kk} w^2(U^0)^{-2}\calF^{-2\kk-1}\barcalG^{-2}(\pt\Theta + \lamt(\Theta +\ze))\left(1+\frac{\Theta}{\ze}\right)^2\calD_{i+1}\pt\Theta + R_{422}^i,
\end{equation}
where
\begin{equation}
    \label{est:R422}
    \|R_{422}^i\|_{i}^2 \lesssim (\delta \lam^{-3\kk})^3\calE^N(\tau)(1+\calE^N(0)).
\end{equation}

\noindent\textbf{Estimates for $\calD_i E_{423}$.} We further decompose $E_{423}$ into
\begin{align*}
    E_{423} &= 3\delta \lam^{-3\kk}\lamt w^2(U^0)^{-2} \calF^{-2\kk}\barcalG^{-2}(\lamt \zeta)\\
    &\quad +3\delta \lam^{-3\kk}\lamt w^2(U^0)^{-2} \calF^{-2\kk}\barcalG^{-2}(\pt\Theta + \lamt\Theta)\\
    &=: E_{423}^1 + E_{423}^2.
\end{align*}
We focus on the study of $\calD_i E_{423}^1$, which has the least amount of nonlinearities while also encompasses all essential difficulties. Invoking the product rule, chain rule, and Proposition \ref{prop:buildingblockpowers}, we argue in a similar way to the deduction of \eqref{R4112type1} and \eqref{R4112type2} to show that $\calD_i E_{423}^1$ is a linear combination of the following two types of terms:
\begin{equation}
        \label{E4231type1}
        \delta\lam^{-3\kk}(A_1(w^2 \ze))\calF^{-2\kk-k_3}\barcalG^{-2-k_4}\prod_{\substack{\calA_j \in \calP_{i_j},\\ \sum_{j=1}^{m_2}i_j = l_2}}(\calA_j H^j(\Theta))\cdot \prod_{\substack{A_3^j \in \barcalP_{i_j}\\\sum_{j=1}^{m_3}i_j = l_3}}(A_3^j\Dz\Theta)\cdot \prod_{\substack{A_4^j \in \barcalP_{i_j},\\\sum_{j=1}^{k_4}i_j = l_4}}(A_4^j\barcalG),
    \end{equation}
    for $m_2 = 1,2$, and 
    \begin{equation}
        \label{E4231type2}
        \delta\lam^{-3\kk}(\lamt^2 A_2\ze^2)(A_1(w^2\ze))\calF^{-2\kk-k_3}\barcalG^{-2-k_4}\prod_{\substack{A_3^j \in \barcalP_{i_j}\\\sum_{j=1}^{m_3}i_j = l_3}}(A_3^j\Dz\Theta)\cdot \prod_{\substack{A_4^j \in \barcalP_{i_j},\\\sum_{j=1}^{k_4}i_j = l_4}}(A_4^j\barcalG),
    \end{equation}
    where $A_1 \in \calP_{l_1}$, $A_2 \in \calP_{l_2}$, $\sum_{j = 1}^4 l_j = i$, $k_3 = 1,\hdots, l_3$, and $k_4 = 1,\hdots, l_4$ when $l_3,l_4 \ge 1$. Finally, $H^j(\Theta) = \pt\Theta$ or $\Theta$. The term \eqref{E4231type1} can be treated in the same way as \eqref{R4112type1}. In particular,
    \begin{equation}
        \label{est:E4231type1}
        \|\eqref{E4231type1}\|_i^2 \lesssim (\delta \lam^{-3\kk})^2 \calE^N(\tau)(1+\calE^N(0)).
    \end{equation}
    The term \eqref{E4231type2} can be treated in a similar way, except for the case where $l_2, l_4 \le 2$ and $l_3 = 0$. In this case, while the derivative estimates are of lower order, the source term involved in $A_2 \ze^2$ and $A_4^j \barcalG$ does not vanish. Therefore, the best bound which one can obtain is:
    \begin{equation}
        \label{est:E4231type2}
        \|\eqref{E4231type2}\|_i^2 \lesssim (\delta \lam^{-3\kk})^2 (1+\calE^N(0)).
    \end{equation}
    The estimate for $\calD_i E_{423}^2$ follows from a similar argument. Because it involves perturbation variables $\pt\Theta$ and $\Theta$, no source errors appear and we may obtain the following bound:
    \begin{equation}
        \label{est:E4232}
        \|\calD_i E_{423}^2 \|_i^2 \lesssim (\delta \lam^{-3\kk})^2 \calE^N(\tau)(1+\calE^N(0)).
    \end{equation}
    Combining \eqref{est:E4231type1}, \eqref{est:E4231type2}, and \eqref{est:E4232}, we conclude that
    \begin{equation}
        \label{est:E423}
        \|\calD_i E_{423} \|_i^2 \lesssim (\delta \lam^{-3\kk})^2(1+\calE^N(0)) +(\delta \lam^{-3\kk})^2 \calE^N(\tau)(1+\calE^N(0)).
    \end{equation}

\noindent\textbf{Estimates for $\calD_i E_{421}$.} This term possesses a different structure from all other terms which we have discussed earlier in this section. Therefore, we study $\calD_i E_{421}$ in detail. As before, we split $E_{421}$ into two parts:
\begin{align*}
    E_{421} &= -\frac{1}{2}w(U^0)^\kk \calF^{-\kk}\barcalG^{-2}(\lamt \ze)\cdot \left((U^0)^{-\kk}(0)(1+ \delta\lambda^{-3\kk}(0)w\calF^{-\kk}(0)) \right) \cdot \pt(U^0)^{-2}\\
    &\quad -\frac{1}{2}w(U^0)^\kk \calF^{-\kk}\barcalG^{-2}(\pt\Theta +\lamt \Theta)\cdot \left((U^0)^{-\kk}(0)(1+ \delta\lambda^{-3\kk}(0)w\calF^{-\kk}(0)) \right) \cdot \pt(U^0)^{-2}\\
    &=: E_{421}^1 + E_{421}^2.
\end{align*}
Again, we focus on $E_{421}^1$ since it contains the least amount nonlinearities while captures essential difficulties in the highest order estimates.
\begin{enumerate}
    \item Estimates for $\calD_i E_{421}^1$. Applying product rule \eqref{eq: leading order symmetry} and then \eqref{eq:dtU0zetahighest}, we obtain the following:
    \begin{equation}
        \label{DiE4211decomp}
        \begin{split}
        \calD_i E_{421}^1 &= -\frac{\lamt}{2}w(U^0)^\kk \calF^{-\kk}\barcalG^{-2}\left((U^0)^{-\kk}(0)(1+ \delta\lambda^{-3\kk}(0)w\calF^{-\kk}(0)) \right)\cdot \calD_i(\ze \pt(U^0)^{-2})\\
        &\quad -\frac{\lamt}{2}\sum_{\substack{A_{1,\hdots,5} \in \barcalP_{l_1,\hdots, l_5},\\A_6 \in \calP_{l_6}, l_6 \le i-1,\\ \sum_{j=1}^6 l_j = i}}c^{l_1,\hdots,l_6}_i(A_1w)(A_2 (U^0)^{\kk})(A_3\calF^{-\kk})(A_4\barcalG^{-2})\\
        &\quad \times \left(A_5\left((U^0)^{-\kk}(0)(1+ \delta\lambda^{-3\kk}(0)w\calF^{-\kk}(0)) \right)\right)(A_6(\ze\pt (U^0)^{-2}))\\
        &= \lamt w(U^0)^\kk \calF^{-\kk}\barcalG^{-2}\left((U^0)^{-\kk}(0)(1+ \delta\lambda^{-3\kk}(0)w\calF^{-\kk}(0)) \right)\cdot \ze(\pt\Theta + \lamt(\Theta +\ze))\calD_i\pt^2\Theta\\
        &\quad + R_{421,1}^1 + R_{421,2}^1,
        \end{split}
    \end{equation}
    where
    \begin{equation}
        \label{R4211}
        \begin{split}
            R_{421,1}^1 &:= -\frac{\lamt}{2}w(U^0)^\kk \calF^{-\kk}\barcalG^{-2}\left((U^0)^{-\kk}(0)(1+ \delta\lambda^{-3\kk}(0)w\calF^{-\kk}(0)) \right)\\
            &\times \bigg[\sum_{\substack{B_{1,2,3}\in\calP_{l_1,l_2,l_3},\\l_1+l_2 + l_3 = i,\\l_2 \le i-1}} c^{l_1,l_2,l_3}_{i,1}(B_1\ze)(B_2\pt^2\Theta)\left[\lamt(B_3\ze) + B_3(\pt\Theta + \lamt\Theta)\right]\\
    &+ \lamt\pt\lamt (P_i\ze^3) + \lamt\pt\lamt \sum_{\substack{B_1\in\calP_{l_1}, A_2 \in \barcalP_{l_2}\\l_1+l_2 = i}} c^{l_1,l_2}_{i,3}(B_1\Theta)(B_2\ze^2)\\
    &+ (\lamt^2 + \pt\lamt)\sum_{\substack{B_1\in\calP_{l_1}, B_2 \in \barcalP_{l_2}\\l_1+l_2 = i}} c^{l_1,l_2}_{i,4}(B_1\pt\Theta)(B_2\ze^2)\\
    &+ \sum_{\substack{B_{1,2,3}\in\calP_{l_1,l_2,l_3},\\l_1+l_2 + l_3 = i}} c^{l_1,l_2,l_3}_{i,5}(B_1\ze)(B_2(\pt\Theta + \lamt\Theta))(B_3(\pt\lamt \Theta + \lamt \pt\Theta))\bigg],
        \end{split}
    \end{equation}
    and
    \begin{equation}
        \label{R4212}
        \begin{split}
        R_{421,2}^1 &:= -\frac{\lamt}{2}\sum_{\substack{A_{1,\hdots,5} \in \barcalP_{l_1,\hdots, l_5},\\A_6 \in \calP_{l_6}, l_6 \le i-1,\\ \sum_{j=1}^6 l_j = i}}c^{l_1,\hdots,l_6}_i(A_1w)(A_2 (U^0)^{\kk})(A_3\calF^{-\kk})(A_4\barcalG^{-2})\\
        &\quad \times \left(A_5\left((U^0)^{-\kk}(0)(1+ \delta\lambda^{-3\kk}(0)w\calF^{-\kk}(0)) \right)\right)(A_6(\ze\pt (U^0)^{-2}))
        \end{split}
    \end{equation}

    \begin{enumerate}
        \item \textit{Estimates for $R_{421,1}^1$.} Using the bootstrap assumptions on $U^0$, $\calF, \kk\barcalG$, bounds on initial data, as well as the fact that $P\ze, \bar P\ze^2$ are smooth functions on $[0,1]$ given $P \in \calP_j$, $\bar P \in \barcalP_j$ for any $j \ge 0$, the $\|\cdot\|_i^2$ norm corresponding to the contribution from the last three terms in the square bracket of \eqref{R4211} can be bounded by integrals of the following two types:
        $$
        \int_0^1 w^{\frac1\kk + i}\ze^2 \prod_{\substack{\calA_j \in \calP_{i_j},\\\sum_{j=1}^{J} i_j \le i}}|\calA_j H^j(\Theta)|^2 d\ze,
        $$
        where $J \ge 1$ and at least one of $H^j(\Theta) = \pt\Theta$, or
        $$
        |\pt\lamt|^2 \int_0^1 w^{\frac1\kk + i}\ze^2 \prod_{\substack{\calA_j \in \calP_{i_j},\\\sum_{j=1}^{J} i_j \le i}}|\calA_j H^j(\Theta)|^2 d\ze,
        $$
        where $J \ge 1$. Then it follows from Lemma \ref{lem:nonlinear} and the bootstrap assumption \eqref{bootstrap EN} that the first integral above can be bounded by $\delta\lam^{-3\kk}\calE^N(\tau)$. By combining Lemma \ref{lem:nonlinear} and the property for $\pt\lamt$ (cf. \eqref{est:ptlamt}), we may bound the second integral above by $(\delta \lam^{-3\kk})^2 \calE^N(\tau)$.

        Thus, what are left to be estimated involve the first and second term in the square bracket of \eqref{R4211}. The second term is a source term: since $P_i \ze^3$ is a smooth function, an application of \eqref{est:ptlamt} yields $|\lamt \pt\lamt (P_i\ze^3)|^2 \lesssim (\delta \lam^{-3\kk})^2$.

        To estimate the contribution due to the first term in the square bracket of \eqref{R4211}, we note that, again after a use of bootstrap assumptions and bounds on initial data, its $\|\cdot\|_i^2$ norm can be bounded by
        \begin{align*}
            \int_0^1 w^{\frac1\kk + i}\ze^2 \prod_{\substack{\calA_j \in \calP_{i_j},\\\sum_{j=1}^{J_1 + 1}i_j \le i}}(\calA_j\bar H^j(\Theta))^2 d\ze,
        \end{align*}
        for some $J_1 \ge 0$. Here, $\bar H^j(\Theta) = \Theta, \pt\Theta$, or $\pt^2\Theta$. Moreover, exactly one of $\bar H^j(\Theta)$ is $\pt^2\Theta$, and there is no more than $i-1$ derivatives falling on that term. Therefore, we invoke Lemma \ref{lem:nonlinear2} with $(J_1,J_2,J_3, i_1, i_2) = (J_1, 1, 0, i,0)$ to deduce that the above integral can be bounded by $S^N(\tau) (1+\calE^N(\tau))(1+\calE^N(0))$. 

        Thus, based on the above analysis, we conclude the following bound:
        \begin{equation}
            \label{est:R42111}
            \|R_{421,1}^1\|_i^2 \lesssim (\delta \lam^{-3\kk})^2 + (\delta \lam^{-3\kk}) \calE^N(\tau) + \calS^N(\tau) (1+\calE^N(\tau))(1+\calE^N(0)),
        \end{equation}
        where we used that $\delta \lam^{-3\kk} \le 1$.

        \item \textit{Estimates for $R_{421,2}^1$.} To study this term, we need the following decomposition of $A_6(\ze \pt(U^0)^{-2})$ by invoking \eqref{eq:dtU0zeta}:
        \begin{equation}
        \label{eq:A6dtU0}
        \begin{split}
    A_6&\left(\ze \pt (U^0)^{-2}\right) = \sum_{\substack{B_{1,2,3}\in\calP_{a_1,a_2,a_3},\\a_1+a_2 + a_3 = l_6}} c^{a_1,a_2,a_3}_{i,1}(B_1\ze)(B_2\pt^2\Theta)\left[\lamt(B_3\ze) + B_3(\pt\Theta + \lamt\Theta)\right] \\
    &+ \lamt\pt\lamt (A_6\ze^3) + \lamt\pt\lamt \sum_{\substack{B_1\in\calP_{a_1}, B_2 \in \barcalP_{a_2}\\a_1+a_2 = l_6}} c^{a_1,a_2}_{i,3}(B_1\Theta)(B_2\ze^2)\\
    &+ (\lamt^2 + \pt\lamt)\sum_{\substack{B_1\in\calP_{a_1}, B_2 \in \barcalP_{a_2}\\a_1+a_2 = l_6}} c^{a_1,a_2}_{i,4}(B_1\pt\Theta)(B_2\ze^2)\\
    &+ \sum_{\substack{B_{1,2,3}\in\calP_{a_1,a_2,a_3},\\a_1+a_2 + a_3 = l_6}} c^{a_1,a_2,a_3}_{i,5}(B_1\ze)(B_2(\pt\Theta + \lamt\Theta))(B_3(\pt\lamt \Theta + \lamt \pt\Theta)),
        \end{split}
        \end{equation}
        where we used that $A_6 \in \calP_{l_6}$. We also remind ourselves that $l_6 \le i-1$. 

        We explain how to bound $R_{421,2}^1$ by discussing each term appearing in \eqref{eq:A6dtU0}. For terms involving the last three terms on the right-hand-side of \eqref{eq:A6dtU0}, via an analysis which is similar to that for $R_{421,1}^1$, it suffices to bound the following two types of terms in $\|\cdot\|_i^2$ norm:
        \begin{equation}\label{R42121type1}
        (A_1w)(A_2 (U^0)^{\kk})(A_3\calF^{-\kk})(A_4\barcalG^{-2})\prod_{\substack{\calA_j \in \calP_{i_j},\\\sum_{j=1}^{J} i_j \le l_6}}(\calA_j H^j(\Theta)),
        \end{equation}
        where $J \ge 1$ and at at least one of $H^j(\Theta)$ is $\pt\Theta$, or
        \begin{equation}\label{R42121type2}
        \pt\lamt (A_1w)(A_2 (U^0)^{\kk})(A_3\calF^{-\kk})(A_4\barcalG^{-2})\prod_{\substack{\calA_j \in \calP_{i_j},\\\sum_{j=1}^{J} i_j \le l_6}}(\calA_j H^j(\Theta)),
        \end{equation}
        where $J \ge 1$. We observe that both terms bear the same structure as that of $R_{411}^2$ (cf. \eqref{E411decomp}). Hence by following a similar argument, we may bound
        $$
        \|\eqref{R42121type1}\|_i^2 \lesssim \delta \lam^{-3\kk}\calE^N(\tau)(1+ \calE^N(0)).
        $$
        Moreover by the property for $\pt\lamt$ \eqref{est:ptlamt}, we can also bound
        $$
        \|\eqref{R42121type2}\|_i^2 \lesssim (\delta \lam^{-3\kk})^2\calE^N(\tau)(1+ \calE^N(0)).
        $$
        For the term which involves the second term appearing in \eqref{eq:A6dtU0}, we may write such term as a linear combination of:
        $$
        \lamt \pt\lamt(A_1w)(A_2 (U^0)^{\kk})(A_3\calF^{-\kk})(A_4\barcalG^{-2})\left(A_5\left((U^0)^{-\kk}(0)(1+ \delta\lambda^{-3\kk}(0)w\calF^{-\kk}(0)) \right)\right)(A_6\ze^3).
        $$
        The above expression can be further decomposed into a linear combination of terms resembling \eqref{E4231type1} and \eqref{E4231type2}. Then a similar analysis would imply that the aforementioned term is bounded by $(\delta \lam^{-3\kk})^2 (1+ \calE^N(\tau))(1+\calE^N(0))$. We note that this error accounts for several source terms. For example, one scenario for which such source term arises is when $l_2, l_4, l_5 \le 2,$ $l_3 = 0,$ and $l_6 \le 3.$

        Finally, as for the contribution afforded by the first term of \eqref{eq:A6dtU0}, it can be represented by
        \begin{equation}\label{R4212term1}
        \begin{split}
        (A_1w)(A_2 (U^0)^{\kk})(A_3\calF^{-\kk})(A_4\barcalG^{-2})\left(A_5\left((U^0)^{-\kk}(0)(1+ \delta\lambda^{-3\kk}(0)w\calF^{-\kk}(0)) \right)\right)\\
        \times g(\ze)\prod_{\substack{B_j \in \calP_{i_j},\\\sum_{j=1}^{J+1}i_j \le l_6}}(B_j\bar H^j(\Theta)),
        \end{split}
        \end{equation}
        for some $J \ge 0$ and some smooth function $g(\ze)$. Here, $\bar H^j(\Theta) = \Theta, \pt\Theta$, or $\pt^2\Theta$. Moreover, exactly one of $\bar H^j(\Theta)$ is $\pt^2\Theta$. Recalling Proposition \ref{prop:buildingblockpowers}, chain rule \eqref{eq:chainrule}, and product rule \eqref{Pbarproduct}, plus a use of bootstrap assumptions on $U^0, \calF,\kk\barcalG$, the $\|\cdot\|_i^2$ norm of \eqref{R4212term1} can be bounded by
        \begin{align*}
            \int_0^1 &w^{\frac1\kk + i}\ze^2 (A_1 w)^2 \prod_{\substack{\calA_j \in \calP_{i_j},\\\sum_{j=1}^{m_2 + J + 1}i_j \le l_2 + l_6}}|\calA_j \bar H^j(\Theta)|^2\cdot \prod_{\substack{A_3^j \in \calP_{i_j},\\ \sum_{j=1}^{m_3} i_j = l_3}}|A_3^j \Dz\Theta|^2 \cdot \prod_{\substack{A_4^j \in \calP_{i_j},\\ \sum_{j=1}^{k_4}i_j = l_4}}|A_4^j \barcalG|^2\\
            &\times \prod_{\substack{W_j \in \calP_{i_j},\\\sum_{j=1}^{\bar m_1}i_j = l_5^1}}|W_j H^j(\Theta)(0)|^2 \left(1+\delta \lam^{-3\kk}(0)|\tilde P w|\prod_{\substack{V_j \in \barcalP_{i_j},\\\sum_{j=1}^{\bar m_2} i_j = l_5^3}}|V_j\Dz\Theta(0)|\right)^2 d\ze,
        \end{align*}
        where we recall that $A_1 \in \barcalP_{l_1}$ with $l_1 + \hdots + l_6 = i$, $l_6 \le i-1$. Moreover, we have $m_2, \bar m_1, \bar m_2 \ge 0$, $\tilde P \in \barcalP_{l_5^2}$, and $l_5^1 + l_5^2 + l_5^3 = l_5$. Then, we may follow a similar argument to that analyzing \eqref{R4112type1} and \eqref{R4112type2}, with the only modification where we apply Lemma \ref{est:nonlinear2} instead of Lemma \ref{est:nonlinear}. Note that indeed Lemma \ref{est:nonlinear2} is eligible here, because $l_6 \le i-1 \le N-1$ in the integral above. We thus conclude that
        $$
        \|\eqref{R4212term1}\|_{i}^2 \lesssim \calS^N(1+\calE^N(0)).
        $$
        Summarizing all above estimates, we arrive at the outcome that
        \begin{equation}
            \label{est:R42121}
            \|R_{421,2}^1\|_i^2 \lesssim \left((\delta \lam^{-3\kk})^2 +\delta \lam^{-3\kk}\calE^N(\tau) + \calS^N(\tau)\right)(1+\calE^N(\tau))(1+\calE^N(0)).
        \end{equation}
        Finally, we combine \eqref{DiE4211decomp}, \eqref{est:R42111}, and \eqref{est:R42121} to conclude that
        \begin{equation}
            \label{DiE4211decomp2}
            \begin{split}
                \calD_i E_{421}^1 &= \lamt w(U^0)^\kk \calF^{-\kk}\barcalG^{-2}\left((U^0)^{-\kk}(0)(1+ \delta\lambda^{-3\kk}(0)w\calF^{-\kk}(0)) \right)\cdot \ze(\pt\Theta + \lamt(\Theta +\ze))\calD_i\pt^2\Theta\\
        &\quad + R_{421,1}^1 + R_{421,2}^1,
            \end{split}
        \end{equation}
        where
        \begin{equation}\label{est:R4211}
        \|R_{421,1}^1\|_i^2+ \|R_{421,2}^1\|_i^2 \lesssim \left((\delta \lam^{-3\kk})^2 +\delta \lam^{-3\kk}\calE^N(\tau) + \calS^N(\tau)\right)(1+\calE^N(\tau))(1+\calE^N(0)).
        \end{equation}
    \end{enumerate}
            \item Estimates for $\calD_i E_{421}^2.$ Through an almost identical argument to that analyzing $\calD_i E_{421}^1$ (with the exception that we use \eqref{eq:dtU0Hhighest}, \eqref{eq:dtU0H} in place of \eqref{eq:dtU0zetahighest}, \eqref{eq:dtU0zeta}), we have the following:
        \begin{equation}
            \label{DiE4212decomp}
            \begin{split}
                \calD_i E_{421}^2 &= w(U^0)^\kk \calF^{-\kk}\barcalG^{-2}\left((U^0)^{-\kk}(0)(1+ \delta\lambda^{-3\kk}(0)w\calF^{-\kk}(0)) \right)\\
                &\quad \times (\pt\Theta + \lamt\Theta)(\pt\Theta + \lamt(\Theta +\ze))\calD_i\pt^2\Theta + R_{421}^2,
            \end{split}
        \end{equation}
        where
        \begin{equation}
            \label{est:R4212}
            \|R_{421}^2\|_i^2 \lesssim \left((\delta \lam^{-3\kk})^2 +\delta \lam^{-3\kk}\calE^N(\tau) + \calS^N(\tau)\right)\calE^N(\tau)(1+\calE^N(\tau))(1+\calE^N(0)).
        \end{equation}
\end{enumerate}
Finally, we use the bootstrap assumption \eqref{bootstrap EN}, and combine \eqref{E422}, \eqref{est:R422}, \eqref{est:E423}, \eqref{DiE4211decomp2}, \eqref{DiE4212decomp}, \eqref{est:R4211}, and \eqref{est:R4212} to obtain that
\begin{align*}
    \frac{\kk}{1+\kk}\calD_i E_{42} &= \delta \lam^{-3\kk} w^2(U^0)^{-2}\calF^{-2\kk-1}\barcalG^{-2}(\pt\Theta + \lamt(\Theta +\ze))\left(1+\frac{\Theta}{\ze}\right)^2\calD_{i+1}\pt\Theta\\
    &\quad + \lamt w(U^0)^\kk \calF^{-\kk}\barcalG^{-2}\left((U^0)^{-\kk}(0)(1+ \delta\lambda^{-3\kk}(0)w\calF^{-\kk}(0)) \right)(\pt\Theta + \lamt(\Theta +\ze))^2\calD_i\pt^2\Theta\\
    &\quad + \frac{\kk}{1+\kk} R_{42}^i,
\end{align*}
where
$$
\|R_{42}^i\|_i^2 \lesssim (\delta \lam^{-3\kk})^2+\delta \lam^{-3\kk}\calE^N(\tau) + \calS^N(\tau),
$$
where we further used $\delta \lam^{-3\kk} <1$. This concludes the proof of Lemma \ref{lem:DiE42}.

We conclude the section by proving the main Proposition \ref{prop:DiE4}.
\begin{proof}[Proof of Proposition \ref{prop:DiE4}]
    The proposition holds after combining Lemma \ref{lem:DiE41} and Lemma \ref{lem:DiE42}.
\end{proof}
%%%%%%%%%%%%%%%%%%%%%%%%%%%%%%%%%%%%
\subsection{Estimates for $\calD_i E_1$}
\begin{prop}\label{prop:DiE1}
    Let $E_1$ be given as in \eqref{mainerror}. Then for any $\tau \in [0,\tau_*]$,
    \begin{align}
        \calD_i E_1 = \left((1+\kk)\calF^{-\kk-2}(U^0)^{-4}\left(1+\frac{\Theta}{\ze}\right)^4\calL_i\calD_i\Theta\right)(\calG^{\kk}-1) + R_1^i,
    \end{align}
    where 
    \begin{align*}
    R_1^i := \sum_{\substack{A\in \calP_{l_A},\,B\in\barcalP_{l_B}\\l_A+l_B=i,\,l_B\geq 1}}A\left((U^0)^{-4}\left(1+\frac\Theta\zeta\right)^2\frac{1}{w^{\frac1\kk}}\p_\zeta \left(w^{1+\frac1\kappa} (\calF^{-1-\kappa}-1)\right) \right) B(\calG^\kk - 1) + E_{\text{pressure}}(\calG^{\kk}-1), 
    \end{align*}
    where $E_{\text{pressure}}$ is defined in Theorem \ref{thm: pressure estimate}. Also,  $R_1^i$ verifies the following estimate:
    \begin{align}\label{est:R1i}
        \|R_1^i\|_i^2\lesssim (1+\calE^N(0))\calE^N(\tau). 
    \end{align}
\end{prop}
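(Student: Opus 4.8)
The plan is to peel off the top-order elliptic piece of $E_1$ in exactly the way the $\calD_i$-differentiated equation \eqref{eq:highorder momlagmain} was obtained, and then to recognise everything that remains as either $E_{\text{pressure}}$ --- already controlled by Theorem~\ref{thm: pressure estimate} --- multiplied by the small factor $\calG^\kk-1$, or as commutator terms in which at least one derivative has been spent on $\calG^\kk-1$. Concretely, write $E_1=\mathsf{Q}\,(\calG^\kk-1)$ with
$$
\mathsf{Q}:=(U^0)^{-4}\left(1+\frac{\Theta}{\ze}\right)^2 w^{-\frac{1}{\kk}}\pz\!\left(w^{1+\frac{1}{\kk}}(\calF^{-\kk-1}-1)\right)
$$
the pressure-gradient term studied in Section~\ref{sec: elliptic operator}. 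Applying $\calD_i$ and using the Leibniz/commutator identity \eqref{commutator [calDi g]f} to move all $i$ derivatives onto $\mathsf{Q}$ gives
$$
\calD_i E_1=(\calD_i\mathsf{Q})(\calG^\kk-1)+\sum_{\substack{A\in\calP_{l_A},\,B\in\barcalP_{l_B}\\ l_A+l_B=i,\ l_B\ge 1}}c^{AB}\,(A\mathsf{Q})\,(B(\calG^\kk-1)),
$$
and substituting identity \eqref{elliptic derivation} of Theorem~\ref{thm: pressure estimate} for $\calD_i\mathsf{Q}$ reproduces precisely the claimed decomposition, with $R_1^i$ exactly as in the statement (the constants $c^{AB}$ being absorbed into the vector fields).

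For the estimate, the first summand is immediate: Theorem~\ref{thm: pressure estimate} gives $\|E_{\text{pressure}}\|_i^2\lesssim\calE^N$ and the bootstrap bound \eqref{smallness of calFGkk-1} gives $\|\calG^\kk-1\|_{L^\infty}\le\frac14$, so $\|E_{\text{pressure}}(\calG^\kk-1)\|_i^2\lesssim\calE^N$. For the commutator sum the key structural point is that $l_B\ge 1$ forces $l_A\le i-1\le N-1$, so one is always strictly below the top order on the pressure-gradient side. I would expand $A\mathsf{Q}$ using the decomposition \eqref{eq:elltipic term original form} of the pressure gradient together with Propositions~\ref{prop:buildingblock} and \ref{prop:buildingblockpowers}, and expand $B(\calG^\kk-1)=B(\frac{1}{\kk}\barcalG^{-1})$ via the chain rule and Proposition~\ref{prop:buildingblockpowersG}. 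Each resulting summand is then a product of a coefficient bounded by the bootstrap assumptions and Lemma~\ref{lem: induced bootstrap} with factors of the form $\prod(A_jH_j(\Theta))$, $\prod(B_j\Dz\Theta)$, $\prod(A_j\barcalG)$ carrying at most $i+1$ derivatives on $\Theta$ and at most $i$ on $\pt\Theta$; moreover every top-order factor in which $l_A+2$ (respectively $l_A+1$) derivatives fall on $\Theta$ is accompanied by one power of $w$ (respectively by the bounded factor $w'$) inherited from the elliptic structure $w^{-\frac{1}{\kk}}\pz(w^{1+\frac{1}{\kk}}\Dz(\cdot))$. After the resulting weighted-norm bookkeeping, Lemma~\ref{lem:nonlinear} --- applied with its index shifted to absorb the accompanying powers of $w$, exactly as in the estimates of Section~\ref{sect: Energy Estimate 1} --- closes the bound term by term; the initial-data building blocks of Proposition~\ref{prop:buildingblockpowersG}, namely those carrying $(U^0)^{-\kk}(0)$, $\calF^{-\kk}(0)$ and their spatial derivatives, are what produce the factor $(1+\calE^N(0))$. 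Summing over the finitely many terms yields $\big\|\sum(A\mathsf{Q})(B(\calG^\kk-1))\big\|_i^2\lesssim(1+\calE^N(0))\calE^N(\tau)$, and adding the first summand gives \eqref{est:R1i}.

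The main obstacle is precisely this weight bookkeeping in the commutator sum: one must verify that the single power of $w$ coming from the $w\,\pz\Dz\Theta$ piece of $\mathsf{Q}$ and the single power of $w'$ coming from the $w'\Dz\Theta$ piece are exactly enough to absorb the gap between the weight $w^{\frac{1}{\kk}+i}$ in $\|\cdot\|_i$ and the natural weights $w^{\frac{1}{\kk}+(l_A+2)}$, $w^{\frac{1}{\kk}+(l_A+1)}$ for the corresponding derivatives of $\Theta$ --- which works because $l_A\le i-1$ and $w$ is bounded on $[0,1]$ --- so that Lemma~\ref{lem:nonlinear} applies with no loss of derivatives and no loss of powers of $w$. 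This is the same mechanism already used for $\mfC_1$ in \eqref{est: C1} and for $\barcalD_{i-1}\mathfrak{M}_{31}$ in Section~\ref{sect: Energy Estimate 1}, so the argument is essentially a bookkeeping variant of those estimates; the only genuinely new ingredient is invoking Proposition~\ref{prop:buildingblockpowersG} to control $B(\calG^\kk-1)$, which is what introduces the (harmless) dependence on $\calE^N(0)$. Since $\calE^N(0)\lesssim\eps\ll 1$ the bound in fact reads $\|R_1^i\|_i^2\lesssim\calE^N(\tau)$, but the explicit form is recorded for use in Section~\ref{sect: Energy Estimate V}.
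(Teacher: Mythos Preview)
Your proposal is correct and follows essentially the same route as the paper: you split $\calD_iE_1$ via the commutator identity into the top-order piece $(\calD_i\mathsf{Q})(\calG^\kk-1)$, invoke Theorem~\ref{thm: pressure estimate} to extract the elliptic operator plus $E_{\text{pressure}}(\calG^\kk-1)$, and then handle the remaining $l_A\le i-1$ commutator sum by expanding $A\mathsf{Q}$ through \eqref{eq:elltipic term original form} (with the same weight bookkeeping as for $\barcalD_{i-1}\mathfrak{M}_{31}$ and $\mfC_1$) and $B(\calG^\kk-1)=\kk^{-1}B\barcalG^{-1}$ through Proposition~\ref{prop:buildingblockpowersG} (as in the treatment of $\mfC_3$), which is exactly what the paper does.
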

Expanding $\calD_i E_1$ aligning with Theorem \ref{thm: pressure estimate}:
\begin{align}\label{DiE1 pre}
    \begin{aligned}
        \calD_iE_1 = \sum_{\substack{A\in \calP_{l_A},\,B\in\barcalP_{l_B}\\l_A+l_B=i}}A\left((U^0)^{-4}\left(1+\frac\Theta\zeta\right)^2\frac{1}{w^{\frac1\kk}}\p_\zeta \left(w^{1+\frac1\kappa} (\calF^{-1-\kappa}-1)\right) \right) B(\calG^\kk - 1),
    \end{aligned}
\end{align}
we discuss the control of \eqref{DiE1 pre} by separating into the following cases:
\begin{itemize}
    \item [(a)] When $l_A=i$. In this case, $A=\calD_i$, and thus \eqref{DiE1 pre} turns into
\begin{align}\label{DiE1Case1}
    \begin{aligned}
        \calD_iE_1 &= \calD_i\left((U^0)^{-4}\left(1+\frac\Theta\zeta\right)^2\frac{1}{w^{\frac1\kk}}\p_\zeta \left(w^{1+\frac1\kappa} (\calF^{-1-\kappa}-1)\right) \right) (\calG^\kk - 1)\\
        =& \left((1+\kk)\calF^{-\kk-2}(U^0)^{-4}\left(1+\frac{\Theta}{\ze}\right)^4\calL_i\calD_i\Theta\right)(\calG^{\kk}-1) + E_{\text{pressure}}(\calG^{\kk}-1),
    \end{aligned}
\end{align}
where we use Theorem \ref{thm: pressure estimate} in the second equality. Since $|\calG^{\kk}-1|\leq \frac{1}{4}$ as shown in \eqref{smallness of calFGkk-1} in Lemma \ref{lem: induced bootstrap}, the first term on the RHS can be absorbed into the corresponding elliptic term during the energy estimate process. 
Additionally, by virtue of Theorem \ref{thm: pressure estimate}, 
$$
\|E_{\text{pressure}}(\calG^{\kk}-1)\|_i^2 \lesssim \calE^N. 
$$
\item [(b)] When $l_A\leq i-1$. From \eqref{eq:elltipic term original form}, we see that
\begin{align}\label{E1case(b) pre}
\begin{aligned}
    &A\left((U^0)^{-4}\left(1+\frac\Theta\zeta\right)^2\frac{1}{w^{\frac1\kk}}\p_\zeta \left(w^{1+\frac1\kappa} (\calF^{-1-\kappa}-1)\right) \right)\\
    &=-(1+\kk)A\left(\calF^{-\kk-2}(U^0)^{-4}\left(1+\frac{\Theta}{\ze}\right)^4\left(w\pz\Dz\Theta+\frac{1+\kk}{\kk}w'\Dz\Theta\right)\right)+ \sum_{j=1}^3 A\mathfrak{R}_j(\Theta).
    \end{aligned}
\end{align}
Since $l_A\leq i-1$, we note that all terms in $A\mathfrak{R}_j(\Theta)$, $j=1,2,3$, are linear combinations of $\prod_j P_{l_j}H^j(\Theta)$ with $l_j\leq i$. Furthermore, the first term on the right-hand side of \eqref{E1case(b) pre} can be analyzed similarly to $\barcalD_{i-1}\mathfrak{M}_{31}$ in Section \ref{sect: Energy Estimate 1}. This is because these terms share a similar structure regarding the count of derivatives and the $w$-weight count at the leading order. In addition to this, since $\barcalG^{-1}=\kk\calG^{\kk}$ and $l_B\geq 1$, we write 
$$
B(\calG^{\kk}-1) = B(\kk^{-1}\barcalG^{-1}-1) = \kk^{-1} B\barcalG^{-1}. 
$$
The contribution of $B\barcalG^{-1}$ has already been discussed in Section \ref{sect: Energy Estimate II} when estimating $\mathfrak{C}_3$. Particularly, see \eqref{PbarGtype1 contribution} and \eqref{PbarGtype2 contribution}.
\end{itemize} 
%%%%%%%%%%%%%%%%%%%%%%%%%%%%%%%%%%%%
\subsection{Estimates for $\calD_i E_3$}
This section is dedicated to proving the following result:
\begin{prop}
    \label{prop:DiE3}
    $\calD_i E_3$ possesses the following representation: for any $\tau \in [0,\tau_*]$,
    \begin{equation}
        \label{est:DiE3}
        \begin{split}
        \calD_iE_3 &=\calC^{E_3}_1w\ze\calD_{i+1}\pt\Theta + \calC_2^{E_3}\calL_i\calD_i\Theta + \calC^{E_3}_3w\ze\calD_{i+1}\pt\Theta(0) + \calC_4^{E_3}\calL_i\calD_i\Theta(0) + R_{3}^i,
        \end{split}
    \end{equation}
    where
    \begin{equation}\label{E3coeff}
    \begin{split}
    \calC^{E_3}_1(\tau,\ze) &:=-\frac{1+\kk}{\kk^2}(U^0)^{\kk-2}\left(1+\frac\Theta\zeta\right)^2 \calF^{-\kk-1} \barcalG^{-2}\left[(U^0)^{-\kk}(0) \left(1+\delta \lam^{-3\kk}(0) w\calF^{-\kk}(0)\right)\right]\\
        &\quad \times \frac{\pt\Theta + \lamt(\Theta + \ze)}{\ze},\\
    \calC^{E_3}_2(\tau,\ze) &:= \frac{1+\kk}{\kk^2}\delta \lam^{-3\kk} w\calF^{-2\kk-2}\barcalG^{-2}\left[(U^0)^{-4}\left(1+\frac{\Theta}{\ze}\right)^4\right],\\
    \calC^{E_3}_3(\tau,\ze) &:= \frac{1+\kk}{\kk^2} (U^0)^{\kk-4}(U^0(0))^{-\kk+2}\left(1+\frac{\Theta}{\ze}\right)^2 \calF^{-\kk-1}\barcalG^{-2}\left(1+\delta\lam^{-3\kk}(0)w\calF^{-\kk}(0)\right)\\
            &\quad \times \frac{\pt\Theta + \lamt(\Theta + \ze)}{\ze},\\
    \calC^{E_3}_4(\tau,\ze) &:= -  \frac{1+\kk}{\kk^2} \delta \lam(0)^{-3\kk} w \calF^{-\kk-1}\calF(0)^{-\kk-1} (U^0)^{\kk-4}(U^0)^{-\kk}(0) \left(1+ \frac{\Theta}{\ze}\right)^2\left(1+ \frac{\Theta(0)}{\ze}\right)^2\barcalG^{-2}.
    \end{split}
    \end{equation}
    Moreover,
    \begin{equation}
        \label{est:R3i}
        \|R_3^i\|_i^2 \lesssim 1 + \calE^N(\tau) + \calE^N(0)
    \end{equation}
\end{prop}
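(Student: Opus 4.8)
\textbf{Proof proposal for Proposition~\ref{prop:DiE3}.} The plan follows the template already used for $\calD_iE_1$ (Proposition~\ref{prop:DiE1}) and $\calD_iE_4$ (Proposition~\ref{prop:DiE4}), with the role played there by $\pt\barcalG$ now played by $\pz\barcalG$. Since $\calG^\kk=(\kk\barcalG)^{-1}$, one has $\pz(\calG^\kk)=-(\kk\barcalG)^{-2}\pz(\kk\barcalG)$, so that, recalling \eqref{mainerror},
\[
E_3=-\frac{1+\kk}{\kk^2}(U^0)^{-4}\Big(1+\tfrac\Theta\ze\Big)^2\calF^{-\kk-1}\,w\,\barcalG^{-2}\,\pz\barcalG .
\]
The first step is to differentiate the explicit formula \eqref{eq:barcalG} in $\ze$ and split $\pz(\kk\barcalG)$ into three structurally distinct pieces: (i) the piece in which $\pz$ falls on the time-dependent factor $(U^0)^{\kk}$, which via $\pz(U^0)^{-2}=-2(\pt\Theta+\lamt(\Theta+\ze))\big(\pz\pt\Theta+\lamt(\pz\Theta+1)\big)$ is proportional to $(U^0)^{\kk+2}(\pt\Theta+\lamt(\Theta+\ze))\big(\pz\pt\Theta+\lamt(\pz\Theta+1)\big)$ times the initial-data weight $(U^0)^{-\kk}(0)\big(1+\delta\lam^{-3\kk}(0)w\calF^{-\kk}(0)\big)$; (ii) the piece in which $\pz$ falls on that initial-data weight, which splits further into a $\pz(U^0)^{-\kk}(0)$ contribution (again linear in $\pz\pt\Theta(0)$ at top order, via the $\tau=0$ version of the identity for $\pz(U^0)^{-2}$) and a $\delta\lam^{-3\kk}(0)\big(w'\calF^{-\kk}(0)-\kk w\calF^{-\kk-1}(0)\pz\calF(0)\big)$ contribution; and (iii) the ``pressure-like'' piece $-\pz\big(\delta\lam^{-3\kk}w\calF^{-\kk}\big)=-\delta\lam^{-3\kk}\big(w'\calF^{-\kk}-\kk w\calF^{-\kk-1}\pz\calF\big)$. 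In the $\pz\calF$-subpieces of (ii) and (iii) I would invoke Lemma~\ref{lem:calF-1}, equivalently $\pz\calF=\big(1+\tfrac\Theta\ze\big)^2\pz\Dz\Theta+2\ze\xi(\pz\theta)^2$, to expose the top-order factor $\pz\Dz\Theta$, resp.\ $\pz\Dz\Theta(0)$.

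Substituting back into $E_3$ and applying $\calD_i$, the second step is to extract in each of (i)--(iii) the single summand in which all of $\calD_i$ lands on the factor carrying the spatial derivative, treating the remaining $W^{1,\infty}_{\tau,\ze}$-bounded product (smooth weights $w,w',w'/\ze$ by Lemma~\ref{lem:wderivative}, and powers of $U^0,\calF,\barcalG$ that are $\approx1$ on $[0,\tau_*]$ by Lemma~\ref{lem: induced bootstrap}) as a coefficient. Here the ``leading-order symmetry'' product rule \eqref{eq: leading order symmetry} must be used in place of the plain rule \eqref{Pproduct} wherever the top-order factor appears, so that the extracted derivative is exactly $\calD_{i+1}\pt\Theta$ (for piece (i), after writing $\pz\pt\Theta=\Dz\pt\Theta-\tfrac2\ze\pt\Theta$ and discarding the second term into the remainder), exactly $\calD_{i+1}\pt\Theta(0)$ (for the $\pz(U^0)^{-\kk}(0)$-subpiece of (ii)), or an elliptic structure. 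For the $\pz\calF$-subpieces one uses that $E_3\propto w\,\pz(\calG^\kk)$, so the $\pz\calF$-contribution equals $\calC_2^{E_3}L_0\Theta$ (resp.\ $\calC_4^{E_3}L_0\Theta(0)$) modulo terms carrying an \emph{extra} weight $w$ or a factor $\delta\lam^{-3\kk}$; commuting $\calD_i$ past $L_0$ by Lemma~\ref{lem:Lcommute} then produces $\calC_2^{E_3}\calL_i\calD_i\Theta$ (resp.\ $\calC_4^{E_3}\calL_i\calD_i\Theta(0)$) plus lower-order residues. Matching coefficients against \eqref{E3coeff} is a routine chain-rule computation starting from \eqref{eq:barcalG}. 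All remaining terms — the $w'$-weighted contributions, every product-rule term in which $\calD_i$ is distributed over more than one factor, the lower-order residues $\sum_j\xi_{i,j}\calD_{i-j}$ from Lemma~\ref{lem:Lcommute}, and the $\calO(1)$-size source terms $\lamt^2\bar P_i\ze^2$ generated by the $(U^0)^{-2}$-building block \eqref{PbarU0} in Proposition~\ref{prop:buildingblock} — are collected into $R_3^i$.

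For \eqref{est:R3i} I would argue term by term, exactly as for $R_1^i$ and $R_4^i$. Using Propositions~\ref{prop:buildingblock}, \ref{prop:buildingblockpowers}, \ref{prop:buildingblockpowersG}, the chain and product rules, and Lemma~\ref{lem:wderivative}, each summand of $R_3^i$ is a sum of products $\prod_j(P_{l_j}H_j(\Theta))$ with $P_{l_j}\in\calP_{l_j}$, $l_j\le N$, $H_j(\Theta)\in\{\Theta,\pt\Theta\}$, possibly decorated by initial-data factors $H_j(\Theta)(0)$ and by coefficients bounded as above, plus the finitely many $\calO(1)$ source terms; crucially, every summand that carries a top-order derivative $\calD_{i+1}\Theta$ or $\calD_{i+2}\Theta$ also carries the compensating weight $w$, resp.\ $w^2$, inherited from the explicit $w$ in $E_3$, so its $\|\cdot\|_i$-norm is controlled by $\|\calD_{i+1}\Theta\|_{i+1}$, resp.\ $\|\calD_{i+2}\Theta\|_{i+2}$, without loss. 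Applying Lemma~\ref{lem:nonlinear} to the perturbative products gives bounds of the form $(\delta\lam^{-3\kk})^K(\calE^N(\tau))^J\lesssim\calE^N(\tau)$, or $\lesssim\calE^N(\tau)(1+\calE^N(0))$ for products involving initial-data factors; since $\calE^N(0)\lesssim\eps\ll1$ and the source terms contribute $\lesssim1$, summing over the finitely many terms yields $\|R_3^i\|_i^2\lesssim1+\calE^N(\tau)+\calE^N(0)$.

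The main obstacle will be the bookkeeping in the second step: one must verify that, after the recombinations, the leading terms are \emph{precisely} $\calD_{i+1}\pt\Theta$, $\calL_i\calD_i\Theta$ and their $\tau=0$ analogues — not merely generic top-order derivatives $P_{i+1}\pt\Theta$ or an $L_i$-type operator acting on an arbitrary high-order derivative — since only these exact structures cancel against the corresponding leading terms of $\calD_iE_4$ and of the main equation upon integration by parts in the energy identity (the symmetric-highest-order mechanism highlighted at the start of Section~\ref{sect: Energy Estimate III} and in the remark following Proposition~\ref{prop:dtU0}), a point that is decisive exactly when $i=N$, where $\calD_{i+1}\pt\Theta$ and $\calD_{i+2}\Theta$ are \emph{not} controlled by $\calE^N$. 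Concretely, one has to track which factor the final $\Dz$ in $\calD_{i+1}$ acts on, confirm that the $w'\Dz\Theta$-discrepancies arising when folding the $\pz\calF$-terms into $L_0$ are harmless thanks to the extra weight $w$ carried by $\calC_2^{E_3}$ and $\calC_4^{E_3}$, and check that every lower-order residue produced by Lemma~\ref{lem:Lcommute} and by the highest-derivative identities genuinely lands in the class estimated by Lemma~\ref{lem:nonlinear}.
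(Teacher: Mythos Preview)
Your proposal is correct and follows the same overall strategy as the paper: split $\pz(\kk\barcalG)$ via \eqref{eq:barcalG} into a piece where $\pz$ hits $(U^0)^\kk$, a piece where it hits the initial-data weight, and a pressure-like piece from $\pz(w\calF^{-\kk})$, then extract the precise top-order factors $\calD_{i+1}\pt\Theta$, $\calL_i\calD_i\Theta$ and their $\tau=0$ analogues using the leading-order product rule \eqref{eq: leading order symmetry}.

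The one noteworthy difference is in the pressure-like piece. The paper uses the algebraic identity \eqref{eq:dzwF},
\[
\pz(w\calF^{-\kk}) = \frac{\kk}{1+\kk}\,\calF\cdot\frac{1}{w^{1/\kk}}\pz\!\big(w^{1/\kk+1}\calF^{-\kk-1}\big),
\]
so that this contribution becomes exactly a smooth coefficient times the full pressure gradient $(U^0)^{-4}(1+\Theta/\ze)^2 w^{-1/\kk}\pz\big(w^{1/\kk+1}(\calF^{-\kk-1}-1)\big)$ (plus a harmless $w'$ term, the paper's $E_{34}$). The paper can then invoke Theorem~\ref{thm: pressure estimate} wholesale, which already packages $\calL_i\calD_i\Theta+E_{\text{pressure}}$ with $\|E_{\text{pressure}}\|_i^2\lesssim\calE^N$ and the delicate $\mfM$-cancellation of Lemma~\ref{lem: cancellation lemma M}. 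Your route---expand $\pz\calF$, fold the $w\pz\Dz\Theta$ factor into $L_0\Theta$, then use \eqref{eq: leading order symmetry} followed by Lemma~\ref{lem:Lcommute}---also works, because both $\calC_2^{E_3}$ and $L_0\Theta$ carry a factor $w$, so that in every commutator term at least one $w$ survives attached to the top-order spatial derivative. What this buys you over the paper is independence from Section~\ref{sec: elliptic operator}; what it costs is that you must re-verify by hand the controllability of the $\mfR_j$-type discrepancies (the $w'\Dz\Theta$ mismatch and the nonlinear $(\pz\theta)^2$ term from $\pz\calF$), which the paper gets for free from Theorem~\ref{thm: pressure estimate}. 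Your final paragraph correctly identifies this as the main bookkeeping burden.
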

In order to obtain the desired representation as in Proposition \ref{prop:DiE3}, we need to rewrite $E_3$ substantially before commuting derivatives in a reckless way. The reason is as follows: it is rather clear that the most dangerous term (in the sense of derivative count) arises when $\calD_i$ is applied to $\pz \barcalG$. In view of the formula \eqref{eq:barcalG}, our main enemies are precisely $\barcalD_{i+1} (U^0)^\kk$ as well as $\barcalD_{i+1} (w\calF^{-\kk})$. The difficulty concerning $\barcalD_{i+1} (U^0)^\kk$ can be resolved by invoking the representation \eqref{PbarU0highest}, which captures a symmetry at the top order. For the term $\barcalD_{i+1} (w\calF^{-\kk})$, we face a difficulty which is very similar to the scenario when we tackle the pressure gradient, in which we encounter a danger of losing weights. To remedy this issue, we need to rewrite the contribution by $\barcalD_{i+1} (w\calF^{-\kk})$ in a way which recovers a similar structure to the pressure gradient.

\subsubsection{Rewriting of $E_3$}
In this section, we rewrite $E_3$ in a way that is more amenable to our analysis. For readers' convenience, we first recall the definition of $E_3$:
\begin{equation}
    \label{eq:E3}
    E_3 = \frac{1+\kk}{\kk}(U^0)^{-4}\left(1+\frac\Theta\zeta\right)^2 \calF^{-\kk-1}w\pz(\calG^\kk).
\end{equation}
Recalling that $\calG^\kk = \frac{1}{\kk}\barcalG^{-1}$, we have
\begin{align*}
    \pz \calG^\kk &= -\frac1\kk \barcalG^{-2}\pz \barcalG\\
    &= -\frac{1}{\kk^2} \barcalG^{-2}\pz\left[(U^0)^\kk (U^0)^{-\kk}(0) \left(1+\delta \lam^{-3\kk}(0) w\calF^{-\kk}(0)\right)\right] + \frac{1}{\kk^2} \delta \lam^{-3\kk}\barcalG^{-2} \pz(w\calF^{-\kk})\\
    &= \frac{1}{2\kk}\barcalG^{-2}(U^0)^{\kk+2}\pz(U^0)^{-2}\left[(U^0)^{-\kk}(0) \left(1+\delta \lam^{-3\kk}(0) w\calF^{-\kk}(0)\right)\right]\\
    &\quad -\frac{1}{\kk^2}\barcalG^{-2}(U^0)^{\kk}\pz\left[(U^0)^{-\kk}(0) \left(1+\delta \lam^{-3\kk}(0) w\calF^{-\kk}(0)\right)\right]\\
    &\quad + \frac{1}{\kk(1+\kk)}\delta \lam^{-3\kk}\barcalG^{-2} \calF \frac{1}{w^{\frac1\kk}}\pz\left(w^{\frac1\kk + 1}\calF^{-\kk-1}\right),
\end{align*}
where we used the following identities in the last equality above:
$$
\pz (U^0)^{\kk} = -\frac{\kk}{2}(U^0)^{\kk+2}\pz (U^0)^{-2},
$$
and
\begin{equation}\label{eq:dzwF}
\pz(w\calF^{-\kk}) = \frac{\kk}{1+\kk}\calF \frac{1}{w^{\frac1\kk}}\pz\left(w^{\frac1\kk + 1}\calF^{-\kk-1}\right).
\end{equation}
Hence via a straightforward computation, it holds that
\begin{align*}
    E_3 &= \frac{1+\kk}{2\kk^2}w(U^0)^{\kk-2}\left(1+\frac\Theta\zeta\right)^2 \calF^{-\kk-1} \barcalG^{-2}\pz(U^0)^{-2}\cdot\left[(U^0)^{-\kk}(0) \left(1+\delta \lam^{-3\kk}(0) w\calF^{-\kk}(0)\right)\right]\\
    &\quad - \frac{1+\kk}{\kk^3}w(U^0)^{\kk-4}\left(1+\frac\Theta\zeta\right)^2 \calF^{-\kk-1} \barcalG^{-2}\cdot \pz\left[(U^0)^{-\kk}(0) \left(1+\delta \lam^{-3\kk}(0) w\calF^{-\kk}(0)\right)\right]\\
    &\quad + \frac{1}{\kk^2}\delta \lam^{-3\kk}w \calF^{-\kk} \barcalG^{-2} \left[(U^0)^{-4}\left(1+\frac\Theta\zeta\right)^2\frac{1}{w^{\frac1\kk}}\pz\left(w^{\frac1\kk + 1}(\calF^{-\kk-1}-1)\right)\right]\\
    &\quad +\frac{1+\kk}{\kk^3}\delta \lam^{-3\kk}w w' (U^0)^{-4}\left(1+\frac\Theta\zeta\right)^2 \calF^{-\kk} \barcalG^{-2}\\
    &=: \sum_{j=1}^4 E_{3j}.
\end{align*}
In the following subsections, we first show how to estimate $E_{31}$ and $E_{33}$, which involve the most difficult terms. We then consider $E_{32}$ and $E_{34}$, which are slightly easier.

\subsubsection{Estimates for $\calD_i E_{31}$}
In this subsection, we aim to prove the following lemma:
\begin{lem}
    \label{lem:DiE31}
    For any $\tau \in [0,\tau_*]$, it holds that
    \begin{equation}
        \label{eq:DiE31}
        \begin{split}
        \calD_i E_{31} &= -\frac{1+\kk}{\kk^2}w(U^0)^{\kk-2}\left(1+\frac\Theta\zeta\right)^2 \calF^{-\kk-1} \barcalG^{-2}\left[(U^0)^{-\kk}(0) \left(1+\delta \lam^{-3\kk}(0) w\calF^{-\kk}(0)\right)\right]\\
        &\quad \times (\pt\Theta + \lamt(\Theta + \ze))\calD_{i+1}\pt\Theta + R_{31}^i,
        \end{split}
    \end{equation}
    where $R_{31}^i$ obeys the following bound:
    \begin{equation}
        \label{est:R31}
        \|R_{31}^i\|_i^2 \lesssim 1 + \calE^N(\tau)
    \end{equation}
\end{lem}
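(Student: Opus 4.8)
The plan is to mimic, in a simplified setting, the strategy already carried out for $\calD_i E_{41}$ in Lemma \ref{lem:DiE41} and for $\calD_i E_{421}^1$ in the proof of Lemma \ref{lem:DiE42}. Recall that
$$
E_{31} = \frac{1+\kk}{2\kk^2}w(U^0)^{\kk-2}\left(1+\frac\Theta\zeta\right)^2 \calF^{-\kk-1} \barcalG^{-2}\left[(U^0)^{-\kk}(0) \left(1+\delta \lam^{-3\kk}(0) w\calF^{-\kk}(0)\right)\right]\pz(U^0)^{-2},
$$
so the only term which carries the top-order danger is $\pz(U^0)^{-2}$, whose highest-order derivative under $\calD_i$ is governed by the building-block formula \eqref{PbarU0highest}: namely $\calD_i\pz (U^0)^{-2} = \barcalD_{i+1}(U^0)^{-2}$ produces $-2(\pt\Theta + \lamt(\Theta + \ze))\calD_{i+1}\pt\Theta$ at the top order, plus quadratic-in-$\calP_{l_1,l_2}$-derivatives terms with $l_1, l_2 \le i$, plus a source term $-\lamt^2 \barcalD_{i+1}\ze^2$ (which is a smooth function on $[0,1]$). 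Note that, crucially, the $\calD_{i+1}\pt\Theta$ appearing here is \emph{exactly} the good vector field acting on $\pt\Theta$, not a generic $P_{i+1}\pt\Theta$; this is what will permit us to absorb this term into the energy machinery in Section \ref{sect: Energy Estimate V}.

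First I would apply the ``leading-order'' product rule \eqref{eq: leading order symmetry} to $\calD_i E_{31}$, peeling off the term where all $i$ derivatives land on $\pz(U^0)^{-2}$ — i.e. where $\calD_i$ acts as $\calD_i\pz(U^0)^{-2} = \barcalD_{i+1}(U^0)^{-2}$ — with the coefficient $\frac{1+\kk}{2\kk^2}w(U^0)^{\kk-2}(1+\Theta/\ze)^2\calF^{-\kk-1}\barcalG^{-2}[(U^0)^{-\kk}(0)(1+\delta\lam^{-3\kk}(0)w\calF^{-\kk}(0))]$. Substituting \eqref{PbarU0highest} then splits this into: (i) the claimed leading-order term $-\frac{1+\kk}{\kk^2}w(U^0)^{\kk-2}(1+\Theta/\ze)^2\calF^{-\kk-1}\barcalG^{-2}[\hdots(0)](\pt\Theta + \lamt(\Theta+\ze))\calD_{i+1}\pt\Theta$; (ii) lower-order products of two $\calP_{l_j}$-derivatives ($l_j \le i$) of $\pt\Theta, \Theta, \ze$; (iii) the source term involving $-\lamt^2 \barcalD_{i+1}\ze^2$ and a factor of $\lamt$; these contribute to $R_{31}^i$. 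Everything remaining after peeling, i.e. the terms of \eqref{eq: leading order symmetry} where at least one derivative hits the coefficient (at most $i-1$ on $\pz(U^0)^{-2}$, i.e. at most $\calD_i \pz(U^0)^{-2}$ with $l_6 \le i-1$), is also placed in $R_{31}^i$: here one expands the coefficient using the chain rule \eqref{eq:chainrule}, the product rules \eqref{Pbarproduct}, \eqref{Pbarproduct2}, Proposition \ref{prop:buildingblockpowers} for the powers of $\calF^{-\kk-1}$, $(U^0)^{\kk-2}$, Proposition \ref{prop:buildingblockpowersG} for the $\barcalG^{-2}$ factor, Lemma \ref{lem:wderivative} for the $w$- and $w'$-weights, and formula \eqref{eq:dtU0H}--\eqref{eq:dtU0zeta} of Proposition \ref{prop:dtU0} to expand $A_6(\pz(U^0)^{-2})$ (or more precisely the relevant lower-order spatial version).

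For the $R_{31}^i$ estimates I would then invoke the bootstrap assumptions (Definition \ref{assump: bootstrap}) and Lemma \ref{lem: induced bootstrap} to replace $U^0, \calF, \kk\barcalG \approx 1$, use the smoothness of $P\ze, \bar P\ze^2$ on $[0,1]$, and apply Lemma \ref{lem:nonlinear} to terms not involving $\pt^2\Theta$ and Lemma \ref{lem:nonlinear2} to any term where a $\pt^2\Theta$ appears (such terms can only arise from $A_6(\pz(U^0)^{-2})$ via Proposition \ref{prop:dtU0} in the pieces where a derivative hits the coefficient — wait, $\pz(U^0)^{-2}$ involves only $\Theta, \pt\Theta$ and their spatial derivatives, so actually \emph{no} $\pt^2\Theta$ appears here, and Lemma \ref{lem:nonlinear} suffices throughout). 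The source contributions: $-\lamt^2\barcalD_{i+1}\ze^2$ against a smooth coefficient contributes at worst $\lesssim 1$ to $\|R_{31}^i\|_i^2$; the $\pt\lamt$ factors are handled by \eqref{est:ptlamt}. This gives $\|R_{31}^i\|_i^2 \lesssim 1 + \calE^N(\tau)$. Observe that, unlike in $\calD_i E_4$, the estimate \eqref{est:R31} is \emph{not} accompanied by a smallness factor $\delta\lam^{-3\kk}$ and carries an $\calO(1)$ term; this is expected, since $E_{31}$ (unlike $E_{41}, E_{42}$) has no overall power of $\delta$ and the source term $\lamt^2\barcalD_{i+1}\ze^2$ persists at low orders — precisely the ``Type II'' low-frequency error flagged in the introduction. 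The main obstacle I anticipate is purely bookkeeping: carefully verifying that, in every term of \eqref{eq: leading order symmetry} obtained after peeling off the leading-order term, no vector field of order $i+1$ ever lands on $\pt\Theta$ \emph{except} in the single designated leading term with the exact coefficient claimed in \eqref{eq:DiE31}, and that whenever $\calP_{i+1}$ acts on $\Theta$ an accompanying weight $w$ is present — this is what \eqref{PbarU0highest} and the weight structure of the coefficient (one factor of $w$) guarantee, but it must be checked term by term. Combined with the analogous (simpler) arguments for the remaining pieces, this establishes Lemma \ref{lem:DiE31}.
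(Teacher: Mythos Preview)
Your proposal is correct and follows essentially the same approach as the paper: apply the leading-order product rule \eqref{eq: leading order symmetry} to isolate $\barcalD_{i+1}(U^0)^{-2}$, invoke \eqref{PbarU0highest} to extract the precise top-order term $-2(\pt\Theta+\lamt(\Theta+\ze))\calD_{i+1}\pt\Theta$ plus lower-order products and the source $-\lamt^2\barcalD_{i+1}\ze^2$, and then bound the remainder via Lemma \ref{lem:nonlinear} and the bootstrap assumptions. One small correction: the reference to Proposition \ref{prop:dtU0} (formulae \eqref{eq:dtU0H}--\eqref{eq:dtU0zeta}) is misplaced, since those concern $\pt(U^0)^{-2}$; for the lower-order pieces $A_7\pz(U^0)^{-2}$ with $l_7\le i-1$ you instead write $A_7\pz=\bar A_7\in\barcalP_{l_7+1}$ and use \eqref{PbarU0} from Proposition \ref{prop:buildingblock}---but you effectively catch this yourself when noting that no $\pt^2\Theta$ appears and Lemma \ref{lem:nonlinear} suffices throughout.
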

Applying Lemma \eqref{lem:leadingorder} and recalling that $\barcalD_{i+1} = \calD_i\pz$, we write:
\begin{equation}
    \label{DiE31aux0}
    \begin{split}
        \calD_i E_{31}&:= \frac{1+\kk}{2\kk^2}w(U^0)^{\kk-2}\left(1+\frac\Theta\zeta\right)^2 \calF^{-\kk-1} \barcalG^{-2}\left[(U^0)^{-\kk}(0) \left(1+\delta \lam^{-3\kk}(0) w\calF^{-\kk}(0)\right)\right]\barcalD_{i+1}(U^0)^{-2}\\
        &\quad + \sum_{\substack{A_{1,\hdots,6} \in \barcalP_{l_{1,\hdots,6}},\\A_7 \in \calP_{l_7},\\\sum_{j=1}^7 l_j = i, l_7 \le i-1}} c^{l_1,\hdots,l_7}_i (A_1 w)(A_2 (U^0)^{\kk-2})\left(A_3\left(1+\frac\Theta\zeta\right)^2\right) (A_4 \calF^{-\kk-1}) (A_5 \barcalG^{-2})\\
        &\quad \times (A_6\left[(U^0)^{-\kk}(0) \left(1+\delta \lam^{-3\kk}(0) w\calF^{-\kk}(0)\right)\right]) \left(A_7\pz (U^0)^{-2}\right).
    \end{split}
\end{equation}
After a further use of \eqref{PbarU0highest}, we may further write
\begin{equation}
    \label{DiE31}
    \begin{split}
        \calD_i E_{31}&:= -\frac{1+\kk}{\kk^2}w(U^0)^{\kk-2}\left(1+\frac\Theta\zeta\right)^2 \calF^{-\kk-1} \barcalG^{-2}\left[(U^0)^{-\kk}(0) \left(1+\delta \lam^{-3\kk}(0) w\calF^{-\kk}(0)\right)\right]\\
        &\quad \times (\pt\Theta + \lamt(\Theta + \ze))\calD_{i+1}\pt\Theta\\
        &\quad + w(U^0)^{\kk-2}\left(1+\frac\Theta\zeta\right)^2 \calF^{-\kk-1} \barcalG^{-2}\left[(U^0)^{-\kk}(0) \left(1+\delta \lam^{-3\kk}(0) w\calF^{-\kk}(0)\right)\right]\sum_{\substack{B_{j} \in \calP_{l_j},\\\sum_{j=1}^{k}l_j \le i+1}}c^{l_j}_i(B_jH^j(\Theta))\\
        &\quad + \frac{1+\kk}{2\kk^2}w(U^0)^{\kk-2}\left(1+\frac\Theta\zeta\right)^2 \calF^{-\kk-1} \barcalG^{-2}\left[(U^0)^{-\kk}(0) \left(1+\delta \lam^{-3\kk}(0) w\calF^{-\kk}(0)\right)\right]\lamt^2 \barcalD_{i+1}\ze^2\\
        &\quad + \sum_{\substack{A_{1,\hdots,6} \in \barcalP_{l_{1,\hdots,6}},\\A_7 \in \calP_{l_7},\\\sum_{j=1}^7 l_j = i, l_7 \le i-1}} c^{l_1,\hdots,l_7}_i (A_1 w)(A_2 (U^0)^{\kk-2})\left(A_3\left(1+\frac\Theta\zeta\right)^2\right) (A_4 \calF^{-\kk-1}) (A_5 \barcalG^{-2})\\
        &\quad \times (A_6\left[(U^0)^{-\kk}(0) \left(1+\delta \lam^{-3\kk}(0) w\calF^{-\kk}(0)\right)\right]) \left(A_7\pz (U^0)^{-2}\right),
    \end{split}
\end{equation}
where in the second term above $k = 1, 2$, and $H^j(\Theta) = \pt\Theta$ or $\Theta$. In particular, $l_j \le i$ whenever $H^j(\Theta) = \pt\Theta$. 

We note that the first term in \eqref{DiE31} has the desired highest order structure. Using bootstrap assumptions, Lemma \ref{lem: induced bootstrap}, and bounds on initial data, we note that
$$
\left|(U^0)^{\kk-2}\left(1+\frac\Theta\zeta\right)^2 \calF^{-\kk-1} \barcalG^{-2}\left[(U^0)^{-\kk}(0) \left(1+\delta \lam^{-3\kk}(0) w\calF^{-\kk}(0)\right)\right]\right| \lesssim 1.
$$
Thanks to an extra weight $w$, the $\|\cdot\|_i^2$ norm of the second term in \eqref{DiE31} can be bounded by $\calE^N(\tau)$ after a routine application of Lemma \ref{lem:nonlinear}. The third term in \eqref{DiE31}, which is a source term, can be bounded by $1$ due to the fact that $\barcalD_{i+1}\ze^2$ is a smooth function. We are then left to bound the last term in \eqref{DiE31}, which can be further written as a linear combination of terms in the form of:
\begin{equation}
    \label{DiE31typical}
    \begin{split}
        &(A_1 w)(A_2 (U^0)^{\kk-2})\left(A_3\left(1+\frac\Theta\zeta\right)^2\right) (A_4 \calF^{-\kk-1}) (A_5 \barcalG^{-2})\\
        &\quad \times (A_6\left[(U^0)^{-\kk}(0) \left(1+\delta \lam^{-3\kk}(0) w\calF^{-\kk}(0)\right)\right]) \left(A_7\pz (U^0)^{-2}\right)
    \end{split}
\end{equation}
We remark that one can further write $A_7\pz = \bar A_7 \in \barcalP_{l_7+1}$. Note that the term $\bar A_7 (U^0)^{-2}$ is not harmful as $l_7 + 1 \le i$. To bound $\|\eqref{DiE31typical}\|_i$, we split into the following two cases:
\begin{enumerate}
    \item $l_3 = 0$. In this case, by \eqref{bootstrap Theta}, \eqref{DiE31typical} can be bounded by:
    $$
    \left|(A_1 w)(A_2 (U^0)^{\kk-2})(A_4 \calF^{-\kk-1}) (A_5 \barcalG^{-2})(A_6\left[(U^0)^{-\kk}(0) \left(1+\delta \lam^{-3\kk}(0) w\calF^{-\kk}(0)\right)\right]) \left(\bar A_7 (U^0)^{-2}\right)\right|.
    $$
    After applying chain rule to $A_5 \barcalG^{-2}$ and using Proposition \ref{prop:buildingblockpowers}, the expression above can be further written as a linear combination of terms that bear a very similar structure to \eqref{R4112type1} and \eqref{R4112type2}. Then by using a similar argument to that analyzing \eqref{R4112type1} and \eqref{R4112type2}, we obtain the bound
    \begin{equation}
        \label{est:DiE31typicall3=0}
        \|\eqref{DiE31typical}\|_i^2 \lesssim \left(1 + \calE^N(\tau)\right)\left(1 + \calE^N(0)\right).
    \end{equation}
    We remark that the above estimate involves a low-frequency source error of size $\calO(1)$, which is generated when $l_2,l_5,l_6 \le 2$, $l_4 =0$, and $l_7 \le 1$.
    \item $l_3 \ge 1$. We first note that by product rule \eqref{Pbarproduct},
    $$
    A_3 \left(1+\frac{\Theta}{\ze}\right)^2 = 2A_3\left(\frac{\Theta}{\ze}\right) + A_3\left(\frac{\Theta}{\ze}\right)^2 = 2A_3\left(\frac{\Theta}{\ze}\right) + \sum_{\substack{A_3^{1,2} \in \calP_{l_3}^{1,2},\\ l_3^1 + l_3^2 =l_3+2,\\l_3^1,l_3^2 \le l_3+1}} c^{l_3^1,l_3^2}_{l_3} (A_3^1\Theta)(A_3^2\Theta).
    $$
    Let us restrict ourselves to the top order case $l_3 = i$: we invoke bootstrap assumptions and bounds on initial data to see that
    \begin{align*}
        \|\eqref{DiE31typical}\|_i^2 \lesssim \int_0^1 w^{\frac1\kk + i}\ze^2 \cdot w^2\left(\left|A_3\left(\frac{\Theta}{\ze}\right)\right|^2 + \left|(A_3^1\Theta)(A_3^2\Theta)\right|^2\right)d\ze
        \lesssim \calE^N(\tau),
    \end{align*}
    after an application of Lemma \ref{lem:nonlinear}. Here, we also used the fact that $A_3\left(\frac1\ze \cdot\right) \in \calP_{l_3 + 1}$. 
    
    The intermediate cases $1 \le l_3 \le i-1$ would follow from a similar treatment to the $l_3 = 0$ case, and we omit the details here.
\end{enumerate}
Combining the estimates resulting from the two cases above, we conclude that $\|\eqref{DiE31typical}\|_i^2 \lesssim 1 + \calE^N(\tau)$. This concludes the proof for Lemma \ref{lem:DiE31}.

\subsubsection{Estimates for $\calD_i E_{33}$}
In this subsection, we prove the following:
\begin{lem}
    \label{lem:DiE33}
    For any $\tau \in [0,\tau_*]$, it holds that
    \begin{equation}
        \label{est:DiE33}
        \begin{split}
        \calD_i E_{33} &= \frac{1+\kk}{\kk^2}\delta \lam^{-3\kk} w\calF^{-2\kk-2}\barcalG^{-2}\left[(U^0)^{-4}\left(1+\frac{\Theta}{\ze}\right)^4\calL_i\calD_i\Theta\right] + R_{33}^i,
        \end{split}
    \end{equation}
    where $R_{33}^i$ conforms to the following bound:
    \begin{equation}
        \label{est:R33}
        \|R_{33}^i\|_i^2 \lesssim (\delta \lam^{-3\kk})^2\calE^N(\tau).
    \end{equation}
\end{lem}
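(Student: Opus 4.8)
The plan is to estimate $\calD_i E_{33}$ by exploiting the fact that $E_{33}$ has, by construction, the \emph{same structure} as the pressure gradient term treated in Section \ref{sec: elliptic operator}. Recall that
$$
E_{33} = \frac{1}{\kk^2}\delta \lam^{-3\kk}w \calF^{-\kk} \barcalG^{-2} \left[(U^0)^{-4}\left(1+\frac\Theta\zeta\right)^2\frac{1}{w^{\frac1\kk}}\pz\left(w^{\frac1\kk + 1}(\calF^{-\kk-1}-1)\right)\right].
$$
The bracketed quantity is exactly $(U^0)^{-4}(1+\Theta/\ze)^2 \cdot w^{-1/\kk}\pz(w^{1+1/\kk}(\calF^{-1-\kk}-1))$, which already appeared in \eqref{eq:elltipic term original form} and was analyzed in Theorem \ref{thm: pressure estimate}. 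Thus the first step is to write $\calD_i E_{33}$ using the Leibniz rule \eqref{Pbarproduct} (or \eqref{eq: leading order symmetry} at top order, to preserve the symmetric structure): separate the term where all $i$ derivatives land on the bracket from the terms where at least one derivative lands on the prefactor $\delta\lam^{-3\kk}w\calF^{-\kk}\barcalG^{-2}$.

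For the main term — $i$ derivatives on the bracket — I would invoke Theorem \ref{thm: pressure estimate} directly (with the prefactor $\delta\lam^{-3\kk}w\calF^{-\kk}\barcalG^{-2}$ pulled out as a coefficient that is $\lesssim \delta\lam^{-3\kk}$ by bootstrap assumptions, since $w, \calF, \barcalG \approx 1$): this produces the leading term
$$
\frac{1+\kk}{\kk^2}\delta\lam^{-3\kk}w\calF^{-\kk}\barcalG^{-2}\cdot(1+\kk)\calF^{-\kk-2}(U^0)^{-4}\left(1+\frac{\Theta}{\ze}\right)^4\calL_i\calD_i\Theta = \frac{(1+\kk)^2}{\kk^2}\delta\lam^{-3\kk}w\calF^{-2\kk-2}\barcalG^{-2}\left[(U^0)^{-4}\left(1+\frac\Theta\zeta\right)^4\calL_i\calD_i\Theta\right],
$$
plus $\delta\lam^{-3\kk}\cdot E_{\text{pressure}}$, which by Theorem \ref{thm: pressure estimate} satisfies $\|\delta\lam^{-3\kk}E_{\text{pressure}}\|_i^2 \lesssim (\delta\lam^{-3\kk})^2\calE^N$. (A small discrepancy between my $(1+\kk)^2$ and the stated $(1+\kk)$ in \eqref{est:DiE33} should be reconciled by tracking constants carefully; I would trust the paper's normalization.) The key point is that the favorable factor $\delta\lam^{-3\kk}$ in front of $E_{33}$ means we never need to absorb this leading term into the energy's elliptic piece — it is genuinely small — so unlike $E_1$ there is no delicate cancellation needed here.

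For the commutator terms — where $A \in \barcalP_{l}$, $l \ge 1$, hits $\delta\lam^{-3\kk}w\calF^{-\kk}\barcalG^{-2}$ and $B \in \barcalP_{i-l}$ hits the bracket — I would expand $B$ of the bracket using \eqref{eq:elltipic term original form}: it equals $-(1+\kk)\calF^{-\kk-2}(U^0)^{-4}(1+\Theta/\ze)^4 w^{-1/\kk}\pz(w^{1+1/\kk}\Dz\Theta) + \sum_j\mathfrak{R}_j(\Theta)$. Since now $l \ge 1$, we have $i - l \le i-1$, so $B$ acting on the elliptic piece produces at most $\calD_{i}\Theta$-type terms with the correct $w$-weights (the same bookkeeping as in the analysis of $\barcalD_{i-1}\mathfrak{M}_{31}$ and $\mfC_1$ in Section \ref{sect: Energy Estimate 1}), and $B$ acting on the $\mathfrak{R}_j$ is lower order; meanwhile $A$ acting on the smooth-ish prefactor is handled by Proposition \ref{prop:buildingblockpowers}, Proposition \ref{prop:buildingblockpowersG}, Lemma \ref{lem:wderivative}, and the bootstrap bounds on $\calF, \barcalG, U^0$. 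Applying Lemma \ref{lem:nonlinear} (or Lemma \ref{lem:nonlinear2} if a stray $\pt^2\Theta$ appears via $\pt$-free derivatives — it should not here since $E_{33}$ has no time derivatives) gives $\|R_{33}^i\|_i^2 \lesssim (\delta\lam^{-3\kk})^2\calE^N$, where both powers of $\delta\lam^{-3\kk}$ come from: one from the explicit prefactor, and — since every term in $B(\text{bracket})$ and $\mathfrak{R}_j$ is at least linear in $\Theta$ and carries $w$-weights — an extra $\delta\lam^{-3\kk}\calE^N$ from Lemma \ref{lem:nonlinear}. The main obstacle is the careful weight-tracking in the commutator terms to ensure that whenever $\calD_{i+1}$-type derivatives appear on $\Theta$ they come with the compensating $w$ factor; but this is entirely parallel to the already-completed analysis of $\barcalD_{i-1}\mathfrak{M}$ and $\mfC_1$, so no genuinely new difficulty arises.
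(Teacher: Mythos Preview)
Your approach is essentially the same as the paper's: split $\calD_i E_{33}$ via \eqref{eq: leading order symmetry} into the top-order piece (where all of $\calD_i$ hits the bracket) plus a commutator remainder, then apply Theorem \ref{thm: pressure estimate} to the former and handle the latter using the decomposition \eqref{E1case(b) pre} and Lemma \ref{lem:nonlinear}, exactly as you describe.

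Two small corrections. First, your constant discrepancy is a miscopy: the prefactor of $E_{33}$ is $\frac{1}{\kk^2}$, not $\frac{1+\kk}{\kk^2}$ (check the decomposition $E_3 = \sum E_{3j}$), so after multiplying by the $(1+\kk)$ from Theorem \ref{thm: pressure estimate} you get exactly the stated $\frac{1+\kk}{\kk^2}$. Second, your accounting for the two powers of $\delta\lam^{-3\kk}$ in $\|R_{33}^i\|_i^2$ is off: since $E_{33}$ contains no $\pt\Theta$, Lemma \ref{lem:nonlinear} contributes no factor of $\delta\lam^{-3\kk}$ (the exponent $K$ there counts occurrences of $\pt\Theta$). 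The $(\delta\lam^{-3\kk})^2$ simply comes from squaring the single explicit prefactor when you take $\|\cdot\|_i^2$.
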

Using the commutator representation \eqref{eq: leading order symmetry}, we can immediately write
\begin{equation}
    \label{DiE33}
    \begin{split}
    \calD_i E_{33} &= \frac{1}{\kk^2}\delta \lam^{-3\kk} w\calF^{-\kk}\barcalG^{-2}\left[\calD_i \left((U^0)^{-4}\left(1+\frac\Theta\zeta\right)^2\frac{1}{w^{\frac1\kk}}\pz\left(w^{\frac1\kk + 1}(\calF^{-\kk-1}-1)\right)\right)\right]\\
    &\quad + \delta \lam^{-3\kk}\sum_{\substack{A_{1,2,3}\in \barcalP_{l_{1,2,3}}, A_4 \in \calP_{l_4},\\\sum_{j=1}^4 l_j = i,\\l_4 \le i - 1}} (A_1 w)(A_2 \calF^{-\kk})(A_3 \barcalG^{-2})\\
    &\quad \times\left[A_4\left((U^0)^{-4}\left(1+\frac\Theta\zeta\right)^2\frac{1}{w^{\frac1\kk}}\pz\left(w^{\frac1\kk + 1}(\calF^{-\kk-1}-1)\right)\right)\right] =: E_{33}^1 + E_{33}^2.
    \end{split}
\end{equation}
To treat $E_{33}^1$, we further invoke Theorem \ref{thm: pressure estimate} to decompose it as:
\begin{equation}\label{DiE33aux1}
\begin{split}
    E_{33}^1 &= \frac{1}{\kk^2}\delta \lam^{-3\kk} w\calF^{-\kk}\barcalG^{-2}\left[(1+\kk)\calF^{-\kk-2}(U^0)^{-4}\left(1+\frac{\Theta}{\ze}\right)^4\calL_i\calD_i\Theta\right] \\
    &\quad + \frac{1}{\kk^2}\delta \lam^{-3\kk} w\calF^{-\kk}\barcalG^{-2}E_{\text{pressure}}
\end{split}
\end{equation}
The first term in the above expression is the desired leading order expression appearing in Lemma \ref{lem:DiE33}. By invoking the bootstrap assumption $\calF, \barcalG \approx 1$ as well as Theorem \ref{thm: pressure estimate}, the second term in the above expression can be easily bounded by:
\begin{equation}\label{DiE33aux2}
\|\frac{1}{\kk^2}\delta \lam^{-3\kk} w\calF^{-\kk}\barcalG^{-2}E_{\text{pressure}}\|_i^2 \lesssim (\delta\lam^{-3\kk})^2 \calE^N(\tau).
\end{equation}

To treat $E_{33}^2$, it suffices to understand the term in the following form:
$$
\delta \lam^{-3\kk} (A_1 w)(A_2 \calF^{-\kk})(A_3 \barcalG^{-2})\left[A_4\left((U^0)^{-4}\left(1+\frac\Theta\zeta\right)^2\frac{1}{w^{\frac1\kk}}\pz\left(w^{\frac1\kk + 1}(\calF^{-\kk-1}-1)\right)\right)\right],
$$
where $A_{1,2,3} \in \barcalP_{l_{1,2,3}}$, $A_4 \in \calP_{l_4}$, $l_1 + \hdots + l_4 = i$, and $l_4 \le i-1$. To proceed, we use the computation \eqref{E1case(b) pre} to further decompose $A_4\left((U^0)^{-4}\left(1+\frac\Theta\zeta\right)^2\frac{1}{w^{\frac1\kk}}\pz\left(w^{\frac1\kk + 1}(\calF^{-\kk-1}-1)\right)\right)$. Then a routine argument mimicking the one estimating \eqref{E4231type1} by using Lemma \ref{lem:nonlinear} would yield the following bound:
\begin{equation}
    \label{DiE33aux3}
    \|E_{33}^2\|_i^2 \lesssim (\delta \lam^{-3\kk})^2\calE^N(\tau).
\end{equation}
Then Lemma \ref{lem:DiE33} follows from \eqref{DiE33aux1}--\eqref{DiE33aux3}.

\subsubsection{Estimates for $\calD_i E_{34}$}
Using the product rule \eqref{Pproduct}, we may write $\calD_i E_{34}$ as a linear combination of the following type of terms:
\begin{equation}
    \label{DiE34typical}
    \delta\lam^{-3\kk} (A_1 w')(A_2 w)(A_3 (U^0)^{-4})(A_4\calF^{-\kk})(A_5 \barcalG^{-2})\left(A_6\left(1+\frac{\Theta}{\ze}\right)^2\right),
\end{equation}
where $A_1 \in \calP_{l_1}$, $A_{2,\hdots, 6} \in \barcalP_{l_{2,\hdots,6}}$, and $\sum_{j=1}^6 l_j = i$. Similar to the analysis of $\calD_i E_{31}$, we first consider the case where $l_6 = 0$. In this case, after an application of Proposition \ref{prop:buildingblockpowers}, \eqref{DiE34typical} can be further written as a linear combination of terms with similar structures to \eqref{E4231type1} and \eqref{E4231type2}. Hence via a similar analysis, we can deduce the bound $\|\eqref{DiE34typical}\|_i^2 \lesssim (\delta \lam^{-3\kk})^2 (1+ \calE^N(\tau))$. We remark that the source error arises when $l_3, l_5 \le 2$. If $l_6 \ge 1$, we may instead use a similar argument to that treating the $l_3 \ge 1$ case for \eqref{DiE31typical} to deduce that $\|\eqref{DiE34typical}\|_i^2 \lesssim (\delta \lam^{-3\kk})^2  \calE^N(\tau)$. We omit the details here. Summarizing, we have proved the following lemma:
\begin{lem}\label{lem:DiE34}
    For any $\tau \in [0,\tau_*]$, the following estimate holds:
    \begin{equation}
    \label{est:DiE34}
    \|\calD_i E_{34}\|_i^2 \lesssim (\delta \lam^{-3\kk})^2 (1+ \calE^N(\tau)).
    \end{equation}
\end{lem}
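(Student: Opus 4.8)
The plan is to estimate $\calD_i E_{34}$ by applying the general product rule to the explicit expression
$$
E_{34} = \frac{1+\kk}{\kk^3}\delta \lam^{-3\kk}w w' (U^0)^{-4}\left(1+\frac\Theta\zeta\right)^2 \calF^{-\kk} \barcalG^{-2},
$$
and then controlling the resulting terms in the $\|\cdot\|_i$ norm using the building-block expansions of Section \ref{sect:buildingblock} together with the nonlinear estimate Lemma \ref{lem:nonlinear}. First I would use the product rule \eqref{Pproduct} to write $\calD_i E_{34}$ as a linear combination of terms of the form \eqref{DiE34typical}, namely $\delta\lam^{-3\kk}(A_1 w')(A_2 w)(A_3 (U^0)^{-4})(A_4\calF^{-\kk})(A_5 \barcalG^{-2})\left(A_6(1+\Theta/\ze)^2\right)$ with $A_1 \in \calP_{l_1}$, $A_{2,\dots,6} \in \barcalP_{l_{2,\dots,6}}$, $\sum_{j=1}^6 l_j = i$; here Lemma \ref{lem:wderivative} guarantees that $A_1 w'$ and $A_2 w$ are smooth, and the bootstrap assumptions \eqref{bootstrap U0}, \eqref{bootstrap FG} (via Lemma \ref{lem: induced bootstrap}) give $(U^0)^{-4},\calF^{-\kk},\barcalG^{-2}\approx 1$.

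The analysis then splits, exactly as for $\calD_i E_{31}$, according to whether all derivatives land on the low-order factors or some land on $(1+\Theta/\ze)^2$. When $l_6 = 0$, I would apply Proposition \ref{prop:buildingblockpowers} to expand $A_3(U^0)^{-4}$, $A_4 \calF^{-\kk}$, and the chain rule plus Proposition \ref{prop:buildingblockpowersG} to expand $A_5\barcalG^{-2}$; this recasts \eqref{DiE34typical} as a linear combination of terms with the same structure as \eqref{E4231type1} and \eqref{E4231type2}, and the argument used there applies verbatim to give $\|\eqref{DiE34typical}\|_i^2 \lesssim (\delta \lam^{-3\kk})^2(1+\calE^N(\tau))$. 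The source term of size $(\delta\lam^{-3\kk})^2$ arises in the regime $l_3, l_5 \le 2$ (together with $l_1=l_2=l_4=l_6=0$), precisely because $A(U^0)^{-4}$ and $A\barcalG^{-2}$ carry the $\calO(1)$ low-frequency pieces $\lamt^2 \bar P\ze^2$ of type \eqref{PbarU0type3}, \eqref{PbarGtype1c}. When $l_6 \ge 1$, I would first use the product rule \eqref{Pbarproduct} to write $A_6(1+\Theta/\ze)^2 = 2A_6(\Theta/\ze) + \sum c_{l_6}^{l_6^1,l_6^2}(A_6^1\Theta)(A_6^2\Theta)$ with $A_6^{1,2}\in\calP_{l_6^{1,2}}$, $l_6^1+l_6^2 = l_6+2$, $l_6^1,l_6^2\le l_6+1$; since $A_6(\tfrac1\ze\cdot)\in\calP_{l_6+1}$, every factor is at least linear in $\Theta$, so Lemma \ref{lem:nonlinear} (with the extra weight $w$ from $A_2 w$ absorbed harmlessly) yields the improved bound $\|\eqref{DiE34typical}\|_i^2 \lesssim (\delta\lam^{-3\kk})^2\calE^N(\tau)$, with no source term. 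Summing the two cases and using $\calE^N(\tau)\lesssim 1$ from \eqref{bootstrap EN} gives \eqref{est:DiE34}.

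The only mild subtlety — and the main thing to check carefully — is the bookkeeping of derivative counts: one must verify that in the $l_6=0$ case no more than $i$ derivatives ever fall on $\pt\Theta$ and no more than $i+1$ on $\Theta$ (with the extra $w$ accompanying any top-order $\calD_{i+1}$ on $\Theta$), so that Lemma \ref{lem:nonlinear} is genuinely applicable, and that when the $\barcalG^{-2}$ factor is expanded via Proposition \ref{prop:buildingblockpowersG} the initial-data contributions are controlled by $\calE^N(0)\lesssim\eps\ll 1$. Since $l_6 = 0$ forces all $i$ derivatives onto $\calP$- or $\barcalP$-acting factors of $(U^0)^{-4}$, $\calF^{-\kk}$, $\barcalG^{-2}$ and the smooth weights, and none of those factors individually can receive more than $i$ derivatives, this is routine; there is no risk of weight loss because $E_{34}$ already carries the full weight $ww'$ and no elliptic structure is involved. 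The proof is thus entirely parallel to the $\calD_i E_{31}$ and $\calD_i E_{33}$ arguments, with no new obstacle, which is why the details can be safely omitted as in the statement.
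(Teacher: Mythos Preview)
Your proposal is correct and follows essentially the same approach as the paper: apply the product rule to obtain the typical term \eqref{DiE34typical}, split into the cases $l_6=0$ (reducing to the structure of \eqref{E4231type1}--\eqref{E4231type2}, with the source error at $l_3,l_5\le 2$) and $l_6\ge 1$ (handled as in the $l_3\ge 1$ case for \eqref{DiE31typical}). The paper's proof is exactly this argument, with the details likewise omitted.
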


\subsubsection{Estimates for $\calD_i E_{32}$}
The treatment of $\calD_i E_{32}$ is a bit subtle: if one directly distribute derivatives, there will be a derivative loss when all $i$ derivatives are applied to $\pz\left[(U^0)^{-\kk}(0) \left(1+\delta \lam^{-3\kk}(0) w\calF^{-\kk}(0)\right)\right]$. Indeed, such derivative loss will be incurred at the level of initial data, and one can close the bootstrap if one grants extra regularity to the initial data. Nevertheless, we will see that such loss is unnecessary as long as we write $\calD_i E_{32}$ by using a similar strategy to those treating $\calD_i E_{31}, \calD_i E_{33}$, and $\calD_i E_{34}$. In particular, we will show the following lemma:
\begin{lem}
    \label{lem:DiE32}
    For any $\tau \in [0,\tau_*]$, it holds that
    \begin{equation}
        \label{est:DiE32}
        \begin{split}
            \calD_i E_{32} &= \frac{1+\kk}{\kk^2} w(U^0)^{\kk-4}(U^0(0))^{-\kk+2}\left(1+\frac{\Theta}{\ze}\right)^2 \calF^{-\kk-1}\barcalG^{-2}\left(1+\delta\lam^{-3\kk}(0)w\calF^{-\kk}(0)\right)\\
            &\quad \times (\pt\Theta + \lamt(\Theta + \ze))\calD_{i+1}\pt\Theta(0)\\
            & -  \frac{1+\kk}{\kk^2} \delta \lam(0)^{-3\kk} w \calF^{-\kk-1}\calF(0)^{-\kk-1} (U^0)^{\kk-4}(U^0)^{-\kk}(0) \left(1+ \frac{\Theta}{\ze}\right)^2\left(1+ \frac{\Theta(0)}{\ze}\right)^2\barcalG^{-2}\calL_i\calD_i \Theta(0)\\
            &+ R_{32}^i,            
        \end{split}
    \end{equation}
    where $R_{32}^i$ obeys the following estimate:
    \begin{equation}
        \label{est:R32}
        \|R_{32}^i\|_i^2 \lesssim 1 + \calE^N(\tau) + \calE^N(0).
    \end{equation}
\end{lem}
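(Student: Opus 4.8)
## Proof proposal for Lemma \ref{lem:DiE32}

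The plan is to follow the same template used for $\calD_i E_{31}$, $\calD_i E_{33}$, and $\calD_i E_{34}$, being careful to isolate the two genuinely top-order contributions at the level of initial data \emph{before} distributing derivatives. Recalling the definition
$$
E_{32} = - \frac{1+\kk}{\kk^3}w(U^0)^{\kk-4}\left(1+\frac\Theta\zeta\right)^2 \calF^{-\kk-1} \barcalG^{-2}\cdot \pz\left[(U^0)^{-\kk}(0) \left(1+\delta \lam^{-3\kk}(0) w\calF^{-\kk}(0)\right)\right],
$$
the only dangerous object is $\calD_i$ acting on $\pz\left[(U^0)^{-\kk}(0)(1+\delta\lam^{-3\kk}(0)w\calF^{-\kk}(0))\right]$, which can produce $\barcalD_{i+1}(U^0)^{-\kk}(0)$ and $\barcalD_{i+1}(w\calF^{-\kk})(0)$. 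First I would write, using the product rule \eqref{Pproduct} together with Lemma \ref{lem:leadingorder},
$$
\calD_i E_{32} = -\frac{1+\kk}{\kk^3}w(U^0)^{\kk-4}\left(1+\frac\Theta\zeta\right)^2 \calF^{-\kk-1}\barcalG^{-2}\cdot \calD_i\left(\pz\left[(U^0)^{-\kk}(0)(1+\delta\lam^{-3\kk}(0)w\calF^{-\kk}(0))\right]\right) + (\text{lower-order product terms}),
$$
where the lower-order terms have at most $i-1$ derivatives on each factor and are handled by Lemma \ref{lem:nonlinear} (using the bootstrap bounds $U^0,\calF,\kk\barcalG\approx 1$, Lemma \ref{lem: induced bootstrap}, the smoothness of $\barcalP$-derivatives of $w$ from Lemma \ref{lem:wderivative}, and the initial-data bound $\calE^N(0)\lesssim 1$), just as in the $l_6=0$ analysis of \eqref{DiE31typical}.

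Next I would extract the leading structure. Using the identity $\pz(U^0)^{-\kk}(0) = \tfrac{\kk}{2}(U^0)^{-\kk+2}(0)\,\pz(U^0)^{-2}(0)$ and the identity \eqref{eq:dzwF} evaluated at $\tau=0$ to rewrite $\pz(w\calF^{-\kk})(0)$ in divergence form as $\tfrac{\kk}{1+\kk}\calF(0)\,w^{-1/\kk}\pz(w^{1/\kk+1}(\calF^{-\kk-1}-1))(0)$ plus the smooth term $\tfrac{\kk}{1+\kk}\calF(0)w^{-1/\kk}\pz(w^{1/\kk+1})(0)$, I split $\calD_i$ of the bracket into a $(U^0)^{-2}(0)$ piece and an elliptic-operator piece. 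On the $(U^0)^{-2}(0)$ piece I commute $\calD_i\pz = \barcalD_{i+1}$ onto $(U^0)^{-2}(0)$ and invoke the highest-order building-block formula \eqref{PbarU0highest} (applied at $\tau=0$), whose top term is exactly $-2(\pt\Theta(0)+\lamt(0)(\Theta(0)+\ze))\calD_{i+1}\pt\Theta(0)$; this produces the first displayed leading term of \eqref{est:DiE32} after collecting the $\tau$-independent coefficients and noting $\lamt(0)=\lam_1$ is absorbed into constants. On the elliptic piece I apply Theorem \ref{thm: pressure estimate} at $\tau=0$ to convert $\calD_i$ of $w^{-1/\kk}\pz(w^{1/\kk+1}(\calF^{-\kk-1}-1))(0)$ (times the smooth prefactor $(U^0)^{-4}(1+\Theta/\ze)^2$ evaluated at $\tau=0$, which I insert and subtract) into $(1+\kk)\calF^{-\kk-2}(U^0)^{-4}(1+\Theta/\ze)^4\calL_i\calD_i\Theta$ at $\tau=0$ plus an $E_{\text{pressure}}(0)$ error; this yields the second displayed leading term of \eqref{est:DiE32} after matching the explicit coefficient $\calC^{E_3}_4$.

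Finally I would gather all remainders into $R_{32}^i$: (i) the lower-order product terms above; (ii) the subleading terms of \eqref{PbarU0highest} at $\tau=0$, which are products of $\le i+1$ derivatives of $\Theta(0),\pt\Theta(0),\ze$ and carry $\lamt(0)^2\barcalD_{i+1}\ze^2$ as an $\calO(1)$ source; (iii) $E_{\text{pressure}}(0)$; (iv) the smooth leftover $w^{-1/\kk}\pz(w^{1/\kk+1})(0)$ and the $\mathfrak R_j(0)$ tails generated when converting to divergence form; plus the outer prefactor difference from inserting $(U^0)^{-4}(1+\Theta/\ze)^2$ at $\tau=0$ versus at general $\tau$, which is a product of a perturbation factor (bounded by $(\calE^N)^{1/2}$ via Lemma \ref{lem: induced bootstrap}) with a quantity controlled at $\tau=0$. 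Estimating each of these in $\|\cdot\|_i$ via Lemmas \ref{lem:nonlinear}, \ref{lem:nonlinear2}, the bootstrap assumptions, and $\|E_{\text{pressure}}(0)\|\lesssim\calE^N(0)$, one obtains $\|R_{32}^i\|_i^2 \lesssim 1 + \calE^N(\tau) + \calE^N(0)$, where the $1$ accounts for the $\calO(1)$ low-frequency source from $\lamt(0)^2\barcalD_{i+1}\ze^2$. The main obstacle I anticipate is the bookkeeping to ensure that at \emph{no} stage more than $i$ derivatives land on $\pt\Theta$ or more than $i+1$ on $\Theta$ without an accompanying $w$-weight — this is exactly the point of rewriting via \eqref{eq:dzwF} and Theorem \ref{thm: pressure estimate} rather than bluntly distributing $\calD_i$, and the reason this lemma is stated separately from $\calD_i E_{34}$; the $\tau=0$ elliptic term $\calL_i\calD_i\Theta(0)$ and acceleration term $\calD_{i+1}\pt\Theta(0)$ are then controlled by $\calE^N(0)$ directly, with no derivative loss.
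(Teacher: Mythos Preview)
Your approach is correct and matches the paper's strategy. The paper organizes the argument slightly differently by first splitting $E_{32}$ itself into three pieces $E_{32}^1 + E_{32}^2 + E_{32}^3$ \emph{before} applying $\calD_i$: the product rule on $\pz[(U^0)^{-\kk}(0)(1+\delta\lam^{-3\kk}(0)w\calF^{-\kk}(0))]$ together with \eqref{eq:dzwF} at $\tau=0$ yields a $(U^0)^{-\kk}(0)$ piece (treated exactly like $E_{31}$), an elliptic piece carrying $w^{-1/\kk}\pz(w^{1/\kk+1}(\calF^{-\kk-1}(0)-1))$ (treated exactly like $E_{33}$, after multiplying and dividing by $(U^0)^{-4}(0)(1+\Theta(0)/\ze)^2$ to match the form in Theorem~\ref{thm: pressure estimate}), and a $ww'$ piece (treated exactly like $E_{34}$). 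Your scheme of commuting $\calD_i$ first via Lemma~\ref{lem:leadingorder} and then decomposing the top bracket is equivalent; the lower-order commutator terms you collect are precisely the analogues of \eqref{DiE31typical} at $\tau=0$.

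One small point to watch: the top term you extract from \eqref{PbarU0highest} applied at $\tau=0$ carries the factor $(\pt\Theta(0)+\lamt(0)(\Theta(0)+\ze))$, whereas the displayed leading term in \eqref{est:DiE32} carries $(\pt\Theta + \lamt(\Theta+\ze))$ at time $\tau$. The discrepancy, multiplied by $w\calD_{i+1}\pt\Theta(0)$, has $\|\cdot\|_i^2$ bounded by $\calE^N(0)$ and is therefore absorbed into $R_{32}^i$; just be explicit about this rather than describing the whole coefficient as ``$\tau$-independent.''
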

The proof of the lemma highly overlaps the strategies used in the study of $\calD_i E_{31}$, $\calD_i E_{33}$, and $\calD_i E_{34}$, so we are content with only sketching the argument here. Note that we can further write
\begin{align*}
    E_{32} &= - \frac{1+\kk}{\kk^3}w(U^0)^{\kk-4}\left(1+\frac\Theta\zeta\right)^2 \calF^{-\kk-1} \barcalG^{-2}\left(1+\delta \lam^{-3\kk}(0) w\calF^{-\kk}(0)\right) \pz\left[(U^0)^{-\kk}(0) \right]\\
    &\quad -\frac{1+\kk}{\kk^3}\delta \lam^{-3\kk}(0)w(U^0)^{\kk-4}(U^0)^{-\kk}(0)\left(1+\frac\Theta\zeta\right)^2 \calF^{-\kk-1} \barcalG^{-2} \pz(w\calF^{-\kk}(0))\\
    &= - \frac{1+\kk}{\kk^3}w(U^0)^{\kk-4}\left(1+\frac\Theta\zeta\right)^2 \calF^{-\kk-1} \barcalG^{-2}\left(1+\delta \lam^{-3\kk}(0) w\calF^{-\kk}(0)\right) \pz\left[(U^0)^{-\kk}(0) \right]\\
    &\quad -\frac{\delta \lam^{-3\kk}(0)}{\kk^2}w(U^0)^{\kk-4}(U^0)^{-\kk+4}(0)\left(1+\frac\Theta\zeta\right)^2\left(1+\frac{\Theta(0)}{\zeta}\right)^{-2} \calF^{-\kk-1} \calF(0) \barcalG^{-2}\\
    &\quad\times\left[(U^0)^{-4}(0)\left(1+\frac{\Theta(0)}{\zeta}\right)^{2}\frac{1}{w^{\frac1\kk}}\pz\left(w^{\frac1\kk + 1}(\calF^{-\kk-1}(0) - 1)\right)\right]\\
    &\quad - \frac{\delta \lam^{-3\kk}(0)}{\kk^2}ww' (U^0)^{\kk-4}(U^0)^{-\kk}(0)\left(1+\frac{\Theta}{\zeta}\right)^{2} \calF^{-\kk-1} \calF(0)\barcalG^{-2}\\
    &=: \sum_{j=1}^3 E_{32}^j.
\end{align*}
Note that we used the identity \eqref{eq:dzwF} in the second equality above. Then we decompose $E_{32}^1$ (respectively $E_{32}^2$) in a similar way to $E_{31}$ (respectively $E_{33}$). This procedure gives the first two lines of \eqref{est:DiE32} plus an error term obeying a bound identical to the right-hand-side of \eqref{est:R32}. The term $E_{32}^3$ can be treated in almost the same way as $E_{34}$, and is also bounded by the right-hand-side of \eqref{est:R32}. Lemma \ref{lem:DiE32} then follows.

We conclude this section by proving the main Proposition \ref{prop:DiE3}.
\begin{proof}[Proof of Proposition \ref{prop:DiE3}]
    The proposition holds after combining Lemma \ref{lem:DiE31}, Lemma \ref{lem:DiE32}, Lemma \ref{lem:DiE33}, and Lemma \ref{lem:DiE34}.
\end{proof}

%%%%%%%%%%%%%%%%%%%%%%%%%%%%%%%%%%%
\subsection{Estimates for $\calD_i E_2$}
In this section, we aim to prove the following estimate:
\begin{prop}
    \label{prop:DiE2}
    The following estimate holds for any $\tau \in [0,\tau_*]$:
    \begin{equation}
        \label{est:DiE2}
        \|\calD_iE_2\|_i^2 \lesssim 1 + \calE^N(\tau).
    \end{equation}
\end{prop}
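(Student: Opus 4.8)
The plan is to estimate $\calD_i E_2$ by exploiting the structure of $E_2$, which is the most benign among the terms $E_1,\hdots, E_6$ in that it involves \emph{no} time derivatives and \emph{no} spatial derivatives of $\calF$ at the top order, so there is no danger of derivative loss. Recall that $E_2 = (U^0)^{-4}\left(1+\frac{\Theta}{\ze}\right)^2\frac{1}{w^{\frac1\kk}}\pz(w^{\frac1\kk + 1})\calG^\kk$. First I would rewrite the weight factor using Lemma \ref{lem:wderivative}: since $\pz(w^{\frac1\kk+1}) = \frac{1+\kk}{\kk}w^{\frac1\kk}w'$, we have $\frac{1}{w^{\frac1\kk}}\pz(w^{\frac1\kk+1}) = \frac{1+\kk}{\kk}w'$, so that
\begin{equation*}
E_2 = \frac{1+\kk}{\kk}w'(U^0)^{-4}\left(1+\frac{\Theta}{\ze}\right)^2\calG^\kk.
\end{equation*}
Writing $w' = \ze \cdot \frac{w'}{\ze}$ and recalling $\frac{w'}{\ze}$ is a smooth function on $[0,1]$ (again by Lemma \ref{lem:wderivative}), I would then apply the product rule \eqref{Pproduct} to $\calD_i E_2$, distributing the $i$ derivatives among the factors $w'$ (or rather $\frac{w'}{\ze}$ and $\ze$), $(U^0)^{-4}$, $\left(1+\frac{\Theta}{\ze}\right)^2$, and $\calG^\kk = \frac1\kk \barcalG^{-1}$.

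Next, I would invoke the building-block expansions to handle each differentiated factor: Proposition \ref{prop:buildingblockpowers} expresses $\bar P_{j}(U^0)^{-4}$ as a linear combination of terms of types \eqref{PbarU0type1}, \eqref{PbarU0type2}, \eqref{PbarU0type3} — schematically products of $\calP$-derivatives of $\Theta$ and $\pt\Theta$, possibly with a source factor $\lamt^2(W_j\ze^2)$; Proposition \ref{prop:buildingblockpowersG} (combined with the chain rule \eqref{eq:chainrule}) expresses $\bar P_j\barcalG^{-1}$, hence $\bar P_j \calG^\kk$, as a linear combination of terms of types \eqref{PbarGtype1}, \eqref{PbarGtype2}, which after using bounds on initial data and bootstrap assumptions $U^0,\calF,\kk\barcalG\approx 1$ again reduce to products of $\calP$-derivatives of $\Theta,\pt\Theta$, possibly carrying a $\delta\lam^{-3\kk}$ factor or a $\lamt^2\ze^2$ source; and $\left(1+\frac\Theta\ze\right)^2$ differentiated is handled directly by \eqref{Pbarproduct}. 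The key observation, exactly as in the analyses of $\barcalD_{i-1}\mathfrak{M}_{11}$ and $\calD_i E_{31}$ in the $l_3=0$ case, is that after this expansion every resulting term is a product $\prod_j (P_{l_j}H^j(\Theta))$ with $P_{l_j}\in\calP_{l_j}$, $\sum_j l_j \le i$, times a bounded (or $\delta\lam^{-3\kk}$-small) coefficient, plus low-frequency source terms. Crucially, no factor $\Dz\pt\Theta$ or $\pz^2\Theta$-type object with the full $i+1$ derivatives ever appears here without an accompanying $w$-weight, precisely because $E_2$ contains no $\pt\calF$ and no second-order elliptic piece; hence there is no weight-loss issue of the kind that forced the more delicate treatment of $\mathfrak{M}_{31}$ or $E_4$.

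I would then estimate the $\|\cdot\|_i^2$ norm of each resulting term. For the genuinely nonlinear terms (those made purely of perturbation variables $\Theta,\pt\Theta$ and their $\calP$-derivatives), a direct application of Lemma \ref{lem:nonlinear} — with parameters $(J_1,J_2,i_1,i_2)$ read off from the number and order of factors, and using $a_k\le N$ whenever $H^k(\Theta)=\pt\Theta$, which holds because at most $i\le N+1$ but in fact $\le N$ derivatives land on $\pt\Theta$-type factors after the product-rule bookkeeping — yields a bound $\lesssim (\delta\lam^{-3\kk})^K(\calE^N)^{J_1+J_2}\lesssim \calE^N(\tau)$ using $\calE^N\lesssim 1$ in the bootstrap regime. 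For the low-frequency source terms, which arise when only $\lesssim 2$ derivatives fall on the variable factors so that a $\lamt^2(W_j\ze^2)$ or a $\left((U^0)^{-\kk}(0)(1+\delta\lam^{-3\kk}(0)w\calF^{-\kk}(0))\right)$ piece survives undifferentiated, one gets a term bounded by an $\calO(1)$ constant (since $\bar P_j\ze^2$, $\frac{w'}{\ze}$ are smooth, $\lamt\lesssim 1$, and the initial-data factors are bounded by $1+\calE^N(0)\lesssim 1$), contributing the $1$ on the right-hand side of \eqref{est:DiE2}. Summing all contributions gives $\|\calD_i E_2\|_i^2\lesssim 1+\calE^N(\tau)$, as claimed.

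The main obstacle, though a mild one compared with $E_1,E_3,E_4$, is purely bookkeeping: one must verify that in every term produced by the product rule the total number of $\ze$-derivatives on any single $\pt\Theta$ factor never exceeds $N$ and on any single $\Theta$ factor never exceeds $N+1$, so that Lemma \ref{lem:nonlinear} is applicable, and one must carefully identify all the places where a source term of $\calO(1)$ size can arise (these are exactly the configurations where the $(U^0)^{-4}$ or $\calG^\kk$ factor contributes a type-\eqref{PbarU0type3} or type-\eqref{PbarGtype1b}/\eqref{PbarGtype1c} piece and the remaining derivative budget is $\le 2$). Because $E_2$ has the simplest structure among the $E_j$'s — no $\pt$, no top-order $\pz\calF$ — this verification is routine and mirrors the $l_3=0$ analysis of $\calD_i E_{31}$ almost verbatim; I would therefore present it in condensed form, referring back to the detailed term-by-term estimates already carried out for $\barcalD_{i-1}\mathfrak{M}_{11}$ and $\calD_i E_{31}$ rather than repeating them.
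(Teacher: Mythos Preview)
Your proposal is correct and follows essentially the same strategy as the paper. The only organizational difference is that the paper explicitly expands $\left(1+\frac{\Theta}{\ze}\right)^2 = 1 + 2\frac{\Theta}{\ze} + \frac{\Theta^2}{\ze^2}$ and splits $E_2 = E_{21}+E_{22}+E_{23}$ accordingly: this isolates the $\calO(1)$ source term in $E_{21}$, lets the paper pair one $\ze^{-1}$ with $w'$ to form $\frac{w'}{\ze}$ in $E_{22},E_{23}$, and invokes the special product rule \eqref{eq:thetasquare} on $\frac{\Theta^2}{\ze}$ so that both $\Theta$-factors carry $\calP$-derivatives of order $\le i$. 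Your approach of keeping $\left(1+\frac{\Theta}{\ze}\right)^2$ as one factor and relying on \eqref{Pbarproduct} together with the $B_kP_1\Theta$ slot (with $P_1=\ze^{-1}$) in Lemma~\ref{lem:nonlinear} is equivalent; the paper's decomposition simply makes the derivative bookkeeping and the origin of the $\calO(1)$ source more transparent.
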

From a straightforward computation, we observe that 
\begin{align*}
  E_2 &= \frac{1+\kk}{\kk^2}w'(U^0)^{-4}\left(1+ 2\frac{\Theta}{\ze} + \frac{\Theta^2}{\ze^2}\right)\barcalG^{-1}\\
    &=  \frac{1+\kk}{\kk^2}\left[w'(U^0)^{-4}\barcalG^{-1} + 2\frac{w'}{\ze}(U^0)^{-4}\Theta \barcalG^{-1} + \frac{w'}{\ze}(U^0)^{-4}\frac{\Theta^2}{\ze} \barcalG^{-1}\right]\\
    &=:\frac{1+\kk}{\kk^2}(E_{21} + E_{22} + E_{23}).
\end{align*}
Then it suffices for us to estimate $E_{2j}$, $j = 1,2,3$ separately. 
\subsubsection{Estimates for $\calD_i E_{21}$}
Upon invoking the product rule \eqref{Pproduct}, we may represent $\calD_i E_{21}$ as a linear combination of:
\begin{equation}
    \label{DiE21}
    (A_1 w')(A_2 (U^0)^{-4})(A_3 \barcalG^{-1}),
\end{equation}
where $A_1 \in \calP_{l_1}$, $A_{2,3} \in \barcalP_{l_{2,3}}$, $l_1 + l_2 + l_3 = i$. After a further application of chain rule and \eqref{PbarU0}, we see that \eqref{DiE21} can be bounded by
\begin{equation}
    \label{DiE21aux1}
    \left|(A_1 w') (U^0)^{-4+2k_2}\barcalG^{-1-k_3}\prod_{\substack{\calA_j \in \calP_{i_j},\\ \sum_{j=1}^{m_2} i_j \le l_2}}(\calA_j H^j(\Theta))\prod_{\substack{A^{j}_3 \in \barcalP_{i_j},\\\sum_{j=1}^{k_3}i_j = l_3}} A^j_3\barcalG\right|,
\end{equation}
where $k_2, m_2, k_3 \ge 0$, $H^j(\Theta) = \pt\Theta$ or $\Theta$, and we may further decompose $A_3^j \barcalG$ according to \eqref{PbarGtype1} and \eqref{PbarGtype2}. Note that if $l_3 = 0$ (thus $k_3 = 0$) and $m_2 = 0$, \eqref{DiE21aux1} produces a source error after invoking the bootstrap assumptions, namely $|\eqref{DiE21aux1}| \lesssim 1$. If $l_3 = 0$ and $m_2 > 0$, then a routine application of Lemma \ref{lem:nonlinear} yields the bound $\|\eqref{DiE21aux1}\|_i^2 \lesssim \calE(\tau)$. 

Thus, it suffices for us to consider $l_3 > 0$. For the sake of simplicity, we only consider the case where $k_3 = 1$, as other case would follow from a systematic use of Lemma \ref{lem:nonlinear}. If $A_3^1\barcalG$ contributes to \eqref{DiE21aux1} by terms of type \eqref{PbarGtype1}, then it follows that we can obtain the following bound:
\begin{align*}
    \|\eqref{DiE21aux1}\|_i^2 &\lesssim \int_0^1 w^{\frac1\kk + i} \ze^2 |A_1w'|^2 |\tilde P w|^2 \prod_{\substack{W_j \in \calP_{i_j},\\\sum_{j=1}^{\bar m_1} i_j = n_1^1}}|W_j H^j(\Theta)(0)|^2\cdot \prod_{\substack{V_j \in \barcalP_{i_j},\\\sum_{j=1}^{\bar m_2}i_j = n_1^3}}|V_j\Dz\Theta(0)|^2\\
            &\times\prod_{\substack{\calA_j \in \calP_{i_j},\\ \sum_{j=1}^{M}i_j \le l_2 + n_2}}|\calA_j H^j(\Theta)|^2 d\ze,
\end{align*}
where $M,\bar m_1, \bar m_2 \ge 0$, $n_1 + n_2 = l_3$, $n_1^1 + n_1^2 + n_1^3 = n_1$, $\tilde P \in \barcalP_{n_1^2}$. By Lemma \ref{lem:nonlinear}, the above integral can be bounded by $(1 + \calE^N(\tau))(1+\calE^N(0)) \lesssim 1 + \calE^N(\tau)$. Note that the $\calO(1)$ source error is generated, for example, when $M, \bar m_1, \bar m_2 = 0$.

In the case where $A_3^1 \barcalG$ contributes to \eqref{DiE21aux1} by terms of type \eqref{PbarGtype2}, we focus on the term of type \eqref{PbarGtype2a}. In this scenario, after invoking bootstrap assumptions, we see that
\begin{equation}\label{DiE21aux2}
    \|\eqref{DiE21aux1}\|_i^2 \lesssim  |\delta \lam^{-3\kk}|^2\int_0^1 w^{\frac1\kk + i}\ze^2|\bar Pw|^2\prod_{\substack{\calA_j \in \calP_{i_j},\\ \sum_{j=1}^{m_2} i_j \le l_2}}|\calA_j H^j(\Theta)|^2 \cdot \prod_{\substack{W_j \in \bar P_{i_j},\\\sum_{j=1}^k i_j = n_2}} |W_j\Dz\Theta|^2 d\ze,
\end{equation}
where $\bar P \in \barcalP_{n_1}$, $n_1 + n_2 = l_3$, and $k = 1,\hdots, l_3$. We distinguish \eqref{DiE21aux2} into two separate cases. If $n_1 = 0$, then $w^{\frac1\kk + i}|\bar P w|^2 = w^{\frac1\kk + i +2}$. We thus have enough weights and can invoke Lemma \ref{lem:nonlinear} with $(J_1,J_2,i_1,i_2) = (m_2, k, l_2, n_2+1)$ to obtain the bound $\|\eqref{DiE21aux1}\|_i^2 \lesssim |\delta \lam^{-3\kk}|^2\calE^N(\tau)$. If $n_1 > 0$, we have $n_2 \le l_3 \le i-1$. Then we can again use Lemma \ref{lem:nonlinear} with $(J_1,J_2,i_1,i_2) = (m_2, k, l_2, n_2+1)$ to obtain the bound $\|\eqref{DiE21aux1}\|_i^2 \lesssim |\delta \lam^{-3\kk}|^2\calE^N(\tau)$. 

Based on the discussions above, we conclude that
\begin{equation}
    \label{est:DiE21}
    \|D_i E_{21}\|_{i}^2 \lesssim 1 + \calE^N(\tau).
\end{equation}

\subsubsection{Estimates for $\calD_i E_{22}$ and $\calD_i E_{23}$}
Upon invoking the product rule \eqref{Pproduct}, we may write $\calD_i\calD_{22}$ 
as a linear combination of:
\begin{equation}
    \label{DiE22}
    \left(A_1\frac{w'}{\ze}\right)(A_2 (U^0)^{-4})(A_3 \barcalG^{-1})(A_4 \Theta),
\end{equation}
where $A_j \in \barcalP_{l_j}$, $j = 1,2,3$, $A_4 \in \calP_{l_4}$, and $\sum_{j=1}^4 l_j = i$. Then by further application of \eqref{PbarU0} and chain rule, we have that \eqref{DiE22} is bounded by
$$
\left|\left(A_1 \frac{w'}{\ze}\right) (U^0)^{-4+2k_2}\barcalG^{-1-k_3}\prod_{\substack{\calA_j \in \calP_{i_j},\\ \sum_{j=1}^{m_2+1} i_j \le l_2+l_4}}(\calA_j H^j(\Theta))\prod_{\substack{A^{j}_3 \in \barcalP_{i_j},\\\sum_{j=1}^{k_3}i_j = l_3}} A^j_3\barcalG\right|,
$$
where $k_2, m_2, k_3 \ge 0$, $H^j(\Theta) = \pt\Theta$ or $\Theta$, and we may further decompose $A_3^j \barcalG$ according to \eqref{PbarGtype1} and \eqref{PbarGtype2}. Note that the above expression bears the same structure as \eqref{DiE21aux1}, except that the product of $\calA_j H^j(\Theta)$ is at least linear in $\Theta$. Then by a similar analysis to that the previous subsection, we may obtain the bound
\begin{equation}
    \label{est:DiE22}
    \|D_i E_{22}\|_{i}^2 \lesssim \calE^N(\tau).
\end{equation}
As for $\calD_iE_{23}$, we use the product rule \eqref{Pproduct} in conjunction with the special product rule \eqref{eq:thetasquare} to write $\calD_{i} E_{23}$ as a linear combination of:
\begin{equation}
    \label{DiE23}
    \left(A_1\frac{w'}{\ze}\right)(A_2 (U^0)^{-4})(A_3 \barcalG^{-1})(A_4 \Theta)(A_5 \Theta),
\end{equation}
where $A_j \in \barcalP_{l_j}$, $j = 1,2,3$, $A_{4,5} \in \calP_{l_{4,5}}$, $\sum_{j=1}^5 l_j = i + 1$, and $l_j \le i$ for all $j$. 
Note that the highest amount of derivatives in the above expression does not exceed $i$, and it is quadratic in $\Theta$. Thus, we may still invoke the same argument as the one in the previous subsection to deduce the following bound:
\begin{equation}
    \label{est:DiE23}
    \|D_i E_{23}\|_{i}^2 \lesssim (\calE^N(\tau))^2.
\end{equation}
The main result of this section (cf. Proposition \ref{prop:DiE2}) then follows from \eqref{est:DiE21}, \eqref{est:DiE22}, and \eqref{est:DiE23}.
%%%%%%%%%%%%%%%%%%%%%%%%%%%%%%%%%%%
\subsection{Estimates for $\calD_iE_5$ and $\calD_iE_6$}
The errors $E_5$ and $E_6$ share almost identical structure, so we estimate them together. The main goal of this section is proving the following Proposition:
\begin{prop}
    \label{prop:DiE5+6}
    The following estimate holds for any $\tau \in [0,\tau_*]$:
    \begin{equation}
        \label{est:DiE5+6}
        \|\calD_iE_5\|_i^2 + \|\calD_iE_6\|_i^2 \lesssim 1 + \calE^N(\tau).
    \end{equation}
\end{prop}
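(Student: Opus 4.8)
\textbf{Proof strategy for Proposition \ref{prop:DiE5+6}.} Recall that
\[
E_5 = -3(1+\kk)\lamt (U^0)^{-2}(\pt\Theta + \lamt(\Theta+\zeta))w\calF^{-\kk}\calG^\kk,\qquad
E_6 = -(1+\delta\lam^{-3\kk}w\calF^{-\kk}\calG^\kk)\zeta.
\]
Since $\calG^\kk = \tfrac1\kk\barcalG^{-1}$ and $|\kk\barcalG - 1|\le \tfrac1{100}$, $|\calF-1|\le\tfrac1{100}$ by \eqref{bootstrap FG}, and $U^0 \approx 1$ by \eqref{U0k bd}, both errors factor, up to harmless smooth coefficients that are uniformly bounded together with all their $\calP$-derivatives, into a product of the quantities $\Theta$, $\pt\Theta$, $w$ (or $w'/\zeta$, $\zeta$), $(U^0)^{-2}$, $\calF^{-\kk}$, $\barcalG^{-1}$, and the scalar $\lamt$ (which by Lemma \ref{lem:lambdaODE} is bounded). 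The plan is therefore to apply $\calD_i$ via the product rule \eqref{Pproduct}/\eqref{Pbarproduct}, expand each factor using the building-block decompositions of Section \ref{sect:buildingblock} (Proposition \ref{prop:buildingblock} for $(U^0)^{-2}$, Proposition \ref{prop:buildingblockpowers} for $\calF^{-\kk}$ and powers of $U^0$, Proposition \ref{prop:buildingblockpowersG} for $\barcalG$), and then bound the resulting linear combinations of products of the form $\prod_j P_{l_j}H^j(\Theta)$ (times smooth functions, possibly times $w$, possibly times a single explicit source factor like $\lamt^2\bar P_{i}\zeta^2$ or $\bar P w$) by $\calS^N$-free versions of Lemma \ref{lem:nonlinear} — exactly as was done for $E_2$ in Proposition \ref{prop:DiE2}.

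Concretely, first I would handle $\calD_i E_6$: the term $\calD_i \zeta$ contributes a smooth source, giving the $\calO(1)$ contribution, and $\calD_i(\delta\lam^{-3\kk}w\calF^{-\kk}\calG^\kk\,\zeta)$ is treated by distributing $\calD_i$ over $w\zeta$, $\calF^{-\kk}$, and $\barcalG^{-1}$; after invoking Proposition \ref{prop:buildingblockpowers}, Proposition \ref{prop:buildingblockpowersG} and chain rule one is reduced to integrals of exactly the type appearing in \eqref{DiE21aux1}–\eqref{DiE21aux2}, so Lemma \ref{lem:nonlinear} gives $\|\calD_i E_6\|_i^2 \lesssim 1 + \calE^N(\tau)$, the $\calO(1)$ piece originating (as for $E_{21}$) when no derivative falls on a perturbation variable and all of them land on $w$, $\calF(0)$, $\calG(0)$. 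Then for $\calD_i E_5$: the only structural novelty compared to $E_2$ is the presence of the factor $\pt\Theta + \lamt(\Theta+\zeta)$. Writing this as $(\pt\Theta+\lamt\Theta) + \lamt\zeta$, the $\pt\Theta+\lamt\Theta$ piece is at least linear in the perturbation and enjoys the same bound as $\calD_i E_{22}/\calD_i E_{23}$ (purely in $\calE^N$), while the $\lamt\zeta$ piece behaves like $\calD_i E_{21}$ and contributes the $\calO(1)$ source error. In every subcase one either has the extra $w$-weight available (when a top-order $\calP_{i+1}$-operator hits $\Theta$, which here cannot happen since $E_5$ carries the weight $w$ and the total derivative budget is $i$, so at worst $\calP_i$ acts on any single factor) or enough spare weights because $\sum l_j \le i$, exactly as in the proof of Proposition \ref{prop:DiE2}.

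The main point to check carefully — and the only place any thought is required — is the bookkeeping of source errors: one must verify that the explicit non-decaying, non-perturbative factors produced by \eqref{PbarU0} (the term $\lamt^2\bar P_i\zeta^2$) and by \eqref{PbarGtype1}, \eqref{PbarGtype1b}, \eqref{PbarGtype2a} (the terms $\bar P w$, $\bar P w'$ evaluated at $\tau=0$, and $(U^0)^{-\kk}(0)$, $\calF^{-\kk}(0)$) are always accompanied by at least the base weight $w^{\frac1\kk+i}\zeta^2$ after forming the $\|\cdot\|_i$-norm, so that the corresponding integral is finite and bounded by a constant depending only on $\kk,\bar\lam,N$ and on $\calE^N(0)\lesssim\eps\lesssim 1$. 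Since $w$, $w'$, $\bar P w$ are smooth on $[0,1]$ by Lemma \ref{lem:wderivative}, $\zeta^2$ and all $\bar P\zeta^2$ are smooth, and $\lamt$, $(U^0)^{\pm\kk}(0)$ are bounded, no divergence arises; this produces precisely the harmless $1$ on the right-hand side of \eqref{est:DiE5+6}. All other terms carry at least one genuine perturbation factor $H^j(\Theta)$ and are thus bounded by $\calE^N(\tau)$ (or a higher power, absorbed since $\calE^N\lesssim 1$), with a gain of $\delta\lam^{-3\kk}$ whenever a spare $w^{\frac1\kk}$ is available — a gain we do not even need here. Combining the two bounds gives \eqref{est:DiE5+6}. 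I do not expect any genuine obstacle: the estimate is of ``low-frequency error'' type and is strictly easier than the derivative-loss-prone estimates for $E_1, E_3, E_4$ already completed.
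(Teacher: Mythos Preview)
Your proposal is correct and follows essentially the same approach as the paper: split $\pt\Theta + \lamt(\Theta+\zeta)$ into the source piece $\lamt\zeta$ (giving $E_{51}$, which produces the $\mathcal{O}(1)$ error) and the perturbative piece $\pt\Theta+\lamt\Theta$ (giving $E_{52}$, bounded purely by $\calE^N$), distribute $\calD_i$ via the product rules, expand $(U^0)^{-2}$, $\calF^{-\kk}$, $\barcalG^{-1}$ with Propositions \ref{prop:buildingblock}--\ref{prop:buildingblockpowersG}, and close with Lemma \ref{lem:nonlinear}; the paper phrases the bookkeeping by analogy with the $E_{41}$ templates \eqref{R4112type1}--\eqref{R4112type2} rather than the $E_2$ templates you cite, but the content is identical. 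One small correction: your parenthetical ``a top-order $\calP_{i+1}$-operator\dots cannot happen'' is not quite right---when all $i$ derivatives land on $\calF^{-\kk}$ you do get a factor $\bar P_i\Dz\Theta$ with $\bar P_i\in\barcalP_i$, which is of order $i+1$ on $\Theta$; the point is that in this case $l_1=0$, so the undifferentiated $w$ from $E_5$ (or $w\zeta$ from $E_6$) survives and supplies exactly the extra weight needed for Lemma \ref{lem:nonlinear} at level $i+1$.
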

To prove this proposition, it is helpful to decompose $E_5$ in the following fashion:
\begin{equation}
    \label{E5decomp}
    \begin{split}
    E_5 &= -3(1+\kk)\lamt \left[(\lamt\ze)w(U^0)^{-2}\calF^{-\kk}\calG^\kk + (\pt\Theta + \lamt \Theta)w(U^0)^{-2}\calF^{-\kk}\calG^\kk\right]\\
    &= -3(1+\kk)\lamt (E_{51} + E_{52}).
    \end{split}
\end{equation}
\subsubsection{Estimates for $\calD_i E_{51}$}
After an application of the product rule \eqref{Pproduct}, we may realize $\calD_i E_{51}$ as a linear combination of terms in the following form:
\begin{equation}
    \label{DiE51}
    A_1(w\ze)A_2 (U^0)^{-2}A_3 \calF^{-\kk} A_4\barcalG^{-1},
\end{equation}
where $A_1 \in \calP_{l_1}$, $A_{2,\hdots, 4} \in \barcalP_{l_2,\hdots,l_4}$, and $l_1 + \hdots + l_4 = i$. After further invoking Proposition \eqref{prop:buildingblock} and Proposition \eqref{prop:buildingblockpowers}, we may further break down \eqref{DiE51} into the following two types of terms:
    \begin{equation}
        \label{DiE51type1}
        (A_1(w\ze))\calF^{-\kk-k_3}\barcalG^{-1-k_4}\prod_{\substack{\calA_j \in \calP_{i_j},\\ \sum_{j=1}^{m_2}i_j = l_2}}(\calA_j H^j(\Theta))\cdot \prod_{\substack{A_3^j \in \barcalP_{i_j}\\\sum_{j=1}^{m_3}i_j = l_3}}(A_3^j\Dz\Theta)\cdot \prod_{\substack{A_4^j \in \barcalP_{i_j},\\\sum_{j=1}^{k_4}i_j = l_4}}(A_4^j\barcalG),
    \end{equation}
    for $m_2 = 1, 2$, and 
    \begin{equation}
        \label{DiE51type2}
        (\lamt^2 A_2\ze^2)(A_1(w\ze))\calF^{-\kk-1-k_3}\barcalG^{-1-k_4} \prod_{\substack{A_3^j \in \barcalP_{i_j}\\\sum_{j=1}^{m_3}i_j = l_3}}(A_3^j\Dz\Theta)\cdot \prod_{\substack{A_4^j \in \barcalP_{i_j},\\\sum_{j=1}^{k_4}i_j = l_4}}(A_4^j\barcalG).
    \end{equation}
    Here, $k_3 = 1,\hdots, l_3$ and $k_4 = 1,\hdots, l_4$ when $l_3, l_4 \ge 1$. Note that \eqref{DiE51type1} and \eqref{DiE51type2} bear a very similar structure to \eqref{R4112type1} and \eqref{R4112type2}. In fact, an almost identical analysis to that treating \eqref{R4112type1} would yield the following bound:
    \begin{equation}
        \label{est:DiE51type1}
        \|\eqref{DiE51type1}\|_i^2 \lesssim \calE^N(\tau)(1+\calE^N(0)).
    \end{equation}
    As for the estimate for \eqref{DiE51type2}, we note that low frequency source terms appear (for example, when $l_2 \le 2$, $l_3 = l_4 = 0$). Therefore, we are only able to obtain the following bound:
    \begin{equation}
        \label{est:DiE51type2}
        \|\eqref{DiE51type2}\|_i^2 \lesssim (1+\calE^N(\tau))(1+\calE^N(0)).
    \end{equation}
    Summarizing \eqref{est:DiE51type1}, \eqref{est:DiE51type2}, \eqref{DiE51}, and appealing to the bootstrap assumption \eqref{bootstrap EN} as well as the bound on initial data that $\calE^N(0) < 1$ in the bootstrap horizon, we conclude that
    \begin{equation}
        \label{est:DiE51}
        \|\calD_iE_{51}\|_{i}^2 \lesssim 1 + \calE^N(\tau).
    \end{equation}
\subsubsection{Estimates for $\calD_i E_{52}$}
Similarly, we may apply product rule \eqref{Pproduct} to write $\calD_iE_{52}$ as a linear combination of 
\begin{equation}
    \label{DiE52}
    A_1(w)A_2 (U^0)^{-2}A_3 \calF^{-\kk} A_4\barcalG^{-1} A_5H(\Theta),
\end{equation}
where $A_{1,\hdots, 4} \in \barcalP_{l_1,\hdots,l_4}$, $A_5 \in \calP_{l_5}$, and $l_1 + \hdots + l_5 = i$. Moreover, $H(\Theta) = \pt\Theta$ or $\Theta$. Again by a similar argument to that estimating \eqref{R4112type1} and \eqref{R4112type2}, we conclude the following bound:
\begin{equation}
    \label{est:DiE52}
    \|\calD_iE_{52}\|_{i}^2 \lesssim \calE^N(\tau).
\end{equation}

\subsubsection{Estimates for $\calD_i E_6$}
Using the product rule \eqref{Pproduct}, we may write
\begin{equation}
    \label{DiE6}
    \calD_i E_6 = -\calD_i\ze - \delta \lam^{-3\kk}\sum_{\substack{A_1 \in \calP_{l_1}, A_{2,3} \in \barcalP_{l_{2,3}},\\\sum_{j=1}^3 l_j = i}}c^{l_1,l_2,l_3}_i(A_1 (w\ze))(A_2\calF^{-\kk})(A_3 \calG^\kk).
\end{equation}
Then by the fact that $\calD_i\ze = 3$ for $i = 1$ and $\calD_i\ze = 0$ for $i \ge 2$, and via a similar analysis to \eqref{DiE51} to treat the second term in \eqref{DiE6}, we obtain the following bound:
\begin{equation}
    \label{est:DiE6}
    \|\calD_i E_6\|_i^2 \lesssim 1 + \calE^N(\tau),
\end{equation}
where we also used that $\calE^N(0) < 1$ and $\delta \lam^{-3\kk} < 1$ for all $\tau \ge 0$.

Therefore, Proposition \ref{prop:DiE5+6} follows from \eqref{est:DiE51}, \eqref{est:DiE52}, and \eqref{est:DiE6}.

%%%%%%%%%%%%%%%%%%%%%%
\section{Energy Estimates IV: Estimates for Lagrangian Acceleration} \label{sect: Energy Estimate 4}
In this section, we develop suitable estimates for $S^N$, which control spatial derivatives of the Lagrangian acceleration $\pt^2\Theta$ up to the $(N-1)$--th order. This quantity will be used in conjunction with the norm $E^N$ to close our bootstrap argument. The key result that we prove in this section is the following:
\begin{prop}
    \label{prop:SN}
    Let $(\Theta,\pt\Theta)$ be the solution to \eqref{eq:momlagmainfull} on $[0,\tau_*]$ verifying the bootstrap assumptions. Suppose that $\eps \le \eps'$ (where $\eps'$ is chosen in Lemma \ref{lem: induced bootstrap}), $\delta \le \eps^\frac12$, and time instances $\tau_1,\tau_2$ verifying $0\le \tau_1 \le \tau_2 \le \tau_*$. Then the following estimate holds:
    \begin{equation}
        \label{est:SN}
        S^N(\tau_2;\tau_1) \lesssim \delta^{\frac32}(\calE^N(\tau_1) + \calE^N(\tau_2) + \calE^N(0)) + \delta^2 + \delta\int_{\tau_1}^{\tau_2} \lam^{-3\kk} \calE^N(\tau) d\tau.
    \end{equation}
\end{prop}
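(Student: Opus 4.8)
\textbf{Proof strategy for Proposition \ref{prop:SN}.} The plan is to extract $\pt^2\Theta$ from the main equation \eqref{eq:highorder momlagmain} and estimate it directly in $L^2_{\tau,\ze}$, exploiting the structurally favorable coefficient $\delta^{-1}\lam^{3\kk}$ in front of $\pt^2\Theta + \lamt\pt\Theta$. Concretely, for each $i \le N-1$, I would rewrite \eqref{eq:highorder momlagmain} as
\begin{equation*}
\left(1+\delta \lam^{-3\kk}w\calF^{-\kk}\calG^\kk\right)\delta^{-1}\lam^{3\kk}\calD_i\pt^2\Theta = -\lam^{3\kk}\delta^{-1}\lamt\left(1+\delta \lam^{-3\kk}w\calF^{-\kk}\calG^\kk\right)\calD_i\pt\Theta - \mathcal{R}_i,
\end{equation*}
where $\mathcal{R}_i$ collects the remaining terms: $\left(1+\delta \lam^{-3\kk}w\calF^{-\kk}\calG^\kk\right)\calD_i\Theta$, the elliptic term $(1+\kk)\calF^{-\kk-2}(U^0)^{-4}(1+\Theta/\ze)^4\calL_i\calD_i\Theta$, the commutators $\mfC_j$, the terms $\calD_i\mfR_j(\Theta)$, $\barcalD_{i-1}\mathfrak{M}$, and the forcing terms $\calD_i E_j$. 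Multiplying through by $\delta\lam^{-3\kk}\left(1+\delta \lam^{-3\kk}w\calF^{-\kk}\calG^\kk\right)^{-1}$ and using the bootstrap assumptions (so that $(1+\delta\lam^{-3\kk}w\calF^{-\kk}\calG^\kk)\approx 1$), one obtains $\|\calD_i\pt^2\Theta\|_i \lesssim \lamt \|\calD_i\pt\Theta\|_i + \delta\lam^{-3\kk}\|\mathcal{R}_i\|_i$. Squaring, integrating in $\tau$ over $[\tau_1,\tau_2]$, and summing over $0 \le i \le N-1$ then yields the desired bound, \emph{provided} the various pieces of $\|\mathcal{R}_i\|_i$ are suitably controlled.

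The key point is that every term in $\delta\lam^{-3\kk}\mathcal{R}_i$ carries either an explicit factor of $\delta\lam^{-3\kk}$ or is itself controlled by $\lam^{-3\kk}$ times energy-type quantities. The zeroth-order term $\delta\lam^{-3\kk}\|\calD_i\Theta\|_i^2 \lesssim \delta\lam^{-3\kk}\calE^N$ integrates to something bounded by $\delta\int_{\tau_1}^{\tau_2}\lam^{-3\kk}\calE^N\,d\tau$. The elliptic term contributes $\delta^2\lam^{-6\kk}\|\calL_i\calD_i\Theta\|_i^2$; here the crucial structural fact is that $\|w^{1+\frac1\kk+i}$-weighted second derivatives, i.e. $\calL_i\calD_i\Theta$ written in divergence form against $\calD_{i+1}\Theta$, are bounded by $\calE^N$ through the definition of $\calE_i$, so this term is $\lesssim \delta^2\lam^{-6\kk}\calE^N$, whose $\tau$-integral is controlled by $\delta^2 + \delta\int\lam^{-3\kk}\calE^N$. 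The damping term $\lamt\|\calD_i\pt\Theta\|_i^2$: since $\|\calD_i\pt\Theta\|_i^2 = \delta\lam^{-3\kk}\cdot\delta^{-1}\lam^{3\kk}\|\calD_i\pt\Theta\|_i^2 \lesssim \delta\lam^{-3\kk}\calE^N$, and $\lamt \lesssim 1$, its $\tau$-integral is again $\lesssim \delta\int\lam^{-3\kk}\calE^N$. The commutators and $\mfR$-terms: by Proposition \ref{prop:est M}, Proposition \ref{prop:est R}, Lemma \ref{prop:est C12}, and Proposition \ref{prop:est C} we have $\|\barcalD_{i-1}\mathfrak{M}\|_i^2 + \sum_j\|\calD_i\mfR_j\|_i^2 + \|\mfC_1\|_i^2 + \|\mfC_2\|_i^2 \lesssim \calE^N$ and $\|\mfC_3\|_i^2 \lesssim \calS^N + \calE^N\calS^N + \delta\lam^{-3\kk}\calE^N$, so after multiplication by $(\delta\lam^{-3\kk})^2$ these become $(\delta\lam^{-3\kk})^2\calE^N$ plus $(\delta\lam^{-3\kk})^2\calS^N(1+\calE^N)$; the last of these, when integrated, is absorbed into the left-hand side $S^N$ for $\delta$ small, while the rest contribute the $\delta\int\lam^{-3\kk}\calE^N$ term. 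Finally the forcing terms: Propositions \ref{prop:DiE4}, \ref{prop:DiE1}, \ref{prop:DiE3}, \ref{prop:DiE2}, \ref{prop:DiE5+6} give $\|\calD_i E_j\|_i^2 \lesssim 1 + \calE^N + \calE^N(0) + \calS^N$ (the worst cases being $E_1, E_3$ with the $1 + \calE^N(0)$ source), but multiplied by $(\delta\lam^{-3\kk})^2$ this becomes $(\delta\lam^{-3\kk})^2(1+\calE^N + \calE^N(0)) + (\delta\lam^{-3\kk})^2\calS^N$; the constant piece integrates to $\lesssim \delta^2$ (using $\int_0^\infty \lam^{-6\kk}\,d\tau \lesssim 1$ from \eqref{est:lambdaasym}), the $\calE^N(0)$ piece to $\lesssim \delta^2\calE^N(0) \lesssim \delta^{3/2}\calE^N(0)$, and the $\calS^N$ piece is again absorbed. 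One subtlety: the decompositions in Propositions \ref{prop:DiE4}, \ref{prop:DiE3} contain leading-order terms proportional to $\calD_{i+1}\pt\Theta$, $\calD_i\pt^2\Theta$, and $\calL_i\calD_i\Theta$ (and their evaluations at $\tau=0$); these must be tracked explicitly. The $\calD_i\pt^2\Theta$ terms come with small coefficients (a factor $\lamt$ times bounded quantities, and in $E_4$ an extra $w$ or $\delta\lam^{-3\kk}$), so after multiplication by $\delta\lam^{-3\kk}$ they are absorbed into the left side; the $\calD_{i+1}\pt\Theta$ terms come with a factor $w$, which upgrades the weight from $w^{\frac1\kk+i}$ to allow control by $\delta\lam^{-3\kk}\calE^N$ through $\|w^{\frac{\frac1\kk+i+1}{2}}\ze\calD_{i+1}\pt\Theta\|^2 \lesssim \delta\lam^{-3\kk}\calE^{N}$ wait—more precisely $\|w\ze\calD_{i+1}\pt\Theta\|_i^2 = \|\ze\calD_{i+1}\pt\Theta\|_{i+1}^2 \lesssim \delta\lam^{-3\kk}\calE^{N}$ only if we have the $(i+1)$-th order $\pt\Theta$ control, which holds since $i \le N-1$ so $i+1 \le N$; and the $\calL_i\calD_i\Theta(0)$, $\calD_{i+1}\pt\Theta(0)$ terms from $E_3$ are bounded by $\calE^N(0)$ directly.

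\textbf{Main obstacle.} The principal difficulty is bookkeeping rather than analysis: one must verify that \emph{every} leading-order term appearing in the decompositions of $\calD_i E_4$ (Proposition \ref{prop:DiE4}) and $\calD_i E_3$ (Proposition \ref{prop:DiE3})—namely the terms $\calC_j^{E_4}w\ze\calD_{i+1}\pt\Theta$, $\calC_3^{E_4}w\calD_i\pt^2\Theta$, $\calC_1^{E_3}w\ze\calD_{i+1}\pt\Theta$, $\calC_2^{E_3}\calL_i\calD_i\Theta$, and the initial-data counterparts $\calC_3^{E_3}$, $\calC_4^{E_3}$—carries enough structure (an extra $w$, an explicit $\delta\lam^{-3\kk}$, or a $\lamt$) so that after the overall multiplication by $\delta\lam^{-3\kk}$ the resulting contribution to $\|\calD_i\pt^2\Theta\|_i^2$ is either (i) bounded by a constant times $\delta\lam^{-3\kk}$ times $\calS^N$ with small constant, hence absorbable into the left-hand side for $\delta \le \eps^{1/2}$ sufficiently small, or (ii) bounded by $\delta\lam^{-3\kk}$ times $\calE^N$ (possibly $\calE^N(0)$) whose $\tau$-integral yields exactly the terms on the right-hand side of \eqref{est:SN}. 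The appearance of $\delta^{3/2}$ rather than $\delta^2$ in front of $\calE^N(\tau_1) + \calE^N(\tau_2) + \calE^N(0)$ is the margin needed to absorb, via Young's inequality, the endpoint contributions that arise when an integration by parts in $\tau$ is performed on terms involving $\calD_i\pt^2\Theta$ paired with $\calD_{i+1}\pt\Theta$ or $\calL_i\calD_i\Theta$ (which produce boundary terms $\sim \delta\lam^{-3\kk}\calE^N$ evaluated at $\tau_1, \tau_2$); one splits $\delta\lam^{-3\kk}\calE^N \le \delta^{3/2}\calE^N + \delta^{1/2}\delta\lam^{-3\kk}\calE^N$ and the second piece is harmless. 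Once this accounting is complete, summing over $i$ and rearranging gives \eqref{est:SN}.
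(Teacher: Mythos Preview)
Your overall strategy—test \eqref{eq:highorder momlagmain} against $\pt^2\calD_i\Theta$, extract a coercive $\|\calD_i\pt^2\Theta\|_i^2$, and control everything else—is exactly what the paper does. However, there is a genuine gap in your treatment of the elliptic term.

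You assert that $\|\calL_i\calD_i\Theta\|_i^2 \lesssim \calE^N$, so that after multiplying by $(\delta\lam^{-3\kk})^2$ the elliptic contribution is harmless. This fails at the top order $i=N-1$. Writing (for $i$ even) $\calL_i\calD_i\Theta = -(1+\tfrac1\kk+i)\,w'\,\calD_{i+1}\Theta - w\,\calD_{i+2}\Theta$, the second piece gives $\|w\calD_{i+2}\Theta\|_i^2 = \|\calD_{i+2}\Theta\|_{i+2}^2$, which is indeed in $\calE^N$ when $i\le N-1$. But the first piece gives $\|w'\calD_{i+1}\Theta\|_i^2 \sim \|\calD_{i+1}\Theta\|_i^2$, and at $i=N-1$ this is $\|\calD_N\Theta\|_{N-1}^2$, which requires \emph{one fewer} power of $w$ than $\calE^N$ supplies (the energy only controls $\|\calD_N\Theta\|_N^2$). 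The Hardy embedding \eqref{est:L2Hardy} rescues this only for $i\le N-2$. So the pointwise-in-$\tau$ bound you propose cannot close at $i=N-1$. The same obstruction applies to the $\calC_2^{E_3}\calL_i\calD_i\Theta$ and $\calC_4^{E_3}\calL_i\calD_i\Theta(0)$ terms from $E_3$ and to the leading elliptic piece of $E_1$.

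The paper circumvents this not by bounding $\|\calL_i\calD_i\Theta\|_i$, but by exploiting the divergence form of $\calL_i$: in the pairing $(g\,\calL_i\calD_i\Theta,\pt^2\calD_i\Theta)_i$ one integrates by parts in $\ze$ to obtain $\int_0^1 w^{1+\frac1\kk+i}\ze^2 g\,\calD_{i+1}\Theta\,\pt^2\calD_{i+1}\Theta\,d\ze$ plus a lower-order piece with $\pz g$. The resulting integrand involves $\pt^2\calD_{i+1}\Theta$, which is one order too many for $\calS^N$; a second integration by parts, now in $\tau$, converts this into $\tfrac{d}{d\tau}\int w^{1+\frac1\kk+i}\ze^2 g\,\calD_{i+1}\Theta\,\pt\calD_{i+1}\Theta$ minus $\int w^{1+\frac1\kk+i}\ze^2 g\,|\pt\calD_{i+1}\Theta|^2$ and a $\pt g$ commutator, all of which are controlled by $\calE^N$ with the \emph{correct} weight $w^{1+\frac1\kk+i}$. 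The boundary contributions at $\tau_1,\tau_2$ are then $(\delta\lam^{-3\kk})\|\calD_{i+1}\Theta\|_{i+1}\|\pt\calD_{i+1}\Theta\|_{i+1} \lesssim (\delta\lam^{-3\kk})^{3/2}\calE^N$, and this is the true origin of the $\delta^{3/2}$ in \eqref{est:SN}. Your closing paragraph alludes to an integration by parts in $\tau$ but without the prior IBP in $\ze$ there is nothing to integrate by parts against; the two must be combined, and the divergence structure of $\calL_i$ is what makes the $\ze$-IBP land with exactly the right weight.
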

\begin{proof}
    For fixed $i\in \{0,\cdots, N-1\}$, we obtain the following identity by testing \eqref{eq:highorder momlagmain} with $\delta \lam^{-3\kk}\pt^2\calD_i\Theta$ with respect to the inner product $(\cdot,\cdot)_i$:
    \begin{equation}
        \label{eq:Sienergy}
        \begin{split}
        &\int_0^1 w^{\frac1\kk + i}\ze^2 \left(1+\delta \lam^{-3\kk}w\calF^{-\kk}\calG^\kk\right)|\p_\tau^2 \calD_i\Theta|^2 d\ze\\
        &\quad + \lamt\int_0^1 w^{\frac1\kk + i}\ze^2 \left(1+\delta \lam^{-3\kk}w\calF^{-\kk}\calG^\kk\right) (\p_\tau \calD_i\Theta)(\pt^2 \calD_i \Theta) d\ze\\
        &\quad + \delta \lam^{-3\kk}\int_0^1 w^{\frac1\kk + i}\ze^2\left(1+\delta \lam^{-3\kk}w\calF^{-\kk}\calG^\kk\right) (\calD_i\Theta)(\pt^2 \calD_i\Theta)d\ze\\
        &\quad+(1+\kk)\delta \lam^{-3\kk}\int_0^1 w^{\frac1\kk + i}\ze^2\calF^{-\kk-2}(U^0)^{-4}\left(1+\frac{\Theta}{\ze}\right)^4 (\calL_{i}\calD_i\Theta)(\pt^2 \calD_i \Theta)d\ze\\
        &\quad + \delta \lam^{-3\kk}\bigg[\sum_{j=1}^3 (\mfC_i,\pt^2\calD_i \Theta)_i+\sum_{j=1}^2(\calD_i \mathfrak{R}_j(\Theta),\pt^2\calD_i \Theta)_i+ (\barcalD_{i-1}\mathfrak{M}(\Theta),\pt^2\calD_i \Theta)_i\\
        &\quad
        + \sum_{j=1}^6(\calD_iE_j,\pt^2\calD_i \Theta)_i\bigg] = 0,
        \end{split}
    \end{equation}
    We then estimate \eqref{eq:Sienergy} on a term-by-term basis:
    \begin{enumerate}
            \item \textbf{First term on the LHS of \eqref{eq:Sienergy}.} By bootstrap assumptions on $\calF, \barcalG$, and the fact that $|\lamt-\bar\lam| \lesssim \delta$ in view of the estimate \eqref{est:difflamt}, we immediately have $|\delta \lam^{-3\kk}w\calF^{-\kk}\calG^\kk|_{L^\infty} \le \frac12$ after choosing $\delta \le \eps^{\frac12} \ll 1$. We then obtain the following coercivity estimate:
        \begin{equation}
            \label{est:line1}
            \int_0^1 w^{\frac1\kk + i}\ze^2 \left(1+\delta \lam^{-3\kk}w\calF^{-\kk}\calG^\kk\right)|\p_\tau^2 \calD_i\Theta|^2 d\ze \ge \frac12 \|\pt^2 \calD_i \Theta\|_i^2.
        \end{equation}

        \item \textbf{Second and third terms on the LHS of \eqref{eq:Sienergy}.} Once again, by using the bounds $|\delta \lam^{-3\kk}w\calF^{-\kk}\calG^\kk|_{L^\infty} \le \frac12$ and $|\lamt| \lesssim 1$, a straightforward application of Cauchy-Schwarz inequality yields that
        \begin{equation}
            \label{est:line2+3}
            \begin{split}
                &\left|\lamt\int_0^1 w^{\frac1\kk + i}\ze^2 \left(1+\delta \lam^{-3\kk}w\calF^{-\kk}\calG^\kk\right) (\p_\tau \calD_i\Theta)(\pt^2 \calD_i \Theta) d\ze\right|\\
        &\quad + \left|\delta \lam^{-3\kk}\int_0^1 w^{\frac1\kk + i}\ze^2\left(1+\delta \lam^{-3\kk}w\calF^{-\kk}\calG^\kk\right) (\calD_i\Theta)(\pt^2 \calD_i\Theta)d\ze\right|\\
        &\lesssim \|\pt\calD_i \Theta\|_i \|\pt^2 \calD_i \Theta\|_i + \delta \lam^{-3\kk} \|\calD_i\Theta\|_i \|\pt^2\calD_i\Theta\|_i\\
        &\lesssim \mathring{\eps}\|\pt^2\calD_i\Theta\|_i^2 + C(\mathring{\eps})\delta\lam^{-3\kk}\calE^N(\tau),
            \end{split}
        \end{equation}
        where $\mathring{\eps} > 0$ is a small constant to be determined. We also used $\delta \lam^{-3\kk} \ll 1$ in the final inequality above.

        \item \textbf{Fourth term on the LHS of \eqref{eq:Sienergy}.} Let us denote
        $$
        g(\tau,\ze) := \calF^{-\kk-2}(U^0)^{-4}\left(1+\frac{\Theta}{\ze}\right)^4 .
        $$
        By definition of $\calL_i\calD_i$, we integrate by parts and obtain the following:
        \begin{equation}\label{eq:line4}
        \begin{split}
            &(1+\kk)\delta \lam^{-3\kk}\int_0^1 w^{\frac1\kk + i}\ze^2\calF^{-\kk-2}(U^0)^{-4}\left(1+\frac{\Theta}{\ze}\right)^4 (\calL_{i}\calD_i\Theta)(\pt^2 \calD_i \Theta)d\ze\\
            &\quad = (1+\kk)\delta \lam^{-3\kk}\int_0^1 w^{\frac1\kk + i + 1}\ze^2 g \calD_{i+1}\Theta \pt^2 \calD_{i+1}\Theta d\ze\\
            &\quad\quad +(1+\kk)\delta \lam^{-3\kk}\int_0^1 w^{\frac1\kk + i + 1}\ze^2 \pz g \calD_{i+1}\Theta \pt^2 \calD_i\Theta d\ze\\
            &\quad = (1+\kk)\frac{d}{d\tau}\left[\delta \lam^{-3\kk}\int_0^1 w^{\frac1\kk + i + 1}\ze^2 g \calD_{i+1}\Theta \pt\calD_{i+1}\Theta d\ze\right]\\
            &\quad\quad +3\kk(1+\kk)\delta \lam^{-3\kk}\lamt \int_0^1 w^{\frac1\kk + i + 1}\ze^2 g \calD_{i+1}\Theta \pt\calD_{i+1}\Theta d\ze\\
            &\quad\quad -(1+\kk)\delta \lam^{-3\kk}\int_0^1 w^{\frac1\kk + i + 1}\ze^2 (\pt g) \calD_{i+1}\Theta \pt\calD_{i+1}\Theta d\ze\\
            &\quad\quad -(1+\kk)\delta \lam^{-3\kk}\int_0^1 w^{\frac1\kk + i + 1}\ze^2 g |\pt\calD_{i+1}\Theta|^2 d\ze\\
            &\quad\quad +(1+\kk)\delta \lam^{-3\kk}\int_0^1 w^{\frac1\kk + i + 1}\ze^2 \pz g \calD_{i+1}\Theta \pt^2 \calD_i\Theta d\ze.
            \end{split}
        \end{equation}
        By bootstrap assumptions, it is clear that
        $$
        \|g\|_{L^\infty} + \|\pt g\|_{L^\infty} + \|\pz g\|_{L^\infty} \lesssim 1.
        $$
        Since $i \le N-1$, we have the following bounds:
        \begin{equation}
            \label{est:line4}
            \begin{split}
                \left|3\kk(1+\kk)\delta \lam^{-3\kk}\lamt \int_0^1 w^{\frac1\kk + i + 1}\ze^2 g \calD_{i+1}\Theta \pt\calD_{i+1}\Theta d\ze\right| &\lesssim (\delta\lam^{-3\kk})^{\frac32} \calE^N(\tau),\\
                \left|(1+\kk)\delta \lam^{-3\kk}\int_0^1 w^{\frac1\kk + i + 1}\ze^2 (\pt g) \calD_{i+1}\Theta \pt\calD_{i+1}\Theta d\ze\right| &\lesssim (\delta\lam^{-3\kk})^{\frac32} \calE^N(\tau),\\
                \left|(1+\kk)\delta \lam^{-3\kk}\int_0^1 w^{\frac1\kk + i + 1}\ze^2 g |\pt\calD_{i+1}\Theta|^2 d\ze\right| &\lesssim (\delta\lam^{-3\kk})^2 \calE^N(\tau),\\
                \left|(1+\kk)\delta \lam^{-3\kk}\int_0^1 w^{\frac1\kk + i + 1}\ze^2 \pz g \calD_{i+1}\Theta \pt^2 \calD_i\Theta d\ze\right| &\lesssim \mathring{\eps} \|\pt^2 \calD_i\Theta\|_i^2 + C(\mathring{\eps})(\delta \lam^{-3\kk})^2\calE^N(\tau),
            \end{split}
        \end{equation}
        where $\mathring{\eps} > 0$ is a small constant to be determined later.

        \item \textbf{Fifth term on the LHS of \eqref{eq:Sienergy}.} Invoking Theorem \ref{thm: pressure estimate} as well as the estimate \eqref{est: est C3}, a direct application of Cauchy-Schwarz inequality gives:
        \begin{equation}
            \label{est:line5}
            \begin{split}
            &\left|\delta \lam^{-3\kk}\bigg[\sum_{j=1}^3 (\mfC_i,\pt^2\calD_i \Theta)_i+\calD_i\sum_{j=1}^2(\mathfrak{R}_j(\Theta),\pt^2\calD_i \Theta)_i+ (\barcalD_{i-1}\mathfrak{M}(\Theta),\pt^2\calD_i \Theta)_i\bigg]\right|\\
            &\quad \le \delta \lam^{-3\kk}\left(\sum_{j=1}^3 \|\mfC_j\|_i + \sum_{j=1}^2 \|\calD_i \mfR_j(\Theta)\|_i + \|\barcalD_{i-1}\mathfrak{M}(\Theta)\|_i\right)\|\pt^2 \calD_i\Theta\|_i\\
            &\quad \lesssim \delta \lam^{-3\kk} (\calE^N(\tau)^\frac12 + \calS^N(\tau)^\frac12)\|\pt^2 \calD_i\Theta\|_i\\
            &\quad \lesssim \delta\lam^{-3\kk} S^N(\tau)^\frac12\|\pt^2 \calD_i\Theta\|_i + \mathring{\eps}\|\pt^2 \calD_i\Theta\|_i^2 + C(\mathring{\eps})(\delta\lam^{-3\kk})^2 \calE^N(\tau).
            \end{split}
        \end{equation}

        \item \textbf{Sixth term on the LHS of \eqref{eq:Sienergy}.} When $j = 2,5,6$, we invoke Proposition \ref{prop:DiE2}, Proposition \ref{prop:DiE5+6} and directly apply Cauchy-Schwarz inequality to obtain:
        \begin{equation}\label{line6aux1}
        \begin{split}
        |\delta\lam^{-3\kk}(\calD_i E_j , \pt^2\calD_i\Theta)_i| &\le \delta\lam^{-3\kk}(1+(\calE^N(\tau))^{\frac12})\|\pt^2 \calD_i\Theta\|_i\\
        &\le \mathring{\eps}\|\pt^2 \calD_i\Theta\|_i^2 + C(\mathring{\eps})(\delta\lam^{-3\kk})^2,
        \end{split}
        \end{equation}
        where $\mathring{\eps} > 0$ is a small constant to be determined later. We also used that $\calE^N(\tau) < 1$ in the bootstrap regime.

        When $j = 1$, we invoke Proposition \ref{prop:DiE1} to have
        \begin{align*}
        \delta\lam^{-3\kk}(\calD_i E_1,\pt^2\calD_i\Theta)_i &= \delta\lam^{-3\kk}\left(\left((1+\kk)\calF^{-\kk-2}(U^0)^{-4}\left(1+\frac{\Theta}{\ze}\right)^4\calL_i\calD_i\Theta\right)(\calG^{\kk}-1), \pt^2\calD_i\Theta\right)_i\\
        &\quad +\delta\lam^{-3\kk}(R_1^i, \pt^2\calD_i\Theta)_i
        \end{align*}
        The second term above can be estimated straightforwardly using \eqref{est:R1i}:
        \begin{equation}
            \label{line6aux2}
            |\delta\lam^{-3\kk}(R_1^i, \pt^2\calD_i\Theta)_i| \lesssim \mathring{\eps}\|\pt^2 \calD_i\Theta\|_i^2 + C(\mathring{\eps})(\delta\lam^{-3\kk})^2 (1+\calE^N(0))\calE^N(\tau).
        \end{equation}
        The first term above can be treated in an identical way to how we deal with the fourth line of \eqref{eq:Sienergy}, now with
        $$
        g(\tau,\ze) = \calF^{-\kk-2}(U^0)^{-4}\left(1+\frac{\Theta}{\ze}\right)^4(\calG^{\kk}-1).
        $$
        By bootstrap assumptions and Lemma \ref{lem: induced bootstrap}, we also have
        $$
        \|g\|_{L^\infty} + \|\pt g\|_{L^\infty} + \|\pz g\|_{L^\infty} \lesssim 1.
        $$
        Then by an argument which is similar to how we treat the fourth line of \eqref{eq:Sienergy}, we may write
        \begin{equation}
            \label{line6aux3}
            \begin{split}
            &\delta\lam^{-3\kk}\left(\left((1+\kk)\calF^{-\kk-2}(U^0)^{-4}\left(1+\frac{\Theta}{\ze}\right)^4\calL_i\calD_i\Theta\right)(\calG^{\kk}-1), \pt^2\calD_i\Theta\right)_i\\
            &\quad = (1+\kk)\frac{d}{d\tau}\left[\delta \lam^{-3\kk}\int_0^1 w^{\frac1\kk + i + 1}\ze^2\left(\calF^{-\kk-2}(U^0)^{-4}\left(1+\frac{\Theta}{\ze}\right)^4(\calG^{\kk}-1)\right)\calD_{i+1}\Theta \pt\calD_{i+1}\Theta d\ze\right]\\
            &\quad\quad + I_{E_1},
            \end{split}
        \end{equation}
        where $|I_{E_1}| \lesssim \mathring{\eps} \|\pt^2 \calD_i\Theta\|_i^2 + C(\mathring{\eps})(\delta \lam^{-3\kk})^2\calE^N(\tau) + (\delta \lam^{-3\kk})^\frac32\calE^N(\tau)$ for $\mathring{\eps} > 0$ a small constant.

        When $j = 3$, we invoke Proposition \ref{prop:DiE3} to write
        \begin{align*}
            \delta\lam^{-3\kk}(\calD_i E_3, \pt^2 \calD_i\Theta)_i &= \delta\lam^{-3\kk}(\calC^{E_3}_1w\ze\calD_{i+1}\pt\Theta, \pt^2 \calD_i\Theta)_i + \delta\lam^{-3\kk}(\calC_2^{E_3}\calL_i\calD_i\Theta, \pt^2 \calD_i\Theta)_i\\
            &\quad + \delta\lam^{-3\kk}(\calC^{E_3}_3w\ze\calD_{i+1}\pt\Theta(0), \pt^2 \calD_i\Theta)_i + \delta\lam^{-3\kk}(\calC_4^{E_3}\calL_i\calD_i\Theta(0), \pt^2 \calD_i\Theta)_i\\
            &\quad + \delta\lam^{-3\kk}(R_{3}^i, \pt^2 \calD_i\Theta)_i,
        \end{align*}
        where we recall that the coefficients $\calC_k^{E_3}$, $k = 1,2,3,4$ are given in \eqref{E3coeff}. 
        \begin{enumerate}
            \item \textit{Bound for $\delta\lam^{-3\kk}(\calC^{E_3}_1w\ze\calD_{i+1}\pt\Theta, \pt^2 \calD_i\Theta)_i$.} By bootstrap assumptions and Lemma \ref{lem: induced bootstrap}, one observes that $\|\calC_1^{E_3}\|_{L^\infty} \lesssim 1$. Since $i \le N-1$, we have
            \begin{equation}
                \label{line6aux4}
                \begin{split}
                |\delta\lam^{-3\kk}(\calC^{E_3}_1w\ze\calD_{i+1}\pt\Theta, \pt^2 \calD_i\Theta)_i| &\le \delta \lam^{-3\kk}\int_0^1 w^{\frac1\kk + i + 1}\ze^3 |\calC_1^{E_3}\calD_{i+1}\pt\Theta\pt^2 \calD_i\Theta| d\ze\\
                &\lesssim (\delta\lam^{-3\kk})^\frac32\calE^N(\tau)^\frac12\|\pt^2 \calD_i\Theta\|_i\\
                &\lesssim \mathring{\eps}\|\pt^2 \calD_i\Theta\|_i^2 + C(\mathring{\eps})(\delta\lam^{-3\kk})^3\calE^N(\tau).
                \end{split}
            \end{equation}
            \item \textit{Bound for $\delta\lam^{-3\kk}(\calC_2^{E_3}\calL_i\calD_i\Theta, \pt^2 \calD_i\Theta)_i$.} This term can be treated in an almost identical way to that treating the fourth line of \eqref{eq:Sienergy}. Note that by bootstrap assumptions and Lemma \ref{lem: induced bootstrap}, we have $\|\calC_2^{E_2}\|_{L^\infty} + \|\pz \calC_2^{E_2}\|_{L^\infty} +\|\pt \calC_2^{E_2}\|_{L^\infty}  \lesssim \delta\lam^{-3\kk}.$ Then we can write
            \begin{equation}
                \label{line6aux5}
                \begin{split}
                \delta\lam^{-3\kk}(\calC_2^{E_3}\calL_i\calD_i\Theta, \pt^2 \calD_i\Theta)_i = \frac{d}{d\tau}\left[\delta \lam^{-3\kk}\int_0^1 w^{\frac1\kk + i + 1}\ze^2 \calC_2^{E_3} \calD_{i+1}\Theta \pt\calD_{i+1}\Theta d\ze\right]+ I_{E_3}^1,
                \end{split}
            \end{equation}
            where $I_{E_3}^1$ obeys the bound
            $$
            |I_{E_3}^1| \lesssim \mathring{\eps} \|\pt^2 \calD_i\Theta\|_i^2 + C(\mathring{\eps})(\delta \lam^{-3\kk})^4\calE^N(\tau) + (\delta \lam^{-3\kk})^\frac52\calE^N(\tau)
            $$
            for $\mathring{\eps} > 0$ a small constant.

            \item \textit{Bound for $\delta\lam^{-3\kk}(\calC^{E_3}_3w\ze\calD_{i+1}\pt\Theta(0), \pt^2 \calD_i\Theta)_i$.} This term can be estimated in an identical way to item (a) above. Due to $\|\calC_3^{E_3}\|_{L^\infty} \lesssim 1$ by bootstrap assumptions and Lemma \ref{lem: induced bootstrap}, we can bound:
            \begin{equation}
                \label{line6aux6}
                |\delta\lam^{-3\kk}(\calC^{E_3}_3w\ze\calD_{i+1}\pt\Theta(0), \pt^2 \calD_i\Theta)_i| \lesssim \mathring{\eps}\|\pt^2 \calD_i\Theta\|_i^2 + C(\mathring{\eps})\delta^3\lam^{-6\kk}\calE^N(0),
            \end{equation}
            for $\mathring{\eps} > 0$ a small constant.

            \item \textit{Bound for $\delta\lam^{-3\kk}(\calC_4^{E_3}\calL_i\calD_i\Theta(0), \pt^2 \calD_i\Theta)_i$.} Emulating the computation \eqref{eq:line4}, we realize that
            \begin{align*}
                &\delta\lam^{-3\kk}(\calC_4^{E_3}\calL_i\calD_i\Theta(0), \pt^2 \calD_i\Theta)_i = \frac{d}{d\tau}\left[\delta \lam^{-3\kk}\int_0^1 w^{\frac1\kk + i + 1}\ze^2 \calC_4^{E_3} \calD_{i+1}\Theta(0)\pt \calD_{i+1}\Theta d\ze\right]\\
                &\quad + 3\kk \delta \lam^{-3\kk}\lamt \int_0^1 w^{\frac1\kk + i + 1}\ze^2 \calC_4^{E_3} \calD_{i+1}\Theta(0)\pt\calD_{i+1}\Theta d\ze\\
                &\quad - \delta \lam^{-3\kk}\int_0^1 w^{\frac1\kk + i + 1}\ze^2 (\pt \calC_4^{E_3})\calD_{i+1}\Theta(0)\pt\calD_{i+1}\Theta d\ze\\
                &\quad + \delta \lam^{-3\kk}\int_0^1 w^{\frac1\kk + i + 1}\ze^2 (\pz \calC_4^{E_3}) \calD_{i+1}\Theta(0)\pt^2\calD_{i}\Theta d\ze\\
                &=: \frac{d}{d\tau}\left[\delta \lam^{-3\kk}\int_0^1 w^{\frac1\kk + i + 1}\ze^2 \calC_4^{E_3} \calD_{i+1}\Theta(0)\pt \calD_{i+1}\Theta d\ze\right] + I_{E_3}^2.
            \end{align*}
            By bootstrap assumptions and Lemma \ref{lem: induced bootstrap}, it is straightforward to see that
            $$
            \|\calC_4^{E_3}\|_{L^\infty} + \|\pt\calC_4^{E_3}\|_{L^\infty} + \|\pz\calC_4^{E_3}\|_{L^\infty} \lesssim \delta.
            $$
            We may then bound $I_{E_3}^2$ by
            \begin{equation}\label{line6aux7}
            \begin{split}
            |I_{E_3}^2| &\lesssim \delta^2\lam^{-3\kk}\left(\delta^\frac12 \lam^{-\frac32\kk} \calE^N(0)^\frac12 \calE^N(\tau)^\frac12  + \calE^N(0)^\frac12 \|\pt^2\calD_i\Theta\|_i\right)\\
            &\le \delta^2\lam^{-3\kk} \calE^N(\tau) + \mathring{\eps}\|\pt^2\calD_i\Theta\|_i^2 + C(\mathring{\eps})(\delta^2\lam^{-3\kk})^2\calE^N(0).
            \end{split}
            \end{equation}

            \item \textit{Bound for $\delta\lam^{-3\kk}(R_{3}^i, \pt^2 \calD_i\Theta)_i$.} Invoking \eqref{est:R3i} and directly using Cauchy-Schwarz inequality, we have
            \begin{equation}
                \label{line6aux8}
                |\delta\lam^{-3\kk}(R_{3}^i, \pt^2 \calD_i\Theta)_i| \lesssim  \mathring{\eps}\|\pt^2 \calD_i\Theta\|_i^2 + C(\mathring{\eps})(\delta\lam^{-3\kk})^2(1+\calE^N(\tau) + \calE^N(0)),
            \end{equation}
            for $\mathring{\eps} > 0$ a small constant.
        \end{enumerate}
        Finally, when $j = 4$, we invoke Proposition \ref{prop:DiE4} to write
        \begin{align*}
            \delta\lam^{-3\kk}(\calD_i E_4, \pt^2 \calD_i\Theta)_i &= \delta\lam^{-3\kk}\left((\calC^{E_4}_1+\calC^{E_4}_2)w\ze\calD_{i+1}\pt\Theta,\pt^2\calD_i\Theta\right)_i\\
            &\quad +\delta\lam^{-3\kk}\left(\calC^{E_4}_3w\calD_i\pt^2\Theta,\pt^2\calD_i\Theta\right)_i\\
            &\quad + \delta\lam^{-3\kk}\left(R_{4}^i,\pt^2\calD_i\Theta\right)_i,
        \end{align*}
        where we recall that the coefficients $\calC_k^{E_4}$ are given in \eqref{E4coeff}. By bootstrap assumptions and Lemma \ref{lem: induced bootstrap}, we see that $\|\calC_k^{E_4}\|_{L^\infty} \lesssim 1$ in the bootstrap regime. Then we immediately have
        $$
        \left|\delta\lam^{-3\kk}\left((\calC^{E_4}_1+\calC^{E_4}_2)w\ze\calD_{i+1}\pt\Theta,\pt^2\calD_i\Theta\right)_i\right| \lesssim \mathring{\eps}\|\pt^2 \calD_i\Theta\|_i^2 + C(\mathring{\eps})(\delta\lam^{-3\kk})^3\calE^N(\tau),
        $$
        for $\mathring{\eps} > 0$ sufficiently small, and
        $$
        \left| \delta\lam^{-3\kk}\left(\calC^{E_4}_3w\calD_i\pt^2\Theta,\pt^2\calD_i\Theta\right)_i\right| \lesssim \delta\lam^{-3\kk} \|\pt^2\calD_i\Theta\|_i^2.
        $$
        Finally using \eqref{est:R4i}, we have
        \begin{align*}
        \left|\delta\lam^{-3\kk}\left(R_{4}^i,\pt^2\calD_i\Theta\right)_i\right| &\lesssim \delta\lam^{-3\kk}\left(\delta\lam^{-3\kk} + (\delta\lam^{-3\kk})^\frac12 \calE^N(\tau)^\frac12 + \calS^N(\tau)^\frac12\right)\|\pt^2\calD_i\Theta\|_i\\
        &\lesssim \mathring{\eps}\|\pt^2\calD_i\Theta\|_i^2 + C(\mathring{\eps})\left[(\delta\lam^{-3\kk})^4 + (\delta\lam^{-3\kk})^3\calE^N(\tau)\right] + \delta\lam^{-3\kk}\calS^N(\tau)^\frac12\|\pt^2\calD_i\Theta\|_i.
        \end{align*}
    \end{enumerate}
    Now, we take \eqref{eq:Sienergy}, sum over $i = 0,\hdots, N-1$, and integrate over the time interval $[\tau_1,\tau_2]$. By combining all estimates above, we arrive at, for $\mathring{\eps} > 0$ and a constant $C$ depending on $N$:
    \begin{equation}
        \label{est:SN0}
        \begin{split}
        \frac{1}{2}S^N(\tau_2;\tau_1) &\le C(\mathring{\eps}+ \delta\lam^{-3\kk}) S^N(\tau_2;\tau_1) + C(\mathring{\eps})\left[\delta \int_{\tau_1}^{\tau_2}\lam^{-3\kk}\calE^N(\tau)d\tau + \int_{\tau_1}^{\tau_2}(\delta\lam^{-3\kk})^2 + \delta^3\lam^{-6\kk}\calE^N(0) d\tau\right]\\
        & + \left|(1+\kk)\delta \lam^{-3\kk}\int_0^1 w^{\frac1\kk + i + 1}\ze^2\left(\calF^{-\kk-2}(U^0)^{-4}\left(1+\frac{\Theta}{\ze}\right)^4\calG^{\kk}\right)\calD_{i+1}\Theta \pt\calD_{i+1}\Theta d\ze\bigg|_{\tau = \tau_1}\right|\\
        & + \left|(1+\kk)\delta \lam^{-3\kk}\int_0^1 w^{\frac1\kk + i + 1}\ze^2\left(\calF^{-\kk-2}(U^0)^{-4}\left(1+\frac{\Theta}{\ze}\right)^4\calG^{\kk}\right)\calD_{i+1}\Theta \pt\calD_{i+1}\Theta d\ze\bigg|_{\tau = \tau_2}\right|\\
        & + \left|\delta \lam^{-3\kk}\int_0^1 w^{\frac1\kk + i + 1}\ze^2 \calC_2^{E_3} \calD_{i+1}\Theta \pt\calD_{i+1}\Theta d\ze\bigg|_{\tau = \tau_1}\right|\\
        & + \left|\delta \lam^{-3\kk}\int_0^1 w^{\frac1\kk + i + 1}\ze^2 \calC_2^{E_3} \calD_{i+1}\Theta \pt\calD_{i+1}\Theta d\ze\bigg|_{\tau = \tau_2}\right|\\
        & + \left|\delta \lam^{-3\kk}\int_0^1 w^{\frac1\kk + i + 1}\ze^2 \calC_4^{E_3} \calD_{i+1}\Theta(0)\pt \calD_{i+1}\Theta d\ze\bigg|_{\tau = \tau_1}\right|\\
        & + \left|\delta \lam^{-3\kk}\int_0^1 w^{\frac1\kk + i + 1}\ze^2 \calC_4^{E_3} \calD_{i+1}\Theta(0)\pt \calD_{i+1}\Theta d\ze\bigg|_{\tau = \tau_2}\right|.
        \end{split}
    \end{equation}
    First by \eqref{est:lambdaasym}, $\lam^{-6\kk}$ is integrable on $[0,\infty)$. Then we have
    $$
    \int_{\tau_1}^{\tau_2}(\delta\lam^{-3\kk})^2 + \delta^3\lam^{-6\kk}\calE^N(0) d\tau \le C(\delta^2 + \delta^3\calE^N(0)),
    $$
    where $C > 0$ is independent of $\tau_1,\tau_2$.
    Next, since 
    $$
    \left\|\calF^{-\kk-2}(U^0)^{-4}\left(1+\frac{\Theta}{\ze}\right)^4\calG^{\kk}\right\|_{L^\infty} + \|\calC_2^{E_3}\|_{L^\infty} + \|\calC_4^{E_3}\|_{L^\infty} \lesssim 1,
    $$
    uniformly for all $\tau \in [0,\tau_*]$ by the bootstrap assumptions and and Lemma \ref{lem: induced bootstrap}, we may apply Cauchy-Schwarz inequality to bound terms on the temporal boundary $\tau = \tau_{1,2}$ by
    $$
    (\delta\lam^{-3\kk})^\frac32 \calE^N(\tau_1) + (\delta\lam^{-3\kk})^\frac32 \calE^N(\tau_2) + (\delta\lam^{-3\kk})^\frac32 \calE^N(0).
    $$
    Finally, by choosing $\mathring{\eps}, \delta$ sufficiently small such that $C(\mathring{\eps} +\delta\lam^{-3\kk}) \le \frac14$, we conclude, after plugging in all estimates above to \eqref{est:SN0} and rearranging, that
    \begin{align*}
        S^N(\tau_2;\tau_1) &\lesssim (\delta\lam^{-3\kk})^\frac32 \calE^N(\tau_1) + (\delta\lam^{-3\kk})^\frac32 \calE^N(\tau_2) + (\delta\lam^{-3\kk})^\frac32 \calE^N(0) + \delta^2+\delta^3\calE^N(0) + \delta\int_{\tau_1}^{\tau_2} \lam^{-3\kk} \calE^N(\tau) d\tau\\
        &\lesssim \delta^{\frac32}(\calE^N(\tau_1) + \calE^N(\tau_2) + \calE^N(0)) + \delta^2 + \delta\int_{\tau_1}^{\tau_2} \lam^{-3\kk} \calE^N(\tau) d\tau,
    \end{align*}
    which is the desired inequality.
\end{proof}

\section{Energy Estimates V: Main Energy Inequality}\label{sect: Energy Estimate V}
In this section, we establish the main energy inequality by utilizing all estimates established in Section \ref{sect: Energy Estimate 1}--\ref{sect: Energy Estimate 4}. We summarize this crucial inequality in the following proposition:
\begin{prop}[Main Energy Inequality]\label{prop:mainEN0}
    Let $(\Theta,\pt\Theta)$ be the solution to \eqref{eq:momlagmainfull} on $[0,\tau_*]$ verifying the bootstrap assumptions. There exists $\eps'' \in (0,1)$ sufficiently small so that for any $\eps \le \eps''$, $\delta \le \eps^\frac12$, the following statement holds: for any time instances $\tau_0, \tau$ such that $0 \le \tau_0 \le \tau \le \tau_*$, then
    \begin{equation}
        \label{est:mainEN0}
        \begin{split}
        \calE^N(\tau) + \int_{\tau_0}^\tau \mfD^N(\tilde\tau)d\tilde\tau &\le C_0(\calE^N(\tau_0) + \calE^N(0) + \delta^2)\\
        &\quad + C_1 \left[\delta^\frac12\int_{\tau_0}^\tau e^{-\frac32\kk\bar\lam \tilde\tau} (\calE^N(\tilde\tau))^\frac12 d\tilde\tau + \int_{\tau_0}^\tau e^{-\frac32\kk\bar\lam \tilde\tau}\calE^N(\tilde\tau) d\tilde\tau\right],
        \end{split}
    \end{equation}
    where $C_0, C_1$ are positive constants that are independent of parameters $\eps, \delta$, and $M_*$.
\end{prop}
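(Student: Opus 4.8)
The plan is to integrate the high-order energy identity \eqref{eq: high-order energy identity} in time, sum over $i = 0, \hdots, N$, and control every term on the right-hand side using the estimates accumulated in Sections \ref{sect: Energy Estimate 1}--\ref{sect: Energy Estimate 4}, together with the norm-energy equivalence of Corollary \ref{cor: energy norm equiva}. First I would recall that $\frac{d}{d\tau}\mfE_i + \mfD_i$ equals a sum of inner products against $\pt\calD_i\Theta$ of the error terms $\barcalD_{i-1}\mathfrak{M}$, $\calD_i\mathfrak{R}_{1,2}$, $\calD_iE_j$ ($j=1,\hdots,6$), the commutators $\mfC_{1,2,3}$, plus the five terms $\mfI_1,\hdots,\mfI_5$. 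By Cauchy-Schwarz in $(\cdot,\cdot)_i$, each inner product is bounded by $\|\cdot\|_i \cdot \|\pt\calD_i\Theta\|_i \lesssim \|\cdot\|_i \cdot (\delta\lam^{-3\kk})^{-1/2}(\calE^N)^{1/2}$, so the ``safe'' error terms (those whose $\|\cdot\|_i^2$ is bounded by $\calE^N$ with no top-order $\pt^2\Theta$ or extra-derivative structure), namely $\barcalD_{i-1}\mathfrak{M}$, $\calD_i\mathfrak{R}_{1,2}$, $\mfC_{1,2}$, and the $R$-remainders of $\calD_iE_j$, contribute after time integration at most $\int_{\tau_0}^\tau (\delta\lam^{-3\kk})^{-1/2}(\calE^N)^{1/2}\big[(\calE^N)^{1/2} + \ldots\big]d\tilde\tau$; using $\lam \approx e^{\bar\lam\tau}$ from \eqref{est:lambdaasym} turns the prefactor $(\delta\lam^{-3\kk})^{1/2}$ into $\delta^{1/2}e^{-\frac32\kk\bar\lam\tilde\tau}$, which yields exactly the two integral terms in \eqref{est:mainEN0}. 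The $\mfI_j$ terms are handled directly by \eqref{est: mfI}, contributing $\int e^{-\frac32\kk\bar\lam\tilde\tau}\calE^N d\tilde\tau$ after using $\lam^{-\frac32\kk}\approx e^{-\frac32\kk\bar\lam\tau}$.

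The main obstacle is the treatment of the ``dangerous'' terms $\calD_iE_1, \calD_iE_3, \calD_iE_4$, whose decompositions in Propositions \ref{prop:DiE1}, \ref{prop:DiE3}, \ref{prop:DiE4} contain leading-order pieces that either (i) feature $\calL_i\calD_i\Theta$ times a small factor ($\calG^\kk - 1$ or $\delta\lam^{-3\kk}$), or (ii) feature $w\ze\calD_{i+1}\pt\Theta$ or $w\calD_i\pt^2\Theta$. For (i), the pieces proportional to $\calL_i\calD_i\Theta$ must be absorbed: when tested against $\pt\calD_i\Theta$ and integrated by parts exactly as in \eqref{ellipticibp}, they regenerate a $\frac{d}{d\tau}$ of a term comparable to the elliptic part of $\mfE_i$ but multiplied by $|\calG^\kk - 1| \le \frac14$ (by \eqref{smallness of calFGkk-1}) or by $\delta\lam^{-3\kk} \ll 1$, so they can be moved to the left-hand side and absorbed into $\calE^N(\tau)$ for $\eps''$ small; the leftover time derivative at the endpoints $\tau_0,\tau$ contributes to the $C_0(\calE^N(\tau_0)+\calE^N(0))$ term. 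For (ii), the terms $\calC^{E}_\bullet w\ze\calD_{i+1}\pt\Theta$ must be handled by integration by parts in $\tau$ (writing $\calD_{i+1}\pt\Theta\cdot\pt\calD_i\Theta$ and noting $\calD_{i+1}\pt\Theta = \pt\calD_{i+1}\Theta$) to trade the top-order derivative for an endpoint boundary term plus lower-order commutators — this is the crucial step flagged in the remark after Proposition \ref{prop:dtU0}, and it is why the coefficients were carefully arranged so that exactly $\calD_{i+1}\pt\Theta$ appears, paired against the elliptic energy $\|\calD_{i+1}\Theta\|_{i+1}$; the extra $w$-weight supplies the weight needed to match $\|\cdot\|_{i+1}$. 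The $\calC^{E_4}_3w\calD_i\pt^2\Theta$ and $\calC^{E_3}_2\calL_i\calD_i\Theta$ terms are controlled using $S^N$ via Proposition \ref{prop:SN}, which must be invoked with $\delta$ small so that the $\delta^{3/2}\calE^N(\tau)$ feedback is absorbable.

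Concretely, the steps in order are: (1) start from \eqref{eq: high-order energy identity}, sum over $i$, integrate on $[\tau_0,\tau]$; (2) move the leading-order $\calL_i$-pieces from $\calD_iE_1$ and $\calD_iE_3$ (those coming from $\calC_2^{E_3}$, $\calC_4^{E_3}$, and the $\calG^\kk-1$ factor) to the left-hand side, absorbing them into the energy via the elliptic structure \eqref{ellipticibp} and Corollary \ref{cor: energy norm equiva}, picking up boundary terms at $\tau_0,\tau,0$; (3) integrate by parts in $\tau$ on all terms of the form $(\text{coeff})\cdot w\ze\calD_{i+1}\pt\Theta$ coming from $\calC_1^{E_3},\calC_3^{E_3},\calC_1^{E_4},\calC_2^{E_4}$, converting them into endpoint boundary terms (controlled by $\calE^N(\tau_0)+\calE^N(\tau)+\calE^N(0)$ up to small constants) plus integrals with one fewer derivative that are bounded as safe terms; (4) bound the $S^N$-type contributions (from $\calC_3^{E_4}w\calD_i\pt^2\Theta$ and the $R$-remainders) by invoking Proposition \ref{prop:SN}, choosing $\delta$ small so the $\delta^{3/2}\calE^N$ terms are absorbed into the left-hand side; (5) bound all remaining safe errors via Cauchy-Schwarz plus Lemmas \ref{lem:nonlinear}, \ref{lem:nonlinear2} and the propositions of Section \ref{sect: Energy Estimate III}, converting the $\lam$-powers to exponentials via Lemma \ref{lem:lambdaODE}; (6) use the coercivity of $\mfD^N$ for $\kk \le \frac23$ (Definition \ref{def: damping}) so the damping term stays on the left. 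Collecting everything and choosing $\eps''$ (hence the absorption constants) small enough yields \eqref{est:mainEN0}. The genuinely delicate bookkeeping is ensuring that at every integration by parts in $\tau$, the boundary term is of the form $\delta^{1/2}(\calE^N)^{1/2}\cdot(\calE^N)^{1/2}$ or $\delta(\calE^N(\tau_0)+\calE^N(0))$ and never contains an uncontrolled $\|\calD_{i+1}\pt\Theta\|$ at an interior time — which is precisely guaranteed by the exact top-order forms recorded in Proposition \ref{prop:highestdtF}, Proposition \ref{prop:dtU0}, and Propositions \ref{prop:DiE1}--\ref{prop:DiE4}.
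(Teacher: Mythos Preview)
Your overall architecture is right, and most of the bookkeeping matches the paper. However, there is a genuine gap in steps (3) and (4) at the top order $i=N$, and it is precisely the place where the argument is most delicate.

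For the leading term $(\calC^{E}_\bullet\, w\ze\,\calD_{i+1}\pt\Theta,\; \pt\calD_i\Theta)_i$ you propose integration by parts in $\tau$. After that IBP the residual integrand contains either $\calD_{i+1}\Theta\cdot\pt^2\calD_i\Theta$ or $\calD_{i+1}\pt^2\Theta\cdot\calD_i\Theta$, depending on which factor you integrate. At $i=N$ neither is controlled: $S^N$ only bounds $\calD_j\pt^2\Theta$ for $j\le N-1$, and $\calD_{N+1}\pt^2\Theta$ is even further out of reach. The paper avoids this entirely by integrating by parts in $\zeta$, not $\tau$: writing $\calD_{i+1}=\Dz\calD_i$ (for $i$ even), one has $w\ze\,\calD_{i+1}\pt\Theta\cdot\calD_i\pt\Theta = \tfrac12 w\ze\,\pz(\calD_i\pt\Theta)^2 + \text{(zero-order)}$, and spatial IBP turns the whole inner product into $\int \pz(\text{coeff})\,w^{\frac1\kk+i+1}\ze^3(\calD_i\pt\Theta)^2\,d\ze$ plus similar pieces, all bounded directly by $\|\calD_i\pt\Theta\|_i^2\le\delta\lam^{-3\kk}\calE^N$ with no appeal to $S^N$ and no loss at $i=N$. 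This is the actual reason the top-order coefficients were forced to be exactly $\calD_{i+1}\pt\Theta$ rather than a generic $P_{i+1}\pt\Theta$: it is a spatial symmetry against the multiplier $\calD_i\pt\Theta$, not a temporal one.

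The same issue bites in step (4): bounding $(\calC_3^{E_4}w\,\calD_i\pt^2\Theta,\;\calD_i\pt\Theta)_i$ via $S^N$ fails at $i=N$. The paper instead uses the exact identity $\calD_i\pt^2\Theta\cdot\calD_i\pt\Theta=\tfrac12\pt(\calD_i\pt\Theta)^2$ to write this term as $-\tfrac12\pt(\calC_3^{E_4}w\,\calD_i\pt\Theta,\calD_i\pt\Theta)_i+\tfrac12(\pt(\calC_3^{E_4}w)\,\calD_i\pt\Theta,\calD_i\pt\Theta)_i$; the first piece is absorbed into the energy on the left, the second is $\lesssim\delta\lam^{-3\kk}\calE^N$. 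So $S^N$ is only used for the genuine remainder terms $R^i_j$ and $\mfC_3$ (which by construction involve at most $\calD_{N-1}\pt^2\Theta$), exactly as Proposition~\ref{prop:SN} allows. Once you replace the temporal IBP in (3) by the spatial IBP and use the perfect-derivative structure in (4), the rest of your outline goes through essentially as in the paper.
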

\begin{proof}
    We start by recalling the energy balance \eqref{eq: high-order energy identity}: for $0 \le i \le N$,
    \begin{equation}
        \label{eq:ienergybalance}
        \begin{split}
        \frac{d}{d\tau}\mfE_i(\tau) + \mfD_i(\tau) &=  -(\barcalD_{i-1}\mathfrak{M}, \p_{\tau}\calD_i\Theta)_i - \sum_{j=1}^2 (\calD_i\mathfrak{R}_j(\Theta), \p_\tau\calD_i\Theta)_i-\sum_{j=1}^6(\calD_iE_j, \p_{\tau}\calD_i\Theta)_i\\
        &\quad-\sum_{j=1}^3(\mfC_j, \p_{\tau}\calD_i\Theta)_i -\sum_{j=1}^5\mfI_j.
        \end{split}
    \end{equation}
    Using Proposition \ref{prop:est M}, Proposition \ref{prop:est R}, Proposition \ref{prop:est C12}, and Proposition \ref{prop:est C}, we have the following bound after a use of Cauchy-Schwarz inequality:
    \begin{equation}
        \label{energyineqaux1}
        \begin{split}
        &\left|(\barcalD_{i-1}\mathfrak{M}, \p_{\tau}\calD_i\Theta)_i + \sum_{j=1}^2 (\calD_i\mathfrak{R}_j(\Theta), \p_\tau\calD_i\Theta)_i +\sum_{j=1}^3(\mfC_j, \p_{\tau}\calD_i\Theta)_i +\sum_{j=1}^5\mfI_j\right|\\
        &\quad \lesssim (\delta\lam^{-3\kk})^\frac12\left((\calE^N(\tau))^\frac12 + (\calS^N(\tau))^\frac12\right)(\calE^N(\tau))^\frac12 + \lam^{-\frac32\kk}\calE^N(\tau)\\
        &\quad \lesssim \lam^{-\frac32\kk}\calE^N(\tau) + \calS^N(\tau),
        \end{split}
    \end{equation}
    where we used that $\delta \lam^{-3\kk} + \delta \lesssim 1$. Using Proposition \ref{prop:DiE1}, Proposition \ref{prop:DiE2}, Proposition \ref{prop:DiE3}, Proposition \ref{prop:DiE4}, and Proposition \ref{prop:DiE5+6}, we write
    \begin{equation}
        \label{energyineqaux2}
        \begin{split}
            -\sum_{j=1}^6(\calD_iE_j, \p_{\tau}\calD_i\Theta)_i &= -\left((\calC_1^{E_4} + \calC_2^{E_4} +\calC_1^{E_3})w\ze\calD_{i+1}\pt\Theta, \calD_i\pt\Theta\right)_i\\
            &\quad -\left(\calC_3^{E_4}\calD_{i}\pt^2\Theta, \calD_i\pt\Theta\right)_i - \left(g\calL_{i}\calD_i\Theta, \calD_i\pt\Theta\right)_i\\
            &\quad - \left(\calC_3^{E_3}w\ze\calD_{i+1}\pt\Theta(0), \calD_i\pt\Theta\right)_i - \left(\calC_4^{E_3}\calL_{i}\calD_i\Theta(0), \calD_i\pt\Theta\right)_i\\
            &\quad + (\mfR_E,\calD_i\pt\Theta)_i,
        \end{split}
    \end{equation}
    where the coefficients $\calC_k^{E_j}$ are defined in \eqref{E4coeff} and \eqref{E3coeff}, and the function $g$ is given by
    \begin{equation}\label{gdefn}
    g(\tau,\ze) := (1+\kk)\calF^{-\kk-2}(U^0)^{-4}\left(1+\frac{\Theta}{\ze}\right)^4(\calG^\kk - 1) +\calC_{2}^{E_3}.
    \end{equation}
    Moreover, $\mfR_E$ obeys the following bound for all $\tau \in [0,\tau_*]$:
    \begin{equation}
        \label{energyineqaux3}
        \|\mfR_E\|_i^2 \lesssim 1 + \calE^N(0) + \calS^N(\tau) + \calE^N(\tau).
    \end{equation}
    Then we are left to study the leading order terms appearing in \eqref{energyineqaux2}. 
    \begin{enumerate}
        \item $\left((\calC_1^{E_4} + \calC_2^{E_4} +\calC_1^{E_3})w\ze\calD_{i+1}\pt\Theta, \calD_i\pt\Theta\right)_i$. It suffices to consider the case where $i$ is even, as the $i$ odd case is strictly easier. Writing $\calD_{i+1} = \Dz \calD_i = \pz\calD_i + \frac{2}{\ze}\calD_i$, we have
        \begin{align*}
            &-\left((\calC_1^{E_4} + \calC_2^{E_4} +\calC_1^{E_3})w\ze\calD_{i+1}\pt\Theta, \calD_i\pt\Theta\right)_i = -\frac12\int_0^1 (\calC_1^{E_4} + \calC_2^{E_4} +\calC_1^{E_3}) w^{\frac1\kk + i+1}\ze^3 \pz\left(\calD_i\pt\Theta\right)^2d\ze\\
            &\quad - 2\int_0^1 (\calC_1^{E_4} + \calC_2^{E_4} +\calC_1^{E_3}) w^{\frac1\kk + i+1} \ze^2 \left(\calD_i\pt\Theta\right)^2d\ze\\
            &=\frac12 \int_0^1 \pz\left(\calC_1^{E_4} + \calC_2^{E_4} +\calC_1^{E_3}\right)w^{\frac1\kk + i+1}\ze^3 \left(\calD_i\pt\Theta\right)^2d\ze\\
            &\quad +\frac{\frac1\kk + i + 1}{2} \int_0^1 \left(\calC_1^{E_4} + \calC_2^{E_4} +\calC_1^{E_3}\right)w'w^{\frac1\kk + i}\ze^3 \left(\calD_i\pt\Theta\right)^2d\ze\\
            &\quad -\frac{1}{2}\int_0^1 \left(\calC_1^{E_4} + \calC_2^{E_4} +\calC_1^{E_3}\right)w^{\frac1\kk + i+1}\ze^2 \left(\calD_i\pt\Theta\right)^2d\ze.
        \end{align*}
        Since $\|\calC_1^{E_4}\|_{W^{1,\infty}} + \|\calC_2^{E_4}\|_{W^{1,\infty}} + \|\calC_1^{E_3}\|_{W^{1,\infty}} \lesssim 1$ by bootstrap assumptions and Lemma \ref{lem: induced bootstrap}, we immediately have
        \begin{equation}
            \label{leadingorder1}
            \left|\left((\calC_1^{E_4} + \calC_2^{E_4} +\calC_1^{E_3})w\ze\calD_{i+1}\pt\Theta, \calD_i\pt\Theta\right)_i\right| \lesssim \|\calD_i\pt\Theta\|_i^2 \le \delta\lam^{-3\kk}\calE^N(\tau).
        \end{equation}

        \item $-\left(\calC_3^{E_4}\calD_{i}\pt^2\Theta, \calD_i\pt\Theta\right)_i$. We observe that
        \begin{equation}\label{leadingorder2}
        \begin{split}
            -\left(\calC_3^{E_4}\calD_{i}\pt^2\Theta, \calD_i\pt\Theta\right)_i &= -\frac12\pt\left(\calC_3^{E_4}\calD_i\pt\Theta,\calD_i\pt\Theta\right)_i + \frac12  \left(\pt\calC_3^{E_4} \calD_i\pt\Theta,\calD_i\pt\Theta\right)\\
            &=: -\frac12\pt\left(\calC_3^{E_4}\calD_i\pt\Theta,\calD_i\pt\Theta\right)_i + \mfI_{leading}^1.
            \end{split}
        \end{equation}
        By definition of $\calC_3^{E_4}$, bootstrap assumptions, and Lemma \ref{lem: induced bootstrap}, we have $\|\pt \calC_3^{E_4}\|_{L^\infty} \lesssim 1$. Then the following bound holds:
        $$|\mfI_{leading}^1| \lesssim \delta\lam^{-3\kk}\calE^N(\tau).$$

        \item $- \left(g\calL_{i}\calD_i\Theta, \calD_i\pt\Theta\right)_i$. Following \eqref{ellipticibp}, we have
        \begin{equation}\label{leadingorder3}
        \begin{split}
            -\left(g\calL_{i}\calD_i\Theta, \calD_i\pt\Theta\right)_i &=-\frac{1}{2}\frac{d}{d\tau}\int_0^1w^{1+\frac1\kk+i}\ze^2g|\calD_{i+1}\Theta|^2 + \frac{1}{2}\int_0^1w^{1+\frac1\kk+i}\ze^2(\p_{\tau}g)|\calD_{i+1}\Theta|^2\\
    &-\int_0^1w^{1+\frac1\kk+i}\ze^2(\pz g)(\p_{\tau}\calD_i\Theta)(\calD_{i+1}\Theta)\\
    &=: -\frac{1}{2}\pt (g\calD_{i+1}\Theta, \calD_{i+1}\Theta)_{i+1} + \mfI_{leading}^2.            
        \end{split}
        \end{equation}
        By bootstrap assumptions, Lemma \ref{lem: induced bootstrap}, and the definition of $g$, it is not difficult to see the following estimate:
        $$
        \|\pz g\|_{L^\infty} \lesssim 1,\quad \|\pt g\|_{L^\infty} \lesssim \lam^{-\frac32}.
        $$
        Then an application of Cauchy-Schwarz inequality yields:
        \begin{equation}\label{Ileading2}
            |\mfI_{leading}^2| \lesssim (\delta\lam^{-3\kk})^\frac12 \calE^N(\tau).
        \end{equation}

        \item $- \left(\calC_3^{E_3}w\ze\calD_{i+1}\pt\Theta(0), \calD_i\pt\Theta\right)_i$. It suffices to consider $i$ to be even. We observe that
        \begin{align*}
            &- \left(\calC_3^{E_3}w\ze\calD_{i+1}\pt\Theta(0), \calD_i\pt\Theta\right)_i = \int_0^1 \calC_3^{E_3} w^{\frac1\kk + i + 1}\ze^3\calD_i \pt\Theta(0)\calD_{i+1}\pt\Theta d\ze\\
            &\quad + \int_0^1 \pz\left(\calC_3^{E_3} w^{\frac1\kk + i + 1}\ze^3\right) \calD_i \pt\Theta(0) \calD_i\pt\Theta d\ze-2\int_0^1 \calC_3^{E_3} w^{\frac1\kk + i + 1}\ze^2\calD_i \pt\Theta(0)\calD_i\pt\Theta d\ze\\
            &= \frac{d}{d\tau} \int_0^1 \calC_3^{E_3} w^{\frac1\kk + i + 1}\ze^3\calD_i \pt\Theta(0)\calD_{i+1}\Theta d\ze- \int_0^1 (\pt\calC_3^{E_3}) w^{\frac1\kk + i + 1}\ze^3\calD_i \pt\Theta(0)\calD_{i+1}\Theta d\ze\\
            &\quad + \int_0^1 \pz\left(\calC_3^{E_3} w^{\frac1\kk + i + 1}\ze^3\right) \calD_i \pt\Theta(0) \calD_i\pt\Theta d\ze-2\int_0^1 \calC_3^{E_3} w^{\frac1\kk + i + 1}\ze^2\calD_i \pt\Theta(0)\calD_i\pt\Theta d\ze\\
            &=: \frac{d}{d\tau} \int_0^1 \calC_3^{E_3} w^{\frac1\kk + i + 1}\ze^3\calD_i \pt\Theta(0)\calD_{i+1}\Theta d\ze + \mfI_{leading}^3.
        \end{align*}
        By definition of $\calC_3^{E_3}$, the bootstrap assumptions, and Lemma \ref{lem: induced bootstrap}, we have the following bounds:
        $$
        \|\calC_3^{E_3}\|_{W^{1,\infty}} \lesssim 1,\quad \|\pt \calC_3^{E_3}\|_{L^\infty} \lesssim \lam^{-\frac32\kk},
        $$
        from which we deduce that
        \begin{equation}
            \label{Ileading3}
            |\mfI_{leading}^3| \lesssim (\delta \lam^{-3\kk})^\frac12 (\calE^N(0))^\frac12(\calE^N(\tau))^\frac12 \lesssim (\delta \lam^{-3\kk})^\frac12(\calE^N(0) + \calE^N(\tau)).
        \end{equation}

        \item $- \left(\calC_4^{E_3}\calL_{i}\calD_i\Theta(0), \calD_i\pt\Theta\right)_i$. Following \eqref{ellipticibp}, we note that
        \begin{align*}
            &- \left(\calC_4^{E_3}\calL_{i}\calD_i\Theta(0), \calD_i\pt\Theta\right)_i = -\frac{d}{d\tau}\int_0^1 w^{\frac1\kk + i + 1}\ze^2 \calC_4^{E_3} \calD_{i+1}\Theta(0)\calD_{i+1}\Theta d\ze\\
            &\quad + \int_0^1 w^{\frac1\kk + i + 1}\ze^2(\pt \calC_4^{E_3})\calD_{i+1}\Theta(0)\calD_{i+1}\Theta d\ze- \int_0^1 w^{\frac1\kk + i + 1}\ze^2(\pz \calC_4^{E_3})\calD_{i+1}\Theta(0)\calD_{i}\pt\Theta d\ze\\
            &=: -\frac{d}{d\tau}\int_0^1 w^{\frac1\kk + i + 1}\ze^2 \calC_4^{E_3} \calD_{i+1}\Theta(0)\calD_{i+1}\Theta d\ze + \mfI_{leading}^4.
        \end{align*}
        Again by the definition of $\calC_4^{E_3}$, the bootstrap assumptions and Lemma \ref{lem: induced bootstrap}, we have the bounds
        $$
        \|\calC_4^{E_3}\|_{W^{1,\infty}} \lesssim \delta,\quad \|\pt \calC_4^{E_3}\|_{L^\infty} \lesssim \delta \lam^{-\frac32\kk}.
        $$
        Then we have
        \begin{equation}
            \label{Ileading4}
            |\mfI_{leading}^4| \lesssim \delta \lam^{-\frac32\kk}(\calE^N(0) + \calE^N(\tau)).
        \end{equation}

    \end{enumerate}

    Now, we integrate \eqref{eq:ienergybalance} in time on $[\tau_0,\tau]$, rearranging, and applying \eqref{energyineqaux1}--\eqref{Ileading4}, we obtain
    \begin{equation}\label{est:mfEiaux1}
    \begin{split}
        \mfE_i(\tau) &+ \int_{\tau_0}^\tau \mfD_i(\tilde\tau) d\tilde\tau + \frac12 \left(\calC_3^{E_4}\calD_i\pt\Theta, \calD_i\pt\Theta\right)_i(\tau) + \frac12 \left(g\calD_{i+1}\Theta,\calD_{i+1}\Theta\right)_i(\tau)\\
        &\quad-\int_0^1 \calC_3^{E_3}(\tau) w^{\frac1\kk + i + 1}\ze^3\calD_i \pt\Theta(0)\calD_{i+1}\Theta(\tau) d\ze+\int_0^1 w^{\frac1\kk + i + 1}\ze^2 \calC_4^{E_3}(\tau) \calD_{i+1}\Theta(0)\calD_{i+1}\Theta(\tau) d\ze\\
        &\le \frac12 \left|\left(\calC_3^{E_4}\calD_i\pt\Theta, \calD_i\pt\Theta\right)_i(\tau_0)\right| + \frac12\left|\left(g\calD_{i+1}\Theta,\calD_{i+1}\Theta\right)_i(\tau_0)\right|\\
        &\quad +\left|\int_0^1 \calC_3^{E_3}(\tau_0) w^{\frac1\kk + i + 1}\ze^3\calD_i \pt\Theta(0)\calD_{i+1}\Theta(\tau_0) d\ze\right|+ \left|\int_0^1 w^{\frac1\kk + i + 1}\ze^2 \calC_4^{E_3}(\tau_0) \calD_{i+1}\Theta(0)\calD_{i+1}\Theta(\tau_0) d\ze\right|\\
        &\quad + C\bigg[\delta^\frac12\int_{\tau_0}^\tau \lam^{-\frac32\kk}(\calE^N(\tilde\tau))^\frac12\left(1+\calS^N(\tilde\tau) + \calE^N(\tilde\tau)+\calE^N(0)\right)^\frac12 d\tilde\tau + \delta^\frac12\int_{\tau_0}^\tau \lam^{-\frac32\kk} \calE^N(0) d\tilde\tau\\
        &\quad + \int_{\tau_0}^\tau \lam^{-\frac32\kk}\calE^N(\tilde\tau) d\tilde\tau + \int_{\tau_0}^\tau \calS^N(\tilde\tau) d\tilde\tau\bigg]\\
        &\le \bar{C}\left(\calE^N(\tau_0) + \calE^N(0)\right) + C\left[\delta^\frac12\int_{\tau_0}^\tau e^{-\frac32\kk\bar\lam \tilde\tau} (\calE^N(\tilde\tau))^\frac12 d\tilde\tau + \int_{\tau_0}^\tau \calS^N(\tilde\tau) d\tilde\tau\right]\\
        &\le \bar{C}\left(\delta^\frac32\calE^N(\tau) + \calE^N(\tau_0) + \calE^N(0) + \delta^2\right) + C\left[\delta^\frac12\int_{\tau_0}^\tau e^{-\frac32\kk\bar\lam \tilde\tau} (\calE^N(\tilde\tau))^\frac12 d\tilde\tau+\int_{\tau_0}^\tau e^{-\frac32\kk\bar\lam \tilde\tau}\calE^N(\tilde\tau) d\tilde\tau\right],
        \end{split}
    \end{equation}
    where $C,\bar C > 0$ are positive constants which are independent of $M_*,\eps,\delta$. We remark that we have used bootstrap assumptions, Lemma \ref{lem: induced bootstrap}, the equivalence \eqref{est:lambdaasym}, and the integrability of $\lam^{-\frac32\kk}$ on $[0,\infty]$ in the second inequality above; we also used Proposition \ref{prop:SN} in the final inequality.

    In view of the LHS of \eqref{est:mfEiaux1}, we immediately infer from Lemma \ref{lem: induced bootstrap} that
    \begin{align*}
        \left|\frac12 \left(\calC_3^{E_4}\calD_i\pt\Theta, \calD_i\pt\Theta\right)_i(\tau)\right| &\lesssim \delta\lam^{-3\kk}\calE^N(\tau) \lesssim \delta \calE^N(\tau),\\
        \left|\int_0^1 \calC_3^{E_3}(\tau) w^{\frac1\kk + i + 1}\ze^3\calD_i \pt\Theta(0)\calD_{i+1}\Theta(\tau) d\ze\right| &\lesssim \delta^\frac12(\calE^N(0))^\frac12(\calE^N(\tau))^\frac12 \lesssim \calE^N(0) + \delta\calE^N(\tau),\\
        \left|\int_0^1 w^{\frac1\kk + i + 1}\ze^2 \calC_4^{E_3}(\tau) \calD_{i+1}\Theta(0)\calD_{i+1}\Theta(\tau) d\ze\right| &\lesssim (\calE^N(0))^\frac12(\calE^N(\tau))^\frac12 \le \mathring{\eps}\calE^N(\tau) + C(\mathring{\eps})\calE^N(0),
    \end{align*}
    where $\mathring{\eps} > 0$ is a small parameter to be determined. Using the above bounds, we then bound the LHS of \eqref{est:mfEiaux1} from below as follows:
    \begin{equation}\label{est:LHSEi}
    \begin{split}
    \text{LHS of }\eqref{est:mfEiaux1} &\ge \int_{\tau_0}^\tau \mfD_i(\tilde\tau)d\tilde\tau + \frac{1}{2}\delta^{-1}\lam^{3\kk}\int_0^1 w^{\frac1\kk+i}\ze^2(1+\delta\lam^{-3\kk}w\calF^{-\kk}\calG^{\kk})|\calD_i\p_{\tau}\Theta|^2d\ze \\
    &\quad+ \frac{1}{2}\int_0^1 w^{\frac1\kk+i}\ze^2(1+\delta\lam^{-3\kk}w\calF^{-\kk}\calG^{\kk})|\calD_i\Theta|^2d\ze\\
    &\quad+\frac{1+\kk}{2}\int_0^1w^{1+\frac1\kk+i}\ze^2\left(\calF^{-\kk-2}(U^0)^{-4}\left(1+\frac{\Theta}{\ze}\right)^4\right)|\calD_{i+1}\Theta|^2d\ze\\
    &\quad+\frac12 \left(g\calD_{i+1}\Theta,\calD_{i+1}\Theta\right)_i(\tau) - (\mathring{\eps} + \delta)\calE^N(\tau) - C(\mathring{\eps})\calE^N(0)\\
    &=\int_{\tau_0}^\tau \mfD_i(\tilde\tau)d\tilde\tau + \frac{1}{2}\delta^{-1}\lam^{3\kk}\int_0^1 w^{\frac1\kk+i}\ze^2(1+\delta\lam^{-3\kk}w\calF^{-\kk}\calG^{\kk})|\calD_i\p_{\tau}\Theta|^2d\ze \\
    &\quad+ \frac{1}{2}\int_0^1 w^{\frac1\kk+i}\ze^2(1+\delta\lam^{-3\kk}w\calF^{-\kk}\calG^{\kk})|\calD_i\Theta|^2d\ze\\
    &\quad+\frac{1+\kk}{2}\int_0^1w^{1+\frac1\kk+i}\ze^2\left[\left(\calF^{-\kk-2}(U^0)^{-4}\left(1+\frac{\Theta}{\ze}\right)^4\calG^\kk\right) + \calC_2^{E_3}\right]|\calD_{i+1}\Theta|^2d\ze\\
    &\quad - (\mathring{\eps} + \delta)\calE^N(\tau) - C(\mathring{\eps})\calE^N(0)\\
    &\ge c_0\calE_i(\tau) +\int_{\tau_0}^\tau \mfD_i(\tilde\tau)d\tilde\tau - (\mathring{\eps} + \delta)\calE^N(\tau) - C(\mathring{\eps})\calE^N(0),
    \end{split}
    \end{equation}
    where $c_0 > 0$ is a constant independent of $M_*, \eps,\delta$. Note that we have used the definition of $g$ \eqref{gdefn} in the equality above. In the final inequality above, we used the bootstrap assumption that $\calF, U^0,\calG \approx 1$, the bound $\|\calC_2^{E_3}\|_{L^\infty} \lesssim \delta \ll1$, and the energy-norm equivalence (i.e., Corollary \ref{cor: energy norm equiva}). Now, let us choose $\mathring{\eps} + \delta < \frac{c_0}{2}$. Combining \eqref{est:mfEiaux1} and \eqref{est:LHSEi}, rearranging, and summing over $i = 0,\hdots, N$, we arrive at
    \begin{align*}
    \frac{c_0}{2}\calE^N(\tau) +\int_{\tau_0}^\tau \mfD^N(\tilde\tau)d\tilde\tau &\le \bar{C}N\left(\delta^\frac32\calE^N(\tau) + \calE^N(\tau_0) + \calE^N(0) + \delta^2\right)\\
    &\quad +CN\left[\delta^\frac12\int_{\tau_0}^\tau e^{-\frac32\kk\bar\lam \tilde\tau} (\calE^N(\tilde\tau))^\frac12 d\tilde\tau + \int_{\tau_0}^\tau e^{-\frac32\kk\bar\lam \tilde\tau}\calE^N(\tilde\tau) d\tilde\tau\right]
    \end{align*}
    Choosing $\delta$ even smaller such that $\bar{C}N\delta^\frac32 \le \min\{\frac{c_0}{4},\frac12\}$, we deduce from the above inequality that
    \begin{align*}
    \calE^N(\tau)+\int_{\tau_0}^\tau \mfD^N(\tilde\tau)d\tilde\tau &\le \frac{4\bar C N}{c_0}\left(\calE^N(\tau_0) + \calE^N(0) + \delta^2\right)\\
    &\quad+ \frac{4CN}{c_0}\left[\delta^\frac12\int_{\tau_0}^\tau e^{-\frac32\kk\bar\lam \tilde\tau} (\calE^N(\tilde\tau))^\frac12 d\tilde\tau + \int_{\tau_0}^\tau e^{-\frac32\kk\bar\lam \tilde\tau}\calE^N(\tilde\tau) d\tilde\tau\right]
    \end{align*}
    Then the main result \eqref{est:mainEN0} follows after we fix $C_0 = \frac{4\bar C N}{c_0}$ and $C_1 = \frac{4CN}{c_0}.$
\end{proof}

\section{Local Existence and Uniqueness}
\label{S:LWP}
In this section we state the basic local existence and continuation criterion results concerning the main equation \eqref{eq:momlagmain} that are needed for the Main Theorem stated and proven in Section \ref{sect: mainthm}. In a nutshell, the result follows from \cite{disconzi2022relativistic}, where local well-posedness of the relativistic Euler equations with a physical vacuum boundary condition has been established, along with a continuation criterion. 
In order to precisely appeal to \cite{disconzi2022relativistic}, we first need some definitions.

Consider a domain $\EulerianDomain \subset \mathbb{R}^3$ given by a defining function $\DefFunction \colon \mathbb{R}^3 \rightarrow [0,\infty)$, i.e., $\EulerianDomain = \{ \DefFunction > 0 \}$ and $\left. \partial_\nu \DefFunction \right|_{\partial \EulerianDomain} \neq 0$, where $\nu$ is the inner outer normal to $\partial \EulerianDomain$. Let $k \geq 0$ be an integer and $\sigma > -\frac{1}{2}$ a real number. We define $H_\DefFunction^{k,\sigma}(\EulerianDomain)$ as the space of distributions $\GenericFunction$ on $\EulerianDomain$ whose norm
\begin{align*}
    \| \GenericFunction \|^2_{H_\DefFunction^{k,\sigma}(\EulerianDomain)} := \sum_{|\alpha| \leq k} \| \DefFunction^\sigma \partial^\alpha \GenericFunction \|^2_{L^2(\EulerianDomain)} 
\end{align*}
is finite. 
We note the following embedding from \cite{disconzi2022relativistic}
\begin{align}
    \label{E:Weighted_embedding_DIT}
    H_\DefFunction^{k,\sigma}(\EulerianDomain)
    \subset C^s(\EulerianDomain), \, s < k-\sigma-\frac{3}{2}.
\end{align}
Let us denote
\begin{align*}
    \DITWeightedSpace^{2k}_{\DefFunction}(\EulerianDomain) := 
    H_{\DefFunction}^{2k,k+\frac{1-\kappa}{2\kappa}}(\EulerianDomain) \times 
    H_{\DefFunction}^{2k,k+\frac{1-\kappa}{2\kappa}+\frac{1}{2}}(\EulerianDomain).
\end{align*}

Because we will be appealing to \cite{disconzi2022relativistic}, it is best to introduce the variables used in that work. Given solution variables $(\rho,u^i)$ to \eqref{eq:RE}, define
\begin{subequations}{\label{E:New_sound_and_velocity}}
\begin{align}
    \label{E:New_sound_speed_sq}
    \NewSoundSpeedSq & := \frac{\kappa+1}{\kappa} \rho^\kappa, 
    \\
    \label{E:New_velocity}
    \NewVelocity^i & := (1+\rho^\kappa)^{1+\frac{1}{\kappa}} u^i,
\end{align}
\end{subequations}
and note that $\NewSoundSpeedSq$ is a multiple of the sound speed squared, $c_s^2$, so $\NewSoundSpeedSq$ is comparable to the distance to the boundary in our context. We also observe that the passage from $(\rho,u^i)$ to $(\NewSoundSpeedSq,\NewVelocity^i)$ is invertible and that
\begin{align*}
    \Omega_t := 
    \{ \rho > 0 \} = \{ \NewSoundSpeedSq > 0 \}.
\end{align*}

For a precise statement of continuity relative to the (rescaled) time variable $\tau$, we introduce the following Banach space related to the norm $\calE^N$ given by \eqref{defn:calEN},
\begin{align*}
    \mathcal{X}^{m}:= \Big\{ (\Psi_0,\Psi_1) \colon
    &[0,1] \rightarrow \mathbb{R} \, | \, 
    \\
    &
    \sum_{i = 0}^m \Big(\|\calD_i 
    \Psi_1\|_i^2 + 
    \|\calD_i \Psi_0 \|_i^2 + \|\calD_{i+1}\Psi_0\|_{i+1}^2 \Big) < \infty
    \Big\},
\end{align*}
where $\calD_i$ is given by \eqref{calDj} and the norm $\| \cdot \|_i$ is defined in Definition \ref{def: inner product}. We also note that $\mathcal{X}^m$ can be turned into an inner product since the norms $\| \cdot \|_i$ are inner-produce norms. 

\begin{rmk}
\label{R:Norm_equivalence_calX_calE}
    We see that the $\mathcal{X}^m$ norm is the  $\calE^N$ norm given by \eqref{defn:calEN}
up to the factor $\delta^{-1}\lam(\tau)^{3\kk}$. Recall that $\lambda(\tau)$ is given by Lemma \ref{lem:lambdaODE}, which also ensures that   $\lambda$ does not depend on solutions to \eqref{eq:momlagmain} and is uniformly bounded away from zero on any finite time interval. Therefore, the $\mathcal{X}^{m}$ norm is equivalent to the norm $\calE^N$ given by \eqref{defn:calEN} for each $\tau$.
\end{rmk}

Define the set of initial states $(\mathring{\NewSoundSpeedSq},\mathring{\NewVelocity})$ as
\begin{align*}
    \DITWeightedSpace_0^{2k} := \{ (\mathring{\NewSoundSpeedSq},\mathring{\NewVelocity}^i) \in \DITWeightedSpace^{2k}_{\mathring{\NewSoundSpeedSq}}(\Omega_{\mathring{\NewSoundSpeedSq}}) \},
\end{align*}
where $\Omega_{\mathring{r}} := \{ \mathring{r} > 0 \} = \{ \mathring{\rho} > 0 \}$, with $\mathring{\rho}$ given by \eqref{E:New_sound_speed_sq}. We observe that elements in $\DITWeightedSpace^{2k}_0$ are defined in different domains. Thus, in order to compare elements in this space we need to introduce a suitable topology on $\DITWeightedSpace^{2k}_0$. This is done in \cite{disconzi2022relativistic} and we use the same 
here.

\begin{thm}
    \label{T:LWP}
    Let $(\mathring{\rho},\mathring{u}^i)$ be spherically symmetric initial data for the relativistic Euler equations \eqref{eq:RE} on a bounded domain $\mathring{\EulerianDomain} = B_{\lambda_0}(0) \subset \mathbb{R}^3$, with equation of state \eqref{equation of states}. Assume that $\mathring{\EulerianDomain} = \{ \mathring{\rho} > 0 \}$ and that the physical vacuum boundary condition \eqref{fallout rate} holds on $\mathring{\EulerianDomain}$.  Let $N$ be an integer such that 
    \begin{align}
    \label{E:Fixed_N_bound}
    N \geq \lfloor \frac{1}{\kappa} \rfloor + 10.
    \end{align}
    Suppose that
    \begin{align*}
        (\mathring{\NewSoundSpeedSq},\mathring{\NewVelocity}^i) \in 
        \DITWeightedSpace^{2k}_{\mathring{\NewSoundSpeedSq}}(\mathring{\EulerianDomain}),
    \end{align*}
    where $(\mathring{\NewSoundSpeedSq},\mathring{\NewVelocity}^i)$ are the variables \eqref{E:New_sound_and_velocity} constructed out of $(\mathring{\rho},\mathring{u}^i)$, and $k$ satisfies
    \begin{align}
        \label{E:N_k_relation}
        k & \geq N + 5 + \lceil \frac{1}{2\kappa} \rceil.
    \end{align}        
    Fix parameters $\lambda_1 \in (0,1)$, $\delta >0$, an admissible weight $w$ as in Definition \ref{def: w}, introduce the reference initial data 
    \begin{subequations}{\label{E:Reference_data}}
    \begin{align}
        \label{E:rho_ref}
        \mathring{\rho}_{\text{ref}}(x) &:= \lambda_0^{-3} \big(\delta w(\frac{|x|}{\lambda_0})\big)^\frac{1}{\kappa}, \quad
        \\
        \label{E:u_ref}
        \mathring{u}^i_{\text{ref}}(x) &:= \frac{1}{\sqrt{1-\frac{\lambda_1^2 }{\lambda_0^2}|x|^2}} \frac{\lambda_1}{\lambda_0} x^i,
    \end{align}        
    \end{subequations}
    and let $(\NewSoundSpeedSq_\text{ref},\NewVelocity^i_\text{ref})$ be the variables \eqref{E:New_sound_and_velocity} corresponding to $(\rho_\text{ref},u^i_\text{ref})$.
    Introduce the $\delta$-dependent reference mass
    \begin{subequations}{\label{E:Masses_data}}
    \begin{align}
        \label{E:Reference_mass}
        M_\text{ref}(\delta,w) \equiv M_\text{ref}:=4\pi \int_0^1 (\delta w(\zeta))^\frac{1}{\kappa} \zeta^2  d\zeta,
    \end{align}
    and the total mass of the initial data
    \begin{align}
        \label{E:Initial_mass}
        M(\mathring{\rho}) \equiv 
        M :=4\pi \int_0^{\lambda_0} \mathring{\rho}(r) r^2 dr.
    \end{align}
    \end{subequations}    
    Finally, assume that
    \begin{align}
        \label{E:Mass_constraint}
        M = M_\text{ref}.
    \end{align}

    Then, the following statements hold:
    
    \medskip
    1) The variables $(\Theta_0,U_0)$ in \eqref{mainID} given by the change of variables described in Section \ref{sect: setup}, with $\rho=\mathring{\rho}$ and $u=\mathring{u}$ in \eqref{ansatz radial symmetry}, are well defined. Moreover, $(\Theta_0,U_0)=(0,0)$ if $\mathring{\rho}=\mathring{\rho}_{\text{ref}}$ and $\mathring{u}^i=\mathring{u}^i_{\text{ref}}$.

    \medskip
    2) There exists a $\tau^*>0$ and a unique classical solution $\Theta$ to \eqref{eq:momlagmain} defined on the time interval $[0,\tau^*)$ and taking the data \eqref{mainID}, where again \eqref{eq:momlagmain} is constructed out of $(\mathring{\rho},\mathring{u}^i)$ as in Section \ref{sect: setup}. Furthermore, for any $0 \leq \tau^{**} < \tau^*$, 
    \begin{align}
        \label{E:Regularity_Theta_partial_tau_Theta_closed_time_interval}
        (\Theta,\partial_\tau\Theta) \in C^0([0,\tau^{**}],\mathcal{X}^N),
    \end{align}
    and 
    \begin{align}
    \label{E:Finiteness_E_N_closed_time_interval}
    E^N(\tau^{**}) < \infty.
    \end{align}

    \medskip
    3) Define the set of initial states with compatible mass as 
    \begin{align*}
    \DITWeightedSpace_{\text{ref}}^{2k} := \{ (\mathring{\NewSoundSpeedSq},\mathring{\NewVelocity}^i) \in \DITWeightedSpace^{2k}_0 \, | \,     (\mathring{\NewSoundSpeedSq},\mathring{\NewVelocity}^i) \text{ is spherically symmetric and }
    M(\mathring{r}) = M_\text{ref} \},
    \end{align*}    
    with topology induced from $\DITWeightedSpace_0^{2k}$, where $M(\mathring{r})$ being the corresponding $M(\mathring{\rho})$ given by \eqref{E:Initial_mass}.
    
    Given $\epsilon > 0$, there exists a neighborhood $(\NewSoundSpeedSq_\text{ref},\NewVelocity_\text{ref}) \in \mathcal{U}_\text{ref} \subset \DITWeightedSpace_{\text{ref}}^{2k}$ such that if
    \begin{align*}    (\mathring{\NewSoundSpeedSq},\mathring{\NewVelocity}^i)  \in \mathcal{U}_\text{ref} 
    \end{align*}
    then
    \begin{align*}
        E^N(0) < \epsilon.
    \end{align*}
    
    \medskip
    4) Assume that the bootstrap assumptions \eqref{bootstrap U0} and \eqref{bootstrap FG} hold on $[0,\tau^*)\times[0,1]$. If 
    \begin{align}
        \label{E:Continuation_criterion_Lagrangian}
        \sup_{\tau \in [0,\tau^*)} \Big( 
        \| \Theta(\tau,\cdot) \|_{C^3([0,1])} + \| \partial_\tau \Theta(\tau,\cdot) \|_{C^2([0,1])}
        \Big) < \infty
    \end{align}
    and $\tau^* < \infty$, then the solution $(\Theta(\tau,\cdot),\partial_\tau \Theta(\tau,\cdot))$ given by 2) can be continued past $\tau^*$. 
\end{thm}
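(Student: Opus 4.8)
The plan is to deduce Theorem~\ref{T:LWP} by translating the known local well-posedness and continuation results of Disconzi--Ifrim--Tataru \cite{disconzi2022relativistic} through the chain of changes of variables developed in Section~\ref{sect: setup}, rather than re-running the energy method from scratch. First I would verify Part 1): given spherically symmetric data $(\mathring\rho,\mathring u^i)$ with a physical vacuum boundary as in \eqref{fallout rate}, the rescaled variables $(\trho,\tv)$ from \eqref{defn:scaling} are well defined because $\lambda(0)=\lambda_0>0$ is fixed by Lemma~\ref{lem:lambdaODE} and independent of the solution; the modified velocity $V$ from \eqref{modvel} is well defined since $u^0\ge1$; the Lagrangian map $\eta_0$ exists by Dacorogna--Moser \cite{dacorogna1990partial} once the gauge-fixing relation \eqref{gauge fixing} holds, which is exactly the content of the mass-matching constraint \eqref{E:Mass_constraint} with the reference mass \eqref{E:Reference_mass} (note $M_\text{ref}=4\pi\delta^{1/\kk}\int_0^1 w(\zeta)^{1/\kk}\zeta^2\,d\zeta$ is the total mass of $\mathring\rho_{\text{ref}}$, and $\trho\circ\eta_0$ must integrate against the Jacobian to $(\delta w)^{1/\kk}$); and finally $\Theta_0=\eta_0-\ze$, $U_0=V\circ\eta_0$ vanish identically when the data equals the reference data \eqref{E:Reference_data} because then $\eta_0=\ze$ and $\tv/u^0=\frac{\lambda'}{\lambda}z$, i.e.\ $V\equiv0$. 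This last point is a direct computation using that $\mathring u^i_{\text{ref}}$ in \eqref{E:u_ref} is precisely the linear-expansion profile $\frac{\lambda_1}{\lambda_0}x^i$ with the relativistic normalization factor.

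For Part 2), the idea is that \eqref{eq:RE} with the stated data is locally well-posed in $\DITWeightedSpace^{2k}_{\mathring\NewSoundSpeedSq}(\mathring\EulerianDomain)$ by \cite[Main Theorem]{disconzi2022relativistic}, with $k$ chosen large enough (the relation \eqref{E:N_k_relation} is engineered so that the Eulerian regularity translates, under the Lagrangian change of variables and the loss of derivatives inherent in passing to flow-map coordinates, into $(\Theta,\partial_\tau\Theta)\in C^0([0,\tau^{**}],\mathcal{X}^N)$). Concretely I would: (i) invoke the Eulerian local existence to get a solution $(\rho,u)$ on a short Eulerian time interval with the stated weighted Sobolev regularity, preserving spherical symmetry by uniqueness; (ii) push this forward through the algebraic and differential changes of variables of Section~\ref{sect: setup} — each of which is smooth and invertible on the relevant open set (using $u^0\ge1$, and $\calF,\, \kk\barcalG$ near $1$ on a short interval) — to obtain $\Theta$ solving \eqref{eq:momlagmain}; (iii) check that the Eulerian weighted space $H^{2k,\sigma}_\DefFunction$ with $\sigma$ as in $\DITWeightedSpace^{2k}$ maps into $\mathcal{X}^N$ using the embedding \eqref{E:Weighted_embedding_DIT} and the explicit relation between the defining function $\DefFunction$ and the admissible weight $w$ (both comparable to distance to $\partial\Omega$), so that $E^N(\tau^{**})<\infty$ follows from Remark~\ref{R:Norm_equivalence_calX_calE}. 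The finiteness \eqref{E:Finiteness_E_N_closed_time_interval} is then immediate since $\lambda$ is bounded on $[0,\tau^{**}]$.

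Part 3) is a continuity-of-the-data-map statement: the reference data $(\NewSoundSpeedSq_\text{ref},\NewVelocity_\text{ref})$ maps to $(\Theta_0,U_0)=(0,0)$ by Part 1), hence $E^N(0)=0$; since the composition of changes of variables taking $\DITWeightedSpace^{2k}_{\text{ref}}$ to $\mathcal{X}^N\cong E^N(0)$ is continuous in the $\DITWeightedSpace^{2k}_0$ topology set up in \cite{disconzi2022relativistic} (this is where the topology on states defined over varying domains is needed — one compares domains via the associated flow maps), there is a neighborhood $\mathcal{U}_\text{ref}$ on which $E^N(0)<\epsilon$. Part 4) is the continuation criterion: assuming the bootstrap bounds \eqref{bootstrap U0}, \eqref{bootstrap FG} hold on $[0,\tau^*)$ (so the changes of variables remain non-degenerate and invertible up to $\tau^*$) together with the $C^3$-in-space, $C^2$-in-$\partial_\tau$ bound \eqref{E:Continuation_criterion_Lagrangian}, one translates these back to the Eulerian $(\NewSoundSpeedSq,\NewVelocity)$ variables to see that the continuation criterion of \cite{disconzi2022relativistic} is met — this translation uses that $\eta$ stays a diffeomorphism (from the $C^3$ control of $\Theta$ and positivity of $\partial_\ze\eta$) and that $\NewSoundSpeedSq\approx \lambda^{-3\kk}\,\delta w\calF^{-\kk}\calG^\kk$ stays bounded above and bounded below by a multiple of the distance to the boundary — and then the Eulerian solution extends past the corresponding Eulerian time, which pulls back to an extension of $\Theta$ past $\tau^*$.

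The main obstacle I anticipate is \emph{not} any single estimate but the bookkeeping of regularity through the change of variables: one must confirm that the derivative count $2k$ in the Eulerian weighted space \eqref{E:N_k_relation}, after the loss incurred by (a) passing to Lagrangian coordinates, (b) the algebraic inversion $u^0=\sqrt{1+v^2}$ and the modified-velocity substitution \eqref{modvel}, and (c) extracting $\barcalG$ via the integral formula \eqref{eq:barcalG} (which trades time-regularity for a harmless algebraic dependence), lands exactly at the $\mathcal{X}^N$ regularity with $N$ as in \eqref{E:Fixed_N_bound}; and, symmetrically, that the low-regularity continuation norm \eqref{E:Continuation_criterion_Lagrangian} suffices to rule out degeneration of all these maps. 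Matching the weights is the subtle part: the Eulerian spaces use the defining function $\DefFunction$ while our framework uses the admissible $w$, and although both are comparable to $\dist(\cdot,\partial\Omega)$, one must track how the comparison constants interact with the high powers $\frac1\kk+i$ appearing in Definition~\ref{def: inner product}. Since \cite{disconzi2022relativistic} already carries out the hard analytic work, however, I expect this step to be lengthy but routine, and I would present it as a sequence of lemmas verifying invertibility and boundedness of each individual change of variables on the relevant (bootstrap-controlled) open set.
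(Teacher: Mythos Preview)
Your overall strategy is the same as the paper's: invoke the Eulerian local well-posedness and continuation criterion of \cite{disconzi2022relativistic}, push the solution through the changes of variables of Section~\ref{sect: setup}, and translate the continuation criterion back. The paper's detailed proof (Appendix~\ref{S:Proof_of_LWP}) follows exactly this outline in nine steps.

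However, you have misidentified the main obstacle and omitted two technical points that the paper treats carefully. First, the obstacle you name---matching the Eulerian weight $\DefFunction$ against the Lagrangian weight $w$ inside the high-order norms---is not what the paper does. Instead, the paper uses the embedding \eqref{E:Weighted_embedding_DIT} to pass from $\DITWeightedSpace^{2k}$ to $C^{N+2}(\overline{\Omega_t})$ (this is where the slack in \eqref{E:N_k_relation} is spent), and then argues from $C^{N+2}$ regularity directly to finiteness of the $\mathcal{X}^N$ norm. No weight comparison is needed. Second, and more seriously, you do not mention \emph{center regularity}: the operators $\calD_i$ contain factors of $\ze^{-1}$, so $\Theta(\tau,\cdot)\in C^{N+2}([0,1])$ is not by itself enough for $\|\calD_i\Theta\|_i<\infty$; one must verify that $\Theta$ and $\partial_\tau\Theta$ are genuinely odd at $\ze=0$ (i.e.\ $\partial_\ze^{2\ell}\Theta(\tau,0)=0$). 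The paper establishes this by propagating parity through the flow of the odd vector field $V$ from an odd $\eta_0$ (Step~7 and Lemma~\ref{L:calD_odd_functions}). Third, your appeal to Dacorogna--Moser for $\eta_0$ gives existence but not the $C^{N+2}$ regularity up to $\ze=1$: since $\trho_0$ and $(\delta w)^{1/\kappa}$ both vanish like $(1-\ze)^{1/\kappa}$, the mass functions $m$, $m_{\text{ref}}$ are only $C^1$ at $\ze=1$ in general, and the paper (Step~5) recovers $\eta_0\in C^{N+2}$ near the boundary by working with $(m^{-})^{\kappa/(1+\kappa)}$ instead. Your proposal would go through once these two issues---parity at the origin and the endpoint regularity of $\eta_0$---are addressed.
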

\begin{proof}
    The proof follows from carefully translating the result in \cite{disconzi2022relativistic} to the variables used in our setting. Thus, here, we only highlight the main points. Interested readers can check Appendix \ref{S:Proof_of_LWP}, where a more detailed proof is given.

    Statement 1) follows from following the calculations of Section \ref{sect: setup} at the level of the data, whereas local existence and uniqueness of a classical solution to \eqref{eq:momlagmain} follows again by the procedure of Section \ref{sect: setup} applied to the Eulerian solution launched by the data $(\mathring{\rho},\mathring{u}^i)$. We note that our regularity assumptions are more than enough to justify the steps in Section \ref{sect: setup}.

    By \eqref{E:N_k_relation} and \eqref{E:Weighted_embedding_DIT}, we have $\Theta(\tau,\cdot) \in C^{N+2}([0,1])$. From this and the fact that such solutions satisfy the correct center-regularity conditions at $\zeta=0$ coming from spherical symmetry we obtain that their $\mathcal{X}^N$ norm is finite and thus their energy view of Remark \ref{R:Norm_equivalence_calX_calE}. This gives statement 2).

    Statement 3) follows from another judicious inspection of the procedure relating $(\mathring{\rho},\mathring{u}^i)$ to $(\Theta_0,U_0)$, but rephrased in terms of $(\mathring{\NewSoundSpeedSq},\mathring{\NewVelocity}^i)$,  showing that this defines a map $(\mathring{\NewSoundSpeedSq},\mathring{\NewVelocity}^i) \mapsto(\Theta_0,U_0)$ that is continuous relative to the $\DITWeightedSpace^{2k}_\text{ref}$ and $\mathcal{X}^N$ topologies. We note that the restriction to states of same mass singles out the correct set of perturbations in that spherically symmetric solutions to the relativistic Euler equations conserve mass.

    Finally, for statement 4), one verifies that under the corresponding bootstrap assumptions, we can invert back from the $(\Theta,\partial_\tau\Theta)$ variables to $(\rho,u^i)$ and then $(\NewSoundSpeedSq,\NewVelocity^i)$, with uniform control over the change of variables. The bound \eqref{E:Continuation_criterion_Lagrangian} then translate to uniform $C^2$ control of $(\NewSoundSpeedSq,\NewVelocity^i)$ over $[0,T^*)$, where $\tau^*$ and $T^*$ are related by $\tau^* = \int_0^{T^*} \frac{1}{\lambda(t)} dt$. Such control is enough to apply the continuation criterion of \cite{disconzi2022relativistic} and thus continue the solution $(\NewSoundSpeedSq,\NewVelocity^i)$ past $T^*$. Applying the procedure of Section \ref{sect: setup} to this continued-beyond-$T^*$ solution then gives a solution that continues $(\Theta,\partial_\tau\Theta)$ beyond $\tau^*$.
\end{proof}

\begin{rmk}
    It is expected that the passage from Eulerian to Lagrangian variables can lose derivatives. We have not tried to minimize such a loss here, and in fact have chosen the relation between the regularity of the $(\NewSoundSpeedSq,\NewVelocity^i)$ and $(\Theta,\partial_\tau\Theta)$ given by \eqref{E:N_k_relation} so that we have lots of room to spare. This is because we want the easiest route to produce local solutions to \eqref{eq:momlagmain} out of results already established in the literature, so that we can then focus on the most interesting part of the work which is the global existence.
\end{rmk}

\section{The Global Existence Theorem}\label{sect: mainthm}
In this section, we state and prove our main result.
\begin{thm}[Main Theorem: The Precise Version]
\label{thm:mainprecise}
    Let $(\lam_0,\lam_1)\in \R^+\times (0,1)$ and $N\geq \lfloor\frac1\kk\rfloor+10$. 
There exists $\eps_0=\eps_0(\kk, N, \lam_0, \lam_1)$ such that for every $\eps\leq \eps_0$, $\delta\leq \eps$, and initial data $(\Theta_0, U_0)$ verifying 
    $$
    E^N(0) \leq \eps, 
    $$
    the associated solution to the equation \eqref{eq:momlagmain} from Theorem \ref{T:LWP},
    $$
    \tau \mapsto (\Theta(\tau,\cdot), \pt\Theta(\tau,\cdot))
    $$
    uniquely exists for all $\tau>0$. Moreover, there exists $M_*>0$, depending only on $\kk, N, \lam_0, \lam_1$ such that 
    \begin{equation}\label{est: main theorem}
    E^N(\tau) \leq M_*\eps,\quad \tau>0. 
    \end{equation}
    Finally, there exists a $\tau$-independent attractor $\Theta_{\infty}$ such that
    \begin{equation}\label{convergence: main theorem}
    \sum_{j=0}^N \|\calD_j \Theta(\tau,\cdot)-\calD_j\Theta_{\infty}(\cdot)\|_j \rightarrow 0,
    \end{equation}
    as $\tau\to\infty$.
\end{thm}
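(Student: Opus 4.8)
The plan is to run a standard continuation/bootstrap argument built on the main energy inequality \eqref{est:mainEN0}, and then extract the asymptotic attractor from the damping functional $\mfD^N$. First I would fix the parameters in the correct order: choose $M_* $ large depending only on $C_0, C_1, \kk, \bar\lam, N$ (the constants from Proposition \ref{prop:mainEN0}), then choose $\eps_0$ small enough that $\eps_0 \le \eps''$, that $\delta = \eps \le \eps'$ (so Lemma \ref{lem: induced bootstrap} and Corollary \ref{cor: energy norm equiva} apply), and that $M_* \eps_0 < 1$ together with the smallness thresholds needed for the bootstrap assumptions \eqref{bootstrap EN}--\eqref{bootstrap FG} to be \emph{strictly improved}. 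By Theorem \ref{T:LWP}, the data with $E^N(0) \le \eps$ launch a unique local solution on $[0,\tau^*)$ with $(\Theta,\pt\Theta) \in C^0([0,\tau^{**}],\mathcal{X}^N)$ for every $\tau^{**} < \tau^*$; let $\tau_*$ be the maximal time on which \eqref{bootstrap EN}, \eqref{bootstrap U0}, \eqref{bootstrap FG} all hold. The continuity of the norms in $\tau$ and the openness of the bootstrap conditions show $\tau_* > 0$, and the standard argument is to prove $\tau_* = \infty$ by showing each bootstrap bound is improved on $[0,\tau_*]$.

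The heart is to close \eqref{bootstrap EN}. Apply \eqref{est:mainEN0} with $\tau_0 = 0$: since $\calE^N(0) \lesssim \eps$ and $\delta^2 \le \eps^2 \ll \eps$, we get $\calE^N(\tau) \le C_0(2\eps) + C_1[\delta^\frac12 \int_0^\tau e^{-\frac32\kk\bar\lam \tilde\tau}(\calE^N)^\frac12 d\tilde\tau + \int_0^\tau e^{-\frac32\kk\bar\lam \tilde\tau}\calE^N d\tilde\tau]$. On $[0,\tau_*]$ we may substitute the bootstrap bound $\calE^N \le E^N \le M_* \eps$ into the integral terms and use $\int_0^\infty e^{-\frac32\kk\bar\lam\tilde\tau}d\tilde\tau = \frac{2}{3\kk\bar\lam}$ to obtain $\calE^N(\tau) \le 2C_0\eps + \frac{2C_1}{3\kk\bar\lam}(\delta^\frac12 (M_*\eps)^\frac12 + M_*\eps)$. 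Taking the supremum over $\tau \in [0,\tau_*]$ and using $\delta = \eps$, this reads $E^N(\tau_*) \le 2C_0\eps + \frac{2C_1}{3\kk\bar\lam}(M_*^\frac12 \eps + M_*\eps)$. Choosing first $M_* \ge 8C_0$ and then $\eps_0$ small so that $\frac{2C_1}{3\kk\bar\lam}(M_*^\frac12 \eps_0 + M_* \eps_0) \le \frac{M_*}{4}\eps_0$ yields $E^N(\tau_*) \le \frac{M_*}{2}\eps < M_*\eps$, strictly improving \eqref{bootstrap EN}. For \eqref{bootstrap U0} and \eqref{bootstrap FG}, one reruns the argument already done inside Lemma \ref{lem: induced bootstrap}: from the improved smallness of $\calE^N$ one gets $\|\calF - 1\|_{L^\infty} + \|\kk\barcalG - 1\|_{L^\infty} \le \frac{1}{200}$ (improving $\frac1{100}$) via Lemma \ref{lem:calF-1}, Proposition \ref{prop:buildingblockpowersG}, and the explicit formula \eqref{eq:barcalG}, while the causal bound on $(U^0)^{-2}$ improves because $(U^0)^{-2} = 1 - (\pt\Theta + \lamt(\Theta + \ze))^2$ and $\lamt \to \bar\lam$ with $|\lamt - \bar\lam| \lesssim \delta$ by \eqref{est:difflamt}, plus $\|\pt\Theta\|_{L^\infty}, \|\Theta\|_{L^\infty} \lesssim (\eps)^\frac12 \to 0$. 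Since all three conditions are strictly improved, $\tau_*$ cannot be finite (else the continuation criterion \eqref{E:Continuation_criterion_Lagrangian}, which follows from the improved $C^3$ bound on $\Theta$ via the Hardy embeddings, extends the solution and contradicts maximality). Hence $\tau_* = \infty$ and \eqref{est: main theorem} holds.

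For the convergence \eqref{convergence: main theorem}, I would exploit the coercive damping term. Integrating the energy inequality and using $\tau_* = \infty$ shows $\int_0^\infty \mfD^N(\tilde\tau) d\tilde\tau \le C_0(\calE^N(0) + \delta^2) + C_1 \cdot O(\eps) < \infty$; since $\mfD_j(\tau) = (1 - \tfrac32\kk)\lamt \delta^{-1}\lam^{3\kk}\|w^{(\frac1\kk + j)/2}\ze \calD_j\pt\Theta\|_{L^2}^2$ and $\lamt \gtrsim \bar\lam > 0$, $1 - \tfrac32\kk \ge 0$ (with strict positivity for $\kk < \tfrac23$; the boundary case $\kk = \tfrac23$ needs the separate remark that coercivity is still available, e.g.\ from combining with the $\calE_j$ bound), this gives $\int_0^\infty \delta^{-1}\lam^{3\kk}\|\calD_j\pt\Theta(\tilde\tau)\|_j^2 d\tilde\tau < \infty$. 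Then for any $j \le N$, $\calD_j\Theta(\tau) = \calD_j\Theta(0) + \int_0^\tau \calD_j\pt\Theta(\tilde\tau)d\tilde\tau$, and $\|\calD_j\Theta(\tau_2) - \calD_j\Theta(\tau_1)\|_j \le \int_{\tau_1}^{\tau_2}\|\calD_j\pt\Theta\|_j d\tilde\tau \le (\int_{\tau_1}^{\tau_2}\lam^{-3\kk}d\tilde\tau)^\frac12(\int_{\tau_1}^{\tau_2}\lam^{3\kk}\|\calD_j\pt\Theta\|_j^2 d\tilde\tau)^\frac12 \to 0$ as $\tau_1 \to \infty$, using that $\lam^{-3\kk} \approx e^{-3\kk\bar\lam\tilde\tau}$ is integrable (Lemma \ref{lem:lambdaODE}). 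Thus $\{\calD_j\Theta(\tau)\}$ is Cauchy in $L^2_j$, hence converges to some $\calD_j\Theta_\infty$; a routine check (e.g.\ passing to the limit in $\calD_j = \barcalD_{j-1}\Dz$ and using weak-$*$ compactness of the bounded family $E^N(\tau) \le M_*\eps$) confirms these limits are the derivatives of a single function $\Theta_\infty$, and \eqref{convergence: main theorem} follows by summing over $j$. The main obstacle is purely bookkeeping in the first closing step: ensuring the constant $M_*$ can be chosen \emph{before} $\eps_0$ so that the quadratic-in-$M_*$ terms $\delta^\frac12(M_*\eps)^\frac12$ and $M_*\eps$ are genuinely absorbed — this works precisely because the $\delta^\frac12 = \eps^\frac12$ prefactor and the $\int e^{-\frac32\kk\bar\lam\tilde\tau}$ decay make those terms $o(M_*\eps)$ as $\eps \to 0$, which is exactly the parameter hierarchy $\delta \le \eps \ll 1$ flagged after \eqref{mainenergyintro}.
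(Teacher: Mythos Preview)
Your bootstrap closure for \eqref{bootstrap EN} contains a genuine circularity. After substituting the bootstrap bound $\calE^N \le M_*\eps$ into \eqref{est:mainEN0} you arrive at
\[
E^N(\tau_*) \le 2C_0\eps + \frac{2C_1}{3\kk\bar\lam}\bigl(M_*^{1/2}\eps + M_*\eps\bigr),
\]
and you then claim that taking $\eps_0$ small forces $\frac{2C_1}{3\kk\bar\lam}(M_*^{1/2}\eps_0 + M_*\eps_0) \le \frac{M_*}{4}\eps_0$. But dividing by $\eps_0$ this reads $\frac{2C_1}{3\kk\bar\lam}(M_*^{-1/2}+1)\le \tfrac14$, which is \emph{independent of $\eps_0$} and forces $C_1 < \tfrac{3\kk\bar\lam}{8}$ --- a constraint on the fixed constant $C_1$ that you have no right to impose. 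Your final remark that ``those terms are $o(M_*\eps)$ as $\eps\to 0$'' is false for the linear term $C_1\int e^{-\frac32\kk\bar\lam\tilde\tau}\calE^N\,d\tilde\tau$: it is exactly $O(M_*\eps)$, with a coefficient you do not control. The paper circumvents this by an initial-layer trick: use local well-posedness to propagate the solution to a time $\tau_{loc}$ chosen so large that $C_3\int_{\tau_{loc}}^\infty e^{-\frac32\kk\bar\lam\tilde\tau}\,d\tilde\tau \le \tfrac12$, then apply \eqref{est:mainEN0} with $\tau_0=\tau_{loc}$; the dangerous integral is now absorbable into the left side regardless of $C_1$. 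Alternatively, a direct application of Gr\"onwall's inequality to the form $E^N(\tau)\le C_2(2\eps+\delta)+C_3\int_0^\tau e^{-\frac32\kk\bar\lam\tilde\tau}E^N(\tilde\tau)\,d\tilde\tau$ (obtained after Cauchy--Schwarz on the $\delta^{1/2}$ term, as in the paper's \eqref{est:mainEN1}) gives the bound with $M_* = 3C_2\exp(\tfrac{2C_3}{3\kk\bar\lam})$, which is also independent of $\eps$ --- but your argument as written is neither of these.

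Your convergence argument is close in spirit to the paper's, but you route it through the damping $\int_0^\infty \mfD^N\,d\tau<\infty$, which at $\kk=\tfrac23$ vanishes identically and leaves a hole you only gesture at. The paper instead uses the \emph{pointwise} bound $\delta^{-1}\lam^{3\kk}\|\calD_i\pt\Theta\|_i^2\le M_*\eps$ coming directly from $E^N$, splitting $\int\|\calD_i\pt\Theta\|_i\,d\tilde\tau$ via Cauchy--Schwarz against $\lam^{-\frac32\kk}$; this works uniformly for all $\kk\in(0,\tfrac23]$ and is simpler.
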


\begin{rmk}
    We note that the initial data $(\Theta_0,U_0)$ verifying the assumption in Theorem \ref{thm:mainprecise} can be produced by Eulerian data $(\mathring{\rho},\mathring{u}^i)$ from item 3) of Theorem \ref{T:LWP}.
\end{rmk}

By re-interpreting Theorem \ref{thm:mainprecise} in terms of Eulerian variables, we immediately obtain the following corollary:

\begin{cor}
    Fix a reference state $(\mathring{\rho}_\text{ref},\mathring{u}^i_\text{ref})$ as in Theorem \ref{T:LWP}. Initial data $(\mathring{\rho},\mathring{u}^i)$ to the relativistic Euler equations in Eulerian coordinates, i.e., equations \eqref{eq:RE}, that are sufficiently close to $(\mathring{\rho}_\text{ref},\mathring{u}^i_\text{ref})$ in the sense of item 3) of Theorem \ref{T:LWP}, launch solutions that exist globally in time. 
\end{cor}

\subsection{Proof of \eqref{est: main theorem}}
To prove \eqref{est: main theorem}, it suffices to show that the bootstrap assumptions \eqref{bootstrap EN}, \eqref{bootstrap U0}, and \eqref{bootstrap FG} are improved on the bootstrap horizon $[0,\tau_*]$ for an appropriately chosen $M_*$ and $\eps, \delta$ sufficiently small. 
To see why this is the case, let $\tau_\text{max}$ be the maximum interval of existence for our local-in-time solution solution $(\Theta,\partial_\tau\Theta)$. Set
\begin{align*}
    \tau_\text{max boot} := \sup\{ \tau \in (0,\tau_\text{max}) \, | \, \text{the bootstrap assumptions hold on }\, [0,\tau] \}.
\end{align*}
The quantities on the LHS of 
on the LHS of  \eqref{bootstrap U0} and \eqref{bootstrap FG}, as well as $E^N(\tau)$, are continuous functions of $\tau$. Thus the set $\{ \tau \, | \, \text{the bootstrap assumptions hold on }\, [0,\tau]\}$ is closed under increasing limits. Therefore, if $\tau_\text{max boot} < \tau_\text{max}$, the bootstrap bounds hold on $[0,\tau_\text{max boot}]$ and  at least one of the inequalities \eqref{bootstrap EN}, \eqref{bootstrap U0}, or \eqref{bootstrap FG} is saturated at $\tau_\text{max boot}$, otherwise their validity would extend slightly past $\tau_\text{max boot}$ by continuity. Moreover, by our assumptions on the initial data and again continuity in $\tau$, we have that $\tau_\text{max boot}>0$.

Therefore, if we show that the bootstraps assumptions are improved over $[0,\tau_*]$, this means that their validity and improvement over each $[0,\tau]$, $\tau<\tau_\text{max boot}$, allows us to pass to the limit $\tau\rightarrow\tau_\text{max boot}^-$ and conclude that the bootstrap assumptions are improved over $[0,\tau_\text{boot max}]$, contradicting the maximality of $\tau_\text{max boot}$ unless $\tau_\text{max} < \infty$. But if $\tau_\text{max} < \infty$, then the bootstraps assumptions hold on $[0,\tau]$ for any $\tau < \tau_\text{max}$ and we can then invoke our continuation criterion to extend the solution past $\tau_\text{max}$. Thus $\tau_{\max} = \infty$.

Thus, it suffices to prove the following theorem:

\begin{thm}\label{thm:improved}
    Let $(\Theta,\pt\Theta)$ be a solution to \eqref{eq:momlagmainfull} on $[0,\tau_*]$ verifying the bootstrap assumptions. There exists $\eps_0 > 0$ sufficiently small such that for any $\eps \le \eps_0$ and $\delta \le \eps$, the following improved estimates hold:
    \begin{subequations}
    \label{est:improved}
    \begin{equation}\label{est:improvedEN}
        E^N(\tau_*) \le \frac{M_*\eps}{2},
        \end{equation}
        \begin{equation}\label{est:improvedU0}
        \frac23(1-\bar\lam^2\ze^2) \le (U^0(\tau,\ze))^{-2} \le \frac{4}{3}(1-\bar\lam^2\ze^2),\quad (\tau,\ze) \in [0,\tau_*]\times [0,1],
        \end{equation}
        \begin{equation}\label{est:improvedFG}
        \|\calF(\tau) - 1\|_{L^\infty} + \|\kk\barcalG(\tau) - 1\|_{L^\infty} \le \frac{1}{200},\quad \tau \in [0,\tau_*].
        \end{equation}
    \end{subequations}
\end{thm}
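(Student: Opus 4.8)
The plan is to close the bootstrap by improving each of the three assumptions \eqref{bootstrap EN}, \eqref{bootstrap U0}, and \eqref{bootstrap FG} in turn, using the main energy inequality (Proposition \ref{prop:mainEN0}) together with Lemma \ref{lem:lambdaODE} and the explicit formulas for $U^0$ and $\barcalG$.

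\textbf{Step 1: Improving the energy bound \eqref{est:improvedEN}.} We apply Proposition \ref{prop:mainEN0} with $\tau_0 = 0$ and use $\calE^N(0) \le E^N(0) \le \eps$, $\delta \le \eps$, to obtain
\begin{equation}
    \label{eq:mainEN0applied}
    \calE^N(\tau) \le C_0(2\eps + \eps^2) + C_1\left[\eps^\frac12 \int_0^\tau e^{-\frac32\kk\bar\lam\tilde\tau}(\calE^N(\tilde\tau))^\frac12 d\tilde\tau + \int_0^\tau e^{-\frac32\kk\bar\lam\tilde\tau}\calE^N(\tilde\tau)d\tilde\tau\right]
\end{equation}
for all $\tau \in [0,\tau_*]$. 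Under the bootstrap bound $E^N(\tau_*) \le M_*\eps$, we have $\calE^N(\tilde\tau) \le M_*\eps$ and $(\calE^N(\tilde\tau))^\frac12 \le (M_*\eps)^\frac12$ on $[0,\tau_*]$; inserting these and using $\int_0^\infty e^{-\frac32\kk\bar\lam\tilde\tau}d\tilde\tau = \frac{2}{3\kk\bar\lam} =: C_2$, which is finite and independent of $\eps,\delta,M_*$ (here $\bar\lam \ge \frac12 \lam_1 > 0$ for $\delta$ small by \eqref{est:difflamt}), we get
$$
E^N(\tau_*) = \sup_{0 \le \tau' \le \tau_*}\calE^N(\tau') \le 3C_0\eps + C_1 C_2\left(\eps^\frac12 (M_*\eps)^\frac12 + M_*\eps\right) = 3C_0\eps + C_1 C_2 M_*^\frac12\eps + C_1 C_2 M_*\eps.
$$
We now choose $M_* := 12 C_0$ (so $3C_0\eps = \frac{M_*}{4}\eps$), and then choose $\eps_0$ small enough that $C_1 C_2 M_*^\frac12 \eps_0^\frac12 \le \frac{M_*}{8}$ and $C_1 C_2\eps_0 \le \frac18$; this yields $E^N(\tau_*) \le \frac{M_*}{4}\eps + \frac{M_*}{8}\eps + \frac{M_*}{8}\eps = \frac{M_*}{2}\eps$, which is \eqref{est:improvedEN}.

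\textbf{Step 2: Improving \eqref{est:improvedU0} and \eqref{est:improvedFG}.} With \eqref{est:improvedEN} in hand (so in particular $\calE^N(\tau) \le \frac{M_*}{2}\eps$ on $[0,\tau_*]$), we revisit the pointwise bounds. For $U^0$, recall $(U^0)^{-2} = 1 - (\pt\Theta + \lamt(\Theta+\ze))^2$; expanding,
$$
(U^0)^{-2} - (1-\bar\lam^2\ze^2) = -(\pt\Theta + \lamt\Theta)^2 - 2\lamt\ze(\pt\Theta + \lamt\Theta) + (\bar\lam^2 - \lamt^2)\ze^2,
$$
and each term on the right is controlled: the first two by $\|\pt\Theta\|_{L^\infty}^2 + \|\Theta\|_{L^\infty}\|\pt\Theta\|_{L^\infty} + \|\Theta\|_{L^\infty}^2 \lesssim \eps$ via the Hardy embeddings applied to $\calE^N$ (cf. the proof of Lemma \ref{lem: induced bootstrap}), and the third by $|\bar\lam - \lamt| \lesssim \delta \le \eps$ via \eqref{est:difflamt}. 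Hence $|(U^0)^{-2} - (1-\bar\lam^2\ze^2)| \le C\eps \le \frac16(1-\bar\lam^2\ze^2)$ after shrinking $\eps_0$ (using $1-\bar\lam^2\ze^2 \ge 1-\bar\lam^2 \ge c(\lam_1) > 0$), which gives \eqref{est:improvedU0}. For $\calF$, Lemma \ref{lem:calF-1} expresses $\calF - 1$ as $\Dz$ of terms linear, quadratic, cubic in $\Theta/\ze$, each of which is bounded in $L^\infty$ by $C\eps$ after the Hardy embeddings; similarly, the explicit formula \eqref{eq:barcalG} for $\barcalG$ together with $|(U^0)^\kk - 1| \lesssim \|\pt\Theta\|_{L^\infty} + \|\Theta\|_{L^\infty} + \delta \lesssim \eps$, the bound $\lambda^{-3\kk}(0) \lesssim 1$, $\delta \le \eps$, and the integrability of $\lambda^{-3\kk}$ gives $\|\kk\barcalG - 1\|_{L^\infty} \lesssim \eps$. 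Shrinking $\eps_0$ once more so that both are $\le \frac{1}{200}$ yields \eqref{est:improvedFG}.

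\textbf{Main obstacle.} The delicate point is the ordering and self-consistency of the constants: $M_*$ must be fixed purely in terms of $C_0$ (hence in terms of $\kk, N, \bar\lam$, which depend on $\lam_0,\lam_1$ but not on $\eps, \delta, M_*$), and only \emph{afterwards} may $\eps_0$ be chosen small relative to $M_*$, $C_1$, $C_2$. One must also verify that all implicit constants in the Hardy-embedding bounds of Step 2 are independent of $M_*$ — which they are, since they come from Lemma \ref{lem: induced bootstrap} and the norm-energy equivalence of Corollary \ref{cor: energy norm equiva} — so that the factor of $M_*$ appearing through $\calE^N \le \frac{M_*}{2}\eps$ can genuinely be absorbed by taking $\eps_0 = \eps_0(M_*) = \eps_0(\kk, N, \lam_0, \lam_1)$ small. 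A secondary subtlety is ensuring $\bar\lam$ is bounded below uniformly (guaranteeing $C_2 < \infty$), which follows from $|\bar\lam - \lam_1| \le C\delta$ and $\delta \le \eps \le \eps_0$ small. Beyond these bookkeeping issues the argument is a routine absorption; the heavy lifting has already been done in Sections \ref{sect: Energy Estimate 1}--\ref{sect: Energy Estimate V}.
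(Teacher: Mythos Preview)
Your Step 1 has a genuine gap. After plugging the bootstrap bound $\calE^N \le M_*\eps$ into \eqref{eq:mainEN0applied} you correctly obtain
\[
E^N(\tau_*) \le 3C_0\eps + C_1 C_2 M_*^{1/2}\eps + C_1 C_2 M_*\eps,
\]
but every term here is \emph{linear} in $\eps$, so dividing by $\eps$ the requirement $E^N(\tau_*)\le \tfrac{M_*}{2}\eps$ becomes the $\eps$-independent inequality $3C_0 + C_1C_2 M_*^{1/2} + C_1C_2 M_* \le \tfrac{M_*}{2}$, which forces $C_1C_2 < \tfrac12$. There is no reason this holds, and the $\eps_0$ factors you insert simply are not present in the inequality. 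The paper avoids this by first running local well-posedness to a late time $\tau_{loc}$ (chosen so that $\int_{\tau_{loc}}^\infty e^{-\frac32\kk\bar\lam\tilde\tau}\,d\tilde\tau \le \bar\eps$ with $\bar\eps$ small), and only then applying Proposition~\ref{prop:mainEN0} with $\tau_0=\tau_{loc}$; the dangerous integral term then carries the small coefficient $C_3\bar\eps$ and can be absorbed. A cleaner alternative you could have used: apply Gr\"onwall to \eqref{eq:mainEN0applied} directly (the kernel $e^{-\frac32\kk\bar\lam\tilde\tau}$ has finite mass), obtaining $E^N(\tau)\le C_3\eps$ with $C_3$ depending only on $C_0,C_1,C_2$, and then set $M_*:=2C_3$.

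There is a second error in Step 2: your claim $|(U^0)^\kk - 1|\lesssim \eps$ is false, since at $\Theta=\pt\Theta=0$ one has $(U^0)^{-2}=1-\lamt^2\ze^2$, which is bounded away from $1$ near $\ze=1$. What is small is the \emph{ratio} $(U^0(\tau)/U^0(0))^\kk - 1$, and that is what the formula \eqref{eq:barcalG} actually contains; the paper instead integrates \eqref{eq:dtbarcalG} in time (Lemma~\ref{prop:barGimproved}), integrating by parts to handle the $\pt^2\Theta$ contribution. Also note your pointwise bounds for $(U^0)^{-2}-(1-\bar\lam^2\ze^2)$ and $\calF-1$ are of size $(M_*\eps)^{1/2}$ rather than $\eps$ (since $\|\Theta/\ze\|_{L^\infty}\lesssim(\calE^N)^{1/2}$); this does not spoil the conclusion but your constants need adjusting.
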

We start with the proof of the improved bound \eqref{est:improvedEN}. Let $\eps \le \eps''$ and $\delta \le \eps^\frac12$, where $\eps''$ is chosen as in Proposition \ref{prop:mainEN0}. We also fix time instances $\tau_0, \tau$ such that $0 \le \tau_0 \le \tau \le \tau_*$. Taking supremum over time interval $[\tau_0,\tau]$ on both sides of \eqref{est:mainEN0}, we obtain
\begin{equation}
    \label{est:mainEN1}
    \begin{split}
    E^N(\tau;\tau_0) + \int_{\tau_0}^\tau \mfD^N(\tilde\tau) d\tilde\tau &\le C_0(\calE^N(\tau_0) + \calE^N(0) + \delta^2)\\
        &\quad + C_1 \left[\delta^\frac12\int_{\tau_0}^\tau e^{-\frac32\kk\bar\lam \tilde\tau} (E^N(\tilde\tau;\tau_0))^\frac12 d\tilde\tau + \int_{\tau_0}^\tau e^{-\frac32\kk\bar\lam \tilde\tau}E^N(\tilde\tau;\tau_0) d\tilde\tau\right]\\
        &\le C_2(\calE^N(\tau_0) + \calE^N(0) + \delta) + C_3\int_{\tau_0}^\tau e^{-\frac32\kk\bar\lam \tilde\tau}E^N(\tilde\tau;\tau_0) d\tilde\tau,
        \end{split}
\end{equation}
where $C_2, C_3$ are positive constants that are independent of $M_*, \eps, \delta$. Note that we have used Cauchy-Schwarz inequality and the integrability of $e^{-\frac32\kk\bar\lam \tilde\tau}$ on $(0,\infty)$ to bound:
$$
\delta^\frac12\int_{\tau_0}^\tau e^{-\frac32\kk\bar\lam \tilde\tau} (E^N(\tilde\tau;\tau_0))^\frac12 d\tilde\tau \lesssim \delta + \int_{\tau_0}^\tau e^{-\frac32\kk\bar\lam \tilde\tau} E^N(\tilde\tau;\tau_0) d\tilde\tau.
$$
Given $\bar\eps \in (0,1)$ sufficiently small, the local well-posedness theory guarantees the existence of an $\tilde\eps > 0$ and $\tilde C > 0$ such that if $2\calE^N(0) + \delta = 2\eps + \delta < \tilde\eps$, the (unique) solution $(\Theta,\pt\Theta)$ exists on $[0,\tau_{loc}]$ such that $\tau_{loc} \ge \frac{1}{\frac32\kk\bar\lam}\left|\log(\frac32\kk\bar\lam\bar\eps)\right|$, and it holds that
\begin{equation}
    \label{est:initiallayer}
    E^N(\tau)+ \int_{0}^\tau \mfD^N(\tilde\tau) d\tilde\tau \le \tilde C\left(2\calE^N(0) + \delta\right),\quad \tau \in [0,\tau_{loc}].
\end{equation}
For any $\tau \in (\tau_{loc}, \tau_*)$, we obtain from \eqref{est:mainEN1} that
\begin{align*}
    E^N(\tau;\tau_{loc}) &\le C_2(\calE^N(\tau_{loc}) + \calE^N(0) + \delta) + C_3\int_{\tau_{loc}}^\tau e^{-\frac32\kk\bar\lam \tilde\tau}E^N(\tilde\tau;\tau_{loc}) d\tilde\tau\\
    &\le C_2(\tilde C\left(2\calE^N(0) + \delta\right) + \calE^N(0) + \delta) + C_3\frac{1}{\frac32\kk\bar\lam}e^{-\frac32\kk \bar\lam \tau_{loc}} E^N(\tau;\tau_{loc})\\
    &\le C_2(\tilde C(2\eps + \delta) + \eps + \delta) + C_3\bar\eps E^N(\tau;\tau_{loc}).
\end{align*}
Choosing $\bar \eps < \frac{1}{2C_3}$, $\delta \le \eps$, and $M_* \ge 4C_2(3\tilde C + 2),$ the above inequality can be rearranged as:
$$
E^N(\tau;\tau_{loc}) \le 2C_2(\tilde C(2\eps + \delta) + \eps + \delta) \le 2C_2(3\tilde C + 2)\eps \le \frac{M_*}{2}\eps.
$$
By further choosing $M_* \ge 6\tilde C$ if necessary, the estimate \eqref{est:initiallayer} together with the above inequality yields $E^N(\tau_*) \le \frac{M_*}{2}\eps$, which concludes the proof of \eqref{est:improvedEN}.

\noindent To show the improved bound \eqref{est:improvedU0}, we recall that
\begin{equation}
    \label{defnU0}
    \begin{split}
        &(U^0)^{-2} - (1- \bar\lam^2 \ze^2) = \bar\lam^2 \ze^2 - \left(\pt\Theta + \lamt(\ze + \Theta)\right)^2\\
        &\quad=  - \ze^2\left[\left(\frac{\pt\Theta}{\ze}\right)^2 + (\lamt^2 - \bar\lam^2 ) + 2\lamt^2 \frac{\Theta}{\ze} + \lamt^2 \left(\frac{\Theta}{\ze}\right)^2 + 2\lamt\left(1+\frac{\Theta}{\ze}\right)\left(\frac{\pt \Theta}{\ze}\right)\right].
    \end{split}
\end{equation}
By \eqref{bootstrap EN}, we know from Hardy inequalities \eqref{est:LinftyHardy1} and \eqref{est:LinftyHardy2} that for $\tau \in [0,\tau_*]$,
$$
\delta^{-1}\lam^{3\kk}\|\frac{\pt\Theta}{\ze}\|_{L^\infty}^2 + \|\frac{\Theta}{\ze}\|_{L^\infty}^2 \lesssim \eps.
$$
Moreover, by \eqref{est:difflamt}, $|\lamt^2 - \bar\lam^2| \lesssim \delta \le \eps$. Then the above two estimates with \eqref{defnU0} implies that for some $C > 0$,
$$
|(U^0)^{-2} - (1- \bar\lam^2 \ze^2)| \le C\eps^\frac12\ze^2,
$$
which verifies \eqref{est:improvedU0} after choosing $\eps$ sufficiently small.

Next, we show the improved bound for $\|\calF(\tau)-1\|_{L^{\infty}}$ in \eqref{est:improvedFG}. Invoking Lemma \ref{lem:calF-1} and Lemma \ref{lem:Fnonlinear}, and then the Hardy's inequality \eqref{est:LinftyHardy1}, we obtain
\begin{align*}
|\calF(\tau) - 1|^2 \leq C\Big(|\Dz \Theta(\tau)|^2+ \prod_{\substack{A_j\in \calP_{i_j},\,i_j\leq 1\\\sum_{j=1}^2i_j=2}}|A_j\Theta(\tau)|^2+ \prod_{\substack{A_j\in \calP_{i_j},\,i_j\leq 1\\\sum_{j=1}^3i_j=2}}|A_j\Theta(\tau)|^2\Big)
\leq C\calE^N(\tau),
\end{align*}
for some positive generic constant $C$. By virtual of \eqref{est:improvedEN}, this suggests $\|\calF(\tau) - 1\|_{L^\infty}\leq \frac{1}{200}$ after choosing $\epsilon$ sufficiently small.

Finally,  the improved bound for $\|\kk\barcalG(\tau)-1\|_{L^{\infty}}$ follows from the lemma below after selecting $\eps$ sufficiently small.
\begin{lem}
    \label{prop:barGimproved}
    Given the assumptions of Theorem \ref{thm:improved}, the following estimate holds:
    \begin{equation}
        \label{est:barGimproved}
        \sup_{0\le \tau \le \tau_*}\|\kk\barcalG(\tau) - 1\|_{L^\infty} \leq C(M_*^{\frac12}+1)\epsilon,
    \end{equation}
    where $C>0$ is a generic constant. 
\end{lem}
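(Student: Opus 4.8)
\textbf{Proof proposal for Lemma \ref{prop:barGimproved}.}
The plan is to work directly from the closed-form expression \eqref{eq:barcalG} for $\barcalG$, which writes $\kk\barcalG$ as a sum of two explicit terms built out of $U^0$, $\calF$, $\lam$, $w$, and the initial data. First I would recall
$$
\kk\barcalG = (U^0)^{\kk}(U^0)^{-\kk}(0)\left(1+ \delta\lam^{-3\kk}(0)w\calF^{-\kk}(0)\right) - \delta\lam^{-3\kk} w\calF^{-\kk},
$$
and split the deviation $\kk\barcalG - 1$ accordingly into a ``relativistic'' piece coming from the ratio $(U^0)^{\kk}(U^0)^{-\kk}(0)$ and a ``mass'' piece proportional to $\delta$. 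For the mass piece, the bootstrap assumptions \eqref{bootstrap U0}, \eqref{bootstrap FG}, Lemma \ref{lem: induced bootstrap}, and the uniform bound \eqref{est:lambdaasym} give $|\delta\lam^{-3\kk}(0)w\calF^{-\kk}(0)| + |\delta\lam^{-3\kk}w\calF^{-\kk}| \lesssim \delta \le \eps$, which already contributes a term of size $C\eps$. The remaining task is therefore to control $(U^0)^{\kk}(U^0)^{-\kk}(0) - 1$ in $L^\infty$.

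For the relativistic piece I would use the mean value theorem together with the explicit formula $(U^0)^{-2} = 1 - (\pt\Theta + \lamt(\Theta + \ze))^2$ to reduce $|(U^0)^{\kk}(U^0)^{-\kk}(0) - 1|$ to a bound on $\|(U^0)^{-2}(\tau) - (U^0)^{-2}(0)\|_{L^\infty}$, using that $U^0$ is uniformly bounded above and below on $[0,\tau_*]$ by \eqref{U0k bd}. Expanding the difference of squares,
$$
(U^0)^{-2}(\tau) - (U^0)^{-2}(0) = -\Big[(\pt\Theta + \lamt(\Theta+\ze))^2(\tau) - (\pt\Theta + \lamt(\Theta+\ze))^2(0)\Big],
$$
every term after expansion is either $\ze^2$ times a product of factors like $\tfrac{\pt\Theta}{\ze}$, $\tfrac{\Theta}{\ze}$ (controlled in $L^\infty$ by $\lesssim M_*^{1/2}\eps^{1/2}$ via the Hardy embeddings \eqref{est:LinftyHardy1}, \eqref{est:LinftyHardy2} applied together with the bootstrap bound \eqref{bootstrap EN}, noting the weight $\delta^{-1}\lam^{3\kk}$ on the $\pt\Theta$ norm is bounded below on any finite interval and that the relevant combinations are weight-free), or a term of the form $\ze^2(\lamt^2(\tau) - \bar\lam^2)$ resp. $\ze^2(\lamt^2(0) - \bar\lam^2)$ which is $\lesssim \delta \le \eps$ by \eqref{est:difflamt}, or mixed terms of size $\lesssim M_*^{1/2}\eps^{1/2}\cdot M_*^{1/2}\eps^{1/2}$ resp.\ $\lesssim M_*^{1/2}\eps^{1/2}$. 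Collecting, $\|(U^0)^{-2}(\tau) - (U^0)^{-2}(0)\|_{L^\infty} \le C(M_*^{1/2}+1)\eps^{1/2}\cdot\eps^{1/2} = C(M_*^{1/2}+1)\eps$ after using $\eps \le 1$ to absorb half-powers; here I should be a bit careful to keep the final power of $\eps$ linear by pairing each $\eps^{1/2}$ with another $\eps^{1/2}$ or with a $\delta \le \eps$. Feeding this into the mean value estimate for the map $x \mapsto x^{\kk/2}$ (whose derivative is bounded on the relevant compact range away from $0$) gives $\|(U^0)^{\kk}(U^0)^{-\kk}(0) - 1\|_{L^\infty} \le C(M_*^{1/2}+1)\eps$.

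Combining the mass piece and the relativistic piece yields \eqref{est:barGimproved}. The main obstacle is bookkeeping rather than conceptual: one must be disciplined about which factors come with a free $\ze$-weight versus an $\eps^{1/2}$ from the Hardy inequality versus a $\delta$ from \eqref{est:difflamt}, so that no stray $\eps^{1/2}$ survives in the final bound and the estimate is genuinely linear in $\eps$ (with the constant allowed to grow like $M_*^{1/2}$). Once this estimate is in hand, choosing $\eps$ small enough that $C(M_*^{1/2}+1)\eps \le \tfrac{1}{200}$ — which is possible since $M_*$ is fixed before $\eps$ in the hierarchy of constants — closes the improved bound \eqref{est:improvedFG} for $\kk\barcalG$ and completes the proof of Theorem \ref{thm:improved}.
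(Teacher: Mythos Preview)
Your approach via the closed-form expression \eqref{eq:barcalG} is natural and almost works, but as written there is a real gap at the step where you bound $(U^0)^{-2}(\tau) - (U^0)^{-2}(0)$ term by term. Expanding $(\pt\Theta + \lamt(\Theta+\ze))^2$, every term except one does pair up as you hope: the pure $(\pt\Theta)^2$, $\pt\Theta\cdot\Theta$, $\pt\Theta\cdot\ze$ and $\Theta^2$ pieces all carry at least one factor of $\|\pt\Theta\|_{L^\infty} \lesssim \delta^{1/2}(M_*\eps)^{1/2}$ or are already quadratic in $\Theta$, and the pure $\lamt^2\ze^2$ piece gives $(\lamt^2(\tau)-\lamt^2(0))\ze^2 = O(\delta)$ by \eqref{est:difflamt}. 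The exception is the cross term $2\lamt^2\ze\Theta$: here $\|\Theta/\ze\|_{L^\infty}$ is only $\lesssim (M_*\eps)^{1/2}$ from \eqref{est:LinftyHardy2} (no $\delta^{1/2}$ gain, unlike $\pt\Theta$), so the difference $2\lamt^2(\tau)\Theta(\tau)\ze - 2\lamt^2(0)\Theta(0)\ze$ is only $O((M_*\eps)^{1/2})$, with nothing to pair. Your sentence ``absorb half-powers using $\eps \le 1$'' goes the wrong way, since $\eps^{1/2} \ge \eps$ for $\eps < 1$; the resulting bound on $\kk\barcalG-1$ is $O(\eps^{1/2})$, not the $O(\eps)$ claimed in \eqref{est:barGimproved}.

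The missing idea is to use time integration for this one term: writing $\Theta(\tau) - \Theta(0) = \int_0^\tau \pt\Theta\,d\tilde\tau$ and using $\|\pt\Theta\|_{L^\infty} \lesssim \delta^{1/2}\lam^{-3\kk/2}(M_*\eps)^{1/2}$ together with the integrability of $\lam^{-3\kk/2}$ recovers the needed extra $\delta^{1/2} \le \eps^{1/2}$. This is precisely what the paper's proof does, only organized differently: it integrates $\kk\pt\barcalG$ in time directly from \eqref{eq:dtbarcalG}, which forces an encounter with $\pt(U^0)^\kk$ and hence $\pt^2\Theta$, and that is then removed by an integration by parts in $\tau$ (the boundary terms land on $\pt\Theta$, which carries the $\delta^{1/2}$). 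Your static route avoids $\pt^2\Theta$ from the start, but you cannot escape reintroducing a time integral somewhere to manufacture the missing half-power.
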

\begin{proof}
We begin by converting \eqref{eq:dtbarcalG} into
\begin{equation}
\label{eq:dtbarcalG_rewrite}
\begin{split}
\kk \p_\tau \barcalG &= \mathcal{C}_0 \pt (U^0)^{\kk}- \delta w\pt(\lam^{-3\kk}\calF^{-\kk}),
\end{split}
\end{equation}
where
\begin{equation}
    \nonumber
    \mathcal{C}_0:= (U^0)^{-\kk}(0)(1+ \delta\lambda^{-3\kk}(0)w\calF^{-\kk}(0)).
\end{equation}
Integrating \eqref{eq:dtbarcalG_rewrite} and using $\left. \barcalG \right|_{\tau=0}=\frac{1}{\kk}$, we thus find
\begin{equation}
\nonumber
    \begin{split}
        \kk \barcalG - 1= \mathcal{C}_0 \int_0^\tau \pt (U^0)^{\kk} d\tilde{\tau}
        - \delta w \int_0^\tau\pt (\lam^{-3\kk}\calF^{-\kk})d\tilde{\tau}
        =: T_1 - T_2.
    \end{split}
\end{equation}

    First, we estimate $T_1$. Recall $(U^0)^{-2} = 1-(\p_\tau\Theta+\lamt (\Theta+\ze))^2$. Then, 
    \begin{equation}\label{dtU0a}
    \begin{split}
        \pt (U^0)^{\kk} &= \kk(U^0)^{\kk+2} \left(\pt\Theta + \lamt(\Theta+\ze)\right)\left(\pt^2\Theta + \pt\lamt(\Theta+\ze) + \lamt\pt\Theta\right)\\
        &= \kk(U^0)^{\kk+2}\bigg[\lamt\pt\lamt(\Theta + \ze)^2 + \pt^2\Theta\left(\pt\Theta + \lamt(\Theta+\ze)\right) + \calO(\pt\Theta)\bigg].
    \end{split}
    \end{equation}
    This implies, together with \eqref{est:improvedU0}, that
    \begin{align*}
        |T_1| \lesssim& \int_0^\tau |\lamt\pt\lamt|(\Theta + \ze)^2 d\tilde\tau + \left|\int_0^\tau \pt^2\Theta(\pt\Theta + \lamt(\Theta+\ze)) d\tilde\tau\right| + \int_0^\tau |\calO(\pt\Theta)|d\tilde\tau\\
        &=: T_{11}+T_{12}+T_{13}.
    \end{align*}
    Since $|\lamt| \approx \bar\lam$, we obtain from the Hardy's the embedding \eqref{est:LinftyHardy1} and \eqref{est:ptlamt} to obtain that $T_{11} \lesssim \delta$. Also, by applying \eqref{est:LinftyHardy1} to $\pt\Theta$ and because $\lambda^{-\frac32\kk}$ is integrable, we have 
    $$
    T_{13} \lesssim \int_0^\tau \delta^{\frac12}\lam^{-\frac32\kk}(\calE^{N})^{1/2}  \lesssim (M_*)^{\frac12} \epsilon^{\frac12} \delta^{\frac12},
    $$ 
    where we used \eqref{est:improvedEN} in the last inequality. To bound $T_{12}$, we integrate $\pt$ by parts to obtain that
    \begin{align*}
        T_{12} &\le \frac12 \left|(\pt\Theta(\tau)^2 - \pt\Theta(0)^2)\right| + \left|\lamt\pt\Theta(\Theta + \ze) - \lamt(0)\pt\Theta(0)(\Theta(0) + \ze)\right|\\
        &\quad + \int_0^\tau |\pt\Theta|\left|\pt\lamt(\Theta + \ze) + \lamt\pt\Theta\right|d\tilde\tau\\
        &\lesssim \delta^{\frac12}\left((\calE^N(0))^{1/2} + (\calE^N(\tau))^{1/2}\right) + \int_0^\tau \delta\lam^{- 3\kk}\|\pt\Theta\|_{L^\infty} + \lamt\|\pt\Theta\|_{L^\infty}^2 d\tilde\tau\\
        &\lesssim \delta^{\frac12}\left((\calE^N(0))^{1/2} + (\calE^N(\tau))^{1/2}\right) + \delta^{\frac32}\left(E^N(\tau_*)\right)^{1/2} + \delta^{\frac32}E^N(\tau_*)\\
        &\lesssim M_*^{\frac12}\eps^{\frac12}\delta^{\frac12},
    \end{align*}
    where we used \eqref{est:LinftyHardy1} and \eqref{est:lambdaasym} in the third inequality, as well as \eqref{est:improvedEN} in the final inequality. Summarizing all the estimates above, we have
    \begin{equation}
        \label{est:U0T1}
        |T_1| \lesssim 
        \delta+
        M_*^{\frac12}\epsilon^{\frac12}\delta^{\frac12}.
    \end{equation}
     Lastly, we control $T_2$. By decomposing
    \begin{align*}
        T_2 = \delta w\int_0^\tau \left((-3\kk\lam^{-3\kk}\lamt) \calF^{-\kk} + \lam^{-3\kk}(-\kk\calF^{-\kk-1}\pt\calF)\right)d\tilde\tau
        =: T_{21} + T_{22}, 
    \end{align*}
    we see that $|T_{21}| \lesssim \delta$ from \eqref{est:lambdaasym}, \eqref{est:improvedU0}, and the bound $|\calF-1|\leq \frac{1}{200}$. Additionally, we recall that
    $$
    \pt\calF = \pt\left(\Dz\Theta + \Dz\left(\frac{\Theta^2}{\ze} + \frac{\Theta^3}{3\ze^2}\right)\right).
    $$
    Then an application of Hardy's inequality \eqref{est:LinftyHardy1}, \eqref{est:improvedU0}, the bound $|\calF-1|\leq \frac{1}{200}$, and \eqref{est:lambdaasym} yields the bound $|T_{22}| \lesssim M_*^{\frac12}\eps^{\frac12}\delta$. In summation, we have 
    \begin{align}|T_2| \lesssim \delta+M_*^{\frac12}\eps^{\frac12}\delta.
    \end{align}
    By combining this with \eqref{est:U0T1}, we conclude that
    \begin{align}
    \begin{aligned}
        |\kk\barcalG-1| \leq& C\left(\delta+M_*^{\frac12}\eps^{\frac12}\delta^{\frac12}\right)
        \leq C(M_*^{\frac12}+1)\eps,
        \end{aligned}
    \end{align}
    where we used $\delta\le \eps$ in the final inequality.
\end{proof}
\subsection{Proof of \eqref{convergence: main theorem}}
From the global-in-time boundedness of $E^N$ \eqref{est: main theorem}, there exists a weak limit $\Theta_{\infty}$ independent of $\tau$ such that $\|\calD_i \Theta_{\infty}(\cdot)\|_{i}^2 + \|\calD_{i+1} \Theta_{\infty}(\cdot)\|_{i+1}^2 \lesssim E^N(\tau) \leq M_*\epsilon$. To prove the strong convergence of $\Theta(\tau, \cdot)$, we observe that for any $0<\tau_2<\tau_1$, 
\begin{align*}
    \|\calD_i\Theta(\tau_1,\cdot)-\calD_i\Theta(\tau_2,\cdot)\|_i^2 =& \int_0^1\ze^2w^{\frac1\kk+i}\left|\int_{\tau_2}^{\tau_1}\calD_i \pt \Theta(\tau,\cdot)d\tau\right|^2d\ze\\
    \lesssim& \left(\int_{\tau_2}^{\tau_1}\lam^{-\frac32\kk}d\tau\right)\left(\int_{\tau_2}^{\tau_1}\lam^{\frac32\kk}\int_0^1\ze^2w^{\frac1\kk+i}|\calD_i\pt\Theta(\tau,\cdot)|^2d\ze d\tau\right)\\
     \lesssim& \left(\int_{\tau_2}^{\tau_1}\lam^{-\frac32\kk}d\tau\right)\left(\int_{\tau_2}^{\tau_1}\lam^{-\frac32\kk}\left[\lam^{3\kk}\int_0^1\ze^2w^{\frac1\kk+i}|\calD_i\pt\Theta(\tau,\cdot)|^2d\ze\right] d\tau\right)\\
    \lesssim& M_*\epsilon\delta (e^{-\frac32\kk\bar\lam\tau_2}-e^{-\frac32\kk\bar\lam\tau_1})^2.
\end{align*}
Here, the final inequality follows from \eqref{est:lambdaasym} and $$\sup_{\tau\in[\tau_2, \tau_1]}\delta^{-1}\lam^{3\kk}\int_0^1\ze^2w^{\frac1\kk+i}|\calD_i\pt\Theta(\tau,\cdot)|^2d\ze \leq M_* \epsilon.$$
Consequently, for any strictly increasing sequence $\{\tau_n\}_{n=1}^{\infty}$ satisfying $\tau_n\to \infty$, the sequence $\{\calD_i \Theta(\tau_n, \cdot)\}_{n=1}^{\infty}$ is Cauchy with respect to the $\|\cdot\|_i$ norm provided that $i\leq N$. This completes the proof of \eqref{convergence: main theorem}.

%%%%%%%%%%%%%%%%%%%%%%%%%%%%%%%%%
\begin{appendices}
\section{Toolbox}
In this section, we provide a collection of functional-analytic tools that we use frequently throughout this work. We start with a number of calculus lemmas concerning vector fields of classes $\calP_i$ and $\barcalP_i$. We then supplement Hardy-type inequalities that are adapted to our weighted functional setup. We remark that most of the results presented in this section have been proved in the Appendix of \cite{guo2021continued}, so we are content with only proving the statements which are different from those appearing in \cite{guo2021continued}.
\subsection{Coercivity}
An important fact that we use throughout this paper is that vectorfield $\calD_i$ dominates other $\calP_i$ vector fields in energy estimates. This observation is summarized in the following lemma
\begin{lem}[Lemma A.1, \cite{guo2021continued}]
    \label{lem:Dicoercivity}
    Suppose $\calD_i f \in L^2\left([0,\frac34], r^2dr\right)$. Then the following estimate holds:
    \begin{equation}
        \label{est:Dicoercivity}
        \sum_{P_i \in \calP_i}\int_0^1 |P_i f|^2 \chi(r) r^2dr \lesssim \int_0^1 |\calD_i f|^2 \chi(r) r^2 dr,
    \end{equation}
    where $\chi$ is a bump function supported on $[0,\frac34]$.
\end{lem}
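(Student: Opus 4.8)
The plan is to reduce the estimate to a one-dimensional Hardy-type inequality on $(0,1)$ after peeling off the spherical weight $r^2$, and then to induct on $i$ using the structural relation between the operators $\calD_i$, $\Dz$, and $\pz$. First I would recall that every $P_i \in \calP_i$ is, by definition, a concatenation of $i$ factors each of which is either $\Dz$ or $\ze^{-1}$ (with the outermost factor distinguished according to the parity of $i$). Since $\Dz = \pz + \tfrac2\ze$ and $\calD_i$ is itself built from alternating $\pz$'s and $\Dz$'s, the whole estimate is a statement about controlling arbitrary such concatenations by the specific ``good'' concatenation $\calD_i$. The cutoff $\chi$ supported in $[0,\tfrac34]$ means that the boundary $\ze = 1$ plays no role here; all the difficulty is concentrated at the coordinate singularity $\ze = 0$.

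The key steps, in order: (i) Reduce to the case $\chi \equiv 1$ on a slightly smaller interval $[0,\tfrac34]$ and absorb $\chi$ and its derivatives into constants, using that $\chi$ and all its derivatives are bounded; the point is that $P_i(\chi f)$ differs from $\chi P_i f$ by terms with strictly fewer derivatives on $f$ times bounded coefficients, which can be handled by an outer induction on $i$. (ii) For the model case, observe that controlling $P_i f$ by $\calD_i f$ in the weighted $L^2$ amounts to repeatedly applying the scalar Hardy inequality $\int_0^1 \ze^{2k-2}|g|^2\, d\ze \lesssim \int_0^1 \ze^{2k}|\pz g|^2\, d\ze$ (valid for $k \ge 1$ and $g$ with the appropriate vanishing at the origin), which lets one trade a factor $\ze^{-1}$ for a $\pz$ at the cost of adjusting the power of $\ze$. (iii) Carry out the induction: write $\calD_i = \Dz \calD_{i-1}$ or $\pz \calD_{i-1}$ depending on parity, and similarly decompose $P_i$; the commutators between $\Dz$, $\pz$, and $\ze^{-1}$ only produce lower-order terms with explicit $\ze$-powers, and these are exactly the terms the Hardy inequalities are designed to absorb. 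The center-regularity conditions satisfied by admissible configurations (coming from spherical symmetry, cf.\ Definition \ref{def: w} (3) and the discussion around \eqref{barcalDj}) guarantee the boundary terms in the Hardy inequalities vanish.

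The main obstacle I expect is bookkeeping: keeping careful track of the exact power of $\ze$ accompanying each partially-differentiated expression so that the Hardy inequality is always applicable (i.e., the relevant exponent stays $\ge 1$, or $> -1/2$ in the weighted $L^2$ formulation), and ensuring that the induction closes uniformly over all $P_i \in \calP_i$ rather than just for a single fixed concatenation. A secondary subtlety is that $\calP_i$ includes operators whose outermost factor is $\ze^{-1}$ (the odd-parity case), which is the ``worst'' from the standpoint of singularity at $\ze = 0$; one must check that even in this case the bound against $\calD_i$ — whose odd-parity version $\Dz(\pz\Dz)^{(i-1)/2}$ also has a $\Dz$ rather than a bare derivative outermost — goes through, which it does precisely because $\Dz = \pz + 2\ze^{-1}$ carries its own $\ze^{-1}$. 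Since this is essentially Lemma A.1 of \cite{guo2021continued}, I would in the actual write-up simply cite that reference and only indicate the minor modifications needed for our weight conventions.
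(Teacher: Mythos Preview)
The paper does not provide a proof of this lemma; it is stated with the attribution ``[Lemma A.1, \cite{guo2021continued}]'' and no argument is given, consistent with the remark at the start of the appendix that results already proved in \cite{guo2021continued} are simply cited. Your closing sentence---that in the actual write-up one should simply cite that reference---is exactly what the paper does, so there is nothing further to compare. The Hardy-inequality-plus-induction sketch you outline is a plausible route to the result, but since the paper offers no proof of its own, there is no ``paper's approach'' to weigh it against.
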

\subsection{Commuting $\calD_i$ with elliptic operator $L_{0}$}
Define $\mathcal{P}$ (resp. $\mathcal{Q}$) to be the class of smooth functions on $[0,1]$ such that their Taylor expansion at $\zeta = 0$ only contains odd (resp. even)-order terms. 

With the definitions above, we prove the following crucial lemma. 
\begin{lem}\label{lem:Lcommute}
    For any $i > 0$ a positive integer and $f$ a smooth function, there exist smooth functions $\xi_{i,j}(\zeta)$ with $j \ge 0$ such that 
    \begin{equation}
        \label{maincommutation}
        \calD_i L_0 f = \calL_{i}\calD_i f + \sum_{j=0}^{i-1} \xi_{i,j}\calD_{i-j}f,
    \end{equation}
    where
    $$
    \xi_{i,j} \in \begin{cases}
        \mathcal{P},& j \text{ is odd},\\
        \mathcal{Q},& j \text{ is even}.
    \end{cases}
    $$
\end{lem}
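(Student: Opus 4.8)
The plan is to establish \eqref{maincommutation} by induction on $i$, tracking at each step not only the leading term $\calL_i \calD_i f$ but also the precise parity of the coefficient functions $\xi_{i,j}$. The base case $i=1$ follows by a direct computation: $\calD_1 L_0 f = \Dz L_0 f$, and since $L_0 f = -w^{-\frac1\kk}\pz(w^{1+\frac1\kk}\Dz f)$, one expands $\Dz L_0 f$ using the product rule \eqref{eq:basicproduct} and the identities $\Dz = \pz + \frac2\ze$, collecting the top-order piece into $L_0^* \calD_1 f = \calL_1 \calD_1 f$ (recall \eqref{def:Lk dual} and \eqref{def:high-order elliptic op}) and the remainder into $\xi_{1,0}\calD_1 f$, where $\xi_{1,0}$ will be a combination of $w'/w$, $w''/w$ and $1/\ze$ multiplied by smooth functions; the parity claim $\xi_{1,0}\in\mathcal{P}$ (odd Taylor expansion at $0$) then follows from Lemma \ref{lem:wderivative} and Definition \ref{def: w}(3), which forces the odd-order vanishing of $w$ at the origin, together with the fact that $1/\ze$ shifts parity.

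\textbf{Inductive step.} Assuming \eqref{maincommutation} holds for $i$, I would apply $\calD$ once more. The key structural fact to exploit is the alternating definition \eqref{calDj}: $\calD_{i+1} = \dz \calD_i$ when $i$ is even and $\calD_{i+1} = \Dz \calD_i$ when $i$ is odd, and correspondingly $\calL_{i+1}$ alternates between $L_{i+1}$ and $L_{i+1}^*$ in \eqref{def:high-order elliptic op}. So from $\calD_i L_0 f = \calL_i \calD_i f + \sum_{j=0}^{i-1}\xi_{i,j}\calD_{i-j}f$ I would apply the appropriate first-order operator ($\pz$ or $\Dz$) to both sides. On the left this produces $\calD_{i+1}L_0 f$. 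On the right, the term $(\pz\text{ or }\Dz)\calL_i\calD_i f$ must be reorganized into $\calL_{i+1}\calD_{i+1}f$ plus lower-order corrections — this is the analogue of the well-known "commuting a radial derivative through a Bessel-type operator" computation, and it generates new coefficient functions built from $w^{(k)}/w$ and powers of $1/\ze$. The terms $(\pz\text{ or }\Dz)(\xi_{i,j}\calD_{i-j}f)$ split via the product rule into $(\pz\xi_{i,j})\calD_{i-j}f$ and $\xi_{i,j}(\pz\text{ or }\Dz)\calD_{i-j}f$; the latter is $\xi_{i,j}\calD_{i-j+1}f$ up to further lower-order terms coming from the mismatch between $\dz\Dz$ and $\Dz\dz$ (which differ by a $1/\ze$-order operator, cf.\ the identity $\pz\Dz - \Dz\pz = 0$ but $\Dz = \pz + 2/\ze$ so the commutator with lower $\calD$'s produces $1/\ze^2$ weights). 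Re-indexing the sum then yields the claimed form with new coefficients $\xi_{i+1,j}$.

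\textbf{Parity bookkeeping — the main obstacle.} The genuinely delicate part is verifying that each $\xi_{i+1,j}$ lands in the correct parity class $\mathcal{P}$ (odd $j$) or $\mathcal{Q}$ (even $j$). The mechanism is: every application of $\dz$ preserves parity of a smooth function's Taylor expansion, every application of $\Dz$ also preserves it (since $\frac2\ze$ maps odd to odd up to smoothness, which is guaranteed for the objects we encounter by Definition \ref{def: w}(3) and Lemma \ref{lem:wderivative}), but multiplying by $1/\ze$ flips parity, and $1/\ze$ appears exactly when we pass through one $\Dz$ versus $\pz$ or re-index by one step. Since $\calD_{i-j}$ changes the "level" by one each time $j$ changes by one, and the coefficient parity is pinned to whether $i-j$ (equivalently $j$, since $i$ is fixed in the induction) is even or odd, one checks that each operation in the inductive step shifts the level index and the $1/\ze$-count in tandem, so parity is preserved consistently. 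I would make this precise by carrying, as part of the inductive hypothesis, the statement that $\xi_{i,j}$ is $\zeta^{\pm 1}$ times a function in $\mathcal{Q}$ when appropriate, i.e.\ strengthen the hypothesis to track the exact form $\xi_{i,j} = \zeta^{-a_{i,j}} g_{i,j}$ with $g_{i,j}$ smooth and of definite parity. The rest of the argument — confirming smoothness of all $\xi_{i,j}$ on all of $[0,1]$, including at $\ze=1$ where $w$ vanishes linearly (so no $1/w$ singularity arises because $L_0$ and $L_{k}$ are built to absorb the weight correctly, cf.\ \eqref{def: elliptic Lk}) — is routine given Lemma \ref{lem:wderivative}.
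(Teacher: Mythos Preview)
Your induction skeleton matches the paper's, but there are concrete errors that would prevent it from closing. First, the coefficients never involve $w'/w$ or $w''/w$: expanding $L_k f = -(1+\tfrac1\kk + k)w'\Dz f - w\pz\Dz f$ shows the weight $w^{-\frac1\kk-k}$ is fully absorbed, so nothing is singular at $\ze=1$. The paper's engine is the pair of identities you are missing,
\[
\Dz L_k = L_{k+1}^*\Dz - (k+1)\Big(w'' + \tfrac{2}{\ze}w'\Big)\Dz,\qquad \pz L_k^* = L_{k+1}\pz - (k+1)\Big(w'' - \tfrac{2}{\ze}w'\Big)\pz,
\]
whose coefficients lie in $\mathcal{Q}$ (smooth and even near $0$ by Definition~\ref{def: w}(3)). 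These give the leading term $\calL_{i+1}\calD_{i+1}f$ directly; your description of that step as producing ``$w^{(k)}/w$'' terms is simply wrong. Also, $\xi_{1,0}\in\mathcal{Q}$ since $j=0$ is even, not $\mathcal{P}$.

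Second, your parity bookkeeping has a real gap. It is \emph{not} true that $\Dz$ is harmless on parity: $\Dz 1 = 2/\ze$ is not smooth, so applying $\Dz$ to a $\mathcal{Q}$ function can destroy smoothness. (Relatedly, $\pz\Dz - \Dz\pz = -2/\ze^2$, not $0$.) The dangerous case is $i$ even, $j$ odd: one applies $\pz$ to $\xi_{i-1,j}\calD_{i-1-j}f$ with $\xi_{i-1,j}\in\mathcal{P}$, but since $i{-}1{-}j$ is even one needs $\Dz\calD_{i-1-j}=\calD_{i-j}$, and naively converting $\pz$ to $\Dz$ costs a $2/\ze$. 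The paper's trick is to write $\xi_{i-1,j}=\ze q$ with $q\in\mathcal{Q}$ and use $\ze\pz = \ze\Dz - 2$ on the whole product, which lands the resulting coefficients $\ze q\in\mathcal{P}$ and $-q+\ze\pz q\in\mathcal{Q}$ in the right classes. Your proposed ``strengthened hypothesis'' $\xi_{i,j}=\ze^{-a}g$ goes the wrong way: the lemma asserts smoothness, so negative powers of $\ze$ must never appear.
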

\begin{rmk}
The readers might find this lemma somewhat surprising especially when compared to the Lemma B.1. of \cite{guo2021continued}, in that the coefficients $\xi_{i,j}$ are smooth functions without any negative powers of $\ze$. The reason that Lemma \ref{lem:Lcommute} holds crucially relies on the fact that $w: [0,1] \to \R_{\ge 0}$ can be evenly extended to a sufficiently regular function on $[-1,1]$.
\end{rmk}
\begin{proof}
We first recall the following product rule:
\begin{equation}\label{eq:basicproduct}
\Dz(fg) = \Dz fg + f\dz g.
\end{equation}
Following \cite[p. 538]{guo2021continued}, we also have
\begin{equation}\label{DzLk}
    \Dz L_k f = L^*_{1+k}\Dz f- (1+k)(w'' + \frac{2}{\zeta}w')\Dz f,
\end{equation}

\begin{equation}\label{dzLk*}
    \dz L_k^* f = L_{1+k}\dz f- (1+k)(w'' - \frac{2}{\zeta}w')\dz f.
\end{equation}
    Now, we prove \eqref{maincommutation} by induction. For notational convenience, we will denote $p(\zeta)$ as a generic member of $\mathcal{P}$, and $q(\zeta)$ a generic member of $\mathcal{Q}$. The exact expression of $p,q$ might change from line to line.
    \begin{enumerate}
        \item $i = 1$. This case is clear upon using \eqref{DzLk} and realizing that $\Dz = \calD_1$, $w'' \pm 2w'/\zeta \in \mathcal{Q}$:
        $$
        \calD_1L_0 f = L^*_{1}\calD_1 f - (1+k)(w'' + 2w'/\zeta)\calD_1 f = L^*_{1}\calD_1 f + q\calD_1f.
        $$
        \item $i > 1$. We first discuss the case where $i$ is even. The following elementary fact is useful for the rest of the proof:
        $$
        \dz p,\Dz p \in \mathcal{Q},\quad \dz q \in \mathcal{P}.
        $$
        We remark that $\Dz q$ is extremely dangerous as $\Dz 1 = 2/\zeta$. However, we will see that such adverse derivative will never appear thanks to the vanishing properties of weight $w$. By induction hypothesis, we have
        \begin{align*}
            \calD_i L_0 f &= \dz (\calD_{i-1}L_0 f ) = \dz\left(\calL_{i-1}\calD_{i-1}f + \sum_{j=0}^{i-2}\xi_{i-1,j}\calD_{i-1-j}f\right)\\
            &= \dz\left(L_{i-1}^*\calD_{i-1}f\right) + \sum_{j=0}^{i-2}\dz\left(\xi_{i-1,j}\calD_{i-1-j}f\right)\\
            &= L_{i}\calD_i f - i(w''-2w'/\zeta)\calD_if + \sum_{j=0}^{i-2}\dz\left(\xi_{i-1,j}\calD_{i-1-j}f\right).
        \end{align*}
        It is clear that the second term above has the required structure since $w'' - 2w'/\zeta \in \mathcal{Q}$. Then it suffices for us to consider the final term above. For $0\le j \le i - 2$ being even, we write
        \begin{equation}\label{lemcommuteaux1}
        \begin{split}
        \dz\left(\xi_{i-1,j}\calD_{i-1-j}f\right) &= \dz(\xi_{i-1,j})\calD_{i-1-j}f + \xi_{i-1,j}\dz\calD_{i-1-j}f\\
        &= \dz(\xi_{i-1,j})\calD_{i-1-j}f + \xi_{i-1,j}\calD_{i-j}f,
        \end{split}
        \end{equation}
        where we used that $\dz \calD_{i-j-1} = \calD_{i-j}$ when $j$ is even (i.e. $i-j-1$ is odd). Moreover, because
        \begin{equation}\label{dzxi}
        \p_\zeta \xi_{i-1,j} \in \begin{cases}
        \mathcal{P},& j \text{ is even},\\
        \mathcal{Q},& j \text{ is odd},
    \end{cases}
        \end{equation}
        the RHS to \eqref{lemcommuteaux1} has the desired structure. When $j$ is odd, we recall that $\xi_{i-1,j} \in \mathcal{P}$ by the induction hypothesis. Hence, we may write
        $$
        \xi_{i-1,j} = \zeta q_{i-1,j},\quad q_{i-1,j} \in \mathcal{Q}.
        $$
        Then
        \begin{align*}
            \dz\left(\xi_{i-1,j}\calD_{i-1-j}f\right) &= \dz(\zeta q_{i-1,j}\calD_{i-1-j}f) = q_{i-1,j}\calD_{i-1-j}f + \zeta\p_\zeta (q_{i-1,j}\calD_{i-1-j}f).
        \end{align*}
        But then we recall that for a function $h$,
        $$
        \zeta \Dz h = \zeta\dz h + 2h.
        $$
        Then combining the two pieces of computations above, we have
        \begin{align*}
        \dz\left(\xi_{i-1,j}\calD_{i-1-j}f\right) &= -q_{i-1,j}\calD_{i-1-j}f + \zeta \Dz(q_{i-1,j}\calD_{i-1-j}f)\\
        &= -q_{i-1,j}\calD_{i-1-j}f + \zeta(q_{i-1,j}\calD_{i-j}f + \dz q_{i-1,j}\calD_{i-j-1}f).
        \end{align*}
        This case is thus proved after noticing that $\zeta q_{i-1,j} \in \mathcal{P}$ and $\zeta\dz q_{i-1,j} \in \mathcal{Q}$. The proof for $i$ odd is similar, where one needs to quote \eqref{DzLk} in place of \eqref{dzLk*}.
    \end{enumerate}
\end{proof}
\subsection{Product Rules}
In this section, we record a general product rule concerning a general vectorfield of class $\calP_i$ or $\barcalP_i$, which are useful in analyzing nonlinear terms which are not of top order. Moreover, we will also prove some specific product rules that possess more structures than the general version. In fact, these special product rules will be crucial when we perform the top order calculations, where a structural symmetry is required to avoid a loss of derivatives.

We start with a general product rule, which follows from the basic version \eqref{eq:basicproduct}:
\begin{lem}[Lemma A.4, \cite{guo2021continued}]
    \label{lem:generalproductrule}
Let $ i \in \N$ be given.
    \begin{enumerate}
        \item For any $P \in \calP_{i}$, the following identity holds:
        \begin{equation}
            \label{Pproduct}
            P(fg) = \sum_{k=0}^i \sum_{\substack{A \in \calP_{k}\\ B \in \barcalP_{i-k}}}c^{AB}_k(Af)(Bg),
        \end{equation}
        where $c^{AB}_k$ are real constants.

        \item For any $\bar P \in \barcalP_{i}$, the following identities hold:
        \begin{equation}
            \label{Pbarproduct}
            \bar P(fg) = \sum_{k=0}^i \sum_{\substack{A \in \barcalP_{k}\\ B \in \barcalP_{i-k}}}c^{AB}_k(Af)(Bg),
        \end{equation}
        and
        \begin{equation}
            \label{Pbarproduct2}
            \bar P(fg) = \sum_{k=0}^i \sum_{\substack{A \in \calP_{k}\\ B \in \calP_{i-k}}}\bar{c}^{AB}_k(Af)(Bg),
        \end{equation}
        where $c^{AB}_k, \bar{c}^{AB}_k$ are real constants.
    \end{enumerate}
\end{lem}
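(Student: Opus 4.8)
The statement is a general Leibniz-type formula for the differential operators $P \in \calP_i$ and $\bar P \in \barcalP_i$, which are concatenations of the two first-order operators $\pz$ and $\Dz$ (possibly interspersed with multiplication by $\ze^{-1}$, in the odd case). The plan is to induct on $i$, using the single elementary product rule \eqref{eq:basicproduct}, namely $\Dz(fg) = (\Dz f)g + f\pz g$, together with the obvious Leibniz rule for $\pz$, i.e. $\pz(fg) = (\pz f)g + f\pz g$, and (for the odd cases) the fact that multiplication by $\ze^{-1}$ commutes with nothing but merely shifts the relevant class. The key bookkeeping point is to verify that each time we peel off the leftmost first-order factor of $P$ (or $\bar P$) and distribute it via one of these two product rules, the two resulting ``halves'' land in the classes asserted in the statement.

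First I would set up the induction on $i$, with base case $i = 0$ being trivial ($P = \bar P = \mathrm{Id}$). For the inductive step, write $P = V \cdot P'$ where $V \in \{\pz, \Dz, \ze^{-1}\}$ is the leftmost factor and $P'$ is a shorter operator; by the inductive hypothesis $P'(fg) = \sum_k \sum_{A \in \calP_k, B \in \barcalP_{i-1-k}} c_k^{AB}(Af)(Bg)$ (or the analogous $\barcalP$ expansions). Now apply $V$ to each summand. If $V = \pz$ we use the ordinary Leibniz rule and note that $\pz A$ has the effect of either raising the order of $A$ by one inside the appropriate class or, in the odd case, producing a term of the form $(\text{shorter})\cdot(\text{stuff})$; similarly $\pz B$. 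If $V = \Dz$ we instead use \eqref{eq:basicproduct}: $\Dz\big((Af)(Bg)\big) = (\Dz A f)(Bg) + (Af)(\pz B g)$, where now $\Dz A$ sits in $\calP_{k+1}$ or $\barcalP_{k+1}$ as needed and $\pz B \in \barcalP_{i-k}$. The only subtlety is matching parities: when $i$ is even versus odd, the defining concatenation pattern \eqref{calDj}--\eqref{barcalDj} alternates $\pz$ and $\Dz$, so one must track which of the two product rules is invoked at each step to confirm that the classes $\calP_k$ and $\barcalP_{i-k}$ (or $\barcalP_k$ and $\barcalP_{i-k}$, or $\calP_k$ and $\calP_{i-k}$) are precisely the ones appearing in \eqref{Pproduct}, \eqref{Pbarproduct}, \eqref{Pbarproduct2}. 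For the second $\barcalP$ identity \eqref{Pbarproduct2}, the trick is to observe that $\barcalP_{2j+1} = \{W\pz : W \in \calP_{2j}\}$ and $\barcalP_{2j+2} = \{W\pz : W \in \calP_{2j+1}\}$, so one peels off the $\pz$ first (producing a purely $\calP$-to-$\calP$ Leibniz expansion) and then applies the already-established general product rule for $\calP$ operators (i.e. \eqref{Pproduct}) to the remaining $\calP$ factor, re-summing to land entirely in $\calP \times \calP$.

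I expect the main obstacle to be purely organizational rather than conceptual: keeping the combinatorics of the index $k$ and the parity cases straight, and verifying that multiplication by $\ze^{-1}$ (which appears in the odd-order classes $\calP_{2j+1}$ and is harmless since it commutes past everything in the sense of not changing the order count) does not disrupt the class membership. In particular one must check that applying $\ze^{-1}$ to a term $(Af)(Bg)$ — which happens only when $V = \ze^{-1}$, i.e. at the very last step in building an odd-order $\calP$ operator — can be absorbed by letting $\ze^{-1}$ act on the $A$-factor, keeping the $B$-factor in $\barcalP_{i-k}$. Since the whole argument is a careful but routine induction driven by a single product rule, I would present it by doing the even case in detail and remarking that the odd case (and the $\barcalP$ variants) follow by the identical peeling argument with the roles of $\pz$ and $\Dz$ (resp. the reduction $\barcalP \to \calP\pz$) swapped; this is exactly how the corresponding Lemma A.4 of \cite{guo2021continued} is handled, and the proof here is identical modulo notation.
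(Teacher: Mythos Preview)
Your inductive plan for \eqref{Pproduct} and \eqref{Pbarproduct} is correct and is exactly what the paper does (it simply cites \cite[Lemma~A.4]{guo2021continued}, whose proof is the induction you describe).

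For \eqref{Pbarproduct2}, however, there is a gap. You write $\bar P = P\pz$ with $P \in \calP_{i-1}$, apply $\pz(fg) = (\pz f)g + f(\pz g)$, and claim this is ``a purely $\calP$-to-$\calP$ Leibniz expansion.'' But $\pz \in \barcalP_1$, not $\calP_1$. If you then feed $(\pz f)g$ into \eqref{Pproduct}, you obtain terms $(A\pz f)(Bg)$ with $A \in \calP_k$, $B \in \barcalP_{i-1-k}$; since $A\pz \in \barcalP_{k+1}$, this lands you in $\barcalP \times \barcalP$ --- i.e.\ back at \eqref{Pbarproduct}, not \eqref{Pbarproduct2}. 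Your ``re-summing'' remark does not supply a mechanism to convert this to $\calP \times \calP$.

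The paper's fix is a single line you are missing: rewrite
\[
\pz(fg) = (\Dz f)g + f(\Dz g) - \frac{4f}{\ze}\cdot g,
\]
using $\pz = \Dz - \frac{2}{\ze}$ on each factor. Now every first-order hit is by $\Dz$ or $\ze^{-1}$, both of which lie in $\calP_1$. Applying \eqref{Pproduct} to $P$ acting on each of the three products and using the elementary inclusion $\barcalP_j \cdot \{\Dz,\ze^{-1}\} \subset \calP_{j+1}$ (immediate from the definition $\barcalP_j = \calP_{j-1}\pz$) then gives the desired $\calP \times \calP$ form.
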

\begin{proof}
 Invoking the definition of $\Dz = \pz + \frac{2}{\ze}$, the identity \eqref{eq:basicproduct} follows from the usual product rule.  
    The identities \eqref{Pproduct} and \eqref{Pbarproduct} follow from \cite[Lemma A.4]{guo2021continued}. Then it suffices for us to prove \eqref{Pbarproduct2}. Given $\bar P \in \barcalP_i$, we can write $\bar P = P\pz$, where $P \in \calP_{i-1}$. We thus have
    \begin{align*}
        \bar P(fg) &= P(\pz f g + f\pz g) =P\left(\Dz fg + f\Dz g - \frac{4f}{\ze}\cdot g\right)\\
        &= \sum_{\substack{A_{1,2} \in \calP_{l_1,l_2},\\l_1 + l_2 = i}}\bar{c}^{l_1,l_2}(A_1 f)(A_2 g),
    \end{align*}
    where we used \eqref{Pproduct} in the final equality above.
\end{proof}

Next, we present a few more specific product rules which are instrumental in analyzing the highest order terms.
\begin{lem}\label{lem:highestspatial}
    The following identity holds:
    $$
\calD_i(f\Dz g) = f\calD_{i+1}g + \sum_{\substack{A_{1,2} \in \calP_{l_1,l_2},\\l_1+l_2 = i+1,\\l_1, l_2 \le i}}c^{A_1A_2} (A_1f)(A_2g),
$$
where $c^{A_1A_2}$ are real constants. 
\end{lem}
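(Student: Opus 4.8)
�The plan is to prove Lemma \ref{lem:highestspatial} by induction on $i$, using the elementary product rule \eqref{eq:basicproduct} together with the bookkeeping afforded by the classes $\calP_j$ and $\barcalP_j$. The key structural point, which must be maintained throughout the induction, is twofold: first, that the ``top-order'' term is exactly $f\calD_{i+1}g$ (with the coefficient $f$ itself, undifferentiated, and $g$ hit by precisely $\calD_{i+1}$, not some other operator in $\calP_{i+1}$); second, that in every remainder term both factors are hit by \emph{at most} $i$ derivatives, i.e.\ $l_1,l_2\le i$, so no single factor carries $i+1$ derivatives except in the one distinguished term.

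For the base case $i=1$ we simply compute $\calD_1(f\Dz g)=\Dz(f\Dz g)=f\Dz\Dz g+(\dz f)(\Dz g)=f\calD_2 g+(\dz f)(\Dz g)$, using \eqref{eq:basicproduct}, and note $\dz f\in\barcalP_1$, $\Dz g\in\calP_1$, so the remainder term has the asserted form with $l_1=l_2=1$. For the inductive step I would split into the cases $i$ even and $i$ odd according to whether $\calD_i=\dz\calD_{i-1}$ or $\calD_i=\Dz\calD_{i-1}$. Write $\calD_i(f\Dz g)=X\bigl(\calD_{i-1}(f\Dz g)\bigr)$ where $X\in\{\dz,\Dz\}$, apply the inductive hypothesis to $\calD_{i-1}(f\Dz g)=f\calD_i g+\sum c^{A_1A_2}(A_1f)(A_2g)$ with $A_1A_2\in\calP_{l_1,l_2}$, $l_1+l_2=i$, $l_1,l_2\le i-1$, and then distribute $X$ using \eqref{eq:basicproduct}: on the leading term $X(f\calD_i g)=f\,X\calD_i g+(\dz f)(\text{something}\in\calP_i g)$ when $X=\dz$, respectively $f\,X\calD_i g+(\dz f)(\calD_i g)$ when $X=\Dz$, and in either case $X\calD_i g=\calD_{i+1}g$ by the definition \eqref{calDj}; the extra term $(\dz f)(\cdot)$ has $\dz f$ with one derivative and $g$ with at most $i$ derivatives, so it fits the remainder form. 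On each remainder term $X\bigl((A_1f)(A_2g)\bigr)$, the Leibniz rule \eqref{eq:basicproduct} produces two pieces, one with one more derivative on $A_1f$ and one with one more derivative on $A_2g$ (possibly plus a harmless $\ze^{-1}$ factor absorbed into the operator class), each carrying at most $i$ derivatives on either factor since $l_1,l_2\le i-1$ before. Summing, one lands back in the form $\sum c^{A_1A_2}(A_1f)(A_2g)$ with $l_1+l_2=i+1$, $l_1,l_2\le i$, closing the induction.

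The only genuine subtlety — and the step I expect to require the most care — is verifying that the operator classes are closed under the manipulations: when $X=\Dz$ hits $A_2 g$ one must check that $\Dz A_2\in\calP_{l_2+1}$ and similarly for $\dz$, and that when $X=\dz$ one can rewrite $\dz=\Dz-\tfrac{2}{\ze}$ to put the resulting operators in the allowed class $\calP$ (this is exactly the type of rewriting already used in the proof of \eqref{Pbarproduct2} in Lemma \ref{lem:generalproductrule}). In particular one must confirm that no operator of the form $\Dz$ ever falls directly on $f$ in a way that would force $f$ to be differentiated — it does not, because in the leading term $X$ always commutes onto $\calD_i g$ with only a single-derivative Leibniz correction $(\dz f)(\cdots)$, and that single-derivative correction has one derivative on $f$, well below the $i$-derivative budget. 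Once the class-closure bookkeeping is in place the combinatorics of the constants $c^{A_1A_2}$ is automatic, so the proof is short modulo this routine verification.
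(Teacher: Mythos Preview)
Your inductive strategy matches the paper's, but there is a genuine error in how you identify the leading term. By definition \eqref{calDj}, the operators $\calD_j$ alternate their outermost letter: $\calD_i = X\calD_{i-1}$ with $X=\dz$ for $i$ even and $X=\Dz$ for $i$ odd, but then $\calD_{i+1} = X'\calD_i$ with $X'$ the \emph{opposite} choice. Hence your claim ``$X\calD_i g = \calD_{i+1}g$'' is false in both parities. The same error already appears in your base case: you write $\Dz(f\Dz g)=f\Dz\Dz g+(\dz f)(\Dz g)$ and then assert $\Dz\Dz=\calD_2$, but $\calD_2=\dz\Dz$, not $\Dz\Dz$.

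The paper avoids this by applying the product rule \eqref{eq:basicproduct} in the \emph{asymmetric} form $\Dz(PQ)=(\Dz P)Q+P\,\dz Q$. When passing from level $i$ (even) to $i+1$ via $\calD_{i+1}=\Dz\calD_i$, this puts $\dz$ on the second factor, and $\dz\calD_{i+1}=\calD_{i+2}$ because $i+1$ is odd --- the alternation is respected automatically. For the remainder terms the paper then exploits that $l_1+l_2=i+1$ is odd, so $l_1,l_2$ have opposite parity and one can choose which factor receives $\Dz$ and which receives $\dz$ so that both $\Dz A$ and $\dz B$ land in the correct $\calP$-class. Your sketch glosses over this parity bookkeeping; in your setup $l_1+l_2=i$ so $l_1,l_2$ have the \emph{same} parity, and you will need the rewriting $\dz=\Dz-\tfrac{2}{\ze}$ in more places than you indicate --- in particular on the leading term itself, and to convert each appearance of $\dz f$ (which lies in $\barcalP_1$, not $\calP_1$ as the lemma requires) into $\Dz f-\tfrac{2}{\ze}f$.
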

\begin{proof}
    We prove by induction. If $i = 1$, we see that
    \begin{align*}
        \calD_1 (f\Dz g) = \Dz (f\Dz g) = \Dz f \Dz g + f \calD_2 g = \calD_1 f\calD_1 g + f \calD_2 g,
    \end{align*}
    and we are done.

    To complete the inductive step, we assume $i$ is even. Then $\calD_{i+1} = \Dz \calD_{i}$, and we have
    \begin{align*}
        \calD_{i+1} (f\Dz g) &= \Dz\left(f\calD_{i+1}g + \sum_{k \ge 1,A \in \calP_k,B\in\calP_{i-k+1}}^{i} (Af)(Bg)\right)\\
        &= \calD_1 f \calD_{i+1}g + f\calD_{i+2}g + \Dz \left(\sum_{k \ge 1,A \in \calP_k,B\in\calP_{i-k+1}}^{i} (Af)(Bg)\right).
    \end{align*}
    Since $i+1$ is odd, then $k$ and $i-k+1$ have different parity. We may without loss assume that $k$ is even and hence $i-k+1$ is odd. Then
    $$
    \Dz ((Af)(Bg) ) = \Dz (Af) (Bg) + Af \dz(Bg) = \calA f Bg + Af\mathcal{B}g,
    $$
    where $\calA\in \calP_{k+1}$, $\mathcal{B} \in \calP_{i-k+2}$. Thus we conclude the proof for $i$ even case. The case where $i$ is odd is similar.
\end{proof}
\begin{lem}\label{lem:leadingorder}
    The following identity holds:
    \begin{align}\label{eq: leading order symmetry}
        \calD_i (fg) = (\calD_i f) g + \sum_{k=0}^{i-1}\sum_{\substack{A\in \calP_k\\B\in\bar{\calP}_{i-k}}}c_k^{AB} (Af)(Bg),
    \end{align}
    where $c_k^{AB}$ are real constants. 
    As an immediate corollary, we have
    \begin{equation}\label{commutator [calDi g]f}
        [\calD_i, g] f = \sum_{k=0}^{i-1}\sum_{\substack{A\in \calP_k\\B\in\bar{\calP}_{i-k}}}c_k^{AB} (Af)(Bg).
    \end{equation}
    \end{lem}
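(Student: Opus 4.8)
\textbf{Proof plan for Lemma \ref{lem:leadingorder}.} The plan is to deduce the identity \eqref{eq: leading order symmetry} from the general product rule \eqref{Pproduct} by isolating and identifying its top--order block. Since $\calD_i \in \calP_i$, \eqref{Pproduct} already provides a representation
\[
\calD_i(fg) = \sum_{k=0}^i\sum_{\substack{A\in\calP_k\\ B\in\barcalP_{i-k}}}c_k^{AB}(Af)(Bg);
\]
the terms with $0\le k\le i-1$ are precisely of the shape on the right--hand side of \eqref{eq: leading order symmetry}, so the only point to verify is that the $k=i$ block equals $(\calD_i f)g$. Because $\barcalP_0=\{Id\}$, that block has the form $\bigl(\sum_{A\in\calP_i}c_i^{A}(Af)\bigr)g$ for suitable constants $c_i^{A}$, and the heart of the argument is to show that $\sum_{A\in\calP_i}c_i^{A}(Af)=\calD_i f$.

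To carry out this identification I would specialize the product rule to $g\equiv 1$. On the left-hand side one simply gets $\calD_i(f\cdot 1)=\calD_i f$. On the right-hand side, every operator $B\in\barcalP_m$ with $m\ge 1$ is, by the very definition of the classes $\barcalP_m$, of the form $W\pz$ with $W$ a composition of factors from $\{\Dz,\ze^{-1}\}$; hence $B(1)=W(\pz 1)=0$. Consequently only the $k=i$ terms survive, which yields $\calD_i f=\sum_{A\in\calP_i}c_i^{A}(Af)$. Substituting this back recovers the $k=i$ block of the product rule as exactly $(\calD_i f)g$, and \eqref{eq: leading order symmetry} follows.

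Finally, the commutator identity \eqref{commutator [calDi g]f} is then immediate: by commutativity of multiplication $\calD_i(gf)=\calD_i(fg)$, so
\[
[\calD_i,g]f=\calD_i(gf)-g\,\calD_i f=\calD_i(fg)-(\calD_i f)g=\sum_{k=0}^{i-1}\sum_{\substack{A\in\calP_k\\ B\in\barcalP_{i-k}}}c_k^{AB}(Af)(Bg).
\]
There is no real obstacle in this argument; the single non--mechanical observation is that the operators in $\barcalP_m$ with $m\ge 1$ factor through a $\pz$ and therefore annihilate constants, which is read off directly from their definition, and everything else is bookkeeping. (Should one prefer to avoid invoking \eqref{Pproduct}, the same statement can be proved by a direct induction on $i$ using $\calD_i=\pz\calD_{i-1}$ or $\calD_i=\Dz\calD_{i-1}$ together with \eqref{eq:basicproduct} and the ordinary Leibniz rule, tracking how $\pz$ and $\Dz$ interact with the classes $\calP_k,\barcalP_k$; this runs along the same lines but is slightly more laborious.)
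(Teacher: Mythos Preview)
Your argument is correct. The key observation---that every $B\in\barcalP_m$ with $m\ge 1$ is of the form $W\pz$ and hence annihilates constants---is exactly what makes the specialization $g\equiv 1$ collapse the product rule \eqref{Pproduct} to the identity $\sum_{A\in\calP_i}c_i^A A=\calD_i$, from which \eqref{eq: leading order symmetry} follows immediately. (Your description of $W$ as ``a composition of factors from $\{\Dz,\ze^{-1}\}$'' is slightly loose, since elements of $\calP_k$ have a more specific alternating structure, but all you actually use is $B=W\pz$, which is precisely the definition of $\barcalP_m$ for $m\ge1$.)

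The paper takes the other route you mention at the end: a direct induction on $i$, splitting into the cases $\calD_{i+1}=\Dz\calD_i$ (or $\frac1\zeta\calD_i$) and $\calD_{i+1}=\pz\calD_i$ depending on parity, and checking by hand that each building block $\Dz$, $\frac1\zeta$, $\pz$ distributes over the inductively given terms so as to stay in the right classes. Your approach is shorter and more conceptual, since it recycles \eqref{Pproduct} rather than rerunning a parallel induction; the paper's approach is self-contained and makes the parity bookkeeping explicit. Both are valid.
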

\begin{proof}
   We prove by induction. It is clear that \eqref{eq: leading order symmetry} is true for $i=1$ since $\calD_1 = \Dz$. Suppose that \eqref{eq: leading order symmetry} holds for all $i\leq \ell$. We will show that it is true for $i=\ell+1$. 
   
   When $\ell$ is even, then $\calD_{\ell+1} = \Dz \calD_{\ell}$ or $\frac{1}{\zeta}\calD_{\ell}$. From the inductive hypothesis, it suffices to show that $\Dz((\calD_\ell f) g)$, $\frac{1}{\zeta}(\calD_\ell f) g$, as well as $\Dz((Af)(Bg))$, $\frac{1}{\zeta}(Af)(Bg)$, with $A\in \calP_k$ and $B\in \bar\calP_{\ell-k}$, are of the correct form. The first two are easy to handle:
    $$
    \Dz((\calD_\ell f) g) = (\Dz\calD_\ell f)g + (\calD_\ell f)(\pz g),
    $$
    and
    $$
    \frac{1}{\zeta}(\calD_\ell f) g = \left(\frac{1}{\zeta}\calD_\ell f\right) g.
    $$
    Additionally, if $k$ is even, then $\ell-k$ is even. Thus, we can write
    $$
    \Dz((Af)(Bg)) = (\Dz A f)(Bg) + (Af)(\pz Bg),  
    $$
    and 
    $$
    \frac{1}{\zeta}(Af)(Bg) =\left(\frac{1}{\zeta} Af\right) (Bg). 
    $$
   We note that both $\Dz A$ and $\frac{1}{\zeta} A\in \calP_{k+1}$ with $k+1\leq \ell$. If $k$ is odd, then $\ell-k$ is odd. Thus, we can write
        $$
    \Dz((Af)(Bg)) = (\dz A f)(Bg) + (Af)(\Dz Bg). 
    $$
    and 
    $$
    \frac{1}{\zeta}(Af)(Bg) = (Af)\left(\frac{1}{\zeta}Bg\right). 
    $$
  Here,  $\dz A\in \calP_{k+1}$ with $k+1\leq \ell$. 
    This completes the induction when $\ell$ is even. 

Let us assume $\ell$ is odd now. In this case, $\calD_{\ell+1} = \pz \calD_{\ell}$. Hence, we need to show that $\dz((\calD_\ell f) g)$ as well as $\dz((Af)(Bg))$ with $A\in \calP_k$ and $B\in \bar\calP_{\ell-k}$, are of the correct form. Note that
$$
\dz((\calD_\ell f) g)  = (\dz \calD_\ell f)g + (\calD_{\ell} f)(\pz g), 
$$
where $\dz \calD_\ell = \calD_{\ell+1}$. As before, we consider $k$ is even and odd separately to justify if $\dz((Af)(Bg))$ is of the correct form. If $k$ is even, then $\ell-k$ is odd. We write
$$
\dz((Af)(Bg)) = (\Dz Af)(Bf) + (Af)(\Dz Bf) - (Af)\left(\frac{4}{\zeta}B g\right),
$$
where $\Dz A\in\calP_{k+1}$ with $k+1\leq\ell$. 
If $k$ is odd, then $\ell-k$ is even. 
We write
$$
\dz((Af)(Bg)) = (\pz Af)(Bg)+ (Af)(\pz Bg). 
$$
This completes the proof since $\pz A\in \calP_{k+1}$. 
\end{proof}
Next, we present a lemma concerning derivatives of a nonlinear term arising from $\mathfrak{R}_1(\Theta)$:
\begin{lem}\label{lem: DiZ}
    Let $Z=\frac{1}{\ze}\left(\ze\pz\left(\frac{\Theta}{\ze}\right)\right)^2$. Then $P_i Z$ verifies the following identity: 
\begin{align}\label{eq:DiZ}
P_i Z = \sum_{1\leq k\leq i}\sum_{\substack{A\in\calP_{k+1}\\B\in\calP_{i-k+2}}}c_i^{AB}A\Theta B\Theta, \quad P_i\in \calP_i.  
\end{align}
\end{lem}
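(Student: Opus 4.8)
The plan is to argue by induction on $i$, using the structure of the vector field classes $\calP_i$ and the basic product rules \eqref{eq:basicproduct}, \eqref{Pproduct}. The key preliminary observation is that $Z$ itself can be rewritten purely in terms of $\calP$-vector fields applied to $\Theta$: since $\ze\pz(\Theta/\ze) = \Dz\Theta - 3\,\Theta/\ze$, we have
$$
\ze\pz\left(\frac{\Theta}{\ze}\right) \in \{A\Theta : A \in \calP_1\},
$$
because both $\Dz$ and $\ze^{-1}$ belong to the generating set for $\calP_1$. Hence
$$
Z = \frac{1}{\ze}\left(\ze\pz\left(\frac{\Theta}{\ze}\right)\right)^2 = \left(\ze^{-1}\Dz\Theta - 3\ze^{-2}\Theta\right)\left(\Dz\Theta - 3\frac{\Theta}{\ze}\right),
$$
which is precisely a sum of terms of the form $(A_1\Theta)(A_2\Theta)$ with $A_1 \in \calP_2$ and $A_2 \in \calP_1$. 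This establishes the base case $i=0$ (reading $P_0 = \mathrm{Id}$) and, after one more application of the product rule, the case $i=1$; in fact it already exhibits the claimed shape $\sum_{k}(A\Theta)(B\Theta)$ with $A \in \calP_{k+1}$, $B \in \calP_{i-k+2}$ and the total derivative count $2i+3$, with the constraint that each factor carries at least one derivative beyond the bare $\Theta$.

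For the inductive step, suppose \eqref{eq:DiZ} holds for all orders up to $i$. Given $P_{i+1} \in \calP_{i+1}$, write $P_{i+1} = V P_i$ with $V \in \{\Dz, \ze^{-1}\}$ and $P_i \in \calP_i$ (this is legitimate by the definition of the $\calP$-classes, since one may always peel off the outermost generator). Apply $V$ to the inductive expression $P_i Z = \sum c_i^{AB}(A\Theta)(B\Theta)$. When $V = \ze^{-1}$, the identity $\ze^{-1}(A\Theta)(B\Theta) = (\ze^{-1}A\Theta)(B\Theta)$ keeps the shape, raising the order of the first factor from $k+1$ to $k+2$; note $\ze^{-1}A \in \calP_{k+2}$ by definition. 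When $V = \Dz$, use the product rule \eqref{eq:basicproduct}: $\Dz(fg) = (\Dz f)g + f(\pz g)$; here $\Dz A \in \calP_{k+2}$ and, since $\pz = \Dz - 2\ze^{-1}$, we have $\pz B = \Dz B - 2\ze^{-1}B$ with both $\Dz B$ and $\ze^{-1}B$ in $\calP_{i-k+3}$. In every resulting term the total derivative count increases by exactly one to $2(i+1)+3$, and each factor still carries at least one genuine derivative on $\Theta$ (the original factors did, and applying $V$ only increases orders), so the indexing bookkeeping $A \in \calP_{k'+1}$, $B \in \calP_{(i+1)-k'+2}$ with $1 \le k' \le i+1$ is preserved after relabeling. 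The constants merge into new real constants $c_{i+1}^{AB}$.

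The only genuine subtlety — and the step I expect to require the most care — is the index bookkeeping: one must check that the derivative orders on the two factors always land in the claimed ranges (in particular that $k'$ ranges over $1,\dots,i+1$, i.e. no factor is ever a bare undifferentiated $\Theta$, and no factor exceeds the allowed top order). This is guaranteed because in the base case both factors already carry a derivative, and the inductive step only moves derivatives around or adds them, never strips the last one off either factor; a factor of order $\ge 1$ acted on by a generator in $\{\Dz,\ze^{-1}\}$ produces a factor of order $\ge 2$. So the lower bound $k'\ge 1$ propagates, and the upper constraint follows automatically from the fact that the two orders sum to $2i+3$ and each is at least $1$. This completes the induction. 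One could alternatively derive the statement directly from the general product rule \eqref{Pproduct} applied to $Z = Z_1 Z_2$ with $Z_1 = \ze^{-1}\big(\ze\pz(\Theta/\ze)\big)$ and $Z_2 = \ze\pz(\Theta/\ze)$, but the inductive route makes the order-tracking transparent and avoids reconciling the $\calP$ versus $\barcalP$ classes that appear on the right-hand side of \eqref{Pproduct}.
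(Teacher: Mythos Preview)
Your overall inductive strategy matches the paper's, but the execution of the inductive step overlooks the parity structure built into the classes $\calP_i$, and this is a genuine gap.

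First, the peeling step ``write $P_{i+1}=VP_i$ with $V\in\{\Dz,\ze^{-1}\}$'' is only valid when $i+1$ is odd: by definition $\calP_{2j+2}=\{\prod_{k=1}^{j+1}(\dz V_k)\}$, so an element of even-indexed $\calP$ begins with $\pz$, not with $\Dz$ or $\ze^{-1}$. For $i+1$ even one must instead write $P_{i+1}=\pz P_i$ with $P_i\in\calP_i$. Second, and for the same reason, the assertions ``$\Dz A\in\calP_{k+2}$'' and ``$\Dz B,\ze^{-1}B\in\calP_{i-k+3}$'' are false in general: for instance if $A=\Dz\in\calP_1$ then $\Dz A=\Dz\Dz\notin\calP_2=\{\dz\Dz,\dz\ze^{-1}\}$. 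Prepending $\Dz$ or $\ze^{-1}$ to $P\in\calP_m$ only lands in $\calP_{m+1}$ when $m$ is even, and prepending $\pz$ only works when $m$ is odd.

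The paper repairs this by tracking the parity of $k$ at each stage and exploiting the two equivalent forms of the product rule,
\[
\Dz(fg)=(\Dz f)g+f(\pz g)=(\pz f)g+f(\Dz g),
\]
choosing whichever version routes $\Dz$ to the factor with even $\calP$-index and $\pz$ to the factor with odd $\calP$-index; similarly $\ze^{-1}$ is attached to whichever factor has even index. For the $\pz$-outermost case ($i$ even) one uses the ordinary Leibniz rule and then, when a factor has odd index, rewrites $\pz=\Dz-\frac{2}{\ze}$ on that factor. Your trick $\pz=\Dz-2\ze^{-1}$ is the right identity, but you applied it to the wrong factor; once the parity is respected the induction closes exactly as you outlined.
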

\begin{proof}
Using $\ze\pz\left(\frac{\Theta}{\ze}\right) = \Dz\Theta-3\frac{\Theta}{\ze}$, we see that
$$
Z = \frac{1}{\ze} \left(\Dz\Theta- 3\frac{\Theta}{\ze}\right)^2.
$$
We are now ready to show \eqref{eq:DiZ} by induction. When $P_1=\Dz$, and since $\Dz\frac{1}{\ze} = \frac{1}{\ze^2}$, we have
\begin{equation*}
    \begin{aligned}
\Dz Z =&\frac{2}{\ze}\left(\Dz\Theta- 3\frac{\Theta} {\ze}\right)\pz \left(\Dz\Theta- 3\frac{\Theta} {\ze}\right) + \frac{1}{\ze^2}\left(\Dz\Theta- 3\frac{\Theta} {\ze}\right)^2\\
=&2\pz \left(\frac{\Theta}{\ze}\right)\pz \left(\Dz\Theta- 3\frac{\Theta} {\ze}\right)+\left(\pz\left(\frac{\Theta}{\ze}\right)\right)^2\\
=&2\pz \left(\frac{\Theta}{\ze}\right)\pz\Dz\Theta - 5\left(\pz\left(\frac{\Theta}{\ze}\right)\right)^2. 
\end{aligned}
\end{equation*}
Additionally, when $P_1=\frac{1}{\ze}$, 
\begin{equation*}
  \frac{1}{\ze} Z =   \left(\pz\left(\frac{\Theta}{\ze}\right)\right)^2. 
\end{equation*}
These verify \eqref{eq:DiZ} when $i=1$. Suppose now that \eqref{eq:DiZ} holds for $\ell=1,\cdots,i-1$. If $i$ is odd, $P_iZ = \Dz P_{i-1}Z$ or $\frac{1}{\ze}P_{i-1}Z$. In the former case, 
\begin{align*}
    P_i Z &= \Dz \sum_{1\leq k\leq i-1}\sum_{\substack{A\in\calP_{k+1}\\B\in\calP_{i-k+1}}}c_i^{AB}A\Theta B\Theta\\
   &=
   \begin{cases}
\sum_{1\leq k\leq i-1}\sum_{\substack{A\in\calP_{k+1}\\B\in\calP_{i-k+1}}}c_i^{AB}(\pz A\Theta B\Theta+A\Theta \Dz B\Theta),\quad k\text{ is even};\\
\sum_{1\leq k\leq i-1}\sum_{\substack{A\in\calP_{k+1}\\B\in\calP_{i-k+1}}}c_i^{AB}(\Dz A\Theta B\Theta+A\Theta \pz B\Theta),\quad k\text{ is odd}.
   \end{cases}
\end{align*}
In the latter case, 
\begin{align*}
    P_i Z &= \frac{1}{\ze}\sum_{1\leq k\leq i-1}\sum_{\substack{A\in\calP_{k+1}\\B\in\calP_{i-k+1}}}c_i^{AB}A\Theta B\Theta\\
   &=
   \begin{cases}
\sum_{1\leq k\leq i-1}\sum_{\substack{A\in\calP_{k+1}\\B\in\calP_{i-k+1}}}c_i^{AB}A\Theta\left(\frac{1}{\ze}B\Theta\right),\quad k\text{ is even};\\
\sum_{1\leq k\leq i-1}\sum_{\substack{A\in\calP_{k+1}\\B\in\calP_{i-k+1}}}c_i^{AB}\left(\frac{1}{\ze}A\Theta\right)B\Theta,\quad k\text{ is odd}.
   \end{cases}
\end{align*}
On the other hand, if $i$ is even, then 
\begin{align*}
    \calD_i Z &= \pz \sum_{1\leq k\leq i-1}\sum_{\substack{A\in\calP_{k+1}\\B\in\calP_{i-k+1}}}c_i^{AB}A\Theta B\Theta\\
   &=
   \begin{cases}
\sum_{1\leq k\leq i-1}\sum_{\substack{A\in\calP_{k+1}\\B\in\calP_{i-k+1}}}c_i^{AB}(\pz A\Theta B\Theta+A\Theta \pz B\Theta),\quad k\text{ is even};\\
\sum_{1\leq k\leq i-1}\sum_{\substack{A\in\calP_{k+1}\\B\in\calP_{i-k+1}}}c_i^{AB}\left(\left(\Dz-\frac{2}{\ze}\right) A\Theta B\Theta+A\Theta \left(\Dz-\frac{2}{\ze}\right)B\Theta\right),\quad k\text{ is odd}.
   \end{cases}
\end{align*}
This completes the induction. 
\end{proof}
The following lemma concerns derivatives of nonlinear terms which arise from $\calF$:
\begin{lem}
    \label{lem:Fnonlinear}
    Let $f$ be a smooth function and $P_i \in \calP_i$ for $i \ge 1$. Then the following identities hold:
    \begin{equation}
        \label{eq:thetasquare}
        P_i\left(\frac{f^2}{\zeta}\right) = \sum_{\substack{A_{1,2} \in \calP_{l_1,l_2},\\l_1 + l_2 = i+1,\\l_1,l_2 \le i}}c^{l_1, l_2}_{i}(A_1 f)(A_2f),
    \end{equation}
    \begin{equation}
        \label{eq:thetacube}
        P_i\left(\frac{f^3}{\zeta^2}\right) = \sum_{\substack{A_{1,2,3} \in \calP_{l_1,l_2,l_3},\\l_1 + l_2 + l_3 = i+1,\\l_1,l_2,l_3 \le i}}c^{l_1, l_2, l_3}_{i}(A_1 f)(A_2f)(A_3f).
    \end{equation}
\end{lem}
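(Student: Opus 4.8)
The plan is to prove both identities simultaneously by induction on $i$, after recasting them as a closure statement about products. Since $\ze^{-1}\in\calP_1$ and $\mathrm{Id}\in\calP_0$, one has $\frac{f^2}{\ze}=(\ze^{-1}f)\cdot f$ and $\frac{f^3}{\ze^2}=(\ze^{-1}f)\cdot(\ze^{-1}f)\cdot f$, so the two quantities are (trivial) linear combinations of products $\prod_k(A_kf)$ with $A_k\in\calP_{l_k}$ and $\sum_k l_k$ equal to $1$ and $2$ respectively. Hence it suffices to show the following: if $P_i\in\calP_i$ is applied to such a product, the result is again a linear combination of products $\prod_k(\tilde A_kf)$ with $\tilde A_k\in\calP_{\tilde l_k}$, $\sum_k\tilde l_k=(\sum_k l_k)+i$, and \emph{every} $\tilde l_k\le i$. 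Reading this off with two factors of initial orders $(1,0)$ gives \eqref{eq:thetasquare}, and with three factors of initial orders $(1,1,0)$ gives \eqref{eq:thetacube}; note that for $i\ge 1$ the per-factor bound $\tilde l_k\le i$ combined with $\sum_k\tilde l_k=i+1$ forces every $\tilde l_k\ge 1$, which is why the degenerate order-$0$ factor no longer appears in the statement.

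For the base case $i=1$ one computes directly: if $P_1=\ze^{-1}$ one simply absorbs the factor, $\ze^{-1}\frac{f^2}{\ze}=(\ze^{-1}f)(\ze^{-1}f)$; if $P_1=\Dz$ one uses the Leibniz rule \eqref{eq:basicproduct} and $\pz f=\Dz f-2\ze^{-1}f$ to get $\Dz\bigl(\frac{f^2}{\ze}\bigr)=\frac{2f\pz f}{\ze}+\frac{f^2}{\ze^2}=2(\ze^{-1}f)(\Dz f)-3(\ze^{-1}f)(\ze^{-1}f)$, all factors of order $\le 1$, and the cubic case is analogous. For the inductive step I would use the structural recursions $\calP_{2j+2}=\pz\circ\calP_{2j+1}$ and $\calP_{2j+1}=\Dz\circ\calP_{2j}$ or $\ze^{-1}\circ\calP_{2j}$ to write $P_i=D\circ P_{i-1}$ with $P_{i-1}\in\calP_{i-1}$ and $D$ a \emph{single} operation in $\{\pz,\Dz,\ze^{-1}\}$, apply the inductive hypothesis to $P_{i-1}h$, and then push $D$ through each resulting product using \eqref{eq:basicproduct}, so that it lands on one factor at a time. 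The crucial bit of flexibility is that these are manipulations of \emph{functions}, not operators: scalar factors $\ze^{\pm1}$ may be freely transferred between the factors of a product (as in $(\ze^{-1}\Dz f)\cdot f=(\ze^{-1}f)(\Dz f)$, even though $\ze^{-1}\Dz\notin\calP_2$ as an operator), and together with $\pz=\Dz-2\ze^{-1}$ and the operator identity $\Dz\pz=\pz^2+\frac{2}{\ze}\pz$ this lets one rewrite $D$ acting on a factor as a linear combination of genuine $\calP_{l_k+1}$-images of $f$. The order bound is automatically preserved: the inductive hypothesis supplies factors of order $\le i-1$, and each receives at most one further derivative in passing from $P_{i-1}$ to $P_i$, so every factor has order $\le i$ and the total goes up by exactly $1$.

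The cubic identity \eqref{eq:thetacube} can be obtained either from this same induction run with three factors, or, more economically, by writing $\frac{f^3}{\ze^2}=(\ze^{-1}f)\cdot\bigl(\frac{f^2}{\ze}\bigr)$, splitting $P_i$ across this product with the product rule \eqref{Pproduct}, and handling the $\barcalP$-factors that appear via $\barcalP_m=\calP_{m-1}\circ\pz$ together with the quadratic identity applied to $\pz\bigl(\frac{f^2}{\ze}\bigr)=\frac{2f\pz f}{\ze}+\frac{f^2}{\ze^2}$ (and, if one prefers to stay within $\calP$, \eqref{Pbarproduct2}). The main obstacle — and the bulk of the actual work, exactly as in the proofs of Lemmas \ref{lem: DiZ}, \ref{lem:highestspatial} and \ref{lem:leadingorder} — is the parity/class bookkeeping: one must run a case analysis on the parities of $i$ and of the sub-orders $l_k$ to ensure that the single factor receiving the new operation is always routed into a genuine $\calP$-class (rather than only a $\barcalP$-class) of order $l_k+1$, possibly after shifting a compensating power of $\ze$ onto another factor. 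I do not expect any conceptual difficulty beyond this, since no weight $w$ enters here and $f$ is an arbitrary smooth function; the statement is then used, with $P_{i}$ replaced by $\bar P_{i-1}\Dz\in\calP_i$, in the proof of Proposition \ref{prop:buildingblock}.
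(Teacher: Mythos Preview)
Your approach is correct and essentially the same as the paper's: both induct on $i$, decomposing $P_{i+1}=DP_i$ with a single outermost operator and doing the parity bookkeeping to stay in $\calP$-classes. The paper's step is slightly cleaner in one place: for even $i$ the sum $l_1+l_2=i+1$ is odd, so the two factors have \emph{opposite} parity, and the asymmetric rule $\Dz(gh)=(\Dz g)h+g(\pz h)$ then routes $\Dz$ onto the even-order factor and $\pz$ onto the odd one directly, with no need for your $\ze^{\pm1}$-shifting device. Conversely, your identity $\pz=\Dz-2\ze^{-1}$ is exactly what makes the odd-$i$ step (which the paper only declares ``similar'') go through when both factor orders are even.
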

\begin{proof}
    We prove \eqref{eq:thetasquare} by induction, and the proof of \eqref{eq:thetacube} will be similar. If $i = 1$, we either have $P_1 = \Dz$ or $P_1 = \ze^{-1}$. If the former case hold, then we have
    \begin{align*}
    \Dz\left(\frac{f^2}{\zeta}\right) &= \Dz f \frac{f}{\ze} + f\pz\left(\frac{f}{\ze}\right)= 2\Dz f\frac{f}{\ze} -3\left(\frac{f}{\ze}\right)^2,
    \end{align*}
    which verifies \eqref{eq:thetasquare} when $i = 1$. If the case $P_1 = \ze^{-1}$ holds, then easily $P_1(f^2/\ze) = (f/\ze)^2$, which also verfies the base case.

    To complete the inductive step, we assume that $i \ge 1$ is even. Then $P_{i+1} = VP_{i}$, where $V = \Dz$ or $\ze^{-1}$. Then
    \begin{align*}
    P_{i+1}\left(\frac{f^2}{\ze}\right) &= V\sum_{\substack{A_{1,2} \in \calP_{l_1,l_2},\\l_1 + l_2 = i+1,\\l_1,l_2 \le i}}(A_1 f)(A_2f).
    \end{align*}
    Since $i$ is even and $l_1 + l_2 = i+1$, we may without loss assume that $l_1$ is even and $l_2$ is odd, which implies that $l_1 \le i$ and $l_2 \le i-1$. If $V = \Dz$, then
    \begin{align*}
    P_{i+1}\left(\frac{f^2}{\ze}\right) &= \sum_{\substack{A_{1,2} \in \calP_{l_1,l_2},\\l_1 + l_2 = i+1,\\l_1,l_2 \le i}}(\Dz A_1 f)(A_2f) + (A_1 f)(\pz A_2 f),
    \end{align*}
    which completes the inductive step since $\Dz A_1 \in \calP_{l_1 + 1}$, $\dz A_2 \in \calP_{l_2 + 1}$. If $V = \ze^{-1}$, then
    \begin{align*}
    P_{i+1}\left(\frac{f^2}{\ze}\right) &= \sum_{\substack{A_{1,2} \in \calP_{l_1,l_2},\\l_1 + l_2 = i+1,\\l_1,l_2 \le i}}(\ze^{-1}A_1 f)(A_2f),
    \end{align*}
    which also verfies the inductive step since $\ze^{-1} A_1 \in \calP_{l_1 + 1}$. The case where $i$ is odd is similar.
\end{proof}

\subsection{Chain Rule}
We record a chain rule concerning vector fields of class $\barcalP_i$:
\begin{lem}[Lemma A.5, \cite{guo2021continued}]
    \label{lem:chainrule}
    Let $a \in \R$, $i \in \N$ be given and fix a vectorfield $W \in \barcalP_i$. Then the following holds for any smooth function $f$:
    \begin{equation}
        \label{eq:chainrule}
        W(f^a) = \sum_{k=1}^i f^{a-k}\sum_{\substack{i_1 +\hdots i_k = i\\W_{j} \in \barcalP_{i_j}}}c_{k,i_i,\hdots,i_k}\prod_{j=1}^k W_{j} f.
    \end{equation}
\end{lem}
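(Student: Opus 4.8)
The statement to prove is the chain rule \eqref{eq:chainrule} for a vector field $W \in \barcalP_i$ acting on $f^a$. My plan is to prove this by induction on $i$, exploiting the structure of the classes $\barcalP_i$ recorded after \eqref{barcalDj} and the basic product rule \eqref{eq:basicproduct}. For the base case $i=0$ the statement is trivial (with $W = Id$ and the sum being just $f^a$ itself); for $i = 1$, a vector field $W \in \barcalP_1$ is of the form $W = \dz$ (recalling $\barcalP_1 = \{W\dz : W \in \calP_0\}$ and $\calP_0 = Id$), so the usual chain rule gives $\dz(f^a) = af^{a-1}\dz f$, which is exactly the right-hand side with $k=1$.

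For the inductive step, I would assume \eqref{eq:chainrule} holds for all vector fields in $\barcalP_j$ with $j < i$, and take $W \in \barcalP_i$. By the definition of the classes, one can write $W$ as a concatenation of $\dz$ and $\Dz$ operators with a single $\dz$ at the innermost slot; more precisely, $W = W' V$ where $V \in \{\dz, \Dz\}$ acts first (i.e.\ closest to $f^a$) and $W'$ is a vector field of length $i-1$ of the appropriate class, or alternatively one may peel off the outermost operator. The cleanest route is: write $W = V W''$ where $V$ is the outermost factor and $W'' \in \barcalP_{i-1}$. Apply the inductive hypothesis to $W''(f^a)$, obtaining a sum over $k = 1,\dots, i-1$ of terms $f^{a-k}\prod_{j=1}^k (W_j f)$ with $\sum i_j = i-1$, $W_j \in \barcalP_{i_j}$. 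Then apply $V$ (which is $\dz$ or $\Dz$) to each such term. Using the product rule --- $\dz$ via the Leibniz rule and $\Dz$ via \eqref{eq:basicproduct} in the form $\Dz(gh) = (\Dz g)h + g\,\dz h$ --- the operator $V$ either lands on the scalar power $f^{a-k}$, producing $(a-k)f^{a-k-1}(Vf)\prod_j(W_j f)$, i.e.\ a term of the next level $k+1$ with one new factor $Vf \in \barcalP_1$ and total order increased by one; or it lands on one of the existing factors $W_j f$, producing $f^{a-k}(W_j' f)\prod_{j'\neq j}(W_{j'} f)$ where $W_j' \in \barcalP_{i_j + 1}$, again a term at level $k$ with total order $i$. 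In all cases the resulting term has the asserted form, with constants $c_{k,i_1,\dots,i_k}$ obtained by collecting the combinatorial factors; the constraint $\sum i_j = i$ is preserved throughout.

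The only genuinely delicate point is bookkeeping: one must check that applying $\Dz$ (rather than just $\dz$) does not break the class membership of the factors $W_j' f$ --- but this is exactly what \eqref{eq:basicproduct} guarantees, since $\Dz(W_j f) = (\Dz W_j)f + W_j(\dz f)$ splits into a factor with the extra $\Dz$ absorbed (staying in $\barcalP_{i_j+1}$ by the recursive definition of the $\barcalP$ classes) and a term where $\dz$ acts, which regenerates a new $\barcalP_1$ factor. Since the classes $\barcalP_{2j+1}, \barcalP_{2j+2}$ are precisely designed so that prepending $\Dz$ or $\dz$ stays within the hierarchy, no obstruction arises. I expect this combinatorial verification --- tracking which of $\dz, \Dz$ is peeled off at each stage and confirming that the order indices $i_j$ add up correctly --- to be the main (though entirely routine) part of the argument; there is no analytic difficulty whatsoever, as everything reduces to iterated applications of the ordinary and $\Dz$-Leibniz rules. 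Indeed, since this is Lemma A.5 of \cite{guo2021continued}, I would simply cite that reference and note that the identical argument applies verbatim in our setting, the operators $\dz$ and $\Dz$ being the same.
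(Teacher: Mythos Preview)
Your ultimate recommendation—to cite Lemma~A.5 of \cite{guo2021continued}—matches the paper exactly; no proof is given there.

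The inductive sketch you offer, however, has a concrete gap. When $i$ is even, the outermost operator is $V\in\{\Dz,\ze^{-1}\}$ (not $\{\dz,\Dz\}$ as you wrote), and neither is a derivation, so $V(f^{a-k})\neq (a-k)f^{a-k-1}(Vf)$. More importantly, your assertion that ``prepending $\Dz$ or $\dz$ stays within the hierarchy'' fails: taking $W_j=\Dz\dz\in\barcalP_2$ gives $\Dz W_j=\Dz^2\dz$, and comparing coefficients of $f''',\,\ze^{-1}f'',\,\ze^{-2}f'$ shows $\Dz^2\dz\notin\operatorname{span}(\barcalP_3)=\operatorname{span}\{\dz\Dz\dz,\,\dz\ze^{-1}\dz\}$. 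The line ``$\Dz(W_jf)=(\Dz W_j)f+W_j(\dz f)$'' confuses operator composition with the product rule for functions and is not meaningful as written. The repair is to exploit the \emph{asymmetry} of \eqref{eq:basicproduct}: when $\Dz$ acts on a product $g_1\cdots g_n$, exactly one chosen factor receives $\Dz$ and the remaining receive $\dz$. One checks directly that $\dz W_j\in\operatorname{span}(\barcalP_{i_j+1})$ for every parity of $i_j$ (using $\dz=\Dz-2\ze^{-1}$ when $i_j$ is odd), whereas $\Dz W_j,\,\ze^{-1}W_j\in\barcalP_{i_j+1}$ holds precisely when $i_j$ is odd. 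Since $\sum_j i_j=i-1$ is odd when $i$ is even, at least one $i_j$ is odd, and one groups so that $V$ lands on that factor; all other factors then receive $\dz$, which is always safe. With this care the induction closes.
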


\subsection{Hardy Inequalities}
Since we work in a weighted energy framework, Hardy-type inequalities and embeddings are fundamental for us to close the nonlinear estimates. We record these inequalities in this section. For the rest of this section, we will always fix $N \ge \lfloor\alpha\rfloor + 10$, where $\alpha := \frac{1}{\kk}$.

The first set of basic Hardy inequalities concerns $L^\infty$ estimates close to the origin.
\begin{lem}[Lemma C.2, \cite{guo2021continued}]
    \label{lem:hardy0}
    Let $m$ be a positive integer, and $u: B_1(0) \to \R$ be a smooth function. Then the following inequalities hold:
    \begin{equation}
        \label{est:Linftyu}
        \|u\|_{L^\infty}^2 \lesssim \sum_{i=1}^2 \int_0^\frac34 |B_i u|^2 r^2dr + \sum_{i = 0}^{m+1} \int_\frac14^1 w^{\alpha - \lfloor\alpha\rfloor + 2m}|B_i u|^2 dr,
    \end{equation}
    and
    \begin{equation}
        \label{est:Linftyur}
        \|ru\|_{L^\infty}^2 \lesssim \sum_{i=0}^1 \int_0^\frac34 |B_i u|^2 r^2dr + \sum_{i = 0}^{m+1} \int_\frac14^1 w^{\alpha - \lfloor\alpha\rfloor + 2m}|B_i u|^2 dr,
    \end{equation}
    where $B_i = \calD_i$ or $\barcalD_i$, $i = 1,\hdots, m+1$. Moreover,
    \begin{equation}
        \label{est:Linftyu/r}
        \|\frac{u}{r}\|_{L^\infty}^2 \lesssim \sum_{i=2}^3 \int_0^\frac34 |\calD_i u|^2 r^2dr + \sum_{i = 0}^{m+1} \int_\frac14^1 w^{\alpha - \lfloor\alpha\rfloor + 2m}|\calD_i u|^2 dr,
    \end{equation}
\end{lem}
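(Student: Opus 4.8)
The plan is to prove all three inequalities by the same two–scale scheme, following the strategy of \cite{guo2021continued}. Fix a smooth radial partition of unity $1 = \chi_0 + \chi_1$ on $[0,1]$ with $\supp \chi_0 \subset [0,\tfrac34)$, $\chi_0 \equiv 1$ on $[0,\tfrac12]$, and $\supp\chi_1 \subset (\tfrac14,1]$, so that $\|u\|_{L^\infty([0,1])} \le \|\chi_0 u\|_{L^\infty} + \|\chi_1 u\|_{L^\infty}$. The cutoff $\chi_0$ localizes to the coordinate singularity at $r = 0$ (where $w \approx 1$ and the weight is irrelevant), while $\chi_1$ localizes to the vacuum boundary (where $r \approx 1$ is harmless but $w(r) \approx 1 - r \to 0$). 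Throughout one uses the identities $\calD_1 = \Dz$, $\calD_2 = \p_r \Dz$, $\barcalD_1 = \p_r$, $\barcalD_2 = \Dz\p_r$, the relation $\Dz = \p_r + \tfrac2r$, the radial identities $|\nabla u| \approx |\p_r u|$, $|\nabla^2 u|^2 \approx |\p_r^2 u|^2 + r^{-2}|\p_r u|^2$, $\Delta u = \Dz\p_r u$, and, on $[\tfrac14, 1]$, the fact that $B_i \in \{\calD_i, \barcalD_i\}$ and the plain radial derivatives are interchangeable up to bounded smooth coefficients (so $|\p_r^j u| \lesssim \sum_{l \le j}|B_l u|$ there, and conversely).

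Near the origin one estimates $\|\chi_0 u\|_{L^\infty}$ by the three–dimensional Sobolev embedding $H^2(\R^3) \hookrightarrow L^\infty$ applied to the zero–extension of $\chi_0 u$, which is legitimate because $u$ is a smooth radial function whose odd–order radial derivatives vanish at $0$. Expanding $\|\chi_0 u\|_{H^2(\R^3)}^2$ in radial coordinates and using the identities above together with the Hardy inequality in $\R^3$, $\|f/|x|\|_{L^2}^2 \lesssim \|\nabla f\|_{L^2}^2$ (valid in dimension $\ge 3$), one rewrites it as $\int_0^{3/4}\big(|u|^2 + |B_1 u|^2 + |B_2 u|^2\big)r^2\,dr$ plus commutator terms in which a derivative hits $\chi_0$; the latter are supported in $[\tfrac12,\tfrac34]$, involve at most one derivative of $u$, and—since there $w \approx 1$—are dominated by $\sum_{i\le m+1}\int_{1/4}^1 w^{\alpha - \lfloor\alpha\rfloor + 2m}|B_i u|^2\,dr$. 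The undesired $\int_0^{3/4}|u|^2 r^2\,dr$ term is removed by a Poincaré inequality on $B_{3/4}$ with mean $c$ taken over the shell $\{\tfrac14 \le r \le \tfrac34\}$: $\int_0^{3/4}|u - c|^2 r^2\,dr \lesssim \int_0^{3/4}|\p_r u|^2 r^2\,dr$ with $|c|^2 \lesssim \int_{1/4}^{3/4}|u|^2\,dr \lesssim \int_{1/4}^1 w^{\alpha-\lfloor\alpha\rfloor+2m}|u|^2\,dr$, which yields $\int_0^{3/4}|u|^2 r^2\,dr \lesssim \int_0^{3/4}|B_1 u|^2 r^2\,dr + (\text{boundary term})$. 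This gives \eqref{est:Linftyu}; for \eqref{est:Linftyur} the extra factor $r$ cancels the $r^{-2}$ in $|\nabla^2(ru)|^2$, so one needs one $\Dz$–derivative fewer near the origin (hence the range $i=0,1$), while for \eqref{est:Linftyu/r} the factor $r^{-1}$ is more singular and one additional $\R^3$–Hardy step is required (hence the range $i=2,3$).

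Near the boundary one must prove the one–dimensional weighted embedding $\|\chi_1 u\|_{L^\infty}^2 \lesssim \sum_{i=0}^{m+1}\int_{1/4}^1 w^{\alpha - \lfloor\alpha\rfloor + 2m}|B_i u|^2\,dr$. Writing $\beta := \alpha - \lfloor\alpha\rfloor + 2m$ and using $w(r) \approx 1-r$, one applies the elementary weighted Hardy inequality $\int_{1/4}^1 w^{a}|g|^2\,dr \lesssim \int_{1/4}^1 w^{a+2}|\p_r g|^2\,dr + |g(\tfrac14)|^2$ (valid for $a > -1$; proved by integrating $w^{a+1}|g|^2$ by parts in $r$ and Cauchy–Schwarz) successively with $g = \p_r^{m}u, \p_r^{m-1}u, \dots, \p_r u$, descending from the top term $\int w^{\beta}|\p_r^{m+1}u|^2$ to $\int w^{\beta - 2m}|\p_r u|^2 = \int w^{\alpha - \lfloor\alpha\rfloor}|\p_r u|^2$; each intermediate step is admissible since $\beta - 2j > -1$ for $0 \le j \le m$. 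Because $\alpha - \lfloor\alpha\rfloor < 1$ one has $\int_{1/4}^1 w^{-(\alpha-\lfloor\alpha\rfloor)}\,dr < \infty$, so the fundamental theorem of calculus and Cauchy–Schwarz give $|u(r)|^2 \lesssim |u(c)|^2 + \int_{1/4}^1 w^{\alpha-\lfloor\alpha\rfloor}|\p_r u|^2\,dr$ uniformly in $r \in [c,1)$ for $c \in [\tfrac14,\tfrac34]$; averaging in $c$ and using that all endpoint remainders $|\p_r^j u(c)|^2$ live on $[\tfrac14,\tfrac34]$ where $w \approx 1$ (hence are controlled by $\sum_{i\le m+1}\int_{1/4}^1 w^\beta |B_i u|^2$) closes the estimate. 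The matching of the weight exponent $\alpha - \lfloor\alpha\rfloor + 2m$ with the derivative order $m+1$ is exactly what makes the Hardy chain terminate at order $1$ with a weight $< 1$, which is why the statement carries the free parameter $m$.

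Combining the two regions yields all three inequalities. I expect the near–origin analysis to be the principal obstacle: producing exactly the stated derivative range—in particular eliminating the $\int_0^{3/4}|u|^2$ term—requires coupling the three–dimensional Sobolev embedding with the sharp $\R^3$ Hardy inequality and the shell–Poincaré step, together with careful verification that every commutator and low–order remainder is genuinely supported away from $r=0$ so it can be charged to the (structurally different) boundary term; the bookkeeping for \eqref{est:Linftyu/r}, where the singular factor $r^{-1}$ forces an extra Hardy application, is the most delicate instance. A secondary point is the exponent matching in the near–boundary Hardy chain. Since this lemma coincides with Lemma C.2 of \cite{guo2021continued}, one may simply cite it; the outline above records the argument for completeness.
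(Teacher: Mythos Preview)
The paper does not prove this lemma; it is stated with the attribution ``Lemma C.2, \cite{guo2021continued}'' and no proof is given. Your proposal correctly identifies this at the end and supplies a reasonable two-region sketch (Sobolev/Hardy near $r=0$, iterated weighted Hardy near $r=1$) that is consistent with the standard arguments used for such inequalities; since the paper's ``proof'' is simply a citation, there is nothing further to compare.
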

Consider the Hilbert space $B^N$ afforded by norm
$$
\|f\|_{B^N}^2 := \sum_{i = 0}^N \|\calD_i f\|_i.
$$
We present various $L^2$ and $L^\infty$ based Hardy inequalities concerning functions in class $B^N$. The first of which is an $L^2$ based embedding result:
\begin{lem}[Lemma C.3, \cite{guo2021continued}]
    Let $\Theta, \pt\Theta \in B^N$, $\pt^2\Theta \in B^{N-1}$. Then the following holds:
    \begin{enumerate}
        \item For $\frac{N-\alpha}{2} \le k \le N$, $P_k \in \calP_k$,
        \begin{equation}
        \label{est:L2Hardy}
        \delta^{-1}\lam^{3\kk}\int_0^1 w^{\alpha + 2k - N}|P_k\pt\Theta|^2 r^2dr + \int_0^1 w^{\alpha + 2k - N}|P_k \Theta|^2 r^2 dr \lesssim \calE^N.
    \end{equation}
    
    \item If we further have $\sum_{j = 0}^N \int_0^1 w^{\alpha + j + 1}|\calD_j \Theta|^2 r^2 dr < \infty$, then for $\frac{N-\alpha - 1}{2} \le k \le N$, $P_k \in \calP_k$:
    \begin{equation}
        \label{est:L2Hardy2}
        \int_0^1 w^{\alpha + 2k + 1 - N} |P_{k+1}\Theta|^2 r^2 dr \lesssim \calE^N.
    \end{equation}
    \item For $\frac{N-\alpha - 1}{2} \le k \le N-1$, $P_k \in \calP_k$,
    \begin{equation}
        \label{est:L2Hardypt2}
        \int_0^1 w^{\alpha + 2k + 1 - N}|P_k\pt^2\Theta|^2 r^2dr \lesssim \calS^N.
    \end{equation}
    \end{enumerate}
\end{lem}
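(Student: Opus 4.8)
The plan is to reduce all three estimates to one mechanism: an iterated weighted Hardy inequality adapted to the degenerate weight $w$, run \emph{downward} in the order of differentiation so that each step trades one derivative for two extra powers of $w$. First I would record the basic weighted Hardy bound near the vacuum boundary: for a smooth $f$ on $[0,1]$ and a real exponent $a$ with $a+1>0$ (so that $w^{a+1}|f|^2\to 0$ at $\ze=1$, using $w(1)=0$, $w'(1)<0$, and hence no boundary contribution arises),
\begin{equation*}
\int_0^1 w^{a}\ze^2 |f|^2\, d\ze \lesssim \int_0^1 w^{a+2}\ze^2\left(|\pz f|^2 + \ze^{-2}|f|^2\right) d\ze + \int_{0}^{1/2} w^{a}\ze^2|f|^2\,d\ze,
\end{equation*}
the last term being a harmless piece supported away from $\ze=1$, and the $\ze^{-2}|f|^2$ contribution on the right being accounted for by the $\calP$-derivative bookkeeping and the coercivity Lemma \ref{lem:Dicoercivity} near the origin, and by $|f|^2$ near $\ze=1$. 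A partition of unity then separates $[0,3/4]$ from $[1/4,1]$: on $[0,3/4]$ the weight $w$ is bounded above and below, so $w^{\alpha+2k-N}\approx w^{\alpha+k}\approx 1$ there and the claimed bound for $\calD_k\Theta$ follows immediately from $\|\calD_k\Theta\|_k^2\le\calE^N$ plus Lemma \ref{lem:Dicoercivity} to pass from $\calD_k$ to a general $P_k\in\calP_k$. Hence the content of the lemma lives near $\ze=1$, where $\ze\approx 1$, $w\approx 1-\ze$, and every operator in $\calP_k$ or $\calD_k$ coincides with $\pz^k$ modulo lower-order terms with smooth bounded coefficients.

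\textbf{Part (1).} I would start from $\int w^{\alpha+N}\ze^2|\calD_N\Theta|^2\lesssim\calE^N$, write $\calD_j=\pz\calD_{j-1}$ or $\calD_j=\Dz\calD_{j-1}$ according to the parity of $j$, and apply the Hardy bound repeatedly: the $j$-th step produces $\int w^{\alpha+N-2j}\ze^2|\calD_{N-j}\Theta|^2\lesssim\calE^N$, valid as long as $\alpha+N-2j>-1$, i.e. as long as $N-j>\tfrac{N-\alpha-1}{2}$; setting $k=N-j$ this is exactly the range $k\ge\tfrac{N-\alpha}{2}$, with weight exponent $\alpha+N-2j=\alpha+2k-N$. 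At each step the commutators between $\pz$ and the $\calD_j$'s, and the conversion of the resulting $\calD_j$-bounds to $P_j$-bounds, are handled by Lemma \ref{lem:Dicoercivity} and the product/commutator rules of the Toolbox; the lower-order remainders generated carry a weight exponent no worse than at the previous step and are absorbed by the induction. The estimate for $\pt\Theta$ is identical, carrying the prefactor $\delta^{-1}\lam^{3\kk}$ along verbatim since it sits uniformly in front of the entire $\pt\Theta$ part of $\calE^N$.

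\textbf{Parts (2) and (3).} These are the same iteration started one level higher: for (2) one begins from $\|\calD_{N+1}\Theta\|_{N+1}^2\le\calE^N$, which is precisely the $i=N$ contribution $\|\calD_{i+1}\Theta\|_{i+1}^2$ in \eqref{defn:calEN}, the extra hypothesis $\sum_{j=0}^N\int w^{\alpha+j+1}\ze^2|\calD_j\Theta|^2<\infty$ guaranteeing that the $\ze=1$ boundary terms vanish throughout the iteration; for (3) one begins from $\|\calD_{N-1}\pt^2\Theta\|_{N-1}^2\le\calS^N$, using $\calS^N$ from \eqref{defn:calSN} in place of $\calE^N$. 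In both cases the exponent bookkeeping is the same and yields $\alpha+2k+1-N$ with $k$ in the stated range. I expect the only real obstacle to be the careful tracking of the boundary and lower-order remainder terms through the iteration — in particular checking at every step that the running weight exponent stays $>-1$ (which is where $N\ge\lfloor\alpha\rfloor+10$ leaves the needed room) and that each remainder is reabsorbed rather than degrading the weight. This is routine but tedious, and since the argument is structurally identical to that of \cite[Lemma C.3]{guo2021continued} I would ultimately defer to that reference for the full details.
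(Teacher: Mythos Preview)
Your proposal is correct and aligns with the paper's approach: both defer the core argument to \cite[Lemma~C.3]{guo2021continued}, with the one additional ingredient being the coercivity Lemma~\ref{lem:Dicoercivity} to pass from $\calD_k$ to a general $P_k\in\calP_k$. You have simply spelled out in more detail the iterated weighted Hardy mechanism that underlies that reference, which the paper leaves implicit.
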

\begin{proof}
    The proof of the first two items is in a similar spirit to that of Lemma C.3 in \cite{guo2021continued}. The only key difference is that the differential operators appeared in \eqref{est:L2Hardy} and \eqref{est:L2Hardy2} are general $\calP_k$ vector fields instead of $\calD_k$, which is allowed in view of the coercivity estimate \eqref{est:Dicoercivity}. The proof of \eqref{est:L2Hardypt2} follows similarly. Note that $k$ cannot exceed $N-1$ due to the definition of $S^N$.
\end{proof}

The following two lemmas concern $L^\infty$ Hardy embeddings with or without weight $w$.
\begin{lem}[Lemma C.4, \cite{guo2021continued}]
    \label{lem:LinftyHardy}
    Let $\Theta, \pt\Theta \in B^N$ and $\pt^2\Theta \in B^{N-1}$. Then the following holds:
    \begin{enumerate}
        \item For any nonnegative integer $k$ such that $k \le \frac{N - \lfloor\alpha\rfloor-2}{2}$ and given $P_k \in \calP_k$, we have
        \begin{equation}
            \label{est:LinftyHardy1}
            \delta^{-1}\lambda^{3\kk}\|P_k\pt\Theta\|_{L^\infty}^2 + \|P_k \Theta\|_{L^\infty}^2 \lesssim \calE^N.
        \end{equation}

        \item For any nonnegative integer $k$ such that $k \le \frac{N - \lfloor\alpha\rfloor-2}{2}$ and given $\bar P_k \in \barcalP_k$, we have
        \begin{equation}
            \label{est:LinftyHardy2}
            \delta^{-1}\lambda^{3\kk}\|\bar P_k\pt\left(\frac{\Theta}{\ze}\right)\|_{L^\infty}^2 + \|\bar P_k \left(\frac{\Theta}{\ze}\right)\|_{L^\infty}^2 \lesssim \calE^N.
        \end{equation}

        \item For any nonnegative integer $k$ such that $k \le \frac{N - \lfloor\alpha\rfloor-3}{2}$ and given $P_k \in \calP_k$, we have
        \begin{equation}
            \label{est:LinftyHardypt2}
            \|P_k\pt^2\Theta\|_{L^\infty}^2  \lesssim \calS^N.
        \end{equation}
    \end{enumerate}
\end{lem}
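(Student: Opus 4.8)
The final statement is Lemma~\ref{lem:LinftyHardy} (quoted from \cite[Lemma C.4]{guo2021continued}), which asserts three $L^\infty$ Hardy-type embeddings for functions in $B^N$. The plan is to reduce each estimate to a combination of the $L^\infty$ embeddings near the origin (Lemma~\ref{lem:hardy0}) and the $L^2$-weighted Hardy inequalities (Lemma~\ref{est:L2Hardy}, \eqref{est:L2Hardy2}, \eqref{est:L2Hardypt2}), exactly as in \cite{guo2021continued}; the only genuinely new point is bookkeeping on the weight exponents and the admissible range of $k$ dictated by our choice $N \ge \lfloor\alpha\rfloor + 10$, $\alpha = \tfrac1\kk$.

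First I would prove \eqref{est:LinftyHardy1}. Fix a nonnegative integer $k \le \tfrac{N-\lfloor\alpha\rfloor-2}{2}$ and $P_k \in \calP_k$. Apply \eqref{est:Linftyu} with $u = P_k\Theta$ and a suitable $m$: the origin part $\sum_{i=1}^2 \int_0^{3/4}|B_i P_k\Theta|^2 r^2 dr$ is controlled by $\sum_{j \le k+2}\|\calD_j\Theta\|_{L^2(r^2dr)}^2$ using the coercivity estimate \eqref{est:Dicoercivity}, and since $k+2 \le N$ this is $\lesssim \calE^N$. For the boundary part one must choose $m$ so that the weight $w^{\alpha - \lfloor\alpha\rfloor + 2m}$ attached to $B_i P_k\Theta$ (with $i \le m+1$, so the total derivative count is $\le k+m+1$) is dominated by the weight $w^{\alpha + 2(k+m+1) - N}$ available from \eqref{est:L2Hardy}; this requires $\alpha - \lfloor\alpha\rfloor + 2m \ge \alpha + 2(k+m+1) - N$, i.e. $N - \lfloor\alpha\rfloor - 2k - 2 \ge 0$, which is precisely the range hypothesis, and simultaneously $k+m+1 \le N$, which holds for $m$ chosen minimally. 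The $\pt\Theta$ half is identical, carrying the extra factor $\delta^{-1}\lam^{3\kk}$ through verbatim (it is $\tau$-dependent but harmless since it also appears in $\calE^N$ and $\lam$ is bounded on any finite interval). This proves \eqref{est:LinftyHardy1}.

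Next, \eqref{est:LinftyHardy2} follows by applying \eqref{est:Linftyu/r} to $u = \bar P_k(\Theta/\ze)$ after noting $\bar P_k(\Theta/\ze) \in \calP_{k+1}$-type combinations of $\Theta$ (via the product/chain rules, or more directly: $\ze^{-1}\Theta$-derivatives are controlled since $\Theta(0)=0$ from spherical symmetry, so $\calD_j(\Theta/\ze)$ is bounded by $\calD_{j+1}\Theta$-type quantities up to coercivity losses). One again matches the origin term to $\sum_{j \le k+3}\|\calD_j\Theta\|_{L^2}^2 \lesssim \calE^N$ — here $k \le \tfrac{N-\lfloor\alpha\rfloor-2}{2}$ gives $k+3 \le N$ comfortably — and the boundary term to \eqref{est:L2Hardy} or \eqref{est:L2Hardy2} with the weight-exponent check as above. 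Finally, \eqref{est:LinftyHardypt2} is obtained by applying \eqref{est:Linftyu} to $u = P_k\pt^2\Theta$ for $k \le \tfrac{N-\lfloor\alpha\rfloor-3}{2}$: the origin part is bounded by $\sum_{j \le k+2}\|\calD_j\pt^2\Theta\|_{L^2}^2 \lesssim \calS^N$ since $k+2 \le N-1$ in this range, and the boundary part by \eqref{est:L2Hardypt2}, with the exponent inequality $\alpha - \lfloor\alpha\rfloor + 2m \ge \alpha + 2(k+m+1) - 1 - N$ reducing to $N+1 - \lfloor\alpha\rfloor - 2k - 2 \ge 0$, again exactly the hypothesis (the shift by one relative to \eqref{est:LinftyHardy1} accounting for the odd weight $\alpha+2(\cdot)+1-N$ in \eqref{est:L2Hardypt2}).

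Since the statement is quoted from \cite[Lemma C.4]{guo2021continued}, in practice I would simply record that the proof there transfers with no modification beyond replacing $\calD_k$ by general $P_k \in \calP_k$ (legitimate by \eqref{est:Dicoercivity}) and bookkeeping the exponents against our $N$; accordingly the "proof" in the paper is a one-line citation. The only point requiring a moment's care — and the main (mild) obstacle — is verifying that the derivative counts $k+m+1$ appearing in the boundary Hardy terms never exceed $N$ (respectively $N-1$ for the $\pt^2\Theta$ estimate) for the minimal admissible $m$, so that \eqref{est:L2Hardy}–\eqref{est:L2Hardypt2} are applicable; this is guaranteed by the margin built into $N \ge \lfloor\alpha\rfloor + 10$.
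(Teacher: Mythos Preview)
Your approach is essentially the same as the paper's: both reduce to Lemma~\ref{lem:hardy0} and then control the origin and boundary pieces by the energy via coercivity \eqref{est:Dicoercivity}. Two minor differences are worth noting. First, the paper makes the explicit choice $m = \lfloor\alpha\rfloor + k + 1$ in \eqref{est:Linftyu} and then compares the boundary weight \emph{directly} to the energy weight $w^{\alpha + i + k}$ (using $w \lesssim 1$), rather than routing through the $L^2$ Hardy inequality \eqref{est:L2Hardy} as you do; this sidesteps the need to verify the lower range condition $j \ge \tfrac{N-\alpha}{2}$ in \eqref{est:L2Hardy}, which you do not address and which can fail for the low-$i$ terms in the boundary sum when $m$ is small. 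Second, your treatment of item~2 is slightly garbled: applying \eqref{est:Linftyu/r} to $u = \bar P_k(\Theta/\ze)$ would bound $\|\bar P_k(\Theta/\ze)/\ze\|_{L^\infty}$, not the quantity you want; the paper simply records that the remaining items follow by the same argument. Neither point is a genuine gap---your exponent bookkeeping for the top-order case is correct and the strategy is sound---but the paper's direct comparison to $w^{\alpha+j}$ with the specific $m$ is cleaner.
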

\begin{proof}
    The proof follows similarly to that of Lemma C.4 in \cite{guo2021continued}, so we are content with sketching the modified proof here. Invoking \eqref{est:Linftyu} with $u = P_k \Theta$ and $m = \lfloor \alpha \rfloor + k + 1$, we arrive at
    \begin{align*}
        \|P_k \Theta\|_{L^\infty} &\lesssim \sum_{i=1}^2 \int_0^\frac34 |B_i P_k \Theta|^2 r^2dr + \sum_{i = 0}^{\lfloor \alpha \rfloor + k + 2} \int_\frac14^1 w^{\alpha - \lfloor\alpha\rfloor + 2(\lfloor \alpha \rfloor + k + 1)}|B_i P_k \Theta|^2 dr
    \end{align*}
    Selecting $B_i = \calD_i$ if $k$ is even and $B_i = \barcalD_i$ if $k$ is odd, we have $B_iP_k \in \calP_{k+i}$. Since $i + k \le 2 + \frac{N-\lfloor\alpha\rfloor - 2}{2} \le N$ and $w \gtrsim 1$ in $[0,\frac34]$, the first sum is bounded by $\calE^N$ after invoking the coercivity estimate \eqref{est:Dicoercivity}. On the other hand, since $w \lesssim 1$, then $w^{\alpha - \lfloor\alpha\rfloor + 2(\lfloor \alpha \rfloor + k + 1)} \lesssim w^{\alpha + i + k}$ for $i \le \lfloor \alpha \rfloor + k + 2$. Then the second sum is also bounded by $\calE^N$ after invoking coercivity \eqref{est:Dicoercivity} again. Other estimates follow similarly.
\end{proof}

\begin{lem}[Lemma C.5, \cite{guo2021continued}]\label{lem:LinftyHardyweighted}
Let $\Theta,\pt\Theta \in B^N$ and $\pt^2\Theta \in B^{N-1}$. Then the following holds:
\begin{enumerate}
    \item Given any $0 \le k \le N - 4$ and $P_{k+2} \in \calP_{k+2}$, we have
    \begin{equation}
        \label{est:LinftyHardyweighted1N-4}
        \delta^{-1}\lambda^{3\kk}\|w^kP_{k+2}\pt\Theta \|_{L^\infty}^2 + \|w^kP_{k+2}\Theta\|_{L^\infty}^2 \lesssim \calE^N.
    \end{equation}
    For $k = N-3$ and $P_{N-1} \in \calP_{N-1}$, we have
    \begin{equation}
        \label{est:LinftyHardyweighted1N-3}
        \delta^{-1}\lambda^{3\kk}\|\ze w^{N-3}P_{N-1}\pt\Theta \|_{L^\infty}^2 + \|\ze w^{N-3}P_{N-1}\Theta\|_{L^\infty}^2 \lesssim \calE^N.
    \end{equation}
    
    \item Given any $0 \le k \le N - 5$ and $P_{k+2} \in \calP_{k+2}$, we have
    \begin{equation}
        \label{est:LinftyHardyweightedpt2}
        \|w^kP_{k+2}\pt^2\Theta \|_{L^\infty}^2 \lesssim \calS^N.
    \end{equation}
    
    \item For any $0 \le k \le N-5$ and $\bar P_{k+2} \in \barcalP_{k+2}$,
    \begin{equation}
        \label{est:LinftyHardyweighted2N-5}
        \delta^{-1}\lambda^{3\kk}\|w^k\bar P_{k+2}\pt\left(\frac{\Theta}{\ze}\right) \|_{L^\infty}^2 + \|w^k\bar P_{k+2}\left(\frac{\Theta}{\ze}\right)\|_{L^\infty}^2 \lesssim \calE^N.
    \end{equation}
    For $k = N-4$ and $\bar P_{N-2} \in \barcalP_{N-2}$, we have
    \begin{equation}
        \label{est:LinftyHardyweighted2N-4}
        \delta^{-1}\lambda^{3\kk}\|\ze w^{N-4}\bar P_{N-2}\pt\left(\frac{\Theta}{\ze}\right) \|_{L^\infty}^2 + \|\ze w^{N-4}\bar P_{N-2}\left(\frac{\Theta}{\ze}\right)\|_{L^\infty}^2 \lesssim \calE^N.
    \end{equation}
\end{enumerate}
\end{lem}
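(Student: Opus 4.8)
The plan is to adapt the proof of \cite[Lemma C.5]{guo2021continued} to our weighted framework, in the same spirit as the proof of Lemma \ref{lem:LinftyHardy}. All the estimates are proved by one mechanism, so I describe it for the representative quantity $\|w^k P_{k+2}\Theta\|_{L^\infty}$ with $0\le k\le N-4$; the bounds for $w^k P_{k+2}\pt\Theta$, $w^k P_{k+2}\pt^2\Theta$, $w^k\bar P_{k+2}\pt(\Theta/\ze)$ and the $\ze$-weighted borderline variants follow verbatim, with $\calS^N$ replacing $\calE^N$ whenever $\pt^2\Theta$ appears and the factor $\delta^{-1}\lam^{3\kk}$ tracked through whenever $\pt\Theta$ appears. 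First I would localize, estimating $w^k P_{k+2}\Theta$ separately on an interior region $[0,\tfrac34]$ and a boundary region $[\tfrac14,1]$, as is already built into the basic $L^\infty$ Hardy inequality \eqref{est:Linftyu}.

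On the interior region the weight satisfies $w\approx 1$, so it suffices to bound $\|P_{k+2}\Theta\|_{L^\infty([0,3/4])}$. Here I would use the one-dimensional Sobolev embedding underlying Lemma \ref{lem:hardy0} (controlling an $L^\infty$ norm by the $L^2$ norm of the function and two of its $\Dz$/$\barcalD$-derivatives against $r^2\,dr$), and then invoke the coercivity estimate \eqref{est:Dicoercivity} to dominate the general vector field $P_{k+2}$, and its two further derivatives, by the corresponding $\calD$-operators of the same order. Since $k\le N-4$ these orders do not exceed $N$, so everything is controlled by $\calE^N$ (resp.\ $\calS^N$). This is precisely the step that makes the lemma sharper than a literal reading of \cite{guo2021continued} allows, and it is the only place where \eqref{est:Dicoercivity} is genuinely used.

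On the boundary region, $w$ behaves like $1-\ze$, and the point is to convert the weighted $L^\infty$ norm into weighted $L^2$ norms of one additional derivative by integrating from an interior point: $\|w^kP_{k+2}\Theta\|_{L^\infty([1/4,1])}^2$ is bounded, up to constants and lower-order pieces, by $\int_{1/4}^1 w^{2k-1}|P_{k+2}\Theta|^2\,d\ze$ and $\bigl(\int_{1/4}^1 w^{2k}|P_{k+2}\Theta|^2\,d\ze\bigr)^{1/2}\bigl(\int_{1/4}^1 w^{2k}|\dz P_{k+2}\Theta|^2\,d\ze\bigr)^{1/2}$, the smooth bounded factors coming from derivatives of $w$ via Lemma \ref{lem:wderivative} and the product rules \eqref{Pproduct}, \eqref{Pbarproduct}. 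Each of these is a weighted $L^2$ integral of a derivative of $\Theta$ of order $k+2$ or $k+3$, to which I would apply the $L^2$ Hardy inequality \eqref{est:L2Hardy}, after checking that $2k-1$ and $2k$ meet the weight thresholds $\alpha+2(k+2)-N$ and $\alpha+2(k+3)-N$ and that the orders $k+2,k+3$ do not exceed $N$; both are consequences of $0\le k\le N-4$ together with $N\ge\lfloor\alpha\rfloor+10$. At the borderline index $k=N-3$ (and $k=N-4$ for the $\Theta/\ze$ statement) the order $k+3$ reaches $N$ and the margin in the weight threshold is exhausted; inserting the extra factor $\ze$ supplies the missing room and lets one appeal to \eqref{est:L2Hardy2} instead, yielding \eqref{est:LinftyHardyweighted1N-3} and \eqref{est:LinftyHardyweighted2N-4}. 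The $\pt^2\Theta$ bound \eqref{est:LinftyHardyweightedpt2} is identical but uses \eqref{est:L2Hardypt2} in place of \eqref{est:L2Hardy}, which accounts both for the more restrictive range $k\le N-5$ and for the appearance of $\calS^N$; and $\bar P_{k+2}(\Theta/\ze)$ is handled after noting that it is dominated by $\calP$-derivatives of $\Theta$ of order at most $k+3$.

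The main obstacle is the bookkeeping in this last step: one must verify, uniformly over the ways derivatives are distributed between the powers of $w$ and $P_{k+2}\Theta$, that the surviving power of $w$ never falls below the threshold required by \eqref{est:L2Hardy}/\eqref{est:L2Hardy2}/\eqref{est:L2Hardypt2} and that the total derivative order stays $\le N$. This reduces to a finite inequality among $k$, $N$, $\lfloor\alpha\rfloor$ and the number of derivatives absorbed into $w$, tightest exactly at the borderline values $k=N-3,N-4$; it is where the hypothesis $N\ge\lfloor\alpha\rfloor+10$ is consumed. Everything else is the same calculus with $\Dz$, $\barcalD$ and $w$ already used repeatedly in this appendix.
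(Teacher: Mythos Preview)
Your interior treatment matches the paper's. On the boundary region, however, the paper takes a different and more self-contained route: rather than a single integration-by-parts followed by \eqref{est:L2Hardy}, it applies \eqref{est:Linftyu} directly to $u = w^k P_{k+2}\Theta$ with the $k$-dependent choice $m = 3 + \lfloor\alpha\rfloor - k$, and then uses the product and chain rules (together with Lemma~\ref{lem:wderivative}) to distribute the $B_i$-derivatives between $w^k$ and $P_{k+2}\Theta$. The resulting terms carry weight $w^{\alpha + \lfloor\alpha\rfloor + 6 - 2j}$ and order at most $k+2+i \le 6 + \lfloor\alpha\rfloor \le N$, which lands directly in $\calE^N$; no appeal to \eqref{est:L2Hardy} is needed.

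Your route has a real gap. The $L^2$ Hardy inequality \eqref{est:L2Hardy} carries the hypothesis that the derivative order be at least $\tfrac{N-\alpha}{2}$, and the two checks you perform (weight threshold and order $\le N$) do not address this. For small $k$ the order $k+2$ falls below this threshold and \eqref{est:L2Hardy} does not apply as stated; concretely, at $k=0$ your boundary reduction produces $\int_{1/4}^1 w^{-1}|P_2\Theta|^2\,d\ze$, which is not covered by any of the tools you cite. The fix is easy---for such small $k$ the unweighted estimate \eqref{est:LinftyHardy1} already gives the bound since $w^k \lesssim 1$---but it should be said, and one must verify that the two regimes together cover every integer in $[0,N-4]$.

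Your diagnosis of the borderline case $k=N-3$ also misidentifies the bottleneck. The boundary weight check still has room there (it only needs $N \ge \alpha + 6$). What actually fails is the interior step, which in your scheme and the paper's requires derivatives up to order $k+4 = N+1$. The extra factor $\ze$ in \eqref{est:LinftyHardyweighted1N-3} is there so that one may replace \eqref{est:Linftyu} by \eqref{est:Linftyur}, which needs one fewer derivative near the origin; the relevant tool is \eqref{est:Linftyur}, not \eqref{est:L2Hardy2}.
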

\begin{proof}
    We prove this lemma here since the proof proceeds slightly differently from that of Lemma C.5 of \cite{guo2021continued}. We first prove \eqref{est:LinftyHardyweighted1N-4}. Fix $k \le N-4$ and $P_{k+2} \in \calP_{k+2}$. Invoking \eqref{est:Linftyu} with $u = w^k P_{k+2}\Theta$ and $m = 3 + \lfloor\alpha\rfloor - k$, we have
    \begin{align*}
        \|w^k P_{k+2}\Theta\|_{L^\infty} &\lesssim \sum_{i=1}^2 \int_0^\frac34 |B_i(w^k P_{k+2}\Theta)|^2 r^2dr\\
        &\quad+ \sum_{i = 0}^{4 + \lfloor\alpha\rfloor - k} \int_\frac14^1 w^{\alpha - \lfloor\alpha\rfloor + 2(3 + \lfloor\alpha\rfloor - k)}|B_i(w^k P_{k+2}\Theta)| r^2 dr.
    \end{align*}
    We also let $B_i = \calD_i$ for $k$ even or $B_i = \barcalD_i$ for $k$ odd. To treat the first sum, we invoke the product rule \eqref{Pproduct} (or \eqref{Pbarproduct}) to deduce that
    $$
    B_i(w^k P_{k+2}\Theta) = \sum_{j=0}^i\sum_{A_1 \in \barcalP_j, A_2 \in \calP_{k+2+i-j}} c_{jk}A_1(w^k)A_2\Theta.
    $$
    By the chain rule \eqref{eq:chainrule} as well as Lemma \ref{lem:wderivative}, we know that $A_1(w^k)$ is a smooth function near $\ze = 0$, and thus $|A_1(w^k)| \lesssim 1$. Since $k+2 + i \le k+4 \le N$, we conclude that the first sum is controlled by $\calE^N$ after using \eqref{est:Dicoercivity}. 

    As for the second sum, a similar reasoning to above would yield 
    \begin{align*}
        &\sum_{i = 0}^{4 + \lfloor\alpha\rfloor - k} \int_\frac14^1 w^{\alpha - \lfloor\alpha\rfloor + 2(3 + \lfloor\alpha\rfloor - k)}|B_i(w^k P_{k+2}\Theta)| r^2 dr \\
        &\lesssim \sum_{i = 0}^{4 + \lfloor\alpha\rfloor - k}\sum_{j=0}^i\sum_{A_1 \in \barcalP_j, A_2 \in \calP_{k+2+i-j}} \int_\frac14^1 w^{\alpha - \lfloor\alpha\rfloor + 2(3 + \lfloor\alpha\rfloor - k)} |A_1(w^k)A_2\Theta|^2r^2 dr\\
        &\lesssim \sum_{i = 0}^{4 + \lfloor\alpha\rfloor - k}\sum_{j=0}^i\sum_{A_2 \in \calP_{k+2+i-j}} \int_\frac14^1 w^{\alpha - \lfloor\alpha\rfloor + 2(3 + \lfloor\alpha\rfloor - k)} w^{2k-2j}|A_2\Theta|^2r^2 dr\\
        &\lesssim \sum_{i = 0}^{4 + \lfloor\alpha\rfloor - k}\sum_{j=0}^i\sum_{A_2 \in \calP_{k+2+i-j}} \int_\frac14^1 w^{\alpha + \lfloor\alpha\rfloor + 6-2j} |A_2\Theta|^2r^2 dr
    \end{align*}
    where we used the chain rule and Lemma \ref{lem:wderivative} in the second to the last inequality. Now, note that $w^{\alpha + \lfloor\alpha\rfloor + 6-2j} \lesssim w^{\alpha + k + 2 + i-j}$ since $j \le i \le 4 + \lfloor\alpha\rfloor - k$. Moreover, the term which contains the most derivatives have $k + 2 + i \le 6 + \lfloor\alpha\rfloor \le N$ derivatives. We thus conclude that the second sum is also bounded by $\calE^N$ after using \eqref{est:Dicoercivity}. The bound for $\delta^{-1}\lambda^{-3\kk}\|w^kP_{k+2}\pt\Theta \|_{L^\infty}^2$ is similar. 

    The proof for other inequalities follow from similar arguments, where the proof for \eqref{est:LinftyHardyweighted1N-3} and \eqref{est:LinftyHardyweighted2N-4} utilizes \eqref{est:Linftyur} in place of \eqref{est:Linftyu}.
\end{proof}

\subsection{Miscellaneous Statements}

\begin{lem}
    \label{L:Inverse_odd_function}
    Let $\psi \colon [0,a]\rightarrow \mathbb{R}$ be invertible, $C^1$, and have the form
    \begin{align*}
        \psi(\zeta) = \zeta \mathbf{F}(\zeta^2),
    \end{align*}
    where $\mathbf{F}(0) \neq 0$. Then, near the origin, we can write 
    \begin{align*}
        \psi^{-1}(\zeta) = \zeta \mathbf{G}(\zeta^2),
    \end{align*}
    where $\mathbf{G}(0)\neq 0$.
\end{lem}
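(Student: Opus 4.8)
The plan is to exploit the fact that $\psi$ being odd-type (i.e., $\psi(\zeta) = \zeta\mathbf{F}(\zeta^2)$) forces $\psi^{-1}$ to be odd-type as well, and to extract the radial profile $\mathbf{G}$ via an auxiliary substitution. First I would pass to the squared variable. Set $s = \zeta^2$ and observe that $\psi(\zeta)^2 = \zeta^2 \mathbf{F}(\zeta^2)^2 = s\,\mathbf{F}(s)^2 =: \Phi(s)$, where $\Phi$ is $C^1$ near $s = 0$ with $\Phi(0) = 0$ and $\Phi'(0) = \mathbf{F}(0)^2 \neq 0$. Hence $\Phi$ is invertible near the origin by the inverse function theorem, and $\Phi^{-1}$ is $C^1$ with $\Phi^{-1}(0) = 0$, $(\Phi^{-1})'(0) = \mathbf{F}(0)^{-2} \neq 0$. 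Since $\Phi^{-1}(0) = 0$ and $\Phi^{-1}$ is $C^1$, we may write $\Phi^{-1}(t) = t\,\mathbf{H}(t)$ where $\mathbf{H}(t) = \int_0^1 (\Phi^{-1})'(\theta t)\,d\theta$ is continuous near $0$ with $\mathbf{H}(0) = \mathbf{F}(0)^{-2} \neq 0$ (this is the standard Hadamard-lemma factorization).

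Next I would relate $\psi^{-1}$ to $\Phi^{-1}$. Because $\mathbf{F}(0) \neq 0$ and $\mathbf{F}$ is continuous, $\psi$ is strictly monotone near the origin — say $\mathbf{F}(0) > 0$, so $\psi(\zeta)$ has the same sign as $\zeta$ — and therefore $\psi^{-1}$ preserves sign. For $\zeta \geq 0$ small we have $\zeta = \psi(\psi^{-1}(\zeta))$, and squaring, $\zeta^2 = \Phi(\psi^{-1}(\zeta)^2)$ because $\psi^{-1}(\zeta) \geq 0$. Applying $\Phi^{-1}$ gives $\psi^{-1}(\zeta)^2 = \Phi^{-1}(\zeta^2) = \zeta^2\,\mathbf{H}(\zeta^2)$, so $\psi^{-1}(\zeta) = \zeta\,\sqrt{\mathbf{H}(\zeta^2)}$, using again that $\psi^{-1}(\zeta) \geq 0$ and that $\mathbf{H}(\zeta^2) > 0$ for $\zeta$ small since $\mathbf{H}(0) > 0$. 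For $\zeta \leq 0$ the same identity holds by the sign-preservation of $\psi^{-1}$ and the evenness of $\zeta \mapsto \mathbf{H}(\zeta^2)$. Thus setting $\mathbf{G}(s) := \sqrt{\mathbf{H}(s)}$, which is defined and continuous for $s$ near $0$ with $\mathbf{G}(0) = \mathbf{F}(0)^{-1} \neq 0$, we obtain $\psi^{-1}(\zeta) = \zeta\,\mathbf{G}(\zeta^2)$ as claimed.

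The main obstacle — really the only delicate point — is making sure the algebraic manipulation $\zeta^2 = \Phi(\psi^{-1}(\zeta)^2)$ is legitimate, i.e., that squaring and then taking $\Phi^{-1}$ does not lose information about the sign of $\psi^{-1}(\zeta)$. This is handled cleanly by first establishing strict monotonicity of $\psi$ near the origin (from $\mathbf{F}(0)\neq 0$ and continuity), which gives sign-preservation of both $\psi$ and $\psi^{-1}$, so that the square roots taken at the end are the correct (same-sign) branches. A secondary bookkeeping point is that all statements are local: one shrinks the interval $[0,a]$ as needed so that $\mathbf{F}$, $\mathbf{H}$ stay bounded away from zero and $\Phi$ stays invertible; since the conclusion is only asserted "near the origin" this is harmless. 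No higher regularity of $\psi$ is needed beyond $C^1$, matching the hypothesis, though if $\psi$ were smooth the same argument with the smooth Hadamard lemma would yield $\mathbf{G}$ smooth.
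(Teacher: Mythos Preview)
Your proof is correct and follows essentially the same route as the paper: both pass to the squared variable and invert the auxiliary map $\Phi(s)=s\,\mathbf{F}(s)^2$ (the paper calls it $\Psi$) near the origin. The only cosmetic difference is in reading off $\mathbf{G}$: the paper writes $\psi^{-1}(\alpha)=\alpha/\mathbf{F}(\Psi^{-1}(\alpha^2))$ directly, giving $\mathbf{G}(s)=1/\mathbf{F}(\Psi^{-1}(s))$, whereas you reach the equivalent expression $\mathbf{G}(s)=\sqrt{\mathbf{H}(s)}$ via a Hadamard factorization of $\Phi^{-1}$ and a square root.
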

\begin{proof}
    Set $s=\zeta^2$ and $\Psi(s) = s (\mathbf{F}(s))^2$. Then $\Psi^\prime(0) >0$ and $\Psi$ is a $C^1$ diffeomorphism in a neighborhood of the origin. Let $\alpha=\psi(\zeta)=\zeta \mathbf{F}(\zeta^2)$, so $\alpha^2=\zeta^2 (\mathbf{F}(\zeta^2))^2=\Psi(\zeta^2)$, thus $\Psi^{-1}(\alpha^2) = \zeta^2$ for small $\zeta$. Therefore
    \begin{align*}
        \psi^{-1}(\alpha) = \zeta = \frac{\alpha}{\mathbf{F}(\zeta^2)} = \frac{\alpha}{\mathbf{F}(\Psi^{-1}(\alpha^2))} = \alpha \mathbf{G}(\alpha^2)
    \end{align*}
    where $\mathbf{G}(s) = \frac{1}{\mathbf{F}(\Psi^{-1}(s))}$ satisfies $\mathbf{G}(0) \neq 0$.
\end{proof}

\begin{lem}
    \label{L:calD_odd_functions}
    Let $\GenericFunction \in C^{k}([0,1])$ satisfy $\partial_\zeta^{2\ell}\GenericFunction(0)=0$, $2\ell \leq k$. Then
\begin{align*}
    \calD_{2\ell} \GenericFunction(0) & = 0, \, 2\ell \leq k,
    \\
    \calD_{2\ell+1} \GenericFunction(0) & = \text{finite value }, \, 2\ell+1 \leq k,
\end{align*}
where we recall that $\calD_\ell$ is given by 
\eqref{calDj}. Moreover,
\begin{align*}
    \| \calD_\ell \GenericFunction \|_{L^\infty([0,1])} \lesssim \| \GenericFunction \|_{C^\ell([0,1]), }\, \ell \leq k.
\end{align*}
\end{lem}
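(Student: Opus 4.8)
The plan is to exploit the fact—already behind the center-regularity condition of Definition~\ref{def: w}(3) and Lemma~\ref{lem:wderivative}—that $\pz$ and $\Dz=\pz+\tfrac{2}{\ze}$ (cf.\ \eqref{E:Primitive_weighted_derivative}) interchange the parity of the Taylor expansion at $\ze=0$, and that inside the operator $\calD_\ell$ the singular operator $\Dz$ is only ever composed onto a function whose Taylor expansion at the origin has no constant term, so that the factor $\tfrac{2}{\ze}(\cdot)$ is harmless. Call $h\in C^m([0,1])$ \emph{odd to order $m$} if $\pz^{2j}h(0)=0$ for all $2j\le m$, and \emph{even to order $m$} if $\pz^{2j+1}h(0)=0$ for all $2j+1\le m$; the hypothesis is precisely that $\GenericFunction$ is odd to order $k$.

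The first step is an elementary transfer lemma. If $h\in C^m$, $m\ge1$, is odd to order $m$, then $h(0)=0$, so $h(\ze)/\ze=\int_0^1\pz h(t\ze)\,dt$ extends to a $C^{m-1}$ function, with $\pz^p(h/\ze)(0)=\tfrac{1}{p+1}\pz^{p+1}h(0)$ and $\|h/\ze\|_{C^{m-1}}\le\|h\|_{C^m}$ by differentiating under the integral sign; hence both $h/\ze$ and $\pz h$ are even to order $m-1$, and therefore $\Dz h=\pz h+\tfrac{2h}{\ze}\in C^{m-1}$ is even to order $m-1$ with $\|\Dz h\|_{C^{m-1}}\lesssim\|h\|_{C^m}$. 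Dually, if $h\in C^m$ is even to order $m$ then $\pz h\in C^{m-1}$ is odd to order $m-1$ with $\|\pz h\|_{C^{m-1}}\le\|h\|_{C^m}$.

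Next, set $h_j:=\calD_j\GenericFunction$. From \eqref{calDj} one reads off that $h_j=\Dz h_{j-1}$ when $j$ is odd and $h_j=\pz h_{j-1}$ when $j$ is even; in particular $\Dz$ is always applied to a function $h_{j-1}$ of even index, and $h_0=\GenericFunction$ as well as every $h_{2m}=\pz h_{2m-1}$, $m\ge1$, is odd, hence vanishes at $\ze=0$, so the transfer lemma applies at each step. An induction on $0\le j\le k$ then gives: $h_j$ is odd to order $k-j$ for $j$ even and even to order $k-j$ for $j$ odd, with $\|h_j\|_{C^{k-j}}\lesssim\|\GenericFunction\|_{C^k}$. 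Evaluating at the origin yields the two pointwise claims: for $2\ell\le k$, $h_{2\ell}$ is odd to order $k-2\ell\ge0$ so $\calD_{2\ell}\GenericFunction(0)=h_{2\ell}(0)=0$; and for $2\ell+1\le k$, $h_{2\ell+1}\in C^{k-2\ell-1}\subset C^0$, so $\calD_{2\ell+1}\GenericFunction(0)$ is a finite value. For the sharp $L^\infty$ bound I would rerun the same induction with $k$ replaced by $\ell$ (this needs only that $\GenericFunction$ be odd to order $\ell$, a consequence of the hypothesis since $\ell\le k$), terminating with $\|\calD_\ell\GenericFunction\|_{L^\infty}=\|h_\ell\|_{C^0}\lesssim\|\GenericFunction\|_{C^\ell}$.

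There is no genuine analytic obstacle here; the one point requiring care is the bookkeeping in the transfer lemma, where one must simultaneously track the loss of one derivative and the flip of Taylor parity at each application, and verify that the index thresholds $2\ell\le k$ and $2\ell+1\le k$ match exactly the steps at which $\Dz$—the only operator that could create a $\tfrac1\ze$ singularity—is invoked. This alignment is forced by the alternating structure of $\calD_\ell$ in \eqref{calDj}, which is precisely what guarantees that $\Dz$ only ever acts on a function inheriting the odd parity of $\GenericFunction$.
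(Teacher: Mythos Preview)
Your proof is correct and follows the same underlying parity-alternation idea as the paper: $\Dz$ sends odd-at-the-origin functions to even ones and $\pz$ sends even to odd, and the alternating structure of $\calD_j$ in \eqref{calDj} guarantees that $\Dz$ is only ever applied to functions that vanish at $\ze=0$. The technical implementation differs: the paper writes $\GenericFunction(\ze)=\ze\,\mathbf{F}(\ze^2)$ near the origin and works in the variable $s=\ze^2$, deriving explicit recursions such as $\mathsf{L}^\ell\GenericFunction(\ze)=\ze\,\mathbf{F}_\ell(\ze^2)$ with $\mathbf{F}_{\ell+1}=10\mathbf{F}_\ell'+4s\mathbf{F}_\ell''$, whereas you bypass this representation and instead use the integral formula $h(\ze)/\ze=\int_0^1 h'(t\ze)\,dt$ to control $\Dz h$ directly from the derivative-vanishing conditions. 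Your route is slightly more elementary in that it sidesteps any question about the precise $C^m$-regularity of $\mathbf{F}$ as a function of $s=\ze^2$, and it yields the $L^\infty$ bound in the same stroke; the paper's route gives more explicit closed-form coefficients but relies on the $\ze\mathbf{F}(\ze^2)$ factorization being available at the stated regularity.
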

\begin{proof}
For notational convenience, let us write $\mathsf{L} := \partial_\zeta \Dz$, where we recall from \eqref{E:Primitive_weighted_derivative} that $\Dz = \frac{1}{\ze^2}\pz (\ze^2\cdot) = \pz + \frac{2}{\ze}$. Then $\calD_{2\ell} = \mathsf{L}^\ell$ and $\calD_{2\ell+1}= \Dz \mathsf{L}^\ell$.

Near the origin, we can write
\begin{align*}
    \GenericFunction(\zeta) = \zeta \mathbf{F}(\zeta^2),
\end{align*}
for some $C^k$ function $\mathbf{F}$. With $s=\zeta^2$, we have $\zeta^2 \GenericFunction(\zeta) = \zeta^3 \mathbf{F}(s)$ and then
\begin{align*}
    \partial_\zeta (\zeta^2 \GenericFunction(\zeta)) = \partial_\zeta(
    \zeta^3 \mathbf{F}(s) )
    = 3\zeta^2 \mathbf{F}(s) + 2\zeta^4 \mathbf{F}^\prime(s),
\end{align*}
so that dividing by $\zeta^2$ the operator on LHS becomes $\Dz$ and we get 
\begin{align}
    \label{E:Generic_Dz}
    \Dz \GenericFunction(\zeta) = 3\mathbf{F}(s) + 2s\mathbf{F}^\prime(s),
\end{align}
which is a function only of $s=\zeta^2$, i.e., even in $\zeta$, thus it extends continuously to $\zeta=0$. Hence
$\Dz \GenericFunction(0)= 3\mathbf{F}(0)$. Applying $\partial_\zeta$ to \eqref{E:Generic_Dz} we then obtain 
\begin{align}
    \label{E:Generic_L}
    \mathsf{L} \GenericFunction(\zeta) = \zeta (10 \mathbf{F}^\prime(s) + 4s\mathbf{F}^{\prime\prime}(s) ).
\end{align}
The RHS of \eqref{E:Generic_L} is of the form $\zeta$ times a function of $s=\zeta^2$, thus it extends continuously to $\zeta=0$ so that $\mathsf{L}\GenericFunction(0) = 0$.

Define recursively
\begin{align*}
    \mathbf{F}_{\ell+1} & := \mathsf{L} \mathbf{F}_\ell
    \\
    &= 10 \mathbf{F}^\prime_\ell + 4\zeta^2\mathbf{F}^{\prime\prime}_\ell,
\end{align*}
with $\mathbf{F}_0 := \mathbf{F}$. Then \eqref{E:Generic_L} says that
\begin{align*}
    \mathsf{L} \GenericFunction(\zeta) = \zeta \mathbf{F}_1(\zeta^2).
\end{align*}
Applying to $\mathsf{L} \GenericFunction$ the same reasoning that led to \eqref{E:Generic_L} and proceeding inductively gives
\begin{align*}
    \mathsf{L}^\ell\GenericFunction(\zeta) = \zeta \mathbf{F}_\ell(\zeta^2)
\end{align*}
and $\mathsf{L}^\ell\GenericFunction(0)=0$. Also, applying $\Dz$ to $\mathsf{L}^\ell\GenericFunction$ gives, as in \eqref{E:Generic_Dz},
\begin{align*}
    \Dz (\mathsf{L}^\ell\GenericFunction)(\zeta) = 3 \mathbf{F}_\ell(\zeta^2) + 2\zeta^2\mathbf{F}^\prime_\ell(\zeta^2),
\end{align*}
and thus $\Dz (\mathsf{L}^\ell\GenericFunction)(0)$ is a well-defined finite value. The norm statement follows from the RHS of the above expressions for $\mathsf{L}^\ell$ and $\Dz (\mathsf{L}^\ell)$.
\end{proof}

\section{Proof of Theorem \ref{T:LWP}}
\label{S:Proof_of_LWP}

In this appendix we establish local existence and uniqueness and a continuation criterion for equations \eqref{eq:momlagmain}. 

We begin by recalling the definition of the topology of $\DITWeightedSpace_0^{2k}$. 
A sequence $(\mathring{\NewSoundSpeedSq}_n,\mathring{\NewVelocity}_n^i) \rightarrow (\mathring{\NewSoundSpeedSq},\mathring{\NewVelocity}^i)$
in $\DITWeightedSpace_0^{2k}$ if  (i) $|\nabla \mathring{\NewSoundSpeedSq}_n| \geq c >0$ (this is a uniform degeneracy condition); (ii) $\| \mathring{\NewSoundSpeedSq}_n - \mathring{\NewSoundSpeedSq}\|_{Lip(\mathbb{R}^3)} \rightarrow 0$, where $\mathring{\NewSoundSpeedSq}_n$ and $\mathring{\NewSoundSpeedSq}$ are extended as zero outside their domains, resulting in Lipschitz functions on $\mathbb{R}^3$ (this is a domain convergence condition); (iii) given $\epsilon >0$, there exist smooth functions $\mathring{\NewSoundSpeedSq}_\text{smooth}$ and $\mathring{\NewVelocity}_\text{smooth}^i$ defined in a neighborhood of $\Omega_{\mathring{\NewSoundSpeedSq}}$ such that $\| (\mathring{\NewSoundSpeedSq},\mathring{\NewVelocity}^i) - (\mathring{\NewSoundSpeedSq}_\text{smooth}, \mathring{\NewVelocity}_\text{smooth}^i) \|_{\DITWeightedSpace^{2k}_{\mathring{\NewSoundSpeedSq}}(\Omega_{\mathring{\NewSoundSpeedSq}})} \leq \epsilon$ and
$\limsup_{n\rightarrow\infty}\| (\mathring{\NewSoundSpeedSq}_n,\mathring{\NewVelocity}^i_n) - (\mathring{\NewSoundSpeedSq}_\text{smooth}, \mathring{\NewVelocity}_\text{smooth}^i) \|_{\DITWeightedSpace^{2k}_{\mathring{\NewSoundSpeedSq}_n}(\Omega_{\mathring{\NewSoundSpeedSq}_n})} \leq \epsilon$ (this condition serves three purposes: it provides a uniform bound and an equicontinuity-type of property on $(\mathring{\NewSoundSpeedSq}_n,\mathring{\NewVelocity}^i_n)$, which in turn ensures that we cannot have ever larger amounts of the norm of the sequence concentrating on very small neighborhoods of the boundary, and it gives triangle-inequality type of comparison, with $(\mathring{\NewSoundSpeedSq}_\text{smooth}, \mathring{\NewVelocity}_\text{smooth}^i)$ serving as the middle term since it is defined on both $\Omega_{\mathring{\NewSoundSpeedSq}}$ and $\Omega_{\mathring{\NewSoundSpeedSq}_n}$ for large $n$).

\begin{proof}[Proof of Theorem \ref{T:LWP}] The proof is given in several steps.

\medskip
\noindent\textbf{Step 1} (Radially symmetric data  launches radially symmetric solutions). Since the variables in \eqref{eq:momlagmain} assume a spherically symmetric solution to \eqref{eq:RE}, we first need to verify that such solutions are spherically symmetric if the initial data is. The argument is standard (see \cite{MR1724661}), but since it has not been explicitly written in a physical vacuum boundary context, we provide it here.

By \cite{disconzi2022relativistic}, there exists a classical solution $(\rho,u^i)$ to equations \eqref{eq:RE} defined on
\begin{align}
    \label{E:Moving_domain_spherically_symmetric_LWP}
    \bigcup_{0 \leq t < T^*} \{ t \} \times \Omega_t,
\end{align}
for some $T^*>0$, where $\Omega_t$ is given by \eqref{def: moving domain Omega_t} and taking the given data $(\mathring{\rho},\mathring{u}^i)$ at $t=0$, provided that
\begin{align}
        \label{E:k_DIT_LWP}
        2k &> 5 +\frac{1}{\kappa}.
\end{align}
Condition \eqref{E:k_DIT_LWP} is satisfied in view of \eqref{E:Fixed_N_bound} and \eqref{E:N_k_relation}. Moreover, such solution satisfies the physical vacuum boundary condition.

Let $\mathsf{R} \in SO(3)$ and define
\begin{align*}
    \rho_\mathsf{R}(t,x) &:= \rho(t, \mathsf{R}^{-1} x),
    \\
    u^i_\mathsf{R}(t,x) & := \mathsf{R}\indices{^i_j}  u^j(t,\mathsf{R}^{-1} x), 
\end{align*}
which are defined in the rotated domain
\begin{align*}
    & \bigcup_{0 \leq t < T^*} \{ t \} \times \Omega_t^\mathsf{R},
    \\
    & \Omega_t^\mathsf{R} := \mathsf{R}(\Omega_t) := \{ \mathsf{R}x \, | \, x \in \Omega_t \}.
\end{align*}
The time component of the four velocity transforms as 
\begin{align*}
    u^0_\mathsf{R}(t,x) := u^0(t,\mathsf{R}^{-1}x), 
\end{align*}
so that \eqref{eq: RE_normalization} is preserved in view of the invariance of the Minkowski metric under rotations. Since $SO(3) \subset \text{Lorentz group}$ and the Lorentz group maps solutions to relativistic Euler equations into solutions, we have that $(\rho_\mathsf{R},u^i_\mathsf{R})$ also solves \eqref{eq:RE}. 

Next, we verify that $(\rho_\mathsf{R},u^i_\mathsf{R})$ satisfies the physical vacuum boundary condition in the rotated domain. First, observe that if $x \in \partial \Omega_t^\mathsf{R}$, then $\mathsf{R}^{-1}x \in \partial \Omega_t$ and therefore $\rho_\mathsf{R}(t,x) = \rho(t,\mathsf{R}^{-1}x) =0$, so that $\rho_\mathsf{R}$ vanishes on the boundary of the rotated domain; we also see that $\rho_\mathsf{R}$ remains positive away from the boundary. The sound speed squared in the rotated domain is given by $c_{s,\mathsf{R}}^2 = (1+\kappa) \rho_\mathsf{R}^\kappa$. Because $\mathsf{R}$ is an isometry of $\mathbb{R}^3$, we have that $\text{dist(}x,\partial\Omega_t^\mathsf{R})=\text{dist}(\mathsf{R}^{-1}x,\partial\Omega_t)$. Therefore
\begin{align*}
    c_{s,\mathsf{R}}^2(t,x) = (1+\kappa) \rho^\kappa_\mathsf{R}(t,x) = (1+\kappa)\rho^\kappa(t,\mathsf{R}^{-1}x) = c_s^2(t,\mathsf{R}^{-1}x) \approx 
    \text{dist}(\mathsf{R}^{-1}x,\partial\Omega_t)
    =\text{dist(}x,\partial\Omega_t^\mathsf{R}),
\end{align*}
i.e., $c_{s,\mathsf{R}}^2$ is comparable to the distance to the boundary in the rotated domain. Finally, to verify the kinematic boundary condition that the four-velocity is tangent to the free-boundary in spacetime for the rotated quantities (or equivalently, that the rotated domain is advected by the rotated velocity), we  observe that if $(t,\gamma(t)) \subset \cup_{0\leq t < T^*} \{t\} \times \partial\Omega_t$ is a fluid-particle trajectory, so that
\begin{align*}
    \frac{d}{dt}\gamma(t) = \frac{u^i}{u^0}(t,\gamma(t)),
\end{align*}
then the rotated trajectory $(t,\gamma_\mathsf{R}(t)) = (t, \mathsf{R} \gamma(t))$ is such that $(t,\gamma_\mathsf{R}(t)) \subset  \cup_{0\leq t < T^*} \{t\} \times \partial\Omega^\mathsf{R}_t$ and
\begin{align*}
     \frac{d}{dt}\gamma_\mathsf{R}(t) = \mathsf{R} \frac{d}{dt} \gamma(t) = \left( \mathsf{R} \frac{u^i}{u^0} \right)(t,\gamma(t)) = \frac{\mathsf{R}\indices{^i_j} u^j(t,\gamma(t))}{u^0(t,\gamma(t))}  
     =
     \frac{\mathsf{R}\indices{^i_j} u^j(t,\mathsf{R}^{-1}\gamma_\mathsf{R}(t))}{u^0(t,\mathsf{R}^{-1}\gamma_\mathsf{R}(t))}
     =\frac{u^i_\mathsf{R}(t,\gamma_\mathsf{R}(t))}{u^0_\mathsf{R}(t,\gamma_\mathsf{R}(t))},
\end{align*}
as needed.

At $t=0$ we have
\begin{align*}
    \rho_\mathsf{R}(0,x) &= \rho(0,\mathsf{R}^{-1}x) = \mathring{\rho}(\mathsf{R}^{-1}x)  = \mathring{\rho}(x),
    \\
    u^i_\mathsf{R}(0,x) & := \mathsf{R}\indices{^i_j}  u^j(0,\mathsf{R}^{-1} x)
    = \mathsf{R}\indices{^i_j}  \mathring{u}^j(\mathsf{R}^{-1} x)
    =
    \mathring{u}^j(x),
\end{align*}
where the last equality on each line follows from the assumption that the data is spherically symmetric. Thus, $(\rho_\mathsf{R},u^i_\mathsf{R})$ and $(\rho,u^i)$ are two solutions to \eqref{eq:RE} satisfying the physical vacuum boundary condition and taking the same data at time zero. By uniqueness (also established in \cite{disconzi2022relativistic}), they must agree, so that for all $0 \leq t < T^*$, $\Omega_t = \Omega_t^\mathsf{R}$, $\rho_\mathsf{R}(t,x) =\rho(t,x)$, $u_\mathsf{R}^i(t,x)=u^i(t,x)$, thus in particular
\begin{align*}
    \rho(t, \mathsf{R}^{-1} x) = \rho(t,x),
    \\
    \mathsf{R}\indices{^i_j}  u^j(t,\mathsf{R}^{-1} x) = u^i(t,x), 
\end{align*}
i.e., $(\rho,u^i)$ is spherically symmetric. (In particular, each $\Omega_t$ must be a ball.)

\medskip
\noindent\textbf{Step 2} (Definiteness of the change of variables at the level of data). Here, we follow the arguments of Section \ref{sect: setup} and check that the procedure is well defined at the level of the data. Throughout, we will refer to expressions from Section \ref{sect: setup} that involve $\rho$, $u^i$, and $u^0$, but it should be understood that they are to be taken with $\mathring{\rho}$, $\mathring{u}^i$, and $\mathring{u}^0$ instead. We will refer to expressions as being well defined to mean that they are determined uniquely in terms of the data $(\mathring{\rho},\mathring{u}^i)$ and the fixed parameters in the statement of the Theorem.
We will not track the regularity of the change of variables in this step because this will be done further below for spacetime quantities, from which in particular we can verify the regularity of the data $(\Theta_0,U_0)$ constructed out of $(\mathring{\rho},\mathring{u}^i)$.

In view of our assumption on the data, \eqref{ansatz radial symmetry} and \eqref{u00} are well defined.
The ODE defining $\lambda$ in \eqref{lambdaODEt}
depends only on $\delta$, $\kappa$, and the initial conditions \eqref{lambdaODEt_data} involve only the parameters $\lambda_0$ and $\lambda_1$. Thus, the change of variables \eqref{defn:scaling} is well-defined, as are the modified velocity $V$ in \eqref{modvel} and the expression for the time component of $\mathring{u}^0$ in \eqref{u0}.
It follows that $\partial_s\eta$ in \eqref{E:d_eta_d_s} is well defined at $s=0$ in terms of the data and $\eta_0$. It follows that $f$, $U$, and $U^0$ in \eqref{E:f_and_U_of_s} are also well defined at $s=0$, as is  $\calF$ in \eqref{E:Jac_det_def}. By \eqref{E:d_tau_d_s} and $\partial_\tau = \lambda^{-\frac{3}{2}\kk}\partial_s$, all these quantities are also well defined at $\tau=0$, thus in terms of the data and $\eta_0$. 

To fix $\eta_0$, we invoke \eqref{gauge fixing}, but also note that \eqref{gauge fixing} imposes a constraint. Integrating \eqref{gauge fixing} and recalling that $w_\delta = \delta w$,
\begin{align*}
   \int_0^1 \calF_0 f_0 \zeta^2 d\zeta = \int_0^1 (\delta w)^{\frac1\kk} \zeta^2 d\zeta.
\end{align*}
The LHS is $\frac{1}{4\pi}M_\text{ref}$ by \eqref{E:rho_ref} and \eqref{E:Reference_mass}, whereas the RHS is $\frac{1}{4\pi}M$ in view of \eqref{E:f_and_U_of_s}, \eqref{E:Jac_det_def},  \eqref{E:d_tau_d_s}, a change of variables, and \eqref{E:Initial_mass}. The constraint $M=M_\text{ref}$ is satisfied by \eqref{E:Mass_constraint}.

It now follows from the above that $\Theta(0,\cdot)$ given by \eqref{E:Theta_def} and $\partial_\tau \Theta(0,\cdot)$ obtained from \eqref{E:d_eta_d_s}, and \eqref{E:Theta_def}, \eqref{E:d_tau_d_s}, are uniquely determined in terms of the data (and the gauge choice \eqref{gauge fixing}).

To show that $(\Theta_0,U_0)=(0,0)$ if $\mathring{\rho}=\mathring{\rho}_{\text{ref}}$ and $\mathring{u}^i=\mathring{u}^i_{\text{ref}}$, it suffices to verify that for such data we have $V(0,\cdot)=0$ and that \eqref{gauge fixing} is satisfied by taking $\eta_0(\zeta)=\zeta$. Both statements are a direct consequence of following the above steps in the construction of $(\Theta_0,U_0)$ using $(\mathring{\rho},\mathring{u}^i)=(\mathring{\rho}_{\text{ref}},\mathring{u}^i_{\text{ref}})$.

This establishes item 1) of the Theorem.

\medskip
\noindent\textbf{Step 3} (Spherically symmetric solutions to \eqref{eq:RE} give rise to solutions to \eqref{eq:momlagmain}). By Step 1 and \cite{disconzi2022relativistic}, there exists a unique spherically symmetric classical solution ($\rho,u^i)$ to \eqref{eq:RE} defined on \eqref{E:Moving_domain_spherically_symmetric_LWP} for some $T>0$, and taking the given data at $t=0$. In view of \eqref{E:d_tau_d_s}, if we set
\begin{align}
    \label{E:Time_existence_Lagrangian}
    \tau^* := \int_0^{T^*} \frac{1}{\lambda(t)} dt,
\end{align}
then the procedure in Section \ref{sect: setup} immediately produces a solution $(\Theta,\partial_\tau\Theta)$ defined for $0\leq \tau < \tau^*$ and taking the data $(\Theta_0,U_0)$ produced in Step 2, provided all steps in Section \ref{sect: setup} are justified. This, in turn, is a matter of verifying that there is enough regularity to carry out all steps. The regularity of $(\Theta,\partial_\tau\Theta)$ is investigated in the next step, which shows in particular that $(\Theta,\partial_\tau\Theta)$ is at least (in fact, much better) $C^2([0,\tau^*)\times[0,1])\times C^1([0,\tau^*)\times[0,1])$, hence a classical solution.

Uniqueness of \eqref{eq:momlagmain} for the class of data $(\Theta_0,U_0)$ constructed out of $(\mathring{\rho},\mathring{u}^i)$ in Step 2 follows from uniqueness of \eqref{eq:RE} (established in \cite{disconzi2022relativistic}).

This establishes the existence and uniqueness part in item 2) of the Theorem.

\medskip
\noindent\textbf{Step 4} (Spacetime and radial regularity). The solution $(\NewSoundSpeedSq,\NewVelocity^i)$ given by \cite{disconzi2022relativistic} has regularity
\begin{align*}
    (\NewSoundSpeedSq(t,\cdot),\NewVelocity^i(t,\cdot)) & \in \DITWeightedSpace_\NewSoundSpeedSq^{2k}(\Omega_t), 
    \\
    (\partial_t\NewSoundSpeedSq(t,\cdot),\partial_t\NewVelocity^i(t,\cdot)) & \in \DITWeightedSpace_\NewSoundSpeedSq^{2k-1}(\Omega_t),
\end{align*}
from which follows, in light of \eqref{E:N_k_relation} and \eqref{E:Weighted_embedding_DIT}, that
\begin{align*}
    (\NewSoundSpeedSq(t,\cdot),\NewVelocity^i(t,\cdot)) &\in C^{N+2}(\overline{\Omega_t}),
    \\
    (\partial_t\NewSoundSpeedSq(t,\cdot),\partial_t \NewVelocity^i(t,\cdot)) &\in C^{N+1}(\overline{\Omega_t}). 
\end{align*}
From the equations of motion we then obtain standard regularity for time derivatives, where each $\partial_t$ corresponds to one lower spatial regularity. From \eqref{E:New_sound_and_velocity} and the physical vacuum boundary condition it follows that
\begin{align*}
    u^i(t,\cdot) & \in C^{N+2}(\overline{\Omega_t}), \quad
    \partial_t u^i(t,\cdot) \in C^{N+1}(\overline{\Omega_t}),  
    \\
    \rho(t,\cdot) & \in C^{N+2}_{loc}(\Omega_t), \quad
    \partial_t \rho(t,\cdot) \in C^{N+1}_{loc}(\Omega_t),     
\end{align*}
again with higher time derivatives having regularity determined by the equations of motion as above. Crucially, these all refer to the regularity relative to rectangular coordinates in $\mathbb{R}^3$, which therefore implies the usual compatibility conditions at the origin for spherically symmetric solutions, so that 
the variables \eqref{ansatz radial symmetry} satisfy
\begin{align*}
    \partial_r^{2\ell+1} \rho(t,0) & = 0,\, 
    2\ell+1 \leq N+2,
    \\
    \partial_r^{2\ell}v(t,0) & = 0, \, 
    2\ell \leq N+2,
\end{align*}
The variables $\tilde{\rho}$ and $\tilde{v}$ in \eqref{defn:scaling} inherit the regularity of $\rho$ and $u^i$ and thus also satisfy the same compatibility conditions in $r$ at the origin. Thus we can summarize the regularity of the relevant variables as
\begin{subequations}{\label{E:Regularity_rescaled_symmetric_variables}}
\begin{align}
    \label{E:Regularity_rescaled_v}
   & \tilde{v}(t,\cdot) \in C^{N+2}([0,1]), \quad
    \partial_t \tilde{v}(t,\cdot) \in C^{N+1}([0,1]),  
    \\
    \label{E:Regularity_rescaled_rho}
    &\tilde{\rho}(t,\cdot)  \in C^{N+2}_{loc}([0,1)), \quad
    \partial_t \rho(t,\cdot) \in C^{N+1}_{loc}([0,1)), 
    \\
    \label{E:Regularity_rescaled_sound_speed}
    &\tilde{\rho^\kappa}(t,\cdot)  \in C^{N+2}([0,1]), \quad
    \partial_t \rho^\kappa(t,\cdot) \in C^{N+1}([0,1]), 
    \\
    \label{E:Regularity_rescaled_v_at_zero}
    & \partial_r^{2\ell}\tilde{v}(t,0)  = 0, \, 
    2\ell \leq N+2,
    \\
    \label{E:Regularity_rescaled_rho_at_zero}    
    &\partial_r^{2\ell+1} \tilde{\rho}(t,0)  = 0,\, 
    2\ell+1 \leq N+2,
\end{align}
\end{subequations}
and, again, each further time derivative costs another spatial derivative.

\medskip
\noindent\textbf{Step 5} (Regularity of $\eta_0$).
Define
\begin{subequations}{\label{E:Mass_functions}}
\begin{align}
    \label{E:Mass_function}
    m(\zeta) & := \int_0^\zeta \tilde{\rho}_0(z) z^2 dz,
    \\
    \label{E:Reference_mass_function}
    m_\text{ref}(\zeta) &:= \int_0^\zeta (\delta w(z))^\frac{1}{\kappa} z^2 dz.
\end{align}
\end{subequations}
We have that $m \colon [0,1] \rightarrow [0,m(1)]$ and $m_\text{ref} \colon [0,1] \rightarrow [0,m_\text{ref}(1)]$ are continuous and strictly increasing maps, thus invertible. By  \eqref{E:Reference_mass}, \eqref{E:Initial_mass}, and \eqref{E:Mass_constraint},
$m(1)=M=M_\text{ref}=m_\text{ref}(1)$, and therefore we have a well-defined orientation preserving homeomorphism
\begin{align*}
    &\eta_0 \colon [0,1] \rightarrow [0,1],
    \\
    & \eta_0(\zeta) := m^{-1}(m_\text{ref}(\zeta)).
\end{align*}

The expressions defining $m$ and $m_\text{ref}$ show that these quantities are differentiable. We further show below that $\eta_0$ is in fact a $C^{N+2}$ diffeomorphism, which then justifies the following step: differentiating the identity
\begin{align}
\label{E:Pre_gauge_identity_in_terms_of_masses}
    m(\eta_0(\zeta)) = m_\text{ref}(\zeta)
\end{align}
gives
\begin{align*}
    \tilde{\rho}_0(\eta_0(\zeta)) (\eta_0(\zeta))^2 \partial_\zeta \eta_0(\zeta) = (\delta w(\zeta))^\frac{1}{\kappa} \zeta^2,
\end{align*}
which is \eqref{gauge fixing} (recall \eqref{E:Jac_det_def}). 

An orientation preserving diffeomorphism $\hat{\eta}$ on $[0,1]$ must be increasing. If it satisfies \eqref{gauge fixing} (with $\hat{\eta}$ in place of $\eta_0$), then upon integration it must satisfy $m(\hat{\eta}(\zeta))=m_\text{ref}(\zeta)$, hence $\hat{\eta} = \eta_0$. This shows that in our setting the diffeomorphism given by Dacorogna--Moser \cite{dacorogna1990partial} in \eqref{gauge fixing} agrees with the one we just constructed.

By \eqref{E:Regularity_rescaled_rho} and Definition \ref{def: w}, we obtain that $m,m_\text{ref} \in C^{N+3}([\epsilon, 1-\epsilon])$ for any $\epsilon > 0$, and since $\partial_\zeta m(\zeta) > 0$, we have $m^{-1} \in C^{N+3}([\epsilon,1-\epsilon])$ by the inverse function theorem. Thus, $\eta_0 \in C^{N+3}([\epsilon,1-\epsilon])$ for any $\epsilon>0$, i.e., $\eta_0 \in C^{N+3}_{loc}((0,1))$.

Let us next investigate regularity at $\zeta=0$. By 
\eqref{E:Regularity_rescaled_rho} and \eqref{E:Regularity_rescaled_rho_at_zero} we have
\begin{align*}
    \tilde{\rho}_0(\zeta)=\sum_{\ell=0}^{\lfloor \frac{N+1}{2} \rfloor}  \frac{1}{(2\ell)!} \partial_\zeta^{2\ell} \tilde{\rho}(0) \zeta^{2\ell}
    +O(\zeta^{2 \lfloor \frac{N+1}{2} \rfloor+2}),
\end{align*}
so that
\begin{align}
    \label{E:Mass_function_near_zero}
    m(\zeta) = \frac{1}{3}\tilde{\rho}(0) \zeta^3 + \frac{1}{10}\partial_\zeta^2 \tilde{\rho}(0)\zeta^5 + \dots = \zeta^3 \GenericFunction_m(\zeta^2),
\end{align}
for some function $\GenericFunction_m$ that is $C^{N+2}$ in a neighborhood of $\zeta=0$ and satisfies $\GenericFunction_m(0)=\frac{1}{3}\tilde{\rho}(0)>0$. Similarly, in view of the properties of $w$ stated in Definition \ref{def: w}, we have
\begin{align}
    \label{E:Reference_mass_function_near_zero}    
    m_\text{ref}(\zeta) = \zeta^3 \GenericFunction_{m_\text{ref}}(\zeta^2)
\end{align}
for some function $\GenericFunction_{m_\text{ref}}$ that is $C^{N+2}$ in a neighborhood of $\zeta=0$ and satisfies $\GenericFunction_{m_\text{ref}}(0)>0$.

The maps $(m(\zeta))^\frac{1}{3}=\zeta (\GenericFunction_m(\zeta^2))^\frac{1}{3}$ and $(m_\text{ref})^\frac{1}{3} = \zeta (\GenericFunction_{m_\text{ref}}(\zeta^2))^\frac{1}{3}$ are $C^{N+2}$ in a neighborhood of $\zeta=0$ because $\GenericFunction_m(0)>0$ and $\GenericFunction_{m_\text{ref}}(0)>0$. Since 
\begin{align*}
    \left. \partial_\zeta [(m(\zeta))^\frac{1}{3}]\right|_{\zeta=0} = (\GenericFunction_m(0))^\frac{1}{3}>0,
\end{align*}
by the inverse function theorem, $m^\frac{1}{3}$ is a $C^{N+2}$ diffeomorphism in a neighborhood of $\zeta=0$. But near zero we have that 
$(m(\eta_0(\zeta)))^\frac{1}{3} := (m_\text{ref}(\zeta))^\frac{1}{3}$ if and only if $m(\eta_0(\zeta)) := m_\text{ref}(\zeta)$, and we conclude that 
\begin{align}
    \label{E:eta_0_near_zero}
    \eta_0(\zeta) = (m^\frac{1}{3})^{-1}(m_\text{ref}(\zeta))^\frac{1}{3}),
\end{align}
thus $\eta_0$ is $C^{N+2}$ at $\zeta=0$.

We finally move to establish regularity at $\zeta=1$. Since $w(1)=0$, we can write, for $0<\zeta< 1$,
\begin{align*}
    w(1-\zeta) &= w(1-\zeta) - w(1) =\int_1^{1-\zeta} \partial_z w(z) dz
    \\
    & = \int_{1-\zeta}^1 (-\partial_z w(z) )dz
    \\
    & =
    \zeta \int_0^1 (-\partial_z w(1-\zeta z))dz,
\end{align*}
where we changed variables $z \mapsto 1-\zeta z$ in the last step. Thus we can write
\begin{align*}
    w_\delta(1-\zeta) = \zeta W_\delta(\zeta),
\end{align*}
where 
\begin{align*}
    W_\delta(\zeta) = \delta  \int_0^1 (-\partial_z w(1-\zeta z))dz
\end{align*}
is smooth in a neighborhood of $\zeta=0$. Since  $\partial_\zeta w(1) < 0$, we also have that $W_\delta$ is positive in a neighborhood of $\zeta=0$. Similarly, by the physical vacuum boundary condition and \eqref{E:Regularity_rescaled_sound_speed}, we can write 
\begin{align}
    \label{E:Identity_boundary_regularity_eta_0_1}
    \tilde{\rho}^\kappa_0(1-\zeta) = \zeta  R(\zeta),
\end{align}
where $R$ is $C^{N+2}$ and positive in a neighborhood of zero. Thus, there exists an $\epsilon>0$ such that $W_\delta,R \in C^{N+2}([0,\epsilon])$ and $W_\delta, R > 0$ on $[0,\epsilon]$. In what follows, we may need to shrink $\epsilon$ as we will be making several statements valid in a neighborhood of $\zeta=0$.

Introduce
\begin{subequations}{\label{E:Endpoint_mass_functions}}
\begin{align}
    \label{E:Endpoint_mass_function}    
    m^-(\zeta) & := \int_\zeta^1 \tilde{\rho}_0(z) z^2 dz,
    \\
    \label{E:Endpoint_reference_mass_function}     
    m^-_\text{ref}(\zeta) &:= \int_\zeta^1 (\delta w(z))^\frac{1}{\kappa} z^2 dz.
\end{align}
\end{subequations}
Then
\begin{align*}
     m^-(1-\zeta) & = \int_{1-\zeta}^1 (\tilde{\rho}^\kappa_0(z))^\frac{1}{\kappa} z^2 dz   
     \\
     &= \int_0^\zeta
     (\tilde{\rho}^\kappa_0(1-z))^\frac{1}{\kappa} (1-z)^2 dz
     \\
     &=
     \int_0^\zeta z^\frac{1}{\kappa} \tilde{R}(z) dz,
\end{align*}
where we used \eqref{E:Identity_boundary_regularity_eta_0_1}, changed variables $z\mapsto 1-z$ in the second step, and defined
\begin{align*}
    \tilde{R}(\zeta) := R^\frac{1}{\kappa}(\zeta)(1-\zeta)^2.
\end{align*}
By the above construction of $R$ in \eqref{E:Identity_boundary_regularity_eta_0_1}, we have that $\tilde{R}\in C^{N+2}([0,\epsilon])$ and $\tilde{R}>0$ on $[0,\epsilon ]$. 
Changing variables once more, $z \mapsto \zeta z$,
\begin{align*}
     m^-(1-\zeta) & = \zeta^{1+\frac{1}{\kappa}}
     \int_0^1 z^\frac{1}{\kappa} \tilde{R}(\zeta z) dz
     \\
     &=
     \zeta^{1+\frac{1}{\kappa}} \hat{R}(\zeta),
\end{align*}
where 
\begin{align*}
    \hat{R}(\zeta) :=     \int_0^1 z^\frac{1}{\kappa} \tilde{R}(\zeta z) dz
\end{align*}
belongs to $C^{N+2}([0,\epsilon])$ and is positive on $[0,\epsilon]$. Similarly, we can write
\begin{align*}
    m^-_\text{ref}(1-\zeta) = \zeta^{1+\frac{1}{\kappa}} \hat{W}_\delta(\zeta),
\end{align*}
where $\hat{W}_\delta \in C^{N+2}([0,\epsilon])$ and is positive on $[0,\epsilon]$.

Define
\begin{subequations}{\label{E:Endpoint_masses_to_a_power}}
\begin{align}
    \mathsf{m}^-(\zeta) & := (m^-(\zeta))^{\frac{\kappa}{1+\kappa}},
    \\
    \mathsf{m}^-_\text{ref}(\zeta) & := (m^-_\text{ref}(\zeta))^\frac{\kappa}{1+\kappa},
\end{align}
\end{subequations}
so that 
\begin{align*}
    \mathsf{m}^-(1-\zeta) = \zeta (\hat{R}(\zeta))^\frac{\kappa}{1+\kappa}.
\end{align*}
Because $\hat{R} \in C^{N+2}([0,\epsilon])$ and $\hat{R}(0) > 0$, we have that $\left. \partial_\zeta \mathsf{m}^-(1-\zeta)\right|_{\zeta=0} >0$. By the inverse function theorem, $\mathsf{m}^-(1-\zeta)$ is a $C^{N+2}$ diffeomorphism in a neighborhood of $\zeta=0$, or equivalently  $\mathsf{m}^-$ is a $C^{N+2}$ diffeomorphism in a neighborhood of $\zeta=1$. Similarly, $\mathsf{m}^-_\text{ref}$ is a $C^{N+2}$ diffeomorphism in a neighborhood of $\zeta=1$.

In view of the definitions \eqref{E:Masses_data}, \eqref{E:Mass_functions}, and \eqref{E:Endpoint_mass_functions}, we can write
\begin{subequations}{\label{E:Masses_and_endpoint_masses_relations}}
\begin{align}
    m^-(\zeta) &= M - m(\zeta),\\
    m^-_\text{ref}(\zeta) & = M_\text{ref} - m_\text{ref}(\zeta).
\end{align}
\end{subequations}
Identities \eqref{E:Pre_gauge_identity_in_terms_of_masses} and \eqref{E:Masses_and_endpoint_masses_relations} imply that $M-m^-(\eta_0(\zeta)) = M_\text{ref} - m^-_\text{ref}(\zeta)$, and thus
\begin{align}
    \label{E:Pre_gauge_identity_in_terms_of_endpoint_masses} 
    m^-(\eta_0(\zeta)) = m^-_\text{ref}(\zeta)    
\end{align}
in view of assumption \eqref{E:Mass_constraint}. Reciprocally, if \eqref{E:Pre_gauge_identity_in_terms_of_endpoint_masses} holds so does \eqref{E:Pre_gauge_identity_in_terms_of_masses} using again \eqref{E:Masses_and_endpoint_masses_relations}
and \eqref{E:Mass_constraint}.
In other words, \eqref{E:Pre_gauge_identity_in_terms_of_endpoint_masses} and \eqref{E:Masses_and_endpoint_masses_relations}
are equivalent, and since the latter holds by our construction of $\eta_0$ so does the former. Raising \eqref{E:Pre_gauge_identity_in_terms_of_endpoint_masses} to the power $\frac{\kappa}{1+\kappa}$ and invoking \eqref{E:Endpoint_masses_to_a_power}
gives
\begin{align}
    \mathsf{m}^-(\eta_0(\zeta)) = \mathsf{m}^-_\text{ref}(\zeta)    
\end{align}
By the foregoing, $\mathsf{m}^-$ and $\mathsf{m}_\text{ref}^-$ are $C^{N+2}$ diffeomorphisms in a neighborhood of $\zeta=1$, and thus is $\eta_0 = (\mathsf{m}^{-})^{-1} \circ \mathsf{m}^-_\text{ref}$. In particular $\eta_0 \in C^{N+2}([1-\epsilon,1])$. 

Considering the three regions investigated above, namely, a neighborhood of $\zeta=0$, a neighborhood of $\zeta=1$, and $(0,1)$, we conclude that $\eta_0 \in C^{N+2}([0,1])$ and $\eta_0^{-1} \in C^{N+2}([0,1])$.

\medskip
\noindent\textbf{Step 6} (Regularity of $(\Theta,\partial_\tau\Theta)$).
By \eqref{E:Regularity_rescaled_v} and \eqref{modvel}, we have $V(\tau,\cdot) \in C^{N+2}([0,1])$ and $\partial_\tau V(\tau,\cdot) \in C^{N+1}([0,1])$. We showed above that $\eta_0 \in C^{N+2}([0,1])$. Therefore,  by \eqref{E:d_eta_d_s},
we conclude that $\eta(\tau,\cdot)$ and $\partial_\tau\eta(\tau,\cdot)$ are in $C^{N+2}([0,1])$, and thus the same holds for $\Theta(\tau,\cdot)$ and $\partial_\tau\Theta(\tau,\cdot)$ in view of \eqref{E:Theta_def}, where we recall that the change of variables from $s$ to $\tau$ does not lose derivatives.

\medskip
\noindent\textbf{Step 7} (Center regularity of $(\Theta,\partial_\tau\Theta)$ and boundedness of $E^N$). We now show that $\Theta$ and $\partial_\tau \Theta$ are genuinely spherically symmetric functions. For this, we need to verify that they satisfy
\begin{align*}
    \partial_\zeta^{2\ell} \Theta(\tau,0) &=0, \, 2\ell \leq N+2, 
    \\
    \partial_\zeta^{2\ell} \partial_\tau \Theta(\tau,0) &=0, \, 2\ell \leq N+2.   
\end{align*}

We will refer to functions whose even derivatives vanish at $\zeta=0$ as odd, and those whose odd derivatives vanish at $\zeta=0$ as even. This is a slight abuse of notation in that our maps are not defined for $\zeta < 0$, but the important point is that the standard parity algebra $(\text{odd}) \cdot (\text{odd})= (\text{even})$, $(\text{odd}) \cdot (\text{even})= (\text{odd})$, etc., continues to hold. Here we are interested in parity in $\zeta$ for functions of $\tau$ and $\zeta$, while we will appeal to expressions of $s$ and $\zeta$ where $s$ is as in \eqref{defn:scaling}. The change from $s$ to $\tau$, however, does not affect the parity in $\zeta$.

Let us show that $V(\tau,\zeta)$ is an odd function of $\zeta$. Consider \eqref{modvel}. The term $\frac{\lambda^\prime}{\lambda}\zeta$ is odd, as is $\tilde{v}$ by \eqref{E:Regularity_rescaled_v_at_zero}. $u^0$ is even by \eqref{u00}. Thus, $V$ is odd.

From \eqref{E:Mass_function_near_zero},
\eqref{E:Reference_mass_function_near_zero}, and \eqref{E:eta_0_near_zero}, we have that near $\zeta=0$ we can write
\begin{align*}
    \eta_0(\zeta) = \zeta \mathbf{F}(\zeta^2),
\end{align*}
with $\mathbf{F}(0) \neq 0$,
where we used an invertible map of the form, $\mathbf{F}(\zeta^2)$, $\mathbf{F}(0) \neq 0$,  also has the form 
$\zeta \mathbf{F}(\zeta^2)$, $\mathbf{F}(0) \neq 0$ by Lemma \ref{L:Inverse_odd_function}. 
Thus, $\eta_0$ is odd.

$V$ being odd in $\zeta$, i.e., having all even derivatives vanishing at $\zeta=0$, implies that $V$ can be extended from $[0,1]$ to an odd function on $[-1,1]$ which is of the same regularity, $C^{N+2}$. Similarly for $\eta_0$. By \eqref{E:d_eta_d_s}, $\eta$ is the flow of an odd vector field with odd initial condition. A standard parity-preserving argument based on uniqueness shows that $\eta(\tau,\zeta)$ is then odd in $\zeta$. Upon restricting to $\zeta \in [0,1]$, we have $\partial_\zeta^{2\ell}\eta(\tau,0)=0$, $2\ell \leq N+2$. Thus the same holds for $\Theta(\tau,\zeta)$ by \eqref{E:Theta_def}.

For $\partial_\tau \Theta$, we use  \eqref{E:Theta_def} and \eqref{E:d_eta_d_s} to get
\begin{align*}
    \partial_\tau \Theta(\tau,\zeta) = \frac{ds}{d\tau} \partial_s \Theta(s,\zeta) = 
    \frac{ds}{d\tau} \partial_s \eta(s,\zeta)
    = \frac{ds}{d\tau} V(s,\eta(s,\zeta)),
\end{align*}
which is odd in $\zeta$.

For any odd function $\GenericFunction \in C^{N+2}([0,1])$, Lemma \ref{L:calD_odd_functions} gives that
\begin{subequations}{\label{E:calDj_on_odd_functions}}
\begin{align}
    \calD_{2\ell} \GenericFunction(0) & = 0, \, 2\ell \leq N+2,
    \\
    \calD_{2\ell+1} \GenericFunction(0) & = \text{finite value }, \, 2\ell+1 \leq N+2,
\end{align}
\end{subequations}
and also a bound on the respective norms,
where we recall that $\calD_\ell$ is given by 
\eqref{calDj}.

With \eqref{E:calDj_on_odd_functions} at hand and the regularity of $\Theta$ and $\partial_\tau \Theta$ established in Step 6, 
we then immediately obtain \eqref{E:Regularity_Theta_partial_tau_Theta_closed_time_interval}; \eqref{E:Finiteness_E_N_closed_time_interval} then follows from \eqref{E:Regularity_Theta_partial_tau_Theta_closed_time_interval} and Remark \ref{R:Norm_equivalence_calX_calE}.

This completes the proof of item 2) of the Theorem.

\medskip
\noindent\textbf{Step 8} (Perturbations of reference data give rise to small $E^N$ data). 
This is essentially the statement that our association $(\mathring{\NewSoundSpeedSq},\mathring{\NewVelocity}^i)\mapsto (\Theta_0,U_0)$ is continuous at $(\NewSoundSpeedSq_\text{ref},\NewVelocity_\text{ref}^i)$. We have already seen that this association is well defined, but now we need to be more quantitative about it.

Define the set 
\begin{align*}
    \mathcal{U} := \{ (\mathring{\NewSoundSpeedSq},\mathring{\NewVelocity}^i) \in \DITWeightedSpace^{2k}_\text{ref} \, \, | \, \text{the construction of } (\Theta_0,U_0) \, \text{ out of } \, (\mathring{\NewSoundSpeedSq},\mathring{\NewVelocity}^i) \,\text{ in Step 2 is well defined} \},
\end{align*}
and endow $\mathcal{U}$ with the topology inherited from $\DITWeightedSpace^{2k}_\text{ref}$. In practice, recall that the ingredients needed for $(\Theta_0,U_0)$ to be well defined are the physical vacuum boundary condition, the gauge fixing of $\eta_0$, subluminality, enough regularity for the variables, and the center-regularity conditions. 
These conditions are stable under convergence in $\DITWeightedSpace^{2k}_\text{ref}$ and thus $\mathcal{U}$ is open.
By construction, on $\mathcal{U}$ we have a well-defined Euler-to-Lagrangian data map $\mathbb{D} \colon (\mathring{\NewSoundSpeedSq},\mathring{\NewVelocity}) \mapsto (\Theta_0,U_0)$ and we need to ascertain the continuity of this map, where the topology on the image is given by the $\mathcal{X}^N$ topology.

Let $\{ (\mathring{\NewSoundSpeedSq}_n,\mathring{\NewVelocity}^i_n)\}_{n=0}^\infty \subset \mathcal{U}$ converge to $(\mathring{\NewSoundSpeedSq},\mathring{\NewVelocity}^i)$.
By definition of convergence in $\DITWeightedSpace^{2k}_0$, we have that $\Omega_{\mathring{\NewSoundSpeedSq}_n}$ converges to $\Omega_{\mathring{\NewSoundSpeedSq}}$. Since 
$\Omega_{\mathring{\NewSoundSpeedSq}_n}=B_{\lambda_{0,n}}(0)$ and $\Omega_{\mathring{\NewSoundSpeedSq}}=B_{\lambda_0}(0)$ by spherical symmetry, we have $\lambda_{0,n} \rightarrow \lambda_0$. 

Set
\begin{align*}
    \mathring{\NewSoundSpeedSq}^L_n(\zeta) & := \mathring{\NewSoundSpeedSq}_n(\lambda_{0,n}\zeta),
    \\
    \mathring{\NewSoundSpeedSq}^L(\zeta) & := \mathring{\NewSoundSpeedSq}(\lambda_{0}\zeta),   \\
    (\mathring{\NewVelocity}^L_n)^i(\zeta) & := \mathring{\NewVelocity}_n^i(\lambda_{0,n}\zeta),\\ 
    (\mathring{\NewVelocity}^L)^i(\zeta) & := \mathring{\NewVelocity}^i(\lambda_{0}\zeta), 
\end{align*}
$\zeta \in [0,1]$. By definition of convergence in 
$\DITWeightedSpace^{2k}_0$, given $\epsilon$ we have a smooth $(\mathring{\NewSoundSpeedSq}_\text{smooth}, \mathring{\NewVelocity}_\text{smooth}^i)$, which we can assume to be spherically symmetric (this is because the relevant domains and norms are invariant under $SO(3)$; if $(\mathring{\NewSoundSpeedSq}_\text{smooth}, \mathring{\NewVelocity}_\text{smooth}^i)$ were not spherically symmetric, we could replace it by its $SO(3)$ average without worsening the approximation), defined in a neighborhood $B_{\lambda_\text{smooth}}(0)$ of $\Omega_{\mathring{\NewSoundSpeedSq}}$ such that
$\| (\mathring{\NewSoundSpeedSq},\mathring{\NewVelocity}^i) - (\mathring{\NewSoundSpeedSq}_\text{smooth}, \mathring{\NewVelocity}_\text{smooth}^i) \|_{\DITWeightedSpace^{2k}_{\mathring{\NewSoundSpeedSq}}(\Omega_{\mathring{\NewSoundSpeedSq}})} \leq \epsilon$ and
$\limsup_{n\rightarrow\infty}\| (\mathring{\NewSoundSpeedSq}_n,\mathring{\NewVelocity}^i_n) - (\mathring{\NewSoundSpeedSq}_\text{smooth}, \mathring{\NewVelocity}_\text{smooth}^i) \|_{\DITWeightedSpace^{2k}_{\mathring{\NewSoundSpeedSq}_n}(\Omega_{\mathring{\NewSoundSpeedSq}_n})} \leq \epsilon$. From this and the non-degeneracy of $\mathring{\NewSoundSpeedSq}_n$ it follows that 
\begin{align*}
    \| (\mathring{\NewSoundSpeedSq}^L,(\mathring{\NewVelocity}^L)^i) - (\mathring{\NewSoundSpeedSq}^L_\text{smooth}, (\mathring{\NewVelocity}^L)^i_\text{smooth}) \|_{\DITWeightedSpace^{2k}_{w}([0,1])} \lesssim \epsilon
\end{align*}
and
\begin{align*}
    \limsup_{n\rightarrow\infty}
    \| (\mathring{\NewSoundSpeedSq}^L_n,(\mathring{\NewVelocity}^L_n)^i) - (\mathring{\NewSoundSpeedSq}^L_\text{smooth}, (\mathring{\NewVelocity}^L)^i_\text{smooth}) \|_{\DITWeightedSpace^{2k}_{w}([0,1])} \lesssim \epsilon,
\end{align*}
where $\mathring{\NewSoundSpeedSq}^L_\text{smooth}(\zeta):=\mathring{\NewSoundSpeedSq}_\text{smooth}(\lambda_\text{smooth}\zeta)$ and $(\mathring{\NewVelocity}^L)^i_\text{smooth}(\zeta) := \mathring{\NewVelocity}_\text{smooth}^i(\lambda_\text{smooth}\zeta)$. This implies that $(\mathring{\NewSoundSpeedSq}^L_n,(\mathring{\NewVelocity}^L_n)^i) \rightarrow (\mathring{\NewSoundSpeedSq}^L,(\mathring{\NewVelocity}^L)^i)$ in $\DITWeightedSpace^{2k}_w([0,1])$.

From \eqref{E:New_sound_and_velocity} we can write $u^i = (1+\frac{\kappa}{1+\kappa}r)^{-(1+\frac{1}{\kappa})} \NewVelocity^i$, so that $u^i$ is related to $\NewVelocity^i$ by a smooth function of $\NewSoundSpeedSq$ that is uniformly bounded away from zero. Thus, for the Eulerian velocities $\mathring{u}^i$ and $\mathring{u}^i_n$ corresponding to $\mathring{\NewVelocity}^i$ and $\mathring{\NewVelocity}^i_n$, respectively, if we set $(\mathring{u}^L)^i(\zeta) := \mathring{u}^i(\lambda_{0}\zeta)$ and $(\mathring{u}^L_n)^i(\zeta) := \mathring{u}_n^i(\lambda_{0}\zeta)$, then we also have $(\mathring{\NewSoundSpeedSq}^L_n,(\mathring{u}^L_n)^i) \rightarrow (\mathring{\NewSoundSpeedSq}^L,(\mathring{u}^L)^i)$ in $\DITWeightedSpace^{2k}_w([0,1])$. By \eqref{E:N_k_relation} and \eqref{E:Weighted_embedding_DIT} we then obtain 
$(\mathring{\NewSoundSpeedSq}^L_n,(\mathring{u}^L_n)^i) \rightarrow (\mathring{\NewSoundSpeedSq}^L,(\mathring{u}^L)^i)$ in $C^{N+2}([0,1])$.

Next, by \eqref{E:New_sound_speed_sq} we have $\| \mathring{\rho}_n - \mathring{\rho} \|_{L^\infty(\mathbb{R})} \rightarrow 0$ (with $\mathring{\rho}_n,\mathring{\rho}$ extended to zero outside the domains). This combined with \eqref{defn:scaling} and the dominated convergence theorem gives that $m_n$ converges uniformly to $m$, where $m_n$ is \eqref{E:Mass_function} constructed out of $\mathring{\rho}_n$ and $m$ out of $\mathring{\rho}$. Each $m_n$ is strictly increasing and uniformly non-degenerate away from the end points, and therefore $m_n^{-1}$ also converges uniformly to $m^{-1}$. Thus, $\eta_{0,n} = m_n^{-1} \circ m_\text{ref}$ converges uniformly to $\eta_0 = m^{-1} \circ m_\text{ref}$. Analyzing the regularity and invertibility of $\eta_{0,n}$ near the endpoints and away from the endpoints as in Step 5, and recalling \eqref{E:N_k_relation}, we can show that in fact $\eta_{0,n} \rightarrow \eta_0$ in $C^{N+1}([0,1])$. From this, we then obtain $C^{N+1}([0,1])$ convergence of $\Theta_n := \eta_{0,n} - \zeta$ to $\Theta = \eta_0 -\zeta$. 

From the convergences already established, we can  obtain $C^{N+1}([0,1])$ convergence of $U_{0,n}(\zeta) := V_n(0,\eta_{0,n}(\zeta))$ to $U_0(\zeta) := V(0,\eta_0(\zeta))$, where $V_n$ and $V$ are given in terms of $\mathring{u}_n^i$ and $\mathring{u}^i$, respectively, and  \eqref{defn:scaling} in view of \eqref{modvel}.

Finally, as in Step 8, the center-regularity conditions ensure that the above $C^{N+1}([0,1])$ convergence implies $\mathcal{X}^N$ convergence and thus $E^N(0)$ convergence by Remark \ref{R:Norm_equivalence_calX_calE}. Recalling that $(\Theta_0,U_0)=(0,0)$ for the reference state, we obtain item 3) of the Theorem.

\medskip
\noindent\textbf{Step 9} (Continuation criterion). The solutions $(\Theta,\partial_\tau\Theta)$ to \eqref{eq:momlagmain} that we have constructed are obtained out of solutions $(\rho,u^i)$ to \eqref{eq:RE}, which in turn are obtained from the solutions $(\NewSoundSpeedSq,\NewVelocity)$ obtained in \cite{disconzi2022relativistic}. The passage from $(\NewSoundSpeedSq,\NewVelocity)$ to $(\rho,u^i)$ is given by \eqref{E:New_sound_and_velocity}, whereas the passage from $(\rho,u^i)$ to $(\Theta,\partial_\tau\Theta)$ is given by the procedure of Section \ref{sect: setup}, which we justified through Steps 1--3 above. From this procedure we have that if $(\NewSoundSpeedSq,\NewVelocity)$ is defined on the time interval $[0,T^*)$ then $(\Theta,\partial_\tau\Theta)$ is defined on the time interval $[0,\tau^*)$, where $\tau^*$ is given by \eqref{E:Time_existence_Lagrangian}. Thus, if $(\NewSoundSpeedSq,\NewVelocity)$ is defined instead on the time interval $[0,T^*+ \epsilon)$, $\epsilon > 0$, then $(\Theta,\partial_\tau\Theta)$ is defined on the time interval $[0,\tau^*+\epsilon^\prime)$, where $\epsilon^\prime$ is given by 
\begin{align*}
    \epsilon^\prime := \int_{T^*}^{T^*+\epsilon} \frac{1}{\lambda(t)} dt.
\end{align*}

By the continuation criterion established in \cite{disconzi2022relativistic}, a subluminal solution $(\NewSoundSpeedSq,\NewVelocity)$ defined on $[0,T^*)$ and uniformly satisfying the physical vacuum boundary condition can be continued past $T^*$ if
\begin{align}
    \label{E:DIT_continuation_criterion}
    \sup_{0\leq t < T^*}
    \| (\NewSoundSpeedSq,\NewVelocity)\|_{C^2(\Omega_t)} < \infty.
\end{align}
(The continuation criterion in \cite{disconzi2022relativistic} requires in fact control of a norm weaker than $C^2$, but $C^2$ suffices for our purposes.)
Therefore, in order to show item 4) of the Theorem, it suffices to show that under the bootstrap assumptions \eqref{bootstrap U0} and \eqref{bootstrap FG}, \eqref{E:Continuation_criterion_Lagrangian} implies \eqref{E:DIT_continuation_criterion}.

Recalling \eqref{E:U_0_Theta_and_bar_lambda}, we have the subluminality condition
\begin{align*}
  0 <  (U^0)^{-2} = 1 - (\p_\tau \Theta + \lamt (\Theta + \ze))^2,  
\end{align*}
The bootstrap assumption \eqref{bootstrap U0} is stronger than subluminality, giving a uniform positive lower bound for $U^0$ on $[0,\tau^*)$, i.e., there exists a $C_0 >0$ such that 
\begin{align*}
    (U^0)^{-2} = 1 - (\p_\tau \Theta + \lamt (\Theta + \ze))^2 \geq C_0
    \, \text{ on } [0,\tau^*).
\end{align*}

Recalling the computations surrounding \eqref{E:Jac_det_def}, we have
\begin{align}
    \calF = \left(\frac{\eta}{\zeta}\right)^2\partial_\zeta\eta.
\end{align}
We showed in Step 7 that $\eta$ is odd in $\zeta$, thus $\eta(\tau,\zeta) = \zeta + O(\zeta^3)$ near $\zeta=0$ for $\tau \in [0,\tau^*)$. Therefore, 
the bootstrap assumption \eqref{bootstrap FG} on $\calF$ gives that $\partial_\zeta \eta(\tau,\cdot)$ remains uniformly bounded away from zero and thus $\eta = \Theta + \zeta$ remains uniformly non-degenerate on $[0,\tau^*)$, i.e., there exists a $C_1>0$ such that
\begin{align}
    \label{E:eta_remains_diffeo_for_continuation_criterion}
    \partial_\zeta \eta = 1+\partial_\zeta \Theta \geq C_1 \, \text{ on } [0,\tau^*).
\end{align}
This, combined with 
\eqref{E:f_and_U_of_s}, 
\eqref{defn:scaling}, and the bootstrap assumption \eqref{bootstrap FG} on $\bar{\calG}$, gives that there is no interior vacuum formation in the corresponding Eulerian variables solutions, i.e., if we define
\begin{align*}
    \overline{\Omega_{T^*} } &:=    \Big(\overline{  \bigcup_{0 \leq t < T^*} \{ t \} \times \Omega_t} \Big)\backslash
     \Big(\bigcup_{0 \leq t < T^*} \{ t \} \times \Omega_t \Big),
     \\
     \Omega_{T^*} &:=\text{interior}(\overline{\Omega_{T^*} }),
\end{align*}
then for each $x \in \Omega_{T^*}$, it holds that
\begin{align}
    \liminf_{t \rightarrow (T^{*})^-} \rho(t,x) > 0.
\end{align}
Furthermore, \eqref{E:eta_remains_diffeo_for_continuation_criterion} and  \eqref{bootstrap FG} also give that $f^\kappa(\tau,\zeta) \approx w_\delta(\zeta) \approx 1-\zeta$ uniformly over $[0,\tau^*)$, from which we conclude that the physical vacuum boundary condition is satisfied uniformly on the Eulerian variables over $[0,T^*)$, i.e., there exists $C_2>1$ such that for all $(t,x) \in \cup_{0 \leq t < T^*} \{ t \} \times \Omega_t$
\begin{align*}
    \frac{1}{C_2}\dist(x,\partial\Omega_t) \leq c_s^2(t,x) \leq C_2 \dist(x,\partial \Omega_t).
\end{align*}

With the above ingredients, we can now verify that \eqref {E:DIT_continuation_criterion} holds if \eqref{E:Continuation_criterion_Lagrangian} does so. We note that $\eta = \zeta + \Theta$ is $C^3$ if $\Theta$ is $C^3$; $\calF = (\frac{\eta}{\zeta})^2 \partial_\zeta \eta = (1+\frac{\Theta}{\zeta})^2 (1+\partial_\zeta \Theta) $ is $C^2$ if $\Theta$ is $C^3$; $U=\lambda^{\frac{3}{2 }\kappa}\partial_\tau \Theta$ is $C^2$ if $\partial_\tau \Theta$ is $C^2$; $U^0 = 1 - (\p_\tau \Theta + \lamt (\Theta + \ze))^2$ is $C^2$ if $\Theta$ is $C^2$ and $\partial_\tau \Theta$ is $C^2$. These observations combined with the fact that $\eta(\tau,\cdot) = \eta(\tau(t),\cdot) \colon [0,1] \rightarrow \Omega_t$, $\tau \in [0,\tau^*)$, is a diffeomorphism, imply the claim. This establishes item 4) of the Theorem.

\end{proof}
\end{appendices}

%%%%%%%%%%%%%%%%%%%%%%%%%%%%%%%%%
\bibliographystyle{abbrv}
\bibliography{references.bib}
\end{document}